\theoremstyle{plain}
\newtheorem{thmx}{Theorem}
\renewcommand{\thethmx}{\Alph{thmx}} 
\newtheorem{thm}{Theorem}[section]  
\newtheorem{lem}[thm]{Lemma}
\newtheorem{claim}[thm]{Claim}
\newtheorem{proposition}[thm]{Proposition}
\newtheorem{cor}[thm]{Corollary}
\newtheorem{corx}[thmx]{Corollary} 
\newtheorem{conjecture}[thm]{Conjecture}
\theoremstyle{definition}
\newtheorem{dfn}[thm]{Definition}
\theoremstyle{remark}
\newtheorem{rem}[thm]{Remark} 
\newtheorem{example}[thm]{Example}
\numberwithin{equation}{subsection}  
\theoremstyle{plain}
\newlist{thmlist}{enumerate}{1}
\setlist[thmlist]{wide = 0pt, labelwidth = 2em, labelsep*=0em, itemindent = 0pt, leftmargin = \dimexpr\labelwidth + \labelsep\relax, noitemsep,topsep = 1ex, font=\normalfont, label=(\roman*), ref=\thethm.(\roman{thmlisti})}
\newlist{thmenum}{enumerate}{1} 
\setlist[thmenum]{wide = 0pt, labelwidth = 2em, labelsep*=0em, itemindent = 0pt, leftmargin = \dimexpr\labelwidth + \labelsep\relax, noitemsep,topsep = 1ex, font=\normalfont, label=(\roman*), ref=\thethmx.(\roman{thmenumi})}
\newlist{corlist}{enumerate}{1} 
\setlist[corlist]{wide = 0pt, labelwidth = 2em, labelsep*=0em, itemindent = 0pt, leftmargin = \dimexpr\labelwidth + \labelsep\relax, noitemsep,topsep = 1ex, font=\normalfont, label=(\roman*), ref=\thecorx.(\roman{corlisti})}
\crefname{lem}{Lemma}{Lemmas} 
\crefname{conjecture}{Conjecture}{Conjectures}
\crefname{thm}{Theorem}{Theorems}
\crefname{proposition}{Proposition}{Propositions}
\crefname{dfn}{Definition}{Definitions}
\crefname{rem}{Remark}{Remarks}
\crefname{cor}{Corollary}{Corollaries}
\crefname{corx}{Corollary}{Corollaries}
\crefname{problem}{Problem}{Problems}
\crefname{thmx}{Theorem}{Theorems}
\crefname{claim}{Claim}{Claims}
\crefname{assumption}{Assumption}{Assumptions}
\crefname{main}{Main Theorem}{Main Theorems}
\def\ep{\varepsilon}
\def\D{\mathscr{D}} 
\def\D{D}
\def\Res{{\rm Res}}
\def\Sp{{\rm Sp}}
\newcommand{\cS}{\mathcal{S}}
\newcommand{\cY}{\mathcal{Y}}  
\newcommand{\da}{\rightharpoonup}
\newcommand*{\rom}[1]{\expandafter\@slowromancap\romannumeral #1@}
\newcommand{\crefnames}[3]{%
	\@for\next:=#1\do{%
		\expandafter\crefname\expandafter{\next}{#2}{#3}%
	}%
}
\newcommand{\cA}{\mathcal A}
\newcommand{\cC}{\mathcal C}
\newcommand{\cD}{\mathcal D}
\newcommand{\cI}{\mathcal I}
\newcommand{\cO}{\mathcal O}
\newcommand{\bA}{\mathbb{A}}
\newcommand{\bC}{\mathbb{C}}
\newcommand{\bD}{\mathbb{D}}
\newcommand{\bF}{\mathbb{F}}
\newcommand{\bP}{\mathbb{P}}
\newcommand{\bQ}{\mathbb{Q}}
\newcommand{\bR}{\mathbb{R}}
\newcommand{\bZ}{\mathbb{Z}}
\newcommand{\xsp}{X^{\! \rm sp}}
\newcommand{\kg}{\mathfrak{g}}
\newcommand{\pN}[1]{\bar{N}_{#1}^{\partial}}
  \def\spec{\textrm{Spec}\,}
\def\End{{\rm \small  End}}
\def\Sym{{\text{Sym}}}
\def\sp{{\rm sp}}
\newcommand{\Spab}{\mathrm{Sp}_{\mathrm{sab}}}
\newcommand{\Sph}{\mathrm{Sp}_{\mathrm{h}}}
\newcommand{\Spp}{\mathrm{Sp}_{\mathrm{p}}}
\newcommand{\Spalg}{\mathrm{Sp}_{\mathrm{alg}}}
\def\Im{{\rm Im}\,}
\newcommand{\Hom}{{\rm Hom}}
\newcommand{\diae}{{}^\diamond\! E}
\newcommand{\ord}{{\rm ord}\,}
\newcommand{\ram}{{\rm ram}\,}
\title[Hyperbolicity and  representations of $\pi_1$]{Hyperbolicity and    fundamental groups of complex quasi-projective varieties} 
\date{\today} 
\author[B. Cadorel]{Benoit Cadorel} 
\email{benoit.cadorel@univ-lorraine.fr}
\address{Institut \'Elie Cartan de Lorraine, Universit\'e de Lorraine, F-54000 Nancy,
	France.}
\urladdr{http://www.normalesup.org/~bcadorel/} 
\author[Y. Deng]{Ya Deng}
\email{ya.deng@math.cnrs.fr}
\address{CNRS, Institut \'Elie Cartan de Lorraine, Universit\'e de Lorraine, F-54000 Nancy,
	France.}
\urladdr{https://ydeng.perso.math.cnrs.fr}
\author[K. Yamanoi]{Katsutoshi Yamanoi}
\email{yamanoi@math.sci.osaka-u.ac.jp}
\address{Department of Mathematics, Graduate School of Science, Osaka University, Toyonaka,  Osaka 560-0043, Japan} 
\urladdr{https://sites.google.com/site/yamanoimath/}
\keywords{pseudo Picard hyperbolicity,   generalized Green-Griffiths-Lang conjecture,    special varieties, Hamonic mapping to Bruhat-Tits buildings,   variation of Hodge structures,  Nevanlinna theory, Galois conjugate}
\begin{document} 
	\begin{abstract} 
		This paper   investigates the relationship between the hyperbolicity of complex quasi-projective varieties $X$ and the (topological) fundamental group $\pi_1(X)$ in the presence of a linear representation $\varrho: \pi_1(X) \to {\rm GL}_N(\mathbb{C})$. We present our main  results in three parts.
		
		Firstly, we show that if $\varrho$ is a big representation and the Zariski closure of $\varrho(\pi_1(X))$ in ${\rm GL}_N(\mathbb{C})$ is a semisimple algebraic group, then for any Galois conjugate variety $X^\sigma:=X\times_\sigma\mathbb{C}$ where $\sigma\in {\rm Aut}(\mathbb{C}/\mathbb{Q})$ is an automorphism of $\mathbb{C}$,  there exists a proper Zariski closed subset $Z \subsetneqq X^\sigma(\bC)$ such that any closed irreducible subvariety $V$ of $X^\sigma(\bC)$ not contained in $Z$ is of log general type, and any holomorphic map from the punctured disk $\bD^*$ to $X^\sigma(\bC)$ with image not contained in $Z$ does not have an essential singularity at the origin. In particular, all entire curves in $X^\sigma(\bC)$ lie on $Z$. We provide examples to illustrate the optimality of this condition.
		
		Secondly, assuming that $\varrho$ is big and reductive, we prove the generalized Green-Griffiths-Lang conjecture for all Galois conjugate varieties $X^\sigma$. Furthermore, if $\varrho$ is   large  in addition to being big,  we show that the special subsets of $X^\sigma$ that capture the non-hyperbolicity locus of $X^\sigma$ from different perspectives are equal, and this subset is a proper Zariski closed subset if and only if $X$ is of log general type. We also obtain a structure theorem for these algebraic varieties.
		
		Lastly, we prove that if $X$ is a special quasi-projective manifold in the sense of Campana or $h$-special, then $\varrho(\pi_1(X))$ is virtually nilpotent. We provides examples to demonstrate that this result is sharp and thus revise Campana's abelianity conjecture for smooth quasi-projective varieties. 
		
		To prove these theorems, we develop new features in non-abelian Hodge theory, geometric group theory, and Nevanlinna theory. Along the way, we also prove the generalized Green-Griffiths-Lang conjecture for quasi-projective varieties $X$ admitting a morphism $a:X \to A$ with $\dim X=\dim a(X)$ where $A$ is a semi-abelian variety, and a reduction theorem for reductive representations $\pi_1(X) \to {\rm GL}_N(K)$ where $K$ is a  non-Archimedean local field $K$.
	\end{abstract}  
	
	\maketitle
	\tableofcontents

	 \section{Main results} 
	 \subsection{Hyperbolicity and fundamental groups}
The study of hyperbolicity properties of complex algebraic varieties with big (topological) fundamental groups is a fundamental topic in algebraic geometry. In particular, for projective varieties whose fundamental groups admit a linear semisimple quotient, this topic has been extensively studied, with \cite{Zuo96,Yam10,CCE15} confirming the expectation that such varieties possess hyperbolicity properties.  A linear algebraic group $G$ over a field $K$  is called semisimple if it has no non-trivial connected normal solvable algebraic subgroups defined over the algebraic closure of $K$, and has positive dimension.   Specifically,  Campana-Claudon-Eyssidieux \cite[Theorem 1]{CCE15} proved that a smooth complex projective variety $X$ with a Zariski dense representation $\varrho:\pi_1(X)\to G(\mathbb{C})$, where $G$ is a semisimple linear algebraic group over $\mathbb{C}$, is of general type when $\varrho$ is big. In addition, the third author \cite[Proposition 2.1]{Yam10} proved that $X$ does not admit Zariski dense entire curves $f:\bC\to X$.

A representation $\varrho:\pi_1(X)\to G(\bC)$ is said to be \emph{big}, or \emph{generically large} in \cite{Kol95}, if for any closed irreducible subvariety $Z\subset X$ containing a \emph{very general} point of $X$, $\varrho({\rm Im}[\pi_1(Z^{\rm norm})\to \pi_1(X)])$ is infinite, where $Z^{\rm norm}$ denotes the normalization of $Z$ (see \cref{def:big representation}). 
It is worth noting that a stronger notion of largeness exists, where $\varrho$ is called \emph{large}   if $\varrho({\rm Im}[\pi_1(Z^{\rm norm})\to \pi_1(X)])$ is infinite for any closed subvariety $Z$ of $X$. In this paper, we aim to generalize and strengthen the above theorems by Campana-Claudon-Eyssidieux and the third author to complex quasi-projective varieties.
	 \begin{thmx}[=\cref{thm:20220819}]\label{main2}
	 	Let $X$ be a complex   quasi-projective normal variety and let $G$ be a semisimple algebraic group over $\bC$. If  $\varrho:\pi_1(X)\to G(\bC)$ is a big  and Zariski dense representation, then for any automorphism $\sigma\in {\rm Aut}(\bC/\bQ)$,  there is a proper Zariski closed subset $Z\subsetneqq X^\sigma$ where $X^\sigma$ is the Galois conjugate variety of $X$ under $\sigma$ such that
	 	\begin{thmenum}
	 		\item \label{main:log general type}    any closed  subvariety   of $X^\sigma$ not contained in $Z$ is  of log general type. In particular, $X^\sigma$ is of log general type.
\item \label{main:pseudo Picard}   Any holomorphic map $f:\bD^*\to X^\sigma$ from the punctured disk $\bD^*$ to $X^\sigma$ with $f(\bD^*)\not\subseteq Z$ extends to  a holomorphic map from the disk $\bD$ to a projective compactification $\overline{X^\sigma}$ of $X^\sigma$ (i.e. $X^\sigma$ is \emph{pseudo Picard hyperbolic}).   In particular,  all entire curves in $X^\sigma$ lie on $Z$ (i.e. $X^\sigma$ is \emph{pseudo Brody hyperbolic}).
	 	\end{thmenum} 
	 \end{thmx}
It is worth noting that every pseudo Picard hyperbolic variety is pseudo Brody hyperbolic (cf. \cref{lem:inclusion}).
We also mention that the two conditions for the representation $\varrho$ in \cref{main2} are essential to conclude the two statements in \cref{main2}, as discussed in \cref{rem:sharp}. Both statements in \cref{main2} are new even in the case where $X$ is projective, and the proof of \cref{main2} involves several new techniques (see \cref{rem:previous} for more details).

It is noteworthy that the condition of bigness for the representations $\varrho$ in \cref{main2} is not particularly restrictive, unlike the requirement for a large representation.  In fact, in \cref{lem:kollar} we demonstrate that any linear representation of $\pi_1(X)$ can be factored through a big representation after taking a finite \'etale cover. This result, combined with \cref{main2}, yields a factorization theorem for linear representations of $\pi_1(X)$.
 	 \begin{corx}[$\subsetneqq$\cref{cor:202304071}]\label{main}
 	 	Let $X$ be a complex    quasi-projective  normal variety and let $G$ be a semisimple algebraic group over $\bC$. 
If  $\varrho:\pi_1(X)\to G(\bC)$ is a Zariski dense representation, then there  exist
a  finite \'etale cover \(\nu:\widehat{X}\to X\), a birational and proper morphism \(\mu:\widehat{X}'\to \widehat{X}\), a dominant morphism $f:\widehat{X}'\to Y$  with connected general fibers, and a   big   and Zariski dense representation \(\tau : \pi_{1}(Y) \to G(\bC)\)  such that
 	\begin{itemize}
 	\item   \(f^{\ast} \tau = (\nu\circ\mu)^{\ast}\varrho\). 
 			\item There is a proper Zariski closed subset $Z\subsetneqq Y$ such that any closed   subvariety of $Y$ not contained in $Z$ is  of log general type.  
 			\item  $Y$ is pseudo Picard hyperbolic, and in particular pseudo Brody hyperbolic.  
 	\end{itemize}  
 In particular,   $X$ is not weakly special and does not contain Zariski-dense entire curves. 
 \end{corx} 
Note that by Campana \cite{Cam11}, a quasi-projective variety $X$ is \emph{weakly special} if for any finite \'etale cover $\widehat{X}\to X$ and any proper birational modification $\widehat{X}'\to \widehat{X}$, there exists no   dominant morphism $\widehat{X}'\rightarrow Y$  with $Y$ a positive-dimensional quasi-projective normal variety of log general type.

\cref{main} generalizes the previous work by Mok \cite{Mok92}, Corlette-Simpson \cite{CS08}, and Campana-Claudon-Eyssidieux \cite{CCE15}, in which they proved similar factorisation results.

\subsection{On the generalized Green-Griffiths-Lang conjecture}\label{subsec:20230427}
Building upon \cref{main2}, we further investigate the generalized Green-Griffiths-Lang conjecture (cf. \Cref{conj:GGL})and its relation to the non-hyperbolicity locus of a smooth quasi-projective variety $X$, under the weaker assumption that $\pi_1(X)$ admits a big and reductive representation. 
Specifically, we introduce four special subsets of $X$ that measure the non-hyperbolicity locus from different perspectives, as defined in \cref{def:special2}. Our main result, given in \cref{main:GGL}, establishes the equivalence of several properties of the conjugate variety $X^\sigma$ under the assumption that $\varrho:\pi_1(X)\to {\rm GL}_N(\bC)$ is a big and reductive representation, and for any automorphism $\sigma\in {\rm Aut}(\bC/\bQ)$. Additionally, we provide a further result regarding the special subsets, as stated in \cref{main:special}.
\begin{dfn}[Special subsets] \label{def:special2}
	Let $X$ be a smooth quasi-projective variety.
	\begin{thmlist}
		\item $\Spab(X) := \overline{\bigcup_{f}f(A_0)}^{\mathrm{Zar}}$, where $f$ ranges over all non-constant rational maps $f:A\dashrightarrow X$ from all semi-abelian varieties $A$ to $X$ such that $f$ is regular on a Zariski open subset $A_0\subset A$ whose complement $A\backslash A_0$ has codimension at least two;
		\item $\Sph(X) := \overline{\bigcup_{f}f(\mathbb{C})}^{\mathrm{Zar}}$, where $f$ ranges over all non-constant holomorphic maps from $\mathbb{C}$ to $X$;
		\item $\Spalg(X) := \overline{\bigcup_{V} V}^{\mathrm{Zar}}$, where $V$ ranges over all positive-dimensional closed subvarieties of $X$ which are not of log general type;
		\item $\Spp(X) := \overline{\bigcup_{f}f(\bD^*)}^{\mathrm{Zar}}$, where $f$ ranges over all holomorphic maps from the punctured disk $\bD^*$ to $X$ with essential singularity at the origin, i.e., $f$ has no holomorphic extension $\bar{f}:\mathbb D\to\overline{X}$ to a projective compactification $\overline{X}$.
	\end{thmlist}
	\end{dfn}

The first two sets $\Spab(X)$ and $\Sph(X)$ are introduced by Lang for the compact case.
He made the following two conjectures (cf. \cite[I, 3.5]{Lan97} and \cite[VIII, Conjecture 1.3]{Lan97}):
\begin{itemize}
\item
$\Spab(X)\subsetneqq X$ if and only if $X$ is of general type.
\item
$\Spab(X)=\Sph(X)$.
\end{itemize}
The first assertion implicitly include the following third conjecture:
\begin{itemize}
\item
$\Spab(X)=\Spalg(X)$.
\end{itemize}

The original two conjectures imply the famous strong Green-Griffiths conjecture that varieties of (log) general type are pseudo Brody hyperbolic.
Here we note that, by definition, $X$ is pseudo Brody hyperbolic if and only if $\Sph(X)\subsetneqq X$.
Similarly, $X$ is pseudo Picard hyperbolic if and only if $\Spp(X)\subsetneqq X$.

\begin{thmx}[=\cref{thm:GGL}]\label{main:GGL}
	Let $X$ be a complex  smooth  quasi-projective  variety admitting a big and reductive representation  $\varrho:\pi_1(X)\to {\rm GL}_N(\bC)$. 
Then for any  automorphism $\sigma\in {\rm Aut}(\bC/\bQ)$, the following properties are equivalent:
	\begin{enumerate}[font=\normalfont, label=(\alph*)] 
		\item $X^\sigma$ is of log general type. 
		\item   $\Spp(X^\sigma)\subsetneqq X^\sigma$.
		\item  $\Sph(X^\sigma)\subsetneqq X^\sigma$.
		\item  $\Spalg(X^\sigma)\subsetneqq X^\sigma$.
\item  $\Spab(X^\sigma)\subsetneqq X^\sigma$.
	\end{enumerate} 
\end{thmx} 

We note that the implication $(a)\implies (c)$ in \cref{main:GGL} establishes the strong Green-Griffiths conjecture for $X^\sigma$ for all automorphism $\sigma\in {\rm Aut}(\bC/\bQ)$, provided a big and reductive representation  $\varrho:\pi_1(X)\to {\rm GL}_N(\bC)$ exists.

As for the second and third conjectures of Lang, we obtain the following theorem under the stronger assumption when $\pi_1(X)$ admits a \emph{large} and reductive representation.

\begin{thmx}[=\cref{thm:special}]\label{main:special}
	Let $X$ be a smooth quasi-projective   variety admitting a large and reductive representation $\varrho:\pi_1(X)\to {\rm GL}_N(\bC)$. Then   for any  automorphism $\sigma\in {\rm Aut}(\bC/\bQ)$,  
	\begin{thmenum}
		\item   the four special subsets defined in \cref{def:special2} are the same, i.e., $$\Spalg(X^\sigma)=\Spab(X^\sigma)=\Sph(X^\sigma)=\Spp(X^\sigma).$$  
		\item These special subsets are conjugate under  automorphism   $\sigma$,  i.e., $$\Sp_{\bullet}(X^\sigma)=\Sp_{\bullet}(X)^\sigma,$$
		where $\Sp_{\bullet}$  denotes any of $\Spalg$, $\Spab$, $\Sph$ or $ \Spp$.
	\end{thmenum}
\end{thmx}

\begin{rem}
In the compact case, we should mention that several results have been recently announced on the topics of this paper.
In \cite{Sun22}, Sun claimed the pseudo Borel hyperbolicity (which is weaker than our pseudo Picard hyperbolicity) of projective manifolds $X$ with  $\pi_1(X)$ admitting Zariski dense and big representations into semisimple algebraic groups. 
Our \cref{main:special} is motivated by the recent work of Brunebarbe \cite{Bru22}.
He claimed that if $X$ is a projective variety supporting a large complex local system, then $\Sp_{\rm alg}(X)=\Spab(X)=\Sph(X)$ and these are proper subsets of $X$ iff $X$ is of general type.
We will discuss some (seemingly unfixed) issues in their proofs later.
See \cref{rem:20230417} and also \cref{rem:small ramification} for the case of \cite{Sun22}.
\end{rem}

\subsection{Fundamental groups of  special and $h$-special varieties}
We refer the readers to \cref{sec:spechspecial} for the definition of \emph{special}  and \emph{$h$-special} variaties. Campana's abelianity conjecture \cite[Conjecture 13.10.(1)]{Cam11} predicts that a smooth quasi-projective variety $X$ that is  {special} has a virtually abelian fundamental group.  
When a special variety $X$ is projective, it is known that all linear quotients of $\pi_1(X)$ are virtually abelian (cf. \cite[Theorem 7.8]{Cam04}).
The same conclusion is valid for any smooth projective variety $X$ which contains Zariski dense entire curves (cf. \cite[Theorem 1.1]{Yam10}).
It is natural to expect similar results for smooth quasi-projective varieties. However, we shall see an example of a quasi-projective surface that is special, that contains Zariski dense entire curves, whose fundamental group is linear and nilpotent but not virtually abelian (cf. \Cref{example}). This provides a counterexample to Campana's conjecture in the general case.

 Our third main result in this paper  is  as follows.  
  \begin{thmx}[=\cref{thm:VN}]\label{main5}
	Let $X$ be a   special  or $h$-special smooth quasi-projective variety.   Let  $\varrho:\pi_1(X)\to {\rm GL}_N(\bC)$ be a  linear representation.   Then  $\varrho(\pi_1(X)) $ is \emph{virtually nilpotent}.  
\end{thmx}
For the definition of $h$-special above, we refer the readers to \cref{defn:20230407}; it includes smooth quasi-projective varieties $X$ such that
\begin{itemize}
	\item $X$ admits Zariski dense entire curves, or 
	\item generic two points of $X$ are connected by the chain of entire curves.
\end{itemize}  

Note that the above theorem is sharp, as shown by \Cref{example}. Its proof is based on \cref{main2} and the following theorem:
\begin{thmx}[=\cref{thm:202210123}]\label{main:geomety group}
Let $X$ be an $h$-special or special  quasi-projective manifold.
Let $G$ be a connected, solvable algebraic group defined over $\mathbb C$.
Assume that there exists a Zariski dense representation $\varphi:\pi_1(X)\to G(\bC)$.
Then $G$ is nilpotent.
In particular, $\varphi(\pi_1(X))$ is nilpotent.
\end{thmx} 
When $X$ is a compact K\"ahler manifold, \cref{main:geomety group} was proved by Campana \cite{Cam01} and Delzant \cite{Del10} using different methods. The proof of \cref{main:geomety group} is based on results in Nevanlinna theory, Deligne's mixed Hodge theory and \cref{main}.
\subsection{A structure theorem}
If we replace the semi-simple algebraic group $G$ in \cref{main2} by a reductive one,   we  obtain the following structure theorem. 
\begin{thmx}[=\cref{thm:structure,thm:char}]
\label{thm:20230510}
	Let $X$ be a   smooth quasi-projective variety and let $\varrho:\pi_1(X)\to {\rm GL}_N(\bC)$ be a reductive and big representation. Then
	\begin{thmenum}
	\item the logarithmic Kodaira dimension $\bar{\kappa}(X)\geq 0$. Moreover, if $\bar{\kappa}(X)=0$, then   $\pi_1(X)$ is virtually abelian.
	\item There is a proper Zariski closed subset $Z$ of $X$ such that each non-constant morphism $\bA^1\to X$ has image in $Z$.
	\item After replacing $X$ by a finite \'etale cover and a birational modification, there are a semiabelian variety $A$, a quasi-projective manifold $V$, and a birational morphism $a:X\to V$ such that we have the following commutative diagram
		\begin{equation*}
			\begin{tikzcd}
				X \arrow[rr,    "a"] \arrow[dr,  "j"] & & V \arrow[ld, "h"]\\
				& J(X)&
			\end{tikzcd}
		\end{equation*}
	where $j$ is the logarithmic Iitaka fibration and $h:V\to J(X)$ is a  locally trivial fibration with fibers isomorphic to $A$.   Moreover, for a   general fiber $F$ of $j$, $a|_{F}:F\to  A$ is proper in codimension one. 
		\item \label{char abelian} If $X$ is   special or $h$-special, then $\pi_1(X)$ is virtually abelian, and after replacing $X$ by a finite \'etale cover, its quasi-Albanese morphism $\alpha:X\to \cA$ is birational.  
 		\end{thmenum} 
\end{thmx} 
When $X$ is projective, the third author proved \cref{char abelian} in \cite{Yam10} without assuming that the representation $\varrho$ is reductive. However, it is worth noting that when $X$ is only quasi-projective,   \cref{char abelian} might fail if $\varrho$ is not reductive (cf. \cref{rem:sharp abelian}).   
 \subsection{Results in non-abelian Hodge  and Nevanlinna theories}\label{sec:techniques}
 We believe that some of the new techniques developed in the proof of \cref{main2} are of significant interest in their own right. One such technique is a reduction theorem for Zariski dense representations $\varrho: \pi_1(X)\to G(K)$, where $G$ is a reductive algebraic group defined over a non-Archimedean local field $K$.
 \begin{thmx}[=\cref{thm:KZreduction}] \label{main3}
 	Let $X$ be a complex   quasi-projective normal variety, and let $\varrho:\pi_1(X)\to {\rm GL}_N(K)$ be a reductive representation where $K$ is non-archimedean local field.  Then there exists a quasi-projective normal variety $S_\varrho$ and a dominant morphism $s_\varrho:X\to S_\varrho$ with connected general fibers, such that  for any connected Zariski closed  subset $T$ of $X$, the following properties are equivalent:
 	\begin{enumerate}[label={\rm (\alph*)}]
 		\item \label{item bounded} the image $\rho({\rm Im}[\pi_1(T)\to \pi_1(X)])$ is a bounded subgroup of $G(K)$.
 		\item \label{item normalization} For every irreducible component $T_o$ of $T$, the image $\rho({\rm Im}[\pi_1(T_o^{\rm norm})\to \pi_1(X)])$ is a bounded subgroup of $G(K)$.
 		\item \label{item contraction}The image $s_\varrho(T)$ is a point.
 	\end{enumerate} 
 \end{thmx}
It is worth noting that if $X$ is projective, the equivalence between \Cref{item bounded} and \Cref{item contraction} has been established by Katzarkov \cite{Kat97}, Eyssidieux \cite[Proposition 1.4.7]{Eys04}, and Zuo \cite{Zuo96}. However, it seems that the implication of \Cref{item normalization} to \Cref{item contraction} is new even in the projective setting.
One of the building blocks of the proof of \cref{thm:KZreduction} is based on previous results by Brotbek, Daskalopoulos, Mese, and the second author \cite{BDDM} on the existence of harmonic mappings to Bruhat-Tits buildings (an extension of Gromov-Schoen's theorem to quasi-projective cases) and the construction of logarithmic symmetric differential forms via these harmonic mappings.

The following theorem is a crucial component in the proof of \cref{main2}.
 \begin{thmx}[=\cref{thm:log general type,thm:main33}] \label{main6}
	Let $X$ be a quasi-projective manifold.  Let $G$ be an almost simple algebraic group defined over a non-archimedean local field $K$.  Suppose that $\varrho:\pi_1(X)\to G(K)$ is a big and unbounded Zariski dense representation. Then:
	\begin{thmenum}
		\item  \label{main:lgt}$\Spalg(X)\subsetneqq X$. 
	\item  \label{main:PPH} $X$ is pseudo Picard hyperbolic.
\end{thmenum}
\end{thmx}
A significant building block in the proof of \cref{main6} is \cref{thm2nd} on the \emph{generalized Green-Griffiths-Lang conjecture} (cf. \cref{conj:GGL}). We present here a simplified version for clarity. 
\begin{thmx}[=\cref{cor:20221102}] \label{main:second}
	Let $X$ be a  quasi-projective variety. 
	Assume that there is a  morphism $a:X\to A$ such that $\dim X=\dim a(X)$ where $A$ is a semi-abelian variety (e.g., when $X$ has maximal quasi-Albanese dimension).  
Then the following properties are equivalent:  
	\begin{enumerate}[wide = 0pt,  noitemsep,  font=\normalfont, label=(\alph*)] 
			\item \label{being general type} $X$ is of log general type. 
			\item  \label{pseudo Picard} $\Spp(X)\subsetneqq X$.
			\item  $\Sph(X)\subsetneqq X$.
 	\item \label{strong LGT} $\Spalg(X)\subsetneqq X$.
\item 
$\Spab(X)\subsetneqq X$. 
\end{enumerate}
\end{thmx}
It is worth mentioning that showing \ref{being general type} implies \ref{strong LGT} requires using the implication of \ref{being general type} to \ref{pseudo Picard}, which follows from \cref{thm2nd}. The proof of \cref{thm2nd} is heavily based on Nevanlinna theory, and the entire \cref{sec:GGL} is dedicated to proving it.

 
 \medspace
 
 \subsection{Structure of the paper and further developments}
 \begin{figure}
 	\centering
 	\begin{tikzpicture}[>=stealth, node distance=1.7cm, every node/.style={rectangle, draw, align=center}]	
 		\node (A) {\cref{main6}};
 			\node (Z) [above left of = A] {\cref{main:second}};
 				\node (X) [above right of = A]  {\cref{main3}}; 
 				 		\node (W) [below of = A, yshift=0.6cm] {\cref{main2}};
 		\node (B) [below left of=W] {\cref{main}};
 		\node (C) [below right of=W] {\cref{main:GGL}};
 		\node (D) [below of= C] {\cref{main:special}}; 
 		\node (Y) [left of = B, xshift=-0.6cm]{\cref{main:geomety group}};
 		\node (E) [below of= C, , xshift=-2.6cm] {\cref{main5}};
 		\node (F) [right of=C, xshift=0.6cm] {\cref{thm:20230510}};

 			\path[->] (Z) edge (A);
 			 			\path[->] (X) edge (A);
 			 						\path[->] (A) edge (W);
 		\path[->] (W) edge (B);
 		\path[->] (W) edge (C);
 		\path[->] (C) edge (D);
 		\path[->] (Y) edge (E);
 		 		\path[->] (B) edge (E);
 		\path[->] (W) edge (F); 
 	\end{tikzpicture}
 	\caption{Relationships between Main Theorems}
 	\label{fig:relationships}
 \end{figure}
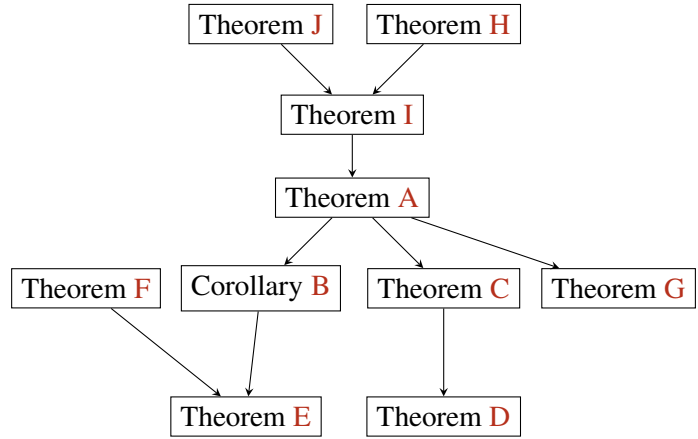 
 
This paper is long and comprehensive as there are several delicate issues in previous related works in the projective setting that require detailed explanation (see \cref{rem:20230417,rem:previous}). Given the complexity of the subject matter, we have devoted almost 100 pages to carefully and thoroughly exploring the issues at hand. However, we have taken care to ensure that the paper is as self-contained as possible. Since we believe that various techniques developed in this paper will have more applications in the future, our aim is to make our arguments accessible to readers who may not be familiar with the topic, all  without assuming prior knowledge of previous works on the compact cases.   Since the paper presents several results  from different perspectives, for the readers' convenience, we list in Figure \ref{fig:relationships} the  relationships between main theorems.  

\medspace
 
Let us conclude this section by highlighting some recent developments following the paper.  
  It is worth mentioning that the techniques developed in this paper have  substantial applications in these more recent works:
\begin{itemize}
	\item in \cite[Theorem A]{DY23}, the second and third authors constructed the Shafarevich morphism for complex reductive representations of fundamental groups of complex quasi-projective normal varieties.
	\item In \cite[Theorem A]{DY23b}, the second and third authors  constructed the Shafarevich morphism for linear representations in positive characteristic of fundamental groups of complex quasi-projective varieties. Furthermore, in \cite{DY23b} they established analogous results to those in \cref{main2,main:GGL,main:special,main5} for quasi-projective varieties whose fundamental groups admit linear (not necessarily reductive) representations in positive characteristic.
	\item   It is worth emphasizing the   significance of pseudo Picard hyperbolicity explored in this paper compared to   pseudo Brody one.   In \cite[Theorem F]{DY23b}, we prove a conjecture by Claudon-H\"oring-Koll\'ar in the linear case: let $X$ be a smooth complex projective variety whose universal covering  is quasi-projective. If there is a faithful representation $\varrho:\pi_1(X)\to {\rm GL}_N(K)$ with $K$ any field, then up to some finite \'etale cover, the Albanese map of $X$ is locally isotrivial with fibers simply connected.  The pseudo Picard hyperbolicity proved in \cref{main2} played a significant role in this result. 
\end{itemize}

\subsection*{Convention and notation.}In this paper, we use the following conventions and notations:
\begin{itemize}[noitemsep]
	\item Quasi-projective varieties and their closed subvarieties are usually assumed to be irreducible unless otherwise stated, while Zariski closed subsets might be reducible.
	\item Fundamental groups are always referred to as topological fundamental groups.
	\item If $X$ is a complex space, its normalization is denoted by $X^{\mathrm{norm}}$.
	\item An \emph{algebraic fiber space} is a dominant morphism between quasi-projective normal varieties whose general fibers are connected, but not necessarily proper or surjective.
	\item A birational map $f:X\to Y$ between quasi-projective normal varieties is \emph{proper in codimension one} if there is an open set $Y^\circ\subset Y$ with $Y\backslash Y^\circ$ of codimension at least two such that $f$ is proper over $Y^\circ$.
\end{itemize}

\subsection*{Acknowledgment.}
We would like to thank Michel Brion, Patrick Brosnan, Fr\'ed\'eric Campana,  Philippe Eyssidieux, Hisashi Kasuya, Chikako Mese,  Gopal Prasad, Guy Rousseau  and Carlos Simpson for helpful discussions.  We also thank 
Ariyan Javanpeykar
 and Yohan Brunebarbe for their valuable remarks on the paper.   B.C and Y.D acknowledge  the support by the ANR grant Karmapolis (ANR-21-CE40-0010). 
K.Y acknowledges the support by JSPS Grant-in-Aid for Scientific Research (C) 22K03286.

\section{Preliminaries and notation}\label{sec:pre}

\subsection{Logarithmic forms.} Let \((\overline{X}, D)\) be a smooth log-pair, {\em i.e.\ } the data of a smooth projective manifold \(\overline{X}\), and of a simple normal crossing divisor \(D\) on it. We will sometimes denote by \(T_{1}(\overline{X}, D) := H^{0}(\overline{X}, \Omega_{\overline{X}}(\log D))\) the space of logarithmic forms. Similarly, if \(X\) is a smooth quasi-projective manifold, we will write \(T_{1}(X) := T_{1}(\overline{X}, D)\), where \((\overline{X}, D)\) is any smooth log-pair compactifying \(X\). Note that \(T_{1}(X)\) depends only on \(X\), but not on the choice of \(\overline{X}\).

\subsection{Semi-abelian varieties and quasi-Albanese morphisms.} In this subsection we collect some results on quasi-Albanese morphisms of quasi-projective manifolds and semi-abelian varieties. We refer the readers to \cite{NW13,Fuj15} for further details. 

 A commutative complex algebraic  group $\cA$ is called a \emph{semi-abelian variety} if there is a short exact sequence of complex algebraic groups  
$$
1 \rightarrow H \rightarrow \cA \rightarrow \cA_0 \rightarrow 1
$$
where $\cA_0$ is an abelian variety and $H \cong (\bC^*)^\ell$.

Let \(X\) be a complex quasi-projective manifold. The quasi-Albanese variety of \(X\) is the semi-abelian variety
\[
	\cA_{X}=T_{1}(X)^*/\varrho(H_1(X,\bZ)) 
\]
where $\varrho(\gamma):=(\eta\mapsto \int_{\gamma}\eta)$. If we fix a base point \(\ast \in X\), and we denote by \(e \in \mathcal{A}_{X}\) the unit element, then there is a natural morphism of pointed varieties \((X, \ast) \to (\cA_{X}, e)\) given by integration along paths, exactly as in the projective setting. Also, any pointed morphism \((X, \ast) \to (B, e)\) to a semi-abelian variety factors uniquely through \(\mathcal{A}_{X}\) via a morphism of semi-abelian varieties \(\mathcal{A}_{X} \to B\).

Note that the image of the quasi-Albanese morphism is only a constructible set, which is neither closed nor open in general.
\medskip

The following lemma will permit to prove factorisation results for quasi-Albanese morphisms. 
\begin{lem} \label{lem:critpointalb}
	Let \(X\) be a quasi-projective manifold, and let \(q : X \to \cA_{X}\) be its quasi-Albanese morphism. Let \(S \subset T_{1}(X)\) be a set of logarithmic forms on \(X\), and let \(B \subset \mathcal{A}_{X}\) be the largest semi-abelian subvariety on which all \(\eta \in S\) vanish, using the natural identification \(T_{1}(\cA_{X}) \cong T_{1}(X)\). Let
	\[
		r : X \to \mathcal{A}_{X}/B
	\]
	be the quotient map. Then, for any morphism \(f : Y \to X\) from a   quasi-projective variety \(Y\), \(r(f(Y))\) is a point if and only for any \(\eta \in S\), one has \(f^{\ast}(\eta) = 0\).
\end{lem}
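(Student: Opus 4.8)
The plan is to reduce everything to the functoriality of the quasi-Albanese morphism, together with the elementary representation theory of semi-abelian varieties. Throughout I identify $S$ with a set of translation-invariant logarithmic forms on $\cA_X$ via the isomorphism $q^*\colon T_1(\cA_X)\xrightarrow{\sim} T_1(X)$, and I use the exact sequence $0\to T_1(\cA_X/B)\xrightarrow{\pi_B^*} T_1(\cA_X)\to T_1(B)\to 0$ attached to the quotient homomorphism $\pi_B\colon\cA_X\to\cA_X/B$ (so $r=\pi_B\circ q$); its content is that the invariant forms on $\cA_X$ vanishing on $B$ are exactly those of the form $\pi_B^*\eta'$.

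For the easy implication, suppose $r(f(Y))$ is a point. For $\eta\in S$ the defining property of $B$ gives $\eta$ vanishes on $B$, hence $\eta=\pi_B^*\eta'$ for some $\eta'\in T_1(\cA_X/B)$; then $f^*\eta=f^*q^*\pi_B^*\eta'=(r\circ f)^*\eta'=0$ because $r\circ f$ is constant.

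For the converse, assume $f^*\eta=0$ for all $\eta\in S$. Replacing $Y$ by a resolution of singularities $\nu\colon\widetilde Y\to Y$ and $f$ by $f\circ\nu$ changes neither hypothesis (as $\nu^*$ is injective on logarithmic forms) nor conclusion (as $\nu$ is surjective), so I may assume $Y$ smooth. Fixing compatible base points, $q\circ f\colon Y\to\cA_X$ is a pointed morphism to a semi-abelian variety, so by the universal property recalled in \cref{sec:pre} it factors as $Y\xrightarrow{q_Y}\cA_Y\xrightarrow{\psi}\cA_X$ with $q_Y$ the quasi-Albanese morphism of $Y$ and $\psi$ a homomorphism of semi-abelian varieties. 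Set $C_0:=\psi(\cA_Y)$, a semi-abelian subvariety of $\cA_X$. For $\eta\in S$ we get $0=f^*\eta=(q\circ f)^*\eta=q_Y^*(\psi^*\eta)$, and since $q_Y^*\colon T_1(\cA_Y)\xrightarrow{\sim} T_1(Y)$ is an isomorphism this forces $\psi^*\eta=0$; as $\psi$ factors through a surjection $\cA_Y\twoheadrightarrow C_0$ and pullback along a surjective homomorphism is injective on invariant forms, we conclude $\eta|_{C_0}=0$. Thus every $\eta\in S$ vanishes on $C_0$, and maximality in the definition of $B$ yields $C_0\subseteq B$. Finally $q(f(Y))=\psi(q_Y(Y))\subseteq C_0\subseteq B$, so $r(f(Y))=\pi_B(q(f(Y)))$ is the single point $\pi_B(e)$, as desired.

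The argument is essentially formal once these objects are in place. I expect the only points requiring care to be the reduction to $Y$ smooth and the base-point bookkeeping ensuring $\psi$ is a genuine homomorphism, together with the two standard facts about semi-abelian varieties used above (the image of a homomorphism is a semi-abelian subvariety, and invariant forms vanishing on $B$ are precisely the pullbacks from $\cA_X/B$), for which I would refer to \cite{NW13,Fuj15}. An alternative to routing through $\cA_Y$ — avoiding functoriality — would be to let $C_0$ be the smallest semi-abelian subvariety such that $(q\circ f)(Y)$ lies in a single coset of it, and to show directly that any invariant form killed by $(q\circ f)^*$ vanishes on $C_0$ by iterating the addition map $C_0\times C_0\to C_0$ and using that finitely many additive translates of a subvariety through $0$ generating $C_0$ exhaust $C_0$; but the proof via $\mathrm{Alb}(f)$ is cleaner and stays within the framework already recalled.
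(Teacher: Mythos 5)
Your argument is correct and is essentially the paper's own proof: both reduce to the smooth case by resolution, factor $q\circ f$ through the quasi-Albanese $\cA_Y$ via a homomorphism of semi-abelian varieties, and then invoke the defining maximality of $B$ together with the isomorphism $q_Y^*\colon T_1(\cA_Y)\to T_1(Y)$. The only difference is presentational — you separate the two implications and spell out the standard facts (forms vanishing on $B$ are pullbacks from $\cA_X/B$; the image of $\psi$ is a semi-abelian subvariety) that the paper's one-line chain of equivalences leaves implicit.
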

\begin{proof}
	After taking a resolution of singularities of $Y$, we assume that $Y$ is smooth. Choose base points on \(X\) and \(Y\) so that one has a diagram of pointed spaces
	\[
		\begin{tikzcd}
			X \arrow[r, "q"] & \cA_{X} \arrow[r, "p"] & \cA_{X}/B \\
			Y \arrow[u, "f"] \arrow[r, "u"] & \cA_{Y} \arrow[u, "g"]
		\end{tikzcd},
	\]
	where the semi-abelian varieties are pointed by their unit elements, and \(g, p\) are morphisms of algebraic groups.

	The previous diagram shows that \(p \circ q\) sends \(f(Y)\) to a point if and only if \(g(\cA_{Y}) \subset B\). By definition of \(B\), this is true if and only if \(g^{\ast}(\eta) = 0\) for any \(\eta \in S \subset T_{1}(\cA_{X}) = T_{1}(X)\). Since \(u^{\ast} g^{\ast} = f^{\ast} q^{\ast}\), and this \(q^{\ast}\) realizes the identification \(T_{1}(\cA_{X}) \cong T_{1}(X)\), this is equivalent to the second condition of the lemma.
\end{proof}
The above morphism $r:X\to \cA_X/B$ is called the \emph{partial quasi-Albanese morphism} induced by $S$.  

\begin{rem}
	 \cref{lem:critpointalb} enables us to define quasi-Albanese morphisms  for quasi-projective normal varieties $X$.     Let $Y$ be a desingularization of $X$. Let $S\subset T_1(Y)$ be the set of log one forms which vanish  on each fiber of $Y\to X$.  Then for the partial quasi-Albanese morphism $Y\to \cA$ induced by $S$, by \cref{lem:critpointalb} each fiber of $Y\to X$ is  contracted to one point.  Hence $Y\to \cA$ factors through $X\to \cA$. Note that this construction does not depend on the desingularization of $X$. Such $X\to \cA$ is called the quasi-Albanese morphism of $X$.
\end{rem}

We will also need two standard results permitting to evaluate the Kodaira dimension of subvarities of semi-abelian varieties (see \cite[Propositions 5.6.21 \& 5.6.22]{NW13}).  
\begin{proposition} \label{prop:Koddimabb}
	Let \(\cA\) be a semi-abelian variety. 
	\begin{enumerate}[label=(\alph*)]
		\item Let \(X \subset \cA\) be a closed subvariety. Then \(\overline{\kappa}(X) \geq 0\) with equality if and only if \(X\) is a translate of a semi-abelian subvariety.
		\item Let \(Z \subset \cA\) be a Zariski closed subset. Then \(\overline{\kappa}(\cA - Z) \geq 0\) with equality if and only if \(Z\) has no component of codimension \(1\).  \qed
	\end{enumerate}
\end{proposition}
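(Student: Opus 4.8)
The plan is to reduce both parts to properties of the log-cotangent sheaf of a standard equivariant compactification $\overline{\cA}$ of $\cA$. Writing $1\to H\to\cA\to\cA_0\to 1$ with $H\cong(\bC^*)^\ell$, I would compactify each $\bC^*$-factor inside $\bP^1$ and take the associated $(\bP^1)^\ell$-bundle over $\cA_0$; this produces a smooth projective $\overline{\cA}$ with $\partial\cA:=\overline{\cA}\setminus\cA$ a simple normal crossing divisor and, since $\Omega^1_{\bP^1}(\log\{0,\infty\})\cong\cO_{\bP^1}$, an isomorphism $\Omega^1_{\overline{\cA}}(\log\partial\cA)\cong\cO_{\overline{\cA}}^{\oplus n}$ with $n=\dim\cA$; in particular $\omega_{\overline{\cA}}(\partial\cA)\cong\cO_{\overline{\cA}}$, so $\overline{\kappa}(\cA)=0$. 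For the inequality in (a), let $X\subset\cA$ be closed of dimension $d$ and take a log resolution $g:\overline{X}\to\overline{\cA}$ with $\overline{X}$ smooth projective, $g$ restricting to the inclusion $X\hookrightarrow\cA$ over a dense open subset of the smooth locus $X_{\mathrm{sm}}$, and $D\subset\overline{X}$ (the reduced preimage of $\partial\cA$ together with the exceptional locus) simple normal crossing. Pulling back the $n$ global logarithmic $1$-forms and wedging in $d$-tuples yields a map $\cO_{\overline{X}}^{\oplus\binom{n}{d}}\to\Omega^d_{\overline{X}}(\log D)=\omega_{\overline{X}}(D)$. At a point $x$ over a smooth point of $X$ in $\cA\setminus\partial\cA$ (so $x\notin D$), evaluation of logarithmic forms identifies $T_1(\cA)$ with $T_x^\vee\cA$, and restriction to $X$ is onto $T_x^\vee X$; hence some $d$ of these forms are linearly independent at $x$ and their wedge is a nonzero section of $\omega_{\overline{X}}(D)$. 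Thus $\overline{\kappa}(X)\ge 0$. The same computation, with the closure of a codimension-$1$ component of $Z$ adjoined to the boundary, gives $\overline{\kappa}(\cA-Z)\ge\overline{\kappa}(\cA)=0$, which is the inequality in (b).

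For the equality cases I would invoke the structure theory of subvarieties of semi-abelian varieties --- the logarithmic form of Ueno's theorem, due to Kawamata --- which is exactly what \cite[Props.~5.6.21--5.6.22]{NW13} records, and which I would not reprove here. In (a), set $B:=\operatorname{Stab}(X)^0$, a semi-abelian subvariety; Kawamata's theorem says that the log-Iitaka fibration of $X$ is, up to translation, the projection $\cA\to\cA/B$, the general fibre of $X\to X/B\subset\cA/B$ being a translate of $B$, so that $\overline{\kappa}(X)=\dim X-\dim B$. Therefore $\overline{\kappa}(X)=0$ if and only if $\dim X=\dim B$, i.e.\ $X$ is a single $B$-coset, i.e.\ a translate of the semi-abelian subvariety $B$; conversely every such translate has $\overline{\kappa}=0$ by the compactification above. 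For (b): if $Z$ has no codimension-$1$ component then removing it does not affect logarithmic pluricanonical forms ($\overline{\cA}$ being smooth), so $\overline{\kappa}(\cA-Z)=\overline{\kappa}(\cA)=0$. If $Z$ contains an irreducible divisor $Z_0$, then $\overline{\kappa}(\cA-Z)\ge\overline{\kappa}(\cA-Z_0)\ge\kappa\big(\overline{\cA},\,\omega_{\overline{\cA}}(\partial\cA)\otimes\cO_{\overline{\cA}}(\overline{Z}_0)\big)=\kappa\big(\overline{\cA},\cO_{\overline{\cA}}(\overline{Z}_0)\big)$, and this is $\ge 1$: replacing $\cA$ by $\cA/\operatorname{Stab}(Z_0)^0$ reduces to the case where $Z_0$ has finite stabilizer and $\dim\cA\ge 1$, and then the theorem of the square (through the abelian quotient together with the torus factor) produces, for general $a\in\cA$, an effective divisor $t_a^*\overline{Z}_0+t_{-a}^*\overline{Z}_0\sim 2\overline{Z}_0$ not proportional to $\overline{Z}_0$, whence $h^0(\overline{\cA},\cO_{\overline{\cA}}(2\overline{Z}_0))\ge 2$ and $\kappa\ge 1$. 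So $\overline{\kappa}(\cA-Z)=0$ forces $Z$ to have no codimension-$1$ component.

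The routine ingredients are thus the construction of $\overline{\cA}$, the wedge-of-log-forms computation, and the Hartogs and theorem-of-the-square positivity statements. The single genuinely hard input is Kawamata's structure theorem --- equivalently, that a positive-dimensional subvariety of a semi-abelian variety with finite stabilizer is of log general type --- whose proof proceeds through a careful analysis of the logarithmic Gauss map and the theorem of the cube; following the authors, I would simply cite it from \cite{NW13} rather than reproduce it.
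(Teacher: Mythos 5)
The paper does not actually prove this proposition: it is imported wholesale from \cite[Propositions 5.6.21 \& 5.6.22]{NW13}, so there is no in-paper argument to compare yours against. Your reconstruction is the standard route and is essentially sound: the inequality comes from trivializing $\Omega^1_{\overline{\cA}}(\log\partial\cA)$ on an equivariant compactification and wedging pulled-back invariant forms, and the equality cases are precisely the content of Kawamata's structure theorem ($\overline{\kappa}(X)=\dim X-\dim\operatorname{Stab}(X)^0$, resp.\ bigness of a compactified divisor with finite stabilizer), which you, like the authors, cite rather than reprove. So in substance you and the paper are doing the same thing; you have merely filled in the routine reductions.

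Two points deserve tightening. In (a), you place the exceptional locus of the log resolution into the boundary $D$; a nonzero section of $\omega_{\overline{X}}(D)$ then only shows $\overline{\kappa}(U)\geq 0$ for a proper open subset $U$ of (a resolution of) $X$, which does not bound $\overline{\kappa}(X)$ from below (compare $\bP^1$ versus $\bC^*$: enlarging the boundary can raise $\overline{\kappa}$ from $-\infty$ to $0$). The fix is immediate: the invariant forms are regular on $\cA$, hence their pullbacks under the morphism $X'\to\cA$ from the smooth resolution $X'$ of $X$ are regular on all of $X'$, so the wedge already lies in $\omega_{\overline{X}}(D_0)$ with $D_0=g^{-1}(\partial\cA)_{\mathrm{red}}$ the genuine boundary of $X$ — and that is the sheaf computing $\overline{\kappa}(X)$. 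In (b), the step $\kappa(\overline{\cA},\cO_{\overline{\cA}}(\overline{Z}_0))\geq 1$ via the theorem of the square needs the isomorphism $t_a^*\cO(\overline{Z}_0)\otimes t_{-a}^*\cO(\overline{Z}_0)\cong\cO(2\overline{Z}_0)$ on the \emph{compactification}, which is not automatic outside the abelian case and requires choosing the compactification adapted to $Z_0$; this is exactly the (nontrivial) bigness statement of \cite[Theorem 5.6.13]{NW13}, and it is cleaner to cite it than to sketch it.
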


\subsection{Covers and Galois morphisms.} \label{sec:covGal}

If \(X\) is a complex variety, its fundamental group will be denoted by \(\pi_{1}(X)\), and its universal cover will usually be denoted by \(\pi : \widetilde{X} \to X\).

\begin{dfn}[Galois morphism]
	 	A finite map $\gamma: X \rightarrow Y$ of varieties is called \emph{Galois  with group $G$} if there exists a finite group $G \subset \operatorname{Aut}(X)$ such that $\gamma$ is isomorphic to the quotient map. 
	 \end{dfn}

	 If \(Y\) is a complex manifold and \(p : X \to Y\) is an étale cover, then there exists a Galois cover \(p' : X' \to Y\) factoring through \(p\) such that any other such Galois cover factors through \(p'\). In the case where \(X\) is connected, the Galois cover \(p' : X' \to Y\) is described as follows.
Let $F$ be the fiber of $p$ over $y\in Y$.
Then $\pi_1(Y,y)$ acts on $F$ transitively by lifting a closed path through $y$ starting with a point $x\in F$.
This defines a group morphism \(\varrho : \pi_{1}(Y,y) \to \mathfrak{S}(F)\), where $\mathfrak{S}$ stands for the permutation.
Now the covering $p':X'\to Y$ corresponds to the finite $\pi_1(Y,y)$-set $\mathrm{Im}(\varrho)$.
Since this set is a coset space of the normal subgroup $\mathrm{Ker}(\varrho)$, $p':X'\to Y$ is a Galois cover.
Note that we have a surjective map $\mathrm{Im}(\varrho)\to F$ of finite $\pi_1(Y,y)$-sets defined by $\sigma\mapsto \sigma(x_0)$, where $x_0\in F$ is a fixed point.
Hence \(p' \) factoring through \(p\).

			 Now, if \(p : X \to Y\) is a finite morphism of normal quasi-projective varieties, one can form the normalization of \(X\) in the Galois closure of \(\mathbb{C}(X)/\mathbb{C}(Y)\). This is a finite Galois morphism \(p' : X' \to Y\), factoring through \(X\); over the locus \(Y_{\circ}\) where \(p\) is \'etale on $p^{-1}(Y_{\circ})$, it identifies with the Galois closure defined previously. In particular, \(p'\) is \'etale over \(Y_{\circ}\).

\subsection{Tame and pure imaginary harmonic bundles}\label{sec:tame}

\begin{dfn}[Higgs bundle]\label{Higgs}
	A \emph{Higgs bundle} on a complex manifold $X$  is a pair $(E,\theta)$ where $E$ is a holomorphic vector bundle  and $\theta:E\to E\otimes \Omega^1_X$ is a holomorphic one form with value in $\End(E)$, called the \emph{Higgs field},  satisfying $\theta\wedge\theta=0$.
\end{dfn}
Let  $(E,\theta)$ be a Higgs bundle  over a complex manifold $X$.  
Suppose that $h$ is a smooth hermitian metric of $E$.  Denote by $\nabla_h$  the Chern connection with respect to $h$, and by $\theta^\dagger_h$  the adjoint of $\theta$ with respect to $h$.  We write $\theta^\dagger$ for $\theta^\dagger_h$ for short.   The metric $h$ is  \emph{harmonic} if the connection  $\bD_1:=\nabla_h+\theta+\theta^\dagger$
is flat, i.e., if  $\bD_1^2=0$. 
\begin{dfn}[Harmonic bundle] A \emph{harmonic bundle} on  $X$ is
	a Higgs bundle $(E,\theta)$ endowed with a  harmonic metric $h$.
\end{dfn}

Let $\overline{X}$ be a compact complex manifold, $D=\sum_{i=1}^{\ell}D_i$ be a   simple normal crossing divisor of $\overline{X}$ and $X=\overline{X}\backslash D$ be the complement of $D$.
Let $(E,\theta,h)$ be a  harmonic bundle on $X$.	 Let $p$ be any point of $D$, and $(U;z_1,\ldots,z_n)$  be a coordinate system centered at  $p$ such that $D\cap U=(z_1\cdots z_\ell=0)$ . On $U$, we have the description:
\begin{align}\label{eq:local}
	\theta=\sum_{j=1}^{\ell}f_jd\log z_j+\sum_{k=\ell+1}^{n}f_kdz_k.
\end{align}

\begin{dfn}[Tameness]\label{def:tameness}
	Let $t$ be a formal variable. For any $j=1,\dots, \ell$, the characteristic polynomial $\det (f_j-t)\in \mathcal{O}(U\backslash D)[t]$, is a polynomial in $t$   whose coefficients are holomorphic functions. 
	If those functions can be extended
	to the holomorphic functions over $U$ 
for all $j$, then the harmonic bundle is called \emph{tame} at $p$.  A harmonic bundle is \emph{tame} if it is tame at each point.
\end{dfn} 

For a tame harmonic bundle  $(E,\theta,h)$ over $X$,  we prolong $E$ over $\overline{X}$ by a sheaf of $\cO_{\overline{X}}$-module $\diae_h$  as follows:  for any open set $U$ of $X$, 
\begin{align*} 
	\diae_h(U):=\{\sigma\in\Gamma(U\backslash D,E|_{U\backslash D})\mid |\sigma|_h\lesssim {\prod_{i=1}^{\ell}|z_i|^{-\ep}}\  \ \mbox{for all}\ \ep>0\}. 
\end{align*}  
In \cite{Moc07} Mochizuki proved that $\diae_h$ is locally free and that  $\theta$ extends to a morphism
$$
\diae_h\to \diae_h\otimes \Omega_{\overline{X}}(\log D),
$$
which we still denote by $\theta$.

\begin{dfn}[Pure imaginary, nilpotent residue]\label{def:nilpotency}
	Let $(E, h,\theta)$ be a tame harmonic bundle on $\overline{X}\backslash D$. The residue $\Res_{D_i}\theta$ induces an endomorphism of $\diae_h|_{D_i}$. Its characteristic polynomial has constant coefficients, and thus the eigenvalues are constant. We say that $(E,\theta,h)$ is \emph{pure imaginary} (resp. has  \emph{nilpotent residue}) if for each component $D_i$ of $D$, the   eigenvalues of $\Res_{D_i}\theta$ are all pure imaginary (resp. all zero).
\end{dfn}   
One can verify that \cref{def:nilpotency} does not depend on the compactification $\overline{X}$ of $\overline{X}\backslash D$. 

The following  theorem by Mochizuki will be used in \cref{sec:rigid}.
\begin{thm}[{Mochizuki \cite[Theorem 25.21]{Moc07b}}]\label{moc}
	Let $X$ be a smooth quasi-projective variety and let $(E,\theta,h)$ be a tame pure imaginary harmonic bundle on $X$. Then the flat bundle $(E, \nabla_h+\theta+\theta^\dagger)$ is semi-simple. Conversely, if $(V,\nabla)$ is a semisimple flat bundle on $X$, then there is a tame pure imaginary harmonic bundle $(E,\theta,h)$ on $X$ so that $(E, \nabla_h+\theta+\theta^\dagger)\simeq (V,\nabla)$. Moreover, when $\nabla$ is \emph{simple}, then any such harmonic metric $h$ is unique up to positive multiplication. 
\end{thm}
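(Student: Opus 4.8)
The plan is to deduce the statement from the Kobayashi--Hitchin-type correspondence for tame harmonic bundles, together with the existence theory for harmonic metrics on flat bundles over quasi-projective varieties. Fix a smooth log-pair $(\overline{X},D)$ compactifying $X$. For the first direction, given a tame pure imaginary harmonic bundle $(E,\theta,h)$, one forms the flat bundle $(V,\nabla):=(E,\nabla_h+\theta+\theta^\dagger)$, and uses Mochizuki's prolongation $\diae_h$ --- which is locally free and carries $\theta:\diae_h\to\diae_h\otimes\Omega_{\overline{X}}(\log D)$ --- to refine it to a filtered (parabolic) Higgs bundle $(\mathcal{P}_*E,\theta)$ on $(\overline{X},D)$. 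The pure imaginary condition on the residues forces the parabolic weights to be arranged so that the parabolic degree and, more generally, the parabolic Chern characters of $(\mathcal{P}_*E,\theta)$ vanish; combined with the Hermitian--Einstein property of $h$ (i.e. the harmonic equation), this makes $(\mathcal{P}_*E,\theta)$ $\mu$-polystable with vanishing parabolic characteristic numbers. Polystability means it is a direct sum of $\mu$-stable parabolic Higgs subbundles, each of which corresponds under the correspondence to a \emph{simple} flat bundle, so $(V,\nabla)$ is a direct sum of simple flat bundles, hence semisimple. (Alternatively, and more self-containedly, one invokes Mochizuki's canonical decomposition of a tame harmonic bundle into a direct sum of harmonic bundles whose underlying flat bundles are simple.)

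For the converse, decompose $(V,\nabla)=\bigoplus_i(V_i,\nabla_i)$ into simple flat bundles; since a direct sum of tame pure imaginary harmonic bundles is again one, it suffices to treat each $V_i$. A simple flat bundle has irreducible, hence reductive, monodromy $\rho_i:\pi_1(X)\to{\rm GL}_N(\bC)$. The key analytic input is the existence of a $\rho_i$-equivariant harmonic map $\widetilde{X}\to{\rm GL}_N(\bC)/{\rm U}(N)$ with moderate (tame) growth near $D$ --- the quasi-projective analogue of Corlette's theorem, due in various forms to Jost--Zuo, Simpson and Mochizuki (and, for the boundary control, using a good compactification, possibly after a ramified base change reducing the local monodromy around $D$ to its unipotent part). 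Pulling back the symmetric-space metric yields a harmonic metric $h_i$ on $V_i$; Simpson's and Mochizuki's norm estimates then show that $(V_i,h_i)$, viewed as a Higgs bundle via the harmonic equation, is tame, and the fact that the construction produces no jump in the weight filtration attached to the residues gives that the residue eigenvalues of $\theta$ are pure imaginary. Summing over $i$ gives the desired harmonic bundle with $(E,\nabla_h+\theta+\theta^\dagger)\simeq(V,\nabla)$.

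For the uniqueness statement when $\nabla$ is simple, let $h$ and $h'$ be two harmonic metrics on $(V,\nabla)$ and write $h'=h\cdot s$ with $s$ a positive-definite $h$-self-adjoint automorphism of $V$. Both metrics being harmonic, $s$ satisfies a second-order elliptic (sub)equation; the tame growth of $h$ and $h'$ makes $\operatorname{tr}(s+s^{-1})$ a bounded function satisfying a maximum-principle inequality on $X$, so, after passing to $\overline{X}$ and using a boundedness argument à la Mochizuki, $s$ is $\nabla$-parallel. Then $s$ is an automorphism of the flat bundle $(V,\nabla)$, hence, by Schur's lemma applied to the simple $(V,\nabla)$, a positive scalar; that is, $h'=c\,h$ for some $c>0$.

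The main obstacle is the second direction: constructing a harmonic metric with precisely the tame, pure imaginary behavior along $D$. This is where the heavy analysis lives --- existence and regularity of equivariant harmonic maps with noncompact source, together with sharp boundary norm estimates --- and it is exactly the content of the works of Corlette, Simpson, Jost--Zuo and Mochizuki underlying the statement; a fully self-contained treatment is long, which is why we quote it here and use it as a black box in \cref{sec:rigid}.
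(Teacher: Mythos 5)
The paper does not prove this statement: it is quoted verbatim as Mochizuki's Theorem 25.21 from \cite{Moc07b} and used as a black box, so there is no in-paper argument to compare yours against. Your sketch follows the standard route from the literature (prolongation to a parabolic Higgs bundle, Kobayashi--Hitchin, Corlette/Jost--Zuo/Mochizuki existence for the converse, maximum principle plus Schur for uniqueness), and your closing decision to defer the hard analysis to the cited sources is exactly what the paper itself does.

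One point in your first direction is misplaced, and it is worth flagging because it is precisely where the pure imaginary hypothesis does its work. You attribute the role of pure imaginarity to forcing the parabolic weights so that the parabolic degree and Chern characters vanish; in fact the vanishing of the parabolic characteristic numbers and the $\mu$-polystability of $(\mathcal{P}_*E,\theta)$ hold for \emph{any} tame harmonic bundle, pure imaginary or not. The subtle step is the next one: a $\mu$-stable parabolic Higgs bundle with trivial characteristic numbers does \emph{not} in general correspond to a simple (or even semisimple) flat bundle, because the filtered structures on the Higgs side and on the de Rham side are related by a shift governed by the KMS spectrum, i.e. by the eigenvalues of the residues of $\theta$. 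The pure imaginary condition is exactly what makes this shift harmless, so that the polystable decomposition on the Higgs side descends to a decomposition of $(V,\nabla)$ into simple flat summands. As written, your argument would "prove" semisimplicity for an arbitrary tame harmonic bundle, which is false; your parenthetical fallback (invoking Mochizuki's canonical decomposition directly) is the safe version. The converse direction and the uniqueness argument are fine in outline, with the usual caveat that parallelness of the endomorphism $s$ requires the boundedness/energy-growth control near $D$ that you gesture at.
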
 

 \subsection{Special varieties and $h$-special varieties} \label{sec:spechspecial}
Special varieties are introduced by Campana \cite{Cam04,Cam11} in his  remarkable program of classification of geometric orbifolds.  In this subsection we briefly recall definitions and properties of special varieties, and we refer the readers to \cite{Cam11} for more details. 
 
Let $f : X \to Y$ be a dominant morphism between quasi-projective smooth varieties with connected general fibers, that admits a compactification \(\overline{f} : \overline{X} \to \overline{Y}\), where \(\overline{X} = X \sqcup D\) (resp. \(\overline{Y} = Y \sqcup G\)) is a compactification with simple normal crossing boundary divisor.  We will consider $(\overline{X}| D)$ as a geometric orbifold  defined in \cite[D\'efinition 2.1]{Cam11} and $\bar{f}:(\overline{X}| D)\to \overline{Y}$ as  an orbifold morphism  defined in \cite[D\'efinition 2.4]{Cam11} .  In \cite[\S 2.1]{Cam11}, Campana defined the \emph{multiplicity divisor} $\Delta(\bar{f}, D)\subset \overline{Y}$ of $\overline{f}$, for which the \emph{orbifold base} of $\overline{f}$ is the pair $(\overline{Y}|\Delta(\bar{f}, D))$. Note that with this definition, one has \(\Delta(\bar{f}, D) \geq G\). 

 The \emph{Kodaira dimension} of $\overline{f}:(\overline{X}| D)\to \overline{Y}$, denoted by $\kappa(\overline{f}, D)$, is defined to be 
\begin{align} \label{eq:Kodaira}
\kappa(\overline{f}, D):=\inf \{\kappa(\overline{Y}'|\Delta(\bar{f}', D')) \},
\end{align} 
where $\overline{f}':\overline{X}'\to \overline{Y}'$ ranges over all \emph{birational models} of $\overline{f}$; i.e., $\overline{f}':\overline{X}'\to \overline{Y}'$ is an algebraic fiber space between smooth projective varieties   such that we have the following commutative diagram
\begin{equation*}
	\begin{tikzcd}
		\overline{X}' \arrow[r, "u"]\arrow[d, "\overline{f}'"] & \overline{X}\arrow[d, "\overline{f}"] \\
		\overline{Y}' \arrow[r, "v"] & \overline{Y} 
 	\end{tikzcd}
\end{equation*}
where $u$ and $v$ are  birational morphisms, and \(D' = u^{-1}(D)\) is a simple normal crossing divisor on \(\overline{X}'\). We note that $\kappa(\bar{f},D)$ is thus a birational invariant and does not depend on the choice of  the compactification  of $X$. Hence we  define the Kodaira dimension of $f:X\to Y$ to be
\begin{align}\label{eq:Kodaira2}
	 \kappa(Y,f):=\kappa(\bar{f},D).
\end{align} 

 \begin{dfn}[Campana's specialness]\label{def:special}
Let $X$ be a   quasi-projective variety. 
	 We say that $X$ is \emph{weakly special} if for any finite \'etale cover $\widehat{X}\to X$ and any proper birational modification $\widehat{X}'\to \widehat{X}$, there exists no dominant morphism  $\widehat{X}'\to Y$  with  connected general fibers such that $Y$ is a positive-dimensional quasi-projective normal variety of log general type. The variety $X$ is \emph{special} if for  any proper birational modification $\widehat{X}\to X$ and any  dominant morphism $f:\widehat{X}\to Y$ over a quasi-projective normal variety $Y$ with connected general fibers,  we have $\kappa(Y,f)<\dim  {Y}$.
 \end{dfn}

Campana defined $X$ to be \emph{$H$-special} if $X$ has vanishing Kobayashi pseudo-distance.
Motivated by \cite[Definition 9.1]{Cam04}, we introduce the following definition.

\begin{dfn}[$h$-special]\label{defn:20230407}
Let $X$ be a smooth quasi-projective variety.
We define the equivalence relation $x\sim y$ of two points $x,y\in X$ iff there exists a sequence of holomorphic maps $f_1,\ldots,f_l:\mathbb C\to X$ such that letting $Z_i\subset X$ to be the Zariski closure of $f_i(\mathbb C)$, we have 
$$x\in Z_1, Z_1\cap Z_2\not=\emptyset, \ldots, Z_{l-1}\cap Z_l\not=\emptyset, y\in Z_l.$$
We set $R=\{ (x,y)\in X\times X; x\sim y\}$.
We define $X$ to be \emph{hyperbolically special} ($h$-special for short) iff $R\subset X\times X$ is Zariski dense.
\end{dfn}

By definition, rationally connected projective varieties are $h$-special without refering a theorem of Campana and Winkelmann \cite{CW16}, who proved that all rationally connected projective varieties contain Zariski dense entire curves.  

\begin{lem}\label{lem:20230406}
If a smooth quasi-projective variety $X$ admits a Zariski dense entire curve $f:\mathbb C\to X$, then $X$ is $h$-special.
\end{lem}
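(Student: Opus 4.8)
The plan is to unwind the definition of the relation $\sim$ in \cref{defn:20230407} and observe that a single Zariski dense entire curve already makes \emph{every} pair of points of $X$ equivalent, via a chain of length one. Since the only content of the statement is this definitional observation, I expect no genuine obstacle; the one point worth flagging is simply that \cref{defn:20230407} permits chains consisting of a single curve whose Zariski closure is allowed to be all of $X$.

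Concretely, let $f\colon\mathbb{C}\to X$ be the given Zariski dense entire curve, and set $Z_1:=\overline{f(\mathbb{C})}^{\mathrm{Zar}}$. By hypothesis $Z_1=X$. Now fix arbitrary points $x,y\in X$ and apply \cref{defn:20230407} with $l=1$ and $f_1=f$: the intermediate intersection conditions are vacuous, and the remaining requirements reduce to $x\in Z_1$ and $y\in Z_1$, both of which hold since $Z_1=X$. Hence $x\sim y$. As $x,y$ were arbitrary, $R=X\times X$, which is trivially Zariski dense in $X\times X$; by definition this means $X$ is $h$-special.

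If one prefers to exhibit a dense subset of $R$ more explicitly rather than invoking $Z_1=X$ directly, one can instead consider the map $\mathbb{C}^2\to X\times X$, $(s,t)\mapsto (f(s),f(t))$. The same length-one chain argument shows $(f(s),f(t))\in R$ for all $s,t$, so $R$ contains $f(\mathbb{C})\times f(\mathbb{C})$; and since the Zariski closure of a product is the product of the Zariski closures, $\overline{f(\mathbb{C})\times f(\mathbb{C})}^{\mathrm{Zar}}=X\times X$. Either way the conclusion follows at once, and the proof amounts to correctly reading off the definition of $\sim$.
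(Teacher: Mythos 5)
Your proof is correct and is essentially the paper's own argument: the authors likewise note that Zariski density of $f$ gives $x\sim y$ for all $x,y\in X$ via a length-one chain, so $R=X\times X$ and $X$ is $h$-special. The extra explicit verification you give (the product map $\mathbb{C}^2\to X\times X$) is harmless but not needed.
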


\begin{proof}
Since $f:\mathbb C\to X$ is Zariski dense, we have $x\sim y$ for all $x,y\in X$.
Hence $R=X\times X$.
Thus $X$ is $h$-special.
\end{proof}

Note that the converse of \cref{lem:20230406} does not hold in general (cf. \Cref{ex:20230418}).
For the smooth case, motivated by Campana's suggestion \cite[11.3 (5)]{Cam11b}, we may ask whether a quasi-projective manifold $X$ is $h$-special  if and only if it is special. 
Campana proposed the following tantalizing \emph{abelianity conjecture} (cf. \cite[11.2]{Cam11b}).
 \begin{conjecture}[Campana]\label{conj:Campana}
 	A special smooth projective geometric orbifold   (e.g. quasi-projective manifold)  has \emph{virtually abelian} fundamental group.  
 \end{conjecture}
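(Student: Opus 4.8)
Since the conclusion is insensitive to passing to a finite \'etale cover, and since specialness is preserved under such covers (and under proper birational modifications), I would try to reduce the statement, in stages, to the two engines proved in this paper: the dichotomy for linear representations (\cref{main5}, \cref{main:geomety group}) and the hyperbolicity/factorisation package (\cref{main2}, \cref{main}, \cref{thm:20230510}). Concretely, write $\Gamma=\pi_1(X)$ for $X$ a special smooth projective variety. The plan is: (i) show every linear quotient of $\Gamma$ is virtually abelian; (ii) assemble these into a statement about the pro-algebraic completion of $\Gamma$; (iii) control the part of $\Gamma$ invisible to linear representations. The first two steps I expect to go through with the present technology; the third is where I expect the real difficulty to sit.

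\textbf{Step (i): linear quotients.} Apply \cref{main5} with $D=\emptyset$: any $\varrho:\Gamma\to{\rm GL}_N(\bC)$ has virtually nilpotent image. To upgrade \emph{nilpotent} to \emph{abelian} in the projective case, split the semisimplification off: the Zariski closure $H$ of $\varrho(\Gamma)$ is solvable and virtually nilpotent, so its reductive quotient is (virtually) a torus, and it remains to treat the unipotent radical $R_u(H)$. Here I would invoke Hodge theory on the (relative) Malcev completion of $\Gamma$ in the style of Deligne--Griffiths--Morgan--Sullivan and Hain: the graded pieces of the lower central series of the relevant unipotent part carry pure Hodge structures, and the obstruction to abelianity is a cup-product/Massey-product class living in a space of logarithmic forms; specialness is brought in exactly as in Campana \cite{Cam04}, through the absence of dominant maps to orbifolds of log general type, to force that class to vanish. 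Conclusion of Step (i): after a finite \'etale cover, every linear quotient of $\Gamma$ is abelian. (In the quasi-projective case this step genuinely stops at \emph{nilpotent}: the $(\bC^*)^\ell$-directions of a semi-abelian target contribute a nontrivial unipotent part, which is why \cref{main5} cannot be strengthened there and one must pass through \cref{main:geomety group} and \cref{thm:20230510} instead.)

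\textbf{Step (ii): the pro-algebraic completion.} Let $\Gamma^{\rm lin}$ denote the image of $\Gamma$ in the product of its finite-dimensional complex linear representations (equivalently, the $\bC$-points of its pro-reductive completion together with the relevant Malcev completions). Step (i) says $\Gamma^{\rm lin}$ is virtually abelian. Using the structure theorem \cref{thm:20230510} and the factorisation \cref{main} one can moreover arrange that the Shafarevich/$\gamma$-reduction $\gamma_X:X\dashrightarrow\gamma(X)$ relative to the class of linear representations is understood: specialness of $X$ passes to the fibres of $\gamma_X$, and if $\gamma(X)$ were positive-dimensional it would carry a big representation of its $\pi_1$, hence be of log general type by \cref{main2}, contradicting specialness. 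So $\gamma(X)$ is a point, i.e. every linear representation of $\Gamma$ has \emph{virtually abelian} image and $\Gamma\to\Gamma^{\rm lin}$ has virtually abelian target.

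\textbf{Step (iii): the main obstacle, and what would close it.} The remaining point is to show the kernel of $\Gamma\to\Gamma^{\rm lin}$ is negligible, i.e. that $\Gamma$ itself is virtually abelian. This does \emph{not} follow formally: fundamental groups of projective manifolds need not be residually finite (Toledo), so a large kernel is a priori possible, and no Hodge- or Nevanlinna-theoretic tool in this paper sees beyond linear representations. The strategy I would attempt is to run the full (non-linear) Shafarevich programme and bootstrap it against the hyperbolicity conclusions: on the fibres of any non-trivial reduction one still has a special variety, so one tries to propagate ``entire curves / chains of entire curves fill up the fibre'' from \cref{main2}, \cref{main:second} and the $h$-special machinery to contradict the reduction being non-trivial. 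I expect this to be exactly the step that does \emph{not} close with the present methods: it would require either a new input controlling non-residually-finite quotients of $\Gamma$, or the extra hypothesis that $\pi_1(X)$ is residually finite (under which Steps (i)--(ii) above do suffice). Absent that, the honest output of this plan is the sharp weaker statement actually established here --- virtually nilpotent images of linear representations, together with the counterexample \Cref{example} showing one cannot do better in the quasi-projective case --- rather than a proof of the conjecture as literally worded.
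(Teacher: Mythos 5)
There is a fundamental mismatch here: the statement you were asked about is \cref{conj:Campana}, which is a \emph{conjecture} of Campana, not a result of the paper, so there is no proof in the paper to compare your proposal against. Worse, the paper actually \emph{refutes} the statement as literally worded: the parenthetical ``(e.g.\ quasi-projective manifold)'' puts quasi-projective manifolds within its scope, and \Cref{example} constructs a quasi-projective surface that is both special and $h$-special whose fundamental group is linear and nilpotent but not virtually abelian. Consequently no correct proof of the displayed statement can exist, and the paper's response is precisely to replace it by the revised \cref{conj:revised} (virtually \emph{nilpotent}) and to prove only the linear version, \cref{main5}, resting on \cref{main:geomety group} and \cref{main2}. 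Your proposal is therefore not, and cannot be, a proof of the statement; to your credit, Step (iii) concedes this, and your closing sentence essentially lands on what the paper actually establishes.

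Beyond that structural point, two substantive gaps in the parts you do assert. In Step (i) the upgrade from ``virtually nilpotent'' to ``virtually abelian'' for linear quotients in the projective case is not carried by the sketch you give: the vanishing of the relevant cup/Massey-product obstruction on the Malcev completion is asserted to be ``forced by specialness'' without an argument, and the mechanism the paper (and Campana \cite{Cam04}) actually uses is different, namely that a Zariski-dense representation into a connected solvable group $G$ with nontrivial torus action on $G'/G''$ would contradict specialness via the $\pi_1$-exactness of the (quasi-)Albanese map and Deligne's unipotency of monodromy (\cref{thm:202210123}); in the reductive case abelianity comes from \cref{thm:VN}, not from rational homotopy theory. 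In Step (ii), the claim that a positive-dimensional linear Shafarevich-type reduction $\gamma(X)$ would carry a \emph{big} representation with \emph{semisimple Zariski closure} (so that \cref{main2} applies and contradicts specialness) is unjustified: for a general linear representation the Zariski closure may be solvable or mixed, and then \cref{main2} says nothing; the paper handles this by splitting off the reductive quotient $G/R(G)$ and treating the solvable radical separately. Finally, Step (iii) correctly identifies that nothing in the paper (or in your plan) controls quotients of $\pi_1(X)$ invisible to linear representations, so even for the projective case the conjecture about $\pi_1$ itself remains open and your plan does not close it.
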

 In \Cref{example}  below we give an example of a special (and $h$-special) quasi-projective manifold with \emph{nilpotent} but not virtually abelian fundamental group, which thus disproves \cref{conj:Campana} for non-compact quasi-projective manifolds. Therefore, we revise Campana's conjecture as follows. 
 \begin{conjecture}\label{conj:revised}
 	An $h$-special or   special smooth quasi-projective variety has \emph{virtually nilpotent} fundamental group.  
 \end{conjecture}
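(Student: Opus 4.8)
The plan is to split Conjecture~\ref{conj:revised} into a ``linear'' part and a ``non-linear'' part. The linear part is exactly what the present paper establishes: Theorem~\ref{main5} says that $\varrho(\pi_1(X))$ is virtually nilpotent for \emph{every} representation $\varrho:\pi_1(X)\to{\rm GL}_N(\bC)$, and Theorem~\ref{main:geomety group} treats Zariski-dense representations into connected solvable algebraic groups. In particular, for $X$ special or $h$-special, the Zariski closure of the image of any linear representation is nilpotent-by-finite, so (being reductive after semisimplification) its identity component is a torus; equivalently, every semisimple $\bC$-local system on $X$ becomes a direct sum of characters on a finite étale cover. So the only thing left to prove is the passage from ``all linear quotients of $\pi_1(X)$ are virtually nilpotent'' to ``$\pi_1(X)$ itself is virtually nilpotent''.

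For that step I would work with the pro-algebraic (Malcev, relative) completion of $\pi_1(X)$, using the mixed Hodge structure carried by the Malcev Lie algebra of the fundamental group of a quasi-projective variety. The previous paragraph forces all reductive quotients of this completion to be tori, so the completion is pro-(torus-by-unipotent)-by-finite; feeding the solvable quotients into Theorem~\ref{main:geomety group} further forces pro-nilpotence. One would then try to convert this into a genuine finiteness statement by combining it with the finite presentability of $\pi_1(X)$ and Campana's $\Gamma$-reduction machinery: if $\pi_1(X)$ were not virtually nilpotent one would want to manufacture, after a finite étale cover and a birational modification, an algebraic fiber space $X'\to Y$ onto a positive-dimensional base whose orbifold base is of log general type, contradicting specialness --- this is the non-linear analogue of Theorem~\ref{main2} and Corollary~\ref{main}. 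A clean target along the way would be Gromov's theorem: show that $\pi_1(X)$ has polynomial growth.

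The main obstacle --- and the reason this remains a conjecture --- is precisely this last conversion. All of the non-abelian Hodge and Nevanlinna input of this paper controls \emph{linear} (or residually linear) quotients; it delivers at best that $\pi_1(X)$ is residually torsion-free nilpotent, which is strictly weaker than virtual nilpotence (free groups already have the former). Closing the gap would require genuinely new structural control over the \emph{non-linear} quotients of fundamental groups of quasi-projective varieties --- essentially a form of the Shafarevich holomorphic-convexity conjecture in the quasi-projective setting, together with an analysis of the orbifold base of the resulting Stein factorisation --- and no such control is currently available; even in the projective case Campana's original abelianity conjecture is open for the same reason, only its linear-quotient version being known. So a complete proof would have to adjoin to the methods of this paper a substantial new geometric ingredient on non-residually-linear quotients, and that ingredient is what I expect to be the crux.
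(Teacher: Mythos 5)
The statement you were asked to prove is \cref{conj:revised}, which the paper itself states only as a \emph{conjecture} (a revision of Campana's abelianity conjecture \cref{conj:Campana}, prompted by the counterexample in \Cref{example}); the paper offers no proof of it, so there is no argument of the authors' to compare yours against. Your proposal is, correctly, not a proof either: it accurately identifies that what the paper actually establishes is only the linear-quotient version — \cref{main5} (every linear representation has virtually nilpotent image, and virtually abelian image in the reductive case) together with \cref{main:geomety group} for Zariski-dense solvable targets — and that the genuinely open step is the passage from ``all linear quotients of $\pi_1(X)$ are virtually nilpotent'' to ``$\pi_1(X)$ is virtually nilpotent.'' Your remark that the linear machinery delivers at best residual torsion-free nilpotence, which free groups already satisfy, is exactly the right way to see why this gap cannot be closed by the methods of the paper; the same obstruction is why even Campana's original conjecture is open in the projective case beyond linear quotients. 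In short: your assessment of the state of the problem is faithful to the paper, and the missing ingredient you name (structural control of non-residually-linear quotients, e.g.\ via a quasi-projective Shafarevich-type statement) is indeed the crux that the paper does not supply.
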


\subsection{Fundamental groups of algebraic varieties}
Throughout this paper, we will make use of the following well-known result on fundamental groups from \cite[Theorem 2.1]{Ara16}:
\begin{lem}\label{lem:fun}
	Let $\mu:\widetilde{X}\to X$  be a bimeromorphic proper morphism between irreducible complex normal analytic variety.  Then $\mu_*:\pi_1(\widetilde{X})\to\pi_1(X)$ is surjective, and it is an isomorphism if both $\widetilde{X}$ and $X$ are smooth. Moreover, for any proper closed analytic subset $A\subset X$, $\pi_1(X\backslash A)\to \pi_1(X)$ is surjective.  \qed
\end{lem}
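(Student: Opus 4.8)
The plan is to reduce the whole statement to its last (``moreover'') assertion, prove that assertion by a general-position argument, and then obtain the two claims about $\mu_*$ by a short diagram chase.

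\emph{The moreover assertion.} Say $X$ has pure complex dimension $n$, so a proper closed analytic subset $A\subsetneq X$ has real dimension at most $2n-2$; fix a base point $x_0\in X\setminus A$, which is legitimate since $X$ is normal and irreducible, hence $X\setminus A$ is connected. Normality is also what makes the argument work on the possibly singular space $X$: being normal, $X$ is locally irreducible, so $X\setminus A$ stays connected in arbitrarily small neighborhoods of any point of $A$, and cycles can genuinely be moved off $A$. Concretely, I would triangulate $X$ as a subanalytic set (\L ojasiewicz, Hironaka) with $A$ realized as a subcomplex of real dimension $\le 2n-2$, the triangulation of $X$ being pure of dimension $2n$; since $1+(2n-2)<2n$, a loop through $x_0$ put in general position with respect to this triangulation can be homotoped, rel $x_0$, off $A$. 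Hence $\pi_1(X\setminus A)\to\pi_1(X)$ is surjective. Moreover, if $X$ is smooth and $A$ has complex codimension $\ge 2$ (hence real dimension $\le 2n-4$), the same transversality applied to a bounding $2$-disk, using $2+(2n-4)<2n$, shows that this map is then an \emph{isomorphism}. (This is essentially \cite[Theorem 2.1]{Ara16}, and is in any case a classical fact.)

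\emph{Deducing the statements about $\mu_*$.} Let $A\subsetneq X$ be the fundamental locus of the bimeromorphic proper morphism $\mu$, i.e.\ the smallest closed analytic subset of $X$ outside of which $\mu$ restricts to a biholomorphism $\widetilde X\setminus\mu^{-1}(A)\xrightarrow{\ \sim\ }X\setminus A$; it is analytic (using normality of $X$, so that $\mu_*\mathcal O_{\widetilde X}=\mathcal O_X$, together with Remmert's proper mapping theorem for its image) and proper since $\mu$ is bimeromorphic. Consider the commutative square with objects $\pi_1(\widetilde X\setminus\mu^{-1}(A))$, $\pi_1(\widetilde X)$, $\pi_1(X\setminus A)$, $\pi_1(X)$, the horizontal maps induced by the open inclusions and the vertical maps induced by $\mu$. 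The left vertical map is an isomorphism because $\mu$ is a biholomorphism over $X\setminus A$, and the bottom horizontal map is surjective by the moreover assertion; hence the composite $\pi_1(\widetilde X\setminus\mu^{-1}(A))\to\pi_1(X)$ is surjective, which forces $\mu_*$ to be surjective. If in addition $\widetilde X$ and $X$ are smooth, then $A$ has complex codimension $\ge 2$ in $X$: the indeterminacy locus of the rational map $\mu^{-1}\colon X\dashrightarrow\widetilde X$ from a smooth variety to a proper one has codimension $\ge 2$, and $\mu$ has connected fibers since $X$ is normal, so $\mu$ is in fact an isomorphism over the complement of that codimension-$\ge2$ set. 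Consequently the bottom horizontal map is an isomorphism and the top one is surjective (moreover assertion applied to the proper closed analytic subset $\mu^{-1}(A)$ of the manifold $\widetilde X$); chasing the square then gives that the top map, hence $\mu_*$, is an isomorphism.

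The only step that is not formal is the general-position argument of the first part. Its correctness rests on two classical inputs: the triangulability of analytic spaces with a prescribed closed analytic subset as a subcomplex (so that low-dimensional cycles can be pushed off analytic subsets of complementary real dimension), together with the local irreducibility furnished by normality; and, for the smooth case, the fact that the fundamental locus of a proper bimeromorphic morphism onto a smooth variety has codimension $\ge 2$. I expect this first part to be the main point; granting it, the surjectivity of $\mu_*$ and its injectivity in the smooth case fall out of the commutative square above.
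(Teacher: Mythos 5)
The paper offers no proof of this lemma at all: it is stated with a \qed and attributed to \cite[Theorem 2.1]{Ara16}, so there is no argument of the authors' to compare yours against. Judged on its own, your proof is essentially correct and is the standard route: reduce everything to the ``moreover'' statement, and get both claims about $\mu_*$ from the commutative square built on the isomorphism locus. The diagram chase is right (surjectivity of the composite $\pi_1(\widetilde X\setminus\mu^{-1}(A))\to\pi_1(X)$ forces surjectivity of $\mu_*$; in the smooth case, bottom and left arrows isomorphisms plus top arrow surjective force the top arrow, hence $\mu_*$, to be an isomorphism), and your identification of the two nontrivial inputs (codimension $\ge 2$ of the non-isomorphism locus when $X$ is smooth, and the $\pi_1$-isomorphism for complements of codimension-$\ge 2$ subsets of manifolds) is accurate. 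Incidentally, the codimension-$\ge 2$ claim for $A$ holds already for normal $X$ by the dimension count $\dim A<\dim\mu^{-1}(A)\le n-1$; smoothness is only needed for the injectivity of $\pi_1(X\setminus A)\to\pi_1(X)$.

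The one step I would press you on is the general-position argument for the ``moreover'' assertion. On a triangulated space that is not a manifold, PL general position does not by itself let a $1$-cycle avoid a subcomplex of real codimension $2$: the dimension count $1+(2n-2)<2n$ is a transversality argument, and transversality is not available at singular points of $X$ (this is exactly why the statement fails for the nodal cubic, where the generating loop cannot be pushed off the node). You do name the correct rescue --- normality gives local irreducibility, hence $W\setminus A$ is connected and nonempty for small irreducible neighborhoods $W$ of any point of $A$ --- but you do not say how this feeds back into the homotopy. The clean way to finish is to drop the transversality language altogether: cover a given loop by finitely many contractible (cone) neighborhoods $W_i$ with $W_i\cap(X\setminus A)$ connected, replace each segment by a path in $W_i\setminus A$ with perturbed endpoints, and use contractibility of $W_i$ to see the new loop is homotopic to the old one in $X$. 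With that substitution your argument is complete; as written, the first part is a correct sketch of the classical fact rather than a full proof of it.
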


We also recall the following result. 
\begin{lem}[{see \cite[Proposition 1.3]{Cam91}, \cite[Proposition 2.10]{Kol95}}]\label{lem:finiteindex}
	Let \(f : X \to Y\) be a dominant morphism between   quasi-projective varieties, with \(Y\) normal. Then the image of \(f_{\ast} : \pi_{1}(X) \to \pi_{1}(Y)\) has finite index in \(\pi_{1}(Y)\). \qed
\end{lem}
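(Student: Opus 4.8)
The plan is to pass to a Zariski-dense open subset of $Y$ over which $f$ becomes a topological fibre bundle, extract the conclusion from the homotopy exact sequence of that bundle, and then transport it back to $Y$ using the surjectivity statement in \cref{lem:fun}.

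First I would note that, since $f$ is dominant and $Y$ is irreducible, $f(X)$ is a dense constructible subset of $Y$ and hence contains a Zariski-dense open subset of $Y$; combining this with the generic topological local triviality of morphisms of complex algebraic varieties (Verdier's theorem on Whitney stratifications, or Thom's first isotopy lemma applied to a Whitney stratification of $f$, both valid without properness of $f$), I may choose a Zariski-dense open $Y_0\subseteq f(X)$ such that $f\colon f^{-1}(Y_0)\to Y_0$ is a topologically locally trivial fibration. Since $X$ is irreducible, $f^{-1}(Y_0)$ is a dense open subset of $X$, hence connected, and $Y_0$ is connected. Fix $y_0\in Y_0$, put $F=f^{-1}(y_0)$, and pick $x_0\in F$. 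Because $F$ is a quasi-projective variety, $\pi_{0}(F)$ is a finite set, and because the total space $f^{-1}(Y_0)$ is connected, the monodromy action of $\pi_{1}(Y_0,y_0)$ on $\pi_{0}(F)$ is transitive.

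Next I would invoke the homotopy exact sequence of pointed sets of the fibration $F\hookrightarrow f^{-1}(Y_0)\to Y_0$, whose relevant portion reads $\pi_{1}(f^{-1}(Y_0),x_0)\to\pi_{1}(Y_0,y_0)\to\pi_{0}(F)$. Exactness at $\pi_{1}(Y_0,y_0)$ identifies $\mathrm{Im}\big[\pi_{1}(f^{-1}(Y_0),x_0)\to\pi_{1}(Y_0,y_0)\big]$ with the preimage of the base point under the monodromy map $\pi_{1}(Y_0,y_0)\to\pi_{0}(F)$, that is, with the stabiliser of the component of $x_0$; as the action is transitive on the finite set $\pi_{0}(F)$, this stabiliser has index $\#\pi_{0}(F)<\infty$ in $\pi_{1}(Y_0,y_0)$. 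On the other hand, applying the last assertion of \cref{lem:fun} to the normal variety $Y$ and the proper Zariski closed subset $Y\setminus Y_0$ shows that $\pi_{1}(Y_0,y_0)\to\pi_{1}(Y,y_0)$ is surjective. Now I would chase the commutative square
\[
	\begin{array}{ccc}
		\pi_{1}\big(f^{-1}(Y_0),x_0\big) & \longrightarrow & \pi_{1}(X,x_0)\\[4pt]
		\downarrow & & \downarrow\\[4pt]
		\pi_{1}(Y_0,y_0) & \longrightarrow & \pi_{1}(Y,y_0)
	\end{array}
\]
whose horizontal arrows are induced by the inclusions and whose vertical arrows are induced by $f$: going downward then rightward, the image of $\pi_{1}(f^{-1}(Y_0),x_0)$ in $\pi_{1}(Y,y_0)$ is the image of a finite-index subgroup under a surjection, hence of finite index; going rightward then downward, this same subgroup of $\pi_{1}(Y,y_0)$ is contained in $\mathrm{Im}\big[f_{\ast}\colon\pi_{1}(X)\to\pi_{1}(Y)\big]$. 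Therefore $\mathrm{Im}(f_{\ast})$ has finite index in $\pi_{1}(Y)$, as desired.

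I expect the only genuine difficulty to be the input of generic topological local triviality --- the existence of the open set $Y_0$ over which $f$ is a topological fibre bundle --- which rests on the theory of Whitney stratifications and Thom's isotopy lemmas for not necessarily proper morphisms; the remainder is a formal manipulation of the homotopy exact sequence together with \cref{lem:fun}. As an alternative route that sidesteps stratification theory, one could instead resolve the indeterminacy of $f$ and compactify so as to obtain a proper morphism, apply Ehresmann's fibration theorem over a suitable dense open subset, and then reconcile the various fundamental groups by means of the surjections furnished by \cref{lem:fun}; this is essentially the argument of \cite{Cam91,Kol95}.
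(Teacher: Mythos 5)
The paper supplies no proof of this lemma at all — it is quoted with a reference to \cite{Cam91,Kol95} — and your argument is exactly the standard one underlying those citations: generic topological local triviality of $f$ over a dense open $Y_0\subseteq f(X)$, finiteness of $\pi_0$ of the fibre together with transitivity of the monodromy action (so the stabiliser, which by the homotopy exact sequence is the image of $\pi_1(f^{-1}(Y_0))$, has finite index), and the surjectivity of $\pi_1(Y_0)\to\pi_1(Y)$ for normal $Y$ provided by \cref{lem:fun}. The argument is correct, and its only substantive input — Verdier's generic local triviality for possibly non-proper morphisms — is the same fact the paper itself invokes elsewhere (e.g.\ in \cref{lem:normal,lem:factor0}).
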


We now give the definition of a big representation of fundamental groups, also known as a generically large representation as introduced by Koll\'ar in \cite{Kol95}:
\begin{dfn}[Big representation]\label{def:big representation}
	Let $X$ be a quasi-projective normal variety.  Let $\varrho:\pi_1(X)\to G(K)$ be a representation, where $G$ is an algebraic group defined over some field $K$. We say that $\varrho$ is a \emph{big representation} if there are at most countably many Zariski  closed subvarieties $Z_i\subsetneqq X$ so that for every positive dimensional    closed subvariety $Y\subset X$ so that   $Y\not\subset \cup Z_i$, the image $\varrho\big({\rm Im}[\pi_1(Y^{\rm norm})\to \pi_1(X))] \big)$ is infinite.  The points in $X-\cup_{i}Z_i$ are called \emph{very general points} in $X$.
\end{dfn} 
\begin{rem}
	In a more recent work by the second and third authors \cite{DY23}, it has been established that, for a quasi-projective normal variety $X$ and a reductive representation $\varrho:\pi_1(X)\to {\rm GL}_N(\bC)$, if we consider any closed subvariety $Z$, the image $\varrho\big({\rm Im}[\pi_1(Z^{\rm norm})\to \pi_1(X))] \big)$ is infinite if and only if $\varrho\big({\rm Im}[\pi_1(Z)\to \pi_1(X))] \big)$. Consequently, when the representation is reductive, in \cref{def:big representation} we can define the big representation without taking the normalization of $Y$. 
\end{rem}

 \subsection{Notions of pseudo Picard hyperbolicity}
Let us first recall the definition of pseudo Picard hyperbolicity introduced in \cite{Denarxiv}.  
Let $f:\bD^*\to X$ be a holomorphic map from the punctured disk $\bD^*$ to a quasi-projective variety $X$.
If $f$ extends to  a holomorphic map $\mathbb D\to\overline{X}$ from the disk $\bD$ to some projective compactification $\overline{X}$ of $X$, then the same holds for any projective compactification $\overline{X}'$ of $X$, as $\overline{X}$ and $\overline{X}'$ are birational.

\begin{dfn}[pseudo Picard hyperbolicity]\label{def:Picard}
	Let $X$ be a  smooth quasi-projective variety, and let $\overline{X}$ be a smooth projective compactification. 
 $X$ is called \emph{pseudo-Picard hyperbolic} if there is a   Zariski closed proper subset $Z\subsetneq X$ so that any holomorphic map  $f:\bD^*\to X$ with $f(\bD^*)\not\subset X$ extends to a holomorphic map $\bar{f}:\bD\to \overline{X}$. 
	If $Z=\varnothing$, $X$ is simply called \emph{Picard hyperbolic}.
\end{dfn}
As shown in \cite[Proposition 1.7]{CD21}, 
pseudo Picard hyperbolic varieties exhibit the following algebraic properties. 

\begin{proposition} \label{extension theorem} 
	Let $X$ be a smooth quasi-projective variety that is pseudo Picard hyperbolic. Then any meromorphic map $f:Y\dashrightarrow X$ from another smooth quasi-projective variety $Y$ to $X$ with $f(Y)\not\subset \mathrm{Sp_p}(X)$ is \emph{rational}. 
\end{proposition}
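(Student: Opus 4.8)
The plan is to reduce the statement, via pseudo Picard hyperbolicity applied along one-parameter slices transverse to the boundary, to a classical extension theorem for meromorphic maps into compact Kähler manifolds. First I would make two harmless reductions. Rationality of $f$ is unchanged under composing with a proper birational modification of the source, so after resolving the indeterminacy of $f$ I may assume $f:Y\to X$ is a morphism. Next set $Z:=\Spp(X)$; since $X$ is pseudo Picard hyperbolic, $Z$ is a proper Zariski closed subset of $X$ and $Z$ itself serves as the exceptional locus: any $g:\bD^*\to X$ with an essential singularity satisfies $g(\bD^*)\subseteq Z$ by definition of $\Spp$, hence any $g$ with $g(\bD^*)\not\subseteq Z$ extends holomorphically to $\bD\to\overline X$. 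As $f(Y)\not\subseteq Z$ and $f$ is a morphism, $Y_0:=Y\setminus f^{-1}(Z)$ is a dense Zariski open subset of $Y$, and $f$ is rational if and only if $f|_{Y_0}$ is; so I may replace $Y$ by $Y_0$ and assume $f(Y)\cap Z=\varnothing$. This reduction is exactly what makes every transverse slice admissible below, and explains why the hypothesis $f(Y)\not\subseteq\Spp(X)$ is the natural one.

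Next I would fix a smooth projective compactification $X\subset\overline X$ and a smooth projective compactification $\overline Y$ of $Y$ with simple normal crossing boundary $D=\sum_iD_i$, and aim to extend $f$ to a meromorphic map $\overline Y\dashrightarrow\overline X$; granting this, Chow's theorem applied to the closure of its graph in the projective variety $\overline Y\times\overline X$ shows the extension is rational, and hence so is $f$, the two agreeing on the dense subset $Y$. By Siu's theorem on extension of meromorphic maps into compact Kähler manifolds across analytic subsets of codimension at least two, it suffices to produce a meromorphic extension in a Euclidean neighbourhood of a general point $p$ of each component $D_i$. There I would pick local coordinates $(z_1,\dots,z_n)$ at $p$ with $D=\{z_1=0\}$, so that the neighbourhood minus $D$ is $\bD^*\times\bD^{n-1}$, and for $w\in\bD^{n-1}$ consider the slice $\phi_w:\bD^*\to X$, $\phi_w(t)=f(t,w)$. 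Since $\phi_w(\bD^*)\subseteq f(Y)$ is disjoint from $Z$, in particular $\phi_w(\bD^*)\not\subseteq Z$, pseudo Picard hyperbolicity yields holomorphic extensions $\bar\phi_w:\bD\to\overline X$ for \emph{every} $w$.

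To promote these slice-wise extensions to a joint meromorphic extension near $p$, I would form the closure $\overline\Gamma$ in $\bD^n\times\overline X$ of the graph $\Gamma=\{(t,w,f(t,w)):(t,w)\in\bD^*\times\bD^{n-1}\}$, a closed set which coincides off $\{z_1=0\}$ with the pure $n$-dimensional analytic set $\Gamma$. By a finite-volume analyticity criterion of Bishop/Shiffman type, $\overline\Gamma$ will be an analytic subset of $\bD^n\times\overline X$ — and hence the graph of a meromorphic map extending $f$ — once one knows it has locally finite $2n$-dimensional volume near $\{z_1=0\}\times\overline X$ (this in particular forces $\overline\Gamma$ to meet that divisor in a set of dimension $<n$, i.e.\ the cluster sets of $f$ along $D$ to be small). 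This volume control reduces to a uniform area bound $\sup_{|w|\le 1/2}\int_{\bD}\bar\phi_w^{\,*}\omega<\infty$ for a fixed Kähler form $\omega$ on $\overline X$, which I would obtain by first bounding the area of $\phi_w$ over a fixed annulus $\{\tfrac12\le|t|\le\tfrac34\}$ uniformly in $w$ (because $f$ is holomorphic on the compact set $\{\tfrac12\le|t|\le\tfrac34\}\times\{|w|\le\tfrac12\}$) and then using a submean-value / concavity estimate for the Nevanlinna characteristic of $\bar\phi_w:\bD\to\overline X$ to control its area over $\{|t|\le\tfrac12\}$ in terms of its characteristic at radius $\tfrac34$.

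The step I expect to be the main obstacle is precisely this last one: passing from the slice-wise extensions to a genuine several-variable meromorphic extension, i.e.\ establishing the uniform area estimate for the family $\{\bar\phi_w\}$ so that the finite-volume analyticity criterion applies. The remaining ingredients — the two reductions, Siu's theorem to pass from general boundary points to all of $D$, and the Chow/GAGA conclusion — are essentially formal, the one genuinely conceptual point outside that step being the observation (used in the first paragraph) that after shrinking $Y$ every transverse slice automatically avoids $\Spp(X)$ and therefore extends.
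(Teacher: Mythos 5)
Your overall architecture is exactly the paper's: compactify, use Siu's codimension-two extension theorem to reduce to a neighbourhood of a general (smooth) point of the boundary divisor, slice the resulting $\bD^*\times\bD^{n-1}$ by discs $\bD^*\times\{w\}$, use pseudo Picard hyperbolicity to extend each admissible slice to $\bD\to\overline X$, and finish with Chow. The gap is in the one step you flag as the main obstacle, and it is a real gap: the uniform area estimate you propose is false. There is no ``submean-value / concavity'' inequality bounding $\int_{\bD_{1/2}}\bar\phi_w^{\,*}\omega$ by $\int_{\{1/2\le|t|\le3/4\}}\bar\phi_w^{\,*}\omega$ for holomorphic maps $\bar\phi_w:\bD\to\overline X$. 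Convexity of the Nevanlinna characteristic $T(r)$ in $\log r$ only yields monotonicity of the area function $A(r)$, not a reverse bound of $A(1/2)$ by $A(3/4)-A(1/2)$. Concretely, take $\overline X=\bP^1$ and $\phi(t)=[1:Rt^n]$ with $R=10^n$: then $A(r)=\pi n\,R^2r^{2n}/(1+R^2r^{2n})$, so $A(1/2)\sim \pi n\to\infty$ while $A(3/4)-A(1/2)\to 0$. Hence a family of extensions can have uniformly small area on the outer annulus (where $f$ is controlled by compactness) and unbounded area on the inner disc, and your finite-volume/Bishop criterion for the closure of the graph cannot be fed by this estimate.

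The correct way to pass from slice-wise extendability to a genuine meromorphic extension of $f|_{\bD^*\times\bD^{n-1}}$ to $\bD^n\dashrightarrow\overline X$ is to invoke Siu's theorem \cite[p.~442, $(\ast)$]{Siu75} directly: a meromorphic map into a compact K\"ahler manifold which extends holomorphically along the slices $\bD\times\{w\}$ for all $w$ in a dense subset of $\bD^{n-1}$ extends meromorphically across $\{0\}\times\bD^{n-1}$. This is precisely what the paper does; the volume control in Siu's proof is obtained by a genuinely different (and nontrivial) current-theoretic argument, not by comparing the area of a disc with that of an outer annulus. Everything else in your proposal (the two preliminary reductions, the use of $\Spp(X)$ as exceptional set so that every slice after shrinking $Y$ is admissible, the codimension-two step, and the Chow/GAGA conclusion) is sound and matches the paper, modulo the minor point that the paper does not resolve indeterminacy but simply restricts to slices not contained in the indeterminacy locus, which is codimension at least two.
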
 
Although we have stated this proposition only for dominant meromorphic maps $f:Y\dashrightarrow X$ in \cite[Proposition 1.7]{CD21}, the same proof works for the proof of \cref{extension theorem}.  We provide it here for completeness. 
\begin{proof}[Proof of \cref{extension theorem}]
Let $\overline{Y}$ be a smooth projective compactification of $Y$ such that $D:=\overline{Y}\backslash Y$ is a simple normal crossing divisor.   Let $\overline{X}$ be a smooth projective compactification of $X$.  Note that  $f$ is rational if and only if $f$ extends to a meromorphic map $\bar{f}:\overline{Y}\dashrightarrow \overline{X}$. It suffices to check that this property holds in a neighborhood of any point of \(D\).  By \cite[Theorem 1]{Siu75}, any meromorphic map from a Zariski open set $W^\circ$ of a complex manifold $W$ to a compact K\"ahler manifold $\overline{X}$ extends to a meromorphic map from $W$ to $\overline{X}$ provided that the codimension of $W-W^\circ$ is at least 2.  It then suffices to consider the extensibility of $f$ around smooth points on $D$. Pick any such point $p\in D$ and choose a coordinate system $(\Omega;z_1,\ldots,z_n)$ centered at $p$ such that $\Omega\cap D=(z_1=0)$. The theorem follows if we can prove that $f:\bD^*\times \bD^{n-1}\dashrightarrow X$ extends to a meromorphic map $\bD^{n}\dashrightarrow \overline{X}$. 

  Denote by $S$  the indeterminacy locus 
 of $f|_{\bD^*\times \bD^{n-1}}:\bD^*\times \bD^{n-1}\dashrightarrow X$, which is a closed subvariety of $\bD^*\times \bD^{n-1}$  of codimension at least two.  Since we assume that $f(Y)\not\subset\Sp_p(X)$, there is thus a dense open set $W\subset \bD^{n-1}$ such that for any $z\in W$, each slice $\bD^*\times \{z\}\not\subset S$ and $f(\bD^*\times \{z\}-S)\not\subset \Sp_p(X)$. Then the restriction   $f|_{\bD^*\times \{z\}}:\bD^*\times \{z\}\dashrightarrow X$ is well-defined and  holomorphic. Then  $f:\bD^*\times \{z\}\to X$   extends  to  a holomorphic map $\bD\times   \{z\}\to \overline{X}$ for each $z\in W$.   We then apply the theorem of Siu in \cite[p.442,  ($\ast$)]{Siu75} to conclude that $f|_{\bD^*\times \bD^{n-1}}$ extends to a meromorphic map  $\bD^{n}\dashrightarrow \overline{X}$. This implies that  $f$ extends to a meromorphic map $\bar{f}:\overline{Y}\dashrightarrow \overline{X}$. By the Chow theorem, $f$ is rational.   
\end{proof}

A direct consequence of \cref{extension theorem}  is the following uniquness of algebraic structure of pseudo Picard hyperbolic varieties. 

\begin{cor}
Let $X$ and $Y$ be smooth quasi-projective varieties such that there exists an analytic isomorphism $\varphi:Y^{\rm an}\to X^{\rm an}$ of associated complex spaces.
Assume that $X$ is pseudo Picard hyperbolic. 
Then $\varphi$ is an algebraic isomorphism. \qed
\end{cor}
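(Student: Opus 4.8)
The plan is to derive the corollary from \cref{extension theorem} together with two standard facts: a rational map between quasi-projective varieties which is everywhere defined as an analytic map is already a morphism, and a bijective morphism onto a smooth variety in characteristic zero is an isomorphism. First I would fix smooth projective compactifications $\overline{Y}\supseteq Y$ and $\overline{X}\supseteq X$. Since $X$ is pseudo Picard hyperbolic, $\Spp(X)\subsetneq X$ by \cref{def:Picard}, so the holomorphic map $\varphi\colon Y^{\rm an}\to X^{\rm an}$, regarded as a meromorphic map $Y\dashrightarrow X$ of quasi-projective varieties, satisfies $\varphi(Y)=X\not\subseteq\Spp(X)$; then \cref{extension theorem} shows that $\varphi$ is rational, i.e.\ it restricts to a rational map of algebraic varieties $Y\dashrightarrow X$ (equivalently, extends to a rational map $\overline{Y}\dashrightarrow\overline{X}$).

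Second --- the step I expect to need the most care --- I would promote this rational map to an honest morphism by a graph-closure argument. Let $U\subseteq Y$ be the Zariski-dense domain of definition of $\varphi$, let $\Gamma\subseteq\overline{Y}\times\overline{X}$ be the closure of the graph of $\varphi|_{U}$ (an irreducible variety, being the closure of a graph over the irreducible $U$), and let $p\colon\Gamma\to\overline{Y}$ and $q\colon\Gamma\to\overline{X}$ be the projections, so $p$ is proper and birational. The assignment $y\mapsto(y,\varphi(y))$ is a holomorphic section of $p^{\rm an}$ over $Y^{\rm an}$; it agrees with $p^{-1}$ over the dense open $U$, hence by continuity and closedness of $\Gamma^{\rm an}$ it factors through $\Gamma^{\rm an}$, yielding a holomorphic section $s\colon Y^{\rm an}\to p^{-1}(Y^{\rm an})$. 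As the section of a separated morphism, $s$ is a closed embedding, so $s(Y^{\rm an})$ is a closed analytic subvariety of $p^{-1}(Y^{\rm an})$ of dimension $\dim Y=\dim\Gamma$ containing the dense open subset $p^{-1}(U)^{\rm an}$; since $p^{-1}(Y^{\rm an})$ is irreducible, $s(Y^{\rm an})=p^{-1}(Y^{\rm an})$, and $p^{\rm an}$ is a biholomorphism over $Y^{\rm an}$ with inverse $s$. Consequently the proper morphism $p$ restricts to an isomorphism $p^{-1}(Y)\xrightarrow{\ \sim\ }Y$ of varieties, so $\varphi$ has empty indeterminacy locus in $Y$ and $\varphi=q\circ p^{-1}\colon Y\to\overline{X}$ is a morphism; its set-theoretic image is $\varphi(Y^{\rm an})=X^{\rm an}$, so (as $Y$ is reduced) it factors through a morphism $\varphi\colon Y\to X$ whose analytification is the given isomorphism.

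Finally, $\varphi\colon Y\to X$ is a morphism of smooth quasi-projective $\mathbb{C}$-varieties that is bijective on closed points, with $\dim Y=\dim X$ because $Y^{\rm an}\cong X^{\rm an}$; being dominant and injective on closed points it is generically injective, hence birational (we are in characteristic zero), and then Zariski's main theorem together with the normality of $X$ forces $\varphi$ to be an open immersion, hence --- being surjective --- an isomorphism of algebraic varieties. The only genuinely delicate point is the second step, namely ruling out indeterminacy of an a priori merely rational map using that it is everywhere holomorphic; everything else is formal.
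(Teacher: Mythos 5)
Your proof is correct and follows the route the paper intends: the corollary is stated as a direct consequence of \cref{extension theorem}, and you apply that proposition to $\varphi$ (whose image $X$ is not contained in $\Spp(X)$ by pseudo Picard hyperbolicity) to get rationality, then upgrade to a morphism and finally to an isomorphism. The two details you supply — eliminating indeterminacy via the graph closure and the holomorphic section, and invoking Zariski's main theorem for a bijective birational morphism onto the normal variety $X$ — are exactly the standard facts the paper leaves implicit, and your treatment of them is sound.
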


A classical result due to Borel \cite{Bor72} and Kobayashi-Ochiai \cite{KO71} is that quotients of bounded symmetric domains by torsion-free lattices are Picard hyperbolic. The second author has proved a similar result for algebraic varieties that admit a complex variation of Hodge structures.
\begin{thm}[\protecting{\cite[Theorem A]{Denarxiv}}]\label{thm:PicardVHS}
	Let $X$ be a quasi-projective manifold. Assume that there is a complex variation of  Hodge structures on $X$ whose period mapping is injective at one  point. Then $X$ is pseudo Picard hyperbolic. \qed
\end{thm}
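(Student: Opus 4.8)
\textbf{Proof strategy for \cref{thm:PicardVHS}.}
The plan is to reduce the statement to the classical Borel extension theorem for period domains. First I would recall the setup: a complex variation of Hodge structures on $X$ gives a period mapping $p\colon X \to \Gamma\backslash \mathbf{D}$, where $\mathbf{D}$ is the period domain (a classifying space for Hodge structures) and $\Gamma\subset \mathrm{Aut}(\mathbf{D})$ is the monodromy group, which need not be discrete in general; but after replacing $\Gamma$ by its Zariski closure and using the fact that the image of monodromy is arithmetic when the VHS comes from geometry—or more robustly, by invoking that the relevant extension statement holds for the quasi-unipotent local monodromies—one gets a Hodge-theoretic analogue of a locally symmetric situation. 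I would fix a smooth projective compactification $\overline{X}$ with simple normal crossing boundary $D = \overline{X}\setminus X$.

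The key input is the behaviour near the boundary. By Borel's theorem (as extended by Schmid's nilpotent orbit theorem to the VHS setting), for any holomorphic map $f\colon \bD^* \to X$, the composition $p\circ f \colon \bD^* \to \Gamma\backslash\mathbf{D}$ has the property that the local monodromy around $0$ is quasi-unipotent, and $p\circ f$ extends across the puncture in a suitable partial compactification—this is exactly the statement that periods have at worst logarithmic growth and the associated limiting mixed Hodge structure exists. The point $Z\subsetneq X$ will be the locus where the period map fails to be an immersion: I would take $Z$ to be the union of the (countably many, hence contained in a proper Zariski closed subset after taking closure, or genuinely Zariski closed by a semicontinuity / generic smoothness argument on $dp$) components along which $dp$ drops rank. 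Away from $Z$, the period map is, infinitesimally, an embedding, so $f$ with $f(\bD^*)\not\subset Z$ composed with $p$ is "non-degenerate," and the extension of $p\circ f$ combined with injectivity of $p$ at a point—propagated to a Zariski-dense open set by rigidity of VHS—forces $f$ itself to extend to $\bar f\colon \bD \to \overline{X}$.

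The main obstacle I expect is twofold. First, injectivity of the period map at a single point is a pointwise hypothesis, and one must upgrade it to control on a Zariski-dense open subset: here I would use that the locus where $dp$ is injective is Zariski open (by the algebraicity of the Hodge bundles and the Griffiths transversality constraint, $dp$ is an algebraic section of a bundle map $T_X \to p^*T_{\Gamma\backslash\mathbf{D}}$), and nonempty by hypothesis, so its complement is a proper Zariski closed $Z$. Second, and more delicate, is passing from "the period map extends over the puncture" to "the original map $f$ into $X$ extends into $\overline X$": the period domain $\Gamma\backslash\mathbf{D}$ is not projective, so one cannot directly conclude. The resolution is to work with the Griffiths–Schmid / Kato–Usui type partial compactification, or more elementarily to observe that if $f$ did not extend then $f(\bD^*)$ would accumulate on $D$, and by the logarithmic estimates on the period map (Schmid) combined with the injectivity of $p$ on $X\setminus Z$, the curve $p\circ f$ would be forced to leave every compact subset of $\Gamma\backslash\mathbf{D}$ in a way incompatible with the nilpotent orbit theorem unless the monodromy of $f$ around $0$ is trivial, in which case $f$ extends holomorphically into $X$ itself, hence a fortiori into $\overline X$. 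Making this last dichotomy precise—trivial versus nontrivial local monodromy of the pulled-back VHS—and checking it gives the extension uniformly in $f$ (with the single exceptional locus $Z$) is where the real work lies; this is precisely the content of \cite[Theorem A]{Denarxiv}, whose proof I would follow.
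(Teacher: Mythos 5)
First, note that \cref{thm:PicardVHS} is not proved in this paper at all: it is imported verbatim from \cite[Theorem A]{Denarxiv}, and the proof there proceeds along a completely different route from yours. Deng's argument is Nevanlinna-theoretic and differential-geometric: one establishes a criterion for the big Picard theorem in terms of a Finsler pseudometric on the logarithmic tangent bundle satisfying a negative curvature inequality along leaves (proved via the lemma on logarithmic derivative on $\bD^*$), and then constructs such a metric from the Hodge metrics on the graded pieces of the associated Higgs bundle, using Griffiths' curvature formula together with the Schmid--Mochizuki norm estimates near the boundary. No extension theorem for the period map itself is invoked.

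Your approach, by contrast, has a genuine gap precisely at the step you flag as ``where the real work lies,'' and the dichotomy you propose does not close it. The fundamental obstruction is that for a general ($\bC$-)VHS the target $\Gamma\backslash\mathbf{D}$ of the period map is not an algebraic variety and admits no Baily--Borel--type algebraic compactification; the Borel--Kobayashi--Ochiai mechanism (extend $p\circ f$ into a projective compactification of the target, then descend to $\overline{X}$) is therefore unavailable. Schmid's nilpotent orbit theorem controls the asymptotics of $p\circ f$ but produces at best a limit in a non-algebraic, non-Hausdorff-compactified object, and even granting an extension of $p\circ f$ across the puncture, nothing in the hypothesis lets you pull this back to an extension of $f$: ``injective at one point'' only gives generic immersivity/quasi-finiteness of $p$ on $X\setminus Z$, not properness of $p$ over a neighbourhood of the limit value, so $f(t)$ could still accumulate on $D=\overline{X}\setminus X$ while $p(f(t))$ converges. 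Your trivial-versus-nontrivial monodromy dichotomy addresses the lifting of $p\circ f$ to $\mathbf{D}$, not this descent problem. (A secondary issue: for a $\bC$-VHS the monodromy need not be discrete and local monodromies need not be quasi-unipotent, so even the invocation of Borel's monodromy theorem and the nilpotent orbit theorem requires the tame/pure-imaginary harmonic bundle formalism rather than the classical statements.) To make an extension-of-period-map strategy work one needs substantially more input — e.g.\ o-minimal definability of period maps à la Bakker--Brunebarbe--Tsimerman — which is a different and much heavier toolkit than what you sketch.
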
 

We conclude this subsection by recalling from \cref{subsec:20230427} the strong Green-Griffiths and Lang conjectures, which are fundamental problems in the study of hyperbolicity of algebraic varieties (cf. \cite[I, 3.5]{Lan97} and \cite[VIII, Conj. 1.3]{Lan97}).
Taking into account \Cref{main:GGL,main:second}, we  include the pseudo Picard hyperbolicity into the statement as well, and formulate the \emph{generalized  Green-Griffiths-Lang conjecture} as follows.  
\begin{conjecture}\label{conj:GGL}
Let $X$ be a smooth quasi-projective variety.  Then the following properties are equivalent:
	\begin{thmlist} 
		\item  $X$ is of log general type;
	\item  $\Spp(X)\subsetneqq X$; 
	\item  $\Sph(X)\subsetneqq X$; 
	\item  
	$\Spab(X)\subsetneqq X$; 
	\item  
	$\Spalg(X)\subsetneqq X$.
	\end{thmlist}
\end{conjecture}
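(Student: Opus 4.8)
\textbf{Proof proposal for \cref{conj:GGL}.}

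This is a conjecture, so there is no proof to supply; instead the plan is to describe how the results of the paper bear on it and what would be needed for a complete resolution. The plan is to reduce the equivalence of the five properties (a)--(e) to two genuinely hard implications: that (a) implies (b), i.e. log general type forces pseudo Picard hyperbolicity, and that (e) (equivalently, any of the hyperbolicity statements) implies (a). The remaining implications among (b)--(e) are essentially formal consequences of the inclusions among the special subsets together with the obvious fact that a variety of log general type has only log-general-type subvarieties generically, so the core of the conjecture lives in these two directions.

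First I would fix a smooth projective compactification $(\overline X, D)$ and recall the chain of inclusions $\Spab(X) \subseteq \Sph(X)$ (a semi-abelian variety is covered by images of $\mathbb C$, and one approximates rational maps from $A$ on the codimension-$\geq 2$ complement) and $\Sph(X) \subseteq \Spp(X)$ (an entire curve $f:\mathbb C \to X$ that is Zariski dense in its image $V$ restricts, after reparametrization near $\infty$, to a map $\bD^* \to X$ with essential singularity unless $V$ is already compactifiable in a trivial way; see \cref{lem:inclusion}). One also has $\Spalg(X) \supseteq \Sph(X), \Spab(X)$ since images of $\mathbb C$ and of non-constant maps from semi-abelian varieties are never of log general type by \cref{prop:Koddimabb}. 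Thus $\Spp(X) \subsetneqq X \Rightarrow \Sph(X) \subsetneqq X \Rightarrow \Spab(X)\subsetneqq X$ and $\Spalg(X)\subsetneqq X \Rightarrow \Sph(X), \Spab(X)\subsetneqq X$. What is missing for the purely formal part is the reverse comparison $\Spalg(X) \subseteq \Spp(X)$: a positive-dimensional subvariety $V$ not of log general type need not a priori be covered by entire curves or disks with essential singularity, and conversely one needs $\Spp(X)\subsetneqq X \Rightarrow \Spalg(X)\subsetneqq X$. This is exactly the content of the generalized conjecture and cannot be bypassed formally.

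Next I would address (a) $\Rightarrow$ (b). The natural approach mirrors \cref{main:second}: if $X$ admits a dominant map $a : X \to A$ to a semi-abelian variety with $\dim a(X) = \dim X$, then Nevanlinna theory (the logarithmic derivative lemma and the second main theorem for semi-abelian varieties, used throughout \cref{sec:GGL}) yields that $X$ is pseudo Picard hyperbolic as soon as it is of log general type. In general, one would want to run an Iitaka-type fibration argument: the log canonical map of a log-general-type variety is generically finite onto its image, and one would hope to propagate pseudo Picard hyperbolicity along such fibrations. The hard part is that, without the semi-abelian hypothesis (or without a big linear representation of $\pi_1$, as in \cref{main:GGL}), there is no known mechanism to produce enough jet differentials or metric positivity to control maps $\bD^* \to X$; this is precisely why the paper only proves the conjecture conditionally (under a big reductive representation, \cref{main:GGL}, or under maximal quasi-Albanese dimension, \cref{main:second}). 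So the main obstacle is the implication (a) $\Rightarrow$ (b) in full generality, which subsumes the still-open strong Green--Griffiths conjecture (a) $\Rightarrow$ (c); any complete proof would necessarily resolve that long-standing problem. Finally, for (c) (or (e)) $\Rightarrow$ (a), the expected route is through Campana's core map / orbifold theory: a variety not of log general type should fiber non-trivially over a lower-dimensional orbifold base in a way that forces the special subsets to be all of $X$, but making this precise in the quasi-projective orbifold setting is itself delicate and is the reason the converse direction remains conjectural outside the cases treated in \cref{main:GGL,main:second,thm:20230510}.
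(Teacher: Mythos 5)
You are right that this statement is a conjecture and that the paper offers no unconditional proof of it: the equivalence is established only under extra hypotheses (\cref{thm:GGL} when $\pi_1(X)$ carries a big reductive representation, \cref{cor:20221102} when $X$ has maximal quasi-Albanese dimension), and your identification of (a)$\Rightarrow$(b) together with the comparison between $\Spalg$ and the analytic special sets as the genuinely open content matches how the paper frames the problem.

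However, your sorting of the implications into ``formal'' and ``hard'' is inverted in two places. First, the inclusion $\Sph(X)\subseteq\Spalg(X)$ does not follow from \cref{prop:Koddimabb}: that proposition concerns subvarieties of semi-abelian varieties and says nothing about the Zariski closure of an entire curve in an arbitrary $X$; the assertion that such a closure is never of log general type is precisely the strong Green--Griffiths conjecture, i.e.\ one of the open implications, not a consequence of the definitions. The paper's \cref{lem:inclusion} gives only $\Spab\subseteq\Sph\subseteq\Spp$, and even the step $\Spab(X)\subsetneqq X\Rightarrow\Spalg(X)\subsetneqq X$ is carried out in \cref{cor:20221102} only by invoking the quasi-Albanese hypothesis (via \cref{lem:20230509} and Fujino's addition theorem). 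Second, the ``obvious fact that a variety of log general type has only log-general-type subvarieties generically'' is exactly Lang's conjecture (a)$\Rightarrow$(e) and is among the deepest open implications; a log general type variety can certainly contain subvarieties that are not of log general type, and the whole issue is whether they sit in a proper Zariski closed subset. Conversely, the implication (e)$\Rightarrow$(a), which you describe as genuinely hard and requiring Campana's core map, is trivial: if $X$ is positive-dimensional and not of log general type, then $X$ itself occurs in the union defining $\Spalg(X)$, so $\Spalg(X)=X$; the paper dispenses with this direction in one line. With these corrections, your overall conclusion stands: the conjecture reduces to (a)$\Rightarrow$(b) (or (c)) plus the comparison of $\Spalg$ with the analytic special sets, and no unconditional argument for either is available.
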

Note that if $X$ is of log general type, then the conjugate variety $X^\sigma:=X\times_\sigma\bC$ under  $\sigma\in {\rm Aut}(\bC/\bQ)$ is also of log general type.   Therefore, by \cref{conj:GGL}, if $X$ is pseudo Brody (Picard) hyperbolic, it is conjectured that $X^\sigma$ is also pseudo Brody (Picard) hyperbolic (cf. \cite[p. 179]{Lan97}). 
This problem remains quite open and is currently an active area of research.

 \section{Some factorisation results}\label{sec:fac}
 Throughout this paper an \emph{algebraic fiber space} $f:X\to Y$ is a dominant (not necessarily proper) morphism $f$ between quasi-projective normal varieties $X$ and $Y$ such that   general fibers of $f$ are connected.  
\begin{lem}[Quasi-Stein factorisation]\label{lem:Stein}
	Let $f:X\to Y$ be a morphism between quasi-projective manifolds. Then $f$ factors through   morphisms $\alpha:X\to S$ and $\beta:S\to Y$ such that
	\begin{enumerate}[label={\rm (\alph*)}]
		\item $S$ is a quasi-projective normal variety;
		\item   $\alpha$ is an algebraic fiber space;
		\item $\beta$ is a finite morphism.
	\end{enumerate}
Such a factorisation is unique.
\end{lem}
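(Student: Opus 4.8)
The plan is to realize the quasi-Stein factorisation as the algebraic analogue of the classical Stein factorisation, but at the level of function fields rather than via proper pushforward (which is unavailable since $f$ need not be proper). First I would compactify: choose smooth projective compactifications $\overline{X}$ of $X$ and $\overline{Y}$ of $Y$ such that $f$ extends to a rational map, and after blowing up $\overline{X}$ further, to a morphism $\overline{f}:\overline{X}\to\overline{Y}$ with $\overline{X}\setminus X$ and $\overline{Y}\setminus Y$ simple normal crossing divisors. Let $\overline{f}=\overline{\beta}\circ\overline{\alpha}$ be the usual Stein factorisation of the proper morphism $\overline{f}$, so $\overline{\alpha}:\overline{X}\to\overline{S}$ has connected fibers, $\overline{S}$ is normal, and $\overline{\beta}:\overline{S}\to\overline{Y}$ is finite. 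Concretely $\overline{S}=\operatorname{Spec}_{\overline{Y}}\overline{f}_*\cO_{\overline{X}}$, i.e.\ $\overline{\beta}$ is the normalization of $\overline{Y}$ in the algebraic closure of $\bC(\overline{Y})=\bC(Y)$ inside $\bC(\overline{X})=\bC(X)$.

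Next I would cut down to the quasi-projective setting. Set $S:=\overline{\alpha}(X)\subset\overline{S}$, or more carefully, let $S$ be the largest Zariski open subset of $\overline{S}$ over which $\overline{\alpha}$ restricts to a map whose restriction to $X$ lands in $S$; concretely one can take $S:=\overline{S}\setminus\overline{\alpha}(\overline{X}\setminus X)$ intersected with the locus where things stay well-behaved, and then define $\alpha:=\overline{\alpha}|_X:X\to S$ and $\beta:=\overline{\beta}|_S:S\to\overline{Y}$. One checks $\beta(S)\subseteq Y$, so $\beta:S\to Y$; since $\overline{\beta}$ is finite, $\beta$ is finite, giving (c). Since $\overline{S}$ is normal and $S$ is open in it, $S$ is a quasi-projective normal variety, giving (a). For (b), the general fibers of $\overline{\alpha}$ are connected, and a general fiber of $\alpha$ is a nonempty Zariski open subset of such a fiber, hence still connected (an open subvariety of an irreducible variety is irreducible); and $\alpha$ is dominant onto $S$ by construction, so $\alpha$ is an algebraic fiber space. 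The identity $f=\beta\circ\alpha$ on $X$ is inherited from $\overline{f}=\overline{\beta}\circ\overline{\alpha}$.

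For uniqueness, suppose $f=\beta'\circ\alpha'$ is another such factorisation through $S'$. Then on function fields $\bC(Y)\hookrightarrow\bC(S')\hookrightarrow\bC(X)$, and $\beta'$ finite forces $\bC(S')/\bC(Y)$ algebraic, while $\alpha'$ an algebraic fiber space forces $\bC(S')$ to be \emph{algebraically closed} in $\bC(X)$ (a general fiber of $\alpha'$ being connected and hence $\alpha'_*\cO$ having no extra global sections generically). Hence $\bC(S')$ equals the algebraic closure of $\bC(Y)$ in $\bC(X)$, which is $\bC(S)$; so $\alpha'$ and $\alpha$ define the same rational map to the normalization, and normality of $S,S'$ plus the fact that $\alpha,\alpha'$ are morphisms on all of $X$ pins down a unique isomorphism $S\cong S'$ compatible with $\alpha,\alpha',\beta,\beta'$.

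The main obstacle I anticipate is purely one of bookkeeping at the boundary: ensuring that the open subset $S\subset\overline{S}$ can be chosen so that simultaneously $\alpha(X)\subseteq S$, $\beta(S)\subseteq Y$, \emph{and} the general fiber of $\alpha$ remains a nonempty (hence connected) open piece of the general fiber of $\overline{\alpha}$ — i.e.\ that trimming $\overline{S}$ down to $S$ does not accidentally throw away the generic fiber or fail to be $f$-saturated. This is handled by taking $S$ to be the complement in $\overline{S}$ of the (closed, since $\overline{\alpha}$ is proper) image $\overline{\alpha}(\overline{X}\setminus X)$ together with $\overline{\beta}^{-1}(\overline{Y}\setminus Y)$, and observing that then $\overline{\alpha}^{-1}(S)\subseteq X$, so $\alpha=\overline{\alpha}|_{\overline{\alpha}^{-1}(S)}$ and indeed $\overline{\alpha}^{-1}(S)=X\cap\overline{\alpha}^{-1}(S)$ has full fibers over $S$; the generic fiber of $\overline{\alpha}$ lies over the generic point of $\overline{S}\in S$, so nothing generic is lost. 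Everything else is a standard consequence of Stein factorisation and normalization in function fields.
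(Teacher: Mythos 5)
Your overall strategy (Stein-factorize a proper extension of $f$, then restrict) is the right one and matches the paper's, and your uniqueness argument via the algebraic closure of $\bC(Y)$ in $\bC(X)$ is fine. But the "trimming" step, which you yourself flag as the main obstacle, does not work as you resolve it. You set $S:=\overline{S}\setminus\bigl(\overline{\alpha}(\overline{X}\setminus X)\cup\overline{\beta}^{-1}(\overline{Y}\setminus Y)\bigr)$ and then claim both that $\overline{\alpha}^{-1}(S)\subseteq X$ and that $\alpha:=\overline{\alpha}|_X$ maps $X$ into $S$. These two requirements are incompatible unless $f$ is already proper: whenever a fiber of $\overline{\alpha}$ meets both $X$ and $\overline{X}\setminus X$ (the generic situation for non-proper $f$), its image point lies in $\overline{\alpha}(X)$ but has been removed from $S$, so $\overline{\alpha}|_X$ is not a morphism into $S$. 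Worse, $S$ can be empty: for $f:\bA^2\to\bA^1$ the projection, $\overline{X}\setminus X$ dominates $\overline{S}=\bP^1$, so $\overline{\alpha}(\overline{X}\setminus X)=\overline{S}$. Your closing assertion that "the generic fiber of $\overline{\alpha}$ lies over the generic point of $\overline{S}$, so nothing generic is lost" fails for the same reason: the generic point survives only when $\overline{X}\setminus X$ does not dominate $\overline{S}$, i.e.\ essentially only when $f$ is proper.

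The fix — and what the paper actually does — is to compactify $X$ \emph{relatively to $Y$} rather than compactifying $Y$ as well: choose a partial smooth compactification $\overline{X}\supseteq X$ such that $f$ extends to a \emph{projective} morphism $\bar f:\overline{X}\to Y$ (with target $Y$ itself). Stein factorization of $\bar f$ then produces $\bar\alpha:\overline{X}\to S$ with connected fibers and $\beta:S\to Y$ finite, where $S$ is already finite over $Y$ and needs no trimming; $\alpha:=\bar\alpha|_X:X\to S$ is automatically a morphism defined on all of $X$, dominant with connected (indeed irreducible, hence still connected after passing to the dense open piece $X\cap\bar\alpha^{-1}(s)$) general fibers. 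A secondary small point: when you argue that a general fiber of $\alpha$ is connected because it is "an open subvariety of an irreducible variety", you should justify that the general fiber of $\bar\alpha$ is irreducible and not merely connected; this is standard for algebraic fiber spaces out of a smooth irreducible variety, but connectedness alone would not suffice.
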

\begin{proof}
	Let $\overline{X}$ be a partial smooth compactification of $X$ such that $f$ extends to a projective morphism $\bar{f}:\overline{X}\to Y$. Take the Stein factorization of $\bar{f}$ and we obtain a proper surjective morphism  $\bar{\alpha}:\overline{X}\to S$ with connected fibers   and a finite morphism $\beta:S\to Y$. Then $\alpha:X\to S$ is defined to be the restriction of $\bar{\alpha}$ to $X$, which is dominant with connected general fibers. It is easy to see that this construction does not depend on the choice of $\overline{X}$. 
\end{proof}
The previous factorisation will be called \emph{quasi-Stein factorisation} in this paper. 
  \begin{lem}\label{lem:normal}
  		Let $f:X\to Y$ be a dominant morphism between connected quasi-projective manifolds such that general fibers are connected.  Then for a general fiber $F$, one has $ {\rm Im}[\pi_1(F)\to \pi_1(X)]\triangleleft \pi_1(X)$. 
  \end{lem}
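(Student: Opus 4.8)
The plan is to reduce to the case of a \emph{proper} fibration and then use the long exact homotopy sequence, exactly as in the classical situation. First I would choose a partial smooth compactification $\overline{X}$ of $X$ together with a projective morphism $\bar f:\overline{X}\to Y$ extending $f$ (this is possible after blowing up, as in the proof of \cref{lem:Stein}), and then pass to the quasi-Stein/Stein factorisation $\bar f=\beta\circ\bar\alpha$ with $\bar\alpha:\overline{X}\to S$ proper with connected fibres and $\beta:S\to Y$ finite. Replacing $f$ by $\bar\alpha$ does not change the image of $\pi_1$ of a general fibre: for a general $y\in Y$, the fibre $F=f^{-1}(y)$ is, after possibly shrinking $Y$ to an open set over which $\beta$ is \'etale, a connected component of $\bar\alpha^{-1}(s)$ for the unique preimage $s$ of $y$; since the fibre of $\bar\alpha$ over a general $s$ is already connected, $F$ maps onto an open dense part of a general fibre of $\bar\alpha$, and by \cref{lem:fun} the fundamental group surjects. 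So it is enough to prove the statement for a proper fibration $g:\overline{X}\to S$ with connected fibres, $\overline{X}$ smooth.

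For the proper case I would argue by the standard deformation/monodromy argument. Let $S^\circ\subset S$ be the Zariski open locus over which $g$ is a topological fibre bundle (such a locus exists by generic smoothness and Ehresmann's theorem after a stratification, and general fibres lie over $S^\circ$), and set $\overline{X}^\circ=g^{-1}(S^\circ)$. The long exact homotopy sequence of the fibration $g^\circ:\overline{X}^\circ\to S^\circ$ with fibre $F$ gives that $\mathrm{Im}[\pi_1(F)\to\pi_1(\overline{X}^\circ)]$ is normal, being the kernel of $\pi_1(\overline{X}^\circ)\to\pi_1(S^\circ)$. Then I would compose with the surjection $\pi_1(\overline{X}^\circ)\twoheadrightarrow\pi_1(\overline{X})$ (surjective because $\overline{X}\setminus\overline{X}^\circ$ is a proper closed analytic subset, again by \cref{lem:fun}) to conclude that the image of $\pi_1(F)$ in $\pi_1(\overline{X})$ is normal. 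Transporting this back through the identification $\pi_1(\overline{X})\cong\pi_1(X)$ (\cref{lem:fun}, since $X\hookrightarrow\overline{X}$ is an open immersion of smooth varieties with complement of codimension $\geq 1$, so the map is surjective; to get injectivity one takes $\overline{X}$ to be an honest smooth compactification—or simply works with $\mathrm{Im}[\pi_1(F)\to\pi_1(X)]$ directly via the surjection $\pi_1(X)\twoheadleftarrow\pi_1(\overline{X}^\circ)$) yields the claim.

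The main subtlety is keeping the various fundamental-group maps under control: one must make sure the chosen general fibre $F$ of $f$ really does have the same image in $\pi_1(X)$ as the corresponding fibre of the locally trivial model $g^\circ$, and that "general fibre" in the statement is compatible with "fibre over a point of $S^\circ$". This is handled by noting that generality is a countable-intersection condition and that all the relevant loci ($\beta$ \'etale, $g$ a bundle, $F$ connected, $F$ meeting $\overline{X}^\circ$ in a dense open set) are nonempty Zariski open, so a general $y$ lies in all of them simultaneously; the surjectivity statements in \cref{lem:fun} then bridge the gap between $F\cap\overline{X}^\circ$, $F$, and the full fibre of $g$. No deep input is needed beyond \cref{lem:fun} and the homotopy exact sequence of a fibre bundle; the only real care is bookkeeping.
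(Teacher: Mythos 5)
Your core idea --- restrict to the Zariski open locus over which the map is a topological fibre bundle, apply the homotopy exact sequence to get normality there, and then push forward along a surjection onto $\pi_1(X)$ --- is exactly the paper's argument. But the reduction to a proper model introduces a genuine error in your final step. You propose to ``transport back through the identification $\pi_1(\overline{X})\cong\pi_1(X)$''; this map is only surjective and is essentially never injective here, because $\overline{X}\setminus X$ contains divisors (compare $X=\mathbb{C}^*\subset\overline{X}=\mathbb{P}^1$). \cref{lem:fun} gives an isomorphism only for proper \emph{birational} morphisms between smooth varieties, not for open immersions. And normality of the image of $\pi_1(F)$ in $\pi_1(\overline{X})$ says nothing about normality of its image in $\pi_1(X)$: if $\phi\colon G\twoheadrightarrow H$ is surjective and $N\le G$, normality of $\phi(N)$ in $H$ does not imply normality of $N$ in $G$. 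Neither of your proposed repairs works as stated: an ``honest smooth compactification'' still has divisorial boundary, so injectivity still fails; and the ``surjection $\pi_1(X)\twoheadleftarrow\pi_1(\overline{X}^\circ)$'' does not exist, since $\overline{X}^\circ=g^{-1}(S^\circ)$ is not contained in $X$.

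The fix is simply to drop the compactification. Generic topological local triviality (Verdier's theorem, i.e.\ Thom's isotopy lemma applied to a Whitney stratification) holds for an \emph{arbitrary} morphism of complex algebraic varieties; properness is not required. Hence there is a Zariski open $Y^\circ\subset Y$ such that $X^\circ:=f^{-1}(Y^\circ)\to Y^\circ$ is a topologically locally trivial fibration with fibre $F$. The homotopy exact sequence exhibits $\mathrm{Im}[\pi_1(F)\to\pi_1(X^\circ)]$ as the kernel of $\pi_1(X^\circ)\to\pi_1(Y^\circ)$, hence a normal subgroup, and the surjection $\pi_1(X^\circ)\twoheadrightarrow\pi_1(X)$ from \cref{lem:fun} sends it to the normal subgroup $\mathrm{Im}[\pi_1(F)\to\pi_1(X)]$ of $\pi_1(X)$. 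This is precisely the paper's two-line proof; your Stein-factorisation preamble is unnecessary once one knows local triviality does not need properness.
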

\begin{proof}
 There is a Zariski open set $Y^\circ\subset Y$ such that $X^\circ:=f^{-1}(Y^\circ)$ is a topologically locally trivial fibration over $Y^\circ$.  Hence we have a short exact sequence
 $$
 \pi_1(F)\to \pi_1(X^\circ)\to \pi_1(Y^\circ)\to 0
 $$
 It follows that  $ {\rm Im}[\pi_1(F)\to \pi_1(X^\circ)]\triangleleft \pi_1(X^\circ)$. Note that $\pi_1(X^\circ)\to \pi_1(X)$ is surjective by \cref{lem:fun}.  Hence
$ {\rm Im}[\pi_1(F)\to \pi_1(X)]\triangleleft \pi_1(X)$. 
\end{proof}

\begin{lem}\label{lem:factor0}
	Let $f:X\to Y$ be a dominant morphism between  quasi-projective manifolds with  general fibers connected.  Let $\varrho:\pi_1(X)\to G(K)$ be a representation whose image is torsion free, where $G$ is a linear algebraic group defined on some field $K$.  If for the general fiber $F$, $\varrho({\rm Im}[\pi_1(F)\to \pi_1(X)])$ is trivial, then there is a commutative diagram
	\[
		\begin{tikzcd}
			X' \arrow[r, "\mu"] \arrow[d, "f'"] & X \arrow[d, "f"] \\
			Y' \arrow[r, "\nu"] & Y
		\end{tikzcd}
	\]
	where
	\begin{enumerate}[label=(\alph*)]
		\item \(\mu\) is a proper birational morphism,
		\item  \(\nu\) is a birational, not necessarily proper morphism ;
		\item  \(f'\) is dominant; 
	\end{enumerate} 
	 and a representation \(\tau : \pi_{1}(Y') \to G(K)\) such that $f'^*\tau=\mu^*\varrho$.  
	 \end{lem}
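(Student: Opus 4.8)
The plan is to use the representation $\varrho$ to construct a factorization of $f$ (up to birational modifications) through a morphism whose fibers are exactly where $\varrho$ of the fiber group dies, then descend the representation. The starting point is \cref{lem:normal}: for a general fiber $F$, the subgroup $N := \operatorname{Im}[\pi_1(F)\to \pi_1(X)]$ is normal in $\pi_1(X)$. By hypothesis $\varrho(N)$ is trivial, so $\varrho$ factors through $\pi_1(X)/N$; the remaining task is purely geometric: realize the quotient $\pi_1(X) \twoheadrightarrow \pi_1(X)/N$ (or a finite-index subgroup of it, which is harmless after a suitable étale cover, but here we want it on the nose) as $f'_*$ for some algebraic fiber space $f' : X' \to Y'$ with $X' \to X$ proper birational and $Y' \to Y$ birational.

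First I would take the quasi-Stein factorisation of $f$ from \cref{lem:Stein}, writing $f = \beta \circ \alpha$ with $\alpha : X \to S$ an algebraic fiber space and $\beta : S \to Y$ finite; replacing $Y$ by $S$ we may as well assume $f$ itself is an algebraic fiber space (one then composes the final $\nu$ with $\beta$, noting $\beta$ is finite hence in particular a morphism, though to land exactly in the statement's ``birational $\nu$'' we should instead just keep $Y$ and remember $f$ already has connected general fibers — so I would phrase it as: WLOG general fibers of $f$ are connected). Next, over a Zariski open $Y^\circ \subset Y$ the map $X^\circ := f^{-1}(Y^\circ) \to Y^\circ$ is a locally trivial fibration, giving the exact sequence $\pi_1(F) \to \pi_1(X^\circ) \to \pi_1(Y^\circ) \to 1$, so that $\operatorname{Im}[\pi_1(X^\circ) \to \pi_1(X)]/N$ maps onto $\pi_1(Y^\circ)$ with kernel controlled by $\pi_1(F)$; since $\pi_1(X^\circ) \twoheadrightarrow \pi_1(X)$ by \cref{lem:fun}, the composite $\varrho|_{X^\circ} : \pi_1(X^\circ) \to G(K)$ kills $\operatorname{Im}[\pi_1(F)\to\pi_1(X^\circ)]$ and hence descends to a representation $\tau^\circ : \pi_1(Y^\circ) \to G(K)$ with $(f|_{X^\circ})^* \tau^\circ = \varrho|_{X^\circ}$. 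The point of the torsion-freeness hypothesis on $\varrho(\pi_1(X))$ is to handle multiple fibers: a priori $\pi_1(F) \to \pi_1(X^\circ)$ need only have image of finite index over its "true" image, and an element of $\pi_1(Y^\circ)$ could be represented by a loop whose lift is a nontrivial torsion obstruction; torsion-freeness forces $\varrho$ to vanish on all such, so the descent to $\pi_1(Y^\circ)$ is unobstructed.

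It then remains to spread $\tau^\circ$ out from $Y^\circ$ to a representation of $\pi_1$ of a modification of $Y$, and to arrange a compatible modification of $X$. By \cref{lem:fun}, $\pi_1(Y^\circ) \twoheadrightarrow \pi_1(Y)$ is surjective, so $\tau^\circ$ descends to $\tau : \pi_1(Y) \to G(K)$ provided it kills the kernel — which it does, because for any loop in $Y^\circ$ that bounds in $Y$ (more precisely, any element of $\ker[\pi_1(Y^\circ)\to\pi_1(Y)]$), pulling back to $X$ and using $(f|_{X^\circ})^*\tau^\circ = \varrho|_{X^\circ}$ together with $\pi_1(X^\circ) \twoheadrightarrow \pi_1(X)$ shows its $\tau^\circ$-image is trivial. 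So in fact one can take $Y' = Y$, $\nu = \mathrm{id}$, and $X' \to X$ a proper birational morphism chosen so that $f' : X' \to Y$ is a genuine morphism (resolve the indeterminacy of $f$ if needed, e.g. take $X'$ a smooth compactification-interior making $X' \to Y$ defined everywhere — which exists since $f$ was already a morphism, so $X' = X$ works and $\mu = \mathrm{id}$). Wait: then the diagram is trivial. So the content the authors want is presumably the mild generality where, after the quasi-Stein step, $Y$ got replaced, forcing the not-necessarily-proper $\nu$. I would write the proof in that order: reduce to $f$ an algebraic fiber space via \cref{lem:Stein} (this produces the finite, a fortiori the birational-type, base change $\nu$), then run the $\pi_1$ descent over $Y^\circ$ and extend over $Y$ using \cref{lem:fun} twice and torsion-freeness once. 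The Zariski-density addendum is immediate: $\operatorname{Im}\tau \supseteq \operatorname{Im}\tau^\circ = \varrho(\operatorname{Im}[\pi_1(X^\circ)\to\pi_1(X)]) = \varrho(\pi_1(X))$, which is Zariski dense by hypothesis, so $\operatorname{Im}\tau$ is too.

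The main obstacle I anticipate is the bookkeeping around multiple fibers and the precise role of torsion-freeness: making rigorous that $\ker[\pi_1(Y^\circ)\to\pi_1(Y)]$ and the discrepancy between $\operatorname{Im}[\pi_1(F)\to\pi_1(X^\circ)]$ and $N$ are both annihilated by $\varrho$, with the hypothesis used exactly once and at the right spot. The second, more cosmetic, subtlety is stating the $(\mu,\nu)$ diagram at the right level of generality — the diagram is forced on us only because the quasi-Stein factorisation may genuinely change the base, so I would make sure the composite $\nu$ (finite base change from \cref{lem:Stein}, possibly post-composed with a birational modification to regularize) lands in the claimed class of morphisms, and that $f'$ on $X'$ is literally a morphism, which may require one blow-up of $X$, accounting for the proper birational $\mu$.
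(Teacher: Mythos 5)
There is a genuine gap, and it sits exactly at the step you dismissed as cosmetic: the extension of $\tau^\circ$ from $\pi_1(Y^\circ)$ to the fundamental group of the base. Your argument that $\tau^\circ$ kills $\ker[\pi_1(Y^\circ)\to\pi_1(Y)]$ (``pulling back to $X$ \dots shows its $\tau^\circ$-image is trivial'') does not work: that kernel is normally generated by meridian loops $\eta$ around the divisorial components $D$ of $Y\setminus Y^\circ$, and a lift $\gamma\in\pi_1(X^\circ)$ of such an $\eta$ has no reason to die in $\pi_1(X)$, so $\tau^\circ(\eta)=\varrho(\bar\gamma)$ can be arbitrary. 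Concretely, take $X=\mathbb{G}_m\hookrightarrow Y=\mathbb{A}^1$ (dominant, general fiber a point, so the hypothesis on $\pi_1(F)$ is vacuous) and $\varrho:\pi_1(\mathbb{G}_m)=\mathbb{Z}\to G(K)$ with infinite torsion-free image: there is no $\tau$ on $\pi_1(\mathbb{A}^1)=\{1\}$ pulling back to $\varrho$. Hence your conclusion that ``$Y'=Y$, $\nu=\mathrm{id}$, $X'=X$, $\mu=\mathrm{id}$ works'' is false; the reason the statement allows a non-proper birational $\nu$ is precisely that one must delete from $Y$ the divisors over which $\tau$ cannot extend. (The quasi-Stein reduction you begin with is not the source of $\nu$ and is not needed, since connectedness of general fibers is already a hypothesis.)

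What the paper's proof actually does, and what is missing from yours, is the following. (i) It applies Hironaka--Gruson--Raynaud flattening to a partial compactification $\bar f:\overline X\to Y$, at the cost of proper birational modifications of $X$ and $Y$ --- this is where $\mu$ and the proper-birational part of $\nu$ come from --- so that $\bar f$ becomes equidimensional and therefore every prime component $P$ of $f^{-1}(D)$ dominates $D$. (ii) It deletes from $Y$ every divisorial component of $Y\setminus Y^\circ$ disjoint from $f(X)$; this is the non-proper part of $\nu$ and removes the $\mathbb{G}_m$-type obstruction above. (iii) For each remaining divisor it chooses adapted coordinates with $f^*(w_1)=z_1^k$, so the meridian $\gamma$ around $P$, which is trivial in $\pi_1(X)$, maps to $\eta^k$; this gives $\tau(\eta)^k=1$, and torsion-freeness of $\mathrm{Im}(\varrho)$ forces $\tau(\eta)=1$. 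That multiplicity $k$ is the one place torsion-freeness is used --- not, as you suggest, to compare $\mathrm{Im}[\pi_1(F)\to\pi_1(X^\circ)]$ with its image in $\pi_1(X)$. Finally $\tau$ is extended across a codimension-two locus using smoothness of $Y$. Steps (i)--(iii) are absent from your proposal, and without them the extension step fails.
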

\begin{proof} 
	\noindent{\em Step 1. Compactifications and first reduction step.} 
	  We take a partial smooth compactification $\overline{X}$ of $X$  so that $f$ extends to a projective surjective morphism $\bar{f}:\overline{X}\to Y$ with connected fibers.   	 

	  \begin{claim} We may assume that \(\bar{f}:\overline{X} \to Y\) is equidimensional.
	  \end{claim}
	  Indeed, by Hironaka-Gruson-Raynaud's flattening theorem, there is a birational proper morphism $Y_1\to Y$ from a quasi-projective manifold $Y_1$ so that for the irreducible component  $T$ of $\overline{X}\times_YY_1$ which dominates $Y_1$, the induced morphism $f_{T} :=T\to Y_1$ is surjective, proper and flat.         In particular, the fibers of $f_{T}$ are equidimensional.  
	 Consider the normalization map $\nu:\overline{X}_1\to T$. Then the induced morphism $f_1:\overline{X}_1\to Y_1$ still has equidimensional fibers.  Write $\mu:\overline{X}_1\to \overline{X}$ for the induced proper birational morphism, and let $X_1:=\mu^{-1}(X)$.  Note that $\pi_1(X_1)\to \pi_1(X)$ is an isomorphism by \cref{lem:fun} below.

	 Then one has a diagram
	 \[
		 \begin{tikzcd}
		 X_{1} \arrow[r] \arrow[d] & X \arrow[d] \\
		 Y_{1} \arrow[r] & Y
		 \end{tikzcd}
         \]
	 where the horizontal maps are proper birational, and the two spaces on the left satisfy the hypotheses of the proposition if we take the representation induced on \(\pi_{1}(X_{1})\). Clearly, it suffices to show the result where \(X\) (resp. \(Y\)) is replaced by \(X_{1}\) (resp. \(Y_{1}\)). In the following, we may also replace \(\overline{X}\)  (resp. $Y$) by \(\overline{X}_{1}\) and $Y$ (resp. $Y_1$). 
	 \medskip
	
	\noindent
	{\em Step 2. Induced representation on an open subset of \(Y\).}  Consider a Zariski open set $Y^\circ\subset Y$ such that    $X^\circ:=f^{-1}(Y^\circ)$   is  a topologically locally trivial fibration over $Y^\circ$ with connected fibers $F$. Then  we have a short exact sequence
$$
\pi_1(F)\to \pi_1(X^\circ)\to \pi_1(Y^\circ)\to 0
$$
	By our assumption,  $\varrho({\rm Im}[\pi_1(F)\to \pi_1(X)])$ is trivial. Hence we can pass to the quotient, which yields a representation $\tau:\pi_1(Y^\circ)\to G(K)$ so that $\varrho|_{\pi_1(X^\circ)}= f^{\ast}\tau$. 
	\medskip

	\noindent
	{\em Step 3. Reducing \(Y\), we may assume that all divisorial components of \(Y - Y^{\circ}\) intersect \(f(X)\).} Denote by  $E$ the sum of prime divisors of $Y$ contained in the complement $Y\backslash Y^\circ$. We decompose $E=E_1+E_2$ so that $E_1$ is the sum of prime divisors of $E$ that do not intersect \(f(X)\).  We replace $Y$ by $Y\backslash E_1$.  Then for any prime divisor $P$ contained in $Y\backslash Y^\circ$, $f^{-1}(P)\cap X\neq \varnothing$.  
	\medskip

	\noindent
	{\em Step 4. Extension of the representation to the whole \(\pi_{1}(Y)\).}

	Let \(D\) be a divisorial component of \(Y - Y^{\circ}\). By what has been said above, \(f^{-1}(D) \neq \varnothing\). Since \(\bar{f} : \overline{X }\to Y\) is equidimensional, then for any prime component \(P\) of \(f^{-1}(D)\), the morphism \(f|_{P} : P \to D\) is dominant. Also, since \(X\) is normal, \(X\) is smooth at the general points  of \(P\).

This allows to find a point \(x \in P_{reg}\) (resp. \(y \in D_{reg}\)) with local coordinates \((z_{1}, \dotsc z_{m})\) (resp. \((w_{1}, \dotsc, w_{n})\)) around \(x\) (resp. \(y\)), adapted to the divisors, such that \(f^{\ast}(w_{1}) = z_{1}^{k}\) for some \(k \geq 1\).
	Hence the meridian loop $\gamma$ around the general point of \(P\)  is mapped to $\eta^k$ where $\eta$ is the meridian loop around \(D\). On the other hand, since \(\gamma\) is trivial in $\pi_1(X)$, it follows that
$$
0=\varrho(\gamma)=\tau(\eta^k).
$$    
Hence $\tau(\eta)$ is a torsion element. Since we have assumed that the image of $\varrho$ does not contain torsion element, then $\tau(\eta)$ has to be trivial. Hence $\tau$ extends to the smooth locus of $D$.

	Since this is true for any divisorial component \(D \subset Y - Y^{\circ}\), this shows that \(\tau\) extends to \(\pi_{1}(Y^{\circ\circ})\), where \(Y^{\circ} \subset Y^{\circ\circ}\) and \(Y - Y^{\circ\circ}\) has codimension \(\geq 2\) in \(Y\). However, since \(Y\) is smooth, we have \(\pi_{1}(Y) \cong \pi_{1}(Y^{\circ\circ})\), so \(\tau\) actually extends to \(\pi_{1}(Y)\).
\end{proof}
Note that the above proof is   more difficult than the compact cases, since $f(X)$ is only a constructible subset of $Y$ and $f$ is not proper. 


 Based on \cref{lem:factor0} we prove the following factorisation result which is important in proving \cref{main}. 
 \begin{proposition}\label{lem:kollar}
 	Let $X$ be a quasi-projective normal variety.  Let $\varrho:\pi_1(X)\to G(K)$ is a representation, where $G$ is a linear algebraic group defined over a field $K$ of zero characteristic.  Then there is a diagram
	 \[
		 \begin{tikzcd}
			 \widetilde{X} \arrow[r, "\mu"] \arrow[d, "f"] & \widehat{X} \arrow[r, "\nu"] & X\\
			 Y                                             &  &
		 \end{tikzcd}
	 \]
	 where \(Y\) and \(\widetilde{X}\) are quasi-projective manifolds, and
	 \begin{enumerate}[label=(\alph*)]
 		\item $\nu:\widehat{X}\to X$ is a finite étale cover;
		\item \(\mu : \widetilde{X} \to \widehat{X}\) is a birational proper morphism;
		\item $f : \widetilde{X} \to Y$ is a dominant morphism with connected general fibers;
 	\end{enumerate}
		such that there exists a big representation \(\tau : \pi_{1}(Y) \to G(K)\) with  $f^*\tau=(\nu\circ \mu)^*\varrho$.   \end{proposition}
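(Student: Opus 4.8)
\textbf{Proof plan for Proposition \ref{lem:kollar}.}

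The plan is to run a maximality argument over the poset of algebraic fiber spaces out of finite étale covers of $X$, using the "constancy on fibers" reduction of \cref{lem:factor0} as the engine that turns a non-big representation into a genuine fibration. First I would pass to a finite étale cover $\widehat X\to X$ such that $\varrho(\pi_1(\widehat X))$ is torsion-free: this is possible by Selberg's lemma since $K$ has characteristic zero (so $\varrho(\pi_1(X))$ is a finitely generated linear group, hence virtually torsion-free), and any such cover is dominated by a finite étale one. Replacing $\widehat X$ by a resolution of singularities (which does not change $\pi_1$ by \cref{lem:fun}), we may assume $\widehat X$ is a quasi-projective manifold and $\varrho|_{\pi_1(\widehat X)}$ has torsion-free image. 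From now on the representation in play has torsion-free image, which is exactly the hypothesis needed to invoke \cref{lem:factor0}.

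Next I would consider the collection $\mathcal{F}$ of diagrams $\widetilde X\xrightarrow{\mu}\widehat X$ (birational proper, $\widetilde X$ smooth) together with a dominant morphism with connected general fibers $f:\widetilde X\to Y$ onto a quasi-projective manifold $Y$, such that the pulled-back representation $\mu^*\varrho$ descends to some $\tau:\pi_1(Y)\to G(K)$, i.e. $\mu^*\varrho=f^*\tau$; equivalently (by \cref{lem:normal}, the restriction map on a general fiber $F$ lands in a normal subgroup, and the descent is forced) $\varrho\big({\rm Im}[\pi_1(F)\to\pi_1(\widehat X)]\big)$ is trivial for a general fiber $F$ of $f\circ\mu$. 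This set is nonempty (take $Y=\widehat X$, $f=\mathrm{id}$). I would order it by $\dim Y$ and pick an element with $\dim Y$ minimal. I claim the resulting $\tau$ is big. Suppose not: then there is a positive-dimensional closed subvariety $W\subset Y$, moving in a family covering a very general point, with $\tau\big({\rm Im}[\pi_1(W^{\rm norm})\to\pi_1(Y)]\big)$ finite. A standard argument (this is Kollár's method, as in \cite[\S 2]{Kol95}) produces, after a further finite étale cover of $Y$ (pulled back to a finite étale cover of $\widetilde X$, which is dominated by a finite étale cover of $\widehat X$ hence of $X$) and a birational modification, a new algebraic fiber space $Y\dashrightarrow Y'$ onto a lower-dimensional $Y'$ through which $\tau$ still factors up to passing to that cover — because the subvarieties $W$ with finite monodromy sweep out $Y$, so contracting them gives a morphism, and triviality of the image on fibers (after killing torsion again, which is automatic since subgroups of a torsion-free group are torsion-free) lets \cref{lem:factor0} descend the representation. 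Composing, this contradicts minimality of $\dim Y$. Hence $\tau$ is big, and assembling the tower of finite étale covers of $X$ and the birational modifications into a single diagram of the stated shape (using that a composition of birational proper morphisms from smooth varieties is again one, and \cref{lem:fun} to keep $\pi_1$ under control) completes the proof.

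The main obstacle is the inductive/descent step showing that a \emph{non}-big $\tau$ can be factored through a strictly lower-dimensional base \emph{after a finite étale cover}. The subtlety is twofold: first, one must upgrade "there is a covering family of subvarieties $W$ with finite (not trivial) monodromy" to an actual morphism contracting them — this requires the Kollár-type construction of the "$\varrho$-reduction"/Shafarevich-type map for the family, and one must check it is defined after only a finite étale base change rather than some wilder cover; second, finiteness of the monodromy on the $W$'s becomes \emph{triviality} only after passing to a finite étale cover trivializing that finite group, and one has to verify this cover can be taken globally (pulled back from a cover of $X$) and that torsion-freeness is preserved so that \cref{lem:factor0} applies to descend the representation across the new fibration. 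Handling the non-properness of $f$ and the fact that $f(X)$ is merely constructible — already flagged after \cref{lem:factor0} — is what makes this more delicate than the projective case, but \cref{lem:factor0} has been stated precisely to absorb that difficulty, so the remaining work is the combinatorics of the maximality argument together with Kollár's covering-family technique.
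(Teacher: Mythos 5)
Your opening reduction (Selberg's lemma to make the image torsion-free after a finite \'etale cover, then resolving so that \cref{lem:factor0} applies) is exactly what the paper does, and your use of \cref{lem:factor0} to descend the representation across the fibration is also the same. Where you diverge is the construction of the fibration itself: the paper invokes Koll\'ar's existence theorem for the $H$-Shafarevich map with $H=\ker\varrho$ (\cite[Cor.~3.5 \& Rem.~4.1.1]{Kol93}) \emph{once}, which hands over a rational map $X\dashrightarrow \mathrm{Sh}^H(X)$ together with the crucial two-sided property that a very general subvariety is contracted \emph{if and only if} its monodromy image is finite; bigness of $\tau$ then follows directly from that property in Step 3. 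You instead set up a minimality induction on $\dim Y$ over all fibrations through which $\varrho$ descends, with the descent step deferred to ``Koll\'ar's method.''

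The gap is in that descent step. From ``$\tau$ is not big'' you correctly extract a covering family $\{W_t\}$ with finite --- hence, by torsion-freeness, trivial --- image, and the Campana/Koll\'ar quotient by this family does give a rational map $q:Y\dashrightarrow Y'$ with $\dim Y'<\dim Y$. But the very general fiber of $q$ is not a single member $W_t$; it is a connected \emph{chain} of members, and the image of the fundamental group of a connected chain in $\pi_1(Y)$ is not generated by the images of the components alone (paths running through one component and back through another contribute HNN-type elements). So triviality of $\tau$ on each $W_t$ does not formally give triviality on the fibers of $q$, which is what you need to apply \cref{lem:factor0} and contradict minimality. Controlling exactly this chain-closure phenomenon is the substance of Koll\'ar's theorem, so your inductive step is not ``a standard argument'' reducible to the covering-family quotient: filling it amounts to reproving \cite[3.5]{Kol93}, at which point the induction is redundant --- one application of that theorem at the outset, as in the paper, already produces the fibration with the bigness property. (Two minor points: since you have already arranged torsion-freeness, no further \'etale cover is needed to upgrade ``finite'' to ``trivial,'' contrary to what your obstacle paragraph suggests; and the base of the Shafarevich map is only normal, so it must be resolved to get the quasi-projective manifold $Y$ required by the statement.)
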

Note that when $X$ is projective, this result is proved in \cite[Theorem 4.5]{Kol93}.   

\begin{proof}[Proo of \cref{lem:kollar}]
	{\em Step 1. We may assume that \(\varrho\) has a torsion free image.}
	Since the image $\varrho(\pi_1(X))$ is a finitely generated linear group, by a theorem of Selberg, there is a finite index normal subgroup $\Gamma\subset \varrho(\pi_1(X))$ which is torsion free.  Take an \'etale cover $\nu:\widehat{X}\to X$ with fundamental group $\pi_1(\widehat{X})=\varrho^{-1}(\Gamma)$.  Then the image of $\nu^*\varrho$ is torsion free.

	In the following, we may replace \(X\) by \(\widehat{X}\) and \(\varrho\) by \(\nu^{\ast}\varrho\) to assume that \(\mathrm{Im}(\varrho)\) is torsion free.
	\medskip

\noindent	{\em Step 2. We find a  suitable model of the Shafarevich map.}
	Denote by $H:={\ker} (\varrho)$, which is a normal subgroup of $\pi_1(X)$.  We apply \cite[Corollary 3.5 \& Remark 4.1.1]{Kol93} to conclude that there is a normal quasi-projective variety ${\rm Sh}^H(X)$ and a dominant rational map ${\rm sh}_{X}^H:X\dashrightarrow {\rm Sh}^H(X)$ so that
\begin{enumerate}[label=(\roman*)]
	\item there is a Zariski open set $X^\circ\subset X$ which does not meet the indeterminacy locus of ${\rm sh}_{X}^H|_{X^{\circ}}$ such that the fibers of ${\rm sh}_{X}^H|_{X^{\circ}}$
  	are closed in $X$; 
	\item  ${\rm sh}_{X}^H : X^{\circ} \to \mathrm{Sh}^{H}(X)$ has connected general fibers;
	\item \label{VG} there are at most countably many  closed subvarieties $Z_i\subsetneqq X$ so that for  every    closed subvariety $W\subset X$ such that   $W\not\subset \cup Z_i$, the image $\varrho\big({\rm Im}[\pi_1(W^{\rm norm})\to \pi_1(X))] \big)$ is finite (and thus trivial since the image of $ \varrho$ is torsion free by Step 1) if and only if ${\rm sh}_{X}^H(W)$ is a point. 
\end{enumerate}

Let us first take a resolution of singularities $Y_1\to {\rm Sh}^H(X)$, and then a birational proper morphism $X_1\to X$ from a quasi-projective manifold $X_1$ which resolves the indeterminacy of $X\dashrightarrow Y_1$. Then the induced dominant morphism
 $
f_1:X_1\to  Y_1
$ 
	fulfills the conditions of \cref{lem:factor0}. Thus, one has a commutative diagram as follows:
	\[
		\begin{tikzcd}
			X_{1}' \arrow[r, "\mu"] \arrow[d, "f_{1}'"] & X_{1} \arrow[r, "q"] \arrow[d, "f_{1}"] & X \arrow[d, dashed] \\ 
			Y_{1}' \arrow[r, "\nu"]            &  Y_{1} \arrow[r] & \mathrm{Sh}^{H}(X) 
		\end{tikzcd}
	\]
	where 
	\begin{enumerate}[label=(\alph*)]
	\item \(\mu\) is a proper birational morphism; 
	\item \(\nu\) is a birational (not necessarily proper) morphism,  
	\item  \(f_{1}'\) is a dominant morphism.
	\end{enumerate}
	Moreover, one has a representation $\tau:\pi_1(Y_{1}')\to G(K)$ so that \((f_{1}')^*\tau\) identifies with the pullback \(\mu^{\ast} q^{\ast} \varrho\).  
	
Therefore, there   are at most countably many  closed subvarieties $Z'_i\subsetneqq X_1'$ such that for  each   closed subvariety $Z\subset X_1'$ with 
\begin{itemize}
	\item $Z\not\subset \cup Z'_i$;
	\item   $f_1'(Z)$ is not a point,
\end{itemize}
 the map $Z\to (q\circ\mu)(Z)$ is proper birational, and ${\rm sh}_X^H((q\circ\mu)(Z))$ is not a point.  Write $\widetilde{Z}:=(q\circ\mu)(Z)$.  By \Cref{VG} $\varrho\big({\rm Im}[\pi_1(\widetilde{Z}^{\rm norm})\to \pi_1(X))] \big)$ is infinite.  By \cref{lem:fun} below, $\pi_1(Z^{\rm norm})\to \pi_1(\widetilde{Z}^{\rm norm})$ is surjective. Hence $(q\circ\mu)^*\varrho\big({\rm Im}[\pi_1( {Z}^{\rm norm})\to \pi_1(X_1'))] \big)$ is infinite. 
	\medskip

	\noindent
	{\em Step 3. We check that \(\tau\) is big and obtain the required diagram.} Let \(\widetilde{X} := X_{1}'\), \(Y := Y_{1}'\) and \(f := f_{1}'\). To simplify the notation, let us denote by the same letter \(\varrho\) the pullback \(\varrho : \pi_{1}(\widetilde{X}) \to G(K)\). Hence $\varrho=f^*\tau$. Recall that by Step 2 there   are at most countably many  closed subvarieties $Z'_i\subsetneqq \widetilde{X}$ such that for  every   closed subvariety $Z\subset \widetilde{X}$ with $Z\not\subset \cup Z'_i$ and $f(Z)$ not a point, the image $\varrho\big({\rm Im}[\pi_1(Z^{\rm norm})\to \pi_1(\widetilde{X}))] \big)$ is infinite. 
	
	Since $f:\widetilde{X}\to Y$ is dominant with connected general fibers,   for a subvariety $W$ of $Y$ containing a very general point, there is an irreducible component $Z$ of $f^{-1}(W)$ which dominates $W$ and $Z\not\subset \cup Z'_i$. Hence $\tau({\rm Im}[\pi_1(Z^{\rm norm})\to \pi_1(Y)])=\varrho({\rm Im}[\pi_1(Z^{\rm norm})\to \pi_1(\widetilde{X})])$ is infinite.  Since the morphism $\pi_1(Z^{\rm norm})\to \pi_1(Y)$ factors through $\pi_1(W^{\rm norm})\to \pi_1(Y)$, it follows that  $\tau({\rm Im}[\pi_1(W^{\rm norm})\to \pi_1(Y)])$ is infinite. Therefore $\tau$ is big.
	\end{proof}

 \section{Some results on varieties with logarithmic Kodaira dimension zero and maximal quasi-Albanese dimension} \label{sec:quasi}
In this section we prove a result on \cref{conj:Campana} by Campana, which will play important role in this paper.
We first prove a lemma. 
\begin{lem}  \label{lem:same Kd}
	Let $f:X\to Y$ be a dominant birational morphism of  quasi-projective manifolds. 
	Let $E\subset X$ be a Zariski closed subset such that $\overline{f(E)}$ has codimension at least two.  
	Assume $\bar{\kappa}(Y)\geq 0$.
	Then $\bar{\kappa}(X-E)=\bar{\kappa}(X)$. 
\end{lem}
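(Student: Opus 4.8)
The plan is to compare the two logarithmic Kodaira dimensions by a careful analysis of logarithmic pluricanonical forms, exploiting that $f$ is birational and that the exceptional behaviour is hidden in codimension $\geq 2$.

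\medskip

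\noindent\textbf{Setup.} First I would choose compatible smooth log compactifications: a smooth projective $(\overline{X}, D_X)$ with $X = \overline{X} - D_X$ and $(\overline{Y}, D_Y)$ with $Y = \overline{Y} - D_Y$, such that $f$ extends to a birational morphism $\bar f:\overline{X}\to\overline{Y}$. After further blowing up I may assume $\bar f$ is a composition of blow-ups and that $E$ is contained in a simple normal crossing divisor; enlarging $D_X$ so that the $\bar f$-exceptional locus is a union of components of $D_X$ does not change $\bar\kappa$ of the open variety as long as we track which components we add. The inequality $\bar\kappa(X - E)\geq \bar\kappa(X)$ is immediate from the inclusion of log pluricanonical sections (removing more from $X$ can only enlarge the space of sections $H^0(\overline{X}, m(K_{\overline X}+D_X+\overline{E}))$, where $\overline{E}$ is the divisorial part of $E$), so the content is the reverse inequality
\[
\bar\kappa(X - E)\leq \bar\kappa(X).
\]

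\medskip

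\noindent\textbf{Key step: killing the divisorial part of $E$.} Only the codimension-one components of $E$ matter for $\bar\kappa(X-E)$, so I may assume $E = E_1 + \dots + E_r$ is a reduced SNC divisor, each $E_i$ a component of $D_X$ disjoint from $D_X^{(0)} := D_X - E$ in the relevant sense, with $\operatorname{codim}_Y \overline{f(E_i)}\geq 2$. The goal is to show every section of $m(K_{\overline X} + D_X^{(0)} + E)$ in fact lies in $H^0(m(K_{\overline X}+D_X^{(0)}))$, i.e. has no pole along the $E_i$. Here I use that $\bar\kappa(Y)\geq 0$: pick a nonzero $\omega \in H^0(\overline{Y}, m_0(K_{\overline Y}+D_Y))$ for some $m_0\geq 1$. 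Given $s\in H^0(\overline{X}, m(K_{\overline X}+D_X^{(0)}+E))$, I would compare $s^{m_0}$ with $(f^*\omega)^{m}$ (matching the weights): both are sections of the same line bundle up to the exceptional corrections, their ratio is a rational function on $\overline{X}$, hence pulled back from $\overline{Y}$ via $\bar f_*$, and the pole order of $s$ along $E_i$ translates into a pole of this rational function along $\overline{f(E_i)}$ — but $\overline{f(E_i)}$ has codimension $\geq 2$ in $\overline Y$, and a rational function on a normal variety cannot have a pole concentrated in codimension $\geq 2$. This forces $s$ to extend across the $E_i$, giving $H^0(m(K_{\overline X}+D_X^{(0)}+E)) = H^0(m(K_{\overline X}+D_X^{(0)}))$ for all $m$, hence $\bar\kappa(X-E) = \bar\kappa(X)$. (An alternative, perhaps cleaner, route: use that $\bar f: \overline X\to\overline Y$ with the log structures satisfies $K_{\overline X}+D_X \geq \bar f^*(K_{\overline Y}+D_Y)$ up to exceptional divisors mapping into $D_Y$, reduce to $Y$ directly, and run the codimension argument there via $\overline{\kappa}(Y)\ge 0$ and \cref{prop:Koddimabb}-style valuative reasoning.)

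\medskip

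\noindent\textbf{Main obstacle.} The delicate point is bookkeeping of the discrepancy/log-ramification divisors: $f$ being merely birational (not an isomorphism) means $f^*$ of log forms on $Y$ acquires zeros and poles along exceptional divisors, and one must ensure that after adding those exceptional components to $D_X$ one is genuinely computing $\bar\kappa(X - E)$ and not of a different open set. Concretely, one must check that the exceptional divisors of $\bar f$ that are \emph{not} among the $E_i$ cause no trouble — they map into $\overline{f(E)}\cup D_Y$ only if we are lucky, and in general an exceptional divisor $F$ with $\bar f(F)$ of codimension $\geq 2$ in $\overline Y$ but $\bar f(F)\not\subset \overline{f(E)}$ must be handled by the same codimension-$\geq 2$ pole argument applied to $F$ rather than to $E_i$. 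I expect the write-up to hinge on getting this divisor accounting exactly right, after which the "a rational function has no codimension-$\geq 2$ poles on a normal variety" principle closes both the $E_i$ and the exceptional contributions uniformly.
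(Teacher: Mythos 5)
Your reduction to the divisorial part of $E$ and the observation that $\bar\kappa(X-E)\ge\bar\kappa(X)$ is the trivial direction both match the paper, but the key step is wrong, in two ways.

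First, the statement you try to prove is too strong and is in fact false under the hypotheses of the lemma: the section spaces $H^0\bigl(\overline X, m(K_{\overline X}+D_X^{(0)}+\overline E)\bigr)$ and $H^0\bigl(\overline X, m(K_{\overline X}+D_X^{(0)})\bigr)$ need not coincide; only the Kodaira dimensions agree. Take $Y=A$ an abelian surface, $\Theta_1,\dots,\Theta_4$ four translates of a principal polarization passing through a point $p$ (smooth and pairwise transverse there), $X=\mathrm{Bl}_pA-\sum\tilde\Theta_i$, $f$ the blow-down and $E=\mathcal E\cap X$ the punctured exceptional curve, so $\overline{f(E)}=\{p\}$ and $\bar\kappa(Y)=0$. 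Writing $\Sigma=\sum\Theta_i$, one has $K_{\overline X}+D_X^{(0)}=\pi^*\Sigma-3\mathcal E$ and $K_{\overline X}+D_X^{(0)}+\overline E=\pi^*\Sigma-2\mathcal E$, whose sections are sections of $m\Sigma$ on $A$ vanishing to order $\ge 3m$, resp.\ $\ge 2m$, at $p$. Already for $m=1$ the section $\theta_1\theta_2 u$, with $u\in H^0(\Theta_3+\Theta_4)$ nonvanishing at $p$, vanishes to order exactly $2$ at $p$, so it lies in the larger space but not the smaller one. Second, the mechanism itself breaks: the rational function $g=\bar f_*\bigl(s^{m_0}/(\bar f^*\omega)^m\bigr)$ genuinely has poles along honest \emph{divisors} of $\overline Y$ (the zero divisor of $\omega$, and the boundary divisors where $s$ is allowed poles), and such a divisor may well pass through the codimension-$\ge 2$ center $\overline{f(E_i)}$; its pullback then has a pole along $E_i$. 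So a pole of $s$ along $E_i$ only forces $g$ to have poles along divisors whose total transform contains $E_i$ — it does not produce a "pole concentrated in codimension $\ge 2$", and the normality principle you invoke gives nothing. (That something nontrivial must be used is shown by $Y=\mathbb A^2$, $X=\mathrm{Bl}_0\mathbb A^2$ minus the strict transforms of four lines through $0$, $E=\mathcal E\cap X$: here $\bar\kappa(X)=-\infty$ while $\bar\kappa(X-E)=1$, so the hypothesis $\bar\kappa(Y)\ge 0$ is essential.)

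The paper's proof sidesteps both issues by never comparing $mL_1$ with $mL_2$, where $L_1=K_{\overline X}(\overline X-X)+\overline E$ and $L_2=K_{\overline X}(\overline X-X)$. Instead, using that every divisorial component of $\overline E$ is $\bar f$-exceptional with discrepancy $\ge 1$ over $(\overline Y,B)$, it writes $L_2=\bar f^*K_{\overline Y}(B)+\overline E+F+D$ with $F$ and $D$ effective, whence $2nL_2-nL_1=n\bigl(\bar f^*K_{\overline Y}(B)+D+F\bigr)$, which is linearly equivalent to an effective divisor as soon as $nK_{\overline Y}(B)$ is. This gives $\kappa(L_1)\le\kappa(2L_2)=\kappa(L_2)$ directly — a comparison of Iitaka dimensions after doubling, not an identification of plurigenera. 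If you rewrite your argument around this doubling trick, the rest of your setup goes through.
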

\begin{proof}   
	Let $\bar{f}:\bar{X}\to \bar{Y}$ be a proper birational morphism which extends $f:X\to Y$, where $\bar{X}$ and $\bar{Y}$ are smooth projective compactifications of $X$ and $Y$ such that   $B=\bar{Y}-Y$ is a simple normal crossing divisor. Since the logarithmic Kodaira dimension is  birationally invariant, we may further blow-up  \(\bar{X}\) to assume that 
	\begin{itemize}
		\item $\bar{E}+(\bar{X}-X)$ is a simple normal crossing  divisor,
	\end{itemize} 
	where $\bar{E}\subset \bar{X}$ is the closure of $E$.
	Then we note that $\bar{X}-X$ is also simple normal crossing.
	Write $\bar{X}-X=\bar{f}^{-1}(B)+D$, where $D$ is a reduced divisor on $\bar{X}$ and $\bar{f}^{-1}(B)=\mathrm{supp}\bar{f}^*B$.
	Then we have $f^*K_{\bar{Y}}(B)+\bar{E}\leq K_{\bar{X}}(\bar{f}^{-1}(B))$.
	So we write $K_{\bar{X}}(\bar{f}^{-1}(B))=f^*K_{\bar{Y}}(B)+\bar{E}+F$, where $F$ is effective.
	Then we have
	$$
	K_{\bar{X}}(\bar{f}^{-1}(B))+D+\bar{E}=\bar{f}^*K_{\bar{Y}}(B)+D+2\bar{E}+F
	$$
	By the assumption $\bar{\kappa}(Y)\geq 0$, one has $nK_{\bar{Y}}(B)\geq 0$ for some positive integer $n>0$.
	Therefore,
	$$
	n(\bar{f}^*K_{\bar{Y}}(B)+D+2\bar{E}+F)\leq 2n(\bar{f}^*K_{\bar{Y}}(B)+D+\bar{E}+F)=2n(K_{\bar{X}}(\bar{f}^{-1}(B))+D).
	$$
	Thus 
	$n(K_{\bar{X}}(\bar{f}^{-1}(B))+D+\bar{E})\leq 2n(K_{\bar{X}}(\bar{f}^{-1}(B))+D) $. 
	Recall that $\bar{f}^{-1}(B)+D+\bar{E}$ and $\bar{f}^{-1}(B)+D$ are both simple normal crossing divisors.  	It follows 
	$\bar{\kappa}(X-E)\leq \bar{\kappa}(X)$. 
	Hence $\bar{\kappa}(X-E)=\bar{\kappa}(X)$.  
\end{proof}

\begin{lem}\label{lem:abelian pi0}
	Let $\alpha:X\to \cA$ be  a  (possibly non-proper)  birational morphism  from a quasi-projective manifold $X$ to a semi-abelian variety $\cA$ with   $\overline{\kappa}(X)=0$.  Then  there exists a Zariski closed subset $Z\subset \cA$ of codimension at least two such  that $\alpha$ is   isomorphic
over $\cA\backslash Z$.   
\end{lem}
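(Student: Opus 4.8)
The plan is to deduce the statement from the two facts about semi-abelian varieties already recorded in the excerpt: Lemma~\ref{lem:same Kd} and Proposition~\ref{prop:Koddimabb}(b). First I would fix a smooth projective compactification $\overline{\cA}$ of $\cA$ together with a smooth projective compactification $\overline X$ of $X$ for which $\alpha$ extends to a morphism $\overline\alpha\colon\overline X\to\overline\cA$; this is possible because $\alpha$ is already a morphism on $X$, so only the boundary has to be modified. Then $\overline\alpha$ is proper and birational, $\overline\alpha$ has connected fibres (Zariski's main theorem, $\overline\cA$ being normal), and since $\overline\cA$ is \emph{smooth}, the standard fact that a proper birational morphism to a smooth variety contracts no divisor to a subvariety of codimension $\ge 2$ gives that $\overline\alpha(\mathrm{Exc}(\overline\alpha))$ has codimension $\ge 2$ in $\overline\cA$. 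Setting $E:=\mathrm{Exc}(\overline\alpha)\cap X$, a Zariski closed subset of $X$, the closure $\overline{\alpha(E)}$ in $\cA$ therefore has codimension $\ge 2$. Since $\overline{\kappa}(\cA)=0\ge 0$, Lemma~\ref{lem:same Kd} yields $\overline{\kappa}(X-E)=\overline{\kappa}(X)=0$.

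Next I would observe that on $X^\circ:=X-E=X\setminus\mathrm{Exc}(\overline\alpha)$ the morphism $\overline\alpha$ is a local isomorphism. Combined with the connectedness of the fibres of $\overline\alpha$ this forces $\overline\alpha|_{X^\circ}$ to be injective (two distinct points lying over the same point of $\overline\cA$ would each be isolated in their common fibre, contradicting its connectedness), hence an open immersion onto an open subset $V\subset\cA$ (open in $\overline\cA$ because $\overline\alpha|_{X^\circ}$ is an open immersion, and contained in $\cA$ since $X^\circ\subset X=\alpha^{-1}(\cA)$). Thus $\alpha$ restricts to an isomorphism $X^\circ\xrightarrow{\ \sim\ }V$, so $\overline{\kappa}(V)=\overline{\kappa}(X^\circ)=0$. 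Applying Proposition~\ref{prop:Koddimabb}(b) to the Zariski closed subset $Z:=\cA\setminus V$, we conclude that $Z$ has no component of codimension $1$, i.e. $\codim_\cA Z\ge 2$.

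It remains to check that $\alpha$ is proper over $V=\cA\setminus Z$, which will finish the proof with this $Z$. As $V$ is open in $\overline\cA$, the preimage $\overline\alpha^{-1}(V)$ is open in $\overline X$ and proper over $V$; it is connected, because $\overline\alpha$ has connected fibres and $V$ is connected; and it contains $X^\circ$. Moreover $X^\circ$ is closed in $\overline\alpha^{-1}(V)$: the composite $X^\circ\to V$ is proper (an isomorphism) and $\overline\alpha^{-1}(V)\to V$ is separated, so $X^\circ\hookrightarrow\overline\alpha^{-1}(V)$ is proper, hence has closed image. Being also open, $X^\circ$ must equal the connected $\overline\alpha^{-1}(V)$; therefore $\alpha^{-1}(V)=\overline\alpha^{-1}(V)\cap X=X^\circ$ and $\alpha|_{\alpha^{-1}(V)}=\overline\alpha|_{X^\circ}\colon X^\circ\to V$ is proper. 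The conceptual content is thus entirely carried by Lemma~\ref{lem:same Kd} and Proposition~\ref{prop:Koddimabb}(b); the only step requiring a little care is extracting the open immersion $X^\circ\hookrightarrow\cA$ from the local-isomorphism and connected-fibre properties of $\overline\alpha$, while the compactification and the scheme-theoretic manipulations are routine.
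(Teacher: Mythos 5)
Your proof is correct and follows essentially the same route as the paper: delete the exceptional locus to get an isomorphism $X^\circ\xrightarrow{\sim}V\subset\cA$, use Lemma~\ref{lem:same Kd} to see $\overline{\kappa}(V)=0$, and invoke Proposition~\ref{prop:Koddimabb}(b) to conclude $\codim(\cA\setminus V)\geq 2$. You supply several details the paper leaves implicit (the codimension of the image of the exceptional locus, the open-immersion step, and the final properness check over $V$), but the underlying argument is the same.
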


\begin{proof}
Since $\alpha:X\to \cA$ is birational,  we remove the exceptional locus of $X\to \mathcal{A}$ from $X$ to get $X^\circ\subset X$ such that $\alpha: X^\circ\to \alpha(X^\circ)$ is an isomorphism. 
By $\bar{\kappa}(\cA)=0$, we may apply \cref{lem:same Kd} to get $\bar{\kappa}(X^\circ)=\bar{\kappa}(X)=0$.  
	By \cref{prop:Koddimabb}, $\mathcal{A}-\alpha(X^\circ)$ has codimension  at least two. \end{proof}

\begin{lem}\label{lem:abelian pi}
	Let $\alpha:X\to \cA$ be  a  (possibly non-proper)  morphism  from a quasi-projective manifold $X$ to a semi-abelian variety $\cA$ with   $\overline{\kappa}(X)=0$. 
Assume that $\dim X=\dim \alpha(X)$.	
	 Then  $ \pi_1(X)$ is   abelian.  
	 \end{lem}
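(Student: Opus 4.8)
The plan is to reduce the statement, up to translation, to the case where $\alpha$ is the quasi-Albanese morphism of $X$, to prove that this morphism is then \emph{birational}, and finally to compare fundamental groups via \cref{lem:abelian pi0} and \cref{lem:fun}.

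First I would fix base points and translate $\cA$ so that the base point of $X$ is sent to the unit element; then $\alpha$ factors as $X \xrightarrow{\ a\ } \cA_X \xrightarrow{\ b\ } \cA$, where $a$ is the quasi-Albanese morphism and $b$ is a morphism of semi-abelian varieties. Since $\dim X = \dim \alpha(X)$, the morphism $\alpha$, and hence $a$, is generically finite onto its image; as a dominant generically finite morphism does not decrease the logarithmic Kodaira dimension (use suitable log-compactifications and the ramification formula), one gets $\overline{\kappa}(\overline{a(X)}) \le \overline{\kappa}(X) = 0$. By the first part of \cref{prop:Koddimabb}, $\overline{a(X)}$ is then a translate of a semi-abelian subvariety of $\cA_X$; since it contains the unit element and since the image of the quasi-Albanese morphism generates $\cA_X$, it must be all of $\cA_X$. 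Thus $a : X \to \cA_X$ is dominant and generically finite with $\dim X = \dim \cA_X$.

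Next I would invoke the logarithmic analogue of Kawamata's theorem on the Albanese maps of varieties of vanishing (logarithmic) Kodaira dimension (see \cite{Fuj15,NW13}): it guarantees that $a : X \to \cA_X$ has connected general fibers, so that, being moreover generically finite, $a$ is birational. This is the one step I expect to be the genuine obstacle, and it is where both the hypothesis $\overline{\kappa}(X)=0$ and the maximality of the quasi-Albanese dimension are really used; everything else is formal. Granting it, \cref{lem:abelian pi0} applies to $a$ and shows that $a$ is proper in codimension one, say proper over $\cA_X \setminus Z$ with $\operatorname{codim}_{\cA_X} Z \ge 2$. Since over $\cA_X \setminus Z$ the morphism $a$ is proper and birational between smooth varieties, it is an isomorphism over the complement of a further subset of codimension $\ge 2$; let $U \subset \cA_X$ be the resulting Zariski open set, so that $\operatorname{codim}_{\cA_X}(\cA_X \setminus U) \ge 2$ and $X^{\circ} := a^{-1}(U)$ is mapped isomorphically onto $U$ by $a$.

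Finally, since $\cA_X$ is a semi-abelian variety, $\pi_1(\cA_X)$ is a finitely generated free abelian group, and by the very construction of the quasi-Albanese variety the map $H_1(X,\bZ) \to H_1(\cA_X,\bZ)$ is surjective, hence $a_{\ast} : \pi_1(X) \to \pi_1(\cA_X)$ is surjective. On the other hand, removing a closed subset of codimension $\ge 2$ from the smooth variety $\cA_X$ does not change the fundamental group, so $a$ induces an isomorphism $\pi_1(X^{\circ}) \xrightarrow{\ \sim\ } \pi_1(U) = \pi_1(\cA_X)$, while $\pi_1(X^{\circ}) \to \pi_1(X)$ is surjective by \cref{lem:fun}. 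As the composite $\pi_1(X^{\circ}) \to \pi_1(X) \xrightarrow{\ a_{\ast}\ } \pi_1(\cA_X)$ is exactly this isomorphism, the surjection $a_{\ast}$ is also injective, whence $\pi_1(X) \cong \pi_1(\cA_X)$ is abelian.
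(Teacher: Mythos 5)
Your proof is correct, and its second half is exactly the paper's: apply \cref{lem:abelian pi0} to a birational morphism onto a semi-abelian variety, observe that it is an isomorphism off a closed set whose image has codimension $\ge 2$, and compare fundamental groups using \cref{lem:fun} and the fact that removing codimension-$\ge 2$ subsets from the smooth target does not change $\pi_1$. Where you diverge is in how you manufacture that birational morphism. The paper never passes through the quasi-Albanese variety: it takes the quasi-Stein factorisation $X \xrightarrow{h} Y \xrightarrow{k} \cA$ of the \emph{given} map $\alpha$ (\cref{lem:Stein}), so connectedness of the general fibres of $h$ is automatic and $\dim X = \dim\alpha(X)$ immediately forces $h$ to be birational; the only structural input is then $0=\bar\kappa(X)\ge\bar\kappa(Y)\ge\bar\kappa(k(Y))\ge 0$ together with \cref{prop:Koddimabb} and Kawamata's Theorem~26 of \cite{Kaw81} (a finite cover of a semi-abelian variety with $\bar\kappa=0$ is itself semi-abelian and étale), after which one simply replaces $\cA$ by $Y$. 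You instead factor through the quasi-Albanese map and invoke Kawamata's Theorem~28 (the quasi-Albanese map of a variety with $\bar\kappa=0$ is dominant with connected general fibres), which you rightly identify as the one non-formal ingredient; it is indeed available in \cite{Kaw81,Fuj15}, so there is no gap. The trade-off is that the paper's route needs only the easier finite-cover theorem because the Stein factorisation supplies fibre connectedness for free, whereas yours leans on the deeper connectedness theorem but has the minor bonus of identifying the target as $\cA_X$ and showing that $a_*\colon\pi_1(X)\to\pi_1(\cA_X)$ is an isomorphism. Your auxiliary claims — the inequality $\bar\kappa(X)\ge\bar\kappa(\overline{a(X)})$ for the dominant generically finite $a$ via the log ramification formula, and the identification $\overline{a(X)}=\cA_X$ from \cref{prop:Koddimabb} and the universal property — are sound.
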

\begin{proof}
{\em Step 1. We may assume that \(\alpha\) is dominant and birational.}
Consider the quasi-Stein factorisation $X\stackrel{h}{\to} Y\stackrel{k}{\to} \cA$  of $\alpha$, where $h$  is birational (might not proper), and $k$ is a finite morphism.  Since   $\bar{\kappa}(X)= 0$, one has $$0=\bar{\kappa}(X)\geq\bar{\kappa}(Y)\geq \bar{\kappa}(k(Y))\geq 0$$ where the last inequality follows from \cref{prop:Koddimabb}.     
Hence $\bar{\kappa}(Y)=\bar{\kappa}(k(Y))=0$.  By  \cref{prop:Koddimabb}    and Kawamata \cite[Theorem~26]{Kaw81},  $k(Y)$ is a semi-abelian variety and  $k$ is finite \'etale. In conclusion, it now suffices to prove the proposition with \(\cA\) replaced by \(Y\). 
Hence in the following, we may assume that $\alpha$ is dominant and birational.

\noindent
{\em Step 2.}
We apply \cref{lem:abelian pi0} to get the isomorphism $\alpha|_{X^{\circ}}:X^{\circ}\to \cA^{\circ}$, where $\cA-\cA^\circ$ of codimension at least two.
 Let $\bar{\alpha}:\overline{X}\to \cA$  be a  proper birational morphism which extends $\alpha:X\to \cA$ such that $\overline{X}$ is smooth.   One gets the following commutative diagram
	\begin{equation*}
		\begin{tikzcd}
			\pi_1( X^{\circ})\arrow[r, "\simeq"] \arrow[d] & \pi_1(\cA^{\circ})\arrow[dd, "\simeq"] \\
			\pi_1(X)  \arrow[d]& \\
			\pi_1(\bar{\alpha}^{-1}(\cA)) \arrow[r, "\simeq"] & \pi_1(\cA)
		\end{tikzcd}
	\end{equation*}
	where the two rows are isomorphisms because they are induced by   proper birational morphisms  between smooth quasi-projective varieties. The map \(\pi_{1}(X^\circ) \to \pi_{1}(X)\) is surjective since it is induced by the inclusion of a dense Zariski open subset. It follows that all the groups in the previous diagram are isomorphic, so \(\pi_{1}(X) \cong \pi_{1}(\cA)\) is abelian.
\end{proof}

\begin{lem}\label{lem:ZD}
Let $\alpha:X\to \cA$ be  a  (possibly non-proper)  morphism  from a quasi-projective manifold $X$ to a semi-abelian variety $\cA$ with   $\overline{\kappa}(X)=0$. 
Assume that $\dim X=\dim \alpha(X)$ and $\dim X>0$.
Then $X$ admits a Zariski dense entire curve $\mathbb C\to X$.
In particular, $X$ is $h$-special. 
\end{lem}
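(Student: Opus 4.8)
The plan is to reduce the statement, using the structural analysis already carried out in the proof of \cref{lem:abelian pi}, to the model case of a semi-abelian variety with a subset of codimension at least two removed, and then to build a Zariski dense entire curve by hand from a generic affine line in the universal cover. Concretely, as in the proof of \cref{lem:abelian pi} I would first pass to the quasi-Stein factorisation $X\xrightarrow{h}Y\xrightarrow{k}\cA$ of $\alpha$ (\cref{lem:Stein}), with $h$ dominant birational and $k$ finite. Since $\bar{\kappa}(X)=0$, the chain $0=\bar{\kappa}(X)\ge\bar{\kappa}(Y)\ge\bar{\kappa}(k(Y))\ge 0$ together with \cref{prop:Koddimabb} and \cite[Theorem~26]{Kaw81} shows, after a translation, that $k(Y)$ is a semi-abelian subvariety of $\cA$ and $k$ is finite étale; hence $Y$ is itself a semi-abelian variety, of dimension $\dim Y=\dim X>0$. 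Applying \cref{lem:abelian pi0} to the birational morphism $h:X\to Y$ (and inspecting its proof), there is a Zariski open subset $X^\circ\subset X$ on which $h$ restricts to an isomorphism $X^\circ\xrightarrow{\sim}Y^\circ:=h(X^\circ)$, with $Y\setminus Y^\circ$ of codimension at least two in $Y$. A Zariski dense entire curve in $Y^\circ$ then transports through $(h|_{X^\circ})^{-1}$ to one in $X^\circ$, and since $X^\circ$ is a dense Zariski open subset of the irreducible variety $X$, it is again Zariski dense in $X$. So it suffices to exhibit a Zariski dense entire curve in $Y^\circ$.

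For this, let $\pi:\bC^n\to Y$ be the universal cover ($n=\dim Y$), which is the exponential homomorphism of the commutative complex Lie group $Y$: it is surjective with discrete kernel $\Lambda$. For $(a,b)\in\bC^n\times\bC^n$ set $f_{a,b}:\bC\to Y$, $f_{a,b}(t):=\pi(ta+b)$; I claim a very general pair $(a,b)$ works. On the one hand, if $\overline{\pi(\bC a)}^{\,\mathrm{Zar}}\ne Y$, then $a$ lies in the Lie algebra of a proper connected algebraic subgroup of $Y$ (which, containing the identity, is a genuine subgroup); since such subgroups are determined by the sublattices of $\Lambda$ they cut out, there are only countably many of them, so the bad $a$ form a countable union of proper linear subspaces of $\bC^n$, of measure zero. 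On the other hand, writing $Z:=Y\setminus Y^\circ$, the preimage $\pi^{-1}(Z)$ is a $\Lambda$-periodic closed analytic subset of $\bC^n$ of codimension at least two with at most countably many irreducible components, and the pairs $(a,b)$ for which $f_{a,b}(\bC)$ meets $Z$ are exactly the image of the holomorphic map $(\bC^n\setminus\{0\})\times\bC\times\pi^{-1}(Z)\to\bC^n\times\bC^n$, $(a,t,z)\mapsto(a,z-ta)$; since the source has complex dimension at most $2n-1<2n$, this image has Lebesgue measure zero. Choosing $(a,b)$ outside both exceptional sets, $f_{a,b}$ is an entire curve with image in $Y^\circ$ whose Zariski closure in $Y$ is all of $Y$; transporting it to $X^\circ$ and composing with $X^\circ\hookrightarrow X$ produces the desired Zariski dense entire curve $\bC\to X$. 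Finally, that $X$ is $h$-special is then immediate from \cref{lem:20230406}.

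I expect the only genuine difficulty to lie in the last paragraph: verifying that removing a set of codimension at least two really does leave room for a generic affine line in $\bC^n$ to avoid $\pi^{-1}(Z)$ while still pointing in a Zariski dense direction. The two genericity conditions must be imposed \emph{simultaneously} on $(a,b)$, one must be slightly careful that ``very general'' (the complement of a countable union of measure-zero sets) is enough, and it is precisely here — and nowhere else — that the hypothesis $\operatorname{codim} Z\ge 2$ enters (for a divisor the analogous ``shadow'' set would have full measure and the argument breaks). Everything else is a routine assembly of \cref{lem:Stein}, \cref{prop:Koddimabb}, \cite[Theorem~26]{Kaw81}, \cref{lem:abelian pi0}, and \cref{lem:20230406}.
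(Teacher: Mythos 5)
Your proposal is correct and follows essentially the same route as the paper: reduce via the quasi-Stein factorisation and \cref{lem:abelian pi0} to a semi-abelian variety minus a closed subset $Z$ of codimension at least two, then show a suitably generic translate of a suitably generic one-parameter subgroup misses $Z$ while remaining Zariski dense. The only (cosmetic) difference is that the paper fixes a Zariski dense one-parameter group in advance and finds the translate by a Baire category argument on $\cA$, whereas you choose the direction and the basepoint simultaneously by a Lebesgue measure-zero argument on the parameter space; both implementations are valid and use the codimension-two hypothesis in exactly the same way.
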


\begin{proof} 
As in the step 1 of the proof of \cref{lem:abelian pi}, we may assume that $\alpha:X\to \cA$ is birational.
We apply \cref{lem:abelian pi0} to get the isomorphism $\alpha|_{X^{\circ}}:X^{\circ}\to \cA^{\circ}$, where $\cA-\cA^\circ$ of codimension at least two.
Set $Z=\cA-\cA^{\circ}$.
We shall show that $\cA^{\circ}=\cA-Z$ contains a Zariski dense entire curve.
	Let $\varphi:\mathbb C\to \cA$ be a one parameter group such that $\varphi(\mathbb C)\subset \cA$ is Zariski dense.
	We define $F:\mathbb C\times Z\to \cA$ by $F(c,z)=z+\varphi(c)$.
	Then $F$ is a holomorphic map.
	Let $Z_1\subset Z_2\subset \cdots$ be a sequence of compact subsets of $Z$ such that $\cup_nZ_n=Z$.
	Let $K_n=\{|z|\leq n\}\subset \mathbb C$.
	Then $F(K_n\times Z_n)\subset \cA$ is a compact subset.
	Set $O_n=\cA\backslash F(K_n\times Z_n)$.
	Then $O_n\subset \cA$ is an open subset.
	Note that $\dim (\mathbb C\times Z)<\dim \cA$.
	Hence $O_n$ is a dense subset.
	Hence by the Baire category theorem, the intersection $\cap_nO_n$ is dense in $\cA$.
	In particular, we may take $x\in \cap_nO_n$.
	We define $f:\mathbb C\to \cA$ by $f(c)=x+\varphi(c)$.
	Then $f(\mathbb C)\cap Z=\emptyset$.
	Indeed suppose contary $f(c)\in Z$.
	Then $F(-c,f(c))=f(c)+\varphi(-c)=x$.
	Since $(-c,f(c))\in K_n\times Z_n$ for sufficiently large $n$, we have $x\in F(K_n\times Z_n)$, so $x\not\in O_n$.
	This contradicts to the choice of $x$. 
By \cref{lem:20230406}, $X$ is $h$-special.
\end{proof}

\begin{lem}\label{lem:20230509} 
Let $\alpha:X\to \cA$ be  a  (possibly non-proper)  morphism  from a quasi-projective manifold $X$ to a semi-abelian variety $\cA$ with   $\overline{\kappa}(X)=0$. 
Assume that $\dim X=\dim \alpha(X)$ and $\dim X>0$.
Then $\Spab(X)=X$.
\end{lem}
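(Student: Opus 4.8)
### Proof strategy for \cref{lem:20230509}

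The plan is to reduce to the birational case exactly as in the proofs of \cref{lem:abelian pi} and \cref{lem:ZD}, and then exhibit a large family of semi-abelian subvarieties (in fact translates of $\cA$ itself, or of one-parameter subgroups) whose images sweep out a Zariski dense subset of $X$. Concretely, I would first run Step 1 of \cref{lem:abelian pi}: using the quasi-Stein factorisation $X \xrightarrow{h} Y \xrightarrow{k} \cA$ together with \cref{prop:Koddimabb} and Kawamata's theorem, I may assume that $\alpha : X \to \cA$ is dominant and birational, so $\dim \cA = \dim X > 0$. Then \cref{lem:abelian pi0} gives a Zariski open $\cA^\circ \subset \cA$ whose complement $Z := \cA - \cA^\circ$ has codimension $\geq 2$, such that $\alpha$ restricts to an isomorphism $X^\circ \xrightarrow{\sim} \cA^\circ$.

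Next I would produce many rational maps from semi-abelian varieties into $X$ that are regular in codimension $\geq 2$. The natural choice is to take $\cA$ itself, with the translation maps: for $a \in \cA$, the map $t_a : \cA \to \cA$, $x \mapsto x + a$, is an isomorphism, and composing with $(\alpha|_{X^\circ})^{-1}$ gives a rational map $g_a := (\alpha|_{X^\circ})^{-1} \circ t_a : \cA \dashrightarrow X$ which is regular on $t_a^{-1}(\cA^\circ) = \cA^\circ - a$, an open set whose complement $Z - a$ has codimension $\geq 2$. Each such $g_a$ is a non-constant rational map from a semi-abelian variety regular off a codimension-$\geq 2$ set, hence contributes to $\Spab(X)$ by \cref{def:special2}(i). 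Its image is $g_a(\cA^\circ - a) = (\alpha|_{X^\circ})^{-1}(\cA^\circ - a + a \cap \cA^\circ) \supseteq (\alpha|_{X^\circ})^{-1}(\cA^\circ)$ once we are careful; more simply, the union over all $a$ of the images $g_a(\cA - a)$ already covers $X^\circ$ (take $a = 0$), and hence is Zariski dense in $X$. So in fact even the single map $\alpha^{-1} : \cA \dashrightarrow X$, regular off the codimension-$\geq 2$ set $Z$, suffices: it is a non-constant rational map from the semi-abelian variety $\cA$, regular on the complement of a codimension-$\geq 2$ subset, with image $X^\circ$ which is Zariski dense in $X$. Therefore $\Spab(X) \supseteq \overline{X^\circ}^{\mathrm{Zar}} = X$, and since $\Spab(X) \subseteq X$ trivially, we get $\Spab(X) = X$.

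I should double-check the one subtle point: \cref{def:special2}(i) requires $f : A \dashrightarrow X$ to be regular on a Zariski open $A_0 \subset A$ with $A \setminus A_0$ of codimension at least two, and to be non-constant. With $A = \cA$, $A_0 = \cA^\circ$, and $f = (\alpha|_{X^\circ})^{-1}$, both conditions hold: $\cA - \cA^\circ = Z$ has codimension $\geq 2$ by \cref{lem:abelian pi0}, and $f$ is birational hence non-constant since $\dim X > 0$. Thus $f(A_0) = X^\circ$ is contained in the union defining $\Spab(X)$, giving $X = \overline{X^\circ}^{\mathrm{Zar}} \subseteq \Spab(X)$.

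### The main obstacle

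The only place requiring care — and the reason the hypothesis $\dim X = \dim \alpha(X)$ and $\bar\kappa(X) = 0$ are both needed — is the reduction to the birational case and the application of \cref{lem:abelian pi0}, which in turn rests on \cref{prop:Koddimabb}(b): without the codimension-$\geq 2$ conclusion there, the indeterminacy of $\alpha^{-1}$ could be a divisor and $\alpha^{-1}$ would not qualify as a valid test map in \cref{def:special2}(i). Once \cref{lem:abelian pi0} is in hand the argument is immediate, so I expect no genuine difficulty beyond assembling these ingredients. I would write:

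\begin{proof}
As in Step 1 of the proof of \cref{lem:abelian pi}, using the quasi-Stein factorisation together with \cref{prop:Koddimabb} and \cite[Theorem~26]{Kaw81}, we may assume that $\alpha:X\to\cA$ is dominant and birational; in particular $\dim\cA=\dim X>0$. By \cref{lem:abelian pi0}, there is a Zariski closed subset $Z\subset\cA$ of codimension at least two such that $\alpha$ restricts to an isomorphism $\alpha|_{X^\circ}:X^\circ\xrightarrow{\ \sim\ }\cA^\circ:=\cA\setminus Z$. Consider the rational map $f:=(\alpha|_{X^\circ})^{-1}:\cA\dashrightarrow X$. It is non-constant since $\dim X>0$, it is regular on the Zariski open subset $\cA^\circ\subset\cA$ whose complement $Z$ has codimension at least two, and $f(\cA^\circ)=X^\circ$. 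Hence by \cref{def:special2}, $X^\circ\subseteq\Spab(X)$, so $X=\overline{X^\circ}^{\mathrm{Zar}}\subseteq\Spab(X)$. As $\Spab(X)\subseteq X$ by definition, we conclude $\Spab(X)=X$.
\end{proof}
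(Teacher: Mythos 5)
Your proof is correct and follows essentially the same route as the paper: reduce to the case where $\alpha$ is birational via Step~1 of \cref{lem:abelian pi}, apply \cref{lem:abelian pi0} to get the isomorphism $X^\circ\to\cA^\circ$ with codimension-two complement, and observe that the inverse $\cA^\circ\to X$ is a valid test map in \cref{def:special2}, giving $\Spab(X)=X$. The detour through translations $t_a$ is unnecessary, as you note yourself, but harmless.
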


\begin{proof}
As in the step 1 of the proof of \cref{lem:abelian pi}, we may assume that $\alpha:X\to \cA$ is birational.
We apply \cref{lem:abelian pi0} to get the isomorphism $\alpha|_{X^{\circ}}:X^{\circ}\to \cA^{\circ}$, where $\cA-\cA^\circ$ of codimension at least two.
Then we have the inverse $\cA^{\circ}\to X$ of the isomorphism $X^{\circ}\to \cA^{\circ}$.
Thus $\Spab(X)=X$.
\end{proof}

\begin{example}\label{example:20221105}
	This example is a quasi-projective surface $X$ with maximal quasi-Albanese dimension such that $X$ is $h$-special, special, and $\bar{\kappa}(X)=1$.

	Let $C_1$ and $C_2$ be elliptic curves which are not isogenus.
	Set $A=C_1\times C_2$.
	Let $p_1:A\to C_1$ and $p_2:A\to C_2$ be the first and second projections, respectively.
	Let $\widetilde{A}=\mathrm{Bl}_{(0,0)}A$ be the blow-up with respect to the point $(0,0)\in A$.
	Let $E\subset \widetilde{A}$ be the exceptional divisor and let $D\subset \widetilde{A}$ be the proper transform of the divisor $p_1^{-1}(0)\subset A$.
	Set $X=\widetilde{A}-D$.
	Then by \cref{lem:same Kd}, we have $\bar{\kappa}(X)=\bar{\kappa}(X-E)$.
	On the other hand, we have $X-E=(C_1-\{0\})\times C_2$.
	Hence $\bar{\kappa}(X)=1$.
	
	Next we construct a Zariski dense entire curve $f:\mathbb C\to X$.
	Let $\pi_1:\mathbb C\to C_1$ and $\pi_2:\mathbb C\to C_2$ be the universal covering maps.
	We assume that $\pi_1(0)=0$ and $\pi_2(0)=0$.
	Set $\Gamma=\pi_1^{-1}(0)$, which is a lattice in $\mathbb C$.
	We define an entire function $h(z)$ by the Weierstrass canonical product as follows
	$$
	h(z)=\prod_{\omega\in \Gamma}\left( 1-\frac{z}{\omega}\right)e^{P_2(z/\omega)}.
	$$
	We consider $\pi_2\circ h:\mathbb C\to C_2$.
	Then for $\omega\in \Gamma$, we have $\pi_2\circ h(\omega)=0$.
	Hence $\Gamma\subset (\pi_2\circ h)^{-1}(0)$.
	Thus $\pi_1^{-1}(0)\subset (\pi_2\circ h)^{-1}(0)$.
	We define $g:\mathbb C\to A$ by $g(z)=(\pi_1(z),\pi_2\circ h(z))$.
	Then $g^{-1}(p_1^{-1}(0))\subset g^{-1}((0,0))$.
	Let $f:\mathbb C\to \widetilde{A}$ be the map induced from $g$.
	Then $f^{-1}(D+E)\subset f^{-1}(E)$.
	By $\pi_1'(z)\not=0$ for all $z\in \mathbb C$, we have $f^{-1}(D\cap E)=\emptyset$.
	Hence $f^{-1}(D)=\emptyset$.
	This yields $f:\mathbb C\to X$.
	By the Bloch-Ochiai theorem, the Zariski closure of $g:\mathbb C\to A$ is a translate of an abelian subvariety of $A$.
	Since $C_1$ and $C_2$ are not isogenus, non-trivial abelian subvarieties are $A$, $C_1\times\{0\}$ and $\{0\}\times C_2$.
	Hence $f$ is Zariski dense. 
	Hence $X$ is $h$-special (cf. \cref{lem:20230406}).
	
	Next we show that $X$ is special.
	If $X$ is not special, then  by definition after replacing $X$ by a proper birational modification, there is an algebraic fiber space $g:X\to C$ from $X$ to a  quasi-projective curve such that the orbifold base $(C,\Delta)$ of $g$ defined in \cref{sec:spechspecial} is of log general type, hence hyperbolic.  However, the composition $g\circ f$ is a orbifold entire curve of  $(C,\Delta)$. This contradicts with the hyperbolicity of $(C,\Delta)$.  Therefore, $X$ is special. \end{example}

 \section{Generalized GGL conjecture for algebraic varieties with maximal quasi-Albanese dimension}  \label{sec:GGL}
  In this section, $A$ is a semi-abelian variety and $Y$ is a Riemann surface with a proper surjective holomorphic map $\pi:Y\to\mathbb C_{>\delta}$, where $\mathbb C_{>\delta}:=\{z\in\bC\mid \delta<|z|\}$ with some fixed positive constant $\delta>0$.
 The purpose of this section is to prove the following theorem.
 The notations in the statement will be given in \cref{subsec:notion Nevanlinna}.
  
 \begin{thm}\label{thm2nd}
 	Let $X$ be a smooth quasi-projective variety which is of log general type.
Assume that there is a morphism $a:X\to A$ such that $\dim X=\dim a(X)$.
 	Then there exists a proper Zariski closed set $\Xi\subsetneqq X$ with the following property:
 	Let $f:Y\to X$ be a holomorphic map such that $N_{\ram\pi}(r)=O(\log r)+o(T_f(r))||$ and that $f(Y)\not\subset \Xi$.
	Then $f$ does not have essential singularity over $\infty$, i.e.,	there exists an extension $\overline{f}:\overline{Y}\to\overline{X}$ of $f$, where $\overline{Y}$ is a Riemann surface such that  $\pi:Y\to \mathbb C_{>\delta}$ extends to a proper map $\overline{\pi}:\overline{Y}\to \mathbb C_{>\delta}\cup\{\infty\}$  
	and $\overline{X}$ is a compactification of $X$.	
 \end{thm}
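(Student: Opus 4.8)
\emph{Overall strategy and set-up.}
The plan is to turn the non-extension of $f$ into a statement about the growth of its characteristic function, and then contradict that growth by a Second Main Theorem adapted simultaneously to the semi-abelian target and to the ramified covering $\pi\colon Y\to\mathbb C_{>\delta}$. Fix a smooth projective equivariant compactification $\overline A$ of $A$ with $\partial\overline A:=\overline A\setminus A$ a simple normal crossing divisor; this is precisely the source of rigidity, since then $\Omega^1_{\overline A}(\log\partial\overline A)$ is trivial of rank $\dim A$, equivalently $K_{\overline A}(\partial\overline A)\cong\mathcal O_{\overline A}$. Fix also a smooth compactification $(\overline X,D)$ of $X$ on which $a$ extends to $\overline a\colon\overline X\to\overline A$. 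Replacing $A$ by the smallest semi-abelian subvariety whose translate contains $\overline{a(X)}$, and translating, we may assume $\overline{a(X)}$ is Zariski dense in $A$. Using $K_{\overline A}(\partial\overline A)\cong\mathcal O_{\overline A}$ and the hypothesis $\dim X=\dim a(X)$, one computes $K_{\overline X}(D)$ to be an explicit effective divisor supported on the ramification locus of $\overline a$ together with the components of $D$ lying over $A$; since $X$ is of log general type this divisor is \emph{big}. Finally we let $\Xi\subsetneqq X$ be the union of the non-quasi-finite locus of $a$ (proper because $\dim X=\dim a(X)$) and the preimage under $a$ of the Kawamata--Ueno structure locus of $\overline{a(X)}\subset A$ (the iterated union of translates of positive-dimensional semi-abelian subvarieties contained in it); this is a proper Zariski closed subset by Kawamata's structure theorem \cite{Kaw81} and \cref{prop:Koddimabb}, precisely because $X$, hence $\overline{a(X)}$, is of log general type.

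\emph{Reduction to the Zariski-dense case.}
Given $f\colon Y\to X$ with $f(Y)\not\subseteq\Xi$, let $V\subseteq X$ be the Zariski closure of $f(Y)$; replacing $V$ by the irreducible component dominated by $f$ we may take $V$ irreducible. The key point is that $V\not\subseteq\Xi$ forces $V$ to be again of log general type with $\dim V=\dim a(V)$, $a|_V$ quasi-finite onto its image, and with its own exceptional set contained in $\Xi$: indeed a general point of $V$ then maps to a point of $\overline{a(X)}$ off every translated positive-dimensional semi-abelian subvariety contained in $\overline{a(V)}$, so $\overline{a(V)}$ is of log general type by Kawamata, and then $\overline{\kappa}(V)\ge\overline{\kappa}(\overline{a(V)})=\dim V$. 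Hence, if $V\subsetneqq X$, we may replace $X$ by $V$ and induct on $\dim X$ (the case $\dim X=0$ being vacuous). So from now on $f$ has Zariski-dense image and $g:=a\circ f\colon Y\to A$ is Zariski dense.

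\emph{The Second Main Theorem and the conclusion.}
This is the technical core, occupying \cref{subsec:notion Nevanlinna} and the remainder of the section. One develops Nevanlinna theory for holomorphic maps out of the covering $\pi\colon Y\to\mathbb C_{>\delta}$ --- characteristic, proximity and counting functions, the First Main Theorem, the logarithmic derivative lemma --- and proves, in the spirit of Noguchi--Winkelmann--Yamanoi (cf.\ \cite{NW13}) but carrying the covering along, a Second Main Theorem with a ramification term for the Zariski-dense map $g$. The semi-abelian structure supplies the $n=\dim X$ logarithmic one-forms $\overline a^{*}\omega_{1},\dots,\overline a^{*}\omega_{n}$ trivialising $\overline a^{*}\Omega^1_{\overline A}(\log\partial\overline A)$, whose wedge is a nonzero section of $K_{\overline X}(D)$; feeding the \emph{big} divisor $K_{\overline X}(D)$ and these forms into the jet-lift/structure machinery (which forces the Zariski closures of the successive jet lifts of $g$ to be translated semi-abelian subvarieties, the degenerate possibilities being exactly those absorbed into $\Xi$) yields, for every $\varepsilon>0$,
\[
	T_{f}(r)\ \le\ \varepsilon\,T_{f}(r)\ +\ N_{\ram\pi}(r)\ +\ O(\log r)\ \qquad\|\, .
\]
Choosing $\varepsilon<\tfrac12$ and inserting the hypothesis $N_{\ram\pi}(r)=O(\log r)+o(T_{f}(r))\ \|$ forces $T_{f}(r)=O(\log r)\ \|$. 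A holomorphic map from $Y$ into the projective variety $\overline X$ whose characteristic function is $O(\log r)$ is meromorphic at each of the finitely many points of $\overline Y$ over $\infty$, and a meromorphic map from a Riemann surface to a projective variety is a morphism; hence $f$ extends to $\overline f\colon\overline Y\to\overline X$, i.e.\ $f$ has no essential singularity over $\infty$.

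\emph{Main obstacle.}
Everything rests on the Second Main Theorem with the $N_{\ram\pi}$-term for maps of the covering $\pi\colon Y\to\mathbb C_{>\delta}$ into $A$. Already for $Y=\mathbb C$ this is the deep theorem of Noguchi--Winkelmann--Yamanoi, and the covering version requires redoing the jet-space and logarithmic-derivative estimates while keeping $N_{\ram\pi}$ under control, as well as identifying $\Xi$ so that it is at once a proper Zariski closed subset (via Kawamata) and stable under passing to the Zariski closure of $f(Y)$. A secondary point, invisible when the torus part of $A$ is large but essential when $A$ is an abelian variety, is that the boundary of $\overline A$ then carries no information, so one must genuinely exploit the boundary divisor $D$ of $\overline X$ and the ramification of $\overline a$, via the identity $K_{\overline A}(\partial\overline A)\cong\mathcal O_{\overline A}$.
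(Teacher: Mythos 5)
Your overall shape is right (Nevanlinna theory on the covering $\pi\colon Y\to\mathbb C_{>\delta}$, a Second Main Theorem with an $N_{\ram\pi}$ term, semi-abelian structure theory, bigness of the log canonical bundle, and the conclusion $T_f(r)=O(\log r)\,\|$ forcing extension), but the proposal has a genuine gap in the two steps you treat as routine: the construction of $\Xi$ and the reduction to Zariski-dense image.

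You define $\Xi$ as the $a$-preimage of the Kawamata--Ueno structure locus of $\overline{a(X)}$, and you justify its properness by asserting that ``$X$, hence $\overline{a(X)}$, is of log general type.'' That implication is false: log general type does not pass from $X$ to $\overline{a(X)}$ along a generically finite map (the inequality $\overline\kappa(X)\ge\overline\kappa(\overline{a(X)})$ goes the wrong way for your purposes). The standard counterexample is $X=A\setminus H$ with $A$ an abelian variety and $H$ ample: here $X$ is of log general type, $a$ is the inclusion, $\overline{a(X)}=A$, and the structure locus is all of $A$, so your $\Xi$ equals $X$. This is not a degenerate corner case --- it is precisely the situation where the log-general-typeness of $X$ lives entirely in the boundary divisor, which you yourself flag as the ``essential'' case in your last paragraph. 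The same failure breaks your reduction step: ``$V\not\subset\Xi$ forces $\overline{a(V)}$ to be of log general type'' is exactly the assertion that fails, so you cannot induct on $\dim X$ this way. (Asserting that every $V\not\subset\Xi$ is of log general type is essentially $\Spalg(X)\subset\Xi$, which in the paper is a \emph{consequence} of \cref{thm2nd} via \cref{cor:20221102}, not an input.) A related inaccuracy: your formula for $K_{\overline X}(D)$ in terms of the ramification of $\overline a$ and $K_{\overline A}(\partial\overline A)\cong\mathcal O_{\overline A}$ presupposes $\dim X=\dim A$; under the actual hypothesis $\dim X=\dim a(X)$ one must work with the normalization of $\overline{a(X)}$ and a desingularization of its boundary, which cannot be removed by shrinking $A$ (e.g.\ a genus-two curve in its Jacobian is contained in no translated proper semi-abelian subvariety).

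For comparison, the paper avoids both problems by never asking $\overline{a(X)}$ to be of log general type and by producing $\Xi$ from the Second Main Theorem machinery itself. The induction is not on $\dim X$ but on semi-abelian subvarieties $B\subset A$: one introduces the classes $I_B$ of maps whose $A/B$-component grows slowly, and \cref{prop:20220902} shows that a map in $I_B$ avoiding an exceptional set $\Phi_B$ either drops into $I_C$ for a strictly smaller $C$ or extends; $\Xi$ is the union of the finitely many $\Phi_B$ arising in this descent. The bigness of the relevant log canonical bundle is that of $K_{\overline X'}(H)$ for the normalization $X\to W=\overline{a(X)}$ (\cite[Lemma 3]{NWY13}), and the boundary counting term $\overline N_f(r,D\setminus Z)$ produced by \cref{cor:20220805} is killed because $f$ lands in $X$ and therefore only meets $D$ along the codimension-two set $Z$, not by any hypothesis on $\overline{a(X)}$. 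Your proposal would need to be reorganized along these lines; as written, the claimed $\Xi$ is not a proper subset in the central case.
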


 If we apply this theorem for $Y=\mathbb C_{>\delta}$, then we conclude that $X$ is pseudo Picard hyperbolic, where $X$ is the same as in the theorem.

A consequence of \cref{thm2nd} is the following result on \Cref{conj:GGL}.
\begin{cor}\label{cor:20221102}
Let $X$ be a smooth quasi-projective variety. 
Assume that there is a morphism $a:X\to A$ such that $\dim X=\dim a(X)$.
 Then the following properties are equivalent:  
 \begin{enumerate}[wide = 0pt,  noitemsep,  font=\normalfont, label=(\alph*)] 
 	\item \label{being general type1} $X$ is of log general type; 
 	\item  \label{pseudo Picard1} $\Spp(X)\subsetneqq X$; 
 	\item \label{pseudo Brody}$\Sph(X)\subsetneqq X$; 
\item \label{spab}
$\Spab(X)\subsetneqq X$; 
 	\item \label{strong LGT1} 
$\Spalg(X)\subsetneqq X$.
 \end{enumerate}
\end{cor}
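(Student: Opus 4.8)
The plan is to deduce the five-way equivalence from \cref{thm2nd}, using the standing hypothesis that $a : X \to A$ has $\dim X = \dim a(X)$ together with the elementary inclusions among the special sets and the structural results on quasi-Albanese maps from \cref{sec:quasi}. The backbone of the argument is the cycle $\ref{being general type1} \Rightarrow \ref{pseudo Picard1} \Rightarrow \ref{pseudo Brody} \Rightarrow \ref{strong LGT1} \Rightarrow \ref{being general type1}$, with $\ref{spab}$ inserted between $\ref{pseudo Brody}$ and $\ref{strong LGT1}$ via the chain of inclusions $\Spab(X) \subseteq \Spalg(X)$ and $\Spab(X) \subseteq \Sph(X)$ (both immediate from the definitions, since images of semi-abelian varieties and of $\mathbb{C}$ are subvarieties not of log general type, respectively entire curves).

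First I would establish $\ref{being general type1} \Rightarrow \ref{pseudo Picard1}$: this is exactly \cref{thm2nd} applied with $Y = \mathbb{C}_{>\delta}$ (for which $N_{\ram\pi}(r) = 0$), which gives a proper Zariski closed $\Xi \subsetneqq X$ outside of which every $f : \bD^* \to X$ extends over the puncture, i.e.\ $\Spp(X) \subseteq \Xi \subsetneqq X$. The implication $\ref{pseudo Picard1} \Rightarrow \ref{pseudo Brody}$ follows from the general inclusion $\Sph(X) \subseteq \Spp(X)$: an entire curve $\mathbb{C} \to X$ restricts to a map $\bD^* \to X$, and if this extended holomorphically to a compactification the curve would be constant by the compactness/Liouville argument (this is \cref{lem:inclusion} referenced after \cref{main2}). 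Next, $\ref{pseudo Brody} \Rightarrow \ref{strong LGT1}$: here I would invoke the implication $\ref{being general type1} \Rightarrow \ref{pseudo Picard1}$ already proved. Assume $\Sph(X) \subsetneqq X$. If $X$ itself is of log general type we are reducing to showing $\Spalg(X) \subsetneqq X$; more generally one runs the argument on subvarieties. Concretely: a positive-dimensional closed $V \subseteq X$ not of log general type maps to $a(V) \subseteq A$, and by \cref{prop:Koddimabb} its image and Stein factorization are controlled; by \cref{lem:abelian pi} and \cref{lem:ZD} such a $V$ (after passing to its Stein factorization over a semi-abelian variety, which has $\bar\kappa = 0$ when $\dim V = \dim a(V)$) carries a Zariski-dense entire curve — whereas if $\dim a(V) < \dim V$ one would need a fibration argument reducing the dimension. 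This forces $V \subseteq \Sph(X) \subsetneqq X$, hence $\Spalg(X) \subseteq \Sph(X) \subsetneqq X$. Finally $\ref{strong LGT1} \Rightarrow \ref{being general type1}$ is formal: if $X$ is not of log general type then $X$ is itself a positive-dimensional subvariety not of log general type, so $\Spalg(X) = X$.

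To close the loop with $\ref{spab}$, I would argue $\ref{strong LGT1} \Leftrightarrow \ref{spab}$ directly using $\Spab(X) \subseteq \Spalg(X)$ (so $\ref{strong LGT1} \Rightarrow \ref{spab}$ is trivial), and for $\ref{spab} \Rightarrow \ref{being general type1}$: if $X$ is not of log general type, then combining \cref{lem:20230509} (which gives $\Spab(V) = V$ for any $V$ admitting a morphism to a semi-abelian variety with $\bar\kappa = 0$ and equal dimensions) with the structure of the non-log-general-type locus inside $X$ — itself having a morphism to $A$ of maximal dimension — yields $\Spab(X) = X$. The main obstacle I anticipate is this last circle of implications around $\Spalg$ and $\Spab$: one must argue that a subvariety $V \subseteq X$ that is \emph{not} of log general type actually sits inside all four special sets, and this requires carefully handling the case $\dim a(V) < \dim V$ (where the hypothesis on $X$ does not directly restrict to $V$). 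The resolution should be to use \cref{prop:Koddimabb}(a): since $\bar\kappa(V) \geq 0$ would fail the "not of log general type" conclusion only in the extremal case, one reduces via the Iitaka-type fibration of $a|_V$ and the fact that fibers map to translates of semi-abelian subvarieties, iterating until one lands in the situation covered by \cref{lem:ZD} and \cref{lem:20230509}. Everything else is bookkeeping with the definitions in \cref{def:special2}.
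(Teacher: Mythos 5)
Your proposal follows essentially the same route as the paper's proof: the hard implication (a)$\Rightarrow$(b) comes from \cref{thm2nd} applied with $Y=\mathbb C_{>\delta}$, the implications (b)$\Rightarrow$(c)$\Rightarrow$(d) come from the inclusions $\Spab\subseteq\Sph\subseteq\Spp$ of \cref{lem:inclusion} (note these are not quite ``immediate from the definitions''--- $\Spab\subseteq\Sph$ already uses \cref{lem:ZD}), and the return to (e) and (a) uses the log Iitaka fibration together with \cref{lem:20230509}. The one place where you overshoot is the $\Spalg$ step: you try to show that \emph{every} positive-dimensional subvariety $V$ not of log general type lies in $\Sph(X)$, and you correctly flag that this is problematic when $\dim a(V)<\dim V$, i.e.\ when $V$ sits inside the locus $E$ where $a$ fails to be quasi-finite (such a $V$ could a priori be hyperbolic, so this stronger claim is not available). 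The paper avoids this entirely: it sets $\Xi=\Spab(X)\cup E$ and only shows that every $V\not\subset\Xi$ is of log general type. For such $V$ one has $\dim a(V)=\dim V$, hence $\bar\kappa(V)\geq 0$ by \cref{prop:Koddimabb}; if $V$ were not of log general type, a single application of its log Iitaka fibration yields a positive-dimensional very general fiber $F$ with $\bar\kappa(F)=0$, $F\not\subset E$, hence $\dim a(F)=\dim F$, so $\Spab(F)=F$ by \cref{lem:20230509}, contradicting $F\not\subset\Spab(X)$. No iteration of fibrations is needed, and subvarieties inside $E$ are simply absorbed into the exceptional set. With that adjustment your argument closes and coincides with the paper's.
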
 

To prove \cref{cor:20221102}, we start from the following general facts of the special subsets.

\begin{lem}\label{lem:inclusion} 
	Let $X$ be a smooth quasi-projective   variety. Then  one has  $\Spab\subseteq\Sph\subseteq\Spp$. 
\end{lem}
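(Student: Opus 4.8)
The statement to prove is the chain of inclusions $\Spab(X)\subseteq\Sph(X)\subseteq\Spp(X)$ for a smooth quasi-projective variety $X$.

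\textbf{Plan of proof.} The strategy is to verify each inclusion at the level of the ``generating'' maps, since each special subset is defined as the Zariski closure of a union of images, and Zariski closure is monotone: it suffices to show that every generator of the smaller set has image contained in the larger set (or in its Zariski closure).

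For the inclusion $\Spab(X)\subseteq\Sph(X)$, I would start from a non-constant rational map $f:A\dashrightarrow X$ from a semi-abelian variety $A$, regular on a Zariski open $A_0\subset A$ with $\codim(A\setminus A_0)\geq 2$, and I must show $\overline{f(A_0)}^{\mathrm{Zar}}\subseteq\Sph(X)$. The point is that $A$ is dominated by entire curves: take a one-parameter subgroup $\varphi:\bC\to A$ whose image is Zariski dense (or, more flexibly, translates $c\mapsto x+\varphi(c)$ for varying base points $x$), exactly as in the proof of \cref{lem:ZD}. A Baire-category argument as in \cref{lem:ZD} shows that for a dense set of base points $x\in A$ the entire curve $c\mapsto x+\varphi(c)$ avoids the ``bad locus'' $A\setminus A_0$ (which has codimension $\geq 2$, hence cannot be hit by a generic translate of a curve), so it maps into $A_0$; composing with $f$ gives a non-constant holomorphic map $\bC\to X$ through the given point, whose image lies in $f(A_0)$. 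Letting $x$ vary over this dense set, the union of these entire curves is Zariski dense in $f(A_0)$, hence $\overline{f(A_0)}^{\mathrm{Zar}}\subseteq\Sph(X)$. Taking the Zariski closure over all such $f$ gives $\Spab(X)\subseteq\Sph(X)$.

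For the inclusion $\Sph(X)\subseteq\Spp(X)$, I would take a non-constant holomorphic map $g:\bC\to X$. If $g$ extends holomorphically to $\bP^1\to\overline{X}$ it is algebraic and its image is a curve not of log general type, but in any case I only need to produce, through (a dense subset of) points of $g(\bC)$, holomorphic maps $\bD^*\to X$ with essential singularity at the origin whose images lie in $g(\bC)$. Consider the restriction of $g$ to an annulus, or better, precompose $g$ with a map $\bD^*\to\bC$ of the form $w\mapsto 1/w$ (or $w\mapsto \exp(1/w)$): the map $h:\bD^*\to X$, $h(w)=g(\exp(1/w))$ has an essential singularity at $0$ — indeed if it extended holomorphically to $\bar{h}:\bD\to\overline{X}$ then, since $\exp(1/w)$ takes every nonzero value infinitely often near $0$, the cluster set of $h$ at $0$ would be all of $\overline{g(\bC)}$, forcing $g$ to be constant, a contradiction. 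Thus $h(\bD^*)\subseteq g(\bC)$ contributes to $\Spp(X)$, and since $h(\bD^*)$ is Zariski dense in $\overline{g(\bC)}^{\mathrm{Zar}}$ (it contains $g$ of a punctured-disk worth of points of $\bC$ which is already Zariski dense in the curve $\overline{g(\bC)}$), we get $\overline{g(\bC)}^{\mathrm{Zar}}\subseteq\Spp(X)$. Taking Zariski closure over all $g$ yields $\Sph(X)\subseteq\Spp(X)$.

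\textbf{Main obstacle.} The routine monotonicity bookkeeping is harmless; the only genuinely delicate point is the first inclusion, specifically producing entire curves into $A_0$ rather than merely into $A$ — i.e. dodging the codimension-$\geq 2$ locus $A\setminus A_0$. This is precisely the Baire-category construction already carried out in \cref{lem:ZD}, so I would simply invoke that argument (a generic translate of a fixed Zariski-dense one-parameter subgroup misses any fixed subvariety of codimension $\geq 2$, by a dimension count plus Baire), and the proof goes through. A minor subtlety to check is that the constructed entire curves (resp. punctured-disk maps) are genuinely non-constant and that their union is Zariski dense in the relevant image, but both follow immediately from Zariski density of $\varphi(\bC)$ in $A$ (resp. of $g(\bC)$ in its closure).
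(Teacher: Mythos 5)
Your proposal is correct and follows essentially the same route as the paper: the first inclusion is obtained from the Baire-category/one-parameter-subgroup argument of \cref{lem:ZD} (applied to the codimension-$\geq 2$ complement $A\setminus A_0$), and the second from precomposing a non-constant entire curve with $w\mapsto\exp(1/w)$ and noting the Zariski closures of the images coincide.
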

\begin{proof}
$\Spab\subseteq\Sph$ follows from  \cref{lem:ZD}. For any non-constant holomorphic map $f:\bC\to X$,   the holomorphic map 
\begin{align*}
	g:\bC^*\to X\\
	z\mapsto f(\exp(\frac{1}{z}))
\end{align*} has essential singularity at 0. Note that the Zariski closure of $g(\bD^*)$ coincides with the Zariski closure of $f$. It follows that   $\Sph\subseteq\Spp$. 
\end{proof}

 \begin{proof}[Proof of \cref{cor:20221102}]  \ref{being general type1}$\implies$\ref{pseudo Picard1}.  It follows from \cref{thm2nd} directly. 
 	
 	 \ref{pseudo Picard1}$\implies$\ref{pseudo Brody}. Note that pseudo Picard hyperbolicity always implies pseudo Brody one by \cref{lem:inclusion}.

\ref{pseudo Brody}$\implies$\ref{spab}.
This follows from \cref{lem:inclusion}.

 \ref{spab}$\implies$\ref{strong LGT1}.  
Let $E\subsetneqq X$ be a proper Zariski closed subset such that $a|_{X\backslash E}:X\backslash E\to A$ is quasi-finite.
Set $\Xi=\Spab(X)\cup E$.
 Under the hypothesis  \ref{spab}, we have $\Xi\subsetneqq X$. 
 
Now we prove that all closed subvarieties $V\subset X$ with $V\not\subset \Xi$ are of log-general type.
Note that $\dim V=\dim a(V)$. 
Hence $\bar{\kappa}(V)\geq 0$ by \cref{prop:Koddimabb}.    
We prove that the logarithmic Iitaka fibration $V\dashrightarrow W$ is birational.
So assume contrary that a very general fiber $F$ is positive dimensional.
We have $F\not\subset \Xi$, hence $F\not\subset \Spab(X)$, and the logarithmic Kodaira dimension satisfies $\bar{\kappa}(F)=0$.
Then $a|_F:F\to A$ satisfies $\dim a(F)=\dim F$. 
Hence by \cref{lem:20230509}, we have $\Spab{F}=F$.
This contradicts to $F\not\subset \Spab(X)$.
Hence $V$ is of log general type. 

\ref{strong LGT1}$\implies$\ref{being general type1}.  This is obvious. 
 \end{proof} 
 Therefore, \cref{cor:20221102}   proves  \cref{conj:GGL} for quasi-projective varieties with maximal quasi-Albanese dimension. 
 
 \medspace
 
 In this section, after introducing the notations in Nevanlinna theory in the following subsection (cf. \cref{subsec:notion Nevanlinna}), we shall prove \cref{thm2nd} in \cref{subsec:4.2} to \cref{subsec:4.7}.
 The proof of \cref{thm2nd} is based on the arguments of \cite{NWY13} and \cite{Yam15}.
 The proof of \cref{thm2nd} is independent from the other part of this paper.
 So the reader may skip \cref{subsec:4.2} to \cref{subsec:4.7}.
   
  \subsection{Some notions in Nevanlinna theory}\label{subsec:notion Nevanlinna}
We shall recall some elements of Nevanlinna theory (cf. \cite{NW13}).  
For $r>2\delta$, define
 $
  Y(r)=\pi^{-1}\big( \mathbb C_{>2\delta}(r) \big)
$ 
 where $\mathbb C_{>2\delta}(r)=\{z\in \bC\mid  2\delta<|z|<r\}$. 
 In the following, we assume that $r>2\delta$.
 The \emph{ramification counting function} of the covering $\pi:Y\to  \bC_{>\delta}$ is defined by
$$
N_{{\rm ram}\,  \pi}(r):=\frac{1}{{\rm deg} \pi}\int_{2\delta}^{r}\left[\sum_{y\in {Y}(t)} \ord_y \ram \pi \right]\frac{dt}{t},
$$ 
where $\ram \pi\subset Y$ is the ramification divisor of $\pi:Y\to\mathbb C_{>\delta}$.

Let $X$ be a projective variety and let $Z$ be a closed subscheme of $X$. Let $f: Y \rightarrow X$ be a holomorphic map such that $f(Y)\not\subset Z$. 
 Since $Y$ is one dimensional, the pull-back $f^{*} Z$ is a divisor on $Y$. The counting function and truncated function are defined to be 
$$
\begin{gathered}
	N_f(r, Z):=\frac{1}{\operatorname{deg}  \pi} \int_{2\delta}^{r}\left[\sum_{y \in Y(t)} \operatorname{ord}_{y} f^{*} Z\right] \frac{d t}{t}, \\
 {N}_f^{(k)}(r, Z):=\frac{1}{\operatorname{deg} \pi} \int_{2\delta}^{r}\left[\sum_{y \in Y(t)} \min \left\{k, \operatorname{ord}_{y} f^{*} Z\right\}\right] \frac{d t}{t} ,
\end{gathered}
$$
where $k\in\mathbb Z_{\geq 1}$ is a positive integer.
We define the proximity function by
$$
m_f(r, Z):=\frac{1}{\operatorname{deg} \pi} \int_{y\in \pi^{-1}(\{|z|=r\})}\lambda_Z(f(y))\ \frac{d\mathrm{arg}\ \pi(y)}{2\pi},
$$
where $\lambda_Z:X-\mathrm{supp}\ Z\to \mathbb R_{\geq 0}$ is a Weil function for $Z$ (cf. \cite[Prop 2.2.3]{Yam04}).

Let $L$ be a line bundle on $X$.
Let $f:Y\to X$ be a holomorphic map.
We define the order function $T_f(r,L)$ as follows.
First suppose that $X$ is smooth.
We equip with a smooth hermitian metric $h_L$, and let $c_1(L,h_L)$ be the curvature form of $(L, h_L)$.  Set
$$
T_f(r,L):=\frac{1}{\operatorname{deg} \pi} \int_{2\delta}^{r}\left[\int_{Y(t)} f^{*}c_1(L,h_L)\right] \frac{d t}{t}.
$$
This definition is independent of the choice of  the hermitian metric up to a function  $O(\log r)$.  
For the general case, let $V\subset X$ be the Zariski closure of $f(Y)$ and let $V'\to V$ be a smooth model of $V$.
This induces a morphism $p:V'\to X$. 
We have a natural lifting $f':Y\to V'$ of $f:Y\to V$.
Then we set
$$
T_f(r,L):=T_{f'}(r,p^*L)+O(\log r).
$$
This definition does not depend on the choice of $V'\to V$ up to a function  $O(\log r)$.

\begin{thm}[First Main Theorem]\label{thm:first}
Let $X$ be a projective variety and let $D$ be an effective Cartier divisor on $X$.
Let $f:Y\to X$ be a holomorphic curve such that $f(Y)\not\subset D$.
Then
$$N_f(r,D)+m_f(r,D)=T_f(r,\mathcal{O}_X(D))+O(\log r).$$
\end{thm}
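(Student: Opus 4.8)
The plan is to obtain this identity in the classical way, from the Poincar\'e--Lelong formula combined with the Green--Jensen (Stokes) formula attached to the proper covering $\pi\colon Y\to\bC_{>\delta}$, following the compact-base argument of \cite{NW13} with only cosmetic changes. First I would reduce to the case where $X$ is smooth: by the definition of $T_f(r,\cO_X(D))$ recalled above, and since $N_f(r,D)$ depends only on the divisor $f^*D$ on $Y$ while $m_f(r,D)$ depends only on a Weil function for $D$ (well defined up to a bounded function, cf.\ \cite[Prop.~2.2.3]{Yam04}), replacing $X$ by a smooth model $V'\to V$ of the Zariski closure $V$ of $f(Y)$, $D$ by its pullback, and $f$ by the induced lift $f'\colon Y\to V'$ changes each of the three terms by at most $O(\log r)$. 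So we may assume $X$ smooth, with $f(Y)\not\subset D$.

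Next I would fix the canonical section $s_D\in H^0(X,\cO_X(D))$ with $\mathrm{div}(s_D)=D$ and a smooth Hermitian metric $h$ on $L:=\cO_X(D)$, normalized so that $\lambda_D:=-\log\|s_D\|_h^2$ is the Weil function for $D$ entering the definition of $m_f(r,D)$; any other choice differs by $O(1)$. The Poincar\'e--Lelong formula then gives the current identity $dd^c\lambda_D=c_1(L,h)-[D]$ on $X$, and, since $f(Y)\not\subset D$ (so that $f^*D$ is a genuine divisor on $Y$ and $\lambda_D\circ f$ is locally integrable), pulling back along $f$ yields $dd^c(\lambda_D\circ f)=f^*c_1(L,h)-[f^*D]$ on $Y$.

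The core of the argument is then to integrate this identity over the exhaustion $Y(t)=\pi^{-1}(\bC_{>2\delta}(t))$. For almost every $t$ the level set $\pi^{-1}(|z|=t)$ is a smooth $1$-cycle, and applying Stokes' theorem on $Y(t)$ (with small disks removed around the points of $f^{-1}(D)$, then letting their radii tend to $0$) shows that the curvature mass $\int_{Y(t)}f^*c_1(L,h)$ equals the boundary integral of $d^c(\lambda_D\circ f)$ over $\partial Y(t)$ plus the counting term $\sum_{y\in Y(t)}\ord_y f^*D$. Multiplying by $\frac{dt}{t}$, integrating from $2\delta$ to $r$, and dividing by $\deg\pi$ — exactly the operations built into the definitions of $T_f$, $N_f$, $m_f$ — this becomes
\[
	N_f(r,D)+m_f(r,D)=T_f(r,\cO_X(D))+O(\log r),
\]
where the outer boundary $\pi^{-1}(|z|=t)$ produces the proximity term $m_f(r,D)$, the inner boundary $\pi^{-1}(|z|=2\delta)$ contributes only a bounded constant, and the $O(\log r)$ absorbs the metric dependence of $T_f$ together with the smooth-model reduction. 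This is the asserted formula.

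The only delicate point — and the place where I expect to spend the most care — is the bookkeeping in the Jensen formula over the \emph{branched} covering $\pi$: one must verify that the ramification of $\pi$ contributes no extra term to the First Main Theorem, in contrast with the Second Main Theorem, where the Euler characteristic of $Y(t)$ (Riemann--Hurwitz) forces the appearance of $N_{\ram\pi}(r)$. Concretely, $dd^c$ commutes with the holomorphic map $\pi$ and Stokes' theorem is insensitive to ramification, so the annulus computation transfers verbatim to $Y(t)$; one also checks that the (locally finitely many) critical values of $|\pi|$ cause no harm, by working with almost every $t$ and using the monotonicity in $r$ of the quantities involved. Everything else is routine.
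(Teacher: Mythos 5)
Your proposal is correct and follows essentially the same route as the paper's proof: reduction to a smooth model of the Zariski closure of $f(Y)$, the Poincar\'e--Lelong formula applied to $\log\|s_D\circ f\|^{-1}$ for a smooth Hermitian metric on $\mathcal{O}_X(D)$, and Stokes/Jensen bookkeeping over the exhaustion $Y(t)$. The only (harmless) imprecision is that the inner boundary $\pi^{-1}(\{|z|=2\delta\})$ contributes a constant times $\log(r/2\delta)$ after the $\int\frac{dt}{t}$ integration, i.e.\ an $O(\log r)$ term rather than a bounded one, but this is absorbed into the stated error term in any case.
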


\begin{proof}
Let $V\subset X$ be the Zariski closure of $f(Y)$ and let $p:V'\to V$ be a desingularization.
Let $f':Y\to V'$ be the canonical lifting of $f$.
Replacing $X$, $D$ and $f$ by $V'$, $p^*D$ and $f'$, respectively, we may assume that $X$ is smooth.
\par

Let $||\cdot ||$ be a smooth Hermitian metric on $\mathcal{O}_X(D)$ and let $s_D$ be the associated section for $D$.
By the Poinca{\'e}-Lelong formula, we have 
$$2dd^c\log (1/||s_D\circ f(y)||)=-\sum_{y\in Y}(\mathrm{ord}_yf^{*}D)\delta _y+f^{*}c_1(\mathcal{O}_X(D),||\cdot ||),$$
where $\delta_y$ is Dirac current suported on $y$.
Integrating over $Y(t)$, we get 
$$
2\int_{Y(t)}dd^c\log (1/||s_D\circ f(y)||)=-\sum_{y\in Y(t)}\mathrm{ord}_yf^{*}D+\int_{Y(t)}f^{*}c_1(\mathcal{O}_X(D), ||\cdot ||)
$$
Hence, we get
\begin{equation*}
\begin{split}
-N_f(r,D)+T_f(r,\mathcal{O}_X(D))
&=\lim_{\delta'\to \delta+}\frac{2}{\deg\pi}\int_{2\delta'}^r\frac{dt}{t}\int_{\pi^{-1}\big( \mathbb C_{>2\delta'}(t) \big)}dd^c\log \left( \frac{1}{||s_D\circ f(z)||}\right)\\
&=\lim_{\delta'\to \delta+}\frac{2}{\deg\pi}\int_{2\delta'}^r\frac{dt}{t}\int_{\partial \pi^{-1}\big( \mathbb C_{>2\delta'}(t) \big)}d^c\log \left( \frac{1}{||s_D\circ f(z)||}\right)\\
&=m_f(r,D)-m_f(2\delta,D)-C_f\log (r/2\delta),
\end{split}
\end{equation*}
where we set
$$
C_f=\lim_{\delta'\to \delta+}\frac{2}{\deg\pi}\int_{\pi^{-1}\{ |z|=2\delta'\}}d^c\log \left( \frac{1}{||s_D\circ f(z)||}\right),$$
which is a constant independent of $r$. 
\end{proof}

For any effective divisor $D_L \in  |L|$ with $f(Y)\not\subset D_L$,  the First Main theorem (cf. \cref{thm:first}) implies the following Nevanlinna inequality:
\begin{align}\label{eqn:20221120}
 	N_f(r, D_L) \leqslant T_f(r, L)+O(\log r) .
\end{align}
When $L$ is an ample line bundle on $X$, then we have $T_f(r,L)+O(\log r)>0$.
If $L'$ is another ample line bundle on $X$, then we have $T_f(r,L')=O(T_f(r,L))+O(\log r)$.
We write $T_f(r)$ for short instead of $T_f(r,L)$ when the order of magnitude as $r\to\infty$ is concerned.

\begin{lem}\label{lem:20230411}
Let $X$ be a smooth projective variety and let $f:Y\to X$ be a holomorphic map.
Assume that $T_f(r)=O(\log r)||$ and $N_{\ram\pi}(r)=O(\log r)||$. 	Here the symbol $||$ means that the stated estimate holds for $r>2\delta$ outside some exceptional interval with finite Lebesgue measure. 
 Then $f$ does not have essential singularity over $\infty$.
\end{lem}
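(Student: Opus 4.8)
The plan is to reduce the statement to a one–variable fact about meromorphic functions near $\infty$, after extracting from the two hypotheses that both the ramification of $\pi$ and the growth of $f$ near $\infty$ are as small as possible. First observe that since $N_{\ram\pi}(\cdot)$ and $T_f(\cdot,L)$ are non‑decreasing and the exceptional set hidden in the symbol $||$ has finite Lebesgue measure, for each large $r$ one may pick $r'\in[r,2r]$ outside it and conclude $N_{\ram\pi}(r)\le N_{\ram\pi}(r')=O(\log r')=O(\log r)$, and likewise $T_f(r)=O(\log r)$, \emph{for all} $r$. Now a standard accumulation argument shows $\pi$ has only finitely many ramification points: if $y_1,y_2,\dots$ were infinitely many, with $t_k:=|\pi(y_k)|\nearrow\infty$, then setting $n(t)=\#\{k:t_k<t\}$ one gets $N_{\ram\pi}(r)\ge \frac{1}{\deg\pi}\int_{\sqrt r}^{r}n(t)\frac{dt}{t}\ge \frac{n(\sqrt r)\log r}{2\deg\pi}$, which grows faster than $\log r$ since $n(\sqrt r)\to\infty$ — a contradiction. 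Hence there is $R>2\delta$ over which $\pi$ is unramified, so $\pi^{-1}(\{|z|>R\})$ is a disjoint union of finitely many ends $E_1,\dots,E_s$, each a connected finite unramified cover of the annulus $\{|z|>R\}$; by the classification of covers of an annulus, each $E_i$ is biholomorphic to an annulus $\{|w_i|>\rho_0\}$ on which $\pi$ is $w_i\mapsto w_i^{k_i}$, and $\sum_i k_i=\deg\pi$.

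Next I compare the global order function with an intrinsic one on each end. Fix a very ample line bundle $L$ on $X$ (replace it by a power if needed) equipped with a positively curved smooth metric $h_L$, so that $f^{*}c_1(L,h_L)\ge 0$ pointwise, and embed $X\hookrightarrow \bP^M$ by $|L|$. Since $|z|=|w_i|^{k_i}$ on $E_i$, the region $E_i\cap\{|w_i|<\tau\}$ is contained in $Y(\tau^{k_i})$, so restricting the defining integral of $T_f(\cdot,L)$ and substituting $s=\tau^{k_i}$ gives
\[
	T^{E_i}_{f}(\rho,L):=\int_{\rho_0}^{\rho}\biggl(\int_{E_i\cap\{|w_i|<\tau\}}f^{*}c_1(L,h_L)\biggr)\frac{d\tau}{\tau}\ \le\ \frac{\deg\pi}{k_i}\,T_f(\rho^{k_i},L)\ =\ O(\log\rho).
\]
Thus the restriction $f|_{E_i}\colon \{|w_i|>\rho_0\}\to X\subset\bP^M$ has order function $O(\log\rho)$ in the coordinate $w_i$, and by the standard inequalities relating $T_{f|_{E_i}}(\cdot,\mathcal O_{\bP^M}(1))$ to $\sum_j T_{g_{i,j}}$, the meromorphic coordinate functions $g_{i,1},\dots,g_{i,M}$ of $f|_{E_i}$ also satisfy $T_{g_{i,j}}(\rho)=O(\log\rho)$.

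The remaining input is purely one–dimensional: a meromorphic function $g$ on an annulus $\{|w|>\rho_0\}$ whose Nevanlinna order function is $O(\log\rho)$ extends meromorphically across $w=\infty$. Indeed, by the accumulation argument again $g$ has finitely many zeros and poles near $\infty$, so after multiplying by a rational function $g$ becomes holomorphic and non‑vanishing there, hence of the form $w^{m}e^{\phi(w)}$ with $\phi$ single‑valued holomorphic on the annulus; decomposing $\phi=\phi^{-}+\phi^{+}$ into its Laurent parts at $\infty$, $\phi^{-}$ is bounded while, if $\phi^{+}$ is non‑constant, the order function of $e^{\phi^{+}}$ grows at least like $\rho$ — forcing $\phi^{+}$ constant, so $g$ is meromorphic at $\infty$. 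Applying this to every $g_{i,j}$, the map $f|_{E_i}$ extends to a meromorphic, hence (the source being a Riemann surface and $X$ projective) holomorphic, map $\{|w_i|>\rho_0\}\cup\{\infty\}\to X$. Adjoining one point $p_i$ per end produces the Riemann surface $\overline Y\supset Y$, the proper surjective extension $\overline\pi\colon\overline Y\to\bC_{>\delta}\cup\{\infty\}$ with $\overline\pi(p_i)=\infty$, and the holomorphic extension $\overline f\colon\overline Y\to\overline X=X$ of $f$ — which is exactly the assertion that $f$ has no essential singularity over $\infty$.

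The main obstacle is the second step: one must first pin down the shape of $Y$ near $\infty$ (this is where the ramification hypothesis is used, to guarantee genuinely annular ends) and then correctly transport the order function between the intrinsic coordinate $w_i$ on $E_i$, with its own scale $\rho\asymp|z|^{1/k_i}$, and the global $T_f(r,L)$, which is an average over the whole fibre $\pi^{-1}(\{|z|=r\})$; the positivity of $c_1(L,h_L)$ is precisely what legitimizes the restriction–to–$E_i$ inequality, while the one–variable extension fact, though classical, should be recalled for completeness.
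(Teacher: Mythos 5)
Your argument is correct, and its first half coincides with the paper's: both proofs use the monotonicity of $\int[\sum_{y\in Y(t)}\ord_y\ram\pi]\,dt/t$ to show that $N_{\ram\pi}(r)=O(\log r)$ forces the ramification divisor to be finite, so that the ends of $Y$ are unramified finite covers of an outer annulus, i.e.\ punctured discs, and $\overline{Y}$, $\overline{\pi}$ exist. Where you diverge is the extension step over each puncture. The paper converts $T_f(r)=O(\log r)$ into boundedness of the area function $\alpha(s)=\int_{Y(s)}f^*\omega$, hence $\int_Y f^*\omega<\infty$, and then invokes \cref{lem:picard} — a Bishop-type removable-singularity theorem for finite-area maps $\bD^*\to X$ into a compact Kähler manifold. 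You instead transport the order function to the intrinsic coordinate $w_i$ of each end (the positivity of $c_1(L,h_L)$ and the substitution $s=\tau^{k_i}$ are indeed exactly what make the restriction inequality legitimate), pass to coordinate functions of a projective embedding, and use the classical one-variable fact that a meromorphic function of logarithmic order near $\infty$ is meromorphic at $\infty$. Your route is more self-contained at the level of classical function theory and avoids Bishop's theorem entirely, at the cost of requiring a projective (rather than merely Kähler) target and some extra bookkeeping; the paper's route is shorter because it black-boxes \cref{lem:picard}, and it is the version that generalizes to non-algebraic compact Kähler targets. One small point of care in your one-variable step: the Laurent component $\phi^{+}$ of $\phi$ on $\{|w|>\rho_1\}$ is an entire function, not a priori a polynomial; your growth assertion for $e^{\phi^{+}}$ remains valid in that generality (e.g.\ via the Fourier coefficients of $\mathrm{Re}\,\phi^{+}$ on $|w|=\rho$, which give $\frac{1}{2\pi}\int|\mathrm{Re}\,\phi^{+}|\,d\theta\geq \tfrac12|a_n|\rho^{n}$ for any nonzero coefficient with $n\geq 1$), but the justification should not implicitly assume $\phi^{+}$ is polynomial.
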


\begin{proof}
For $s>2\delta$, we set $\nu(s)=\sum_{y\in {Y}(s)} \ord_y \ram \pi$.
Then for $2\delta <s<r$, we have
\begin{equation}\label{eqn:20230410}
\begin{split}
N_{{\rm ram}\,  \pi}(r)-N_{{\rm ram}\,  \pi}(s)
&=\frac{1}{{\rm deg} \pi}\int_{s}^{r}\left[\sum_{y\in {Y}(t)} \ord_y \ram \pi \right]\frac{dt}{t}
\\
&\geq \frac{1}{{\rm deg} \pi}\int_{s}^{r}\nu(s)\frac{dt}{t}=\frac{\nu(s)}{{\rm deg} \pi}(\log r-\log s).
\end{split}
\end{equation}
Set $K=\varliminf_{r\to\infty}N_{\ram\pi}(r)/\log r$.
By $N_{\ram\pi}(r)=O(\log r)||$, we have $K<\infty$.
By \eqref{eqn:20230410}, we have $\nu(s)\leq K{\rm deg} \pi$.
Since $s>2\delta$ is arbitraly, the ramification divisor $\ram\pi\subset Y$ consists of finite points.
Thus we may take $s_0>\delta$ such that $\pi^{-1}(\mathbb C_{>s_0})\to \mathbb C_{>s_0}$ is unramified covering.
Thus $\pi^{-1}(\mathbb C_{>s_0})$ is a disjoint union of punctured discs.
Hence we may take an extension $\overline{\pi}:\overline{Y}\to \mathbb C_{>\delta}\cup\{\infty\}$ of the covering $\pi:Y\to \mathbb C_{>\delta}$.

In the following, we replace $\delta$ by $s_0$ and take a connected component of $\pi^{-1}(\mathbb C_{>s_0})$.
Then we may assume that $\pi:Y\to \mathbb C_{>\delta}$ is unramified and $Y$ is a punctured disc.
Let $\omega$ be a smooth positive $(1,1)$-form on $X$.
For $s>2\delta$, we set $\alpha(s)=\int_{Y(s)}f^*\omega$.
Then by the similar computation as in \eqref{eqn:20230410}, the assumption $T_f(r)=O(\log r)||$ yields that $\alpha(s)$ is bounded on $s>2\delta$.
Thus $\int_Yf^*\omega<\infty$.
By \cref{lem:picard} below, we conclude the proof.
\end{proof}
The following lemma is well-known to the experts. We refer the readers to \cite[Lemma 3.3]{CD21} for a simpler proof based on Bishop's theorem. 
\begin{lem}\label{lem:picard}
Let $X$ be a compact K\"ahler manifold and let $f:\mathbb D^*\to X$ be a holomorphic map from the punctured disc $\mathbb D^*$.
Let $\omega$ be a smooth K\"ahler form on $X$.
Suppose $\int_{\mathbb D^*}f^*\omega<\infty$.
Then $f$ has holomorphic extension $\bar{f}:\mathbb D\to X$. \qed
\end{lem}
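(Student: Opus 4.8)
The plan is to realize $f$ by its graph and to extend that graph using Bishop's theorem on removable singularities for analytic sets of locally finite volume. Fix a K\"ahler form $\omega_{\mathbb{D}}$ on the unit disk $\mathbb{D}$ (say the Euclidean one) and equip $\mathbb{D}\times X$ with the product K\"ahler form $\Omega:=\mathrm{pr}_1^*\omega_{\mathbb{D}}+\mathrm{pr}_2^*\omega$. Let $\Gamma:=\{(z,f(z))\mid z\in\mathbb{D}^*\}$ be the graph of $f$. Since $f$ is holomorphic and $X$ is compact, $\Gamma$ is a closed, irreducible, pure $1$-dimensional analytic subset of the complex manifold $(\mathbb{D}\times X)\setminus(\{0\}\times X)$, biholomorphic to $\mathbb{D}^*$ via the first projection. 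The first computation I would carry out is the volume bound: by Wirtinger's identity the Riemannian area of $\Gamma$ for the metric associated to $\Omega$ equals $\int_\Gamma\Omega$, and since $\mathrm{pr}_1|_\Gamma$ is a biholomorphism onto $\mathbb{D}^*$ with $\mathrm{pr}_2|_\Gamma=f\circ(\mathrm{pr}_1|_\Gamma)$, this is
\[
\int_\Gamma\Omega=\int_{\mathbb{D}^*}\omega_{\mathbb{D}}+\int_{\mathbb{D}^*}f^*\omega<\infty,
\]
the first term being the Euclidean area of $\mathbb{D}$ and the second finite by hypothesis. In particular $\Gamma$ has finite volume in a neighborhood of $\{0\}\times X$ inside $\mathbb{D}\times X$.

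Next I would invoke Bishop's extension theorem: because $\{0\}\times X$ is an analytic subset of the complex manifold $\mathbb{D}\times X$, and $\Gamma$ is a pure $1$-dimensional analytic subset of its complement with locally finite $2$-dimensional Hausdorff measure near $\{0\}\times X$, the closure $\overline{\Gamma}$ of $\Gamma$ in $\mathbb{D}\times X$ is again a pure $1$-dimensional analytic subset. Moreover $\overline{\Gamma}$ is irreducible, being the closure of the irreducible set $\Gamma\cong\mathbb{D}^*$.

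It then remains to identify $\overline{\Gamma}$ with the graph of the desired extension. Consider $p:=\mathrm{pr}_1|_{\overline{\Gamma}}:\overline{\Gamma}\to\mathbb{D}$. It is proper since $X$ is compact, and it restricts to a biholomorphism over $\mathbb{D}^*$; as $\overline{\Gamma}$ is irreducible of dimension one and its image (being closed and containing $\mathbb{D}^*$) is all of $\mathbb{D}$, the fiber $p^{-1}(0)$ is finite. Hence $p$ is a finite, bimeromorphic morphism onto the smooth --- hence normal --- curve $\mathbb{D}$, and is therefore an isomorphism. Consequently $\overline{\Gamma}$ is the graph of the holomorphic map $\overline{f}:=\mathrm{pr}_2\circ p^{-1}:\mathbb{D}\to X$, which coincides with $f$ on $\mathbb{D}^*$ and is the sought extension.

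The only genuinely delicate point is verifying the hypotheses of Bishop's theorem in this non-compact ambient setting, but this is not a real obstacle: Bishop's statement is local along $\{0\}\times X$, the compact region $\overline{\mathbb{D}_{1/2}}\times X$ carries the portion of $\Gamma$ lying over $\mathbb{D}_{1/2}^*$, and that portion has finite volume by the estimate above. The passage from ``finite bimeromorphic onto a normal curve'' to ``isomorphism'' is the analytic Zariski main theorem for curves; everything else is routine.
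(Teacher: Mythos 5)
Your proof is correct, and it follows exactly the route the paper endorses: the paper gives no inline proof but refers to \cite[Lemma 3.3]{CD21} for "a simpler proof based on Bishop's theorem", which is precisely your graph-plus-Bishop argument (finite volume of the graph in $\mathbb{D}\times X$, analytic extension of its closure, then identification of the closure as a graph via the finite bimeromorphic projection onto the normal curve $\mathbb{D}$). No gaps; the verification that $p^{-1}(0)$ is finite and the appeal to the analytic Zariski main theorem are exactly the delicate points, and you handle both correctly.
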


\begin{lem}\label{lem:20230415}
Let $X$ be a smooth projective variety and let $f:Y\to X$ be a holomorphic map.
If $\varliminf_{r\to\infty}T_f(r)/\log r<+\infty$, then $T_f(r)=O(\log r)$.
\end{lem}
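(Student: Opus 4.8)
The plan is to reduce \cref{lem:20230415} to the essentially one-variable statement already captured in the proof of \cref{lem:20230411}. Recall that $T_f(r,L)$ is, up to an $O(\log r)$ error, an average of $\int_{Y(t)}f^*c_1(L,h_L)$ against $dt/t$. The key elementary fact is monotonicity: writing $\beta(t):=\int_{Y(t)}f^*\omega\ge 0$ for a fixed positive $(1,1)$-form $\omega$ representing $c_1(L,h_L)$ (we may assume $L$ ample so this is a genuine Kähler form, since changing $L$ only alters $T_f$ by $O(\log r)$ and a bounded factor), the function $\beta$ is nondecreasing in $t$, and $T_f(r,L)=\tfrac{1}{\deg\pi}\int_{2\delta}^{r}\beta(t)\,\tfrac{dt}{t}+O(\log r)$.

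First I would run the same Borel-type computation as in \eqref{eqn:20230410}: for $2\delta<s<r$,
\begin{equation*}
T_f(r,L)-T_f(s,L)\ge \frac{1}{\deg\pi}\int_s^r\beta(s)\,\frac{dt}{t}+O(\log r)=\frac{\beta(s)}{\deg\pi}\,\log(r/s)+O(\log r).
\end{equation*}
Setting $K:=\varliminf_{r\to\infty}T_f(r)/\log r<\infty$ and letting $r\to\infty$ along a sequence realizing the liminf, divide by $\log r$ to conclude $\beta(s)\le K\deg\pi$ for every $s>2\delta$; i.e. $\beta$ is a bounded nondecreasing function, so $\beta(t)\le C$ for all $t>2\delta$ with $C:=K\deg\pi$. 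Then
\begin{equation*}
T_f(r,L)=\frac{1}{\deg\pi}\int_{2\delta}^r\beta(t)\,\frac{dt}{t}+O(\log r)\le \frac{C}{\deg\pi}\int_{2\delta}^r\frac{dt}{t}+O(\log r)=O(\log r),
\end{equation*}
which is exactly the assertion, now without the $||$ qualifier (the bound holds for all large $r$). For the general, possibly non-smooth $X$ appearing in the statement one first replaces $X$ by a smooth model $V'$ of the Zariski closure of $f(Y)$ and $f$ by its canonical lift $f'$, using that $T_f(r)=T_{f'}(r,p^*L)+O(\log r)$ as in \cref{subsec:notion Nevanlinna}; this reduces to the smooth case just treated.

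The only genuinely delicate point is the legitimacy of passing from the integrated order function to a pointwise bound on $\beta(t)$, i.e. ensuring $\beta$ is measurable, finite, and nondecreasing so that the inequality $\int_s^r\beta(t)\tfrac{dt}{t}\ge \beta(s)\log(r/s)$ is valid; this is immediate since $Y(t)\subset Y(r)$ for $t<r$ and $f^*\omega\ge 0$, so $\beta$ is a nonnegative nondecreasing function of $t$, automatically measurable, and finite for each fixed $t$ because $Y(t)$ is relatively compact in $Y$. Everything else is the same Borel-lemma manipulation already used for \cref{lem:20230411}, so I expect no real obstacle; the statement is really just the observation that a nondecreasing function whose logarithmic average grows at most like $\log r$ along a subsequence must itself be bounded.
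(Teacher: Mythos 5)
Your argument is correct and is essentially identical to the paper's proof: the paper also sets $\alpha(s)=\int_{Y(s)}f^*\omega$, invokes the same monotonicity computation as in \eqref{eqn:20230410} to deduce from $\varliminf_{r\to\infty}T_f(r)/\log r<+\infty$ that $\alpha$ is bounded, and then integrates $\frac{dt}{t}$ to get $T_f(r)=O(\log r)$. The only difference is your (harmless but unnecessary) remark about reducing to a smooth model, since the lemma already assumes $X$ smooth projective.
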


\begin{proof}
For $s>2\delta$, we set $\alpha(s)=\int_{Y(s)}f^*\omega$, where $\omega$ is a smooth positive $(1,1)$-form on $X$.
Then by the similar computation as in \eqref{eqn:20230410}, the assumption $\varliminf_{r\to\infty}T_f(r)/\log r<+\infty$ yields that $\alpha(s)$ is bounded, i.e., there exists a positive constant $C>0$ such that $\alpha(s)<C$ for all $s>2\delta$.
Hence for all $r>2\delta$, we have
$$
\frac{1}{\operatorname{deg} \pi} \int_{2\delta}^{r}\left[\int_{Y(t)} f^{*}\omega\right] \frac{d t}{t}<\frac{C}{\operatorname{deg} \pi}\log \frac{r}{2\delta}.
$$
Thus we get $T_f(r)=O(\log r)$.
\end{proof}

 \subsection{Lemma on logarithmic derivatives} 
Let $\pi _Y:Y\to \mathbb C_{>\delta}$ be a Riemann surface with a proper surjective holomorphic map.
Set $D_Y=\pi_Y^*(\partial /\partial z)$.
Then $D_Y$ is a meromorphic vector field on $Y$.
For a meromorphic function $f:Y\to\mathbb P^1$ on $Y$, we set $f'=D_Y(f)$.
For $r>2\delta$, we set as follows:
$$
m(r,f)=\frac{1}{\deg \pi_Y}\int_{y\in \pi_Y^{-1}(\{|z|=r\})}\log \sqrt{1+|f(y)|^2}\frac{d\arg \pi_Y(y)}{2\pi},
$$
$$
T(r,f)=\frac{1}{\deg \pi_Y}\int_{2\delta}^r\frac{dt}{t}\int_{Y(t)}f^*\omega_{\mathrm{F.S.}},
$$
where $\omega_{\mathrm{F.S.}}$ is the Fubini-Study form:
$$
\omega_{\mathrm{F.S.}}=\frac{1}{(1+|w|^2)^2}\frac{\sqrt{-1}}{2\pi}dw\wedge d\overline{w}.
$$
Note that $m(r,f)$ is a proximity function function for $f:Y\to\mathbb P^1$ with respect to the divisor $(\infty)$ on  $\mathbb P^1$, and $
T(r,f)$ is a order function with respect to the ample line bundle $\mathcal{O}_{\mathbb P^1}(1)$.

\begin{lem}\label{lem:202311191}
For $n\in\mathbb Z_{\geq 1}$, we have
$T(r,f^n)=nT(r,f)+O(\log r)$.
\end{lem}

\begin{proof}
Let $\varphi:\mathbb P^1\to\mathbb P^1$ be defined by $\varphi(w)=w^n$, then we have $f^n=\varphi\circ f$.
By $\varphi^*\mathcal{O}_{\mathbb P^1}(1)=\mathcal{O}_{\mathbb P^1}(n)$, we obtain our lemma.
\end{proof}

\begin{lem}\label{lem:202311192}
$m(r,f'/f)\leq 3T(r,f)+O(\log r)\ ||$
\end{lem}

\begin{proof}
Set 
$$
f^{\#}=\frac{|f'|}{1+|f|^2}
$$
and
$$
m^{\#}(r,f)=\frac{1}{\deg \pi _Y}\int_{y\in \pi_Y^{-1}(\{|z|=r\})}\log \sqrt{1+(f^{\#}(y))^2} \frac{d \arg \pi _Y(y)}{2\pi}.
$$
We first show
\begin{equation}\label{eqn:1aa}
m^{\#}(r,f)\leq T(r,f)+O(\log r)\ ||.
\end{equation}
Indeed using convexity of $\log$, we have
\begin{equation*}
m^{\#}(r,f)\leq \frac{1}{2}\log \left( 1+\frac{1}{\deg \pi _Y}\int_{y\in \pi_Y^{-1}(\{|z|=r\})}f^{\#}(y)^2\frac{d \arg \pi _Y(y)}{2\pi}\right) .
\end{equation*}
Using polar coordinate, we get
$$
\int_{Y(r)}f^*\omega_{\mathrm{F.S.}}=\frac{1}{\pi}\int_{2\delta}^rtdt\int_{y\in \pi_Y^{-1}(\{|z|=t\})}f^{\#}(y)^2d\arg\pi_Y(y).
$$
This shows
$$
\frac{1}{2r}\frac{d}{dr}\left( r\frac{d}{dr}T(r,f)\right) =\frac{1}{\deg \pi _Y}\int_{y\in \pi_Y^{-1}(\{|z|=r\})}f^{\#}(y) ^2\frac{d \arg \pi _Y(y)}{2\pi}.
$$
Hence using Borel's growth lemma \cite[p. 13]{NW13}, we get
\begin{equation*}
\begin{split}
m^{\#}(r,f)&\leq  \frac{1}{2}\log \left( 1+\frac{1}{2r}\frac{d}{dr}\left( r\frac{d}{dr}T(r,f)\right)\right) \\
&\leq  \frac{1}{2}\log \left( 1+\frac{1}{2r}\left( r\frac{d}{dr}T(r,f)\right)^{1+\delta}
\right) \ ||_{\delta}\\
&\leq  \frac{1}{2}\log \left( 1+\frac{1}{2}r^{\delta}T(r,f)^{(1+\delta )^2}\right) \ ||_{\delta}\\
&\leq  T(r,f) +O(\log r)\ ||.
\end{split}
\end{equation*}
This shows our estimate \eqref{eqn:1aa}.
Here we take $\delta =1$ in the final equation.

\par

Now by
\begin{multline*}
\log \sqrt{1+\left\vert \frac{1}{f}\right\vert^2}+\log \sqrt{1+|f|^2}+\log \sqrt{ 1+\left( \frac{|f'|}{1+|f|^2}\right) ^2}
\\
=\log \sqrt{ \left(|f|+\frac{1}{|f|}\right)^2+\left\vert \frac{f'}{f}\right\vert ^2} 
\geq
\log\sqrt{1+\left\vert \frac{f'}{f}\right\vert ^2},
\end{multline*}
we get
$$
m(r,f'/f)\leq m(r,1/f)+m(r,f )+m^{\#}(r,f).
$$
Using the first main theorem (cf. \cref{thm:first}) and \eqref{eqn:1aa}, we get \cref{lem:202311192}.
\end{proof}

We prove Nevanlinna's lemma on logarithmic derivatives (cf. \cite[Lemma 1.2.2]{NW13}) in the following form.
\begin{thm}\label{thm:20231119}
$m(r,f'/f)=o(T(r,f))+O(\log r)\ ||.
$
\end{thm}

\begin{proof}
We first show that for every $\varepsilon>0$, we have
\begin{equation}\label{eqn:202311193}
m(r,f'/f)\leq \varepsilon T(r,f))+O_{\varepsilon}(\log r)\ ||_{\varepsilon}.
\end{equation}
Here $O_{\varepsilon}$ indicates that the implicit constant in the Landau symbol $O$ may depend on $\varepsilon$.

Indeed we take $n\in\mathbb Z_{\geq 1}$ so that $3/n<\varepsilon$.
We set $f_n=\sqrt[n]{f}$.
Then $f_n$ is a multi-valued meromorphic function on $Y$.
Let $\pi_{Y_n}:Y_n\to \mathbb C_{>\delta}$ be the Riemann surface for $f_n$.
Both $f$ and $f_n$ are considered as meromorphic funtions on $Y_n$.
We have $f'/f=nf_n'/f_n$.
By \cref{lem:202311191}, we get
$$
m(r, f'/f)= m(r,f_n'/f_n )+O(1)\leq 3T(r,f_n)+O(\log r)\ ||. 
$$
Hence by \cref{lem:202311192}, we have 
$$
m(r, f'/f)\leq \frac{3}{n}T(r,f)+O(\log r)\ ||. 
$$
This shows \eqref{eqn:202311193}.

Suppose that $\varliminf_{r\to\infty}T(r,f)/\log r<+\infty$.
Then by \cref{lem:20230415}, we have $T(r,f)=O(\log r)$.
Then by \eqref{eqn:202311193}	, we have $m(r, f'/f)			=O(\log r)$, in particular $m(r,f'/f)=o(T(r,f))+O(\log r)\ ||$.
	
Next we assume $\varliminf_{r\to\infty}T(r,f)/\log r=+\infty$.
Then $\log r=o(T(r,f))$.		
		Hence by \eqref{eqn:202311193}	, we have 
		\begin{equation}\label{eqn:20231120}
		m(r, f'/f)			\leq \ep T(r, f)+	o(T(r,f))\ ||_{\varepsilon}
		\end{equation}
		for all $\varepsilon>0$.
		This implies $m(r, f'/f)=o(T(r,f))\ ||$.
		Indeed we take a sequence $2\delta=r_0<r_1<r_2<\cdots$ with $r_n\to\infty$ as follows.
	By \eqref{eqn:20231120}, we have	$m(r, f'/f)\leq \frac{1}{n}T(r,f)$ for all $r>2\delta$ outside some exceptional set $E_n\subset (2\delta,\infty)$ with $|E_n|<\infty$.
	We take $r_n$ such that $|(r_n,\infty)\cap E_n|<1/2^n$.
	We set $\varepsilon(r)=1/n$ if $r_n\leq r<r_{n+1}$, and $\varepsilon(r)=1$ if $r_0<r<r_1$.
	Then $\varepsilon(r)\to0$ if $r\to\infty$.
	Set $E=(r_0,r_1)\cup\bigcup\left((r_n,r_{n+1})\cap E_n\right)$.
	Then we have $m(r, f'/f)\leq \varepsilon(r)T(r,f)$ for all $r>2\delta$ outside $E$, and $|E|<r_1+1$.
		Hence we get $m(r, f'/f)=o(T(r,f))\ ||$, in particular we get $m(r,f'/f)=o(T(r,f))+O(\log r)\ ||$.
		This conclude the proof of the theorem.	
		\end{proof}
 
Let $V$ be a smooth projective variety and let $D\subset V$ be a simple normal crossing divisor.
 Let $T(V;\log D)$ be the logarithmic tangent bundle. 
Set $\overline{T}(V;\log D)=P(T(V;\log D)\oplus \mathcal{O}_V)$, which is a smooth compactification of $T(V;\log D)$.
Let $\partial T(V;\log D)\subset \overline{T}(V;\log D)$ be the boundary divisor.
Let $f:Y\to V$ be a holomorphic map such that $f(Y)\not\subset D$.
Then we get a derivative map $j_1(f):Y\to \overline{T}(V;\log D)$.
By \eqref{thm:20231119}, we obtain the following estimate
\begin{equation}\label{eqn:202311195}
m_{j_1 (f)}(r,\partial T(V;\log D))=o(T(r,f))+O(\log r)\ ||.
\end{equation}
For the proof of this estimate, we refer the readers to \cite[Thm 5.1.7 (2)]{Yam04}, where we use \cref{thm:20231119} instead of  \cite[Thm 2.5.1 (2)]{Yam04}.
(Here we simply denote $j_1 (f)$ instead of "$j_1^{\log} (f)$" in \cite{Yam04}.)
More generally, by the same manner using \cref{thm:20231119}, the estimates in \cite[Thm 5.1.7]{Yam04} are valid with the error term $S(r,f)=o(T(r,f))+O(\log r)\ ||$.
 
  \subsection*{Convention and notation.} 
  In the rest of this section, we use the following convention and notation.
Let $\Sigma$ be a quasi-projective variety and let $\overline{\Sigma}$ be a projective compactification.
We denote by $f:Y\da \Sigma$ a holomorphic map $\bar{f}:Y\to \overline{\Sigma}$ such that $\bar{f}^{-1}(\Sigma)\not=\emptyset$.
Moreover for a Zariski closed set $W\subset \Sigma$, we denote by $f(Y)\sqsubset W$ if $\bar{f}(Y)\subset \overline{W}$, where $\overline{W}\subset \overline{\Sigma}$ is the Zariski closure.
 
 Let $A$ be a semi-abelian variety and let $S$ be a projective variety.
 For $f:Y\da A\times S$, we set
 $$
 \pN{f}(r):=\frac{1}{\operatorname{deg}  \pi} \int_{2\delta}^{r}\mathrm{card}\left(Y(t)\cap \bar{f}^{-1}(\partial A\times S)\right)\frac{d t}{t}.
 $$
 This definition does not depend on the choice of $\overline{A}$.
 
 For $f:Y\da A\times S$, we define $f_A:Y\da A$ and $f_S:Y\to S$ by the compositions of $f$ and the projections $A\times S\to A$ and $A\times S\to S$, respectively.

 \subsection{Preliminaries for the proof of \cref{thm2nd}}
 \label{subsec:4.2}
 
 Let $V$ be a smooth algebraic variety.
 Let $TV$ be the tangent bundle of $V$.
 Then we have $TV=\mathbf{Spec}(\mathrm{Sym}\ \Omega^1_V)$. 
Set $\overline{T}V=P(TV\oplus \mathcal{O}_V)$, which is a smooth compactification of $TV$.
 Then we have $\overline{T}V=\mathbf{Proj}((\mathrm{Sym}\ \Omega^1_V)\otimes_{\mathcal{O}_V}\mathcal O_V[\eta ])$.
Let $Z\subset V$ be a closed subscheme.
 We define a closed subscheme $Z^{(1)}\subset \overline{T}(V)$ as follows (cf. \cite[p. 38]{Yam04}): 
 Let $U\subset V$ be an affine open subset.
 Let $(g_1,\cdots ,g_l)\subset \Gamma (U,\mathcal O_V)$ be the defining ideal of $Z\cap U$.
 Then $Z^{(1)}\cap \overline{T}(U)$ is defined by the homogeneous ideal
 $$(g_1,\cdots ,g_l,dg_1\otimes 1,\cdots ,dg_l\otimes 1)\subset \Gamma (U,(\mathrm{Sym}\ \Omega^1 _V)\otimes_{\mathcal O_V}\mathcal O_V[\eta ]).
 $$
 We glue $Z^{(1)}\cap \overline{T}(U)$ to define $Z^{(1)}$.
 If $Z\subset V$ is a closed immersion of a smooth algebraic variety, then $Z^{(1)}\cap T(V)=T(Z)$.

 Let $S$ be a projective variety, and let $W\subset A\times S$ be a Zariski closed set.
 Let $q:A\times S\to S$ be the second projection and let $q_W:W\to S$ be the restriction of $q$ on $W$.
 Given $f:Y\da A\times S$, we denote by $\Sigma\subset S$ the Zariski closure of $f_S(Y)\subset S$.
 We define $e(f,W)$ to be the dimension of the generic fiber of $q_W^{-1}(\Sigma)\to \Sigma$.
 We set $e(f,W)=-1$ if the generic fiber of $q_W^{-1}(\Sigma)\to \Sigma$ is an empty set.

 Now assume that $W\subset A\times S$ is irreducible and $\overline{q(W)}=S$.
 We also assume that $\dim \mathrm{St}(W)=0$, where the action $A\curvearrowright A\times S$ is defined by $(a,s)\mapsto (a+\alpha,s)$ for $\alpha\in A$.
 Since $W$ and $S$ are integral, and $q_W$ is dominant, there exists a non-empty Zariski open subset $W^o\subset W$ such that $q_W$ is a smooth morphism over $W^o$.
 Let $S^o\subset S$ be a non-empty Zariski open subset such that for each $s\in S^o$, (1) every irreducible component of $q_W^{-1}(s)$ has non-trivial intersection with $W^o$, and (2) the stabilizer of every irreducible component of $q_W^{-1}(s)$ is 0-dimensional.
 
 Assume $S$ is smooth.
 We note 
 \begin{equation}\label{eqn:20221203}
 \overline{T}(A\times S)=A\times S',
 \end{equation}
  where $S'=\overline{\mathrm{Lie}(A)\times TS}$.
 We may define $e(j_1f,W^{(1)})$ from $j_1f:Y\da A\times S'$ and $W^{(1)}\subset A\times S'$.

 \begin{lem}\label{lem:202210041}
 	Assume that $S$ is smooth and projective, and that $W\subset A\times S$ is irreducible with $\overline{q(W)}=S$.
 	Let $f:Y\da A\times S$ satisfies $e(j_1f,W^{(1)})=e(f,W)$ and $f_S(Y)\not\subset S- S^o$.
 	Then we have
 	$$T_{f_A}(r)=O(T_{f_S}(r))+O(\log r).$$
 \end{lem}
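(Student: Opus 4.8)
\emph{Plan of proof.} The idea is to recast the jet--dimension equality as a first--order algebraic relation between $f_A$ and $f_S$, and then to bound $T_{f_A}$ by the First Main Theorem applied to a rational map. Fix a smooth equivariant compactification $\overline A\supseteq A$, write $\bar f=(\bar f_A,f_S):Y\to\overline A\times S$, and set $\Sigma:=\overline{f_S(Y)}\subseteq S$. Since $f_S(Y)\not\subset S-S^o$, the generic point of $\Sigma$ lies in $S^o$, so $e:=e(f,W)=\dim W_s$ for very general $s\in\Sigma$ and each irreducible component of such a $W_s$ has $0$--dimensional stabiliser. Using the identification $\overline T(A\times S)=A\times S'$ of \eqref{eqn:20221203}, the $A$--component of $j_1 f$ is again $f_A$, while its $S'$--component $g:=(j_1 f)_{S'}:Y\to S'$ is, over the open set $\mathrm{Lie}(A)\times TS\subseteq S'$, given by $g(y)=(\delta f_A(y),\partial f_S(y))$, where $\delta f_A:Y\to\mathrm{Lie}(A)$ is the derivative of $f_A$ read in the translation--invariant trivialisation $TA=A\times\mathrm{Lie}(A)$, and $\partial f_S$ is the derivative of $f_S$; put $\Sigma':=\overline{g(Y)}\subseteq S'$.

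\emph{Step 1: a pointwise tangency.} For very general $y\in Y$ the point $g(y)$ is very general in $\Sigma'$ and lies over a very general $s=f_S(y)\in\Sigma\cap S^o$. Since $e(j_1 f,W^{(1)})=e=\dim W_s$ while $W^{(1)}_{g(y)}\subseteq W_s$, the fibre $W^{(1)}_{g(y)}$ is Zariski dense in a union of top--dimensional components of $W_s$. Unwinding the definition of $W^{(1)}$ over the locus where $q_W$ is smooth (which, by the choice of $S^o$, meets every component of $W_s$), this means precisely that $(\delta f_A(y),\partial f_S(y))\in\mathrm{Lie}(A)\times T_sS$ lies in $T_{(a,s)}W$ for very general $a\in W_s$; equivalently, the constant vector field $a\mapsto\delta f_A(y)-h_s(a,\partial f_S(y))$ on $A$ is tangent to $W_s$ along $W_s$, where $h_s(\,\cdot\,,u)$ is any section of the surjection $T_{(\,\cdot\,,s)}W\twoheadrightarrow T_sS$, chosen linear in $u$ (the tangency being independent of the choice).

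\emph{Step 2: solving for $\delta f_A$ and factoring $f_A$.} If $\tilde v_1,\tilde v_2\in\mathrm{Lie}(A)$ both satisfy the tangency of Step 1 for a given $(s,u)$, their difference is a constant field tangent to $W_s$ along $W_s$, hence lies in $\mathrm{Lie}(\mathrm{St}(W_s))=0$; so the solution, when it exists, is unique and linear in $u$. Letting $s$ vary over the good locus, these solutions assemble (Cramer's rule) into a rational $\mathrm{Lie}(A)$--valued $1$--form $\eta$ on $\Sigma$, and Step 1 becomes $f_A^*\xi=f_S^*\langle\xi,\eta\rangle$ for every translation--invariant $1$--form $\xi\in T_1(A)$. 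The crux is then to show that this identity forces $f_A$ to coincide, up to translation by a fixed element of $A$, with the composite of $f_S$ and a rational map $\phi:\Sigma\dashrightarrow A$: one checks that $\eta$ is closed and that it is the $\phi$--pullback of the invariant forms of $A$, the hypothesis $\dim\mathrm{St}(W)=0$ being exactly what makes this determination unambiguous. Granting this, $f_A=\phi\circ f_S+c$ for some $c\in A$.

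\emph{Step 3: conclusion, and the main obstacle.} Once $f_A=\phi\circ f_S+c$, extend $\phi$ to a rational map of a smooth projective model $\overline\Sigma\supseteq\Sigma$ into $\overline A$; the First Main Theorem (\cref{thm:first}) together with the Nevanlinna inequality \eqref{eqn:20221120} then give $T_{f_A}(r)=T_{\phi\circ f_S}(r)+O(1)=O(T_{f_S}(r))+O(\log r)$, which is the assertion. The main obstacle is Step 2 --- promoting the first--order identity $\delta f_A=\eta_{f_S}(\partial f_S)$ into the factorisation $f_A=\phi\circ f_S+c$, where the scheme structure of $W^{(1)}$, the defining properties of $S^o$, and above all the hypothesis $\dim\mathrm{St}(W)=0$ must be used; the passage in Step 1 from the numerical equality $e(j_1 f,W^{(1)})=e(f,W)$ to a tangency valid over the \emph{whole} fibre $W_s$ also needs care. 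The remaining estimates are routine Nevanlinna bookkeeping (cf.\ \cite{NW13}).
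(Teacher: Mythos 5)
Your Step 1 is essentially the right reading of the hypothesis $e(j_1f,W^{(1)})=e(f,W)$, but the argument breaks down at Step 2, and you have correctly located the breaking point yourself. Two things go wrong. First, the ``Cramer's rule'' construction of $\eta$: the tangency condition of Step 1 is an overdetermined linear system in $v\in\mathrm{Lie}(A)$ (one condition for each point of $W_s$), and while $\dim\mathrm{St}(W_s)=0$ gives \emph{uniqueness} of a solution, it gives no \emph{existence} for directions $u\in T_s\Sigma$ other than those actually attained by $\partial f_S$; since $Y$ is one-dimensional and transcendental, those directions need not span $T_s\Sigma$, so $\eta$ is not a well-defined rational $1$-form on $\Sigma$. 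Second, and more seriously, even granting $\eta$, the passage from the first-order identity $\delta f_A=f_S^{*}\eta$ to the factorisation $f_A=\phi\circ f_S+c$ with $\phi:\Sigma\dashrightarrow A$ a single-valued rational map is unjustified and in general false: closedness of $\eta$ and the integrality of its periods in the lattice defining $A$ are never verified, and the locus cut out by the tangency condition is in general only a correspondence over $\Sigma$, of degree possibly greater than one, so no single-valued $\phi$ selecting the branch containing $f(Y)$ need exist. Without the factorisation, Step 3 collapses.

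The paper's proof replaces your factorisation by a weaker but sufficient statement. It introduces the self-intersection locus $\mathcal{Z}(W)\subset A\times S\times S$ of triples $(a,s_1,s_2)$ with $\dim\bigl(q_W^{-1}(s_1)\cap(a+q_W^{-1}(s_2))\bigr)\ge\dim W-\dim S$, and shows (\cref{claim:20221018}, quoting the sublemma of \cite[Lemma 3]{Yam15}) that the jet-dimension hypothesis forces $f(Y)\subset\overline{\mathcal{Z}_s(W)}+a$ for a single base point $f(y)=(a,s)\in A\times S^o$. The defining property of $S^o$ (zero-dimensional stabilisers of all components of $q_W^{-1}(s)$) makes the projection $\mathcal{Z}_s(W)\to S$ quasi-finite over $S^o$, hence generically finite onto its image; the estimate $T_{f_A}(r)=O(T_{f_S}(r))+O(\log r)$ then follows from \cite[Lemma 4]{Yam15}, which only needs $f$ to land in a subvariety generically finite over $S$ --- no single-valued section over $\Sigma$ is required. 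If you want to repair your argument, aim at proving containment of $f(Y)$ in such a generically finite correspondence rather than at integrating $\eta$.
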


 \begin{rem}
 	In the statement of the lemma, by $T_{f_A}(r)$ we mean $T_{f_A}(r,L)$ for some ample line bundle on a compactification $\overline{A}$.
 	So this notion has ambiguity, but we have $T_{f_A}(r,L')=O(T_{f_A}(r,L))+O(\log r)$ for other choices.
 	Hence the term $O(T_{f_A}(r))+O(\log r)$ has fixed meaning.
 	Similar for $O(T_{f_S}(r))+O(\log r)$.
 \end{rem}
 
 \begin{proof}[Proof of \cref{lem:202210041}]
 	We define $\mathcal{Z}(W)\subset A\times S\times S$ by
 	$$
 	\mathcal{Z}(W)=\{ (a,s_1,s_2)\in A\times S\times S;\dim (q_W^{-1}(s_1)\cap (a+q_W^{-1}(s_2)))\geq \dim W-\dim S\} ,
 	$$
 	where we consider $q_W^{-1}(s_1)$ and $q_W^{-1}(s_2)$ as subvarieties of $A$.
 	For $s\in S$, let $\mathcal{Z}_{s}(W)\subset A\times S$ be the fiber of the composite map $\mathcal{Z}(W)\hookrightarrow A\times S\times S\to S$ over $s\in S$, where the second map $A\times S\times S\to S$ is the third projection.

 	\begin{claim}\label{claim:20221018}
 		Let $s\in S^o$.
 		Let $g:\mathbb D \to A\times S$ be a holomorphic map such that $e(j_1f,W^{(1)})=e(f,W)$ with $g(0)=(a,s)\in A\times S^o$.
 		Then $g(\mathbb D )\subset \mathcal{Z}_{s}(W)+a$.
 	\end{claim}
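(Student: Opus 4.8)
The plan is as follows. First I would record that the conclusion is both pointwise and closed. Write $g(t)=(\alpha(t),\sigma(t))$ with $(\alpha(0),\sigma(0))=(a,s)$. The target $\mathcal{Z}_s(W)+a$ is Zariski closed in $A\times S$, and since $\mathbb D$ is connected the Zariski closure of $g(\mathbb D')$ coincides with that of $g(\mathbb D)$ for every nonempty open $\mathbb D'\subset\mathbb D$; hence it suffices to prove $g(\mathbb D')\subset\mathcal{Z}_s(W)+a$ on the connected component $\mathbb D'$ of $0$ in $\sigma^{-1}(S^o)$, and so we may assume $\sigma(\mathbb D)\subset S^o$. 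Unwinding the definition of $\mathcal{Z}_s(W)$, and viewing the fibers of $q_W$ as subvarieties of $A$, the task becomes: for every $t\in\mathbb D$,
\[
\dim\big(q_W^{-1}(\sigma(t))\cap\big((\alpha(t)-a)+q_W^{-1}(s)\big)\big)\ \ge\ d:=\dim W-\dim S .
\]

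The key step would be to read the hypothesis $e(j_1f,W^{(1)})=e(f,W)$ — equivalently, the same numerical identity for the disk $g$ — as saying that the family of fibers $\{q_W^{-1}(s')\}_{s'\in\Sigma}$, where $\Sigma:=\overline{\sigma(\mathbb D)}^{\mathrm{Zar}}\subseteq S$, is infinitesimally constant up to translation in the directions traced out by $\sigma$. Indeed, if moving $\sigma$ strictly cut down the generic $q_W$-fiber, then — using the local description of the prolongation $W^{(1)}$ by the ideal $(g_j,dg_j)$, generic smoothness of $q_W$ over $W^o$, and the fact that $\mathrm{St}(W)$ is $0$-dimensional, so that no fiber is invariant under a positive-dimensional subgroup — the fiber of $q_{W^{(1)}}$ over the jet-image $\Sigma':=\overline{(j_1g)_{S'}(\mathbb D)}^{\mathrm{Zar}}$ would be strictly smaller than the corresponding $q_W$-fiber over $\Sigma$, contradicting the equality of $e$-invariants. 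Integrating this infinitesimal statement along the disk — legitimate because $\sigma(\mathbb D)\subset S^o$, so every fiber in play is equidimensional of dimension $d$ with finite stabilizer on each component — would then produce a holomorphic map $\tau:\mathbb D\to A$ with $\tau(0)=0$ and $q_W^{-1}(\sigma(t))=\tau(t)+q_W^{-1}(s)$ for every $t$.

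Granting this translation structure, the rest is formal, and this is also where the real difficulty lies. Set $\beta(t):=\alpha(t)-a-\tau(t)$, a holomorphic map $\mathbb D\to A$ with $\beta(0)=0$. Substituting $q_W^{-1}(\sigma(t))=\tau(t)+q_W^{-1}(s)$ and translating by $-\tau(t)$, the intersection in the displayed inequality becomes $q_W^{-1}(s)\cap(\beta(t)+q_W^{-1}(s))$. Since $s\in S^o$, the fiber $q_W^{-1}(s)$ is a union of finitely many irreducible components $C_1,\dots,C_m$, each of dimension $d$ and of finite stabilizer, so the locus $\{\beta\in A:\dim(q_W^{-1}(s)\cap(\beta+q_W^{-1}(s)))\ge d\}=\bigcup_{i,j}\{\beta:C_i=\beta+C_j\}$ is finite; as $\beta$ is continuous on the connected set $\mathbb D$ with $\beta(0)=0$, it is identically $0$, whence the intersection equals $q_W^{-1}(s)$, of dimension $d$, and the inequality holds for all $t$. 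The main obstacle is precisely the key step: squeezing out of the bare equality $e(j_1f,W^{(1)})=e(f,W)$ the fact that the fibers $q_W^{-1}(\sigma(t))$ are genuine $A$-translates of $q_W^{-1}(s)$, and producing the holomorphic family $\tau$. This requires a careful analysis of $W^{(1)}$ and of $\Sigma'$ — in particular checking that $\Sigma'$ dominates $\Sigma$, so that the fiber-dimension comparison is meaningful — together with generic smoothness of $q_W$ over $W^o$ and essential use of $\dim\mathrm{St}(W)=0$ and $s\in S^o$ to exclude the invariant and degenerate cases; the reduction and the concluding argument above are routine.
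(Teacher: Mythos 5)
The paper does not prove this claim from scratch: it simply invokes the sublemma of \cite[Lemma 3]{Yam15} (stated there for $A$ compact), observing that the hypothesis $e(j_1f,W^{(1)})=e(f,W)$ supplies the jet condition $j_1(g)(\mathbb D)\subset\Theta(W)$ required in that sublemma. Your attempt at a self-contained argument has two genuine gaps.

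First, your ``key step'' is not a proof but a description of what would have to be proved, and it asserts something stronger than the hypothesis plausibly yields. The equality $e(j_1f,W^{(1)})=e(f,W)$ is a statement about the \emph{generic} fiber of $q_{W^{(1)}}$ over $\Sigma'$: it says that for generic $t$ the vector $(\alpha'(t),\sigma'(t))$ is tangent to $W$ along a dense subset of the top-dimensional part of $q_W^{-1}(\sigma(t))$. Promoting this generic, first-order information to the exact identity $q_W^{-1}(\sigma(t))=\tau(t)+q_W^{-1}(s)$ for \emph{every} $t$, with $\tau$ holomorphic, is precisely the hard content of the sublemma and is nowhere carried out; it is also doubtful as stated, since nothing forces fibers over different points of $S^o$ to be exact translates of one another (they may differ in their sets of components), which is exactly why $\mathcal{Z}(W)$ is defined by a dimension inequality rather than by equality of translated fibers.

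Second, even granting the translation structure, your concluding step is circular. From the finiteness of $F=\{\beta\in A:\dim(q_W^{-1}(s)\cap(\beta+q_W^{-1}(s)))\ge d\}$, the continuity of $\beta$, and $\beta(0)=0$ you infer $\beta\equiv 0$; but a continuous map $\mathbb D\to A$ starting at a point of a finite set need not take values in that set, and ``$\beta(t)\in F$ for all $t$'' is essentially the statement being proved. If the key step really produced $\tau=\alpha-a$, the finiteness argument would be superfluous; if it produced any other $\tau$, the argument yields nothing. Either way the proof as written does not close, and the substance of the claim remains the uncited computation in the sublemma of \cite[Lemma 3]{Yam15}.
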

 	
 	\begin{proof}
 		This is proved in the sublemma of \cite[Lemma 3]{Yam15} when $A$ is compact.
 		The same proof works for our situation.
 		We note that the condition "$j_1(g)(\mathbb D )\subset \Theta (W)$" appears in the sublemma above follows from $e(j_1f,W^{(1)})=e(f,W)$.
 	\end{proof}

 	Now we take $y\in Y$ such that $f_S(y)\in S^o$ and $f(y)=(a,s)\in A\times S^o$.
 	Since the stabilizer of every irreducible component of $q_W^{-1}(s)$ is 0-dimensional, 
 	the restriction of the natural map 
 	$$q|_{\mathcal{Z}_s(W)} :\mathcal{Z}_s(W)\to S$$ 
 	over $S^o$ is quasi-finite.
 	Let $\overline{\mathcal{Z}_s(W)}\subset \overline{A}\times S$ be the Zariski closure, where $\overline{A}$ is an equivariant compactification of $A$.
 	Let $\mu:\overline{\mathcal{Z}_s(W)}\to S$ be the projection.
 	Then $\mu$ is generically-finite onto its image $\mu(\overline{\mathcal{Z}_s(W)})$.
 	By \cref{claim:20221018}, we have $f(Y)\subset \overline{\mathcal{Z}_s(W)}+a$.
 	Then by \cite[Lemma 4]{Yam15}, we have $T_{f_A}(r)=O(T_{f_S}(r))+O(\log r)$.
 \end{proof}

 We recall $S'=\overline{\mathrm{Lie}A\times TS}$, so that $\overline{T(A\times S)}=A\times S'$.
 
 \begin{lem}\label{lem:202210042}
 	For $f:Y\da A\times S$, we have
 	$$T_{(j_1f)_{S'}}(r)=O( T_{f_S}(r)+N_{\ram \pi _Y}(r)+\pN{f}(r))+O(\log r)+o(T(r,f_A)) \ ||,$$
 	where $j_1(f)_{S'}:Y\to S'$ is the composition of $j_1(f):Y\da A\times S'$ and the second projection $A\times S'\to S'$.
 \end{lem}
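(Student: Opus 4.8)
The plan is to split the $S'$-component of the first jet lift $j_1f:Y\da A\times S'$ according to the decomposition $S'=\overline{\mathrm{Lie}(A)\times TS}$ and to estimate each of the two resulting pieces by a logarithmic derivative argument. Trivializing $TA\cong A\times\mathrm{Lie}(A)$ by the invariant vector fields, and noting that all projective compactifications of $\mathrm{Lie}(A)\times TS$ are birational (so that characteristic functions of maps into $S'$ agree up to $O(\log r)$), we may take $S'=(\mathbb P^{1})^{d}\times\overline{TS}$, where $d=\dim A$ and $\overline{TS}$ is any fixed compactification of the total space $TS$. With this choice $(j_1f)_{S'}=(\ell_A(f),\partial f_S)$, where $\partial f_S:Y\to\overline{TS}$ is the jet lift of $f_S$ computed against the vector field $\pi^{\ast}(z\,\partial/\partial z)$, and $\ell_A(f)=(\phi_1,\dots,\phi_d):Y\da(\mathbb P^{1})^{d}$ is the logarithmic derivative of $f_A$, with $\phi_i:=f_A^{\ast}\omega_i/\pi^{\ast}(dz/z)$ for a basis $\omega_1,\dots,\omega_d$ of the invariant logarithmic $1$-forms $H^{0}(\overline A,\Omega_{\overline A}(\log\partial A))$. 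Hence
\[
T_{(j_1f)_{S'}}(r)=\sum_{i=1}^{d}T_{\phi_i}(r)+T_{\partial f_S}(r)+O(\log r),
\]
and it suffices to bound the two groups of terms on the right.

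For the semi-abelian part, each $f_A^{\ast}\omega_i$ is a logarithmic $1$-form on $Y$ with at worst simple poles along $\bar{f}_A^{-1}(\partial A)$, while $\pi^{\ast}(dz/z)$ is holomorphic with zero divisor exactly the ramification divisor of $\pi$; hence the pole divisor of $\phi_i$ is supported on $\bar{f}_A^{-1}(\partial A)\cup\ram\pi$ and carries multiplicity $\le 1$ along the first part, so that $N(r,\phi_i)\le\pN{f}(r)+N_{\ram\pi}(r)+O(1)$. For the proximity function we invoke the lemma on logarithmic derivatives for semi-abelian varieties in the form valid on the (possibly ramified) covering $\pi:Y\to\mathbb C_{>\delta}$, which gives $m(r,\phi_i)=o(T_{f_A}(r))+O(N_{\ram\pi}(r)+\log r)\,||$ (cf. \cite{NWY13,Yam15}). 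Combining via the First Main Theorem,
\[
\sum_{i=1}^{d}T_{\phi_i}(r)=O\bigl(\pN{f}(r)+N_{\ram\pi}(r)\bigr)+o(T_{f_A}(r))+O(\log r)\,||.
\]

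For the $S$-part, embedding $S\hookrightarrow\mathbb P^{M}$ reduces us, up to $O(\log r)$, to bounding the jet lift of a holomorphic map $g:=f_S:Y\to\mathbb P^{M}$, written $g=[g_0:\dots:g_M]$ with holomorphic $g_j$; the lift into $\overline{T\mathbb P^{M}}$ is controlled by $T_g(r)$ together with the proximity and counting functions of the quotients $g_i'/g_j$ (derivatives taken against $\pi^{\ast}z$). These have poles only along $\{g_j=0\}\cup\ram\pi$, whence $N(r,g_i'/g_j)\le T_g(r)+N_{\ram\pi}(r)+O(\log r)$ by the First Main Theorem, while $m(r,g_i'/g_j)\le m(r,g_i'/g_i)+m(r,g_i/g_j)+O(1)=O(T_g(r)+\log r)\,||$ by the classical logarithmic derivative lemma. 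This gives $T_{\partial f_S}(r)=O(T_{f_S}(r)+N_{\ram\pi}(r))+O(\log r)\,||$, and adding the two estimates yields precisely the asserted bound.

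The main obstacle is the logarithmic derivative estimate used for the semi-abelian factor: one needs the version of the lemma valid on an arbitrary covering $\pi:Y\to\mathbb C_{>\delta}$ of an annulus whose error term is $o(T_{f_A}(r))$ — an $O(T_{f_A}(r))$ bound would be useless for the applications in \cref{lem:202210041} and the remainder of this section — and one must check that the contributions of $\partial A$ and of the ramification of $\pi$ to the pole divisors of the $\phi_i$ are exactly $\pN{f}(r)$ and $N_{\ram\pi}(r)$, with no extra multiplicities coming from higher-order contact of $f_A$ with $\partial A$. The $S$-factor is comparatively routine, being the standard estimate on the order function of the derived curve of a holomorphic map into projective space; this is where the arguments of \cite{NWY13,Yam15} are applied.
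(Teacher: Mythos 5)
Your argument is correct and rests on the same two pillars as the paper's proof: the trivialization $T(\overline A;\log\partial A)=\overline A\times\mathrm{Lie}A$ by invariant logarithmic forms, and the lemma on logarithmic derivatives on the covering $\pi:Y\to\mathbb C_{>\delta}$, which is what produces the $o(T_{f_A}(r))$ term together with the corrections $N_{\ram\pi}(r)$ and $\pN{f}(r)$. The difference is in the packaging. The paper never splits $S'$ into coordinates: it forms $\overline T(\overline A\times S;\log(\partial A\times S))=\overline A\times S'$, applies the single intrinsic inequality $T_{j_1f}(r,E)\le N_{\ram\pi}(r)+\pN{f}(r)+O(\log r)+o(T_f(r))\,||$ of \cite[(2.4.8) and Thm 5.1.7]{Yam04} to the boundary divisor $E$ of the projectivized log tangent bundle, observes that $E=p^*F$ for $F=(\eta=0)\subset S'$, and concludes from $\mathrm{Pic}(S')=\mathrm{Pic}(S)\oplus\mathbb Z[F]$, which writes any ample class on $S'$ in terms of $F$ and classes pulled back from $S$. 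Your coordinate-wise version (the $\phi_i=f_A^{\ast}\omega_i/\pi^{\ast}(dz/z)$ for the $\mathrm{Lie}A$-factor, the quotients $g_i'/g_j$ for the $TS$-factor) unpacks exactly this, at the cost of two points needing care: first, passing to the product compactification $(\mathbb P^1)^d\times\overline{TS}$ changes the characteristic function by a multiplicative constant, not merely $O(\log r)$ — harmless here since the conclusion is an $O(\cdot)$ bound, but your parenthetical claim is too strong as stated; second, the estimate $m(r,g_i'/g_i)=O(\log r+\log^+T_{f_S}(r))\,||$ presupposes an admissible reduced representation, since an arbitrary one can have components growing much faster than $f_S$ itself — this is precisely the representation-dependence that the intrinsic jet-space formulation in \cite{Yam04} is designed to avoid, and it is the one step you should not wave through. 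What your route buys is transparency about the provenance of each error term; what the paper's route buys is that everything reduces to one citation plus a Picard-group computation, with no choice of coordinates or representations entering.
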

 
 \begin{proof}
(cf. \cite[Lemma 2]{Yam15})
Let $\overline{A}$ be an equivariant compactification, which is smooth and projective.
Set $D=(\partial A)\times S$.
Then $D$ is a simple normal crossing divisor on $\overline{A}\times S$.
Let $T(\overline{A}\times S;\log D)$ be the logarithmic tangent bundle.
We set $\overline{T}(\overline{A}\times S;\log D)=P(T(\overline{A}\times S;\log D)\oplus \mathcal{O}_{\overline{A}\times S})$, which is a smooth compactification of $T(\overline{A}\times S;\log D)$.
We set $E=\overline{T}(\overline{A}\times S;\log D)-T(\overline{A}\times S;\log D)$, which is a divisor on $\overline{T}(\overline{A}\times S;\log D)$.
By $T(A\times S)\subset \overline{T}(\overline{A}\times S;\log D)$, we have $j_1f:Y\to \overline{T}(\overline{A}\times S;\log D)$.
By \cite[(2.4.8)]{Yam04} and \eqref{eqn:202311195}, we have 
$$
T_{j_1f}(r,E)\leq N_{\ram \pi}(r)+\pN{f}(r)+O(\log r)+o(T_f(r))\ ||.
$$ 
Note that $T(\overline{A};\log\partial A)=\overline{A}\times \mathrm{Lie}A$ (cf. \cite[Prop 5.4.3]{NW13}).
Hence we have $\overline{A}\times S'=\overline{T}(\overline{A}\times S;\log D)$ and $S'=\mathbf{Proj}((\mathrm{Sym}\ \Omega^1_S)\otimes_{\mathcal{O}_S}\mathcal O_S[\eta,dz_1,\ldots,dz_{\dim A} ])$, where $\{dz_1,\ldots,dz_{\dim A}\}\subset H^0(\overline{A},\Omega^1_{\overline{A}}(\log \partial A))$ is a basis.
Let $F\subset S'$ be the divisor defined by $\eta=0$.
Then we have $p^*F=E$, where $p:\overline{A}\times S'\to S'$ is the second projection.
Hence we have
$$T_{(j_1f)_{S'}}(r,F)\leq N_{\ram \pi _Y}(r)+\pN{f}(r)+O(\log r)+o(T(r,f_A)) \ ||.$$
By $\mathrm{Pic}(S')=\mathrm{Pic}(S)\oplus \mathbb Z[F]$, we obtain our lemma.
 \end{proof}

 \subsection{Refinement of log Bloch-Ochiai Theorem}
 
 Let $\mathcal{S}_0(A)$ be the set of all semi-abelian subvarieties of $A$.
 Let $W\subset A\times S$ be a Zariski closed subset.
 For $B\in \mathcal{S}_0(A)$, we set
 \begin{equation}\label{eqn:20221101}
 W^B=\{ x\in W;\ x+B\subset W\}.
\end{equation}
 Then $W^B=\cap_{b\in B}(b+W)\subset W$ is a Zariski closed subset.
 When $B=\{0\}$, we have $W^{\{ 0\}}=W$.
 
 \begin{proposition}\label{pro:20220807}
 	Let $S$ be a projective variety.
 	Let $W\subset A\times S$ be an irreducible Zariski closed subset.
 	Then there exists a finite subset $P\subset \mathcal{S}_0(A)$ such that 
 	for every $f:Y\da W$, there exists $B\in P$ such that $f(Y)\sqsubset W^B$ and
 	\begin{equation}\label{eqn:20220921}
 		T_{q_B\circ f_A}(r)=O(N_{\ram \pi}(r)+T_{f_S}(r)+\pN{f}(r))+O(\log r)+o(T_f(r))||,
 	\end{equation}
	where $q_B:A\to A/B$ is the quotient.
 \end{proposition}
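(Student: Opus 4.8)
The plan is to prove \cref{pro:20220807} by Noetherian induction on $\dim W$, using \cref{lem:202210041,lem:202210042} as the analytic inputs and the stabilizer theory of semi-abelian varieties to control the combinatorics. The finite set $P$ is assembled as the union of the finitely many sets of semi-abelian subvarieties produced at the finitely many nodes of the induction; the point is that every choice made along the way --- which irreducible component to restrict to, which stabilizer to quotient out --- is determined by $W$ alone, not by $f$.

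\textbf{Reductions.} I would first reduce to $\overline{q(W)}=S$ by replacing $S$ with $\overline{q(W)}$, then to $S$ smooth by resolving $S$ and replacing $W$ with a suitable modification (maps $f$ whose $S$-component meets the locus where this modification fails to be an isomorphism are treated by the degenerate case below, with a strict drop of $\dim S$). Finally, let $B_0$ denote the identity component of $\mathrm{St}(W)$, a member of $\mathcal{S}_0(A)$. Then $W$ is $B_0$-invariant, so $W=W^{B_0}$ and $f(Y)\sqsubset W^{B_0}$ holds automatically; passing to $A/B_0$ and $W/B_0\subset(A/B_0)\times S$ we arrange $\dim\mathrm{St}(W)=0$, noting that a semi-abelian subvariety of $A/B_0$ produced for $(A/B_0,S,W/B_0)$ pulls back to one containing $B_0$ and that $T_{q_B\circ f_A}(r)=T_{q_{B/B_0}\circ q_{B_0}\circ f_A}(r)$. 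After these reductions the open sets $W^o\subset W$ and $S^o\subset S$ of \cref{subsec:4.2} are defined and depend only on $W$.

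\textbf{Induction.} If $f_S(Y)\sqsubset S\setminus S^o$, then $\overline{f_S(Y)}$ lies in some irreducible component $S_1$ of $S\setminus S^o$ and $\overline{f(Y)}$ in some irreducible component $W_1$ of $W\cap(A\times S_1)$; since $W$ is irreducible and dominates $S$, one has $\dim W_1<\dim W$. The inductive hypothesis applied to $(A,S_1,W_1)$ yields $B$ in a finite set depending only on $W_1$ with $f(Y)\sqsubset W_1^{B}\subseteq W^{B}$ and the estimate \eqref{eqn:20220921}; as there are only finitely many such pairs $(S_1,W_1)$, these $B$ range over a finite set depending only on $W$. Otherwise $f_S(Y)\not\sqsubset S\setminus S^o$, and I would iterate the jet construction: replace $f$ by $j_1f:Y\da W^{(1)}\subset A\times S'$, restrict to an irreducible component of $W^{(1)}$ through $j_1f(Y)$, re-run the reductions, and continue. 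Since the $A$-component is unchanged at each step ($(j_kf)_A=f_A$, whence also $\pN{j_kf}=\pN{f}$ and $N_{\ram\pi}$ is unaffected) and the invariant $e(\,\cdot\,)$ is non-decreasing and bounded above by $\dim A$ along the jet sequence, after finitely many steps one reaches $j_kf$ and $W^{(k)}$ with $e(j_{k+1}f,(W^{(k)})^{(1)})=e(j_kf,W^{(k)})$. Then \cref{lem:202210041} applies to $j_kf$ and $W^{(k)}$ and gives $T_{f_A}(r)=O(T_{(j_kf)_{S^{(k)}}}(r))+O(\log r)$, while iterating \cref{lem:202210042} and telescoping (the $o$-terms being $o(T_{f_A}(r))$ at every stage) bounds $T_{(j_kf)_{S^{(k)}}}(r)$ by the right-hand side of \eqref{eqn:20220921}; absorbing the $o(T_{f_A}(r))$ term then yields \eqref{eqn:20220921} with $B=B_0$. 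The stabilizers quotiented out while re-running the reductions on the $W^{(k)}$ contribute finitely many further members to $P$.

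\textbf{Main obstacle.} The delicate step is the jet iteration: verifying that $e(j_kf,W^{(k)})$ is indeed monotone and stabilizes, and that, once it stabilizes, \cref{lem:202210041} --- which packages the logarithmic Bloch--Ochiai mechanism of \cite[Lemmas 2--4]{Yam15} together with \cref{claim:20221018} --- does yield the bound on $T_{f_A}$ in terms of $T_{f_S}$ alone. A subsidiary, purely bookkeeping, difficulty is to propagate the error terms $O(\log r)$, $o(T_f(r))$ and the exceptional-set symbol $||$ consistently through an induction in which $f$ is repeatedly replaced by a jet lift or by a restriction to a subvariety; here one uses $T_f(r)=O(T_{j_1f}(r))$, the invariance of $N_{\ram\pi}$, and the monotone behaviour of the relevant counting functions under these operations.
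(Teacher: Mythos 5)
Your argument follows the paper's proof in all essentials --- the dichotomy between the degenerate case $f_S(Y)\sqsubset S\setminus S^o$ and the jet lift, the use of \cref{lem:202210041} once the invariant $e$ stabilizes, \cref{lem:202210042} to convert $T_{(j_1f)_{S'}}$ back into the admissible error terms, and the stabilizer quotient to create room for $B\neq\{0\}$ --- but the induction you set up is not well-founded as stated, and this is a genuine gap. You propose Noetherian induction on $\dim W$, yet each jet step replaces $W\subset A\times S$ by $W^{(1)}\subset A\times S'$ with $\dim W^{(1)}>\dim W$ (and $\dim S'\gg\dim S$). The degenerate cases that arise at jet level $k\geq 1$ --- when $(j_kf)_{S^{(k)}}(Y)$ lands in $S^{(k)}\setminus(S^{(k)})^o$, or in the exceptional locus of a resolution of $S^{(k)}$ --- force you to restrict to a component $W^{(k)}_1$ of $W^{(k)}\cap(A\times S^{(k)}_1)$ whose dimension, while smaller than $\dim W^{(k)}$, may well exceed $\dim W$; your induction hypothesis then says nothing about it. The paper's measure is the pair $(e(f,W),\dim S)$ in dictionary order: the jet space is invoked through the induction hypothesis only when $e$ strictly drops (so the increase in $\dim S$ is harmless), while the degenerate restrictions keep $e$ from increasing and strictly decrease $\dim S$. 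Your ``iterate until $e$ stabilizes'' loop is this same induction unrolled, but you must replace $\dim W$ by that lexicographic measure for the degenerate sub-branches to terminate.

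A second, related point: you assert that $e$ is non-decreasing along the jet sequence and bounded above by $\dim A$. It is the other way around: the fiber of $W^{(1)}\to S'$ over a point lying above $s\in S$ is a closed subvariety of the fiber $q_W^{-1}(s)$, and $\overline{(j_1f)_{S'}(Y)}$ dominates $\overline{f_S(Y)}$, so $e(j_1f,W^{(1)})\leq e(f,W)$. Your stabilization conclusion survives either way (a bounded monotone integer sequence stabilizes), but this inequality is precisely what makes the lexicographic measure decrease when one passes to the jet space, so it must be stated and proved in the correct direction. With the measure corrected and this inequality in place, the rest of your argument --- including the preliminary quotient by $\mathrm{St}^0(W)$, which the paper instead folds into the induction via $e(f',W/C)<e(f,W)$ --- goes through.
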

 
 \begin{proof}
 	Given $f:Y\da W$, we have $e(f,W)\not=-1$, hence $e(f,W)\in\{0,1,\ldots,\dim A\}$.
 	The proof of \cref{pro:20220807} easily reduces to the following claim by letting $P=P_{\dim A}$.
 	
 	\medskip
 	
 	\begin{claim}
 		In \cref{pro:20220807}, let $\dim S=l$.
 		Let $k\in \{0,1,\ldots,\dim A\}$.
 		Then there exists a finite subset $P_k\subset \mathcal{S}_0(A)$ such that for every $f:Y\da W$ with $e(f,W)\leq k$, there exists $B\in P_k$ such that $f(Y)\sqsubset W^B$ and \eqref{eqn:20220921}. 
 	\end{claim}
 	\medskip
 	
 	We prove this claim by the induction on the pair $(k,l)\in\mathbb Z_{\geq 0}^2$, where we consider the dictionary order on $\mathbb Z^2$.
 	So we assume that the claim is true if $(e(f,W),\dim S)<(k,l)$ and prove the case $(e(f,W),\dim S)=(k,l)$.
	
	Let $q:A\times S\to S$ be the second projection.
	Replacing $S$ by the Zariski closure of $q(W)$, we may assume that $q(W)\subset S$ is dominant.

 	To see the conclusion of the induction hypothesis, we assume that $f_S(Y)\subset V$ for some proper Zariski closed subset $V\subsetneqq S$.
 	Let $V_1,\ldots ,V_l$ be the irreducible components of $V$.
 	For $j=1,\ldots ,l$, we set $W_j=W\cap (A\times V_j)$.
 	Let $W_j^1,\ldots,W_j^{t_j}$ be the irreducible components of $W_j$.
 	Then by the induction hypothesis, there exists a finite subset $P_{V_j,W_j^i}\subset \mathcal{S}_0(A)$ such that if $f(Y)\sqsubset W_j^i$ and $e(f,W_j^i)\leq k$, then there exists $B\in P_{V_j,W_j^i}$ such that $f(Y)\sqsubset (W_j^i)^B$ and \eqref{eqn:20220921}.
 	Set $P_V=\cup_j\cup_iP_{V_j,W_j^i}$.
 	Then if $f_S(Y)\subset V$ and $e(f,W)\leq k$, then there exists $B\in P_{V}$ such that $f(Y)\sqsubset W^B$ and \eqref{eqn:20220921}.

 	Now we first consider the case that $S$ is smooth and $\mathrm{St}^0(W)=\{0\}$.
 	Let $f:Y\da W$ satisfy $e(f,W)=k$.
 	Suppose $f_S(Y)\subset S\backslash S^o$, then by the above consideration, there exists $B\in P_{S\backslash S^o}$ such that $f(Y)\sqsubset W^B$ and \eqref{eqn:20220921}.
 	So we consider the case $f_S(Y)\not\subset S\backslash S^o$.
 	We consider the first jet $j_1f:Y\da \overline{T}(A\times S)=A\times \overline{(\mathrm{Lie}A\times TS)}$.
 	Set $S'=\overline{\mathrm{Lie}A\times TS}$.
 	If $e(j_1f,W^{(1)})=k=e(f,W)$, then \cref{lem:202210041} yields 
 	$$T_{f_A}(r)=O(T_{f_{S}}(r))+O(\log r).$$
	This shows \eqref{eqn:20220921} for $B=\{0\}$.
	 	If $e(j_1f,W^{(1)})<k=e(f,W)$, then the induction hypothesis yields a finite set $P'_{k-1}\subset \mathcal{S}_0(A)$ such that there exists $B\in P'_{k-1}$ such that $j_1f(Y)\sqsubset (W^{(1)})^B$ and 
 	$$T_{q_B\circ f_A}(r)=O(N_{\ram\pi}(r)+\pN{f}(r)+T_{(j_1f)_{S'}}(r))+O(\log r)+o(T_f(r))||.$$
 	By \cref{lem:202210042}, we have 
 	$$T_{(j_1f)_{S'}}(r)=O(N_{\ram\pi}(r)+\pN{f}(r)+T_{f_S}(r))+O(\log r)+o(T_{f_A}(r))||.$$
 	Hence we have $f(Y)\sqsubset W^B$ and \eqref{eqn:20220921}.
 	We set $P=P'_{k-1}\cup P_{S\backslash S^o}\cup\{0\}$.
 	This concludes the proof of the claim if $S$ is smooth and $\mathrm{St}^0(W)=\{0\}$.
 	
 	Next we remove the assumption that $S$ is smooth.
 	Let $\widetilde{S}\to S$ be a smooth modification which is an isomorphism outside a proper Zariski closed set $E\subsetneqq S$.
 	If $f_S(Y)\subset E$, then there exists $B\in P_{E}$ such that $f(Y)\sqsubset W^B$ and \eqref{eqn:20220921}.
 	If $f_S(Y)\not\subset E$, then there exists a unique lift $f:Y\da A\times \widetilde{S}$.
 	Let $\widetilde{W}\subset A\times \widetilde{S}$ be the proper transform of $W$.
 	Then by the consideration above, there exists $P'\subset \mathcal{S}_0(A)$ such that $f(Y)\sqsubset \widetilde{W}^B$ and \eqref{eqn:20220921} for some $B\in P'$.
	 	We set $P=P'\cup P_E$ to conclude the proof of the claim when $\mathrm{St}^0(W)=\{0\}$.
 	
 	Finally we remove the assumption $\mathrm{St}^0(W)=\{0\}$.
 	Suppose that $\mathrm{St}^0(W)\not=\{0\}$.
 	Set $C=\mathrm{St}^0(W)$.
 	Let $f':Y\da W/C\subset (A/C)\times S$ be the induced map.
 	Then we have $e(f',W/C)<e(f,W)$.
 	Hence by the induction hypothesis, there exists $P'\subset \mathcal{S}_0(A/C)$ such that $f'(Y)\sqsubset (W/C)^{B'}$ and \eqref{eqn:20220921} for some $B'\in P'$.	 	We set $P$ by $B\in P$ iff $C\subset B$ and $B/C\in P'$.
 	This concludes the proof of the claim.
 	Thus the proof of \cref{pro:20220807} is completed.
 \end{proof}

 \begin{cor}\label{cor:20220806}
 	In Proposition \ref{pro:20220807}, we may take $P$ so that $\mathrm{St}^0(W)\subset B$ for all $B\in P$.
 	Moreover there exists a proper Zariski closed set $\Xi\subsetneqq W$ such that for every $f:Y\da W$, either one of the followings holds:
 	\begin{enumerate}
 		\item
 		$f(Y)\sqsubset \Xi$.
 		\item
 		$T_{q_{\mathrm{St}^0(W)}\circ f_A}(r)=O(N_{\ram\pi}(r)+\pN{f}(r)+T_{f_S}(r))+O(\log r)+o(T_f(r))||$,
 		where $q_{\mathrm{St}^0(W)}:A\to A/\mathrm{St}^0(W)$ is the quotient map
 	\end{enumerate}
 \end{cor}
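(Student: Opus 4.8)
The plan is to deduce the corollary from \cref{pro:20220807} in two moves. First, I would arrange that every $B$ in the finite set $P$ provided by \cref{pro:20220807} contains $C:=\mathrm{St}^0(W)$. This is essentially built into the last step of the proof of \cref{pro:20220807}: when $C\neq\{0\}$ one passes to the induced map $f':Y\dashrightarrow W/C\subset (A/C)\times S$, obtains a finite set $P'\subset\mathcal{S}_0(A/C)$, and defines $P$ by the rule $B\in P\iff C\subset B$ and $B/C\in P'$. When $C=\{0\}$ there is nothing to prove. So without loss of generality every $B\in P$ satisfies $C\subset B$, and in particular $W^B\subseteq W^C$ for each such $B$. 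Note also $W^C=W$ since $C$ stabilizes $W$, so $f(Y)\sqsubset W^B$ always holds trivially; the content is the estimate \eqref{eqn:20220921} for $q_B\circ f_A$.

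Second, I would define the exceptional set $\Xi$. Let $P_0:=\{B\in P\mid B\supsetneq C\}$ be the members of $P$ strictly larger than $C$, and set
\[
\Xi:=\bigcup_{B\in P_0} W^B.
\]
Since each $W^B$ for $B\in P_0$ is a proper Zariski closed subset of $W$ — indeed, if $W^B=W$ then $B$ would stabilize $W$, forcing $B\subseteq\mathrm{St}^0(W)=C$, contradicting $B\supsetneq C$ — and $P_0$ is finite, $\Xi\subsetneqq W$ is a proper Zariski closed subset. Now take any $f:Y\dashrightarrow W$. By \cref{pro:20220807} (in the normalized form above) there is $B\in P$ with $f(Y)\sqsubset W^B$ and
\[
T_{q_B\circ f_A}(r)=O\bigl(N_{\ram\pi}(r)+T_{f_S}(r)+\pN{f}(r)\bigr)+O(\log r)+o(T_f(r))\,||.
\]
If $B\in P_0$, then $f(Y)\sqsubset W^B\subseteq\Xi$, which is alternative (1). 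If $B=C$ (the only remaining possibility, since every element of $P$ either equals $C$ or strictly contains it), then $q_B=q_C=q_{\mathrm{St}^0(W)}$ and the displayed estimate is exactly alternative (2).

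The only genuine point requiring care — and the place I would expect a referee to look — is the claim that one may indeed take $C\subset B$ for all $B\in P$; this is not a formal consequence of the statement of \cref{pro:20220807} but follows from inspecting its proof (the final induction step handling $\mathrm{St}^0(W)\neq\{0\}$), so I would phrase it as "by the construction of $P$ in the proof of \cref{pro:20220807}". The rest is bookkeeping: finiteness of $P_0$, properness of each $W^B$ for $B\supsetneq C$, and matching up the two cases with alternatives (1) and (2). No Nevanlinna-theoretic input beyond \cref{pro:20220807} itself is needed here.
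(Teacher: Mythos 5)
Your proposal is correct and follows essentially the same route as the paper: the paper secures $\mathrm{St}^0(W)\subset B$ for all $B\in P$ by applying \cref{pro:20220807} to $W/\mathrm{St}^0(W)\subset (A/\mathrm{St}^0(W))\times S$ and lifting the resulting finite set, which is the same quotient device you invoke from the last step of that proposition's proof, and both arguments then set $\Xi=\bigcup_{B\in P,\ B\neq \mathrm{St}^0(W)}W^B$ and use $\mathrm{St}^0(W)\subsetneq B\Rightarrow W^B\subsetneq W$ to get properness. (Only a cosmetic remark: your aside that ``$f(Y)\sqsubset W^B$ always holds trivially'' is only true for $B=\mathrm{St}^0(W)$; for $B\in P_0$ the containment is genuine output of the proposition, and indeed you use it that way in the dichotomy.)
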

 
 \begin{proof}
 	We apply Proposition \ref{pro:20220807} for $W/\mathrm{St}^0(W)\subset (A/\mathrm{St}^0(W))\times S$ to get $P_0\subset \mathcal{S}_0(A/\mathrm{St}^0(W))$.
 	Then we define $P\subset \mathcal{S}_0(A)$ by $B\in P$ iff $\mathrm{St}^0(W)\subset B$ and $B/\mathrm{St}^0(W)\in P_0$.
	We set $P'=P\backslash \{\mathrm{St}^0(W)\}$ and $\Xi=\cup_{B\in P'}W^B$.
 	Then $\Xi$ is a proper Zariski closed set.
 \end{proof}

 \subsection{Second main theorem with weak truncation}
 \label{sec:20221125}
 
 Let $\mathcal{S}(A)$ be the set of all positive dimensional semi-abelian subvarieties of $A$.
 Hence $\mathcal{S}(A)=\mathcal{S}_0(A)\backslash \{\{0\}\}$.

 \begin{proposition}\label{pro:202208062}
 Let $\overline{A}$ be an equivariant compactification of $A$ such that $\overline{A}$ is smooth and projective.
 	Let $W\subsetneqq \overline{A}\times S$ be a closed subscheme, where $S$ is a projective variety.
 	Then there exist a finite subset $P\subset \mathcal{S}(A)\backslash\{A\}$ and a positive integer $\rho\in\mathbb Z_{>0}$ with the following property:
 	Let $f:Y\da A\times S$ satisfies $f(Y)\not\sqsubset \mathrm{supp}W$ and $f_S(Y)\not\subset p(\mathrm{Sp}_AW)$, where $\mathrm{Sp}_AW=\cap_{a\in A}(a+W)\subset \overline{A}\times S$ and $p:\overline{A}\times S\to S$ is the second projection.
 	Then either one of the followings are true:
 	\begin{enumerate}
 		\item
 		There exists $B\in P$ such that
 		$$T_{q\circ f_A}(r)=O(T_{f_S}(r)+N_{\ram\pi}(r)+\pN{f}(r))+O(\log r)+o(T_{f}(r))||,$$
 		where $q:A\to A/B$ is the quotient map.
 		\item
 		The following estimate holds:
 		\begin{multline}
 		m_f(r,W)+N_f(r,W)-N^{(\rho)}_f(r,W)\\
 		=O(N_{\ram\pi}(r)
 		+\pN{f}(r)+T_{f_S}(r))+O(\log r)+o(T_f(r))||.
 		\end{multline}
 	\end{enumerate}
 \end{proposition}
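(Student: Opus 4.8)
The plan is to deduce this statement from the Second Main Theorem of Yamanoi for semi-abelian varieties (\cite{Yam04}, applied in the form used in \cite{Yam15} and \cite{NWY13}) combined with the refinement of the log Bloch--Ochiai theorem established in \cref{pro:20220807} and \cref{cor:20220806}. First I would reduce to the case where $W$ is the image of an irreducible closed subvariety: write $\mathrm{supp}\,W$ as a union of its irreducible components $W_1,\dots,W_m\subset \overline A\times S$, and note that since the proximity and (truncated) counting functions of $W$ are bounded above by $\sum_j$ of those of the $W_j$ up to $O(\log r)$ (using a Weil function decomposition), and since $\mathrm{Sp}_A W\supseteq \mathrm{Sp}_A W_j$ for each $j$ is not quite what we want — rather one needs that $f(Y)\not\sqsubset \mathrm{supp}\,W$ forces $f(Y)\not\sqsubset \mathrm{supp}\,W_j$ for the relevant components — I would argue component by component, collecting the finite sets $P_j$ and integers $\rho_j$ and taking $P=\cup_j P_j$, $\rho=\max_j\rho_j$. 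So assume $W$ irreducible.

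Next, let $C=\mathrm{St}^0(W)\cap A$ and pass to the quotient $q_C:A\to A/C$; by hypothesis $f_S(Y)\not\subset p(\mathrm{Sp}_A W)$, and $\mathrm{Sp}_{A/C}(W/C)$ has image in $S$ equal to $p(\mathrm{Sp}_A W)$, so we may replace $A$ by $A/C$ and assume $\dim\mathrm{St}^0(W)=0$. Now apply \cref{pro:20220807} (or \cref{cor:20220806}) to the irreducible subvariety $\overline{q_{W}(f_A \times f_S)(Y)}$ — more precisely to the Zariski closure $V$ of $f(Y)$ inside $A\times S$ — which yields a finite set of semi-abelian subvarieties; if $f$ falls into the exceptional locus $\Xi$ of \cref{cor:20220806} or satisfies the growth estimate $T_{q_B\circ f_A}(r)=O(\cdots)$ with $B$ a proper semi-abelian subvariety, we are in case (1) of the Proposition (after checking $B\neq A$, which holds because $f_A(Y)$ is not contained in a proper coset — if it were, $f(Y)\sqsubset\mathrm{supp}\,W$ for a suitable choice, contradicting the hypothesis). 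Otherwise $T_{f_A}(r)$ dominates $T_{f_S}(r)+N_{\ram\pi}(r)+\overline N^\partial_f(r)$ and $f_A$ has, in an appropriate sense, maximal rank.

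In this remaining non-degenerate case, the heart of the argument is to invoke the Second Main Theorem with weak (i.e., $\rho$-truncated) counting function. The relevant statement is Yamanoi's SMT for holomorphic curves into semi-abelian varieties: for the map $f_A:Y\da A$ that is non-degenerate relative to $W$, one has, for the divisor-like object $W$ viewed fiberwise over $S$,
\[
m_f(r,W)+N_f(r,W)-N_f^{(\rho)}(r,W)=O\big(N_{\ram\pi}(r)+\overline N^\partial_f(r)+T_{f_S}(r)\big)+O(\log r)+o(T_f(r))\,\|,
\]
where $\rho$ depends only on $W$ and the geometry of $A\times S\to S$. To make this precise I would follow the jet-space and ``moving target'' bookkeeping of \cite[\S3]{Yam15}: parametrize the family of translates $a+W$ over $S$, use the logarithmic jet bundle $\overline T(\overline A\times S;\log D)$ as in \cref{lem:202210042} to control the derivative contributions by $N_{\ram\pi}(r)+\overline N^\partial_f(r)+O(\log r)+o(T_f(r))$, and then apply the SMT of \cite{Yam04} to $f_A$ together with the defect relation for the truncated counting function. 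The exceptional finite set $P$ and the truncation level $\rho$ emerge exactly from the stratification of $W$ by the semi-abelian subvarieties that can occur as stabilizers of limiting subvarieties along $f$.

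The main obstacle I expect is the last step: establishing the truncated SMT uniformly in the fiberwise family $\{a+W\}_{a\in A}$ over the (possibly singular, possibly lower-dimensional-image) base $S$, while keeping the error term of the required shape $O(N_{\ram\pi}(r)+\overline N^\partial_f(r)+T_{f_S}(r))+O(\log r)+o(T_f(r))$ — in particular tracking that the $S$-direction contributes only $T_{f_S}(r)$ and not something larger, and that the ``$o(T_f(r))\|$'' lemma-on-the-logarithmic-derivative type estimates (the reason for the exceptional set of finite measure) can be applied simultaneously. Handling $W$ that is merely a closed subscheme (with multiplicities and non-reduced structure) rather than a reduced divisor, and reducing cleanly to the reduced irreducible case without losing control of $\rho$, is the other delicate point; I would address it by working with $\mathrm{supp}\,W$ for the proximity/counting estimates and absorbing multiplicities into the constant $\rho$.
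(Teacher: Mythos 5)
Your overall orientation is right (reduce to the irreducible case, kill the stabilizer, control derivatives via logarithmic jets), but the proposal has a genuine gap at its centre: the step you describe as ``invoke Yamanoi's SMT for holomorphic curves into semi-abelian varieties ... together with the defect relation for the truncated counting function'' is precisely the statement being proved. \cref{pro:202208062} \emph{is} the second main theorem with weak truncation in this setting (parabolic covering $\pi:Y\to\mathbb C_{>\delta}$, ramification term $N_{\ram\pi}$, boundary term $\pN{f}$, moving family over $S$), and there is no off-the-shelf version to cite. The paper's actual mechanism is a triple induction on $(\dim A,\,e(f,W_o),\,\dim S)$: one passes to the first jet $j_1f:Y\da A\times S'$ and compares $e(j_1f,W_o^{(1)})$ with $e(f,W_o)$. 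If they are equal, \cref{lem:202210041} forces $T_{q_C\circ f_A}(r)=O(T_{f_S}(r))+O(\log r)$ with $C=\mathrm{St}^0(W_o)$, landing in case (1) (or a stronger form of case (2) when $C=\{0\}$); if the jet drops $e$, the induction hypothesis applies to $W_{\log}^{(1)}$ and the truncation level increases by exactly one, with \cref{lem:202210042} absorbing the order function of $(j_1f)_{S'}$ into the allowed error. This descent is where $\rho$ comes from, and it is absent from your argument. Your appeal to \cref{pro:20220807} applied to the Zariski closure of $f(Y)$ also does not give the dichotomy you want: that proposition concerns maps with image \emph{inside} $W$, and in the paper it is only used in the degenerate boundary case.

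Two further concrete problems. First, the case $W_o:=W\cap(A\times S)=\emptyset$, i.e.\ $W$ contained in $\partial A\times S$, is not addressed at all; it is handled in the paper by projecting along the isotropy group $I$ of the boundary component onto $A/I$ and invoking the induction on $\dim A$ — this is the only reason $\dim A$ appears in the induction. Second, your combination rule for reducible $W$ is wrong: taking $\rho=\max_j\rho_j$ does not yield the truncation inequality, because one only has $\ord_yf^*W\le l\sum_i\ord_yf^*W_i$ (with $l$ accounting for the scheme structure), whence
\begin{equation*}
N_f(r,W)-N_f^{(l\widetilde\rho)}(r,W)\le l\sum_i\bigl(N_f(r,W_i)-N_f^{(\rho_i)}(r,W_i)\bigr)
\end{equation*}
requires $\widetilde\rho=\sum_i\rho_i$, so $\rho$ must be (a multiple of) the \emph{sum} of the component truncation levels, not their maximum.
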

 
 \begin{proof}
 For $W\subset \overline{A}\times S$, we set $W_o=W\cap (A\times S)$.
 	Given $f:Y\da A\times S$, we have $e(f,W_o)\in\{-1,0,\ldots,\dim A\}$.
 	The proof of \cref{pro:202208062} easily reduces to the following claim by letting $P=P_{\dim A}$, $\rho=\rho_{\dim A}$. 
 	
 	\begin{claim}
 		In \cref{pro:202208062}, let $\dim A=m$ and $\dim S=l$.
 		Let $k\in \{-1,0,\ldots,\dim A\}$.
 		Then there exist a finite subset $P_k\subset \mathcal{S}(A)\backslash \{A\}$ and a positive integer $\rho_k$ with the following property.
 		Let $f:Y\da A\times S$ with $e(f,W_o)\leq k$ satisfies $f(Y)\not\sqsubset W$ and $f_S(Y)\not\subset p(\mathrm{Sp}_AW)$.
 		Then either one of the assertions of \cref{pro:202208062} is true, where $P$ is replaced by $P_k$ in the first assertion and $\rho$ is replaced by $\rho_k$ in the second assertion.
 	\end{claim}
 	
 	We prove this claim by the induction on the triple $(m,k,l)\in\mathbb Z_{\geq 0}\times \mathbb Z_{\geq -1}\times\mathbb Z_{\geq 0}$ with the dictionary order.  
 	So we assume that the claim is true if $(\dim A,e(f,W_o),\dim S)<(m,k,l)$ and prove the case $(\dim A,e(f,W_o),\dim S)=(m,k,l)$.

	To see the conclusion of the induction hypothesis, we assume that $f:Y\da A\times S$ satisfies $f_S(Y)\subset V$ for some proper Zariski closed subset $V\subsetneqq S$.
 	Let $V_1,\ldots ,V_l$ be the irreducible components of $V$.
 	For $j=1,\ldots ,l$, we set $W_j=W\cap (A\times V_j)$.
 	Then by the induction hypothesis, there exist a finite subset $P_{j,k}\subset \mathcal{S}(A)\backslash \{A\}$ and $\rho_{j,k}\in\mathbb Z_{\geq 1}$ such that if $f:Y\da A\times S$ satisfies $e(f,W)= k$ and $f_S(Y)\subset V_j$, then either one of the assertions of \cref{pro:202208062} is true, where $P$ is replaced by $P_{j,k}$ in the first assertion and $\rho$ is replaced by $\rho_{j,k}$ in the second assertion.
 	Set $P_{V,k}=\cup_jP_{j,k}$ and $\rho_{V,k}=\max_{j}\rho_{j,k}$.
 	Then if $f_S(Y)\subset V$ and $e(f,W)= k$, either one of the assertions of \cref{pro:202208062} is true, where $P$ is replaced by $P_{V,k}$ in the first assertion and $\rho$ is replaced by $\rho_{V,k}$ in the second assertion.

 	Now we first assume the followings
	\begin{enumerate}
	\item
	$q(W)=S$, where $q:\overline{A}\times S\to S$ is the second projection,
	\item
	$W$ is reduced, and irreducible,
		\item
	$S$ is smooth.
		\end{enumerate}

 	First we consider the case $W_o\not=\emptyset$.
	Set $C=\mathrm{St}^0(W_o)$.
	We define $S^o\subset S$ from $W_o/C\subset (A/C)\times S$ (cf. \cref{lem:202210041}).
		We consider $f:Y\da A\times S$ with $e(f,W_o)=k$ such that $f(Y)\not\sqsubset W$ and $f_S(Y)\not\subset p(\mathrm{Sp}_AW)$.
		If $f_S(Y)\subset S\backslash S_o$, then by the above consideration, there exists $B\in P_{S\backslash S_o,k}$ such that the first assertion of \cref{pro:202208062}	is valid, or the second estimate of \cref{pro:202208062} holds for $\rho=\rho_{S\backslash S_o,k}$.
		So we assume that $f_S(Y)\not\subset S\backslash S_o$.
		We consider the first jet $j_1f:Y\da A\times S'$, where $S'=\overline{\mathrm{Lie}(A)\times TS}$.
		
		We consider the case $e(j_1f,W_o^{(1)})=e(f,W_o)$.		
		Note that $C=\mathrm{St}^0(W_o^{(1)})$, where $W_o^{(1)}\subset A\times S'$.
		Let $f':Y\to (A/C)\times S$ be the composition of $f$ and the projection $A\times S\to (A/C)\times S$.
	Then we have $e(f,W_o)=e(f',W_o/C)+\dim C$ and $e(j_1f,W_o^{(1)})=e(j_1f',(W_o/C)^{(1)})+\dim C$. 		
Hence $e(f',W_o/C)=e(j_1f',(W_o/C)^{(1)})$.
 Thus \cref{lem:202210041} yields that 
 \begin{equation}\label{eqn:202212015}
 T_{q_C\circ f_A}(r)=O(T_{f_S}(r))+O(\log r),
 \end{equation}
 where $q_C:A\to A/C$ is the quotient.
 Since we are assuming that $W\to S$ is dominant, we have $C\not=A$, for otherwise $f(Y)\sqsubset W$.
Thus if $\dim C>0$,  the first assertion of \cref{pro:202208062} is true, provided $C\in P_k$.
If $\dim C=0$, then \eqref{eqn:202212015} yields that $T_{f_A}(r)= O(T_{f_S}(r))+O(\log r)$.  
 	Hence $m_f(r,W)+N_f(r,W)=O(T_{f_S}(r))+O(\log r)$.
 	This is stronger than the second assertion of \cref{pro:202208062}.

	So we assume $e(j_1f,W_o^{(1)})<e(f,W_o)$.
 	Then the induction hypothesis yields $P_{k-1}'\subset \mathcal{S}(A)\backslash\{A\}$ and $\rho_{k-1}'$ such that either the first assertion of  \cref{pro:202208062} for $P=P_{k-1}'$ or the estimate
 	\begin{multline*}
 		m_{j_1f}(r,W_{\log}^{(1)})+N_{j_1f}(r,W_{\log}^{(1)})-N^{(\rho_{k-1}')}_{j_1f}(r,W_{\log}^{(1)})
 		\\
 		=O(N_{\ram\pi}(r)+\pN{f}(r)
 		+T_{j_1f_{S'}}(r))+O(\log r)+o(T_f(r))||
 	\end{multline*}
 	holds.
	Here $W_{\log}^{(1)}\subset \overline{A}\times S'=\overline{T}(\overline{A}\times S;\log \partial (A\times S))$ is defined in \cite[Section 5]{Yam04} so that $W_{\log}^{(1)}\cap (A\times S')=W_o^{(1)}$.
			By the same argument for the proof of \cite[Lemma 5]{Yam15}, using \cite[Thm 5.1.7]{Yam04}, we have
 	\begin{multline*}
 		m_f(r,W)+N_f(r,W)-N^{(\rho_{k-1}'+1)}_f(r,W)
 		\\
 		\leq m_{j_1f}(r,W_{\log}^{(1)})+N_{j_1f}(r,W_{\log}^{(1)})-N^{(\rho_{k-1}')}_{j_1f}(r,W_{\log}^{(1)})+N_{\ram\pi}(r)+o(T_{f}(r))||.
 	\end{multline*}
	 	By \cref{lem:202210042}, we have 
 	$$T_{(j_1f)_{S'}}(r)=O(T_{f_S}(r)+N_{\ram\pi}(r)+\pN{f}(r))+O(\log r)+o(T_f(r))||.$$
	Thus we get
\begin{multline*}
 		m_f(r,W)+N_f(r,W)-N^{(\rho_{k-1}'+1)}_f(r,W)=O(T_{f_S}(r)+N_{\ram\pi}(r)+\pN{f}(r))+O(\log r)+o(T_f(r))||. 	
		\end{multline*}		
This concludes the proof of the induction step for our case $W_o\not=\emptyset$.
Here we set $P_k=(P_{S\backslash S_o,k}\cup P_{k-1}' \cup \{C\})\backslash \{0\}$ and $\rho_k=\rho_{S\backslash S_o,k}+\rho_{k-1}'+1$.

 	Next we consider the case $W_o=\emptyset$.
 	In this case, we have $W\subset \partial A\times S$ and $e(f,W_o)=-1$.
 	Let $I\subset A$ be the isotropy group for $W$ and let $D\subset \partial A$ be the irreducible component of $\partial A$ such that $W\subset D\times S$.
 	Then $D$ is an equivariant compactification of $A/I$.
 	By \cite[Lem A.11]{Y22}, there exist an $A$-invariant Zariski open set $U\subset \overline{A}$ and an equivariant map $\psi:U\to D$ such that $D\subset U$ and $\psi$ is an isomorphism over $D\subset U$.
 	We define $g:Y\da (A/I)\times S\subset D\times S$ by $g=(\psi\circ f_A,f_S)$.
 	By $\mathrm{Sp}_A(W)=\mathrm{Sp}_{A/I}(W)$, we have $g_S(Y)\not\subset p(\mathrm{Sp}_{A/I}W)$.
Suppose $g(Y)\not\sqsubset W$.
Note that $\dim (A/I)<\dim A$.
	Hence by the induction hypothesis, there exist $P'\subset \mathcal{S}(A/I)\backslash \{A/I\}$ and $\rho'\in\mathbb Z_{>0}$, which are independent of the choice of $f$, such that either the first assertion of \cref{pro:202208062} or the following estimate 
 	$$m_g(r,W)+N_g(r,W)-N^{(\rho')}_g(r,W)=O(N_{\ram\pi}(r)+\pN{g}(r)
 	+T_{f_S}(r))+O(\log r)+o(T_f(r))||
 	$$
 	holds.
 	By $\pN{g}(r)\leq \pN{f}(r)$, $m_f(r,W)\leq m_g(r,W)$ and $\mathrm{ord}_yf^{*}W\leq \mathrm{ord}_yg^{*}W$ for all $y\in Y$, this estimate implies the second assertion of \cref{pro:202208062} for $\rho=\rho'$.
 	If $g(Y)\sqsubset W$, then we apply \cref{pro:20220807} to get $P''\subset \mathcal{S}_0(A/I)$, which is independent of the choice of $f$.
 	Then there exists $B\in P''$ such that 
 	$$T_{q\circ g_{A/I}}(r)=O(T_{f_S}(r)+N_{\ram\pi}(r)+\pN{f}(r))+O(\log r)+o(T_{f}(r))||,
 	$$
	where $q:A/I\to (A/I)/B$ is the quotient.
 	We note $B\not=A/I$, for otherwise we have $g(Y)\sqsubset W^{A/I}$, which contradicts to $g_S(Y)\not\subset p(\mathrm{Sp}_{A/I}W)$.
 	We define $P_k$ by $B\in P_k$ iff $I\subset B$ and $B/I\in P'\cup (P''\backslash \{A/I\})$.
 	This concludes the proof of the induction step for our case $W_o=\emptyset$.
Thus we have completed the induction step for the case that $W\to S$ is dominant, $W$ is irreducible and reduced, and $S$ is smooth.

	We remove these three assumptions on $W$ and $S$.
 First we remove the first assumption.
	Suppose $q(W)\subsetneqq S$, where $q:\overline{A}\times S\to S$ is the second projection.
	If $f_{S}(Y)\not\subset q(W)$, then we have 
	$$m_f(r,W)+N_f(r,W)=O(T_{f_S}(r))+O(\log r).$$
	This is stronger than the second assertion of \cref{pro:202208062}.
	Thus we may consider the case $f_{S}(Y)\subset q(W)$.
	We replace $S$ by $q(W)$.
	Then, by the induction hypothesis, we get our claim.	
	Hence the first assumption is removed.	
	
	Next we remove the second assumption.
	Let $W_1,\ldots W_n$ be the irreducible components of $\mathrm{supp} W$.
 	Then for each $j\in \{1,\ldots,n\}$, by the argument above, there exist a finite subset $P_{j,k}\subset \mathcal{S}(A)\backslash \{A\}$ and a positive integer $\rho_{j,k}$ such that for every $f:Y\da A\times S$ with $e(f,W_o)\leq k$, either the first assertion of \cref{pro:202208062} for $P=P_{j,k}$ or the following estimate holds:
 	\begin{multline}\label{eqn:9.3.1}
 		m_f(r,W_j)+N_f(r,W_j)-N^{(\rho _{j,k})}_f(r,W_j)
 		\\
 		=O(T_{f_S}(r)+N_{\ram \pi _Y}(r)+\pN{f}(r))+O(\log r)+o( T_f(r)\ ||.
 	\end{multline}
 	There is a positive integer $l$ such that
 	$$\big( \mathcal I_{W_1}\cdots \mathcal I_{W_n}\big) ^l\subset \mathcal I_W,
 	$$
 	where $\mathcal I_W\subset \mathcal O_{\overline{A}\times S}$ (resp. $\mathcal I_{W_i}$) is the defining ideal sheaf of $W$ (resp. $W_i$).
 	Then we have
 	\begin{equation}\label{eqn:9.3.2}
 		m_f(r,W)\leq l\sum_{i=1}^nm_f(r,W_i).
 	\end{equation}
 	For $y\in Y$, we have
 	$$\ord _yf^*W\leq l\sum_{i=1}^n\ord _yf^*W_i$$
 	so setting $\widetilde{\rho}_k=\rho _{1,k}+\cdots +\rho _{n,k}$, we get
 	\begin{equation*}
 		\begin{split}
 			\max \{ 0,\ord _yf^*W-l\widetilde{\rho}_k \}&\leq \max \left\{ 0,l\sum_{i=1}^n\left( \ord _yf^*W_i-\rho _{i,k}\right) \right\}\\
 			&\leq l\sum_{i=1}^n\max \left\{ 0,\left( \ord _yf^*W_i-\rho _{i,k}\right) \right\} .
 		\end{split}
 	\end{equation*}
 	Hence we get
 	\begin{equation}\label{eqn:9.3.3}
 		N_f(r,W)-N^{(l\widetilde{\rho}_k)}_f(r,W)\leq  l\sum_{i=1}^n\left( N_f(r,W_i)-N^{(\rho _{i,k})}_f(r,W_i)\right) .
 	\end{equation}
 	By \eqref{eqn:9.3.1}, \eqref{eqn:9.3.2}, \eqref{eqn:9.3.3}, we get
 	\begin{equation*}
 		m_f(r,W)+N_f(r,W)-N^{(l\widetilde{\rho}_k )}_f(r,W) \\
 		\leq O(T_{f_S}(r)+N_{\ram\pi _Y}(r)+\pN{f}(r))+O(\log r)+o(T_f(r))\ ||.
 	\end{equation*}
 	Thus we have removed the assumption that $W$ is irreducible and reduced.
 	Here we set $\rho_k=l\widetilde{\rho}_k$ and $P_k=P_{1,k}\cup\cdots\cup P_{n,k}$.

The assumption that $S$ is smooth is removed similarly as in the proof of \cref{pro:20220807}.
 	This completes the induction step of the proof of the claim.
 	Thus the claim is proved.
 	The proof of \cref{pro:202208062} is completed.
 \end{proof}

 \subsection{Intersection with higher codimensional subvarieties}

 \begin{lem}\label{lem:202209151}
 Let $L$ be an ample line bundle on $\overline{A}$, where $\overline{A}$ is a smooth equivariant compactification.
 	Let $Z\subset A\times S$ be a closed subscheme whose codimension is greater than one, where $S$ is a projective variety.
 	Let $\varepsilon>0$.
 	Then there exist a finite subset $P\subset \mathcal{S}(A)\backslash\{A\}$ and a proper Zariski closed subset $E\subsetneqq S$ with the following property:
 	Let $f:Y\da A\times S$ satisfies $f(Y)\not\sqsubset \mathrm{supp} Z$.
 	Then either one of the followings is true:
 	\begin{enumerate}
 		\item
 		$
 		N^{(1)}_f(r,Z)\leq \varepsilon T_{f_A}(r,L)+O_{\varepsilon}(N_{\ram\pi}(r)+\pN{f}(r)
 		+T_{f_S}(r))+O(\log r)+o(T_f(r))||_{\varepsilon}$.
 		\item
 		$f_S(Y)\subset E$.
 		\item
 		There exists $B\in P$ such that 
 		$$
 		T_{q_B\circ f_A}(r)=O(N_{\ram\pi}(r)+\pN{f}(r)+T_{f_S}(r))+O(\log r)+o(T_f(r))||, 
 		$$
 		where $q_B:A\to A/B$ is the quotient map.
 	\end{enumerate}
 \end{lem}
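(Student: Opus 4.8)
The plan is to reduce the statement to the second main theorem with weak truncation (\cref{pro:202208062}), applied to an auxiliary effective divisor $D$ that cuts out the closure $\overline{Z}\subset\overline{A}\times S$ of $Z$ with large multiplicity, and then to exploit $\operatorname{codim}_{\overline{A}\times S}\overline{Z}\geq 2$ to turn the resulting ``excess'' counting term into an arbitrarily small multiple of $T_{f_A}(r,L)$. First I would run the standard reductions, parallel to those in \cref{pro:20220807,pro:202208062}. Splitting $\overline{Z}$ into irreducible components (taking for $P$ and $E$ the unions of the sets produced for each) we may assume $\overline{Z}$ irreducible. With $p:\overline{A}\times S\to S$ the projection, if $p(\overline{Z})\subsetneq S$ then either $f_S(Y)\not\subset p(\overline{Z})$, so $f$ meets $Z$ finitely often and $N^{(1)}_f(r,Z)=O(\log r)$ and (1) holds, or $f_S(Y)\subset p(\overline{Z})$, which we absorb into $E$; hence we may assume $p(\overline{Z})=S$, so that a general fibre of $\overline{Z}\to S$ has codimension $\geq 2$ in $\overline{A}$. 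As in \cref{pro:202208062} we may then take $S$ smooth and argue by induction on $(\dim A,\dim S,\operatorname{codim}\overline{Z})$ in the lexicographic order: whenever $f_S(Y)$ lies in a proper closed subset of $S$ we invoke the induction hypothesis for the corresponding pieces of $Z$, and the lower-dimensional ``bad loci'' of $\overline{Z}$ appearing below, which have strictly larger codimension, are handled the same way, enlarging $E$ as needed.

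Now fix $\varepsilon>0$, an ample $L_S$ on $S$, and set $L'=L\boxtimes L_S$, ample on $\overline{A}\times S$. Choose an integer $m_0$ with $\mathcal{I}_{\overline{Z}}\otimes m_0L'$ globally generated; then $\mathcal{I}_{\overline{Z}}^{\,\mu}\otimes(\mu m_0)L'$ is globally generated for every $\mu\geq1$, and I would take $D$ to be a general member of $|\mathcal{I}_{\overline{Z}}^{\,\mu}\otimes(\mu m_0)L'|$, so that $D\in|(\mu m_0)L'|$, $D$ is reduced, $f(Y)\not\sqsubset D$, $\operatorname{mult}_{\overline{Z}}D\geq\mu$, and (for general $D$) $p(\mathrm{Sp}_A D)$ is a proper closed subset of $S$, which we absorb into $E$. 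Outside a proper closed subset $Z^{\mathrm{bad}}\subsetneq\overline{Z}$ (treated by the induction as above), every $y\in Y$ with $\bar f(y)\in\overline{Z}$ satisfies $\operatorname{ord}_y\bar f^*D\geq\mu$, hence $\operatorname{ord}_y\bar f^*D-(\mu-1)\geq1$ while contributing $1$ to $N^{(1)}_f(r,Z)$; comparing counting functions,
\[
N^{(1)}_f(r,Z)\ \leq\ N_f(r,D)-N^{(\mu-1)}_f(r,D)+N^{(1)}_f(r,Z^{\mathrm{bad}})+O(\log r),
\]
the term $N^{(1)}_f(r,Z^{\mathrm{bad}})$ being absorbed by the induction. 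Applying \cref{pro:202208062} to the subscheme $D\subset\overline{A}\times S$, either its first alternative holds for some $B\in\mathcal{S}(A)\setminus\{A\}$, which is our alternative (3) after enlarging $P$; or, writing $\rho=\rho(D)$ for the truncation level it supplies and using $m_f(r,D)\geq0$,
\[
N_f(r,D)-N^{(\rho)}_f(r,D)=O\!\left(N_{\ram\pi}(r)+\pN{f}(r)+T_{f_S}(r)\right)+O(\log r)+o(T_f(r))\ ||.
\]

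In the latter case I would write $N_f(r,D)-N^{(\mu-1)}_f(r,D)=\bigl(N_f(r,D)-N^{(\rho)}_f(r,D)\bigr)+\bigl(N^{(\rho)}_f(r,D)-N^{(\mu-1)}_f(r,D)\bigr)$. The first summand is absorbed by the displayed estimate. The second is $\leq0$ if $\mu-1\geq\rho$; otherwise it is at most $(\rho-\mu+1)\cdot\#\{y\in Y(t):\operatorname{ord}_y\bar f^*D\geq\mu\}$, and since each such $y$ contributes $\geq\mu$ to $N_f(r,D)$, the Nevanlinna inequality \eqref{eqn:20221120} together with $\mathcal{O}(D)=\mathcal{O}\bigl((\mu m_0)(L\boxtimes L_S)\bigr)$ gives, as counting functions,
\[
\#\{y\in Y(t):\operatorname{ord}_y\bar f^*D\geq\mu\}\ \leq\ \tfrac{1}{\mu}N_f(r,D)\ \leq\ m_0\bigl(T_{f_A}(r,L)+T_{f_S}(r)\bigr)+O(\log r).
\]
Hence the second summand is $\leq(\rho-\mu+1)m_0\bigl(T_{f_A}(r,L)+T_{f_S}(r)\bigr)+O(\log r)$, and it remains to choose $\mu=\mu(\varepsilon)$ so that $(\rho(D)-\mu+1)m_0\leq\varepsilon$; granting this, alternative (1) follows, and assembling the contributions from all components of $\overline{Z}$ and all inductively-treated strata yields the finite set $P\subset\mathcal{S}(A)\setminus\{A\}$ and the proper closed $E\subsetneq S$.

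The main obstacle is exactly the inequality $(\rho(D)-\mu+1)m_0\leq\varepsilon$: the truncation level $\rho(D)$ grows with the singularity of $D$ along $\overline{Z}$, so one cannot treat \cref{pro:202208062} as a pure black box and merely ``take $\mu$ large''. Resolving this requires either (i) tracking the dependence of $\rho$ on $\mu$ through the jet-space induction underlying the proof of \cref{pro:202208062}, showing that for the divisors $D\in|\mathcal{I}_{\overline{Z}}^{\,\mu}\otimes(\mu m_0)L'|$ used here one has $\rho(D)$ growing strictly slower than $\mu$ — this is where $\operatorname{codim}\overline{Z}\geq2$ genuinely enters — or (ii) incorporating the present lemma into the same induction that establishes \cref{pro:202208062}, so that the needed bound on $\rho$ is available at the appropriate stage. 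The remaining ingredients (the Bertini-type choice of $D$, the elementary comparison of counting functions, and the bookkeeping of degeneration loci into $E$) are routine.
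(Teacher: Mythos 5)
Your reduction steps (irreducible components, the case $p(\overline{Z})\subsetneqq S$, smoothness of $S$, absorbing degenerate loci into $E$) match the paper's, but the core of your argument has a genuine gap that you yourself flag and do not close: the inequality $(\rho(D)-\mu+1)\,m_0\leq\varepsilon$. The truncation level $\rho$ produced by \cref{pro:202208062} is defined only through a multi-layer induction on $(\dim A, e(f,W_o),\dim S)$, and nothing in that proposition or its proof gives any quantitative control of $\rho(D)$ in terms of the geometry of $D$ — in particular no bound showing that $\rho(D)$ grows strictly slower than $\mu$ as you let $D$ range over general members of $|\mathcal{I}_{\overline{Z}}^{\,\mu}\otimes(\mu m_0)L'|$. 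Your claim that ``this is where $\operatorname{codim}\overline{Z}\geq 2$ genuinely enters'' is not substantiated: the codimension hypothesis plays no role in the determination of $\rho$ in \cref{pro:202208062}, so option (i) would require an entirely new argument of comparable depth to the lemma itself, and option (ii) is not an argument. As written, the proof reduces the lemma to an unproved (and, given how $\rho$ is generated by repeated jet differentiation and ideal-power comparisons in the non-reduced step of \cref{pro:202208062}, quite possibly false) assertion.

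The paper sidesteps this difficulty by never applying a truncated second main theorem to an auxiliary divisor on $A\times S$ at all. Instead it invokes \cref{claim:20221021} (from \cite[Prop.~6]{Yam15}): for each $l$ there is a divisor $F_l$ on $\overline{A}\times\overline{S_l}$, living over the $l$-th jet space, of degree $n(l)$ in the $\overline{A}$-direction with $n(l)/l\to 0$, such that $\operatorname{ord}_0 j_l(f)^{*}F_l\geq l+1$ whenever $f(0)\in Z$. This is where $\operatorname{codim} Z\geq 2$ actually enters (through the dimension count producing sections with $n(l)=o(l)$). One then only needs the \emph{untruncated} estimate $(l+1)N^{(1)}_f(r,Z)\leq N_{j_l(f)}(r,F_{W_i})+\dotsb$ together with the Nevanlinna inequality \eqref{eqn:20221120}, which gives the factor $n(l)/(l+1)<\varepsilon/\mu$ directly; the degenerate case $j_l(f)(Y)\sqsubset F_{W_i}$ is handled by \cref{pro:20220807} and a stratification of the jet space, producing $P$ and $E$. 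If you want to salvage your approach, you would have to replace your divisor $D\subset \overline{A}\times S$ by such jet-space divisors, at which point you have reconstructed the paper's proof.
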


 \begin{proof}
 We first consider the case that 
\begin{itemize}
\item
$q(Z)\subset S$ is dense, where $q:A\times S\to S$ is the second projection,
\item
$Z$ is irreducible,

\item
$S$ is smooth.
\end{itemize}
 	
	We use higher jet spaces.
 	Let $J_l(A\times S)$ be the $l$-th jet space.
	We have the natural splitting $J_l(A\times S)=A\times (J_l(A\times S)/A)$ induced from the splitting $TA=A\times\mathrm{Lie}A$.
	There exists a partial compactification $J_l(A\times S)\subset \bar{J}_l(A\times S)$ such that the natural map $\bar{J}_l(A\times S)\to A\times S$ is projective and $\bar{J}_l(A\times S)$ is $\mathbb Q$-factorial (\cite[2.4]{Yam04}).	
We have the natural splitting $\bar{J}_l(A\times S)=A\times (\bar{J}_l(A\times S)/A)$.
When $l=1$, this reduces to \eqref{eqn:20221203}.

 	\begin{claim}\label{claim:20221021}
	There exist a sequence of positive integers $n(1),n(2),n(3),\cdots$ and an ample line bundle $L_o$ on $\overline{A}$ with the following conditions:
\renewcommand{\labelenumi}{(\theenumi )}
\begin{enumerate}
\item $\frac{n(l)}{l}\to 0 $ when $l\to \infty$.
\item For $l\geq 1$, let $S_l\subset J_l(A\times S)/A$ be an irreducible Zariski closed subset whose
image under the projection $J_l(A\times S)/A\to S$ is Zariski dense.
Let $\overline{S_l}\subset \bar{J}_l(A\times S)/A$ be the compactification of $S_l$ in $\bar{J}_l(A\times S)/A$.
Then there exists an effective Cartier divisor $F_l\subset \overline{A}\times \overline{S_l}$ with the following properties:
\begin{enumerate}
\item $F_l$ corresponds to a global section of
$$H^0(\overline{A}\times \overline{S_l},p_l^*L_o^{\otimes n(l)}\otimes q _l^*M_l),$$
where $p_l:\overline{A}\times \overline{S_l}\to \overline{A}$ is the first projection, $q _l:\overline{A}\times \overline{S_l}\to \overline{S_l}$ is the second projection, and $M_l$ is an ample line bundle on $\overline{S_l}$.

\item Let $f:\mathbb D \to A\times S$ be a holomorphic map such that $j_l(f)(\mathbb D )\subset A\times S_l$ and $j_l(f)(\mathbb D )\not\subset F_l$.
Assume $f(0)\in Z$, then $\ord _{0}j_{l}(f)^*F_l\geq l+1$.
\end{enumerate}
\end{enumerate}
 	\end{claim}
 	
 	\begin{proof}
	When $A$ is compact, this is \cite[Prop 6]{Yam15}.	
	The same proof of \cite[Prop 6]{Yam15} works for our situation.
	See also \cite[Lemma 6.5.42]{NW13}.
		\end{proof}

 	Now we are given $\varepsilon>0$.
	Let $\mu>0$ be a positive integer such that $L^{\otimes\mu}\otimes L_o^{-1}$ is ample.
 	We take $l\in\mathbb N$ such that 
 	\begin{equation}\label{eqn:201308171}
 		n(l)/(l+1)<\varepsilon/\mu .
 	\end{equation}
 	Let $\mathcal{V}$ be the set of all irreducible Zariski closed subsets of $J_l(A\times S)/A$.
 	Let $\mathcal{W}\subset \mathcal{V}$ be the subset of $\mathcal{V}$ which consists of $W\in \mathcal{V}$ whose image under the projection $J_l(A\times S)/A\to S$ is Zariski dense in $S$.
 	We define the sequence
 	$\mathcal{V}_1, \mathcal{V}_2, \ldots$
 	of subsets of $\mathcal{V}$ and the sequence
 	$\mathcal{W}_1, \mathcal{W}_2, \ldots$
 	of subsets of $\mathcal{W}$
 	by the following inductive rule. 
 	Put $\mathcal{V}_1 =\mathcal{W}_1= \{ J_l(A\times S)/A\}$. 
 	For each $W\in \mathcal{W}_i$, let $F_{W}\subset A\times W$ be the divisor obtained in \cref{claim:20221021}.
 	Set $(F_W)^A=A\times W'$, where $W'\subset W$ is a proper Zariski closed subset.
 	Put
 	$$
 	\mathcal{V}_{i+1}=\bigcup_{W\in \mathcal{W}_i} \{ V\in \mathcal{V};\text{$V$ is an irreducible component of $W'$}\} ,
 	$$
 	$$
 	\mathcal{W}_{i+1}=\mathcal{V}_{i+1}\cap \mathcal{W}.
 	$$
 	Since the number of the irreducible components of $W'$ is finite,
 	each $\mathcal{V}_i$ is a finite set. 
 	Since $\dim W'  < \dim W$, we have $\mathcal{V}_i=\mathcal{W}_i = \emptyset$ for
 	$i \geq \dim (J_l(A\times S)/A)+2$.
 	We apply \cref{pro:20220807} for $F_W\subset A\times W$ to get $P_{W}\subset \mathcal{S}_0(A)$.
 	(More precisely, we apply \cref{pro:20220807} for each irreducible component of $F_W$.) 
 	Set
 	$$
 	P=\bigcup_{i}\bigcup_{W_i\in \mathcal{W}_i}P_{W_i}\backslash \{\{0\},A\}.
 	$$
 	Then $P\subset \mathcal{S}(A)-\{ A\}$ is a finite subset.
 	For $V\in \mathcal{V}_i\backslash \mathcal{W}_i$, let $S_{V}\subset S$ be the Zariski closure of the image of $V$ under the projection $J_l(A\times S)/A \to S$.
 	Then by the construction of $\mathcal{W}_i$, we have $S_{V}\not= S$.
 	Set
 	$$
 	E=\bigcup_{i}\bigcup_{V\in \mathcal{V}_i\backslash \mathcal{W}_i}S_{V}.
 	$$
 	Then $E\subset S$ is a proper Zariski closed subset.
 	
 	\par

 	Now let $f:Y\da A\times S$ with $f(Y)\not\sqsubset Z\cup q^{-1}(E)$. 
	Let $q':J_l(A\times S)\to J_l(A\times S)/A$ be the projection under the splitting $J_l(A\times S)=A\times (J_l(A\times S)/A)$.
	 	We may take $W_i\in \mathcal{W}_i$ such that $q'\circ j_l(f)(Y)\sqsubset W_i$ but $q'\circ j_l(f)(Y)\not\sqsubset W_{i+1}$ for all $W_{i+1}\in \mathcal{W}_{i+1}$.
 	Then we have
 	\begin{equation}\label{eqn:130504}
 		j_l(f)(Y)\not\sqsubset (F_{W_i})^A.
 	\end{equation}
 	We consider the three possible cases.
 	
 	\par
 	
 	{\it Case 1.} $j_l(f)(Y)\not\sqsubset F_{W_i}$.
 	We remark that $j_l(f)$ hits the boundary of $\bar{J}_l(A\times S)$ only at the ramification points of $\pi _Y:Y\to \mathbb C$ and the points in $\bar{f}^{-1}(\partial A\times S)$, where $\bar{f}:Y\to \overline{A}\times S$ is the extension of $f$.
 	Hence, we have
 	$$(l+1)N^{(1)}_f(r,Z)\leq N_{j_l(f)}(r,F_{W_i})+(l+1)N_{\ram \pi _Y}(r)+(l+1)\pN{f}(r).$$
 	We recall that $F_{W_i}$ corresponds to $H^0(\overline{A}\times W_i,p_l^*L_o^{\otimes n(l)}\otimes q _l^*M_l)$, where $M_l$ is an ample line bundle on $W_i$ and $q_l:A\times W_i\to W_i$ is the second projection.
 	Hence by the Nevanlinna inequality (cf. \eqref{eqn:20221120}), we have
 	$$
 	N_{j_l(f)}(r,F_{W_i})\leq n(l)T_f(r, p_l^*L_o)+T_{j_l(f)}(r,q_l^*M_l)+O(\log r).
 	$$
 	By the similar argument for the proof of \cref{lem:202210042}, we get 	
	\begin{equation}\label{eqn:12092710}
 		T_{ j_l(f)}(r,q_l^*M_l)=O (T_{f_S}(r)+N_{\ram \pi _Y}(r)+\pN{f}(r))+O(\log r)+o(T_f(r))\ ||.
 	\end{equation}
 	Hence we obtain
 	\begin{equation*}\label{eqn:1209271}
 		N^{(1)}_f(r,Z)\leq \varepsilon T_{f_A}(r,L)+O (T_{f_S}(r)+N_{\ram \pi _Y}(r)+\pN{f}(r))+O(\log r)+o(T_f(r))\ ||.
 	\end{equation*}

 	\par

 	{\it Case 2. }$j_l(f)(Y)\sqsubset F_{W_i}$.
 	We apply \cref{pro:20220807} to get $B\in P_{W_i}$.
 	By $j_lf(Y)\not\sqsubset (F_{W_i})^A$, we have $B\not=A$.
 	
 	\par 
 	
 	{\it Case 2-1.}
 	$B=\{0\}$.
 	Then by \cref{pro:20220807}, we have
 	$$
 	T_{f_A}(r)=O(N_{\ram\pi}(r)+\pN{f}(r)+T_{j_lf_{W_i}}(r))+O(\log r)+o(T_f(r))||.
 	$$
 	Thus by \eqref{eqn:12092710}, we have
 	$$
 	T_f(r)=O (T_{f_S}(r)+N_{\ram \pi _Y}(r)+\pN{f}(r))+O(\log r)+o(T_f(r))\ ||.
 	$$
 	Hence
 	$$
 	N_f(r,Z)=O (T_{f_S}(r)+N_{\ram \pi _Y}(r)+\pN{f}(r))+O(\log r)+o(T_f(r))\ ||.
 	$$
 	
 	\par

 	{\it Case 2-2.} $B\not=\{0\}$.
	Then $B\in P$.
 	By \cref{pro:20220807}, we have
 	$$
 	T_{q_B\circ f_A}(r)
 	=O(N_{\ram\pi}(r)+\pN{f}(r)+T_{j_lf_{W_i}}(r))+O(\log r)+o(T_f(r))||.
 	$$
 	Thus by \eqref{eqn:12092710}, we have
 	$$
 	T_{q_B\circ f_A}(r)=O(N_{\ram\pi}(r)+\pN{f}(r)+T_{f_S}(r))+O(\log r)+o(T_f(r))||.
 	$$
 	
 	\par
 	
 	We combine the three cases above to conclude the proof of the lemma (\cref{lem:202209151}) under the three assumptions above.
	
	We remove the three assumptions.
	First suppose $q(Z)\subsetneqq S$.
	We set $E=\overline{q(Z)}$.
	Then $E\subsetneqq S$ is a Zariski closed set.
	If $f_S(Y)\not\subset E$, then we have $m_f(r,Z)+N_f(r,Z)=O(T_{f_S}(r))+O(\log r)$.
	This is stronger than the first assertion in \cref{lem:202209151}.
	Thus we have removed the assumption that $q(Z)\subset S$ is dense.	
	
	Next we remove the assumption that $Z$ is irreducible.
	Let $Z=Z_1\cup \cdots\cup Z_k$ be the irreducible decomposition.
	We apply \cref{lem:202209151} for $Z_i$ and $\varepsilon/k$ to get $P_i\subset \mathcal{S}(A)\backslash \{A\}$ and $E_i\subsetneqq S$.
	We set $P=\cup_{i}P_i$ and $E=\cup_iE_i$.
	Then $P\subset \mathcal{S}(A)\backslash\{A\}$ is a finite subset and $E\subsetneqq S$ is a proper Zariski closed subset.
	If $f:Y\da A\times S$ does not satisfy the second and the third assertions in \cref{lem:202209151}, then we have 
	$$
 		N^{(1)}_f(r,Z_i)\leq (\varepsilon/k) T_{f_A}(r,L)+O_{\varepsilon}(N_{\ram\pi}(r)+\pN{f}(r)
 		+T_{f_S}(r))+O(\log r)+o(T_f(r))||_{\varepsilon}.
		$$
		By $N^{(1)}_f(r,Z)\leq \sum_{i=1}^kN^{(1)}_f(r,Z_i)$, we have removed the assumption that $Z$ is irreducible.	

Finally we remove the assumption that $S$ is smooth.
Let $E_o\subset S$ be the singular locus of $S$ and let $\tau:S'\to S$ be a smooth modification, which is isomorphic outside $E_o$.		Let $Z'\subset A\times S'$ be the Zariski closure of $Z\cap (A\times (S\backslash E_o))$.
Then the codimension of $Z'\subset A\times S'$ is greater than one.
We apply  \cref{lem:202209151} for $Z'$ to get $P\subset \mathcal{S}(A)\backslash \{A\}$ and $E'\subsetneqq S'$.
We set $E=E_o\cup\tau(E')$.
If $f:Y\da A\times S$ does not satisfy the second and the third assertions in \cref{lem:202209151}, then we have a lift $f':Y\to A\times S'$ and 
$$
 		N^{(1)}_{f'}(r,Z')\leq \varepsilon T_{f_A}(r,L)+O_{\varepsilon}(N_{\ram\pi}(r)+\pN{f}(r)
 		+T_{f_S}(r))+O(\log r)+o(T_f(r))||_{\varepsilon}.
		$$	
		By 
		$$N^{(1)}_f(r,Z)\leq N^{(1)}_{f'}(r,Z')+N^{(1)}_{f_S}(r,E_o)\leq N^{(1)}_{f'}(r,Z')+O(T_{f_S}(r))+O(\log r),$$
		we have removed the assumption that $S$ is smooth.
		The proof of \cref{lem:202209151} is completed.		\end{proof}
 
 \medskip

 Let $Z\subset A\times S$ be a closed subscheme.
 For $f:Y\da A\times S$, we recall $e(f,Z)$ from \cref{subsec:4.2}.

 \begin{proposition}\label{pro:202208061}
 	Let $Z\subset A\times S$ be a closed subscheme, where $S$ is a projective variety.
	Let $L$ be an ample line bundle on $\overline{A}$, where $\overline{A}$ is a smooth equivariant compactification. 	Let $\varepsilon>0$.
 	Then there exists a finite subset $P\subset \mathcal{S}(A)\backslash\{A\}$ with the following property:
 	Let $f:Y\da A\times S$ satisfies $f(Y)\not\sqsubset Z$.
 	Then either one of the followings is true:
 	\begin{enumerate}
 		\item
 		$
 		N^{(1)}_f(r,Z)\leq \varepsilon T_{f_A}(r,L)+O_{\varepsilon}(N_{\ram\pi}(r)+\pN{f}(r)
 		+T_{f_S}(r))+O(\log r)+o(T_f(r))||_{\varepsilon}$.
 		\item
 		$e(f,Z)\geq \dim A-1$.
 		\item
 		There exists $B\in P$ such that 
 		$$
 		T_{q_B\circ f_A}(r)=O(N_{\ram\pi}(r)+\pN{f}(r)+T_{f_S}(r))+O(\log r)+o(T_f(r))||, 
 		$$
 		where $q_B:A\to A/B$ is the quotient map.
 	\end{enumerate}
 \end{proposition}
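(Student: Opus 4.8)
The plan is to deduce \cref{pro:202208061} from \cref{lem:202209151} and the first main theorem by induction on $\dim S$. Since $N^{(1)}_f(r,Z)$, the number $e(f,Z)$, and the left-hand side of alternative (3) all depend only on the support of $Z$, I may assume $Z$ reduced and write
\[
	Z=Z^{\ge 2}\cup Z^1_{\mathrm{dom}}\cup Z^1_{\mathrm{nd}},
\]
where $Z^{\ge 2}$ is the union of the irreducible components of $Z$ of codimension $\ge 2$ in $A\times S$, $Z^1_{\mathrm{dom}}$ is the union of the codimension-one components that dominate $S$, and $Z^1_{\mathrm{nd}}$ is the union of the codimension-one components that do not. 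A dimension count forces each irreducible component of $Z^1_{\mathrm{nd}}$ to be of the form $A\times W'$ with $W'\subsetneqq S$ a prime divisor, so $Z^1_{\mathrm{nd}}=A\times W$ for a fixed reduced divisor $W$ on $S$. For every irreducible component $C$ of $Z^1_{\mathrm{dom}}$ there is a fixed proper Zariski closed subset $J_C\subsetneqq S$ outside of which $C\cap(A\times\{s\})$ is a hypersurface of $A$. Applying \cref{lem:202209151} to the codimension $>1$ subscheme $Z^{\ge 2}$ and to $\varepsilon$ produces a finite subset $P_0\subset\mathcal S(A)\setminus\{A\}$ and a fixed proper Zariski closed subset $E\subsetneqq S$. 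Letting $V_1,\dots,V_m$ be the (finitely many) irreducible components of $E\cup W\cup\bigcup_C J_C$, each of dimension $<\dim S$, the induction hypothesis applied to $Z\cap(A\times V_j)\subset A\times V_j$ and $\varepsilon$ furnishes finite subsets $P_j\subset\mathcal S(A)\setminus\{A\}$, and I would put $P:=P_0\cup P_1\cup\dots\cup P_m$.

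Now take $f:Y\da A\times S$ with $f(Y)\not\sqsubset Z$ and set $\Sigma:=\overline{f_S(Y)}$, an irreducible variety. If $f_S(Y)\subset V_j$ for some $j$, then $f$ induces a map $Y\da A\times V_j$ with $f(Y)\not\sqsubset Z\cap(A\times V_j)$ (note $A\times V_j\not\subset Z$, as otherwise $f(Y)\sqsubset Z$), and since $\Sigma\subset V_j$ neither $e(f,Z)$ nor $N^{(1)}_f(r,Z)$ is affected by replacing $Z$ with $Z\cap(A\times V_j)$; the induction hypothesis over $A\times V_j$ then yields one of the alternatives (1)--(3) with $P_j\subset P$, the ample line bundle on $V_j$ being absorbed into the $O(T_{f_S}(r))$ term. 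Hence I may assume $f_S(Y)$ is contained in none of the $V_j$; in particular $f_S(Y)$ is contained neither in $E$, nor in $W$, nor in any $J_C$ with $C\subset Z^1_{\mathrm{dom}}$.

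If $Z^1_{\mathrm{dom}}\neq\varnothing$, fix a component $C$. As $\Sigma\not\subset J_C$, the morphism $C\cap(A\times\Sigma)\to\Sigma$ has fibres of dimension $\dim A-1$ over the dense open set $\Sigma\setminus J_C$, so its generic fibre has dimension $\dim A-1$; since $C\subset Z$, this gives $e(f,Z)\ge\dim A-1$, which is alternative (2). If instead $Z^1_{\mathrm{dom}}=\varnothing$, then $Z=Z^{\ge 2}\cup(A\times W)$, and by subadditivity of $N^{(1)}$,
\[
	N^{(1)}_f(r,Z)\ \le\ N^{(1)}_f(r,Z^{\ge 2})\ +\ N^{(1)}_{f_S}(r,W).
\]
Since $f_S(Y)\not\subset W$, the first main theorem (\cref{thm:first}, applied to the components of $W$) gives $N^{(1)}_{f_S}(r,W)=O(T_{f_S}(r))+O(\log r)$; and since $f_S(Y)\not\subset E$, \cref{lem:202209151} applied to $Z^{\ge 2}$ gives either alternative (3) for some $B\in P_0\subset P$, or
\[
	N^{(1)}_f(r,Z^{\ge 2})\ \le\ \varepsilon\,T_{f_A}(r,L)+O_\varepsilon\!\big(N_{\ram\pi}(r)+\pN{f}(r)+T_{f_S}(r)\big)+O(\log r)+o(T_f(r))\ ||_{\varepsilon}.
\]
Adding these two bounds, with the coefficient of $T_{f_A}(r,L)$ remaining $\varepsilon$, yields alternative (1). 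The base case $\dim S=0$ is the special case in which $Z^1_{\mathrm{nd}}=\varnothing$ and no $V_j$ occurs, where alternative (2) is the trivial statement $e(f,Z)=\dim Z\ge\dim A-1$ whenever $Z$ contains a divisorial component.

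I expect the main obstacle to be the bookkeeping that keeps the recursion finite: one has to verify that only the finitely many fixed proper closed subsets $V_j\subsetneqq S$ — assembled from $E$, $W$ and the $J_C$'s — are ever fed back into \cref{pro:202208061}, which is exactly the reason for splitting $Z$ according to codimension and to whether its components dominate $S$. The second delicate point is the verification, in the case $Z^1_{\mathrm{dom}}\neq\varnothing$, that $f_S(Y)$ avoiding the jumping loci $J_C$ forces the excess intersection dimension $e(f,Z)$ to be at least $\dim A-1$; both points rely on standard generic-flatness and upper-semicontinuity facts for the fibres of the projections $Z\to S$.
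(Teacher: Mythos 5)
Your proposal is correct and is essentially the paper's own argument in a slightly different dress: the paper runs a Noetherian induction on $S$ and splits cases according to whether the set $\Psi(A,Z)/A=\{s\in S:\dim(Z\cap(A\times\{s\}))\geq\dim A-1\}$ is Zariski dense, which coincides exactly with your dichotomy on whether $Z$ has a divisorial component dominating $S$. The remaining ingredients are the same — \cref{lem:202209151} applied to the codimension-$\geq 2$ part, the Nevanlinna inequality for the vertical divisors $A\times W$, and the induction hypothesis over the finitely many proper closed subsets $V_j\subsetneqq S$ assembled in advance from $E$, $W$ and the jumping loci.
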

 
 \begin{proof}
 	We prove by Noether induction on $S$.
 	So we assume that the the assertion is true for all irreducible $V\subsetneqq S$ and $Z|_{A\times V}\subset A\times V$.
 We are given $\varepsilon>0$.
 
 We set 
 $$
 \Psi(A,Z)=\{ x\in A\times S;\ \dim ((x+A)\cap Z)\geq \dim A-1\}.
 $$
 Note that $\Psi(A,Z)/A\subset S$ is a constructible set.

 First we cosnider the case that $\Psi(A,Z)/A\subset S$ is Zariski dense.
 Then there exists a non-empty Zariski open set $U\subset S$ such that $U\subset \Psi(A,Z)/A$.
 Set $V=S\backslash U$.
 Then $V\subsetneqq S$ is a proper Zariski closed subset.
 If $f_S(Y)\not\subset V$, then we have $e(f,Z)\geq \dim A-1$.
 Hence the second assertion of  \cref{pro:202208061} is valid.
 If $f_S(Y)\subset V$, then the induction hypothesis yields a finite subset $P'\subset \mathcal{S}(A)\backslash\{A\}$ so that one of the three assertions in \cref{pro:202208061} is valid.
Hence our proposition is proved by setting $P=P'$.

 Next we consider the case $V_o=\overline{\Psi(A,Z)/A}\subsetneqq S$.
 Let $Z'\subset A\times S$ be the Zariski closure of $Z\cap (A\times (S\backslash V_o))$.
 Then the codimension of $Z'\subset A\times S$ is greater than one.
 We apply \cref{lem:202209151} for $Z'\subset A\times S$ to get $E\subsetneqq S$ and $P'\subset \mathcal{S}(A)\backslash \{A\}$.
 	Then $E\subsetneqq S$.
 	Let $V_1,\ldots,V_l$ be the irreducible components of $E\cup V_o$.
 	By the induction hypothesis applied for each $V_i\subsetneqq S$, we get $P_i\subset \mathcal{S}(A)\backslash\{ A\}$.
 	We set $P=P'\cup \cup_iP_i$.
 	Now let $f:Y\da A\times S$ satisfies $f(Y)\not\sqsubset Z$.
 	Assume that the second and the third assertions of  \cref{pro:202208061} are not valid.
 If $f_S(Y)\subset V_i$ for some $V_i$, then by the induction hypothesis, we get the first assertion of \cref{pro:202208061}.
Hence we assume that $ f_S(Y)\not\subset V_i$ for all $V_i$.
 	Then we have $f_S(Y)\not\subset E$.
 	By \cref{lem:202209151}, we get
 	$$
 		N^{(1)}_{f}(r,Z')\leq \varepsilon T_{f_A}(r,L)+O_{\varepsilon}(N_{\ram\pi}(r)+\pN{f}(r)
 		+T_{f_S}(r))+O(\log r)+o(T_f(r))||_{\varepsilon}.
		$$	
		By 
		$$N^{(1)}_f(r,Z)\leq N^{(1)}_{f}(r,Z')+N^{(1)}_{f_S}(r,V_o)\leq N^{(1)}_{f}(r,Z')+O(T_{f_S}(r))+O(\log r),$$
		we get the first assertion of \cref{pro:202208061} for the case that $ f_S(Y)\not\subset V_i$ for all $V_i$.
		Thus the proof is completed.
 \end{proof}

 \subsection{Second main theorem with truncation level one}\label{subsec:4.7a}
 In this subsection, we assume that $S$ is smooth and projective.
 In particular, all divisors on $A\times S$ are Cartier divisors.
 
 \begin{lem}\label{lem:20220915}
  	Let $D\subset A\times S$ be a reduced divisor.
	Let $L$ be an ample line bundle on $\overline{A}$, where $\overline{A}$ is a smooth equivariant compactification. 	Let $\varepsilon>0$.
 	Then there exist a finite subset $P\subset \mathcal{S}(A)\backslash\{A\}$ and a proper Zariski closed subset $E\subsetneqq S$ with the following property:
 	Let $f:Y\da A\times S$ satisfies $f(Y)\not\sqsubset D$.
 	Then either one of the followings is true:
 	\begin{enumerate}
 		\item
 		$
 		N^{(2)}_f(r,D)-N^{(1)}_f(r,D)\leq \varepsilon T_{f_A}(r,L)+O_{\varepsilon}(N_{\ram\pi}(r)+\pN{f}(r)
 		+T_{f_S}(r))+O(\log r)+o(T_f(r))||_{\varepsilon}$.
 		\item
 		$f(Y)\subset A\times E$.
 		\item
 		There exists $B\in P$ such that 
 		$$
 		T_{q_B\circ f_A}(r)=O(N_{\ram\pi}(r)+\pN{f}(r)+T_{f_S}(r))+O(\log r)+o(T_f(r))||, 
 		$$
 		where $q_B:A\to A/B$ is the quotient map.
 	\end{enumerate}
 \end{lem}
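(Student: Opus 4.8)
The plan is to reduce the ``tangency count'' $N_f^{(2)}(r,D)-N_f^{(1)}(r,D)$ to an ordinary intersection count with a codimension--$\geq 2$ subscheme of a jet space, and then feed this into \cref{lem:202209151}. Fix a smooth equivariant compactification $\overline A$, put $D'=(\partial A)\times S$, and recall $\overline T(\overline A\times S;\log D')=\overline A\times S'$ with $S'=\overline{\operatorname{Lie}(A)\times TS}$ smooth projective (since $S$ is). Using the notation of \cref{subsec:4.2}, together with its logarithmic refinement from \cite[Section 5]{Yam04}, I would form the first--jet subscheme $D^{(1)}\subset \overline A\times S'$. Because $D$ is a \emph{reduced} divisor, its singular locus has codimension $\geq 2$ in $A\times S$; hence over the smooth locus of $D$ the scheme $D^{(1)}$ is the logarithmic tangent bundle of $D$, of dimension $2\dim(A\times S)-2$, and over $D_{\mathrm{sing}}$ its fibres have dimension at most $\dim(A\times S)$ while the base has dimension at most $\dim(A\times S)-2$, so $D^{(1)}$ has codimension $\geq 2$ in $\overline A\times S'$. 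The point of the construction is the elementary equivalence: for a holomorphic map $f:Y\da A\times S$ with $f(Y)\not\sqsubset D$ and an interior, non--ramification point $y\in Y$, one has $\ord_y f^{*}D\geq 2$ if and only if $j_1f(y)\in D^{(1)}$ (transversal meetings and meetings of the smooth part contribute order $\le 1$, whereas a meeting of $D_{\mathrm{sing}}$ forces order $\geq 2$ and $j_1f(y)$ to lie in the full fibre of $D^{(1)}$). Accounting for the points over $\partial A\times S$ and for the ramification of $\pi$, the argument of \cref{lem:202210042} (via \cite[(2.4.8) and Thm 5.1.7]{Yam04}) then gives
\begin{equation}\label{eqn:jet-truncation}
N_f^{(2)}(r,D)-N_f^{(1)}(r,D)\;\le\; N_{j_1f}^{(1)}(r,D^{(1)})+N_{\ram\pi}(r)+\pN{f}(r)+O(\log r),
\end{equation}
and in particular $j_1f(Y)\not\sqsubset D^{(1)}$ (as $D^{(1)}$ lies over $D$).

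Next I would apply \cref{lem:202209151} to the codimension--$\geq 2$ subscheme $Z:=D^{(1)}\subset A\times S'$, with the same $L$ and $\varepsilon$, obtaining a finite set $P_0\subset\mathcal S(A)\backslash\{A\}$ and a proper Zariski closed $E_0\subsetneq S'$ realizing its trichotomy for the map $j_1f$. To transfer the conclusions I would use that $(j_1f)_A=f_A$ (so $T_{(j_1f)_A}(r,L)=T_{f_A}(r,L)+O(\log r)$), that $\pN{j_1f}(r)=\pN{f}(r)$, and \cref{lem:202210042}, which gives $T_{(j_1f)_{S'}}(r)=O(T_{f_S}(r)+N_{\ram\pi}(r)+\pN{f}(r))+O(\log r)+o(T_{f_A}(r))||$ and hence $T_{j_1f}(r)=O(T_{f_A}(r)+T_{f_S}(r)+N_{\ram\pi}(r)+\pN{f}(r))+O(\log r)$; feeding these into the error terms collapses everything to the required shape. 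Thus alternative \cref{lem:202209151}(1) together with \eqref{eqn:jet-truncation} yields alternative (1) of the present lemma, and alternative \cref{lem:202209151}(3) yields alternative (3) with $P:=P_0$.

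It remains to dispose of the degeneracy alternative \cref{lem:202209151}(2), namely $(j_1f)_{S'}(Y)\sqsubset E_0$. Writing $\pi_S:S'\to S$ for the projection, if $\overline{\pi_S(E_0)}\subsetneq S$ then $f_S(Y)=\pi_S((j_1f)_{S'}(Y))\subset\overline{\pi_S(E_0)}$, and we take $E:=\overline{\pi_S(E_0)}$ to obtain conclusion (2). If $E_0$ dominates $S$, the first jet of $f$ is confined to $A\times E_0$ although $f_S$ may be Zariski dense in $S$; here I would run a secondary descent: replace $S'$ by a desingularization $\widetilde E$ of a component of $E_0$ dominating $S$ (so $\dim\widetilde E<\dim S'$), lift $j_1f$ to $Y\da A\times\widetilde E$, and re--apply \cref{lem:202209151} to the codimension--$\geq 2$ part of the pullback of $D^{(1)}$ to $A\times\widetilde E$, the finitely many divisorial components of that pullback being absorbed into alternative (1) through the Nevanlinna inequality \eqref{eqn:20221120} and the bound on $T_{j_1f}(r)$ above. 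Since $\dim\widetilde E$ strictly decreases, this terminates; in the terminal situation $\widetilde E\to S$ is generically finite onto an irreducible variety, so a proper closed subset of $\widetilde E$ has image a proper closed subset of $S$, which provides the set $E$. Finally one removes the auxiliary hypotheses (e.g.\ $D$ irreducible, $\overline{p(D)}=S$, $S'$ smooth) exactly as in the proofs of \cref{pro:202208062,lem:202209151}, replacing $P$ and $E$ by the union of the finitely many sets so produced.

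I expect the main obstacle to be precisely this last step: alternative \cref{lem:202209151}(2) only constrains the \emph{first jet} of $f$ to a proper subvariety of $S'$, and when that subvariety dominates $S$ one must iterate the jet construction, controlling the divisorial degenerations of the successive pulled--back jet subschemes and carrying all the Nevanlinna error terms ($N_{\ram\pi}$, $\pN{f}$, $T_{f_S}$, $o(T_f)$) faithfully through each descent.
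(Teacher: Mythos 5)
Your opening move coincides with the paper's: both pass to the first jet and use the inequality $N^{(2)}_f(r,D)-N^{(1)}_f(r,D)\leq N^{(1)}_{j_1f}(r,D^{(1)})+N_{\ram\pi}(r)+\pN{f}(r)$, then try to bound $N^{(1)}_{j_1f}(r,D^{(1)})$ by $\varepsilon T_{f_A}(r,L)$ plus admissible errors. The divergence, and the gap, is in which result you feed $D^{(1)}$ into. The paper applies \cref{pro:202208061} (whose degenerate alternative is the jet condition $e(j_1f,D^{(1)})\geq\dim A-1$, and which has already packaged the Noetherian descent over the base), after first dichotomizing on whether $e(j_1f,D^{(1)})=e(f,D)$: in the equality case \cref{lem:202210041} gives the much stronger conclusion $T_{f_A}(r)=O(T_{f_S}(r))+O(\log r)$, and only in the strict-inequality case is the $\varepsilon$-bound on $N^{(1)}_{j_1f}(r,D^{(1)})$ needed. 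You instead apply \cref{lem:202209151} directly, whose degenerate alternative confines $(j_1f)_{S'}(Y)$ to a proper closed $E_0\subsetneqq S'$ that may dominate $S$, and you then propose an iterated descent.

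The concrete failure is in your descent step: when you restrict $D^{(1)}$ to $A\times\widetilde E$ and meet divisorial components $W$, you propose to absorb $N^{(1)}_{j_1f}(r,W)$ into alternative (1) "through the Nevanlinna inequality \eqref{eqn:20221120}". But the Nevanlinna inequality bounds $N_{j_1f}(r,W)$ by $T_{j_1f}(r,\mathcal O(\overline W))$, and the restriction of $\mathcal O(\overline W)$ to $\overline A\times\{\mathrm{pt}\}$ is in general a fixed positive multiple of an ample class on $\overline A$; this yields $C\cdot T_{f_A}(r,L)$ for an uncontrolled constant $C$, not $\varepsilon T_{f_A}(r,L)$. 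Obtaining the $\varepsilon$-coefficient for divisors is exactly the hard content of \cref{claim:20221021} (the $n(l)/(l+1)<\varepsilon/\mu$ estimate inside \cref{lem:202209151}) and cannot be recovered from \eqref{eqn:20221120}. The missing idea is precisely \cref{lem:202210041}: in the degenerate regime one does not try to make the intersection count small, but instead shows $T_{f_A}(r)=O(T_{f_S}(r))+O(\log r)$, which trivializes the required bound. Your proof also omits the stabilizer reduction (Case 2 of the paper, quotienting by $\mathrm{St}^0(D)$, which is needed so that \cref{lem:202210041} applies) and the explicit treatment of the pairwise intersections $D_i\cap D_j$ in the reducible case; these are routine once the main mechanism is in place, but they are not covered by the blanket "remove the auxiliary hypotheses as before".
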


 \begin{proof}
 	We prove in the following three cases.
 	
 	{\it Case 1.}\ 
 	$D$ is irreducible and $\mathrm{St}^0(D)=\{0\}$.
 	We recall a Zariski open set $S^o\subset S$ (cf. \cref{lem:202210041}).
 	By replacing $S^o$ by a smaller non-empty Zariski open set, we may assume that $D_s\subset A$ is a reduced divisor for all $s\in S^o$.
 	Set $E=S\backslash S^o$.
	Set $S'=\overline{(\mathrm{Lie}A\times TS)}$. 	
	We apply \cref{pro:202208061} for $D^{(1)}\subset A\times S'$ and $\varepsilon>0$ to get a finite subset $P\subset \mathcal{S}(A)\backslash\{A\}$.
 	
 	Let $f:Y\da A\times S$ satisfies $f(Y)\not\sqsubset D$.
 	We assume that the assertions 2 and 3 of the statement do not occur.
	Then $e(f,D)=\dim A-1$.
 	If $e(j_1f,D^{(1)})= \dim A-1$, then by \cref{lem:202210041}, we have $T_f(r)=O(T_{f_{S}}(r))+O(\log r)$.
 	Hence
 	$$
 	m_f(r,D)+N_f(r,D)=O(T_{f_S}(r))+O(\log r).
 	$$
 	This is stronger than the assertion 1.
 	Thus in the following, we assume $e(j_1f,D^{(1)})<\dim A-1$.
 	Then by \cref{pro:202208061}, we get
 	$$
 	N^{(1)}_{j_1f}(r,D^{(1)})\leq \varepsilon T_{f_A}(r,L)+O_{\varepsilon}(N_{\ram\pi}(r)+\pN{f}(r)
 	+T_{(j_1f)_{S'}}(r))+O(\log r)+o(T_f(r))||_{\varepsilon}.
 	$$
 	By  \cref{lem:202210042} and
	 	$$
 	N^{(2)}_f(r,D)-N^{(1)}_f(r,D)\leq N^{(1)}_{j_1f}(r,D^{(1)})+N_{\ram\pi}(r)+\pN{f}(r),
 	$$
 	we conclude the proof in the case 1.
 	
 	{\it Case 2.}\
 	$D$ is irreducible, but $\mathrm{St}^0(D)$ is general.
 	We set $B=\mathrm{St}^0(D)$.
 	We apply the case 1 above for $A'=A/B$, $D'=D/B$ to get $P'\subset \mathcal{S}(A')\backslash\{ A'\}$ and $E\subsetneqq S$.
 	We define $P\subset \mathcal{S}(A)\backslash \{A\}$ by $C\in P$ iff $B\subset C$ and $C/B\in P'$.
 	
 	{\it Case 3.}\
 	We treat the general case.
 	Let $D=D_1+\cdots+D_k$ be the irreducible decomposition.
 	Set $Z_{ij}=D_i\cap D_j$.
 	Then $Z_{ij}\subset A\times S$ has codimension greater than one.
 	We apply \cref{pro:202208061} for $Z_{ij}$ to get $P_{ij}\subset \mathcal{S}(A)\backslash \{A\}$ and $E_{ij}\subsetneqq S$ such that $\overline{\Psi(A,Z_{ij})}=A\times E_{ij}$.
 	We apply the case 2 above for each $D_i$ to get $P_i\subset \mathcal{S}(A)\backslash \{A\}$ and $E_i\subsetneqq S$.
 	Then we set $P=\cup_iP_i\cup \cup_{i,j}P_{ij}$ and $E=\cup_{i}E_i\cup \cup_{i,j}E_{ij}$.
 \end{proof}
 
 \begin{lem}\label{lem:20220909}
 	Let $D\subset A\times S$ be a reduced divisor.
	Let $\overline{A}$ be a smooth equivariant compactification such that $p(\mathrm{Sp}_A\overline{D})\subsetneqq S$ is a proper Zariski closed set, where $p:\overline{A}\times S\to S$ is the second projection and $\overline{D}\subset \overline{A}\times S$ is the Zariski closure.
	Let $L$ be an ample line bundle on $\overline{A}$.
 Let $\varepsilon>0$.
 	Then there exist
 		a finite subset $P\subset \mathcal{S}(A)\backslash \{ A\}$, and a proper Zariski closed subset $E\subsetneqq S$
 	such that for every $f:Y\da A\times S$ with $f(Y)\not\sqsubset D$, either one of the followings holds:
 	\begin{enumerate}
 		\item
 		There exists $B\in P$ such that 
 		$$
 		T_{q_B\circ f_A}(r)=O(N_{\ram\pi}(r)+\pN{f}(r)+T_{f_S}(r))+O(\log r)+o(T_f(r))||, 
 		$$
 		where $q_B:A\to A/B$ is the quotient map.
 		\item
 		$f(Y)\sqsubset A\times E$.
 		\item
 		\begin{multline*}
 		m_f(r,\overline{D})+N_f(r,\overline{D})\leq N^{(1)}_f(r,\overline{D})
 			+\varepsilon T_{f_A}(r,L)\\
			+O_{\varepsilon}(N_{\ram\pi}(r)+\pN{f}(r)
 			+T_{f_S}(r))+O(\log r)||.
			\end{multline*}
 	\end{enumerate}
 \end{lem}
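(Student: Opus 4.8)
The plan is to bootstrap from the weakly truncated second main theorem \cref{pro:202208062} and then lower the truncation level down to $1$ by combining \cref{lem:20220915} with an elementary pointwise inequality on truncated counting functions.

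First I would apply \cref{pro:202208062} to the closed subscheme $W=\overline D\subsetneqq\overline A\times S$; it is reduced because $D$ is, and $\mathrm{supp}\,\overline D=\overline D$. This produces a finite subset $P_0\subset\mathcal S(A)\setminus\{A\}$ and an integer $\rho\in\mathbb Z_{>0}$ such that every $f:Y\da A\times S$ with $f(Y)\not\sqsubset\overline D$ and $f_S(Y)\not\subset p(\mathrm{Sp}_A\overline D)$ satisfies either the order function estimate of \cref{pro:202208062}(1) for some $B\in P_0$, or
\[
m_f(r,\overline D)+N_f(r,\overline D)-N^{(\rho)}_f(r,\overline D)=O(N_{\ram\pi}(r)+\pN{f}(r)+T_{f_S}(r))+O(\log r)+o(T_f(r))||.
\]
Since $f(Y)\not\sqsubset D$ is the same as $\bar f(Y)\not\subset\overline D$, the first hypothesis is automatic; the second I would arrange by putting the proper Zariski closed subset $p(\mathrm{Sp}_A\overline D)\subsetneqq S$ into the exceptional locus $E$, because then $f_S(Y)\subset p(\mathrm{Sp}_A\overline D)\subset E$ forces $f(Y)\sqsubset A\times E$, which is assertion~(2).

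To trade $N^{(\rho)}_f$ for $N^{(1)}_f$ I would use $\min(\rho,m)-\min(1,m)\le(\rho-1)\bigl(\min(2,m)-\min(1,m)\bigr)$ for all integers $m\ge0$ (obvious for $m\le1$; for $m\ge2$ both sides are at most $\rho-1$). Integrating over $Y(t)$, and comparing $\bar f^{\ast}\overline D$ with $f^{\ast}D$ — the discrepancy being supported on $\bar f^{-1}(\partial A\times S)$ and hence $O(\pN{f}(r))$ — one gets $N^{(\rho)}_f(r,\overline D)-N^{(1)}_f(r,\overline D)\le(\rho-1)\bigl(N^{(2)}_f(r,D)-N^{(1)}_f(r,D)\bigr)+O(\pN{f}(r))$. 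Then I would invoke \cref{lem:20220915} for the reduced divisor $D\subset A\times S$ with $\varepsilon$ replaced by $\varepsilon/(\rho-1)$ (this step is vacuous if $\rho=1$), obtaining a finite $P_1\subset\mathcal S(A)\setminus\{A\}$ and a proper Zariski closed $E_1\subsetneqq S$ so that for such $f$ either $N^{(2)}_f(r,D)-N^{(1)}_f(r,D)\le\tfrac{\varepsilon}{\rho-1}T_{f_A}(r,L)+O_\varepsilon(N_{\ram\pi}(r)+\pN{f}(r)+T_{f_S}(r))+O(\log r)+o(T_f(r))||$, or $f(Y)\subset A\times E_1$, or the order function estimate \cref{lem:20220915}(3) holds for some $B\in P_1$.

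Finally I would set $P=P_0\cup P_1$ and $E=E_1\cup p(\mathrm{Sp}_A\overline D)$, and combine: when $f$ with $f(Y)\not\sqsubset D$ does not satisfy (1) or (2), the second alternative of \cref{pro:202208062} and the first alternative of \cref{lem:20220915} both apply, and chaining the displayed estimates yields
\[
m_f(r,\overline D)+N_f(r,\overline D)\le N^{(1)}_f(r,\overline D)+\varepsilon T_{f_A}(r,L)+O_\varepsilon(N_{\ram\pi}(r)+\pN{f}(r)+T_{f_S}(r))+O(\log r)+o(T_f(r))||.
\]
To land exactly on assertion~(3) I would absorb the $o(T_f(r))$: for the product compactification $\overline A\times S$ one has $T_f(r)=T_{f_A}(r)+T_{f_S}(r)+O(\log r)$, hence $o(T_f(r))\le o(T_{f_A}(r))+o(T_{f_S}(r))$, where the first summand is $\le\varepsilon T_{f_A}(r,L)$ for $r$ large outside the exceptional set and the second is $O(T_{f_S}(r))$; running the whole argument with $\varepsilon/2$ then recovers the stated bound. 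The main obstacle here is not conceptual but bookkeeping: the careful comparison of $D$ with its closure $\overline D$ in the counting and proximity functions, together with the uniform absorption of the terms $o(T_f(r))$ and $\pN{f}(r)$; all the substantive analytic input is already packaged in \cref{pro:202208062} and \cref{lem:20220915}.
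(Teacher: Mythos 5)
Your proposal is correct and follows essentially the same route as the paper: apply \cref{pro:202208062} to $\overline{D}$ to obtain $P_1$ and $\rho$, apply \cref{lem:20220915} to $D$ with $\varepsilon'=\varepsilon/(\rho-1)$, use the pointwise inequality $N^{(\rho)}_f-N^{(1)}_f\leq(\rho-1)(N^{(2)}_f-N^{(1)}_f)$, and take $P=P_1\cup P_2$, $E=p(\mathrm{Sp}_A\overline{D})\cup E'$. Your explicit handling of the $D$ versus $\overline{D}$ discrepancy and the absorption of the $o(T_f(r))$ term is bookkeeping the paper leaves implicit, and it is done correctly.
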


 \begin{proof}
 	We apply \cref{pro:202208062} for $\overline{D}\subset \overline{A}\times S$ to get a finite subset $P_1\subset \mathcal{S}(A)\backslash\{A\}$ and $\rho\in\mathbb Z_{>0}$.
 	We apply \cref{lem:20220915} for $D\subset A\times S$ and $\varepsilon'=\varepsilon/(\rho-1)$ to get a finite subset $P_2\subset \mathcal{S}(A)\backslash\{A\}$ and $E'\subsetneqq S$.
 	We set $P=P_1\cup P_2$ and $E=p(\mathrm{Sp}_A\overline{D})\cup E'$.

 	Now let $f:Y\da A\times S$ satisfies $f(Y)\not\sqsubset D$.
 	We assume that the assertions 1 and 2 of our lemma do not hold.
 	By \cref{pro:202208062}, we get
 	\begin{equation*}
 		m_f(r,D)+N_f(r,D)-N^{(\rho)}_f(r,D)=O(N_{\ram\pi}(r)
 		+\pN{f}(r)+T_{f_S}(r))+O(\log r)+o(T_f(r))||.
 	\end{equation*}
 	We apply \cref{lem:20220915} to get
 	$$N^{(2)}_f(r,D)-N^{(1)}_f(r,D)\leq \varepsilon' T_{f_A}(r,L)+O(N_{\ram\pi}(r)+\pN{f}(r)
 	+T_{f_S}(r))+O(\log r)+o(T_f(r))||.$$
 	By $N^{(\rho)}_f(r,D)-N^{(1)}_f(r,D)\leq (\rho-1) (N^{(2)}_f(r,D)-N^{(1)}_f(r,D))$, we get
 	\begin{multline}\label{eqn:20220915}
 		m_f(r,D)+N_f(r,D)-N^{(1)}_f(r,D)\leq (\rho-1)\varepsilon'T_{f_A}(r,L)
 		\\
 		+
 		O(N_{\ram\pi}(r)+\pN{f}(r)
 		+T_{f_S}(r))+O(\log r)+o(T_f(r))||.
 	\end{multline}
 	This completes the proof.
 \end{proof}

  \subsection{Varieties of maximal quasi-albanese dimension} 
  \label{subsec:4.7}

 Let $B\subset A$ be a semi-abelian subvariety and let
 $\overline{A}$ be an equivariant compactification.
 
 \begin{dfn}
 We say that $\overline{A}$ is compatible with $B$ iff there exists an equivariant compactification $\overline{A/B}$ such that the quotient map $q_{B}:A\to A/B$ extends to a morphism $\overline{q_B}:\overline{A}\to \overline{A/B}$.
 \end{dfn}

 Let $W\subset A\times S$ be a $B$-invariant closed subvariety.
 Then we have $W/B\subset (A/B)\times S$.
 If $\overline{A}$ is compatible with $B$, then $\overline{q_B}$ yields $p:\overline{A}\times S\to(\overline{A/B})\times S$.

 	\begin{lem}\label{lem:20221124}
 	Let $W\subset A\times S$ be a $B$-invariant closed subvariety.
 	Let $D\subset W$ be a reduced divisor.
 	Then there exists a smooth, projective, equivariant compactification $\overline{A}$ which is compatible with $B$ and $p(\mathrm{Sp}_B\overline{D})\subsetneqq \overline{W/B}$, where $\overline{D}\subset \overline{A}\times S$ is the Zariski closure.
	\end{lem}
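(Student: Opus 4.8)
The plan is to build $\overline{A}$ by a tropical (toroidal) compactification argument performed fibrewise over $W/B$, after some elementary reductions. Throughout, note that $\mathrm{Sp}_B\overline{D}\subseteq\overline{D}$ and that $p$ is proper (as $\overline{A}$ is projective), so $p(\mathrm{Sp}_B\overline{D})$ is a closed subset of $\overline{W/B}$; since $\overline{W/B}$ is irreducible, it is a proper subset exactly when it misses the generic point of $\overline{W/B}$, which lies in $W/B\subseteq(A/B)\times S$. First I would reduce to a convenient case. Replacing $S$ by the closure of the image of $W$ in $S$ changes none of $\overline{D},\mathrm{Sp}_B\overline{D},\overline{W/B}$ as sets, so I may assume $W\to S$ is dominant; if moreover $D\to S$ is not dominant then $\overline{D}\subseteq\overline{A}\times S''$ for a proper closed $S''\subsetneqq S$ while $\overline{W/B}$ dominates $S$, and the conclusion holds for any smooth projective equivariant $\overline{A}$ compatible with $B$ (such exist, cf. \cite{NW13}). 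Writing $D=\bigcup_i D_i$ and using that $B$-orbit closures are irreducible, $\mathrm{Sp}_B\overline{D}=\bigcup_i\mathrm{Sp}_B\overline{D_i}$; for each $D_i$ with $D_i\to W/B$ not dominant one has $\dim q_B(D_i)<\dim W/B$, whence $p(\mathrm{Sp}_B\overline{D_i})\subseteq\overline{q_B(D_i)}\subsetneqq\overline{W/B}$. Taking a common refinement of the compactifications produced for the finitely many remaining components, I may thus assume $D$ is a prime divisor of $W$ dominating both $S$ and $W/B$; in particular the generic fibre $D_{\bar w}\subseteq B$ of $D\to W/B$ is a divisor.

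Next I would fix the geometry. For any smooth projective equivariant $\overline{A}$ compatible with $B$, the open set $\overline{q_B}^{-1}\big((A/B)\times S\big)$ of $\overline{A}\times S$ is the bundle associated to the $B$-torsor $A\times S\to(A/B)\times S$ with fibre the equivariant compactification $\overline{B}$ of $B$ given by the fibre of $\overline{q_B}$ over $e$; hence $\overline{W}^{\,\mathrm{vert}}:=\overline{W}\cap\overline{q_B}^{-1}\big((A/B)\times S\big)=W\times^{B}\overline{B}$ is an $\overline{B}$-bundle over $W/B$. We split $\overline{W}=\overline{W}^{\,\mathrm{vert}}\sqcup\big(\overline{W}\cap\overline{q_B}^{-1}(\partial\overline{A/B}\times S)\big)$; the second piece is carried by $p$ into $\partial\overline{A/B}\times S$, which meets $\overline{W/B}$ in a proper closed subset, so it cannot contribute the generic point of $W/B$. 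Since $\mathrm{Sp}_B\overline{D}\subseteq\overline{W}$ and $\mathrm{Sp}_B\overline{D}\cap\overline{W}^{\,\mathrm{vert}}=\mathrm{Sp}_B\big(\overline{D}^{\,\mathrm{vert}}\big)$, where $\overline{D}^{\,\mathrm{vert}}:=\overline{D}\cap\overline{W}^{\,\mathrm{vert}}$ is a divisor in $\overline{W}^{\,\mathrm{vert}}$, it remains to show that $p\big(\mathrm{Sp}_B(\overline{D}^{\,\mathrm{vert}})\big)$ misses the generic point of $W/B$.

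Now I would analyse the fibres over $W/B$. On a dense open $U\subseteq W/B$, by generic flatness of $\overline{D}^{\,\mathrm{vert}}\to W/B$ one has $\big(\mathrm{Sp}_B(\overline{D}^{\,\mathrm{vert}})\big)_{\bar w}=\mathrm{Sp}_B\big(\overline{D_{\bar w}}\big)$, where $\overline{D_{\bar w}}$ is the closure of the divisor $D_{\bar w}\subseteq B$ inside the fibre $\overline{B}$. Since $p$ restricted to $\overline{W}^{\,\mathrm{vert}}$ is the bundle projection and contracts each $\overline{B}$-fibre, for $\bar w\in U$ we have $\bar w\in p(\mathrm{Sp}_B(\overline{D}^{\,\mathrm{vert}}))$ iff $\mathrm{Sp}_B(\overline{D_{\bar w}})\neq\varnothing$. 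But $\mathrm{Sp}_B(\overline{D_{\bar w}})$ is a $B$-invariant closed subset of $\overline{D_{\bar w}}$, which has dimension $\dim B-1<\dim B$; hence it is disjoint from the dense orbit $B\subseteq\overline{B}$, so it lies in $\partial\overline{B}$, and being closed and $B$-invariant it is a union of boundary orbit closures of $\overline{B}$. Therefore $\mathrm{Sp}_B(\overline{D_{\bar w}})=\varnothing$ provided $\overline{D_{\bar w}}$ contains no boundary orbit of $\overline{B}$.

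The heart of the matter is thus to pick $\overline{B}$ — equivalently $\overline{A}$ — achieving this. Applying the existence of tropical compactifications of a subvariety of a semi-abelian variety (Tevelev's theorem for tori and its semi-abelian extension, cf. \cite{NW13}) to the generic fibre $D_\eta\subseteq B_{k(W/B)}$ of $D\to W/B$, there is a smooth projective equivariant compactification $\overline{B}$ of $B$ in which the closure of $D_\eta$ contains no boundary orbit; spreading out, the same holds for $\overline{D_{\bar w}}$ over a dense open of $W/B$. After a further (harmless) refinement one realizes $\overline{B}$ as the $\overline{q_B}$-fibre of a smooth projective equivariant $\overline{A}$ compatible with $B$ — assembling the fan of $\overline{B}$ with a fan of $\overline{A/B}$ into a fibred fan of $\overline{A}$ — and for this $\overline{A}$ the previous two paragraphs yield $p(\mathrm{Sp}_B\overline{D})\subsetneqq\overline{W/B}$. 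I expect the main obstacle to be precisely this last construction: producing one compactification that is simultaneously compatible with $B$ and, fibrewise over $W/B$, annihilates every $B$-invariant part of the boundary of $\overline{D}$ — a relative version of tropical compactification — while the preliminary reductions and dimension counts are routine.
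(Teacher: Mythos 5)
Your proposal is correct and follows essentially the same route as the paper: pass to the (geometric) generic fibre of $D\to W/B$ inside $B$ over the function field, invoke an existence theorem producing an equivariant compactification $\overline{B}$ in which the closure of that divisor contains no boundary orbit (the paper cites \cite[Lem 12.7]{Y22}, which plays the role of your Tevelev-type tropical compactification), descend $\overline{B}$ to $\mathbb C$, and assemble it into a smooth projective $\overline{A}$ compatible with $B$ via the fibred-fan construction. The step you flag as the main obstacle is exactly the part the paper carries out in detail, using the splitting $T_A=T_B\times T_{A/B}$ of the torus parts.
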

 	
\begin{proof}
We consider $W\to W/B$.
We take the schematic generic point $\eta\in W/B$.
Let $D_{\eta}\subset W_{\eta}$ be the schematic generic fiber.
Let $\bar{\eta}:\mathrm{Spec}\overline{\mathbb C(\eta)}\to W/B$ be the geometric point.
Then $W_{\bar{\eta}}=B\times_{\mathbb C}\overline{\mathbb C(\eta)}$ is a semi-abelian variety over $\overline{\mathbb C(\eta)}$.
By \cite[Lem 12.7]{Y22}, 
there exists a smooth equivariant compactification $\overline{W_{\bar{\eta}}}$ such that $\overline{D_{\bar{\eta}}}\subset \overline{W_{\bar{\eta}}}$ satisfies 
\begin{equation}\label{eqn:20221028}
\mathrm{Sp}_{W_{\bar{\eta}}}\overline{D_{\bar{\eta}}}=\emptyset.
\end{equation}

We remark that there exists a smooth equivariant compactification $\overline{B}$ such that $\overline{W_{\bar{\eta}}}$ is obtained by the base change of $\overline{B}$.
Namely
$$\overline{W_{\bar{\eta}}}=\overline{B}\times_{\mathbb C}\overline{\mathbb C(\eta)}.$$
To see this, we consider the canonical extension $0\to T_B\to B\to B_0\to 0$, where $T_B$ is an algebraic torus and $B_0$ is an abelian variety.
By \cite[Lem A.6]{Y22}, there exists a smooth equivariant compactification $\overline{(T_B)_{\bar{\eta}}}$ such that $\overline{W_{\bar{\eta}}}=(\overline{(T_B)_{\bar{\eta}}}\times W_{\bar{\eta}})/(T_B)_{\bar{\eta}}$, where $(T_B)_{\bar{\eta}}=T_B\times_{\mathbb C}\overline{\mathbb C(\eta)}$.
By Sumihiro's theorem, $\overline{(T_B)_{\bar{\eta}}}$ is a torus embedding associated to some complete fan (cf. \cite[Sec. A.2]{Y22}).
Hence there exists a smooth equivariant compactification $\overline{T_B}$ such that $\overline{(T_B)_{\bar{\eta}}}=\overline{T_B}\times_{\mathbb C}\overline{\mathbb C(\eta)}$.
Let $\overline{B}=(\overline{T_B}\times B)/T_B$.
Then $\overline{B}\times_{\mathbb C}\overline{\mathbb C(\eta)}=\overline{W_{\bar{\eta}}}$ as desired.
By \cite[Lem A.8]{Y22}, we may assume moreover that $\overline{B}$ is projective by replacing $\overline{B}$ by a modification $\hat{B}\to \overline{B}$, for the property \eqref{eqn:20221028} remains true under this modification.
Then $\overline{T_B}$ is projective.

Next we shall construct an equivariant compactification $\overline{A}$ which is compatible with $B$ and the general fibers of $\overline{A}\to\overline{A/B}$ are $\overline{B}$ constructed above.
We have the canonical extensions $0\to T_B\to B\to B_0\to 0$, $0\to T_A\to A\to A_0\to 0$ and $0\to T_{A/B}\to A/B\to A_0/B_0\to 0$.
 Then we have $0\to T_B\to T_A\to T_{A/B}\to 0$.
 We may take a section $T_{A/B}\to T_A$ so that $T_A=T_B\times T_{A/B}$.
 Set $A'=A/T_{A/B}$.
 Then $B\subset A'$ and $A'/B=A_0/B_0$.
 The $B$-torsor $A\to A/B$ is the pull-back of $A'\to A'/B$ by $A/B\to A'/B$. 
 
 Let $\overline{B}$ be the compactification above so that $\overline{B}=(\overline{T_B}\times B)/T_B$.
 Set $\overline{A'}=(\overline{T_B}\times A')/T_B$.
 Then $\overline{A'}=(\overline{B}\times A')/B$. 
 Then the general fibers of $\overline{A'}\to A'/B$ are $\overline{B}$.
 Let $\overline{T_{A/B}}$ be a smooth projective equivariant compactification.
Set $\overline{A}=((\overline{T_B}\times \overline{T_{A/B}})\times A)/(T_B\times T_{A/B})$ and $\overline{A/B}=(\overline{T_{A/B}}\times (A/B))/T_{A/B}$.
 Then $\overline{A}$ is compatible with $B$ and the general fibers of $\overline{A}\to\overline{A/B}$ are $\overline{B}$.
Moreover $\overline{A}$ and $\overline{A/B}$ are smooth and projective by the proof of \cite[Lem A.8]{Y22}.

Now let $\overline{D}\subset \overline{W}$ be the compactification of $D$.
Then we claim
\begin{equation}\label{eqn:202210281}
(\overline{D})_{\bar{\eta}}=\overline{D_{\bar{\eta}}}
\end{equation}
in $(\overline{W})_{\bar{\eta}}
$.
To see this, we note that the induced map $\overline{W}_{\eta}\to \overline{W}$ is homeomorphism onto its image (cf. \cite[\href{https://stacks.math.columbia.edu/tag/01K1}{Tag 01K1}]{stacks-project}). 
Hence $(\overline{D})_{\eta}=\overline{D_{\eta}}$.
Note that $(\overline{W})_{\bar{\eta}}\to (\overline{W})_{\eta}$ is a closed morphism (cf.  \cite[\href{https://stacks.math.columbia.edu/tag/01WM}{Tag 01WM}]{stacks-project}).
Hence $(\overline{D_{\eta}})_{\bar{\eta}}=\overline{D_{\bar{\eta}}}$.
Combining these two observations, we have
$(\overline{D})_{\bar{\eta}}=(\overline{D_{\eta}})_{\bar{\eta}}=\overline{D_{\bar{\eta}}}$.
This shows \eqref{eqn:202210281}.
Since $\mathrm{Sp}_B\overline{D}\subset \overline{W}$ is $B$-invariant, $(\mathrm{Sp}_B\overline{D})_{\bar{\eta}}\subset (\overline{W})_{\bar{\eta}}$ is $B_{\bar{\eta}}$-invariant.
Moreover $(\mathrm{Sp}_B\overline{D})_{\bar{\eta}}\subset (\overline{D})_{\bar{\eta}}$.
Hence $(\mathrm{Sp}_B\overline{D})_{\bar{\eta}}\subset\mathrm{Sp}_{B_{\bar{\eta}}}((\overline{D})_{\bar{\eta}})$.
Combining this with \eqref{eqn:20221028} and \eqref{eqn:202210281},
we have
$(\mathrm{Sp}_B\overline{D})_{\bar{\eta}}\subset\mathrm{Sp}_{B_{\bar{\eta}}}\overline{D_{\bar{\eta}}}=\emptyset$.
Hence $p(\mathrm{Sp}_B\overline{D})\subset \overline{W/B}$ is a proper Zariski closed set.
\end{proof}

  Let $X$ be a quasi-projective variety.
  Let $D\subset X$ be a reduced (Weil) divisor.
  Let $E\subset D$ be a subset.
  Let $f:Y\to X$ be a holomorphic map such that $f(Y)\not\subset D$.
 We set
  $$
  \overline{N}_f(r, E)=\frac{1}{\mathrm{deg}\ \pi} \int_{2\delta}^{r}\mathrm{card}\ (Y(t)\cap f^{-1}E)\ \frac{d t}{t} .
  $$
  When $D$ is a Cartier divisor, then we have $\overline{N}_f(r,D)=N^{(1)}_f(r,D)$.

 Let $S$ be a projective variety and let $B\subset A$ be a semi-abelian subvariety.
 We denote by $I_B$ the set of all holomorphic maps $f:Y\to A\times S$ such that the following three estimates hold:
 \begin{itemize}
 \item 
$T_{q_B\circ f_A}(r)=O(\log r)+o(T_f(r))||$, where $q_B:A\to A/B$ is the quotient,
 \item
$N_{\ram\pi}(r)=O(\log r)+o(T_f(r))||$,
\item
$T_{f_S}(r)=O(\log r)+o(T_f(r))||$.
 \end{itemize}
 
 \begin{rem}\label{rem:20221203}
 If $f\in I_{\{0\}}$, then $f:Y\to A\times S$ does not have essential singularity over $\infty$.
 Indeed, $f\in I_{\{0\}}$ yields $T_{f_A}(r)=O(\log r)+o(T_f(r))||$ and  $T_{f_S}(r)=O(\log r)+o(T_f(r))||$.
 Hence $T_f(r)=O(\log r)||$, thus $N_{\ram\pi}(r)=O(\log r)||$.
 These two estimates implies that $f$ does not have essential singularity over $\infty$ (cf. \cref{lem:20230411} ). 
  \end{rem}
  
  For a proper birational morphism $\varphi:V\to W$, we denote by $\mathrm{Ex}(\varphi)\subset V$ the exceptional locus of $\varphi$.
  If $f:Y\to W$ is a holomorphic map such that $f(Y)\not\subset \varphi(\mathrm{Ex}(\varphi))$, then there exists a unique lift $g:Y\to V$ of $f$.
  
  \begin{lem}\label{lem:20220909gg}
  Let $B\subset A$ be a semi-abelian subvariety.
  Let $W\subset A\times S$ be a $B$-invariant, closed subvariety, where $S$ is a projective variety.
  Let $D\subset W$ be a reduced Weil divisor.
 Then there exist a smooth projective equivariant compactification $A\subset \overline{A}$ and a proper birational morphism $\varphi:V\to \overline{W}$, where $V$ is smooth and $\varphi^{-1}(\overline{D}\cup \partial W)\subset V$ is a simple normal crossing divisor, such that the following property holds:
 Let $Z\subset W$ be a Zariski closed subset such that $\mathrm{codim}(Z,W)\geq 2$ and $Z\subset D$.
 	 Let $L$ be an ample line bundle on $V$.
 	 Let $\varepsilon>0$.
Then there exist
\begin{itemize}
\item
 		a finite subset $P\subset \mathcal{S}(B)\backslash \{ B\}$,
 		\item
 		a proper Zariski closed subset $\Phi\subsetneqq W$ with $\varphi(\mathrm{Ex}(\varphi))\subset \overline{\Phi}$
 		\end{itemize}
 	such that for every $f:Y\to W$ with $f(Y)\not\subset D\cup \Phi$ and $f\in I_B$, either one of the followings holds:
 	 	\begin{enumerate}
 		\item
 		There exists $C\in P$ such that $f\in I_C$.
 		\item
		Let $f':Y\to V$ be the lift of $f$.
		Then
 		$$T_{f'}(r,K_V(\varphi^{-1}(\overline{D}\cup \partial W)))\leq \overline{N}_f(r,D\backslash Z)
 			+\varepsilon T_{f'}(r,L)+O(\log r)+o(T_{f'}(r))||.$$
 	\end{enumerate}
 \end{lem}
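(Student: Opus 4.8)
The plan is to reduce everything to the log general type divisor bound of the preceding lemmas (\cref{lem:20220909}, \cref{lem:202209151}, and \cref{pro:202208062}), transported along the quotient by $B$. First I would pass to the quotient picture: since $W$ is $B$-invariant, one has $W/B \subset (A/B)\times S$ and a $B$-bundle map $W \to W/B$. By \cref{lem:20221124}, applied to the reduced divisor $D\subset W$, there is a smooth projective equivariant compactification $\overline A$ compatible with $B$ such that, writing $\overline D\subset \overline A\times S$ for the Zariski closure, the ``special locus'' $p(\mathrm{Sp}_B\overline D)\subsetneqq \overline{W/B}$ is a proper Zariski closed set, where $p:\overline A\times S\to (\overline{A/B})\times S$ is induced by $\overline{q_B}$. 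Then I would take a log resolution $\varphi:V\to \overline W$ which is an isomorphism over the smooth locus away from $\overline D\cup\partial W$ and for which $\varphi^{-1}(\overline D\cup\partial W)$ is simple normal crossing; set $\Phi_0 := \varphi(\mathrm{Ex}(\varphi))$, a proper Zariski closed subset of $W$.

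Next I would run the second main theorem machinery \emph{on the fibers of $W\to W/B$}, i.e.\ for $\overline D$ viewed inside $\overline A\times S$ relative to the $B$-action, treating $(A/B)\times S$ as the ``base''. Concretely, I would invoke \cref{lem:20220909} with the roles of $A$ and $S$ played by $B$ and (a smooth model of) $\overline{W/B}$ respectively — this is exactly why \cref{lem:20221124} was arranged to give $p(\mathrm{Sp}_B\overline D)$ proper. This produces a finite set $P\subset \mathcal S(B)\backslash\{B\}$ and a proper Zariski closed $E\subsetneqq \overline{W/B}$ so that, for $f\in I_B$ (whose defining estimates say precisely that the $A/B$-part and $S$-part of $f$ are ``small'', i.e.\ $O(\log r)+o(T_f)$), either $f\in I_C$ for some $C\in P$, or $f(Y)$ lies over $E$, or the proximity-plus-counting bound
\begin{equation*}
m_f(r,\overline D)+N_f(r,\overline D)\leq N^{(1)}_f(r,\overline D)+\varepsilon T_{f_A}(r,L')+O(N_{\ram\pi}(r)+\pN{f}(r)+T_{f_S}(r))+O(\log r)||
\end{equation*}
holds. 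For $f\in I_B$ the error terms $N_{\ram\pi}(r)$, $\pN{f}(r)$ (bounded by $\pN{f}(r)\le$ terms controlled once we note $f$ misses $\partial(A/B)$ generically), and $T_{f_S}(r)$ are all $O(\log r)+o(T_f(r))$, and $T_{f_A}(r)=O(T_{f'}(r))+O(\log r)$, so this collapses to $m_f(r,\overline D)+N_f(r,\overline D)\le N^{(1)}_f(r,\overline D)+\varepsilon T_{f'}(r,L)+O(\log r)+o(T_{f'}(r))||$.

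Then I would convert this into the stated canonical-bundle inequality on $V$ by the First Main Theorem (\cref{thm:first}) and the standard identity $T_{f'}(r,K_V(\varphi^{-1}(\overline D\cup\partial W)))$ expressed via $\varphi^*K_{\overline W}(\overline D)$ plus the log-discrepancy divisor of $\varphi$, whose contribution is supported on $\mathrm{Ex}(\varphi)$ hence absorbed after enlarging $\Phi$ to $\Phi\supset \Phi_0\cup$ (image of $E$) $\cup$ (image of the singular locus). The passage from $N_f(r,\overline D)-N^{(1)}_f(r,\overline D)$ and $m_f(r,\overline D)$ to a bound involving $\overline N_f(r,D\backslash Z)$ uses that $Z$ has codimension $\ge 2$ in $W$ and $Z\subset D$: the points of $Y$ mapping into $Z$ contribute to $N_f(r,\overline D)$ but the $N^{(1)}$-truncated count over $Z$ is negligible — more precisely one splits $\overline N_f(r,D)=\overline N_f(r,D\backslash Z)+\overline N_f(r,Z\cap D)$ and controls $\overline N_f(r,Z)$ by \cref{lem:202209151}/\cref{pro:202208061} applied to the codimension-$\ge 2$ subscheme $Z$, throwing the resulting exceptional sets into $\Phi$ and $P$ as well.

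The main obstacle, I expect, is bookkeeping the compactification compatibly: one must choose a \emph{single} $\overline A$ (and the resolution $\varphi:V\to\overline W$) \emph{before} knowing $Z$, $L$, $\varepsilon$, since these are quantified after $\overline A$ and $\varphi$ in the statement — so the application of \cref{lem:20221124} and the log resolution must be done first, and only then, with $(\overline A,\varphi)$ fixed, may one invoke \cref{lem:20220909} and \cref{lem:202209151} (which are allowed to depend on $Z,L,\varepsilon$). Verifying that the ``boundary counting'' term $\pN{f}(r)$ and $N_{\ram\pi}(r)$ genuinely enter only as $o(T_{f'}(r))+O(\log r)$ for $f\in I_B$ — i.e.\ that the hypothesis $f\in I_B$ is exactly strong enough — is the other delicate point, and it is where the definition of $I_B$ (forcing $T_{q_B\circ f_A}$, $N_{\ram\pi}$, $T_{f_S}$ all $O(\log r)+o(T_f)$) is used in full.
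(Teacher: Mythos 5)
Your overall strategy is the paper's: use \cref{lem:20221124} to fix $\overline{A}$ with $p(\mathrm{Sp}_B\overline{D})\subsetneqq\overline{W/B}$, take a log resolution $\varphi:V\to\overline{W}$ first, and then run \cref{lem:20220909} with $B$ in the role of $A$ and the quotient in the role of $S$, handling $Z$ by the codimension-two lemmas. There are, however, two genuine gaps. First, \cref{lem:20220909} applies to a reduced divisor inside a \emph{product} $B\times\Sigma$ with $\Sigma$ smooth projective, whereas $W\to W/B$ is only a $B$-torsor. You cannot literally "invoke \cref{lem:20220909} with $S$ a smooth model of $\overline{W/B}$"; you must first trivialize the torsor étale-locally by choosing a multisection $S'\subset W$ with $S'\to U$ étale ($U$ the smooth locus of $W/B$), obtaining an étale map $\bar{\psi}:\overline{B}\times S'\to p^{-1}(U)$ — this is exactly where compatibility of $\overline{A}$ with $B$ is used — and then lift $f$ to a map $g:Y'\to\overline{B}\times\Sigma$ over a \emph{new} covering $Y'\to Y$. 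That base change introduces fresh ramification and boundary counting functions $N_{\ram\pi'}(r)$, $\pN{g}(r)$, $T_{g_\Sigma}(r)$ which must be checked to be $O(\log r)+o(T_f(r))$ using $f\in I_B$ and the fact that $Y'\to Y$ ramifies only over $g_\Sigma^{-1}(\Sigma\backslash S')$; none of this is in your sketch.

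Second, and more seriously, your claim that the log-discrepancy divisor of $\varphi$ is "supported on $\mathrm{Ex}(\varphi)$ hence absorbed after enlarging $\Phi$" does not work: enlarging $\Phi$ only excludes curves whose image lies \emph{entirely} in $\Phi$, while a curve with $f(Y)\not\subset\Phi$ can still meet $\mathrm{Ex}(\varphi)$ and pick up an unbounded contribution $T_{f'}(r,E)$ from the discrepancy $E$ in $K_V(\varphi^{-1}(\overline{D}\cup\partial W))\le\tau^*K_{V_o}(\cdots)+E$. The paper's fix is to observe that over $p^{-1}(U)$ the non-snc locus of $\varphi_o^{-1}(\overline{D}+\partial W)$ sits inside $\overline{D}$, so $\tau(E)\cup\varphi_o^{-1}(Z)$ can be packaged into a codimension-$\geq 2$ subscheme $\widehat{Z}$ supported in $\varphi_o^{-1}(\overline{D})$, and then the \emph{full} $m_{\tau\circ f'}(r,\widehat{Z})+N_{\tau\circ f'}(r,\widehat{Z})$ — not just the truncated count — is bounded by $\rho\varepsilon' T+\cdots$ by combining \cref{pro:202208062} (to reduce to $N^{(\rho)}$) with \cref{lem:202209151} (to bound $N^{(1)}$, hence $N^{(\rho)}\le\rho N^{(1)}$). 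Your use of the codimension-two lemmas only for $Z$ itself, to pass from $\overline{N}_f(r,D)$ to $\overline{N}_f(r,D\backslash Z)$, is correct but insufficient: the same mechanism must absorb the discrepancy term, and the truncation level $\rho$ from \cref{pro:202208062} is why $\varepsilon$ must be rescaled to $\varepsilon/(\rho+2)$ before applying the auxiliary lemmas.
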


  \begin{proof}
  By \cref{lem:20221124}, we may take an equivariant compactification $\overline{A}$ such that
 \begin{itemize}
 \item
  $\overline{A}$ is smooth and projective, 
  \item
  $\overline{A}$ is compatible with $B$, and
  \item $p(\mathrm{Sp}_B\overline{D})\subsetneqq \overline{W/B}$, where $p:\overline{W}\to \overline{W/B}$ is induced from $\overline{A}\to\overline{A/B}$.
  \end{itemize}
  Let $U\subset W/B$ be the smooth locus of $W/B$.
  Then $p^{-1}(U)\subset \overline{W}$ is smooth.
  We take a smooth modification $\varphi_o:V_o\to \overline{W}$ which is isomorphism over $p^{-1}(U)$.
 We apply the embedded resolution of Szab{\'o}  \cite{Sza94} 
 for $\varphi_o^{-1}(\overline{D}+\partial W)\subset V_o$ to get a smooth modification $\tau:V\to V_o$ such that $\widetilde{D}=(\varphi_o\circ \tau)^{-1}(\overline{D}+\partial W)$ is simple normal crossing and $\tau$ is an isomorphism outside the locus where $\varphi_o^{-1}(\overline{D}+\partial W)$ is not simple normal crossing.
 Let $\varphi:V\to \overline{W}$ be the composite of $\tau:V\to V_o$ and $\varphi_o:V_o\to \overline{W}$.
 We have 
 \begin{equation}\label{eqn:20221126}
 K_{V}(\varphi^{-1}(\overline{D}\cup \partial W)))
\leq
 \tau^*K_{V_o}(\varphi_o^{-1}(\overline{D}+\partial W))+E
 \end{equation}
 for some effective divisor $E\subset V$ such that $\tau(E)\subset V_o$ is contained in the locus where $\varphi_o^{-1}(\overline{D}+\partial W)$ is not simple normal crossing.
 Since $p^{-1}(U)\cap \partial W$ is simple normal crossing, we have $p^{-1}(U)\cap \tau(E)\subset \overline{D}$.

 Let $Z\subset W$ be a Zariski closed subset such that $\mathrm{codim}(Z,W)\geq 2$ and $Z\subset D$. 
 We take a closed subscheme $\widehat{Z}\subset V_o$ such that
 \begin{itemize}
 \item
 $\mathrm{supp}\widehat{Z}=\overline{p^{-1}(U)\cap (\tau(E)\cup\varphi_o^{-1}(Z))}$,
 \item
 $(p\circ \varphi)^{-1}(U)\cap E\subset (p\circ\varphi)^{-1}(U)\cap \tau^*\widehat{Z}$, as closed subschemes of $(p\circ\varphi)^{-1}(U)$.
 \end{itemize}
 Then we have $\mathrm{supp}(\widehat{Z})\subset \varphi_o^{-1}(\overline{D})$ and $\mathrm{codim}(\widehat{Z},V_o)\geq 2$.

 Let $\Psi:B\times \overline{W}\to \overline{W}$ be the $B$-action.
 We may take a smooth subvariety $S'\subset W$ such that the induced map $S'\to U$ is {\'e}tale.
 	Then $\Psi$ yields an {\'e}tale map 
 	$$\psi:B\times S'\to p^{-1}(U)\cap W.$$
 Let $\overline{B}\subset\overline{A}$ be the compactification.
 Then $\overline{B}$ is an equivariant compactification.
 The map $\psi$ extends to a map
 $$
 \bar{\psi}:\overline{B}\times S'\to p^{-1}(U).
 $$ 
 Since $\overline{A}$ is compatible with $B$, $\bar{\psi}$ is {\'e}tale.
 We take a smooth compactification $S'\subset \Sigma$ such that the maps $S'\hookrightarrow W$ and $S'\to W/B$ extend to $\Sigma\to \overline{W}$ and $\sigma:\Sigma\to \overline{W/B}$.
 	Then $\bar{\psi}:\overline{B}\times S'\to p^{-1}(U)\subset V_o$ induces a rational map $\widehat{\psi}:\overline{B}\times \Sigma\dashrightarrow V_o$.

 	\[
		\begin{tikzcd}
		\overline{B}\times S' \arrow[r, hookrightarrow] \arrow[d] \arrow[bend left, "\bar{\psi}"]{rrr}&	\overline{B}\times \Sigma \arrow[r, dashrightarrow]
		\arrow[dashed, bend left, "\widehat{\psi}" ']{rr}
		 \arrow[d, "q"] & \overline{W} \arrow[d, "p"] & V_o\arrow[l,"\varphi_o" ] &V\arrow[l,"\tau" ] \\
		S' \arrow[r, hookrightarrow] &	\Sigma \arrow[r, "\sigma"] & \overline{W/B}& &
		\end{tikzcd}
	\]
We take a closed subscheme $I\subset \overline{B}\times\Sigma$ such that $q(\mathrm{supp} I)\subset \Sigma\backslash S'$ and the ratinal map $\widehat{\psi}:\overline{B}\times\Sigma\dashrightarrow V_o$ extends to a morphism 
$$\widetilde{\psi}:\mathrm{Bl}_I(\overline{B}\times\Sigma)\to V_o.$$

Let $L$ be an ample line bundle on $V$.
Let $L_{V_o}$ be an ample line bundle on $V_o$.
Then, there exists a positive constant $c_1>0$ such that for every $g:Y\to V$, we have the estimate:
\begin{equation}\label{eqn:202211303}
T_{\tau\circ g}(r,L_{V_o})\leq c_1T_g(r,L)+O(\log r).
\end{equation}
Let $L_{\overline{B}}$ be an ample line bundle on $\overline{B}$.
Let $\kappa:\mathrm{Bl}_I(\overline{B}\times\Sigma)\to \overline{B}$ be the composite of $\mathrm{Bl}_I(\overline{B}\times\Sigma)\to \overline{B}\times\Sigma$ and the first projection $\overline{B}\times\Sigma\to\overline{B}$.
Then, there exists a positive constant $c_2>0$ such that for every $g:Y\to V_o$ with $g(Y)\not\subset \widetilde{\psi}(\mathrm{Bl}_I(\overline{B}\times\Sigma)\backslash (\overline{B}\times S'))$, we have the estimate (cf. \cite[Lemma 1]{Yam15}):
\begin{equation}\label{eqn:202211304}
T_{g'}(r,\kappa^*L_{\overline{B}})\leq c_2T_g(r,L_{V_o})+O(\log r),
\end{equation}
where $g':Y\to \mathrm{Bl}_I(\overline{B}\times\Sigma)$ is a lift of $g$.

Let $D'\subset \overline{B}\times \Sigma$ be the reduced divisor defined by the Zariski closure of $\bar{\psi}^{-1}(\overline{D}\cap p^{-1}(U))\subset \overline{B}\times S'$.
Then we have
$$
q(\mathrm{Sp}_BD')\subsetneqq \Sigma.
$$
Let $Z'\subset \overline{B}\times \Sigma$ be the schematic closure of $\bar{\psi}^{*}(\widehat{Z}\cap p^{-1}(U))\subset \overline{B}\times S'$.
Then $\mathrm{codim}(Z',\overline{B}\times \Sigma)\geq 2$ and $\mathrm{supp}Z'\subset D'$.
Hence we have $q(\mathrm{Sp}_BZ')\subsetneqq \Sigma$.
We apply \cref{pro:202208062} for $Z'\subset \overline{B}\times \Sigma$ to get $P_0\subset \mathcal{S}(B)\backslash\{B\}$ and $\rho\in \mathbb Z_{>0}$.

  Let $\varepsilon>0$.
  We set $\varepsilon'=\frac{\varepsilon}{c_1c_2(\rho+2)}$.
  We may apply \cref{lem:20220909} for $D'\subset \overline{B}\times \Sigma$, $L_{\overline{B}}$ and $\varepsilon'$ to get $P_1\subset \mathcal{S}(B)\backslash\{B\}$ and $E_1\subsetneqq \Sigma$.
  We apply \cref{lem:202209151} for $Z'\subset \overline{B}\times \Sigma$, $L_{\overline{B}}$ and $\varepsilon'$ to get $P_2\subset \mathcal{S}(B)\backslash\{B\}$ and $E_2\subsetneqq \Sigma$.
  We take a non-empty Zariski open set $U'\subset U$ such that $S'\to U$ is finite over $U'$.
 We set 
  $$\Phi=\big( \varphi(\mathrm{Ex}(\varphi))\cup p^{-1}(\overline{W/B}\backslash U')\cup p^{-1}(\sigma (E_1\cup E_2\cup q(\mathrm{Sp}_BZ'))\big) \cap W.$$
  We set $P=P_0\cup P_1\cup P_2$.
  
  Now let $f:Y\to W$ be a holomorphic map such that $f(Y)\not\subset D\cup \Phi$ and $f\in I_B$.
  We assume that the first assertion of \cref{lem:20220909gg} does not valid.
  By $p\circ f(Y)\not\subset \overline{W/B}\backslash U'$, we may take a lift 
  $$g:Y'\to \overline{B}\times \Sigma$$ 
  such that 
  $$g^{-1}(\partial B\times \Sigma)\subset g_{\Sigma}^{-1}(\Sigma\backslash S').$$
Hence we have 
\begin{equation*}
 	\pN{g}(r)\leq N_{g_{\Sigma}}(r,\Sigma\backslash S')=O(\log r)+o(T_f(r))||,
 \end{equation*}
 	\begin{equation*}
 	T_{g_{\Sigma}}(r)=O(\log r)+o(T_f(r))||.
 	\end{equation*}
 	Note that the covering $\mu:Y'\to Y$ is unramified outside $g_{\Sigma}^{-1}(\Sigma\backslash S')$.
 	Hence we have
 	\begin{equation*}
 	N_{\ram\pi'}(r)=O(\log r)+o(T_f(r))||,
 	\end{equation*}
   where $\pi':Y'\to \mathbb C_{>\delta}$ is the composite $\pi'=\pi\circ \mu$.

   Hence by \cref{lem:20220909}, we get 
 \begin{equation*}
 		m_{g}(r,D')+N_{g}(r,D')-\overline{N}_{g}(r,D')\leq\varepsilon' T_{g_B}(r,L_{\overline{B}})
 		+O(\log r)+o(T_f(r))||.
 	\end{equation*}
 By \cref{pro:202208062} 
 	\begin{equation*}
 		m_{g}(r,Z')+N_{g}(r,Z')\leq N^{(\rho)}_{g}(r,Z')+
 		O(\log r)+o(T_f(r))||.
 	\end{equation*}
 	By \cref{lem:202209151}, we get
 	$$
 	N^{(1)}_{g}(r,Z')\leq \varepsilon' T_{g_B}(r,L_{\overline{B}})
 	+O(\log r)+o(T_f(r))||.
 	$$
 	Hence
 	$$
 	m_{g}(r,Z')+N_{g}(r,Z')\leq \rho\varepsilon'T_{g_B}(r,L_{\overline{B}})
 +O(\log r)+o(T_f(r))||.
 	$$  
 Note that $K_{\overline{B}\times \Sigma}(\partial (B\times \Sigma))=q^*K_{\Sigma}$.
 Hence
  \begin{multline}\label{eqn:202211301}
 		m_{g}(r,Z')+N_{g}(r,Z')+T_{g}(r,K_{\overline{B}\times \Sigma}(D'+\partial (B\times \Sigma)))\leq \overline{N}_{g}(r,D'\backslash Z')
 		\\
 		+(\rho+2)\varepsilon' T_{g_B}(r,L_{\overline{B}})
 		+O(\log r)+o(T_f(r))||.
 	\end{multline}
 	Since $\bar{\psi}:\overline{B}\times S'\to p^{-1}(U)$ is {\'e}tale, we have
 $$
 \bar{\psi}^*K_{V_o}(\varphi_o^{-1}(\overline{D}+\partial W))=K_{\overline{B}\times \Sigma}(D'+\partial (B\times \Sigma))|_{\overline{B}\times S'}.
 $$
 Let $f':Y\to V$ be the lift of $f:Y\to W$.
 Then $\tau\circ f':Y\to V_o$ is the lift of $f$.
By \cite[Lemma 3.1]{Yam06} we have  
 \begin{equation}
 T_{g}(r,K_{\overline{B}\times \Sigma}(D'+\partial (B\times \Sigma)))=T_{\tau\circ f'}(r,K_{V_o}(\varphi_o^{-1}(\overline{D}+\partial W)))+O(\log r)+o(T_f(r))||.
 \end{equation}
 Similarly we have (cf. \cite[Lemma 1]{Yam15})
 	\begin{equation}
 	m_{g}(r,Z')+N_{g}(r,Z')=m_{\tau\circ f'}(r,\widehat{Z})+N_{\tau\circ f'}(r,\widehat{Z})+O(\log r)+o(T_f(r))||,
 	\end{equation}
 	 \begin{equation}\label{eqn:202211302}
 	 \overline{N}_{g}(r,D'\backslash Z')\leq \overline{N}_{f}(r,D\backslash Z)+O(\log r)+o(T_f(r))||.
 	 \end{equation}
Hence combinning \eqref{eqn:202211301}--\eqref{eqn:202211302}, we get
 \begin{multline}
 		m_{\tau\circ f'}(r,\widehat{Z})+N_{\tau\circ f'}(r,\widehat{Z})+T_{\tau\circ f'}(r,K_{V_o}(\varphi_o^{-1}(\overline{D}+\partial W)))
 		\leq \overline{N}_{f}(r,D\backslash Z)
 		\\
 		+(\rho+2)\varepsilon' T_{g_B}(r,L_{\overline{B}})
 		+O(\log r)+o(T_f(r))||.
 	\end{multline}
 	By the choice of $\widehat{Z}\subset V_o$, we have $(p\circ \varphi)^{-1}(U)\cap E\subset (p\circ\varphi)^{-1}(U)\cap \tau^*\widehat{Z}$.
 	Hence
 	$$
 	m_{f'}(r,E)+N_{f'}(r,E)\leq m_{\tau\circ f'}(r,\widehat{Z})+N_{\tau\circ f'}(r,\widehat{Z})+O(\log r)+o(T_f(r))||.
 	$$
 	Hence by \eqref{eqn:20221126}, we get
 	 \begin{multline}
 		T_{f'}(r,K_{V}(\varphi^{-1}(\overline{D}\cup \partial W)))
 		\leq \overline{N}_{f}(r,D\backslash Z)
 		\\
 		+(\rho+2)\varepsilon' T_{g_B}(r,L_{\overline{B}})
 		+O(\log r)+o(T_f(r))||.
 	\end{multline}
 	By \eqref{eqn:202211303} and \eqref{eqn:202211304}, we have $T_{g_B}(r,L_{\overline{B}})\leq c_1c_2T_{f'}(r,L)$.
 	Thus we get the second assertion of \cref{lem:20220909gg}.
 	The proof is completed.
 \end{proof}

 Let $V$ be a $\mathbb Q$-factorial, projective variety.
 Let $F\subset V$ be an effective Weil divisor.
 Then there exists a positive integer $k$ such that $kF$ is a Cartier divisor.
 Let $g:Y\to V$ be a holomorphic map.
 Then we set
 $$
 T_g(r,F)=\frac{1}{k}T_g(r,\mathcal{O}_V(kF))+O(\log r).
 $$
 This definition does not depend on the choice of $k$.

  \begin{lem}\label{cor:20220805}
  Let $B\subset A$ be a semi-abelian subvariety.
 	Let $X$ be a normal variety with a finite map $a:X\to A\times S$, where $S$ is a projective variety.
 	Let $D\subset X$ be a reduced Weil divisor.
 	Then there exists a compactification $\overline{X}$ and a proper birational morphsim $\varphi:\overline{X}'\to \overline{X}$, where $\overline{X}'$ is $\mathbb Q$-factorial, such that the following property holds:
	Let $Z\subset X$ be a Zariski closed set such that $\mathrm{codim}(Z,X)\geq 2$ and $Z\subset D$.
	 	Let $L$ be a big line bundle on $\overline{X}'$ and let $\varepsilon>0$.
 	Then there exist  
 	\begin{itemize}
 		\item 
 		a finite subset $P\subset \mathcal{S}(B)\backslash\{B\}$,
 		\item
 		a proper Zariski closed set $\Xi\subsetneqq X$ with $\varphi(\mathrm{Ex}(\varphi))\subset \overline{\Xi}$
 	\end{itemize}
 	such that for every holomorphic map $f:Y\to X$ with $f(Y)\not\subset D\cup\Xi$ and $a\circ f\in I_B$, either one of the followings holds:
 	\begin{enumerate}
 		\item
 		There exists $C\in P$ such that $a\circ f\in I_C$.
 		\item
 		Let $H\subset \overline{X}'$ be a reduced Weil divisor defined by $\varphi^{-1}(\overline{D\cup X_{\mathrm{sing}}\cup \partial X})=H\cup \Gamma$, where $\Gamma\subset \overline{X}'$ is a Zariski closed subset of codimension greater than one.
		Let $g:Y\to \overline{X}'$ be the lift of $f$. 		
		Then
 		\begin{equation*}
 			T_g(r,K_{\overline{X}'}(H))
 			\leq \overline{N}_f(r,D\backslash Z)+\varepsilon T_{g}(r,L)
 			\\
 			+O(\log r)+o(T_g(r))||.
 		\end{equation*}
 	\end{enumerate}
 \end{lem}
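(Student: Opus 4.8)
The plan is to deduce this from \cref{lem:20220909gg} by transporting everything along the finite map $a$ to the $B$-invariant subvariety of $A\times S$ generated by $a(X)$. First I would fix a smooth projective equivariant compactification $\overline{A}$ of $A$ compatible with $B$ (\cref{lem:20221124}) and set $W:=\overline{a(X)+B}\subset A\times S$, the Zariski closure of the $B$-orbit of $a(X)$; this is an irreducible $B$-invariant closed subvariety with $W\to W/B$ a $B$-torsor over a dense open set, and $a$ factors as $X\to a(X)\hookrightarrow W$. Next I would choose a compactification $\overline{X}$ — for instance the normalization of $\overline{a(X)}^{\mathrm{Zar}}\subset\overline{A}\times S$ in $\mathbb{C}(X)$ — so that $a$ extends to a \emph{finite} morphism $\bar a:\overline{X}\to\overline{A}\times S$ with $\bar a^{-1}(A\times S)=X$; then $a\circ f$ never meets $\partial\overline{A}\times S$, so $\pN{a\circ f}\equiv 0$. \cref{lem:20220909gg} applied to $(W,B)$ produces, before any choice of $Z,L,\varepsilon$, a proper birational $\varphi_W:V\to\overline{W}$ with $\varphi_W^{-1}(\overline{D_W}\cup\partial W)$ simple normal crossing; I would then let $\overline{X}'$ resolve the rational map $\overline{X}\dashrightarrow V$ induced by $\bar a$ and $\varphi_W^{-1}$, so that $\varphi:\overline{X}'\to\overline{X}$ is proper birational (and $\overline{X}'$ can be taken $\mathbb{Q}$-factorial), and there is a morphism $\psi:\overline{X}'\to V$, generically finite of degree $\deg a$ onto the strict transform of $\overline{a(X)}^{\mathrm{Zar}}$ in $V$. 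The divisor $H$ and the codimension-$\geq 2$ locus $\Gamma$ are defined from $\varphi^{-1}(\overline{D}\cup X_{\mathrm{sing}}\cup\partial X)$ as in the statement.

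Given $Z$, $L$, $\varepsilon$, I would run \cref{lem:20220909gg} on $(W,B)$ with the reduced divisor taken to be the divisorial part $D_W$ of $\bar a_{*}\overline{D}$, a fixed ample $L_V$ on $V$, and a small $\varepsilon'$ to be fixed later; this yields a finite $P\subset\mathcal{S}(B)\setminus\{B\}$ and a proper Zariski closed $\Phi\subsetneqq W$. I would then set $\Xi$ to contain $\varphi(\mathrm{Ex}\,\varphi)$, the finitely many base loci forced by the bigness of $L$, the $a$-preimages (intersected with $X$) of $\overline{\Phi}$ and of $\varphi_W(\mathrm{Ex}\,\varphi_W)$, and the "extra" components of $a^{-1}(D_W)$ not contained in $D$. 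For $f:Y\to X$ with $a\circ f\in I_B$ and $f(Y)\not\subset D\cup\Xi$, the lift $g:Y\to\overline{X}'$ of $f$ exists, $a\circ f$ lies in $I_B$ and avoids $D_W\cup\Phi$, and $\psi\circ g$ is the lift $(a\circ f)'$ of $a\circ f$ to $V$; \cref{lem:20220909gg} gives either some $C\in P$ with $a\circ f\in I_C$ (alternative $(1)$) or
\[
T_{\psi g}\big(r,K_V(\varphi_W^{-1}(\overline{D_W}\cup\partial W))\big)\leq\overline{N}_{a\circ f}(r,D_W\setminus Z_W)+\varepsilon'\,T_{\psi g}(r,L_V)+O(\log r)+o(T_{\psi g}(r))\,||,
\]
where $Z_W$ is a suitable codimension-$\geq 2$ subset of $W$ containing $\bar a(Z)$.

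It then remains to convert this into the estimate for $g$. On the left, $\psi$ being generically finite onto its image, Riemann--Hurwitz together with adjunction — with the discrepancy of $\psi(\overline{X}')$ in $V$ and the ramification divisor of $\psi$ absorbed into $\Gamma$ and $\Xi$ — shows $K_{\overline{X}'}(H)$ is dominated by $\psi^{*}K_V(\varphi_W^{-1}(\overline{D_W}\cup\partial W))$ plus an effective divisor supported on the exceptional and ramification loci, so that $T_g(r,K_{\overline{X}'}(H))\leq T_{\psi g}(r,K_V(\cdots))+O(\log r)$. On the right, finiteness of $a$ gives $\overline{N}_{a\circ f}(r,D_W\setminus Z_W)\leq\overline{N}_f(r,D\setminus Z)+\overline{N}_f(r,Z'')+O(\log r)$ for a codimension-$\geq 2$ subset $Z''\subset X$; applying \cref{lem:202209151} (or \cref{pro:202208061}) to the codimension-$\geq 2$ subscheme $a(Z'')\subset A\times S$, and using $a\circ f\in I_B$ to make $N_{\ram\pi}$, $\pN{a\circ f}$ and $T_{(a\circ f)_S}$ of size $O(\log r)+o(T)\,||$, bounds $\overline{N}_f(r,Z'')$ by $\varepsilon' T_g(r,L)+O(\log r)+o(T_g(r))\,||$ after enlarging $P$ and $\Xi$. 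Finally $T_{\psi g}(r,L_V)=T_g(r,\psi^{*}L_V)=O(T_g(r,L))+O(\log r)$ since $L$ is big and $g(Y)$ avoids its base locus. Taking $\varepsilon'$ small relative to $\varepsilon$ and absorbing the finitely many new loci into $\Xi$, the finitely many new subtori into $P$, and the remaining $O(\log r)$ and $o(T_g(r))\,||$ terms, yields alternatives $(1)$ and $(2)$ exactly as stated.

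The hard part will be the bookkeeping around the mismatch between $D\subset X$ and its image in $W$: since $a(X)$ need not be $B$-invariant in $A\times S$, the image $a(D)$ is in general of codimension $>1$ in $W=\overline{a(X)+B}$ and cannot be handed to \cref{lem:20220909gg} directly, which is why the argument splits into the divisorial part on $W$, controlled by \cref{lem:20220909gg}, and a higher-codimension remainder, controlled by \cref{lem:202209151}/\cref{pro:202208061}, and why the flexible sets $\Xi$ and $\Gamma$ are built into the conclusion. Checking that $X_{\mathrm{sing}}$, $\partial X$, the exceptional loci of $\varphi$ and $\varphi_W$, and the Riemann--Hurwitz ramification of $\psi$ all fit into $H$, $\Gamma$, $\Xi$ with the prescribed codimensions, and that the hypothesis $a\circ f\in I_B$ and the $I_C$/second-main-theorem dichotomy persist through all the reductions and through the choice of compactifications, is technical but routine.
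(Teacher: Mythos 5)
Your overall architecture — construct $\overline{X}'$ from a modification $V$ of a compactification of a $B$-invariant $W\subset A\times S$, apply \cref{lem:20220909gg} on $W$, and transfer the estimate back through the finite map — is the right one, but there is a genuine gap at the very point you flag as "the hard part", and it is not a bookkeeping issue. You take $W:=\overline{a(X)+B}$ to force $B$-invariance. When $a(X)$ is not already $B$-invariant, $\dim W>\dim X$, and then two things break simultaneously. First, $a(D)$ has codimension $\geq 2$ in $W$, so there is no reduced divisor on $W$ to feed to \cref{lem:20220909gg}; your proposed fallback of controlling a "higher-codimension remainder" via \cref{lem:202209151} or \cref{pro:202208061} only yields \emph{upper} bounds on truncated counting functions of codimension-two sets, whereas what is needed on the left of the target inequality is an upper bound for $T_g(r,K_{\overline{X}'}(H))$, i.e.\ a Second Main Theorem, which those lemmas do not provide. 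Second, and fatally, the transfer step fails: $\psi:\overline{X}'\to V$ is then generically finite onto a \emph{proper} subvariety of $V$, so there is no ramification formula relating $K_{\overline{X}'}(H)$ to $\psi^{*}K_V(\varphi_W^{-1}(\overline{D_W}\cup\partial W))$; adjunction for the image subvariety introduces its normal bundle, an honest divisorial discrepancy that cannot be "absorbed into $\Gamma$ and $\Xi$" ($\Gamma$ must have codimension $\geq 2$ and $\Xi$ only enters as an exceptional hypothesis on $f$, not as a correction term in the inequality). A concrete failure: $X$ a general-type curve in an abelian surface $A$, $a$ the inclusion, $B=A$; then $W=A$, $D_W=\varnothing$, and \cref{lem:20220909gg} on $W$ outputs an estimate for $T(r,K_A)=O(\log r)$, which says nothing about $T_g(r,K_{\overline{X}}(H))$.

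The missing idea is a preliminary dichotomy on $\mathrm{St}^0(a(X))$, which is how the paper avoids ever enlarging $W$. Since $a$ is finite, $W:=a(X)$ is closed. If $B\not\subset\mathrm{St}^0(W)$, one does not attempt the Second Main Theorem at all: \cref{cor:20220806} (applied to $W\subset A\times S$) shows that, outside a fixed proper closed subset, any $f$ with $a\circ f\in I_B$ satisfies $a\circ f\in I_{\mathrm{St}^0(W)}$, hence $a\circ f\in I_{B'}$ with $B'=B\cap\mathrm{St}^0(W)\subsetneqq B$, and alternative (1) holds with $P=\{B'\}$. Only in the remaining case $B\subset\mathrm{St}^0(W)$ — where $W=a(X)$ is genuinely $B$-invariant and $\dim W=\dim X$ — does one run your main argument, taking for the divisor on $W$ a reduced $D'$ containing $a(D\cup X_{\mathrm{sing}})$ \emph{and the branch locus} $R$ of $a:X\to W$, so that $p:\overline{X}'\to V$ is unramified outside the SNC divisor $\widetilde{D}=\psi^{-1}(\overline{D'}\cup\partial W)$, the ramification formula $K_{\overline{X}'}(F+H)=p^{*}K_V(\widetilde{D})$ holds with $p^{-1}(\widetilde D)=F+H$, and the extra counting contribution $\overline{N}_g(r,F\backslash\overline{X}'_{\mathrm{sing}})\leq T_g(r,F)+O(\log r)$ cancels against the $F$-part of the left-hand side — no appeal to \cref{lem:202209151} is needed for the transfer. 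Without the dichotomy your construction cannot be repaired, because in the non-invariant case there is simply no divisorial SMT on $W$ available.
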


 \begin{proof}
 We set $W=a(X)$.
 	Since $a$ is finite, $W\subset A\times S$ is a Zariski closed set.
 	We first consider the case that $B\not\subset \mathrm{St}^0(W)$.
 	We apply \cref{cor:20220806} to get $\Xi\subset W$.
 	We set $B'=B\cap \mathrm{St}^0(W)$.
 	Then $B'\subsetneqq B$.
 	We set $P=\{ B'\}$.
 	Now let $f:Y\to X$ satisfies $f(Y)\not\subset D\cup\Xi$ and $a\circ f\in I_B$.
 	By \cref{cor:20220806} we have $a\circ f\in I_{\mathrm{St}^0(W)}$.
 	Hence $a\circ f\in I_{B'}$.
 	This shows \cref{cor:20220805}, provided $B\not\subset \mathrm{St}^0(W)$.
 	In the following, we consider the case $B\subset \mathrm{St}^0W$.

 We first find $\overline{X}$ and $\overline{X}'$.
 Let $R\subset W$ be a reduced Weil divisor such that $a:X\to W$ is {\'e}tale outside $R$.
 We take a reduced Weil divisor $D'\subset W$ such that $a(D\cup X_{\mathrm{sing}})\cup R\subset D'$.
 By \cref{lem:20220909gg},  there exist a compactification $W\subset \overline{W}$ and a proper birational morphism $\psi:V\to \overline{W}$, where $V$ is smooth and $\psi^{-1}(\overline{D'}\cup \partial W)\subset V$ is a simple normal crossing divisor.
Let $\overline{X}\to \overline{W}$ be obtained from $X\to W$ by the normalization.
 Let $p:\overline{X}'\to V$ be obtained from the base change and normalization.

 		\begin{equation*}
 	\begin{tikzcd}
 		X \arrow[r,hookrightarrow] \arrow[d, "a" ']	& \overline{X} \arrow[d]  &\overline{X}' \arrow[l,  "\varphi" ' ] \arrow[d, "p"] \\
 		W\arrow[r,hookrightarrow]  & \overline{W}    &V \arrow[l, "\psi"] 
 		\end{tikzcd}
 	\end{equation*}
	Note that $p:\overline{X}'\to V$ is unramified outside $\widetilde{D}=\psi^{-1}(\overline{D'}\cup \partial W)$.
Since $\widetilde{D}$ is simple normal crossing, \cite[Lemma 2]{NWY13} yields that $\overline{X}'$ is $\mathbb
 	Q$-factorial.

	Let $Z\subset X$ be a Zariski closed set such that $\mathrm{codim}(Z,X)\geq 2$ and $Z\subset D$. 	We set 
 	$$Z'=a(Z\cup X_{\mathrm{sing}}
)\cup (W\cap (\psi\circ p)(\overline{X}'_{\mathrm{sing}})).$$
Then $\mathrm{codim}(Z',W)\geq 2$ and $Z'\subset D'$.
Let $L$ be a big line bundle on $\overline{X}'$.
Let $L_{V}$ be an ample line bundle on $V$.
Then there exists a positive integer $l\in\mathbb Z_{\geq 1}$ such that $p^*L_{V}(E)=L^{\otimes l}$ for some effective divisor $E\subset \overline{X}'$.
Let $\varepsilon>0$.
We set $\varepsilon'=\varepsilon/l$.

Now by \cref{lem:20220909gg} for $Z'\subset D'$, $L_{V}$ and $\varepsilon'$, we get a finite subset $P\subset \mathcal{S}(B)\backslash \{ B\}$ and a proper Zariski closed subset $\Phi\subsetneqq W$.
We set 
$$\Xi=a^{-1}(D'\cup \Phi)\cup (X\cap \varphi(E))\cup (X\cap \varphi(\mathrm{Ex}(\varphi))).$$
Let $f:Y\to X$ such that $a\circ f\in I_B$ and $f(Y)\not\subset \Xi$.
Assume that the first assertion of \cref{cor:20220805} does not valid.
Let $g:Y\to \overline{X}'$ be the lift of $f$.
Then $p\circ g:Y\to V$ is the lift of $a\circ f:Y\to W$.
Hence by \cref{lem:20220909gg}, we get 
\begin{equation}\label{eqn:202211261}
T_{p\circ g}(r,K_V(\widetilde{D}))\leq \overline{N}_{a\circ f}(r,D'\backslash Z')
 			+\varepsilon' T_{p\circ g}(r,L_V)+O(\log r)+o(T_{p\circ g}(r))||
\end{equation}
We define a reduced divisor $F$ on $\overline{X'}$ by $p^{-1}(\widetilde{D})=F+H$.
By the ramification formula, we have
 	\begin{equation*}
	K_{\overline{X'}}(F+H)=p^*K_{V}(\widetilde{D}).
	\end{equation*}
 	Thus we have
 	\begin{equation*}
	T_{g}(r,K_{\overline{X'}}(F+H)) 	=T_{p\circ g}(r,K_{V}(\widetilde{D}))+O(\log r).
 	\end{equation*}
	Combining this estimate with \eqref{eqn:202211261} and $p^*L_{V}(E)=L^{\otimes l}$, we get
	\begin{equation}\label{eqn:20221201}
T_{g}(r,K_{\overline{X'}}(F+H))\leq \overline{N}_{a\circ f}(r,D'\backslash Z')
 			+\varepsilon T_{g}(r,L)+O(\log r)+o(T_{ g}(r))||
\end{equation}

 	We estimate the right hand side of \eqref{eqn:20221201}.
	We have
	 	\begin{equation}\label{eqn:202212011}
 	\overline{N}_{a \circ
 		f}(r,\D'\backslash Z')
 		\leq \overline{N}_{g}(r,F\backslash  \overline{X}'_{\mathrm{sing}})+\overline{N}_{g}(r,H\backslash  \varphi^{-1}(Z\cup X_{\mathrm{sing}}))
 	\end{equation}
 	We set $H'$ by $H=H'+\varphi^{-1}(\partial X+\overline{D})$.
 	Then $\varphi(H')\subset X_{\mathrm{sing}}$.
 	Hence by $\overline{N}_f(r,\partial X)=0$, 
 	we have
 	\begin{equation}\label{eqn:202208051} 	\overline{N}_g(r,H\backslash \varphi^{-1}(Z\cup X_{\mathrm{sing}}))\leq \overline{N}_f(r,D\backslash Z).
 	\end{equation}
	Since $\overline{X}'$ is $\mathbb Q$-factorial, we may take a positive integer $k$ such that $kF$ is a Cartier divisor.
 	Since $F\cap (\overline{X}'\backslash \overline{X}'_{\mathrm{sing}})$
 	is a Cartier divisor
 	on $\overline{X}'\backslash \overline{X}'_{\mathrm{sing}}$, we have
 	$$
 	k\ \mathrm{ord}_z g^{*}F=\mathrm{ord}_z g^{*}(kF)
 	$$
 	for $z\in g^{-1} (\overline{X}'\backslash \overline{X}'_{\mathrm{sing}})$,
 	and hence
 	$$
 	k\min \{ 1,\mathrm{ord}_z g^{*}F\}
 	\leq \mathrm{ord}_z g^{*}(kF).
 	$$
 	Thus we get
 	\begin{equation*}
 		k\overline{N}_g(r,F\backslash \overline{X}'_{\mathrm{sing}})\leq
 		N_g(r,kF).
 	\end{equation*}
 	By the Nevanlinna inequality (cf. \eqref{eqn:20221120}), we have
 	$$
 	N_g(r,kF)\leq T_{g}(r,kF)+O(\log r).
 	$$
 	Hence
 	\begin{equation*}
 		\overline{N}_g(r,F\backslash \overline{X}'_{\mathrm{sing}})\leq T_{g}(r,F)+O(\log r).
 	\end{equation*}
 	Combining this with \eqref{eqn:202212011} and \eqref{eqn:202208051}, we get
	$$
	\overline{N}_{a \circ
 		f}(r,\D'\backslash Z')\leq \overline{N}_f(r,D\backslash Z)+T_{g}(r,F)+O(\log r).	$$
		Combining this estimate with \eqref{eqn:20221201}, we get the second assertion of \cref{cor:20220805}.
This concludes the proof.
\end{proof}

 \begin{proposition}\label{prop:20220902}
 	Let $\Sigma$ be a smooth quasi-projective variety which is of log general type.
 	Assume that there is a morphism $a:\Sigma\to A\times S$ such that $\dim \Sigma=\dim a(\Sigma)$, where $S$ is a projective variety.
 	Let $B\subset A$ be a semi-abelian subvariety.
 	Then there exist a finite subset $P\subset \mathcal{S}(B)\backslash\{B\}$ and a proper Zariski closed set $\Phi\subsetneqq \Sigma$ with the following property:
 	Let $f:Y\to \Sigma$ be a holomorphic map such that $a\circ f\in I_B$ and $f(Y)\not\subset \Phi$. 
		Then either one of the followings holds:
 	\begin{enumerate}
 		  		\item
 		There exists $C\in P$ such that $a\circ f\in I_C$.
		\item
 		$f$ does not have essential singularity over $\infty$.
 	\end{enumerate}
 \end{proposition}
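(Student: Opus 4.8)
The plan is to deduce \cref{prop:20220902} from the logarithmic Second Main Theorem packaged in \cref{cor:20220805}, after first replacing $\Sigma$ by a normal variety mapping \emph{finitely} to $A\times S$; no induction on $\dim B$ is needed, since alternative (1) of \cref{cor:20220805} is already alternative (1) of the Proposition.

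\emph{Step 1: reduction to a finite map.} Let $W:=\overline{a(\Sigma)}\subset A\times S$ be the Zariski closure. Since $\dim\Sigma=\dim W$, the induced dominant morphism $\Sigma\to W$ is generically finite, so $\mathbb{C}(W)\subset\mathbb{C}(\Sigma)$ is a finite extension; I would let $X$ be the normalization of $W$ in $\mathbb{C}(\Sigma)$. Then $X$ is a normal quasi-projective variety, birational to $\Sigma$, the canonical map $\iota\colon X\to W\hookrightarrow A\times S$ is finite, and since $\Sigma$ is smooth with $\mathbb{C}(X)=\mathbb{C}(\Sigma)$, the morphism $a$ factors as $a=\iota\circ b$ for a birational morphism $b\colon\Sigma\to X$. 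In particular $X$, being birational to $\Sigma$, is again of log general type.

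\emph{Step 2: applying \cref{cor:20220805}.} Apply \cref{cor:20220805} to $\iota\colon X\to A\times S$, to $B$, and to the zero divisor $D:=0$; it yields a compactification $\overline X$, a proper birational $\varphi\colon\overline X'\to\overline X$ with $\overline X'$ $\mathbb{Q}$-factorial, and the reduced divisor $H$ with $\varphi^{-1}(\overline{X_{\mathrm{sing}}\cup\partial X})=H\cup\Gamma$, $\operatorname{codim}\Gamma\ge2$. The key geometric input — see Step 4 — is that $K_{\overline X'}(H)$ is a \emph{big} $\mathbb{Q}$-line bundle. Granting this, pick $m_0\in\mathbb{Z}_{>0}$ with $L:=\mathcal{O}_{\overline X'}\big(m_0(K_{\overline X'}+H)\big)$ a big line bundle, and invoke the second half of \cref{cor:20220805} with this $L$, with $Z:=\varnothing$ (which has codimension $\ge2$ and lies in $D=0$), and with $\varepsilon:=\tfrac1{2m_0}$. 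This produces a finite set $P\subset\mathcal{S}(B)\setminus\{B\}$ and a proper Zariski closed $\Xi\subsetneqq X$ with $\varphi(\mathrm{Ex}\,\varphi)\subset\overline\Xi$; I take the Proposition's finite set to be this $P$, and let $\Phi$ contain $b^{-1}(\Xi)$ together with the finitely many further proper closed subsets of $\Sigma$ appearing in Step 3.

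\emph{Step 3: the conclusion.} Given $f\colon Y\to\Sigma$ with $a\circ f\in I_B$ and $f(Y)\not\subset\Phi$, put $h:=b\circ f\colon Y\to X$; then $h(Y)\subset b(\Sigma)$, $h(Y)\not\subset\Xi$, $h(Y)\not\subset\varphi(\mathrm{Ex}\,\varphi)$, and $\iota\circ h=a\circ f\in I_B$, so the lift $g\colon Y\to\overline X'$ of $h$ exists. \cref{cor:20220805} gives two cases. If its first alternative holds, then $a\circ f\in I_C$ for some $C\in P$, which is alternative (1) of the Proposition. If its second alternative holds, then — as $\overline N_h(r,D\setminus Z)=0$ — we get $\tfrac12\,T_g(r,K_{\overline X'}(H))=O(\log r)+o(T_g(r))\,||$, and bigness of $K_{\overline X'}(H)$ forces $T_g(r)=O(\log r)\,||$ once $f(Y)$ avoids the (fixed, proper closed) preimage of the augmented base locus of $K_{\overline X'}(H)$. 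Transferring order functions along the finite map $\overline\iota$ and along a common smooth projective resolution $\overline\Sigma\leftarrow\overline\Sigma'\to\overline X'$ of the birational correspondence — which only requires $f(Y)$ to avoid finitely many further fixed proper closed subsets of $\Sigma$, absorbed into $\Phi$ — gives $T_{\overline f}(r)=O(\log r)\,||$ for $\overline f\colon Y\to\overline\Sigma$, and $a\circ f\in I_B$ then gives $N_{\ram\pi}(r)=O(\log r)\,||$ as well. By \cref{lem:20230411}, $\overline f$ has no essential singularity over $\infty$, i.e. alternative (2) holds. Since $\Sigma$ is irreducible, $\Phi$ (a finite union of proper closed subsets) is a proper Zariski closed subset, which finishes the proof.

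\emph{Step 4: the main obstacle.} The one genuinely delicate point is the bigness of $K_{\overline X'}(H)$, and this is exactly where the hypothesis that $\Sigma$ is of log general type enters. Away from a codimension $\ge2$ set, $\overline X'\setminus H$ coincides with $\varphi^{-1}(X_{\mathrm{reg}})$, hence is birational to $\Sigma$ and thus of log general type, so $\bar\kappa(\overline X'\setminus H)=\dim\Sigma$. What must be checked is that this forces $\kappa\big(\overline X',K_{\overline X'}(H)\big)=\dim\Sigma$, and not merely $\le$: this relies on the construction of $\overline X'$ in \cref{cor:20220805} as a normalized finite base change ramified only along the simple normal crossing divisor $\widetilde D$ downstairs, so that $(\overline X',H)$ is a toroidal pair whose logarithmic canonical bundle computes the logarithmic Kodaira dimension of its interior. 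Verifying this toroidal comparison (and the comparabilities of Nevanlinna order functions used in Step 3) is the substance of the argument; everything else is bookkeeping around \cref{cor:20220805} and \cref{lem:20230411}.
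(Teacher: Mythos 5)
Your overall architecture matches the paper's: normalize $W=\overline{a(\Sigma)}$ in $\mathbb{C}(\Sigma)$ to get a finite map $X\to A\times S$, feed this into \cref{cor:20220805}, use bigness of $K_{\overline X'}(H)$ to force $T_g(r)=O(\log r)\,||$, and conclude via \cref{rem:20221203}. But there is a genuine gap at the one step you yourself flag as delicate: you apply \cref{cor:20220805} with the divisor $D:=0$, and with that choice $K_{\overline X'}(H)$ is \emph{not} big in general. The point is that $\varphi\colon\Sigma\to X$ is birational but not proper, and log Kodaira dimension is only invariant under \emph{proper} birational morphisms; the image $\varphi(\Sigma)$ omits from $X_{\mathrm{reg}}$ a genuine divisor (the divisorial part of $\bar\varphi(\bar\Sigma\setminus\Sigma)$), and $X_{\mathrm{reg}}$ itself need not be of log general type. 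Concretely, take $\Sigma=A\setminus\Theta$ with $A$ an abelian variety and $\Theta$ ample with finite stabilizer: then $\Sigma$ is of log general type, $a$ is the inclusion into $A$ ($S$ a point), $W=X=A$, and with $D=0$ your $H$ accounts only for $\partial A$, so $K_{\overline X'}(H)=K_{\overline A}(\partial A)$ is trivial. Your Step 4 claim that ``$\overline X'\setminus H$ coincides with $\varphi^{-1}(X_{\mathrm{reg}})$, hence is birational to $\Sigma$ and thus of log general type'' is exactly where this breaks: function-field birationality does not transfer $\bar\kappa$.

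The fix is the paper's choice of $D$: let $Z\subset X$ be the codimension $\ge 2$ locus over which $\bar\varphi$ fails to be an isomorphism, and let $D$ be the Zariski closure of $(X\setminus Z)\cap\bar\varphi(\bar\Sigma\setminus\Sigma)$ — the boundary of $\Sigma$ at finite distance inside $X$. Then $X\setminus(X_{\mathrm{sing}}\cup D)$ is of log general type (this is where the hypothesis on $\Sigma$ actually enters, via \cite[Lemma 4]{NWY13}), so $K_{\overline X'}(H)$ with $H$ now containing $\psi^{-1}(\overline D)$ is big by \cite[Lemma 3]{NWY13}. The term $\overline N_{\varphi\circ f}(r,D\setminus(Z\cap D))$ in the conclusion of \cref{cor:20220805} then still vanishes, because $\varphi\circ f$ lands in $\varphi(\Sigma)$, which meets $D$ only inside $Z$; this is why the corollary is applied with its ``$Z$'' equal to $Z\cap D$ rather than $\varnothing$. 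With these two changes your Steps 2--3 go through essentially as the paper's proof (the paper takes $L=K_{\overline X'}(H)$ as a big $\mathbb{Q}$-divisor with $\varepsilon=1/2$ and absorbs the Kodaira-lemma exceptional divisor $E$ into $\Phi$, which is what your $m_0$ bookkeeping is doing).
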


 \begin{proof}
 	Let $W\subset A\times S$ be the Zariski closure of $a(\Sigma)$.
 	Let $\pi :X\to W$ be the normalization with respect to $a$
 	and let $\varphi :\Sigma\to X$ be the induced map.
 	Then $\varphi$ is birational.
 	Let $\overline{\Sigma}$ be a smooth partial compactification such that $\varphi$
 	extends to a projective morphism $\overline{\varphi}:\overline{\Sigma}\to X$ and
 	$\overline{\Sigma}\backslash \Sigma$ is a divisor on $\overline{\Sigma}$. 
 	Since $\overline{\varphi}$ is birational and $X$ is normal, there exists a
 	Zariski closed subset $Z\subset X$ whose codimension is greater than one
 	such that $\overline{\varphi}:\overline{\Sigma}\to X$ is an isomorphism over $X\backslash Z$.
 	In particular $X\backslash Z$ is smooth.
 	Let $D$ be the Zariski closure of
 	$(X\backslash Z)\cap \bar{\varphi}(\bar{\Sigma}\backslash \Sigma)$ in $X$.
 	Then $D$ is a reduced divisor on $X$ and $X\backslash (Z\cup D)$
 	is of log-general type.
 	Thus by \cite[Lemma 4]{NWY13}, $X\backslash (X_{\mathrm{sing}}\cup D)$
 	is of log-general type.
 	Note that $(D\cap Z)\cup X_{\mathrm{sing}}\subset Z$.

 	We apply \cref{cor:20220805} to get $\overline{X}$ and $\psi:\overline{X}'\to \overline{X}$.
	We define a reduced divisor $H\subset \overline{X}'$ to be 	
	$\psi^{-1}(\overline{D\cup X_{\mathrm{sing}}\cup \partial X})=H\cup \Gamma$, where $\Gamma\subset \overline{X}'$ is a Zariski closed subset of codimension greater than one.	
	Since $X\backslash (D\cup X_{\mathrm{sing}})$ is of log-general type, we deduce that $\psi ^{-1}(X\backslash (D\cup
 	X_{\mathrm{sing}}))=\overline{X}'\backslash (H\cup \Gamma)$ is also of log-general type.
 	Thus by \cite[Lemma 3]{NWY13}, $K_{\overline{X}'}(H)$ is big.
	
	By Kodaira's lemma, there exist an effective divisor $E\subsetneqq \overline{X}'$ and a positive integer $l\in\mathbb Z_{\geq 1}$ such that $lK_{\overline{X}'}(H)-E$ is ample.	
	Hence if $g:Y\to \overline{X}'$ satisfies $g(Y)\not\subset E$, then 
		$$T_{g}(r)= O( T_{g}(r,K_{\overline{X'}}(H)))+O(\log r).$$

By \cref{cor:20220805} applied to $Z\cap D$, $L=K_{\overline{X}'}(H)$ and $\varepsilon=1/2$, we get $P\subset \mathcal{S}(B)\backslash\{B\}$ and $\Xi\subsetneqq X$.
We set $\Phi=\varphi^{-1}(\Xi\cup\psi(E))$.
Let $f:Y\to \Sigma$ be a holomorphic map such that $a\circ f\in I_B$ and $f(Y)\not\subset \Phi$.
Then $\varphi\circ f(Y)\not\subset \Xi$. 
Suppose that the first assertion of \cref{prop:20220902} is not valid.
Then by \cref{cor:20220805}, we get
\begin{equation*}
 			T_{g}(r,K_{\overline{X}'}(H))
 			\leq \overline{N}_{\varphi\circ f}(r,D\backslash Z)+\frac{1}{2}T_{g}(r,K_{\overline{X}'}(H))
 			\\
 			+O(\log r)+o(T_{g}(r))||,
 		\end{equation*}
		where $g:Y\to \overline{X}'$ is the lift of $\varphi\circ f:Y\to X$.
We have
 	$$
 	\overline{N}_{\varphi\circ f}(r,D)=\overline{N}_{\varphi\circ f}(r,D\cap Z).$$
	Hence we get
	$$
	T_{g}(r,K_{\overline{X}'}(H))=O(\log r)+o(T_{g}(r))||.
		$$ 	 	
	Thus $T_{\varphi\circ f}(r)=O(\log r) ||$.
		This shows $T_{a\circ f}(r)=O(\log r) ||$. 
		Hence $a\circ f\in I_{\{0\}}$.
 	Hence by \cref{rem:20221203}, $f$ does not have essential singularity over $\infty$.
 \end{proof}

The following theorem implies \cref{thm2nd}, when $S$ is a single point.
 
 \begin{thm}\label{thm:20221201}
 	Let $X$ be a smooth quasi-projective variety which is of log general type.
Assume that there is a morphism $a:X\to A\times S$ such that $\dim X=\dim a(X)$, where $S$ is a projective variety.
 	Then there exists a proper Zariski closed set $\Xi\subsetneqq X$ with the following property:
 	Let $f:Y\to X$ be a holomorphic map with the following three properties:
	\begin{enumerate}[label=(\alph*)]
	\item \label{item1}
	$T_{(a\circ f)_S}(r)=O(\log r)+o(T_f(r))||$, 
	\item \label{item2}
	$N_{\ram\pi}(r)=O(\log r)+o(T_f(r))||$,
	\item \label{item3}
	$f(Y)\not\subset \Xi$.
	\end{enumerate}
	Then $f$ does not have essential singularity over $\infty$.
 \end{thm}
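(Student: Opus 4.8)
The plan is to deduce \cref{thm:20221201} from \cref{prop:20220902} by a finite descending induction on $\dim B$, where $B$ ranges over the semi-abelian subvarieties of $A$ and the induction is started at $B=A$. First I would record a preliminary reduction. Since $\dim X=\dim a(X)$, the morphism $a$ is generically finite onto its image, so there is a proper Zariski closed subset $\Xi_{0}\subsetneqq X$ such that for every $f:Y\to X$ with $f(Y)\not\subset\Xi_{0}$ one has $T_{f}(r)=O(T_{a\circ f}(r))+O(\log r)\,||$, while $T_{a\circ f}(r)=O(T_{f}(r))+O(\log r)$ holds unconditionally; hence $T_{f}(r)\asymp T_{a\circ f}(r)$ up to $O(\log r)$ off an exceptional interval. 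Consequently, for such $f$, the hypotheses (a) and (b) of \cref{thm:20221201} are equivalent to the single statement $a\circ f\in I_{A}$ (note that $q_{A}:A\to A/A$ is trivial, so the first of the three defining conditions of $I_{A}$ is automatic, and the remaining two become exactly (a) and (b) once one replaces $o(T_{a\circ f}(r))$ by $o(T_{f}(r))$ using the above). In the degenerate case $\varliminf_{r\to\infty}T_{f}(r)/\log r<\infty$ one has $T_{f}(r)=O(\log r)$ by \cref{lem:20230415}, and then $f$ has no essential singularity over $\infty$ by \cref{lem:20230411}; so this case may be set aside.

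Next I would construct $\Xi$. For every semi-abelian subvariety $B\subseteq A$, apply \cref{prop:20220902} with $\Sigma=X$ (and the given $a$) to obtain a finite subset $P_{B}\subset\mathcal{S}(B)\setminus\{B\}$ and a proper Zariski closed subset $\Phi_{B}\subsetneqq X$. Form the rooted tree with root $A$ whose children of a vertex $B$ are the elements of $P_{B}$. Since each $C\in P_{B}$ satisfies $0<\dim C<\dim B$, the tree has depth at most $\dim A$, and since each $P_{B}$ is finite the tree is finite; let $\mathcal{B}$ be its (finite) vertex set. Then set
\[
\Xi:=\Xi_{0}\cup\bigcup_{B\in\mathcal{B}}\Phi_{B},
\]
which is a proper Zariski closed subset of $X$.

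Finally I would run the induction on a given $f:Y\to X$ satisfying (a), (b), (c). By the preliminary reduction (using $f(Y)\not\subset\Xi\supseteq\Xi_{0}$) we have $a\circ f\in I_{A}$, so we may take $B_{0}=A\in\mathcal{B}$. Suppose we have reached $B_{k}\in\mathcal{B}$ with $a\circ f\in I_{B_{k}}$. Since $f(Y)\not\subset\Phi_{B_{k}}$, \cref{prop:20220902} gives one of two alternatives: either $f$ does not have an essential singularity over $\infty$, in which case we are done; or there exists $C\in P_{B_{k}}$ with $a\circ f\in I_{C}$, and we set $B_{k+1}:=C\in\mathcal{B}$ and continue, noting $\dim B_{k+1}<\dim B_{k}$. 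The second alternative can occur only finitely many times (the dimension strictly decreases and the $B_{k}$ stay positive-dimensional, since $\mathcal{S}(B)$ consists of positive-dimensional subvarieties), so the process must terminate in the first alternative, which is the desired conclusion. Specializing to $S=\mathrm{pt}$ recovers \cref{thm2nd}.

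The substantive content of the theorem is entirely concentrated in \cref{prop:20220902}, and through it in the refined logarithmic Bloch--Ochiai estimate (\cref{pro:20220807}) and the second main theorems with weak and level-one truncation (\cref{pro:202208062,lem:20220909,cor:20220805}); once those are granted, the present argument is purely formal. The only point demanding genuine care is the Nevanlinna-theoretic bookkeeping in the preliminary reduction, matching the error term $o(T_{f}(r))$ in the hypotheses with the error term $o(T_{a\circ f}(r))$ built into the definition of $I_{B}$ — this is where one uses that $a$ is generically finite onto its image and isolates the harmless case $T_{f}(r)=O(\log r)$.
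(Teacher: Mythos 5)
Your proposal is correct and follows essentially the same route as the paper: the paper builds the same finite family of semi-abelian subvarieties level by level (sets $\mathcal{P}_1=\{A\}$, $\mathcal{P}_{i+1}=\bigcup_{B\in\mathcal{P}_i}P_B$, with $\Xi=E\cup\bigcup_i\Xi_i$) and terminates by taking the maximal $i$ with $a\circ f\in I_B$ for some $B\in\mathcal{P}_i$, which is exactly your tree-plus-descent argument. Your extra care in the preliminary reduction (matching $o(T_f(r))$ with $o(T_{a\circ f}(r))$ off the quasi-finiteness locus, and isolating the $T_f(r)=O(\log r)$ case) is sound and merely makes explicit what the paper leaves implicit.
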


 \begin{proof}
 We take a Zariski closed set $E\subsetneqq X$ such that if $g:Y\to X$ satisfies $g(Y)\not\subset E$, then $T_{g}(r)=O(T_{a\circ g}(r))+O(\log r)$.
 	We take a sequence $\mathcal{P}_i\subset \mathcal{S}(A)$ of finite sets as follows.
 	Set $\mathcal{P}_1=\{A\}$.
 	Given $\mathcal{P}_i$, we define $\mathcal{P}_{i+1}$ as follows.
 	For each $B\in\mathcal{P}_i$, we apply \cref{prop:20220902} to get $\Phi_B\subsetneqq X$ and $P_B\subset \mathcal{S}(B)\backslash\{B\}$.
 	We set $\mathcal{P}_{i+1}=\cup_{B\in \mathcal{P}_i}P_B$ and $\Xi_{i+1}=\cup_{B\in\mathcal{P}_i}\Phi_B$.
 	Set $\Xi=E\cup \cup_i\Xi_i$.
 	Then $\Xi\subsetneqq X$ is a proper Zariski closed set.
 	
 	Let $f:Y\to X$ satisfy the three properties in \cref{thm:20221201}.
	Then $f\in I_A$.
	We take $i$ which is maximal among the property that there exists $B\in \mathcal{P}_i$ such that $f\in I_B$. 	
	Then by $f(Y)\not\subset \Phi_B\subset \Xi$,
 	\cref{prop:20220902} implies that $f$ does not have essential singularity over $\infty$.
 \end{proof}

\begin{cor}\label{cor:GGL}
	Let $X$ be a smooth quasi-projective variety and let $a:X\to A\times S^\circ$ be a morphism such that $\dim X=\dim a(X)$, where $S^\circ$ is a smooth quasi-projective variety ($S$ can be a point). Write $b:X\to S^\circ$ as the composition of $a$ with the projection map $A\times S^\circ\to S^\circ$. Assume that $b$ is dominant.
	\begin{thmlist}
		\item\label{coritem1} 
Suppose $S^\circ$ is pseudo Picard hyperbolic.
If $X$ is of log general type, then $X$  is pseudo Picard hyperbolic.
		\item \label{coritem2} Suppose $\Spalg(S^\circ)\subsetneqq S^\circ$.
		If $\Spab(X)\subsetneqq X$, then $\Spalg(X)\subsetneqq X$.   
	\end{thmlist} 
\end{cor}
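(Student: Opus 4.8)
## Proof Proposal for Corollary \ref{cor:GGL}

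The plan is to reduce both statements to \cref{thm:20221201} by using the pseudo-hyperbolicity (resp. the non-density of $\Spalg$) of the base $S^\circ$ to control the $S^\circ$-component of any map into $X$, and then invoke \cref{thm:20221201} with $S$ a projective compactification of $S^\circ$. First I would fix a smooth projective compactification $\overline{S^\circ}$ of $S^\circ$ and view $a$ as a morphism $X\to A\times\overline{S^\circ}$, so the hypotheses of \cref{thm:20221201} are met with $S=\overline{S^\circ}$. Let $\Xi_0\subsetneqq X$ be the proper Zariski closed subset it produces. Since $S^\circ$ is pseudo Picard hyperbolic, let $Z_{S^\circ}\subsetneqq S^\circ$ be the associated exceptional locus; since $b$ is dominant, $b^{-1}(Z_{S^\circ})$ is a proper Zariski closed subset of $X$ (as $b$ is dominant, the preimage of a proper closed subset is proper). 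Set $\Xi=\Xi_0\cup \overline{b^{-1}(Z_{S^\circ})}$, a proper Zariski closed subset of $X$.

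For \cref{coritem1}: suppose $X$ is of log general type and let $f:\bD^*\to X$ be a holomorphic map with $f(\bD^*)\not\subseteq\Xi$. Take $Y=\bC_{>\delta}$ with $\pi=\mathrm{id}$, so $N_{\ram\pi}(r)=0$, verifying property \ref{item2} of \cref{thm:20221201}. For property \ref{item1}, consider the composition $b\circ f:\bD^*\to S^\circ$. Since $f(\bD^*)\not\subseteq b^{-1}(Z_{S^\circ})$, we have $(b\circ f)(\bD^*)\not\subseteq Z_{S^\circ}$, so pseudo Picard hyperbolicity of $S^\circ$ gives that $b\circ f$ extends holomorphically to $\bD\to\overline{S^\circ}$; in particular $b\circ f$ has finite order, i.e.\ $T_{(a\circ f)_S}(r)=T_{b\circ f}(r)=O(\log r)$ by \cref{lem:20230411} (or directly because a map extending across the puncture has bounded area, hence $T(r)=O(\log r)$ by \cref{lem:20230415}). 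This verifies \ref{item1}. Property \ref{item3} holds by choice of $\Xi$. Hence \cref{thm:20221201} applies and $f$ does not have an essential singularity over $\infty$, i.e.\ $f$ extends to $\bD\to\overline{X}$. This is exactly pseudo Picard hyperbolicity of $X$.

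For \cref{coritem2}: suppose $\Spab(X)\subsetneqq X$. I would first apply \cref{cor:20221102} — note that $X$ admits the morphism $a:X\to A\times\overline{S^\circ}$; but $A\times\overline{S^\circ}$ is not semi-abelian. Instead I would argue directly using the characterization of $\Spalg$. Since $\Spab(X)\subsetneqq X$, in particular $X$ is not covered by translates of semi-abelian subvarieties through its $A$-factor; more to the point I would apply part \cref{coritem1} after passing to conjugates or, better, mimic the implication \ref{spab}$\implies$\ref{strong LGT1} in the proof of \cref{cor:20221102}: let $\Xi'=\Spab(X)\cup E$ where $E$ is the (proper) closed locus where $a$ fails to be quasi-finite on $X\setminus E$, together with $\overline{b^{-1}(\Spalg(S^\circ))}$, which is proper in $X$ since $b$ is dominant and $\Spalg(S^\circ)\subsetneqq S^\circ$. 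Given a positive-dimensional closed $V\subset X$ with $V\not\subseteq\Xi'$, one has $\dim V=\dim a(V)$ and $\overline{\kappa}(V)\ge 0$ by \cref{prop:Koddimabb}. I would show its logarithmic Iitaka fibration is birational: a very general fiber $F$ satisfies $\overline\kappa(F)=0$, $\dim a(F)=\dim F$, and $b(F)$ is a point (since $F\not\subseteq b^{-1}(\Spalg(S^\circ))$ would force $b(F)$ positive-dimensional and of log general type after normalization, contradicting $\overline\kappa(F)=0$); then $a|_F$ factors through a single $A$-translate, and \cref{lem:20230509} gives $\Spab(F)=F$, contradicting $F\not\subseteq\Spab(X)$. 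Hence every such $V$ is of log general type, so $\Spalg(X)\subseteq\Xi'\subsetneqq X$.

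The main obstacle I anticipate is handling the mixed target $A\times S^\circ$ cleanly — in particular making sure that in \cref{coritem2} the locus $b^{-1}(\Spalg(S^\circ))$ genuinely stays proper and that a very general fiber $F$ of the Iitaka fibration of $V$ indeed has $b(F)$ a point (one must rule out that $b|_F$ is non-constant with log-general-type image while $\overline\kappa(F)=0$, which follows since a dominant map to a positive-dimensional log-general-type variety would force $\overline\kappa(F)\ge 1$ by the easy addition inequality applied to the Stein factorization of $b|_F$). The analytic input — boundedness of $T_{(a\circ f)_S}$ from extension across the puncture — is routine via \cref{lem:20230415} and \cref{lem:picard}, and \cref{thm:20221201} does the rest.
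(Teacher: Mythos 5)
Your proposal is correct and follows essentially the same route as the paper: part (i) combines the pseudo-Picard hyperbolicity of $S^\circ$ with \cref{thm:20221201} exactly as the paper does, and part (ii) reproduces the argument for the implication $\Spab(X)\subsetneqq X\implies\Spalg(X)\subsetneqq X$ from \cref{cor:20221102}, using Fujino's addition formula, the quasi-finiteness locus, and \cref{lem:20230509} on the very general Iitaka fiber. One small repair: $\bar{\kappa}(V)\geq 0$ does not follow from \cref{prop:Koddimabb} alone, since $A\times S^\circ$ is not semi-abelian; as in the paper one applies Fujino's (hard, not easy) subadditivity to $b|_V$, whose general fiber has $\bar{\kappa}\geq 0$ by \cref{prop:Koddimabb} applied to its image in $A$ and whose base $\overline{b(V)}$ is of log general type --- the same argument you already run one level down for $F$.
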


\begin{proof}
	Let $S$ be a smooth projective variety that compactifies $S^\circ$. \\
\noindent{\em Proof of \cref{coritem1}:}	Since $S^\circ$ is pseudo Picard hyperbolic, there exists a proper Zariski closed subset $Z\subset S^\circ$ such that each holomorphic map $g:\bD^*\to S^\circ$ with $g(\bD^*)\not\subset Z$ has no essential singularity at $\infty$. By applying \cref{thm:20221201} to $X$, we obtain a proper Zariski closed set $\Xi\subsetneqq X$ that satisfies the property given in \cref{thm:20221201}. Set $\Sigma:=b^{-1}(Z)\cup \Xi$.  Since $b$ is dominant, $\Sigma$ is a proper Zariski closed subset of $X$.  Then for any holomorphic map $f:\bD^*\to X$ with $f(\bD^*)\not\subset \Sigma$, we have $b\circ f(\bD^*)\not\subset Z$ and $f(\bD^*)\not\subset \Xi$. It follows that
	 $
	T_{b\circ f}(r)=O(\log r).
	$ 
	Hence $f$ verifies Properties \Cref{item1} and \Cref{item3} in   \cref{thm:20221201}. 
Note that Property \Cref{item2}  in \cref{thm:20221201} is automatically satisfied as  $N_{\ram\pi}(r)=0$. We can now apply \cref{thm:20221201} to conclude that $f$ does not have an essential singularity over $\infty$. Therefore, $X$ is pseudo Picard hyperbolic.

\medspace

\noindent {\em Proof of \cref{coritem2}:}   
Since $\dim X = \dim a(X)$, there is a proper Zariski closed subset $\Upsilon \subsetneqq X$ such that the restriction of $a|_{X \backslash \Upsilon}:X \backslash \Upsilon\to A\times S^\circ$ is quasi-finite. 
Let $\Xi := b^{-1}(\Spalg(S^\circ)) \cup \Spab(X) \cup \Upsilon$.  
Then by the assumptions $\Spalg(S^\circ)\subsetneqq S^\circ$ and $\Spab(X)\subsetneqq X$, we have $\Xi\subsetneqq X$.

Let $V$ be a closed subvariety of $X$ that is not contained in $\Xi$. Then we have
\begin{equation}\label{eqn:20230510}
\Spab(V)\subsetneqq V.
\end{equation} 
In the following, we shall prove that $V$ is of log general type to conclude $\Spalg(X)\subset \Xi$.

We first show $\bar{\kappa}(V)\geq 0$.
Note that for a general fiber $F$ of $b|_{V}:V\to S^\circ$, we have $\dim F=\dim c(F)$, where $c:X\to A$ is the composition of $a$ with the projection map $A\times S^\circ\to A$. 
By \cref{prop:Koddimabb}, it follows that $\bar{\kappa}(\overline{c(F)})\geq 0$, and hence $\bar{\kappa}(F)\geq 0$. 
Also note that $b(V)$ is not contained in $\Spalg(S^\circ)$, and thus $\overline{b(V)}$ is of log general type.  
We use Fujino's addition formula for logarithmic Kodaira dimensions \cite[Theorem 1.9]{Fuj17} to conclude that $$\bar{\kappa}(V)\geq \bar{\kappa}(F)+\bar{\kappa}(\overline{b(V)})\geq 0.$$  
Hence we have proved $\bar{\kappa}(V)\geq 0$.
We may consider the logarithmic Iitaka fibration of $V$.

Next we show $\bar{\kappa}(V)=\dim V$.
After replacing $V$ with a birational modification, we can assume that $V$ is smooth and that the logarithmic Iitaka fibration $j:V\to J(V)$ is regular. Assume contrary that $V$ is not of log general type. Note that for a very general fiber $F$ of $j$, the followings hold:
   \begin{enumerate}[label*=(\alph*)]
      \item \label{item:positive} $\dim F>0$;
	\item  $F$ is smooth;
	\item   \label{item:zero}   $\bar{\kappa}(F)=0$;
	\item \label{item:avoid} $b(F)\not\subset \Spalg(S^\circ)$;
	\item \label{item:same dim} $F\not\subset \Upsilon$.   
\end{enumerate} 
By \Cref{item:avoid}, we have $\bar{\kappa}(\overline{b(F)})=\dim b(F)\geq 0$. 
\Cref{item:same dim} implies that for a general fiber $Y$ of $b|_{F}:F\to S^\circ$, we have $\dim Y=\dim c(Y)$. 
Using \cref{prop:Koddimabb}, we have $\bar{\kappa}(\overline{c(Y)})\geq 0$, and thus $\bar{\kappa}(Y)\geq 0$. 
Using \cite[Theorem 1.9]{Fuj17} again, we can conclude that $$\bar{\kappa}(F)\geq \bar{\kappa}(Y)+\bar{\kappa}(\overline{b(F)})\geq 0.$$
\Cref{item:zero} implies that $\bar{\kappa}(Y)=0$ and $\bar{\kappa}(\overline{b(F)})=0$. Hence $b(F)$ is a point. 
This implies that $\dim F=\dim c(F)$. 
Combining with \Cref{item:positive,item:zero}, \cref{lem:20230509} yields $\Spab(F)=F$. 
Since $F$ is a very general fiber of $j:V\to J(V)$, we get $\Spab(V)=V$.
This contradicts to \eqref{eqn:20230510}.
Thus we have proved that $V$ is of log general type, hence $\Spalg(X)\subset \Xi$.
\end{proof}

\begin{proposition}\label{prop:20230405}
Let $D\subset A$ be a reduced divisor such that $\mathrm{St}(D)=\{ a\in A; a+D=D\}$ is finite.
Let $Z\subset D$ be a Zariski closed subset such that $\mathrm{codim}(Z,A)\geq 2$.
Then there exists a proper Zariski closed subset $\Xi\subsetneqq A$ with the following property:
Let $f:Y\to A$ be a holomorphic map with the following three properties:
	\begin{enumerate}[label=(\alph*)]
	\item  
	$N_{\ram\pi}(r)=O(\log r)+o(T_f(r))||$, 
	\item  
	$f(Y)\not\subset \Xi\cup D$,
	\item
	$\mathrm{ord}_yf^*D\geq 2$ for all $y\in f^{-1}(D\backslash Z)$.	\end{enumerate}
	Then $f$ does not have essential singularity over $\infty$.
\end{proposition}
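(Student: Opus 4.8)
The plan is to reduce the conclusion to the single estimate \(T_f(r)=O(\log r)\,||\): once this is known, hypothesis (a) upgrades to \(N_{\ram\pi}(r)=O(\log r)\,||\), so \cref{lem:20230411} applies and \(f\) has no essential singularity over \(\infty\) (equivalently, \(f\in I_{\{0\}}\) and one invokes \cref{rem:20221203}). Before attacking that estimate I make two harmless reductions. First, since \(f^{-1}(D\setminus Z')\subseteq f^{-1}(D\setminus Z)\) whenever \(Z'\supseteq Z\), hypothesis (c) is inherited by every larger Zariski closed \(Z'\subseteq D\) of codimension \(\geq 2\); so I enlarge \(Z\) to contain the locus where \(D\) fails to be smooth. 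Second, I record the key geometric input: since \(\mathrm{St}(D)\) is finite, i.e. \(\mathrm{St}^0(D)=\{0\}\), the variety \(A\setminus D\) is of log general type. Indeed, fixing a smooth projective equivariant compactification \(\overline A\) of \(A\), the bundle \(K_{\overline A}(\partial A)\) is trivial (as \(\Omega^1_{\overline A}(\log\partial A)\) is trivial, cf. \cite[Prop.~5.4.3]{NW13}), so \(\overline\kappa(A\setminus D)=\kappa(\overline A,K_{\overline A}(\partial A)+\overline D)=\kappa(\overline A,\overline D)\), and this equals \(\dim A\) precisely because \(D\) has trivial connected stabiliser; equivalently \(\overline D\) is a big divisor on \(\overline A\). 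Using \cref{lem:20221124} (with \(W=A=B\)) I further arrange \(\mathrm{Sp}_A\overline D=\varnothing\).

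The proof proper is an induction over the semi-abelian subvarieties of \(A\), following closely the scheme of the proof of \cref{thm:20221201}. For each semi-abelian subvariety \(B\subseteq A\), apply \cref{cor:20220805} to \(X=A\) with the tautological finite map \(a=\mathrm{id}\colon A\to A\times\{\mathrm{pt}\}\), the reduced divisor \(D\), and the subvariety \(B\); this yields a compactification \(\overline X\) and a \(\mathbb Q\)-factorial modification \(\varphi_B\colon\overline X'_B\to\overline X\) with reduced boundary divisor \(H_B\). Since \(X=A\) is smooth, \(\varphi_B\) is a log resolution of \((\overline A,\overline D+\partial A)\), an isomorphism over \(A\setminus\mathrm{Sing}(D)\supseteq A\setminus Z\); and since \(\overline X'_B\setminus H_B\) is birational to \(A\setminus D\), which is of log general type, the bundle \(K_{\overline X'_B}(H_B)\) is big (by \cite[Lemmas 3 and 4]{NWY13}, as in the proof of \cref{prop:20220902}). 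Feeding in our enlarged \(Z\), the big line bundle \(L_B:=K_{\overline X'_B}(H_B)\), and \(\varepsilon=\tfrac14\), \cref{cor:20220805} produces a finite set \(P_B\subset\mathcal S(B)\setminus\{B\}\) and a proper Zariski closed \(\Xi_B\subsetneq A\). Set \(\mathcal P_1=\{A\}\), \(\mathcal P_{i+1}=\bigcup_{B\in\mathcal P_i}P_B\) (a finite process, since \(\dim C<\dim B\) for \(C\in P_B\)), and let \(\Xi\subsetneq A\) be the union of all the \(\Xi_B\), of the images in \(A\) of the augmented base loci of the \(K_{\overline X'_B}(H_B)\) and of the exceptional loci \(\varphi_B(\mathrm{Ex}(\varphi_B))\), and of \(\mathrm{Sing}(D)\).

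Now take \(f\colon Y\to A\) satisfying (a), (b), (c); then \(f\in I_A\) by (a), and \(f(Y)\not\subset\Xi\cup D\) by (b). Choose \(i\) maximal with \(f\in I_B\) for some \(B\in\mathcal P_i\). Applying \cref{cor:20220805} with that \(B\), the alternative ``\(f\in I_C\) for some \(C\in P_B\subseteq\mathcal P_{i+1}\)'' is excluded by maximality, so, with \(g\colon Y\to\overline X'_B\) the lift of \(f\),
\[
T_g(r,K_{\overline X'_B}(H_B))\;\leq\;\overline N_f(r,D\setminus Z)+\tfrac14\,T_g(r,K_{\overline X'_B}(H_B))+O(\log r)+o(T_g(r))\,||.
\]
Here (c) enters: on \(f^{-1}(D\setminus Z)\) the map \(f\) hits the strict transform \(\widetilde D\subset\overline X'_B\) of \(\overline D\) (as \(\varphi_B\) is an isomorphism over \(A\setminus Z\)), and \(\mathrm{ord}_y f^*D\geq 2\) there forces \(\overline N_f(r,D\setminus Z)\leq\tfrac12 N_g(r,\widetilde D)\). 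A direct discrepancy computation — using \(K_{\overline A}(\partial A)\equiv 0\) together with the fact that \((\overline A,\partial A)\), being smooth plus simple normal crossing, is log canonical — shows \(K_{\overline X'_B}(H_B)\geq\widetilde D\); hence, by the Nevanlinna inequality, \(N_g(r,\widetilde D)\leq T_g(r,\widetilde D)+O(\log r)\leq T_g(r,K_{\overline X'_B}(H_B))+O(\log r)\) (again \(g(Y)\) avoids the relevant exceptional loci, which lie over \(\mathrm{Sing}(D)\subseteq Z\subseteq D\) and are in \(\Xi\)). Combining, \(\tfrac14\,T_g(r,K_{\overline X'_B}(H_B))\leq O(\log r)+o(T_g(r))\,||\); since \(K_{\overline X'_B}(H_B)\) is big and \(g(Y)\) avoids its augmented base locus, this yields \(T_g(r)=O(\log r)\,||\), whence \(T_f(r)=O(\log r)\,||\), completing the proof.

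The main obstacle is, as in \cref{thm:20221201}, making the recursion bookkeeping coherent: \cref{cor:20220805} must be invoked simultaneously at every level \(B\) so that its degenerate alternative (a factorisation \(f\in I_C\) through a smaller quotient, including the analysis of \(f^{-1}(Z\cap D)\), which is already internal to \cref{cor:20220805} via \cref{lem:202209151}) feeds into a single finite family of exceptional loci and subgroups. Everything else is the by-now standard translation of ``\(\mathrm{St}(D)\) finite'' into bigness of \(\overline D\) and of \(K_{\overline X'_B}(H_B)\), and the elementary discrepancy estimate that converts (c) into the self-improving inequality above. The one genuinely new ingredient compared with \cref{thm:20221201} is precisely this use of (c): it replaces the vanishing \(\overline N_{\varphi\circ f}(r,D\setminus Z)=0\) available there (where \(f\) maps into the open part) by the bound \(\overline N_f(r,D\setminus Z)\leq\tfrac12 N_g(r,\widetilde D)\).
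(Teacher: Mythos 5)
Your proof is correct and follows essentially the same route as the paper's: the same induction over semi-abelian subvarieties $B$ through the classes $I_B$ (the paper packages the three running hypotheses into classes $J_B$ and isolates the inductive step as a claim), the same appeal to the machinery of \cref{lem:20220909gg}/\cref{cor:20220805} with the bigness of $K_V(\varphi^{-1}(\overline{D}\cup \partial A))$ coming from the finiteness of $\mathrm{St}(D)$, and the same factor-$2$ gain $2\overline{N}_f(r,D\backslash Z)\leq T_g(r,\cdot)$ extracted from hypothesis (c). The remaining differences are cosmetic: you invoke \cref{cor:20220805} with $a=\mathrm{id}$ where the paper uses \cref{lem:20220909gg} directly, you take $\varepsilon=1/4$ for the big bundle itself rather than $\varepsilon=1/3l$ for an ample part, and you route the multiplicity step through the strict transform $\widetilde{D}$ rather than the full boundary divisor $H$.
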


\begin{proof}
For a semi-abelian variety $B\subset A$, we denote by $J_B$ the set of all holomorphic maps $f:Y\to A$ such that 
 \begin{itemize}
\item
$f\in I_B$,  
\item
$f(Y)\not\subset D$, and
\item
$\mathrm{ord}_yf^*D\geq 2$ for all $y\in f^{-1}(D\backslash Z)$. \end{itemize}
Given a semi-abelian variety $B\subset A$, we first prove the following claim.

\begin{claim}\label{claim:20230405}
There exist a finite subset $P_B\subset \mathcal{S}(B)\backslash\{B\}$ and a proper Zariski closed set $\Phi_B\subsetneqq A$ with the following property:
 	Let $f:Y\to A$ be a holomorphic map such that $f\in J_B$ and $f(Y)\not\subset \Xi_B$. 
		Then either one of the followings holds:
 	\begin{enumerate}[label=(\alph*)]
 		  		\item
 		There exists $C\in P_B$ such that $f\in J_C$.
		\item
 		$f$ does not have essential singularity over $\infty$.
 	\end{enumerate}
\end{claim}

\begin{proof}[Proof of \cref{claim:20230405}]
Since $\mathrm{St}(D)$ is finite, $A\backslash D$ is of log-general type.
We apply \cref{lem:20220909gg} to get a smooth projective equivariant compactification $\overline{A}$ and a proper birational morphism $\varphi:V\to \overline{A}$, where $V$ is smooth and $\varphi^{-1}(\overline{D}\cup \partial A)$ is a simple normal crossing divisor.
	Since $A\backslash D$ is of log-general type, we deduce that $\varphi ^{-1}(A\backslash D)=V\backslash \varphi^{-1}(\overline{D}\cup \partial A)$ is also of log-general type.
 	Thus $K_{V}(\varphi^{-1}(\overline{D}\cup \partial A))$ is big.
	
	By Kodaira's lemma, there exist an effective divisor $E\subsetneqq \overline{A}'$ and a positive integer $l\in\mathbb Z_{\geq 1}$ such that $L=lK_{V}(\varphi^{-1}(\overline{D}\cup \partial A))-E$ is ample.	
	Hence if $g:Y\to V$ satisfies $g(Y)\not\subset E$, then 
		$$T_{g}(r)= O( T_{g}(r,K_{V}(\varphi^{-1}(\overline{D}\cup \partial A))))+O(\log r).$$

By \cref{lem:20220909gg} applied to $(Z\cup \varphi (\mathrm{Ex}(\varphi)))\cap D$, $L$ and $\varepsilon=1/3l$, we get a finite subset $P_B\subset \mathcal{S}(B)\backslash\{B\}$ and a proper Zariski closed set $\Phi_B\subsetneqq A$ with $\varphi (\mathrm{Ex}(\varphi))\subset \overline{\Phi_B}$.
We set $\Xi_B=\Phi_B\cup\varphi(E)$.
Let $f:Y\to A$ be a holomorphic map such that $f\in J_B$ and $f(Y)\not\subset \Xi_B$.
Then $ f(Y)\not\subset D\cup \Xi_B$ and $f\in I_B$.
Suppose that the first assertion of \cref{claim:20230405} is not valid.
Then $f\not\in I_C$ for all $C\in P_B$.
By \cref{lem:20220909gg}, we get
\begin{equation*}
 			T_{g}(r,K_{V}(\varphi^{-1}(\overline{D}\cup \partial A)))
 			\leq \overline{N}_{f}(r,D\backslash (Z\cup \varphi (\mathrm{Ex}(\varphi))))+\frac{1}{3l}T_{g}(r,L)
 			\\
 			+O(\log r)+o(T_{g}(r))||,
 		\end{equation*}
		where $g:Y\to V$ is the lift of $ f:Y\to A$.
By $g(Y)\not\subset E$, we have 
$$
T_{g}(r,L)\leq lT_{g}(r,K_{V}(\varphi^{-1}(\overline{D}\cup \partial A)))+O(\log r).
$$
Hence we get
\begin{equation*}
 			\frac{2}{3}T_{g}(r,K_{V}(\varphi^{-1}(\overline{D}\cup \partial A)))
 			\leq \overline{N}_{f}(r,D\backslash (Z\cup \varphi (\mathrm{Ex}(\varphi))))
 			\\
 			+O(\log r)+o(T_{g}(r))||.
 		\end{equation*}

We estimate the first term of the right hand side.
Since $\overline{A}$ is smooth and equivariant, $\partial A$ is a simple normal crossing divisor.
Hence we may decompose as $\varphi^{-1}(\overline{D}\cup \partial A)=H+F$ so that $F=\varphi^{-1}(\partial A)$.
Then $H=\overline{\varphi^{-1}(D)}$.
The induced map $V\backslash (F\cup \mathrm{Ex}(\varphi))\to A\backslash \varphi(\mathrm{Ex}(\varphi))$ is isomorphic.
Hence by $\mathrm{ord}_yf^*D\geq 2$ for all $y\in f^{-1}(D\backslash Z)$, we have
 	$$
 	2\overline{N}_{f}(r,D\backslash (Z\cup \varphi (\mathrm{Ex}(\varphi))))\leq T_g(r,H).
$$
By $\bar{\kappa}(V\backslash F)=0$ and $g(Y)\not\subset \mathrm{Ex}(\varphi)$, we have $T_g(r,K_V(F))+O(\log r)>0$.
Hence
$$
2\overline{N}_{f}(r,D\backslash (Z\cup \varphi (\mathrm{Ex}(\varphi))))\leq T_g(r,K_V(H+F))+O(\log r).
$$
	Hence we get
	$$
	T_{g}(r,K_{V}(H+F))=O(\log r)+o(T_{g}(r))||.
		$$ 	 	
	Thus $T_{g}(r)=O(\log r) ||$.
		This shows $T_{f}(r)=O(\log r) ||$. 
		Hence $f\in I_{\{0\}}$.
 	Hence by \cref{rem:20221203}, $f$ does not have essential singularity over $\infty$.
\end{proof}

 	We take a sequence $\mathcal{P}_i\subset \mathcal{S}(A)$ of finite sets as follows.
 	Set $\mathcal{P}_1=\{A\}$.
 	Given $\mathcal{P}_i$, we define $\mathcal{P}_{i+1}$ as follows.
 	For each $B\in\mathcal{P}_i$, we apply \cref{claim:20230405} to get $\Xi_B\subsetneqq A$ and $P_B\subset \mathcal{S}(B)\backslash\{B\}$.
 	We set $\mathcal{P}_{i+1}=\cup_{B\in \mathcal{P}_i}P_B$ and $\Xi_{i+1}=\cup_{B\in\mathcal{P}_i}\Xi_B$.
 	Set $\Xi=\cup_i\Xi_i$.
 	Then $\Xi\subsetneqq A$ is a proper Zariski closed set.
 	
 	Let $f:Y\to A$ satisfy the three properties in \cref{prop:20230405}.
	Then $f\in J_A$.
	We take $i$ which is maximal among the property that there exists $B\in \mathcal{P}_i$ such that $f\in J_B$. 	
	Then by $f(Y)\not\subset \Xi_B\subset \Xi$,
 	\cref{claim:20230405} implies that $f$ does not have essential singularity over $\infty$.
\end{proof}
 
 \begin{rem} 
The proof of \cref{thm2nd} is based on the arguments of \cite{Yam15} and \cite{NWY13}
.
Compared with the compact case treated in \cite{Yam15}, the lack of Poincar{\'e} reducibility theorem is a major difficulity to treat the non-compact case.
We use a more general ``cover'' than {\'e}tale cover to overcome this problem.
In \cite{NWY08}, we use ``flat cover''
 (cf. the commutative diagram just after \cite[(5.9)]{NWY08}. 
An important issue to use this flat cover is to construct a lift of holomorphic maps onto the flat cover so that the order function of the lift is bounded by that of original one (cf. \cite[Lemma 5.8]{NWY08}.
In \cite{NWY08}, we only treat the holomorphic maps from the complex plane $\mathbb C$, so thanks to the simply connectedness of $\mathbb C$, we may construct such lift easily.
In this paper, we are considering holomorphic maps from the covering $\pi:Y\to  \bC_{>\delta}$, which is not simply connected.
So, in this paper, we chose another cover $\sigma:\Sigma\to \overline{W/B}$ in the proof of \cref{lem:20220909gg}.
After the base change by this cover, we get the lift $g:Y'\to \overline{B}\times \Sigma$ of $f:Y\to W$ from the covering space $Y'\to Y$. 
Then we apply the results of \cref{subsec:4.2}--\cref{subsec:4.7a} to this lift $g$.
We remark that the map $g_{\overline{B}}:Y'\to \overline{B}$ may hit the boundary $\partial B$.
This is the reason to treat the situation $f:Y\da A\times S$ in \cref{subsec:4.2}--\cref{subsec:4.7a}.
 \end{rem}

	 \section{A reduction theorem for  non-Archimedean representation of $\pi_1$}\label{sec:reduction}
In this section we shall prove \cref{main3}. It is based on two results in    \cite{BDDM}: 
\begin{itemize}
	\item  the existence of $\varrho$-equivariant harmonic mapping $u:\widetilde{X}\to \Delta(G)$ from the universal cover $\widetilde{X}$ of $X$ to the Bruhat-Tits building of  $G$ such that the energy of $u$ at infinity has  logarithmic   growth;
	\item  the construction of logarithmic symmetric differential forms of $X$ via this harmonic mapping $u$.  
\end{itemize} 
  When $X$ is compact, \cref{main3} is proved by Katzarkov \cite{Kat97}, Zuo \cite{Zuo96} and Eyssidieux \cite{Eys04}.   When $X$ is  non-compact, there are several delicate and technical issues which occur, and we provide  complete details as possible during  the proof of \cref{main3}.  
\subsection{Some recollections on buildings}

Let \(K\) be a non-archimedian local field, and let \(G\) be a semi-simple group defined over \(K\). One can attach to \(G\) data its {\em Bruhat-Tits building} \(\Delta(G)\); this is a simplicial complex obtained by glueing affine spaces isometric to \(\mathbb{R}^{N}\) (called the {\em appartments}), where \(N = \mathrm{rk}_{K}(G)\). It is a ${\rm CAT}(0)$ space.  We refer the readers to \cite{AB08} for more details. 

There is a natural continuous action of \(G(K)\) on \(\Delta(G)\) which acts transitively on the appartments, and which is such that the stabilizer of any point in \(\Delta(G)\) is a bounded subgroup of \(G(K)\)  by \cref{lem:BT} below.

Fix an appartment \(A \subset \Delta(G)\). The affine Weyl group $W_{\rm aff}\subset {\rm Isom}(A)$ of $\Delta(G)$ and its finite reflection subgroup $W := W_{\rm aff}\cap GL(A)$ both act on \(A\). Also, if \(g \in G(K)\) is an isometry leaving \(A\) invariant, the restriction \(g|_{A}\) is induced by an element of \(W_{\rm aff}\).

The root system $\Phi=\{\alpha_1,\ldots,\alpha_m\}\subset A^*-\{0\}$ of $\Delta(G)$ is fixed under the action of \(W\):
\begin{align*}
	\{w^*\alpha_1,\ldots,w^*\alpha_m\}=\{\alpha_1,\ldots,\alpha_m\}\quad \mbox{for any}\ w\in W.
\end{align*}
In other words, $W$ acts on $\Phi$ by permutations. Note that $W_{\rm aff}=W\ltimes \Lambda$, where $\Lambda$ is a lattice which acts on $A$ by translations. Hence, it follows that 
\begin{align}\label{eq:coin}
	\{w^*d\alpha_1,\ldots,w^*d\alpha_m\}=\{d\alpha_1,\ldots,d\alpha_m\}\quad \mbox{for any}\ w\in W_{\rm aff}.
\end{align} 
Here we consider $d\alpha_i$ as global real one-forms on $A$. 
\medskip

\subsection{Some finiteness criterion for subgroups of almost simple algebraic groups}
We begin with the following definition. 
\begin{dfn}[Bounded subgroup]
	Let $G$ be a  semisimple algebraic group over the non-archimedean local field $K$. Fix an embedding $G\to {\rm GL}_N$. A subgroup $H$ of $G(K)$ is \emph{bounded} if  there is an upper bound on the absolute values of the matrix entries in ${\rm GL}_N(K)$ of the elements of $H$, otherwise it is called \emph{unbounded}. 
\end{dfn}
\begin{lem}[\protecting{\cite[11.40]{AB08}}]\label{lem:AB}
	Let $G$ be a  semisimple algebraic group over a non-archimedean local field $K$.   Let $H$ be a  subgroup  of $G(K)$. Then the following properties are equivalent. 
	\begin{itemize}
		\item $H$ is bounded.
		\item $H$ is contained in a compact subgroup of $G(K)$.
		\item $H$ fixes a point in $\Delta(G)$.
	\end{itemize}
\end{lem}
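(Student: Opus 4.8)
The plan is to establish the cycle of implications $(1)\Rightarrow(2)\Rightarrow(3)\Rightarrow(1)$, where $(1),(2),(3)$ are the three stated properties in order. The two structural inputs I would use are: first, that $G(K)$ is a locally compact topological group (since $K$ is a local field and, for the fixed embedding $G\hookrightarrow\mathrm{GL}_N$, $G(K)$ is a closed subset of $\mathrm{GL}_N(K)$); and second, that $\Delta(G)$ is a complete $\mathrm{CAT}(0)$ space on which $G(K)$ acts by isometries.

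For $(1)\Rightarrow(2)$: viewing $\mathrm{GL}_N(K)\subset \mathrm{M}_N(K)\cong K^{N^2}$, a subgroup $H$ with bounded matrix entries lies in some $\varpi^{-k}\mathrm{M}_N(\mathcal{O}_K)$, which is compact; since $H$ is a group, for $h\in H$ both $h$ and $h^{-1}$ have bounded entries, so $\det$ is bounded away from $0$ and $\infty$ on $H$, whence the closure $\overline{H}$ taken inside $\mathrm{GL}_N(K)$ is compact. The closure of a subgroup in a topological group is a subgroup, so $\overline{H}\cap G(K)$ is a compact subgroup of $G(K)$ containing $H$, which is $(2)$. (For completeness I would also note the converse $(2)\Rightarrow(1)$: a compact subgroup $C\subset G(K)\subset\mathrm{M}_N(K)$ is a bounded subset, hence so is any $H\subset C$; thus $(1)\Leftrightarrow(2)$.) For $(2)\Rightarrow(3)$: given $H\subset C$ with $C\subset G(K)$ compact, fix $x\in\Delta(G)$; the orbit map $C\to\Delta(G)$, $g\mapsto g\cdot x$, is continuous, so $C\cdot x$ is a compact, hence bounded, subset of the complete $\mathrm{CAT}(0)$ space $\Delta(G)$. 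By the Bruhat--Tits (Cartan) fixed point theorem, the circumcenter of $C\cdot x$ is a point of $\Delta(G)$ fixed by every isometry preserving $C\cdot x$, in particular by $C$, hence by $H$; this is $(3)$. Finally $(3)\Rightarrow(1)$: if $H$ fixes $p\in\Delta(G)$, then $H\subset\mathrm{Stab}_{G(K)}(p)$, which is a bounded subgroup of $G(K)$ by \cref{lem:BT}, so $H$ is bounded, closing the cycle.

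The only non-formal ingredient is the Bruhat--Tits fixed point theorem, so the point to be careful about is the completeness of $\Delta(G)$ as a $\mathrm{CAT}(0)$ space; this is standard from its construction as a polyhedral complex glued from Euclidean apartments, together with the $\mathrm{CAT}(0)$ property of buildings, and once it is in place the whole argument is purely formal. A minor technical care is required in $(1)\Leftrightarrow(2)$ to check that boundedness relative to the chosen embedding into $\mathrm{GL}_N$ coincides with relative compactness in $G(K)$, which is exactly what the determinant-boundedness observation above provides.
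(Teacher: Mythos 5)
The paper itself offers no proof of this lemma --- it is quoted verbatim from \cite[11.40]{AB08} --- so your proposal is to be judged as a standalone argument. Your treatment of $(1)\Leftrightarrow(2)$ (closure of a bounded subgroup is a compact subgroup, using that $\det$ is bounded away from $0$ because $H$ contains inverses) and of $(2)\Rightarrow(3)$ (bounded orbit in a complete $\mathrm{CAT}(0)$ space, circumcenter, Bruhat--Tits fixed point theorem) is correct and is the standard route.

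The implication $(3)\Rightarrow(1)$, however, has a genuine gap. You justify the boundedness of $\mathrm{Stab}_{G(K)}(p)$ by appealing to \cref{lem:BT}, but that lemma is the finiteness criterion for a \emph{normal bounded} subgroup of an unbounded Zariski-dense subgroup; it says nothing about point stabilizers, and in fact its own proof invokes \cref{lem:AB} in both directions, so the dependency runs the other way and your citation makes the argument circular. (The paper's remark in \S 5.1 that stabilizers are bounded ``by \cref{lem:BT}'' appears to be a mislabelled reference to \cref{lem:AB} itself.) More fundamentally, the statement that the stabilizer in $G(K)$ of a point of $\Delta(G)$ is bounded (equivalently, that $G(K)$ acts properly on its building, with parahoric-type stabilizers that are compact open) is precisely the substantive content of $(3)\Rightarrow(1)$ and of \cite[11.40]{AB08}; it requires the Bruhat--Tits structure theory of the building and is not a formal consequence of the $\mathrm{CAT}(0)$ geometry. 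So your claim that ``the only non-formal ingredient is the Bruhat--Tits fixed point theorem'' is not accurate: a second non-formal input --- properness of the action, or compactness of facet stabilizers --- is needed to close the cycle, and as written your proof does not supply it.
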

 	A representation $\varrho:\pi_1(X)\to G(K)$ is \emph{(un)bounded} if its image $\varrho(\pi_1(X))$ is  a (un)bounded subgroup of $G(K)$.

The following finiteness criterion will be used to prove \cref{genericallyfinite}, which is the cornerstone of \cref{main6}.
\begin{lem}\label{lem:BT}
	Let $G$ be an almost simple algebraic group over the non-archimedean local field $K$.  Let $\Gamma\subset G(K)$ be a finitely generated subgroup so that
	\begin{itemize}
		\item  it is a Zariski dense subgroup in $G$,
		\item it is not contained in any bounded subgroup of $G(K)$. 
	\end{itemize} 
	Let $\Upsilon$ be a normal subgroup of $\Gamma$ which is \emph{bounded}. Then $\Upsilon$ must be finite. 
\end{lem}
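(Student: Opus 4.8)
The plan is to use the action of $\Gamma$ on the Bruhat--Tits building $\Delta(G)$ and to exploit the fact that $\Upsilon$, being bounded, fixes a point, while $\Gamma$, being unbounded, does not. First I would invoke \cref{lem:AB}: since $\Upsilon$ is bounded, the fixed-point set
$$
\mathrm{Fix}(\Upsilon) := \{x \in \Delta(G) \mid \gamma \cdot x = x \text{ for all } \gamma \in \Upsilon\}
$$
is non-empty. Because $\Delta(G)$ is a $\mathrm{CAT}(0)$ space and each $\gamma \in \Upsilon$ acts by an isometry, $\mathrm{Fix}(\Upsilon)$ is a closed convex subset of $\Delta(G)$ (the fixed-point set of any family of isometries of a $\mathrm{CAT}(0)$ space is closed and convex). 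The key point is that $\mathrm{Fix}(\Upsilon)$ is invariant under $\Gamma$: since $\Upsilon \triangleleft \Gamma$, for $g \in \Gamma$ and $x \in \mathrm{Fix}(\Upsilon)$ one has $\gamma \cdot (g \cdot x) = g \cdot (g^{-1}\gamma g) \cdot x = g \cdot x$ for all $\gamma \in \Upsilon$, using $g^{-1}\gamma g \in \Upsilon$.

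Next I would distinguish cases according to whether $\mathrm{Fix}(\Upsilon)$ is bounded (as a subset of the metric space $\Delta(G)$) or not. If $\mathrm{Fix}(\Upsilon)$ is a non-empty bounded closed convex subset of the complete $\mathrm{CAT}(0)$ space $\Delta(G)$, then it has a unique circumcenter $c$, which is canonically attached to the set; since $\Gamma$ permutes $\mathrm{Fix}(\Upsilon)$ by isometries, it fixes $c$. But then $\Gamma$ is contained in the stabilizer of $c$, which is a bounded subgroup of $G(K)$ by \cref{lem:BT}'s ambient setup (stabilizers of points are bounded), contradicting the hypothesis that $\Gamma$ is unbounded. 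Hence $\mathrm{Fix}(\Upsilon)$ must be unbounded. The remaining work is to show that an unbounded $\Gamma$-invariant closed convex subset forces $\Upsilon$ to be finite.

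To handle the unbounded case, I would pass to the subgroup $\Upsilon' \subset \Upsilon$ consisting of elements acting trivially on $\mathrm{Fix}(\Upsilon)$ — which is all of $\Upsilon$ by definition — and instead analyze the structure more carefully using the algebraic group. The cleaner route: let $Z := \overline{\mathrm{Fix}(\Upsilon)}$ and note that $\Upsilon$ lies in the pointwise stabilizer $G(K)_Z$ of a positive-dimensional (in the building sense) convex set. Now use Zariski density of $\Gamma$: the elements of $\Upsilon$ that fix a whole apartment's worth of chambers — or at least fix a point together with enough of its neighborhood — lie in a bounded subgroup whose Zariski closure is a proper normal subgroup of $G$ normalized by $\Gamma$. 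Since $\Gamma$ is Zariski dense, the Zariski closure $\overline{\Upsilon}^{\mathrm{Zar}}$ is a normal algebraic subgroup of $G$. Because $G$ is \emph{almost simple}, $\overline{\Upsilon}^{\mathrm{Zar}}$ is either finite (contained in the center) or all of $G$. It cannot be all of $G$, since $\Upsilon$ is bounded and a bounded subgroup cannot be Zariski dense in an isotropic almost simple group over a non-archimedean local field (a Zariski dense subgroup of $G(K)$ is unbounded when $\mathrm{rk}_K G \geq 1$, which holds since $\Delta(G)$ is used nontrivially — and if $\mathrm{rk}_K G = 0$ then $G(K)$ itself is bounded, contradicting $\Gamma$ unbounded). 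Therefore $\overline{\Upsilon}^{\mathrm{Zar}}$ is finite, and since $\Upsilon \subset \overline{\Upsilon}^{\mathrm{Zar}}(K)$, the group $\Upsilon$ is finite.

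\textbf{Main obstacle.} I expect the delicate step to be the very last one: ruling out $\overline{\Upsilon}^{\mathrm{Zar}} = G$, i.e. establishing that a bounded subgroup of $G(K)$ cannot be Zariski dense when $G$ is almost simple and $K$-isotropic. This should follow from the fact that a maximal bounded (compact) subgroup is the stabilizer of a vertex, hence contained in a proper parahoric, and its Zariski closure is a proper subgroup — but making this rigorous may require either a direct argument via the Bruhat--Tits decomposition or citing a standard fact (e.g. that $G(K)$ is non-compact and a compact open subgroup has Lie algebra stable under $\mathrm{Ad}$, forcing properness). Alternatively, and perhaps more robustly, one avoids this by arguing directly with the building: if $\Upsilon$ were infinite and Zariski dense it would be unbounded, contradicting boundedness, so one only needs that \emph{infinite + bounded} is compatible — which it is (e.g. $\mathrm{SL}_2(\mathcal{O}_K)$) — meaning the Zariski-density/almost-simplicity input is essential and cannot be bypassed. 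I would therefore make sure the cited structural fact about Zariski-dense subgroups being unbounded is available, or prove it from the transitivity of $G(K)$ on apartments combined with $\mathrm{rk}_K(G) \geq 1$.
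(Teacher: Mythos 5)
Your first half matches the paper's argument: the fixed-point set $R=\mathrm{Fix}(\Upsilon)\subset\Delta(G)$ is non-empty (by \cref{lem:AB}), closed, convex, and $\Gamma$-invariant by normality, and the Bruhat--Tits fixed point theorem rules out $R$ being bounded since $\Gamma$ is unbounded. The problem is in your final step, where you rule out $\overline{\Upsilon}^{\mathrm{Zar}}=G$ by asserting that a bounded subgroup of $G(K)$ cannot be Zariski dense when $G$ is almost simple and $K$-isotropic. This assertion is false: $\mathrm{SL}_2(\mathbb{Z}_p)\subset\mathrm{SL}_2(\mathbb{Q}_p)$ is compact (hence bounded), and being open in the $p$-adic topology it is Zariski dense in $\mathrm{SL}_2$ — indeed any compact open subgroup of $G(K)$ is Zariski dense in a connected $G$. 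You even cite $\mathrm{SL}_2(\mathcal{O}_K)$ yourself as an infinite bounded group, without noticing that it is simultaneously Zariski dense, which is exactly the counterexample to the fact your argument needs. The sentence "if $\Upsilon$ were infinite and Zariski dense it would be unbounded" is likewise false for the same reason. So normality plus almost simplicity alone does not give finiteness; you must first show that $\overline{\Upsilon}^{\mathrm{Zar}}$ is a \emph{proper} subgroup of $G$ by some other means, and your sketch ("fix a whole apartment's worth of chambers") does not supply this, since an unbounded closed convex subset of $\Delta(G)$ need not contain an apartment or even a half-apartment.

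The missing idea — and the one the paper uses — is to exploit the unboundedness of $R$, which you establish but never actually use. Pass to the compactification $\overline{\Delta(G)}$ by points at infinity: since $R$ is unbounded, closed and convex, its closure meets $\partial\Delta(G)$ (the paper verifies this via a covering argument with cones in the cone topology, using compactness of $\partial\Delta(G)$). Hence $\Upsilon$ fixes a point at infinity and is therefore contained in $P(K)$ for a \emph{proper} parabolic subgroup $P\subsetneq G$. This forces $H:=\overline{\Upsilon}^{\mathrm{Zar}}\subsetneq G$; only then does normality of $H$ (from $\Upsilon\triangleleft\Gamma$ and Zariski density of $\Gamma$) combined with almost simplicity of $G$ yield that $H$, and hence $\Upsilon$, is finite. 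Without the passage through a fixed point at infinity and the proper parabolic, the argument does not close.
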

\begin{proof} 
To prove the lemma, we may assume that $G$ is connected.  	As $G(K)$ acts on $\Delta(G)$ transitively, we denote by  $R\subset \Delta(G)$ the set of fixed points of $\Upsilon$. Since $\Upsilon$ is bounded, $R$ is not empty. $R$ is moreover closed and convex. 	 
	We will prove that $R$ is moreover invariant under $\Gamma$, i.e. $\gamma R\subset R$ for $\gamma \in \Gamma$. 
	
	For every $u\in \Upsilon$ and $\gamma\in \Gamma$, one has  $\gamma^{-1}u\gamma\in \Upsilon$ since $\Upsilon\triangleleft \Gamma$.  Then for every $x\in R$, one has $u(\gamma x)=\gamma ((\gamma^{-1}u\gamma)x)=\gamma x$.  Hence $R$ is  invariant under $\Gamma$.
	
	If $R$ is bounded, by Bruhat-Tits' fixed point theorem (see \cite[Theorem 11.23]{AB08}), $\Gamma$ fixes a point in $R$. Then $\Gamma$ is a bounded subgroup of $G(K)$ by \cref{lem:AB}, which contradicts with our assumption. Hence $R$ is unbounded. 
	
Consider the   compactification $\overline{\Delta(G)}$ of the building $\Delta(G)$ by adding points at infinity.  A point at infinity in $\overline{\Delta(G)}$ is the equivalent class of geodesic rays in $\Delta(G)$ (see \cite[\S 11.8.1]{AB08}). Write $\partial \Delta(G):=\overline{\Delta(G)}-\Delta(G)$.  Note that the action of $G(K)$ on $\Delta(G)$ induces a natural action on $\overline{\Delta(G)}$.   Let $\widetilde{R}= \overline{\Delta(G)}^\Upsilon$ be the set of fixed points  of $\Upsilon$ in the compactification $\overline{\Delta(G)}$.  Then $\widetilde R$ is closed convex. If $\widetilde{R} \cap \partial\Delta(G)$ is empty, then $\partial\Delta(G)$ is covered by some cones (in finite number by compactness of $\partial\Delta(G)$) not intersecting $\widetilde R$. 
	These cones define the topology on $\overline{\Delta(G)}$ (the cone topology). Let O be an origin in $\Delta(G)$, such a cone is the set $C(y,\epsilon)$ of all $\xi$ in $\overline{\Delta(G)}$ such that the line segment (or geodesic ray) $[O,\xi]$ contains a point $x$ at distance less than $\epsilon$ from a fixed point $y \in \Delta(G)$. To get a basis of the topology, one has also to consider open balls in $\Delta$ as bounded cones.
	So if $\widetilde{R} \cap \partial\Delta(G)$ is empty, $\partial\Delta(G)$ is in the finite union of some cones $C(y,1/n)$ with $d(O,y)=n$   for some (great) $n$, and these cones do not intersect $\widetilde R$. Therefore $R=\widetilde{R}$ is in the ball $B(O,n)$. This contradicts with the unboundness of $R$. Hence $\widetilde{R}\cap \partial\Delta(G)\neq\varnothing$.  In other words, $\Upsilon$ fixes a point at infinity.   
	$\Upsilon$ is thus contained in $P(K)$ where $P\subset G$ is a proper parabolic subgroup of $G$. 
Write $H\subset G$ to be the Zariski closure of $\Upsilon$ in $G$. Then $H\subsetneq G$.   Since   $\Upsilon\triangleleft \Gamma$ and $\Gamma$ is Zariski dense in $G$, it follows that $H$ is a normal subgroup of $G$. 
 	Since 
	$G$ is assumed to be almost simple, we conclude that $H$, hence $\Upsilon$ is finite. This proves the lemma.   
\end{proof} 
\begin{rem}
It is worth acknowledging that  \cref{lem:BT} presented here was inspired by a discussion between the second author and Brunebarbe at a non-abelian Hodge Theory conference held in Saint-Jacut, France in June 2022.  After the second author sent the proof of \cref{lem:BT} to Brunebarbe, he informed the second author that his proof is similar to ours, which was later published in \cite{Bru22}. We are grateful for this fruitful discussion and the contributions made by Brunebarbe to this area.
\end{rem}

\subsection{Harmonic mappings} \label{sec:harmmapp}
The construction of $s_\varrho:X\to S_\varrho$ in \cref{main3} will be based on an existence theorem for harmonic mapping into Bruhat--Tits buildings. We refer to \cite{BDDM} for a precise definition of harmonic mapping with values in a Euclidean building.

\begin{thm}[{\cite[Theorem~2.1]{BDDM}}] \label{thm:BDDMex}
	Let $X$ be a quasi-projective manifold.  Let $G$ be a semi-simple group defined over a non-archimedean local field $K$.  Let $\varrho:\pi_1(X)\to G(K)$ be a Zariski dense representation.
	Then there exists a $\varrho$-equivariant harmonic mapping \(u : \widetilde{X} \to \Delta(G)\), which is locally Lipschitz and pluriharmonic. Moreover, its energy has at most logarithmic growth at infinity. 
\end{thm}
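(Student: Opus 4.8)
The statement is the quasi-projective analogue of the Gromov--Schoen existence theorem for equivariant harmonic maps to Euclidean buildings, and the plan is to combine the harmonic map theory for NPC targets of Gromov--Schoen and Korevaar--Schoen with a boundary analysis near the divisor at infinity, in the spirit of the archimedean work of Jost--Zuo and Mochizuki. Fix a smooth projective compactification \((\overline{X},D)\) with \(D=\overline{X}\setminus X\) a simple normal crossing divisor.

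First I would construct a \(\varrho\)-equivariant locally Lipschitz comparison map \(u_{0}:\widetilde{X}\to\Delta(G)\) with prescribed asymptotics near \(D\). Around a point \(p\in D\) lying on exactly \(k\) local branches, a punctured polydisk neighbourhood \((\bD^{*})^{k}\times\bD^{n-k}\) has fundamental group \(\mathbb{Z}^{k}\), which \(\varrho\) maps to a finitely generated abelian group \(A\subset G(K)\) of pairwise commuting isometries of \(\Delta(G)\); by the structure theory of commuting semisimple isometries of a \(\mathrm{CAT}(0)\) space, \(A\) preserves a closed convex subset on which it acts by translations along vectors \(v_{1},\dots,v_{k}\), with \(v_{i}=0\) exactly when the \(i\)-th meridian generator is bounded (elliptic), cf.\ \cref{lem:AB}. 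On such a neighbourhood I take the linear model \(u_{0}=\sum_{i}(\operatorname{Re} w_{i})\,v_{i}+\mathrm{const}\), where \(z_{i}=e^{2\pi\sqrt{-1}\,w_{i}}\): this is harmonic and \(\varrho\)-equivariant, and a direct computation shows its energy density is \(\asymp\sum_{i}|v_{i}|^{2}\,|z_{i}|^{-2}\), so its energy over \(\{|z_{i}|>\epsilon\}\) grows like \(O(\log(1/\epsilon))\). Glueing these local models to an arbitrary equivariant Lipschitz map on the compact part (built by equivariant barycentric interpolation over a triangulation, using completeness and contractibility of \(\Delta(G)\)) produces \(u_{0}\), whose energy on \(\overline{X}\setminus N_{\epsilon}(D)\) is \(O(\log(1/\epsilon))\).

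Next I would produce the harmonic map by exhaustion: write \(X=\bigcup_{j}X_{j}\) with \(X_{j}\) compact manifolds-with-boundary obtained by deleting \(\epsilon_{j}\)-tubes around \(D\), and on each \(\widetilde{X_{j}}\) solve the equivariant Dirichlet problem with boundary data \(u_{0}\). By Korevaar--Schoen one gets a unique energy-minimising \(\varrho\)-equivariant map \(u_{j}\), locally Lipschitz and harmonic off a singular set of Hausdorff codimension \(\geq 2\) by Gromov--Schoen regularity. The essential point is a uniform interior energy estimate — \(E(u_{j};K)\leq C(K)\) for every fixed compact \(K\Subset X\), independent of \(j\) — which I would derive from the minimality of \(u_{j}\) by a collar-interpolation comparison against \(u_{0}\) together with the Gromov--Schoen monotonicity formula; crucially, Zariski density of \(\varrho\) (equivalently, its Zariski closure being the semisimple group \(G\)) forces the absence of a fixed point on the boundary at infinity of \(\Delta(G)\), which provides the coercivity that prevents energy from escaping. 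Korevaar--Schoen compactness then yields, along a subsequence, the desired \(\varrho\)-equivariant locally Lipschitz harmonic map \(u:\widetilde{X}\to\Delta(G)\).

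Finally, pluriharmonicity would follow from the Bochner--Siu--Sampson identity for harmonic maps from Kähler manifolds into Euclidean buildings established by Gromov--Schoen and Daskalopoulos--Mese — the \((2,0)\)-part of the pulled-back metric tensor vanishes, the singular set contributing nothing by its codimension — and the logarithmic energy bound for \(u\) itself would be inherited from \(u_{0}\): near each point of \(D\) the function \(d(u,u_{0})\) is subharmonic on the punctured neighbourhood, hence bounded there by a removable-singularity argument, after which interior gradient estimates for NPC-valued harmonic maps bound \(|du|\) in terms of \(|du_{0}|\) up to lower-order terms. \emph{The main obstacle} is the uniform near-boundary energy control in the exhaustion step, and the verification that the limit is equivariant for the original \(\varrho\) rather than for a representation perturbed along \(D\); this is precisely the quasi-projective refinement of Gromov--Schoen that \cite{BDDM} provides, and it is where the local abelian boundary model, reductivity at infinity, and the monotonicity formula must be played off against one another.
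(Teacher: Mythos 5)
This statement is not proved in the paper at all: it is imported verbatim from \cite[Theorem~2.1]{BDDM}, so there is no internal argument to compare yours against. Judged on its own terms, your strategy — a local model map adapted to the abelian monodromy around each stratum of $D$ with $O(\log(1/\epsilon))$ energy, an exhaustion by equivariant Dirichlet problems, Korevaar--Schoen compactness, and Siu--Sampson for pluriharmonicity — is the standard Gromov--Schoen/Jost--Zuo/Mochizuki route and is essentially what the cited reference carries out. Two points in your sketch are fine but need justification you don't supply: that the commuting local monodromy elements act as \emph{semisimple} isometries of $\Delta(G)$ (true for $G(K)$ acting on its Bruhat--Tits building over a non-archimedean local field, but false for general isometries of a $\mathrm{CAT}(0)$ space, so the flat-subspace model must be derived from the building structure, not from general $\mathrm{CAT}(0)$ theory); and that the bounded case must be split off separately (there $\varrho(\pi_1(X))$ fixes a point of $\Delta(G)$ by \cref{lem:AB} and the constant map already satisfies the theorem), since your coercivity argument via absence of fixed points at infinity only applies to unbounded Zariski dense images.

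The genuine gap is in the pluriharmonicity step. On a non-compact $X$ the Siu--Sampson/Gromov--Schoen Bochner identity cannot simply be "applied": one must integrate it against cutoff functions supported away from $D$ and show the boundary terms vanish, and the estimate that makes this work is precisely the logarithmic energy growth of $u$ near $D$ (together with interior Lipschitz bounds). Your sketch treats the logarithmic growth of $u$ as a consequence to be "inherited from $u_0$" \emph{after} pluriharmonicity, whereas logically the energy control near $D$ must be established for the limit map first (from $E(u_j)\leq E(u_0)=O(\log(1/\epsilon_j))$ plus lower semicontinuity) and then fed into the cutoff Bochner argument. Addressing only the singular set (which is harmless by its codimension) does not address the non-compactness, and this circular ordering is the one step of your outline that, as written, would not close.
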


With this notation, recall that   \(x \in \widetilde{X}\) is called a {\em regular point}  of $u$  if it admits an open neighborhood \(U\) such that \(u(U) \subset A\) for some appartment \(A\). We say that a point \(x \in X\) is regular if some (equivalently, any) of its preimages in \(\widetilde{X}\) is regular. The regular points of $u$ in \(X\) form a non-empty open subset \(X^{\circ}\); denote by \(\mathcal{S}(u)\) its complementary. By the deep theorem of Gromov-Schoen, $\cS(u)$ has small Hausdorff dimension. 
\begin{thm}[\protecting{\cite[Theorem 6.4]{GS92}}]\label{thm:GS}
	$\cS(u)$ is a closed subset of $X^\circ$  with  \emph{Hausdorff  dimension}    at most $2\dim_{\bC}X-2$. \qed
\end{thm}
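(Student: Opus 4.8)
The claim is a local statement about a pluriharmonic map into a CAT(0) Euclidean building, and it is exactly the kind of assertion established by Gromov--Schoen's regularity theory for harmonic maps into $F$-connected complexes. The plan is to reduce to their framework and indicate how each ingredient is available here. Closedness is the easy half: by construction $X^{\circ}$ is a union of open sets (each regular point has an open neighborhood mapping into a single apartment, and apartments are open in the simplicial topology of $\Delta(G)$), so $\cS(u)$, being its complement, is closed; moreover $u$ is locally Lipschitz by \cref{thm:BDDMex}, so $\cS(u)$ has empty interior. The real content is the Hausdorff dimension estimate, and I would follow the three classical steps: a monotonicity formula with an associated order function, a blow-up (tangent map) analysis, and a Federer--Almgren dimension reduction.

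First I would set up the monotonicity formula. For $x_{0}\in\widetilde{X}$ and small $r$ put $E(r)=\int_{B_{r}(x_{0})}|\nabla u|^{2}$ and $I(r)=\int_{\partial B_{r}(x_{0})}d^{2}(u,u(x_{0}))$; using that $u$ is locally Lipschitz and pluriharmonic (hence weakly harmonic into the NPC space $\Delta(G)$, so the inner variation identities of \cite{GS92} hold on $\widetilde{X}$), the scaled quantity $r\,E(r)/I(r)$ is monotone, so the order $\alpha_{u}(x_{0})=\lim_{r\to0}rE(r)/I(r)$ exists, satisfies $\alpha_{u}\ge1$, and is upper semicontinuous in $x_{0}$. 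Next, rescaling $u$ around $x_{0}$ by $u_{\lambda}(x)=\bigl(u(x_{0}+\lambda x)-u(x_{0})\bigr)/\sqrt{\lambda^{-n}E(\lambda)}$ (with $n=2\dim_{\bC}X$) one extracts a tangent map $u_{\ast}$ which is a locally Lipschitz homogeneous (of degree $\alpha_{u}(x_{0})$) harmonic map from $\bR^{n}$ into the tangent cone $T_{u(x_{0})}\Delta(G)$, and this tangent cone is again a Euclidean building, in fact a product $\bR^{k}\times C(L)$ with $L$ the (spherical) link. A point is regular precisely when some tangent map takes values in a single apartment; so $x_{0}\in\cS(u)$ forces every tangent map to be genuinely singular at the origin.

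Then I would run the Federer--Almgren dimension reduction. Stratify $\cS(u)=\bigcup_{d}\cS^{d}$, where $\cS^{d}$ is the set of points admitting a tangent map invariant under translations by a $d$-dimensional subspace (its "spine") but no larger one. Upper semicontinuity of the order, compactness of tangent maps, and the standard reduction argument give $\dim_{\mathcal H}\cS^{d}\le d$. It therefore suffices to show $\cS^{\,n-1}=\cS^{\,n}=\varnothing$. A tangent map realizing a spine of dimension $n-1$ splits off $\bR^{n-1}$ of symmetries and is determined by a homogeneous harmonic map $v:\bR^{2}\to T_{u(x_{0})}\Delta(G)$, i.e.\ a cone over a harmonic map $S^{1}\to\Delta(G)$. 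Here one uses the $F$-connectedness of the Euclidean building: any branching of geodesics in $\Delta(G)$ occurs across walls, and the first-variation (balancing) condition satisfied by a harmonic map at a wall forces the incoming and outgoing "directions" on the several half-apartments meeting that wall to cancel; combined with homogeneity this pins $v$ into a single apartment, so the center is regular — a contradiction. Hence $\dim_{\mathcal H}\cS(u)\le n-2=2\dim_{\bC}X-2$. Finally, since the entire argument is local and $\varrho$-equivariant, it descends from $\widetilde{X}$ to $X$, and \cref{thm:BDDMex} provides precisely the regularity (local Lipschitz, pluriharmonic, logarithmic energy growth) needed for the monotonicity and compactness steps to go through on a quasi-projective base.

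The main obstacle is the last step: the rigidity of homogeneous harmonic maps of $\bR^{2}$ (equivalently of harmonic circles) into the building near a wall, which is where the combinatorial geometry of $\Delta(G)$ — no nontrivial "flat" branching, controlled valence of walls — is genuinely used, and which is the technical heart of \cite{GS92}; verifying that all the analytic prerequisites (monotonicity, $\varepsilon$-regularity, tangent-map compactness) remain valid for the equivariant map on the universal cover of a quasi-projective manifold, rather than on a compact domain, is a secondary point that is handled by the estimates in \cite{BDDM}.
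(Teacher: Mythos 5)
The paper does not actually prove this statement: it is quoted verbatim from Gromov--Schoen (\cite[Theorem 6.4]{GS92}) and used as a black box, so your proposal is really a reconstruction of the Gromov--Schoen argument rather than of anything in this paper. The architecture you describe --- monotonicity of the scaled energy and the order function, tangent maps into the tangent cone of the building, Federer--Almgren dimension reduction over the strata $\cS^{d}$ --- is the correct skeleton, and closedness of $\cS(u)$ is immediate because $X^{\circ}$ is open by its very definition (your parenthetical reason, that apartments are open in $\Delta(G)$, is false --- apartments are closed subcomplexes --- but it is not needed).

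The decisive final step, however, is wrong as written, in two ways. First, the dimension count: a tangent map whose spine is $(n-1)$-dimensional factors through $\bR^{n}/\bR^{n-1}\cong\bR^{1}$, not $\bR^{2}$. Ruling out $\cS^{n-1}$ therefore amounts to analyzing homogeneous harmonic maps of $\bR$ into the tangent cone; these are geodesics through the cone point, and geodesics in Euclidean buildings lie in apartments, which is the (much easier) correct argument. The maps of $\bR^{2}$ you discuss govern the stratum $\cS^{n-2}$, which must \emph{not} be shown empty. Second, and more seriously, the rigidity you assert for those $\bR^{2}$ maps --- that the balancing condition at walls ``pins $v$ into a single apartment'' --- is false: homogeneous harmonic maps of $\bR^{2}$ into a tree such as the fold of $z\mapsto \mathrm{Re}(z^{3/2})$ into a tripod are genuinely singular at the origin and contained in no apartment. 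If your claim were true it would give Hausdorff codimension at least $3$ for $\cS(u)$, contradicting the sharpness of the bound $n-2$ (in the present paper the singular set is exactly captured by the vanishing of differences of spectral one-forms, a complex hypersurface, i.e.\ real codimension $2$). Finally, even with the correct one-dimensional reduction, the implication ``some tangent map lies in a flat $\Rightarrow$ the point is regular'' is not a definition but the substantial interior-regularity content of Theorems 5.1 and 6.3 of \cite{GS92}; your sketch treats it as a tautology, and this is precisely where $F$-connectedness of the building is used.
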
 

\subsection{Proof of \cref{main3}} \label{sec:KZthm}
\begin{thm}[=\Cref{main3}] \label{thm:KZreduction}
	Let $X$ be a complex quasi-projective normal variety,  and let $G$ be a reductive algebraic group defined over a non-archimedean local field $K$. Let $\varrho:\pi_1(X)\to G(K)$ be a Zariski-dense representation. Then there exist  a quasi-projective normal variety $S_\varrho$ and a dominant morphism $s_\varrho:X\to S_\varrho$ with connected general fibers, such that for any connected Zariski closed  subset $T$ of $X$, the following properties are equivalent:
	\begin{enumerate}[label*={\rm (\alph*)}]
		\item  the image $\rho({\rm Im}[\pi_1(T)\to \pi_1(X)])$ is a bounded subgroup of $G(K)$.
		\item  For every irreducible component $T_o$ of $T$, the image $\rho({\rm Im}[\pi_1(T_o^{\rm norm})\to \pi_1(X)])$ is a bounded subgroup of $G(K)$.
		\item The image $s_\varrho(T)$ is a point.
	\end{enumerate} 
\end{thm}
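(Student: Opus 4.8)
The idea is to reduce the reductive case to the already-established semisimple case of \cref{thm:KZreduction} and then to build $s_\varrho$ by combining the pieces coming from the various almost-simple factors (which live over buildings) and from the abelianization/central torus part (which is handled by the quasi-Albanese machinery of \cref{sec:GGL} via characters). First I would write $G = Z(G)^\circ \cdot G^{\mathrm{der}}$ with $G^{\mathrm{der}}$ semisimple, and observe that $\varrho$ induces a representation into $G^{\mathrm{der}}(K)$ up to isogeny after restricting to a finite-index subgroup, i.e. after replacing $X$ by a finite \'etale cover (which does not change the question since we may pass to irreducible components and normalizations, and boundedness is insensitive to finite index). The semisimple $G^{\mathrm{der}}$ further decomposes (over a finite extension of $K$, then by restriction of scalars) into almost-simple factors $G_1,\dots,G_k$, each defined over a non-archimedean local field, and $\Delta(G^{\mathrm{der}}) = \prod_i \Delta(G_i)$. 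For each $i$ one applies the Zariski-dense reduction to the induced (Zariski-dense-into-its-image) representation; here I would invoke \cref{thm:BDDMex} and \cref{thm:GS} to produce the harmonic map $u_i : \widetilde X \to \Delta(G_i)$, and use the logarithmic symmetric differential forms constructed from $u_i$ in \cite{BDDM} to define, via the Stein factorization of a suitable Iitaka-type fibration attached to these forms, a fibration $s_i : X \to S_i$ whose fibers are exactly the subvarieties on which the $i$-th factor of $\varrho$ becomes bounded.

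The heart of the construction is the equivalence (a) $\Leftrightarrow$ (c) on each factor: $s_i(T)$ is a point iff $\varrho_i(\mathrm{Im}[\pi_1(T)\to\pi_1(X)])$ is bounded. The implication "bounded $\Rightarrow$ contracted" is the genuinely new input and I would prove it using \cref{lem:BT}: if $T$ is connected and $\varrho_i$ restricted to its image is bounded but $T$ is not contracted by $s_i$, then one produces inside $\pi_1(X)$ a normal (or normalized-by-a-Zariski-dense) bounded subgroup of the Zariski-dense unbounded group $\varrho_i(\pi_1(\widetilde X))$ — contradicting \cref{lem:BT} unless that subgroup is finite, in which case a monodromy/energy argument (the harmonic map $u_i$ must be constant along $T$, its restriction having zero energy) forces $s_i(T)$ to be a point after all. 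The converse "contracted $\Rightarrow$ bounded" follows because a fiber of $s_i$ lifts into a single appartment-like locus where the equivariant harmonic map is bounded, hence the stabilizer — which contains the image of $\pi_1$ of the fiber — is bounded by \cref{lem:BT}. The passage from "bounded on $T$" to "bounded on each $T_o^{\mathrm{norm}}$" is the easy direction; the nontrivial direction (b) $\Rightarrow$ (a) is exactly where one must be careful: boundedness on each irreducible component and on the normalization does not automatically glue, and here I would use that $T$ is \emph{connected} together with the contraction statement (c) applied component-by-component — each component is contracted, they meet, so the whole connected $T$ maps to one point of $S_i$, whence (a).

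Having produced $s_i : X \to S_i$ for each almost-simple factor, I would set $s_\varrho : X \to S_\varrho$ to be (the Stein factorization of) the product map $X \to \prod_i S_i \times \cA'$, where the last factor handles the central torus: characters of $Z(G)^\circ$ compose with $\varrho$ to give classes in $H^1(X,\bZ)$-ish data, producing via \cref{lem:critpointalb} a partial quasi-Albanese morphism whose fibers detect exactly where the composition of $\varrho$ with $G \to G/G^{\mathrm{der}}$ is bounded (equivalently, torsion, equivalently trivial after finite cover). Then $\varrho$ itself is bounded on $T$ iff every $\varrho_i$ is and the toric part is, iff $T$ is contracted by every $s_i$ and by the quasi-Albanese piece, iff $s_\varrho(T)$ is a point. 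The main obstacle I anticipate is the quasi-projective (non-compact) bookkeeping: ensuring that the logarithmic forms from \cite{BDDM} really descend to an honest dominant morphism with connected general fibers over a \emph{normal} base, controlling the behavior at the boundary divisor (this is where the logarithmic energy growth in \cref{thm:BDDMex} and the Hausdorff-dimension bound in \cref{thm:GS} are essential, so that $\mathcal{S}(u)$ does not interfere with taking closures of fibers), and correctly combining the several factors without losing the equivalence — in particular proving (b) $\Rightarrow$ (c) for reducible $T$, which as noted really needs connectedness and cannot be done factor-by-factor in a naive way.
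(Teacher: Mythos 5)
Your top-level architecture (split $G$ into a semisimple part and a central torus, treat the torus by the quasi-Albanese morphism, treat the semisimple part by a harmonic map to the building and logarithmic forms, then combine by quasi-Stein factorization of the product) matches the paper's Steps 6--7. But the central step --- the equivalence between boundedness of $\varrho(\mathrm{Im}[\pi_1(T)\to\pi_1(X)])$ and contraction of $T$ --- is argued with the wrong tool. You invoke \cref{lem:BT} in both directions, but that lemma requires the bounded subgroup to be \emph{normal} in the Zariski-dense unbounded group; for an arbitrary connected Zariski closed subset $T$ the image of $\pi_1(T)$ is not normal in $\pi_1(X)$, so the lemma simply does not apply. (It is used elsewhere in the paper, for fibers of fibrations, where normality comes from \cref{lem:normal}.) The correct mechanism, and the one the paper uses, is: for ``contracted $\Rightarrow$ bounded'', show the equivariant harmonic map $u$ is constant on lifts of fibers --- first on fibers avoiding the singular set $\mathcal{S}(u)$ (where vanishing of the spectral one-forms forces the complexified differential of $u$ to vanish), then on \emph{all} fibers by a limiting argument using local Lipschitz continuity of $u$ --- and conclude boundedness from \cref{lem:AB} (a subgroup fixing a point of $\Delta(G)$ is bounded). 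For ``bounded $\Rightarrow$ contracted'', the paper does not argue by contradiction via normal subgroups at all: it uses that a bounded image fixes a point, so the constant map is a $\varrho$-equivariant harmonic map, and then the \emph{uniqueness of energy densities} of harmonic maps with logarithmic energy growth forces the restriction of $u$ to have zero energy, hence all spectral one-forms vanish on $T$, hence $T$ is contracted by the partial quasi-Albanese map (\cref{lem:critpointalb}). Your ``fallback energy argument'' gestures at this but is not a repair of the lem:BT step; it is the whole proof.

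Two further gaps. First, the construction of $s_\varrho$ on the semisimple part is not ``the Stein factorization of an Iitaka-type fibration attached to the forms'': the differentials $\{u^*(d\alpha_i)^{(1,0)}\}$ are only \emph{multivalued} one-forms on $X\setminus\mathcal{S}(u)$, so one must first build the finite Galois spectral cover $\pi:\xsp\to X$ on which they become genuine global sections of $\pi^*\Omega_{\overline X}(\log D)$ (Steps 1--3 of the paper, including the extension across $\mathcal{S}(u)$ via Shiffman's removable singularity theorem and the control of the ramification locus), then take the partial quasi-Albanese morphism of $\xsp$ induced by these forms, and finally descend to $X$ using the Galois action before quasi-Stein factorizing. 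Second, your further decomposition of the semisimple part into almost-simple factors is unnecessary (the building $\Delta(G)$ and Theorem~\ref{thm:BDDMex} are already available for semisimple $G$) and adds bookkeeping without buying anything; the paper runs the whole construction once for the semisimple quotient.
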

We divide its proof into 8 steps. Throughout Steps 1-7, we assume that $X$ is smooth, and in Step 8, we address the case when $X$ is singular.  Steps 1-5 are dedicated to the situation where $G$ is semisimple, while in Steps 6-7, we establish the theorem for reductive $G$.  Steps 5-8 are independent of the other part of the paper and the readers may skip them.    It is worth highlighting that \cref{thm:KZreduction} will play a crucial role as a cornerstone in the   paper  \cite{DY23} by the second and third authors on the Shafarevich conjecture for reductive representations over quasi-projective varieties.
\begin{proof} 
	\noindent
	{\em Step 1. Construction of logarithmic symmetric forms on \(X\)}.\  	A large part of  arguments in this step is detailed in \cite[Section~3]{BDDM}, so we will give the necessary details, and refer to {\em loc. cit.} for more details. 
In the following, by a multiset, we mean a pair $(S, m)$ where $S$ is the underlying set and $m: S\to \mathbb {Z}_{\geq 0}$ is a function, giving the multiplicity, that is, the number of occurrences, of the element $s\in S$ in the multiset as the number $m(s)$.
We simply write $\{s_1,s_2,\ldots,s_k\}$ for a multiset so that each $s_i$ apears $m(s_i)$-times in this description.

	We start with the following definition. 
	\begin{dfn}[Multivalued section]
		Let $X$ be a complex manifold, and let $E$ be a holomorphic vector bundle on $X$. A  \emph{multivalued} holomorphic section of $E$ on $X$ consists of the following data:
		\begin{itemize}
			\item  an open covering $\{U_i\}_{i\in I}$ of $X$, 
			\item     
multisets $\{\omega_{i1},\ldots,\omega_{im} \}$ of holomorphic sections in $H^0(U_i, E|_{U_i})$
		\end{itemize} 
		satisfying that if $U_i\cap U_j\neq \varnothing$, then $$\{\omega_{i1}|_{U_i\cap U_j},\ldots,\omega_{im}|_{U_{i}\cap U_j} \}=\{\omega_{j1}|_{U_i\cap U_j},\ldots,\omega_{jm}|_{U_{i}\cap U_j} \} $$
counting multiplicities.
We shall abusively write $\{\omega_1,\ldots,\omega_m\}$ to denote a multivalued  holomorphic section on $X$. 
	\end{dfn}
	\begin{claim}\label{claim:multivalued}
		Assume that $\{\omega_1,\ldots,\omega_m\}$ and $\{\eta_1,\ldots,\eta_m\}$ are two multivalued holomorphic sections of  a holomorphic vector bundle $E$  on a connected complex manifold $X$. If there is an open set $U\subset X$ over which they coincide. Then $\{\omega_1,\ldots,\omega_m\}$  coincides with  $\{\eta_1,\ldots,\eta_m\}$ globally.  
	\end{claim}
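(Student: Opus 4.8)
The statement is a uniqueness/analytic-continuation assertion for multivalued sections, and the natural approach is to reduce it to ordinary analytic continuation of holomorphic sections by passing to a symmetric-power construction, or — more elementarily — to argue directly on the locus where the two multivalued sections agree. I would carry out the direct argument as follows. Let $\{\omega_1,\dots,\omega_m\}$ and $\{\eta_1,\dots,\eta_m\}$ be represented on open coverings $\{U_i\}$ and $\{V_j\}$ respectively; after passing to the common refinement $\{U_i\cap V_j\}$ we may assume both are represented on a single covering $\{W_k\}$ of $X$. Define
\[
	\Omega := \{\, x\in X \;:\; \{\omega_1,\dots,\omega_m\}\text{ and }\{\eta_1,\dots,\eta_m\}\text{ coincide as multisets in a neighborhood of }x \,\}.
\]
By hypothesis $\Omega\supseteq U\neq\varnothing$, and $\Omega$ is open by construction. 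Since $X$ is connected, it suffices to show $\Omega$ is closed, i.e. that $\Omega$ is nonempty, open and closed; then $\Omega=X$, which is exactly the conclusion.

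For closedness, I would fix a point $x_0\in\overline{\Omega}$, choose an index $k$ with $x_0\in W_k$, and work entirely inside the connected open set $W_k$ (shrinking it to a connected neighborhood of $x_0$ if necessary). On $W_k$ we have honest tuples $(\omega_1,\dots,\omega_m)$ and $(\eta_1,\dots,\eta_m)$ of holomorphic sections of $E$. The cleanest device is to encode the multiset by the elementary symmetric functions: consider the ``characteristic section'' formed from the coefficients of $\prod_{a=1}^m (t-\omega_a)$, a polynomial in $t$ with coefficients in $\bigoplus_{p} H^0(W_k,\operatorname{Sym}^p E)$ (or simply in $H^0(W_k, E^{\otimes p})$ after symmetrization), and likewise for the $\eta$'s. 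Two multisets of sections coincide on an open subset of the connected manifold $W_k$ if and only if these symmetric-function sections coincide there; but the symmetric-function sections are genuine holomorphic sections of a fixed holomorphic bundle, so by the identity theorem for holomorphic sections (vanishing on a nonempty open subset of a connected manifold forces vanishing everywhere) they agree on all of $W_k$. Since $x_0\in\overline{\Omega}$, the set $\Omega\cap W_k$ is a nonempty open subset of $W_k$, so the two symmetric-function sections agree on $\Omega\cap W_k$, hence on all of $W_k$, hence the two multisets agree near $x_0$; thus $x_0\in\Omega$. This shows $\Omega$ is closed.

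The one point requiring a little care — and the main (though minor) obstacle — is the elementary claim that equality of two multisets of holomorphic sections of $E$ at a point is equivalent to equality of the associated tuples of elementary symmetric sections: this is the standard fact that the map from the set of $m$-element multisets of a ring $R$ to $R^m$ given by $\{a_1,\dots,a_m\}\mapsto(e_1(a),\dots,e_m(a))$ is injective when $R$ has enough roots (here $R$ is the stalk $\cO_{X,x}\otimes E_x$, or rather one works fiberwise over a neighborhood where the polynomial $\prod(t-\omega_a(x))$ has its roots $\omega_a(x)$), so that knowing the symmetric functions recovers the unordered tuple of values, hence the multiset of germs by continuation. Once this bookkeeping is fixed, the rest is the connectedness argument above. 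I expect this to be written very briefly in the paper, since \cref{claim:multivalued} is purely a lemma-level fact used to make the notion of ``multivalued section'' well-behaved, and the proof is essentially ``open + closed + connected + identity theorem.''
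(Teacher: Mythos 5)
Your proof is correct, and at its core it is the same argument as the paper's: package the multiset into a single-valued holomorphic object by symmetrization and apply the identity theorem on a connected complex manifold. The difference is in the packaging. The paper works on the total space of $E$: with $q:E\to X$ the projection and $\lambda\in H^0(E,q^*E)$ the tautological (Liouville) section, it forms $P(\lambda)=\prod_{i}(\lambda-q^*\omega_i)\in H^0(E,q^*\Sym^m E)$, which is \emph{globally} defined because the transition data only permute the $\omega_i$; since the total space is connected, $P=P'$ everywhere once they agree over $q^{-1}(U)$, and the conclusion is immediate because the zero scheme of $P(\lambda)$ \emph{is} the graph (with multiplicities) of the multivalued section. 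This formulation buys, in one stroke, the two things your version supplies separately: global definedness of the symmetrized object (which makes your open--closed argument dispensable --- the elementary symmetric sections $\sigma_p\in H^0(X,\Sym^p E)$ already glue globally, exactly as the paper uses in \eqref{eq:construction}), and the recovery of the multiset from its symmetric functions, since the zero scheme visibly determines the graph. Your route instead rests on the fiberwise injectivity lemma you flag --- that $e_1(a),\dots,e_m(a)$ determine the multiset $\{a_1,\dots,a_m\}$ in a complex vector space $V$ --- which is indeed true (apply linear functionals, or observe that the zero locus of $\prod_i(\lambda-a_i)$ on $V$ is exactly $\{a_1,\dots,a_m\}$ with multiplicity, i.e.\ the paper's device restricted to a fiber), and on upgrading pointwise equality of value-multisets on $W_k$ to equality of the multisets of sections (each $\mathrm{graph}(\eta_j)$ is irreducible and contained in $\bigcup_i\mathrm{graph}(\omega_i)$, hence coincides with some $\mathrm{graph}(\omega_i)$, and multiplicities match by the equality of the symmetric functions). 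So there is no gap in your proposal, only a slightly longer chain of elementary verifications than the paper's one-line identification of zero schemes with graphs.
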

	\begin{proof}[Proof of \cref{claim:multivalued}]
		Denote by $q:E\to X$ be projective map.	Let $\lambda\in H^0(E, q^*E)$ be the Liouville section defined by $\lambda(e)=e$ for any $e\in E$. Consider the section
		$$
		P(\lambda):=\prod_{i=1}^{m}(\lambda-q^*\omega_i)\in H^0(E, q^*\Sym^mE)
		$$
		which is globally defined. Then the zero scheme (counting multiplicities)  $Z$  of $P(\lambda)$ is the graph  (counting multiplicities)  of $\{\omega_1,\ldots,\omega_m\}$.  It is an analytic subscheme of the complex manifold $E$.  Similarly,  we consider the zero scheme  $Z'$ of  the section
		$$
		P'(\lambda):=\prod_{i=1}^{m}(\lambda-q^*\eta_i)\in H^0(E, q^*\Sym^mE)
		$$
		which is the graph of $\{\eta_1,\ldots,\eta_m\}$. By our assumption,  $P(\lambda)=P(\lambda')$ over the open set $q^{-1}(U)$. Since $E$ is connected,  $P(\lambda)=P(\lambda')$ over the whole $E$. Hence $Z$ and $Z'$ coincide, which implies our claim. 
	\end{proof}

	If we denote by \(r\) the \(K\)-rank of \(G\), the appartments of \(\Delta(G)\) are all isometric to \(\mathbb{R}^{r}\); let \(A\) be one of these apartments. Let $\pi:\widetilde{X}\to X$ be the universal covering map.

	Let \(u : \widetilde{X} \to \Delta(G)\) be the harmonic map provided by Theorem~\ref{thm:BDDMex}. For any regular point $x\in X$ of $u$, there exists an open simply-connected neighborhood  $U$ of $x$   so that
	\begin{itemize}
		\item  the inverse image $\pi^{-1}(U)=\coprod_{i\in I}\Omega_i$ is a union of disjoint open sets in $\widetilde{X}$,  each of which is mapped homeomorphically onto $U$ by $\pi$.
		\item For any $i\in I$,  there is an apartment $A_i$ of $\Delta(G)$ such that $u(\Omega_i)\subset A_i$. Let \(g_{i} \in G\) be such that \(g_{i} \cdot A = A_{i}\), and denote \(u_{i} := g_{i} \circ u \circ (\pi|_{\Omega_{i}})^{-1} : U \to A\).
	\end{itemize}
	
	By the canonical isomorphism between the affine Weyl group \(W_{\mathrm{aff}}\) and the stabilizer of \(A\), one sees that two 
applications \(u_{i}\) and \(u_{j}\) differ by post-composition with an element of \(W_{\mathrm{aff}}\). Letting \(\{\alpha_{1}, \ldots, \alpha_{n}\}\) be the system of roots on \(A\), this implies with \eqref{eq:coin} that the multiset
	\[
 	\{u_{i}^{\ast} (d \alpha_{1})^{(1,0)}_{\mathbb{C}},
	\dotsc,
	u_{i}^{\ast} (d \alpha_{n})^{(1, 0)}_{\mathbb{C}}\}
	\] 
	is independent of \(i\).

	Since \(u\) is pluriharmonic, this gives a multivalued holomorphic one forms $\{\omega_1,\ldots,\omega_n\}$ on the open set $X^\circ:= X-\cS(u)$. Let $\{U_i\}_{i\in I}$ be an open covering of $X^\circ$ such that $\{\omega_{i1},\ldots,\omega_{in}\} \subset \Gamma(U_i,\Omega_{U_i})$ is the realization of $\{\omega_1,\ldots,\omega_n\}$ over $U_i$.    Note that it might happens that $\omega_{ij}=\omega_{i\ell}$ for $j\neq\ell$. 
	
Recall that  $X\backslash X^\circ$ is of Hausdorff codimension at least two by \cref{thm:GS}. Hence $X^\circ$ is connected. 
We take the underlying set of the multiset $\{\omega_{i1},\ldots,\omega_{in}\}$ by ignoring multiplicities.
Namely, we reorder 
the holomorphic one forms $\{\omega_{i1},\ldots,\omega_{in}\}$  on each $U_i$  such that  there exists an integer $m$ with $0\leq m \leq n$  and    over any open set $U_i$ of the covering $\{U_i\}_{i\in I}$  one has
\begin{itemize}
	\item    for any $j,\ell\in \{1,\ldots,m\}$ with $j\neq \ell$, one has  $\omega_{ij}\neq \omega_{i\ell}$. 
	\item For every $m<\ell\leq n$, there exists $j\in \{1,\ldots,m\}$   such that $\omega_{ij}=\omega_{i\ell}$.   
\end{itemize} 
We note that the sets  of holomorphic sections $\{\omega_{i1},\ldots,\omega_{im} \}_{i\in I}$  generate a multivalued holomorphic one form on $X^\circ$, denoted by $\{\omega_1,\ldots,\omega_m\}$.

 	Let $T$ be a formal variable. Then one can form the product
	\begin{align}\label{eq:construction}
		\prod_{k=1}^{m}\big(T-\omega_k\big)=:T^m+\sigma_{1}T^{m-1}+\cdots+\sigma_{m} 
	\end{align}
	where the coefficients \(\sigma_{k}\) are elements of the spaces $H^0(X^{\circ}, {\rm Sym}^k\Omega_{X^{\circ}})$.  
It is proved in \cite[Proposition~3.2]{BDDM} that
  $\sigma_{k}$ extends to a logarithmic symmetric form $H^0(\overline{X}, \Sym^k\Omega_{\overline{X}}(\log D))$, which is still be denoted by $\sigma_k$. 
  
	\medskip 
	
	\noindent
	{\em Step 2.  Construct an intermediate Galois covering of $X$.}   
	Denote by $p: \Omega_{ \overline{X}}(\log D)\to\overline{X}$ the projection map. Let $ {\lambda}  \in H^0(\Omega_{\overline{X}}(\log D),  {p}^*\Omega_{\overline{X}}(\log D))$ be the  Liouville section  on $\Omega_{ \overline{X}}(\log D)$.  For the section
	$$
	P( {\lambda}):=	\lambda^m+p^*\sigma_1\lambda^{m-1}+\cdots+p^*\sigma_m\in H^0(\Omega_{\overline{X}}(\log D), \bar{p}^*\Omega_{\overline{X}}(\log D)),
	$$
	let $\overline{Z}$ be an irreducible component of  the  zero locus $(P(\lambda)=0)$ in $\Omega_{ \overline{X}}(\log D)$ which dominates $\overline{X}$ under $p$.     Then $p|_{\overline{Z}}:\overline{Z}\to \overline{X}$ is a finite (affine) surjective morphism.  Hence $\overline{Z}$ is a projective variety.   
	Since $X\backslash X^\circ$ is of Hausdorff codimension at least two by \cref{thm:GS}, it follows that $X^\circ$ is connected. Then 
	$\overline{Z}$ gives rise to a multivalued section $\{\varphi_{1},\ldots,\varphi_{k}\}$ of $\Omega_{X^\circ}$ on $X^\circ$  such that  
	\begin{itemize}
		\item $\{\varphi_{1},\ldots,\varphi_{k}\}\subset \{\omega_1,\ldots,\omega_m\}$. 
		\item $Z^\circ:=p^{-1}(X^\circ)$ is the graph of $\{\varphi_{1},\ldots,\varphi_{k}\}$. 
	\end{itemize} 
	Set  
	\begin{align}\label{eq:pi}
		Q(\lambda)=\prod_{j=1}^{k}(\lambda-p^*\varphi_i)=:\lambda^k+\delta_1\lambda^{k-1}+\cdots+\delta_k
	\end{align} 
	with $\delta_i\in H^0(X^\circ, \Sym^{i}\Omega_{X^{\circ}})$.  
	By \cite[Proposition 3.2]{BDDM} again,  $\delta_i$ extends to logarithmic forms in $(\overline{X}, D)$.  Therefore, the zero locus of $Q(\lambda)$ contains $\overline{Z}$ and coincides with $\overline{Z}$ over $Z^\circ:=p^{-1}(X^\circ)$. In \cref{claim:zero locus} we will prove that the zero locus $(Q(\lambda)=0)=\overline{Z}$.

	\medskip

	Let $q: \overline{W} \to \overline{Z}$ be the normalization of \(\overline{Z}\) in the Galois closure of the extension $\bC(\overline{Z})/\bC(\overline{X})$.    
	Denote by $H={\rm Aut}(\overline{W}/\overline{X})$.   Then $\overline{X}=\overline{W}/H$.  Write  $W^\circ:=q^{-1}(Z^\circ)$,   $\varpi:=\lambda|_{Z^{\circ}}$, and $\pi:=p|_{\overline{Z}}\circ q$.     
	\begin{claim}\label{claim:permutes2}
		There are $g_{1},\ldots, g_{k}\in H$ so that
		$$
		\{(q\circ g_{1})^*\varpi,\ldots, (q\circ g_{k})^*\varpi\}= \{ \pi^*\varphi_{1},\ldots,   \pi^*\varphi_{k}\}.
		$$ 
		Moreover, for every $g\in H$, one has
		$$
		(q\circ g)^*\varpi\in\{ \pi^*\varphi_{1},\ldots,   \pi^*\varphi_{k}\}.
		$$ 
	\end{claim}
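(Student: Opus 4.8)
\textbf{Proof proposal for \cref{claim:permutes2}.}

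The plan is to exploit the Galois-theoretic setup as follows. Over $Z^\circ = p^{-1}(X^\circ)$, the cover $Z^\circ\to X^\circ$ is (an open subset of) the cover defined by the multivalued section $\{\varphi_1,\dots,\varphi_k\}$: a point of $Z^\circ$ above $x\in X^\circ$ is exactly a choice of one of the $k$ values $\varphi_i(x)$ (with multiplicity $1$, since we took the underlying set). Concretely, the tautological function $\varpi = \lambda|_{Z^\circ}$ restricted to $Z^\circ$ satisfies $Q(\varpi)=0$ where $Q(\lambda)=\prod_{j=1}^k(\lambda - p^*\varphi_j)$, so $\bC(Z^\circ) = \bC(X^\circ)[\varpi]$ is generated by $\varpi$ over $\bC(X^\circ)$, and $\varpi$ is a root of the polynomial $Q(\lambda)$ with coefficients the $\delta_i\in\bC(X^\circ)$. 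Since $\overline{W}$ is the normalization of $\overline{Z}$ in the Galois closure of $\bC(\overline Z)/\bC(\overline X) = \bC(Z^\circ)/\bC(X^\circ)$, the group $H=\mathrm{Aut}(\overline W/\overline X)$ acts on $\bC(\overline W)$ fixing $\bC(\overline X)$, hence permutes the roots of $Q(\lambda)$ inside $\bC(\overline W)$. First I would make precise that the roots of $Q(\lambda)$ in $\bC(\overline W)$ are exactly $\{(q\circ g)^*\varpi : g\in H\}/{\sim}$, or rather: each $(q\circ g)^*\varpi$ is a root, and conversely each root $\pi^*\varphi_j$ (pulled back to $\overline W$) is of this form because the Galois group acts transitively on roots that are $\bC(\overline X)$-conjugate to $\varpi$, and all roots of the irreducible-over-$\bC(\overline Z)$-minimal-polynomial-factors are accounted for.

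The key steps, in order: (1) Identify $\bC(\overline W)$ as a splitting field of (a suitable power/factor of) $Q(\lambda)$ over $\bC(\overline X)$; since $\overline W\to \overline Z$ is the Galois closure, $\bC(\overline W)$ is generated over $\bC(\overline X)$ by all the $\bC(\overline X)$-conjugates of $\varpi$. (2) For $g\in H$, the element $(q\circ g)^*\varpi = g^*(q^*\varpi)$ lies in $\bC(\overline W)$ and satisfies $Q\big((q\circ g)^*\varpi\big) = g^*\big(Q(q^*\varpi)\big) = 0$ because the coefficients $\delta_i$ are pulled back from $\overline X$ and are therefore $H$-invariant; this gives the "moreover" clause, that $(q\circ g)^*\varpi \in \{\pi^*\varphi_1,\dots,\pi^*\varphi_k\}$ as rational functions on $\overline W$ (note $\pi^*\varphi_j = (p|_{\overline Z}\circ q)^*\varphi_j$ are precisely the roots of $Q$ in $\bC(\overline W)$, these being the pullbacks of the $k$ branches). (3) For the first assertion, I must produce $g_1,\dots,g_k\in H$ realizing \emph{all} $k$ roots. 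Here I would use that $H$ acts transitively on the set of $\bC(\overline X)$-embeddings $\bC(\overline Z)\hookrightarrow \bC(\overline W)$ over $\bC(\overline X)$ (standard Galois closure property), equivalently transitively on the $k$ geometric points of the generic fiber of $\overline Z\to \overline X$, which correspond exactly to the $k$ branches $\varphi_1,\dots,\varphi_k$. Translating this transitivity into the statement: there are $g_1,\dots,g_k$ with $(q\circ g_j)^*\varpi = \pi^*\varphi_j$ for each $j$, i.e. $\{(q\circ g_1)^*\varpi,\dots,(q\circ g_k)^*\varpi\} = \{\pi^*\varphi_1,\dots,\pi^*\varphi_k\}$ as a set (and the $\varphi_j$ being distinct, this is an equality of sets of size $k$).

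The main obstacle I anticipate is bookkeeping around multiplicities and the passage between the multiset $\{\omega_1,\dots,\omega_m\}$, its sub-multiset-turned-set $\{\varphi_1,\dots,\varphi_k\}$, and the scheme-theoretic zero locus $\overline Z$ — in particular making rigorous that over $Z^\circ$ the cover $\overline Z$ is reduced (so that the $\varphi_j$ are genuinely distinct branches and $Q(\lambda)$ is separable over $\bC(X^\circ)$), which is what licenses the clean Galois-transitivity argument. This should follow from the construction in Step 1 (where we deliberately passed to the underlying set, discarding repeated forms) together with the fact that $\overline Z$ was chosen to be an irreducible component of $(P(\lambda)=0)$ dominating $\overline X$; I would record a short lemma that $(Q(\lambda)=0) = \overline Z$ (this is flagged in the text as \cref{claim:zero locus}, which I may invoke) and that $\overline Z\to\overline X$ is generically étale of degree $k$. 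Once separability/reducedness over the generic point is in hand, steps (1)–(3) are essentially formal Galois theory applied fibrewise, and the differential-form statement is just the pullback of the corresponding statement about rational functions, using that $\varpi$ is literally the restriction of the Liouville/tautological one-form, so $q^*\varpi$ and the $\pi^*\varphi_j$ are one-forms whose equality as rational sections of $\Omega_{\overline W}(\log)$ follows from their equality as elements of $\bC(\overline W)\cdot(\text{fixed local frame})$.
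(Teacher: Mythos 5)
Your argument is sound and reaches the claim by a genuinely different route from the paper. You work at the generic point: $\overline{W}\to\overline{X}$ is the Galois closure, so $H$ permutes the roots of the (separable, irreducible of degree $k$) polynomial $Q$ and acts transitively on the $\bC(\overline{X})$-embeddings of $\bC(\overline{Z})$ into $\bC(\overline{W})$, i.e.\ on the $k$ generic branches; this gives both assertions at once, and irreducibility of $\overline{W}$ globalizes the identity from the generic point to all of $W^\circ$. The paper instead argues locally analytically: over a small connected $U\subset X^\circ$ where $\overline{Z}\to\overline{X}$ is \'etale, it decomposes $p^{-1}(U)=\coprod_j U_j$ into graphs of the $\varphi_j$ and $q^{-1}(U_j)=\coprod_\alpha U_{j\alpha}$ into sheets, uses transitivity of $H$ on the sheets $U_{j\alpha}$ together with the tautology $\lambda|_{U_j}=p^*\varphi_j$, and then globalizes via connectedness of $W^\circ$ (the Hausdorff-codimension-two bound on $\cS(u)$) and \cref{claim:multivalued}. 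The two transitivity statements are equivalent, but the paper's version handles the one-form-valued nature of $\varpi$ for free (each sheet \emph{is} the graph of a specific $\varphi_j$), whereas your function-field version needs the contraction device you sketch at the end: $\varpi$ is not a scalar, so you must pair with a generic section $s$ of the dual bundle, check that the $\varphi_j(s)$ stay pairwise distinct so that $\bC(\overline{Z})=\bC(\overline{X})(\varpi(s))$ has degree $k$ and $Q(s)$ is irreducible, and then observe that the branch selected by a fixed embedding $(q\circ g)^*$ is independent of $s$ (cleanest by noting the embedding corresponds to a point of the generic fiber of $\overline{Z}\to\overline{X}$, i.e.\ a branch). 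Two small cautions: \cref{claim:zero locus} is established in the paper \emph{after} this claim (its proof goes through \cref{claim:ramified1}, which uses the present claim), so invoking it wholesale would be circular in the paper's logical order — but you only need the statement over $X^\circ$, where $(Q(\lambda)=0)$ coincides with $Z^\circ$ by construction, so this is harmless once stated correctly; and the reducedness/separability you rightly insist on is exactly what Step 1's passage to the underlying set of $\{\omega_1,\ldots,\omega_m\}$ guarantees.
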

	\begin{proof}[Proof of \cref{claim:permutes2}]
	Set $Z:=\overline{Z}\cap \Omega_X$.  	Let us denote by $X'\subset  {X}$ the  Zariski open set over which ${{Z}}\to {X}$ is \'etale.
  Set $X^\circ_\circ:=X'\cap X^\circ$.   It follows from \cref{sec:covGal} that $\overline{W}\to \overline{X}$ is \'etale over $X^\circ_\circ$.  Pick  any point $x\in X^\circ_\circ$. We can take a connected analytic open set $U\subset X^\circ_\circ$ containing $x$ such that 
		\begin{itemize}
			\item $p^{-1}(U)=\coprod_{j=1}^{k}U_j$, where $U_j$ is the graph of some $\varphi_{j}\in \Gamma(U, \Omega_U)$ with $p|_{U_j}:U_j\to U$ isomorphic;
			\item $q^{-1}(U_j)=\coprod_{\alpha=1}^{M}U_{j\alpha}$ with $q|_{U_{j\alpha}}:U_{j\alpha}\to U_j$  isomorphic;
			\item for any $U_{i\alpha}$ and $U_{j\beta}$, there is $g\in H$ so that  $g|_{U_{i\alpha}}:U_{i\alpha}\to U_{j\beta}$  is an isomorphism;
			\item for any $g\in H$ and any $U_{i\alpha}$,  $g(U_{i\alpha})=U_{j\beta}$ for some $U_{j\beta}$.
		\end{itemize} 
Since $X'\backslash X^\circ$ is of Hausdorff codimension at least two by \cref{thm:GS},    it follows that    $W^\circ$ is   connected.  Fix some $U_{i\alpha}$ and by \cref{claim:multivalued} it suffices to prove the claim over it.    Pick any $g\in H$. Then $g(U_{i\alpha})=U_{j\beta}$ for some $U_{j\beta}$.  Note that the isomorphism $g:U_{i\alpha}\to U_{j\beta}$ factors through
		$$
		U_{i\alpha}\stackrel{q|_{U_{i\alpha}}}{\longrightarrow} U_i\stackrel{p|_{U_{i}}}{\longrightarrow}U\stackrel{(p|_{U_{j}})^{-1}}{\longrightarrow} U_{j}\stackrel{(q|_{U_{j\beta}})^{-1}}{\longrightarrow}U_{j\beta}.
		$$ 
		By our construction,  $\varpi|_{U_j}=\lambda|_{U_j}=p^*\varphi_j|_{U_j}$.  
		Hence $	\big((q\circ g)^*\lambda\big)|_{U_{i\alpha}}=\pi^*\varphi_{j}|_{U_{i\alpha}}$.  This proves the second statement. 
		
		Pick any $j\in \{1,\ldots,k\}$.  By the above construction, there exist  some $g_j\in H$  and $U_{j\beta}$ such that $g_j|_{U_{i\alpha}}:U_{i\alpha}\to U_{j\beta}$  is an isomorphism. By the above argument,   $	\big((q\circ g_j)^*\varpi\big)|_{U_{i\alpha}}=\pi^*\varphi_{j}|_{U_{i\alpha}}$.  This proves the first statement.  
	\end{proof}
	
	By the above claim, the multivalued section  $\{ \pi^*\varphi_{1},\ldots,   \pi^*\varphi_{k}\}
	$ coincides with  the set of \emph{global} sections in $W^\circ$  defined by 
	$$
	\{\eta^\circ_{1},\ldots,\eta^\circ_{k}\}:=\{(q\circ g_{1})^*\varpi,\ldots, (q\circ g_{k})^*\varpi\}  \subset H^0(W^{\circ}, \pi^*\Omega_{W^\circ}).
	$$  Moreover,  any $g\in H$ acts on $	\{\eta^\circ_{1},\ldots,\eta^\circ_{k}\}$ as a permutation. 
	
	\begin{claim}\label{claim:extension2}  $
		\{\eta^\circ_{1},\ldots,\eta^\circ_{k}\}$
		extends to global sections $\{\eta_{1},\ldots,\eta_{k}\}\subset H^0(\overline{W}, \pi^*\Omega_{\overline{X}}(\log D))$. For any $g\in H$, $g$ acts on $\{\eta_{1},\ldots,\eta_{k}\}$ as a permutation.  
	\end{claim}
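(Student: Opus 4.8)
The statement to prove is \cref{claim:extension2}: the global sections $\{\eta_1^\circ,\ldots,\eta_k^\circ\}$ on $W^\circ$, which realize the multivalued form $\{\pi^*\varphi_1,\ldots,\pi^*\varphi_k\}$, extend to global logarithmic forms $\{\eta_1,\ldots,\eta_k\}\subset H^0(\overline{W},\pi^*\Omega_{\overline{X}}(\log D))$ on which $H$ acts by permutation. The key point is that each $\eta_i^\circ$ is, by construction and \cref{claim:permutes2}, of the form $(q\circ g_i)^*\varpi$ where $\varpi = \lambda|_{Z^\circ}$ and $\lambda$ is the restriction of the Liouville section; so the extension property is really a statement about the pullback under $\pi\circ g_i = p|_{\overline Z}\circ q\circ g_i$ of logarithmic data that is already known to extend over $\overline{X}$.

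First I would recall that $\pi:\overline{W}\to\overline{X}$ is a finite morphism, hence $\pi^*\Omega_{\overline{X}}(\log D)$ is a vector bundle on $\overline{W}$, and that $\overline{W}\setminus W^\circ$ is a proper analytic subset; moreover, since $X^\circ\subset X$ has Hausdorff codimension $\geq 2$ in $X$ by \cref{thm:GS}, one must be careful: the locus we removed to form $W^\circ$ contains, besides $\pi^{-1}(D)$, only the preimage of $\cS(u)\cup(X\setminus X')$, which is of codimension $\geq 1$ (the divisorial part being $\pi^{-1}(D)$) — but the non-divisorial part has codimension $\geq 2$. Thus it suffices to check the extension across a general point of each prime divisor of $\pi^{-1}(D)$ and then invoke a Hartogs-type extension (normality of $\overline{W}$, or rather the fact that a section of a reflexive sheaf extends across a codimension $\geq 2$ set) for the remaining locus. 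So the proof splits into two reductions: (1) reduce to extending across the divisor $\pi^{-1}(D)$, using that $\overline{W}$ is normal and $\pi^*\Omega_{\overline{X}}(\log D)$ is locally free, hence its sections satisfy Hartogs across codimension $\geq 2$; (2) handle the divisorial part.

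For the divisorial part, the cleanest route is to push the problem down to $\overline{X}$. By \cref{claim:permutes2}, $\eta_i^\circ = (q\circ g_i)^*\varpi$, and since $\varpi$ is the restriction to $Z^\circ$ of the Liouville section $\lambda$ on $\Omega_{\overline X}(\log D)$, and $Z^\circ\subset\Omega_X(\log D)|_{X^\circ}$ with $p$ restricting to the graph of $\varphi_j$'s, one has on each chart $U_{i\alpha}$ that $\eta_i^\circ|_{U_{i\alpha}} = \pi^*\varphi_{\sigma(i)}|_{U_{i\alpha}}$ for an appropriate index $\sigma(i)$. Now $\{\varphi_1,\ldots,\varphi_m\}$ is precisely the multivalued form built in Step 1 from the harmonic map $u$, and \cite[Proposition 3.2]{BDDM} (already quoted in the excerpt) shows that its elementary symmetric functions $\sigma_1,\ldots,\sigma_m$ — and by the same token the $\delta_1,\ldots,\delta_k$ of \eqref{eq:pi} — extend to \emph{logarithmic} symmetric forms $H^0(\overline X,\Sym^\bullet\Omega_{\overline X}(\log D))$. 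Hence the multivalued form $\{\pi^*\varphi_1,\ldots,\pi^*\varphi_k\}$ on $\pi^{-1}(X^\circ)$ is cut out inside $\pi^*\Omega_{\overline X}(\log D)$ by the polynomial $\pi^*Q(\lambda) = \lambda^k + \pi^*\delta_1\lambda^{k-1}+\cdots+\pi^*\delta_k$ whose coefficients \emph{are} global logarithmic sections; its zero-scheme $\widetilde Z := (\pi^*Q(\lambda)=0)\subset\pi^*\Omega_{\overline X}(\log D)$ is therefore a well-defined closed subscheme of the total space, finite over $\overline W$. Over $W^\circ$ it is the disjoint union of the $k$ graphs $\eta_i^\circ$; I would then argue that, because $W^\circ$ is connected and the $\eta_i^\circ$ are already global and single-valued there, $\widetilde Z\to\overline W$ is generically a disjoint union of $k$ sections, and by normality of $\overline W$ together with local freeness of the ambient bundle each such section extends over all of $\overline W$ — equivalently, the finite morphism $\widetilde Z\to\overline W$ admits $k$ global sections extending the $\eta_i^\circ$. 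These extensions are the desired $\{\eta_1,\ldots,\eta_k\}\subset H^0(\overline W,\pi^*\Omega_{\overline X}(\log D))$.

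Finally, the permutation action of $H$: each $g\in H$ acts on $\overline W$ over $\overline X$, hence on the total space $\pi^*\Omega_{\overline X}(\log D)$ and preserves $\widetilde Z$ (its defining equation $\pi^*Q(\lambda)$ is $H$-invariant, as $Q$ is pulled back from $\overline X$). Thus $g$ permutes the $k$ global sections of $\widetilde Z\to\overline W$; over $W^\circ$ this permutation is the one already established in \cref{claim:permutes2} (and in the paragraph following it, $g$ acts on $\{\eta_1^\circ,\ldots,\eta_k^\circ\}$ as a permutation), and since two sections of a vector bundle agreeing on the dense open $W^\circ$ of the connected normal variety $\overline W$ agree everywhere (or invoke \cref{claim:multivalued}), the permutation on $\{\eta_1,\ldots,\eta_k\}$ is the same. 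I expect the main obstacle to be the bookkeeping around \emph{why} the extension holds across the non-general points of $\pi^{-1}(D)$ — i.e. making precise that one only needs codimension-one information plus a Hartogs argument — and making sure the zero-scheme $\widetilde Z$ is reduced / that the $k$ branches stay genuinely separate (no collisions) over all of $\overline W$; this is handled exactly as in \cref{claim:zero locus} and \cref{claim:multivalued}, by working with the defining polynomial rather than the branches individually.
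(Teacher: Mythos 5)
Your argument is essentially correct but takes a different route from the paper. The paper's proof is analytic: it evaluates each $\eta_i^\circ$ on a local holomorphic section $s$ of $\pi^*T_{\overline{X}}(-\log D)$, observes that the resulting function on $U^\circ$ is a root of the monic polynomial $T^k+\pi^*\delta_1(s)T^{k-1}+\cdots+\pi^*\delta_k(s)$ whose coefficients are holomorphic on all of $U$ (because the $\delta_j$ extend over $\overline{X}$), deduces a bound $|\eta_i^\circ(s)|\leq M$ from the classical root--coefficient inequalities, and then applies Shiffman's removable-singularity theorem across $\overline{W}\setminus W^\circ$, which has Hausdorff codimension $\geq 2$. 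Your route is algebraic: you extend the section as the inverse of the finite birational map from the Zariski closure of its graph in $\widetilde{Z}=(\pi^*Q(\lambda)=0)$ onto the normal variety $\overline{W}$. Both arguments rest on the same key input, namely that the $\delta_j$ extend to $H^0(\overline{X},\Sym^j\Omega_{\overline{X}}(\log D))$; yours trades the analytic extension theorem for Zariski's main theorem.

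Two points need care. First, the finiteness of $\widetilde{Z}\to\overline{W}$ is the crux of your argument and you only assert it (``is therefore \ldots finite over $\overline{W}$''); moreover you say it ``is handled exactly as in \cref{claim:zero locus}'', which would be circular, since in the paper \cref{claim:zero locus} is established only inside the proof of \cref{claim:ramified1}, \emph{after} and \emph{using} the extended sections $\eta_i$ of \cref{claim:extension2}. What you actually need is weaker and can be proved directly: because $\pi^*Q(\lambda)$ is monic with coefficients pulled back from $\overline{W}$, every $\lambda(s)$ is integral over $\cO_{\overline{W}}$ modulo the ideal of $\widetilde{Z}$ (equivalently, the fibers of $\widetilde{Z}$ are uniformly bounded by the root--coefficient inequalities), so $\widetilde{Z}\to\overline{W}$ is finite; this is exactly the same estimate the paper uses directly. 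Second, your preliminary reduction ``extend across $\pi^{-1}(D)$, then Hartogs for the codimension-$\geq 2$ rest'' is both unnecessary for your main argument and not quite available as stated: the codimension-$\geq 2$ part of $\overline{W}\setminus W^\circ$ comes from $\cS(u)$, which by \cref{thm:GS} is only a closed set of small Hausdorff dimension, not an analytic subset, so the reflexive-sheaf/$S_2$ extension does not apply to it --- one needs Shiffman's theorem there, as the paper does. Since your zero-scheme argument treats the whole complement at once, simply drop that reduction. The permutation statement is handled as in the paper: $H$-invariance of $\pi^*Q(\lambda)$ plus identity on the dense connected open set $W^\circ$ and \cref{claim:multivalued}.
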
 
	\begin{proof}[Proof of \cref{claim:extension2}] 
		For any open set $U\subset \overline{W}$  and any holomorphic section  $s\in \Gamma(U, \pi^*T_{\overline{X}}(-\log D)|_U)$.  Write $U^\circ:=U\cap W^\circ$.   It suffices to prove  that $ \eta_i^\circ(s)\in \Gamma(U^\circ, \cO_{U^\circ}) $ extends to a holomorphic function in $U$.  By \cref{claim:permutes2}, one has
		$$
		\prod_{i=1}^{k}(T- \eta_i^\circ)=(T^k+\pi^*\delta_1T^{k-1}+\cdots+\pi^*\delta_k)|_{U^\circ}
		$$
		which implies that
		$$
		\prod_{i=1}^{k}(T-\eta_i^\circ(s)) =\big(T^k+\pi^*\delta_1(s)T^{k-1}+\cdots+\pi^*\delta_k(s))|_{U^\circ},
		$$ 
		Recall that $\delta_i\in H^0( \overline{X}, \Sym^i\Omega_{ \overline{X}}(\log D) )$. It follows that the $\pi^*\sigma_i(s)$ are holomorphic functions on $\cO(U)$.   After shrinking $U$, we may assume that  $M:=2\max_{k=1}^{m}\sup_K |\pi^*\sigma_k(s)|^{\frac{1}{k}}$ is finite. Then by the classical inequalities between the norms of roots and coefficients of a polynomial, one has
		$$
		|\eta_i^\circ(s)|(z)\leq M
		$$ 
		for all $z\in   U^\circ$.   
		Recall that $\overline{W}\backslash W^\circ$    is a  closed subset of Hausdorff codimension at least two. By the removable singularity of Shiffman \cite[Lemma 3.2.(ii)]{Shi68},   $\eta_i^\circ(s)|_{U^\circ}$  extends to holomorphic functions  on  $U$.  The first statement is proved. 
		
		For any $g\in H$ and $\eta_i$, it follows that $g^*\eta_i\in H^0(\overline{W}, \pi^*\Omega_{\overline{X}}(\log D))$. By \cref{claim:permutes2}, $g^*\eta_i^\circ=\eta_j^\circ$ for some $j\in \{1,\ldots,m\}$. By the continuity, $g^*\eta_i=\eta_j$. 
		The second statement is proved. 
	\end{proof}  
	
	\begin{claim}\label{claim:ramified1}
		The Galois morphism $\pi:\overline{W}\to \overline{X}$ is \'etale outside 
$$\{z\in \overline{W} \mid \exists \eta_i\neq\eta_j \ \mbox{with}\ (\eta_i-\eta_j)(z)=0 \}.$$
	\end{claim}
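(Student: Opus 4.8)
The claim is a local étaleness statement for the Galois cover $\pi:\overline W\to\overline X$, so I would work locally on $\overline X$ and use the standard criterion: a finite dominant map between normal varieties is étale over a point $x$ precisely when the fiber has the expected cardinality and is reduced, equivalently when the branch locus is avoided. The natural thing is to describe $\overline W$ (or at least $\overline Z$, then pass to its normalization in the Galois closure) via the polynomial $Q(\lambda)=\prod_{i=1}^k(\lambda-p^*\varphi_i)=\lambda^k+\delta_1\lambda^{k-1}+\cdots+\delta_k$, whose coefficients $\delta_i\in H^0(\overline X,\Sym^i\Omega_{\overline X}(\log D))$. The key point is that, on the connected open set $W^\circ = \pi^{-1}(X^\circ_\circ)$ where things are already étale, the roots are globally the sections $\eta_1^\circ,\ldots,\eta_k^\circ$ of Claim~\ref{claim:extension2}, and these extend to $\eta_1,\ldots,\eta_k\in H^0(\overline W,\pi^*\Omega_{\overline X}(\log D))$ on which $H$ acts by permutations.

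\emph{First step.} I would use that $\pi:\overline W\to\overline X$ is finite Galois with group $H$ between normal projective varieties, hence is étale exactly over the open locus $\overline X\setminus B$ where $B$ is its branch divisor, and correspondingly étale over $\overline W\setminus\pi^{-1}(B)$. So it suffices to show that $\pi^{-1}(B)$ is contained in the set $R:=\{z\in\overline W\mid \exists\, \eta_i\neq\eta_j \text{ with }(\eta_i-\eta_j)(z)=0\}$. Equivalently: if $z\in\overline W$ is such that all the values $\eta_i(z)$ at which some $\eta_i\neq\eta_j$ are pairwise distinct where the forms are distinct — more precisely if $z\notin R$ — then $\pi$ is étale at $z$.

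\emph{Second step (the heart).} Fix $z_0\notin R$ and let $x_0=\pi(z_0)$. Choose a small connected neighborhood $U$ of $x_0$ in $\overline X$ over which $\Omega_{\overline X}(\log D)$ is trivial, so that each $\eta_i$ is identified with a holomorphic function $\widetilde\eta_i$ on $\pi^{-1}(U)$ valued in a fixed coordinate vector space, and $\prod_{i=1}^k(T-\widetilde\eta_i)$ has coefficients pulled back from $U$. Since $z_0\notin R$, the $\widetilde\eta_i(z_0)$ that are genuinely distinct as indices (i.e. for those $i,j$ with $\eta_i\neq\eta_j$) take distinct values at $z_0$; grouping equal $\eta_i$'s, the fiber $\pi^{-1}(x_0)$ is in natural bijection with the $H$-orbit structure of $\{\eta_1(z_0),\ldots,\eta_k(z_0)\}$, and the simple-root condition at $z_0$ forces the fiber cardinality to be exactly $|H|$ and the fiber to be reduced — this is where I invoke that $\overline W$ is the normalization in the Galois closure, so that the stabilizers and the covering degree match up. Since $\pi$ is a finite map between normal varieties of the same dimension with reduced fiber of maximal cardinality at $z_0$, it is étale at $z_0$. (Concretely: locally $\overline W\to\overline X$ near $z_0$ is cut out inside a jet/tautological bundle by $Q(\lambda)$, and $Q'(\lambda)(z_0)=\prod_{i\neq i_0}(\eta_{i_0}-\eta_i)(z_0)\neq 0$ for the branch $i_0$ through $z_0$, by the $z_0\notin R$ hypothesis together with the fact that repeated roots among the $\eta_i$ correspond to genuinely identified sheets and hence do not contribute to non-reducedness — one must be a little careful here to separate "$\eta_i=\eta_j$ identically" from "$\eta_i(z_0)=\eta_j(z_0)$ accidentally".)

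\emph{Main obstacle.} The subtle point is exactly the bookkeeping around multiplicities: the $\eta_i$ are indexed with possible repetitions (the construction passed through a multiset), so "$(\eta_i-\eta_j)(z)=0$" for $\eta_i=\eta_j$ is vacuous and must be excluded, while an accidental coincidence $\eta_i(z_0)=\eta_j(z_0)$ with $\eta_i\neq\eta_j$ genuinely signals ramification. Making precise that $Q(\lambda)$ (with the \emph{distinct} $\eta_i$ raised to their multiplicities) has, after passing to the reduced structure / the Galois closure, discriminant locus equal to the image of $R$, and that the normalization $\overline W$ is unramified over $x_0$ iff $z_0\notin R$, is the one place requiring genuine care; I would handle it by reducing to the étale locus $X^\circ_\circ$ already established, where $W^\circ$ is connected and the $\eta_i^\circ$ are honest global sections, and then extending over the Hausdorff-codimension-$\geq 2$ complement using Claim~\ref{claim:extension2} and normality, so that the branch locus is a divisor whose generic points I can test one at a time.
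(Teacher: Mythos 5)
Your overall strategy (show that away from the zero locus of the differences $\eta_i-\eta_j$ the fibers have maximal cardinality and are reduced, then conclude \'etaleness for a finite map of normal varieties) is in the right spirit, but as written the central step computes the fiber of the wrong cover. The monic polynomial $Q(\lambda)$ has degree $k$ and its zero scheme is $\overline{Z}$ (cf.\ \cref{claim:zero locus}), not $\overline{W}$; the derivative computation $Q'(\lambda)(z_0)=\prod_{i\neq i_0}(\eta_{i_0}-\eta_i)(z_0)\neq 0$ therefore proves \'etaleness of the degree-$k$ cover $\overline{Z}\to\overline{X}$ near $\pi(z_0)$, whereas the claim concerns the Galois cover $\overline{W}\to\overline{X}$ of degree $|H|$, which is generally strictly larger. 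Your asserted ``natural bijection'' between $\pi^{-1}(x_0)$ and ``the $H$-orbit structure of $\{\eta_1(z_0),\ldots,\eta_k(z_0)\}$'' is not justified and is not true at face value: there are only $k$ sections $\eta_i$ but $|H|$ points in a generic fiber of $\pi$. The missing bridge is precisely the fact that $\overline{W}$ is the normalization of $\overline{Z}$ in the Galois closure of $\mathbb{C}(\overline{Z})/\mathbb{C}(\overline{X})$, and that this normalization is \'etale over any open set over which $\overline{Z}\to\overline{X}$ is \'etale (this is the content of \cref{sec:covGal}); invoking it only as ``stabilizers and covering degree match up'' does not supply the argument. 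A second, smaller omission: to pass from ``$z_0\notin R$'' to ``the whole fiber over $x_0=\pi(z_0)$ behaves well'' you need $R=\pi^{-1}(\pi(R))$, which holds because $H$ permutes $\{\eta_1,\ldots,\eta_k\}$ (\cref{claim:extension2}) but must be stated.

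For comparison, the paper's proof runs the logic in the opposite direction and avoids any fiber counting on $\overline{W}$: it forms the graph variety $\Xi\subset\pi^*\Omega_{\overline{X}}(\log D)$ of the honest global sections $\eta_1,\ldots,\eta_k$ on $\overline{W}$, observes that $\Xi$ is the reduced zero scheme of $Q'(\lambda')$ and that $\nu|_\Xi:\Xi\to\overline{W}$ is finite \'etale outside $\mathcal{R}$, identifies $\Xi=\overline{W}\times_{\overline{X}}\Sigma$ with $\Sigma_{\rm red}=\overline{Z}$, descends unramifiedness (hence \'etaleness) of $\Sigma\to\overline{X}$ to the complement of $\pi(\mathcal{R})$, and only then applies the Galois-closure fact from \cref{sec:covGal}. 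Finally, the ``main obstacle'' you flag (repeated $\eta_i$) does not actually arise here: by the reduction in Step~1 and the fact that $\overline{Z}$ is a single reduced irreducible component, the forms $\eta_1,\ldots,\eta_k$ attached to $\overline{Z}$ are pairwise distinct, which is exactly what makes the zero scheme of $Q'(\lambda')$ reduced.
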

	\begin{proof}[Proof of \cref{claim:ramified1}] 
 	For the Galois morphism $\pi:\overline{W}\to \overline{X}$,  we denote by $\nu:\pi^*\Omega_{\overline{X}}(\log D)\to \overline{W}$ the natural morphism.   
		
		Consider the graph variety $\Xi\subset \pi^*\Omega_{\overline{X}}(\log D)$ defined by sections $\eta_{1},\ldots,\eta_{k}$.    Let $\lambda'\in H^0(\pi^*\Omega_{\overline{X}}(\log D), \nu^*\pi^*\Omega_{\overline{X}}(\log D))$ be the Liouville section.  Consider the section
		$$
		Q'(\lambda'):=	\lambda'^{k}+\nu^*\pi^*\delta_1\lambda'^{k-1}+\cdots+\nu^*\pi^*\delta_k\in H^0(\pi^*\Omega_{\overline{X}}(\log D), \nu^*\pi^*{\rm Sym}\Omega_{\overline{X}}(\log D))
		$$
		defined on the total space $\pi^*\Omega_{\overline{X}}(\log D)\to \overline{W}$ where $\delta_i\in H^0(\overline{X},\Sym^i\Omega_{\overline{X}}(\log D))$ are defined in \eqref{eq:pi}. Since $\eta_i\neq\eta_j$ for $i\neq j$,  it follows that the zero scheme of    $Q'(\lambda')$ is reduced which is nothing but $\Xi$.    By this construction, $\nu|_{\Xi}:\Xi\to \overline{W}$ is a finite morphism and  \'etale outside 
		$$\mathcal{R}:=\{z\in \overline{W} \mid \exists \eta_i\neq\eta_j \ \mbox{with}\ (\eta_i-\eta_j)(z)=0 \}.$$
		Since any $g\in H$ acts on $\{\eta_{1},\ldots,\eta_{k}\}$ as a permutation, one obtains $\pi^{-1}\pi(\mathcal{R})=\mathcal{R}$.   
		
		Consider the zero scheme  $\Sigma$   of the section
		$$
		Q(\lambda):=	\lambda^k+p^*\sigma_1\lambda^{k-1}+\cdots+p^*\sigma_k\in H^0(\Omega_{\overline{X}}(\log D), p^*\Sym^m\Omega_{\overline{X}}(\log D))
		$$
		defined on the total space $p:\Omega_{\overline{X}}(\log D)\to X$, where $\lambda$ be the  Liouville 1-form on $\Omega_{\overline{X}}(\log D)$.   By this construction, one can see that   
		\begin{equation*}
			\begin{tikzcd}
				\Xi=\overline{W}\times_{\overline{X}}\Sigma \arrow[r]	\arrow[d, "\nu|_{\Xi}"] &  \Sigma\arrow[d, "p|_{\Sigma}"]\\
				\overline{W}\arrow[r, "\pi"] &	    \overline{X}
			\end{tikzcd}
		\end{equation*}
		Since each irreducible component of $\Xi$ is the graph of some $\eta_i$,  it is isomorphic to $\overline{W}$. It follows that any irreducible component of $\Sigma$ is dominant over $\Xi$ under $p|_{\Sigma}$. However, $\overline{Z}$ is an irreducible component of $\Sigma_{\rm red}$ which coincide over $X^\circ$. This implies that $\Sigma_{\rm red}=\overline{Z}$.    Recall that $\nu|_\Xi:\Xi\to \overline{W}$ is    \'etale outside $\mathcal{R}$, and $\mathcal{R}=\pi^{-1}(\pi(\mathcal{R}))$.   By \cite[\href{https://stacks.math.columbia.edu/tag/0475}{Tag 0475}]{stacks-project}, $p|_{\Sigma}$  is unramified over $(p|_{\Sigma})^{-1}(\overline{X}\backslash\mathcal{R})$,  which is moreover \'etale  by \cite[\href{https://stacks.math.columbia.edu/tag/0GS7}{Tag 0GS7}]{stacks-project}.   By \cref{sec:covGal}, we conclude that $\pi:\overline{W} \to \overline{X}$ is a finite \'etale morphism outside $\mathcal{R}$.  
	\end{proof}
	
	From the above proof we obtain the following result. 
	\begin{claim}\label{claim:zero locus}
		The zero locus of $Q(\lambda)$ is $\overline{Z}$.  \qed
	\end{claim}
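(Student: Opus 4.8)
The statement \Cref{claim:zero locus} asserts that the (scheme-theoretic) zero locus $\Sigma_{\mathrm{red}}$ of $Q(\lambda)$ inside $\Omega_{\overline X}(\log D)$ equals the irreducible projective variety $\overline Z$ chosen at the start of Step~2. Recall that $\overline Z$ was fixed to be \emph{one} irreducible component of $(P(\lambda)=0)$ dominating $\overline X$, and that $Q(\lambda)=\prod_{j=1}^k(\lambda-p^*\varphi_j)$ is the sub-product of $P(\lambda)=\prod_{k=1}^m(T-\omega_k)$ singling out exactly the roots $\varphi_1,\dots,\varphi_k$ carried by $\overline Z$ over $X^\circ$. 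So by construction we already know $\overline Z\subset \Sigma_{\mathrm{red}}$ and that $\overline Z$ coincides with $\Sigma$ over the Zariski open set $p^{-1}(X^\circ)=Z^\circ$. The task is to upgrade this generic coincidence to a global equality; equivalently, to show $\Sigma$ has no other component, and no embedded or thickened behaviour that would make $\Sigma_{\mathrm{red}}\supsetneq\overline Z$.

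\textbf{Key steps.} First I would invoke the diagram and dimension count already assembled in the proof of \Cref{claim:ramified1}: there it is shown that $\Xi=\overline W\times_{\overline X}\Sigma$, that each irreducible component of $\Xi$ is the graph of some $\eta_i$ hence isomorphic to $\overline W$ via $\nu|_\Xi$, and consequently that every irreducible component of $\Sigma$ is dominated by a component of $\Xi$ under the projection $p|_\Sigma:\Sigma\to\overline X$, so is finite and surjective over $\overline X$ of the same dimension $\dim\overline X$. Thus $\Sigma$ is equidimensional of dimension $\dim\overline X$ and each of its components surjects onto $\overline X$. Second, since $\overline Z$ and $\Sigma$ agree over the dense open $X^\circ$ (with $X\setminus X^\circ$ of Hausdorff codimension $\geq 2$ by \Cref{thm:GS}, in particular $X^\circ$ dense), any component of $\Sigma$ other than $\overline Z$ would have to be disjoint from $p^{-1}(X^\circ)$; but being finite surjective over $\overline X$ it must meet $p^{-1}(X^\circ)$, a contradiction. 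Hence $\Sigma_{\mathrm{red}}=\overline Z$. Finally, to get the scheme-theoretic statement that the zero locus of $Q(\lambda)$ itself (not just its reduction) is $\overline Z$, I would use that $Q'(\lambda')$ — the pullback of $Q(\lambda)$ to $\pi^*\Omega_{\overline X}(\log D)$ — cuts out the \emph{reduced} scheme $\Xi$ (this is exactly the point established in \Cref{claim:ramified1}, using $\eta_i\neq\eta_j$ for $i\neq j$), and $\pi$ is faithfully flat (finite surjective between normal, resp. smooth, varieties after passing to the étale locus, plus the generic fibre computation), so reducedness descends from $\Xi$ to $\Sigma$. Therefore $\Sigma=\Sigma_{\mathrm{red}}=\overline Z$ as schemes.

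\textbf{Main obstacle.} The delicate point is the descent of reducedness along $\pi:\overline W\to\overline X$, since $\pi$ is only étale away from the ramification locus $\mathcal R$ and $\overline W,\overline X$ need not be smooth along $\pi^{-1}(D)$; one has to be careful that $\Sigma$ is not non-reduced precisely over the branch locus. The clean way around this is to observe that reducedness of $\Sigma$ can be checked on the dense open $p^{-1}(X^\circ_\circ)$ where everything is étale (here $X^\circ_\circ=X'\cap X^\circ$ as in the proof of \Cref{claim:permutes2}), because $\Sigma$ is the schematic closure of $\Sigma\cap p^{-1}(X^\circ_\circ)$ — this follows once we know $\Sigma$ has no component and no embedded component supported over $\overline X\setminus X^\circ_\circ$, the former from the equidimensionality argument above and the latter because $\Sigma$, being a divisor in the smooth (away from $\pi^{-1}(\mathrm{Sing}\, \overline X \cup D)$) total space $\Omega_{\overline X}(\log D)$ cut out by a single section, is Cohen–Macaulay hence has no embedded points there, while $\mathrm{Sing}\,\overline X\cup D$ has codimension $\geq 1$ and its preimage cannot carry a whole component. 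This reduces the whole claim to the generic étale computation already done, completing the argument.
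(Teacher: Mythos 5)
Your first two key steps are exactly the paper's argument: the claim is derived from the proof of \cref{claim:ramified1}, where one observes that every irreducible component of $\Sigma$ is the image of a component of $\Xi=\overline{W}\times_{\overline{X}}\Sigma$ (each a graph of some $\eta_i$, hence isomorphic to $\overline{W}$ and surjective onto $\overline{X}$), so any component other than $\overline{Z}$ would have to miss $p^{-1}(X^\circ)$, contradicting surjectivity; this gives $\Sigma_{\rm red}=\overline{Z}$, which is all the paper means and uses by ``zero locus''. Your additional descent-of-reducedness discussion is therefore unnecessary (and rests on the shaky assertion that $\pi$ is faithfully flat), but it does not affect the correctness of the main argument.
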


	\noindent {\it Step 3. Construct the spectral covering associated to the harmonic mapping.} 
In this step we will construct a finite Galois cover $\pi: \xsp\to X$ from a connected quasi-projective normal variety $\xsp$ such that   multivalued holomorphic section  $\{\pi^*\omega_1,\ldots,\pi^*\omega_m\}$ over $\pi^{-1}(X^\circ)$ becomes   single valued.   
	
	Consider the zero locus of $P(\lambda)$. Let $\overline{Z}_1,\ldots,\overline{Z}_N$ be its irreducible components which dominate $\overline{X}$ under $p:\Omega_{ \overline{X}}(\log D)\to \overline{X}$.  Write    $Z^\circ_i:=(p|_{\overline{Z}_i})^{-1}(X^\circ)$. They generate $N$ multivalued sections $\{\omega_{i1},\ldots,\omega_{ik_i}\}_{i=1,\ldots,N}$ of $\Omega_{X^{\circ}}$ over $X^\circ$ such  that 
	\begin{itemize}
		\item  \begin{align} \label{eq:partition}	\{\omega_1,\ldots,\omega_m\}=\coprod_{i=1}^{N}\{\omega_{i1},\ldots,\omega_{ik_i}\}
		\end{align} 
		\item  $Z^\circ_i$ is the graph of $\{\omega_{i1},\ldots,\omega_{ik_i}\}$ \end{itemize} 
	Set 
	\begin{align*} 
		P_i(\lambda)=\prod_{j=1}^{k_i}(\lambda-p^*\omega_{ij})=:	\lambda^{k_i}+p^*\sigma_{i1}\lambda^{k_i-1}+\cdots+p^*\sigma_{ik_i} 
	\end{align*} 
	with $\sigma_{ij}\in H^0(X^\circ, \Sym^{j}\Omega_{X^{\circ}})$.    
	Then $P(\lambda)=\prod_{i=1}^{N}P_i(\lambda)$ and     $\sigma_{ij}$ extends to logarithmic forms  $H^0(\overline{X}, \Sym^{j}\Omega_{\overline{X}}(\log D))$. By \cref{claim:zero locus} the zero locus of $P_i(\lambda)$  is $\overline{Z}_i$.  Therefore, the zero locus of $P(\lambda)$ is $\cup_{i=1}^{N}\overline{Z}_i$. Let  $q_i:\overline{W}_i\to \overline{Z}_i$ be the Galois closure of the finite morphism $p_i:\overline{Z}_i\to \overline{X}$.  Then the natural morphism $\pi_i:\overline{W}_i\to \overline{X}$ is a Galois cover with the Galois group $H_i$. 
	
	Consider the  normalization $\widehat{W}$ of $   \overline{W_1}\times_{\overline{X}}\times\cdots\times_{\overline{X}}  \overline{W_N}$, which might be not connected. The induces morphism $ \widehat{W}\to \overline{X}$ is thus also a Galois morphism with Galois group $\widehat{G}:=H_1\times\cdots\times H_N$. Therefore, $\widehat{W}$ is a disjoint union of isomorphic projective normal varieties.  We pick one connected component,  denoted by $\overline{\xsp}$.  
	Define a subgroup 
	$$
	\widetilde{G}:=\{g\in \widehat{G}\mid g(\overline{\xsp})=\overline{\xsp} \}. 
	$$
	Then $\pi:\overline{\xsp}\to \overline{X}$ is also a Galois morphism  with the Galois group $\widetilde{G}$.   Denote by $r_i:\overline{\xsp}\to \overline{W_i}$ the natural map, which is a finite surjective morphism. \begin{dfn}[Spectral cover]\label{def:spectral}
		Write $\xsp:=\pi^{-1}(X)$.  The   Galois morphism $\xsp\to X$  is called the \emph{spectral cover} of $X$ with respect to the multivalued one forms $\{\omega_1,\ldots,\omega_m\}$ defined on $X^\circ$. 
	\end{dfn} 
	 By \cref{claim:permutes2,claim:extension2},  there are global sections  $$
	\{\eta _{i1},\ldots,\eta_{ik_i}\} \subset H^0(\overline{W_i}, \pi_i^*\Omega_{\overline{X}}(\log D)).
	$$  such that it coincides  with the multivalued section $\{\pi_i^*\omega_{i1},\ldots,\pi_i^*\omega_{ik_i}\}$ over $\pi_i^{-1}(X^\circ)$.   By \cref{claim:ramified1} we know that $\overline{W_i}\to  \overline{X}$ is \'etale outside 
	\begin{align}\label{eq:ramification locus}
		\mathcal{R}_i:=\{z\in \overline{W}_i \mid \exists \eta_{ik}\neq\eta_{i\ell} \ \mbox{with}\ (\eta_{ik}-\eta_{i\ell})(z)=0 \}.
	\end{align} 
 	Denote by  
	\begin{align*} 
		\{\eta_1,\ldots,\eta_m\}:=\coprod_{i=1}^{N}\{r_i^*\eta_{i1},\ldots,r_i^*\eta_{ik_i}\}\subset H^0(\overline{\xsp}, \pi^*\Omega_{\overline{X}}(\log D)) 
	\end{align*} 
	By \cref{claim:permutes2} and \eqref{eq:partition}, we obtain
	\begin{claim}\label{claim:extension}
		The multivalued section	$ \{ \pi^*\omega_{1},\ldots,   \pi^*\omega_{m}\}$ coincides with 	 $
		\{\eta_1,\ldots,\eta_m\}\subset H^0(\overline{\xsp}, \pi^*\Omega_{ \overline{X}}(\log D))  
		$ 
		over $\pi^{-1}(X^\circ)$. 
		Any $g\in  \widetilde{G}={\rm Aut}(\overline{\xsp}/\overline{X})$ acts on $\{\eta_1,\ldots,\eta_m\}$ by a permutation.  \qed
	\end{claim}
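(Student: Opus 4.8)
\textbf{Proof proposal for \cref{claim:extension}.}
The plan is to deduce both assertions from the information already assembled component‑by‑component in Step~3, namely: for each $i$ the global sections $\{\eta_{i1},\ldots,\eta_{ik_i}\}\subset H^0(\overline{W_i},\pi_i^*\Omega_{\overline{X}}(\log D))$ which agree with $\{\pi_i^*\omega_{i1},\ldots,\pi_i^*\omega_{ik_i}\}$ over $\pi_i^{-1}(X^\circ)$ and on which $H_i$ acts by permutations (this is exactly what \cref{claim:permutes2,claim:extension2} give for each $\overline{W_i}$), together with the partition \eqref{eq:partition}, $\{\omega_1,\ldots,\omega_m\}=\coprod_{i=1}^{N}\{\omega_{i1},\ldots,\omega_{ik_i}\}$. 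Note in particular that $\sum_{i=1}^{N}k_i=m$, so $\{\eta_1,\ldots,\eta_m\}$ does consist of $m$ elements counting multiplicity.

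First I would record the compatibility of the structure maps, $\pi=\pi_i\circ r_i$ for every $i$, whence $r_i^{-1}(\pi_i^{-1}(X^\circ))=\pi^{-1}(X^\circ)$. Pulling back $\eta_{ij}$ by $r_i$ and using that $\eta_{ij}$ agrees with $\pi_i^*\omega_{ij}$ over $\pi_i^{-1}(X^\circ)$ shows that $r_i^*\eta_{ij}$ agrees with $r_i^*\pi_i^*\omega_{ij}=\pi^*\omega_{ij}$ over $\pi^{-1}(X^\circ)$. Taking the disjoint union over $i=1,\ldots,N$ and invoking \eqref{eq:partition}, the multiset $\{\eta_1,\ldots,\eta_m\}=\coprod_{i=1}^{N}\{r_i^*\eta_{i1},\ldots,r_i^*\eta_{ik_i}\}$ coincides over $\pi^{-1}(X^\circ)$ with $\coprod_{i=1}^{N}\{\pi^*\omega_{i1},\ldots,\pi^*\omega_{ik_i}\}=\{\pi^*\omega_1,\ldots,\pi^*\omega_m\}$. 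This is the first assertion.

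For the equivariance, I would use that $\widetilde{G}\subset\widehat{G}=H_1\times\cdots\times H_N$, so any $g\in\widetilde{G}$ writes as $g=(g_1,\ldots,g_N)$ with $g_i\in H_i$, and that the $\widehat G$-action on the normalized fibre product commutes with the projections, i.e.\ $r_i\circ g=g_i\circ r_i$ on $\overline{\xsp}$. Hence $g^*(r_i^*\eta_{ij})=r_i^*(g_i^*\eta_{ij})$. By \cref{claim:extension2} applied to $\overline{W_i}$, $g_i$ permutes the multiset $\{\eta_{i1},\ldots,\eta_{ik_i}\}$; since $g_i$ is an automorphism, pullback by $g_i^{-1}$ provides the inverse, so this is a genuine bijection of multisets. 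Therefore $g$ permutes each sub‑multiset $\{r_i^*\eta_{i1},\ldots,r_i^*\eta_{ik_i}\}$, hence their union $\{\eta_1,\ldots,\eta_m\}$, giving the second assertion.

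The only point requiring care — and the one I would write out explicitly — is the multiset bookkeeping: all the objects here are multisets rather than sets, so one must check that \eqref{eq:partition} is an equality of multisets (it is, being additive in multiplicities) and that pullback by an automorphism preserves multiplicities (which follows from the existence of an inverse $g_i^{-1}$). Beyond that the proof is a purely formal consequence of the constructions in Steps~2–3; no further analytic input (removable‑singularity theorems, Hausdorff‑codimension estimates) is needed at this stage, since those were already consumed in producing the $\eta_{ij}$.
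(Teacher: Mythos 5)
Your proposal is correct and follows exactly the route the paper intends: the paper dispatches this claim as an immediate consequence of \cref{claim:permutes2,claim:extension2} applied to each $\overline{W_i}$ together with the partition \eqref{eq:partition}, and your write-up simply makes explicit the pullback compatibility $\pi=\pi_i\circ r_i$ and the equivariance $r_i\circ g=g_i\circ r_i$ that the paper leaves implicit. The multiset bookkeeping you flag is handled correctly.
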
 
	\begin{dfn}[Spectral one forms]\label{def:spectral one forms}
		These one forms $\{\eta_1,\ldots,\eta_m\}\subset   H^0(\overline{\xsp}, \pi^*\Omega_{\overline{X}}(\log D))$ will be called \emph{spectral one forms} induced by the above $\varrho$-equivariant harmonic mapping $u$.
	\end{dfn}
 	\cref{claim:ramified1} enables us to describe the ramification of the spectral covering $\xsp\to X$.
	\begin{claim}\label{claim:ramified}
		The Galois morphism $\pi:\overline{\xsp}\to \overline{X}$ is \'etale outside $$R:=\{z\in \overline{\xsp} \mid \exists \eta_i\neq\eta_j \ \mbox{with}\ (\eta_i-\eta_j)(z)=0 \},$$
		which satisfies $\pi^{-1}(\pi(R))=R$. 
	\end{claim}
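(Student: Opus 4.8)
The goal is to prove \cref{claim:ramified}, namely that the Galois morphism $\pi : \overline{\xsp} \to \overline{X}$ is étale outside $R := \{z \in \overline{\xsp} \mid \exists\ \eta_i \neq \eta_j \text{ with } (\eta_i - \eta_j)(z) = 0\}$, and that $\pi^{-1}(\pi(R)) = R$.

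\begin{proof}[Proof of \cref{claim:ramified}]
We first observe that $\pi : \overline{\xsp} \to \overline{X}$ factors as $\overline{\xsp} \xrightarrow{r_i} \overline{W_i} \xrightarrow{\pi_i} \overline{X}$ for each $i \in \{1, \dots, N\}$, where $r_i$ is a finite surjective morphism between normal varieties and $\pi_i$ is the Galois cover constructed in Step 3. The plan is to reduce the étaleness assertion to \cref{claim:ramified1} applied to each $\overline{W_i} \to \overline{X}$. By \cref{claim:ramified1}, $\pi_i : \overline{W_i} \to \overline{X}$ is étale outside $\mathcal{R}_i = \{z \in \overline{W_i} \mid \exists\ \eta_{ik} \neq \eta_{i\ell} \text{ with } (\eta_{ik} - \eta_{i\ell})(z) = 0\}$. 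Since $\widehat{W}$ is the normalization of the fiber product $\overline{W_1} \times_{\overline{X}} \cdots \times_{\overline{X}} \overline{W_N}$ and $\overline{\xsp}$ is a connected component of it, the morphism $\pi : \overline{\xsp} \to \overline{X}$ is étale over any open set $U \subset \overline{X}$ over which all $\pi_i$ are étale; indeed, étaleness is preserved under fiber products and under passing to normalization over a locus where the base change is already normal (étale morphisms being smooth, hence normal), and under restriction to a connected component. Hence $\pi$ is étale over $\overline{X} \setminus \bigcup_{i=1}^N \pi_i(\mathcal{R}_i)$.

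Next I would identify $\pi^{-1}\left(\bigcup_i \pi_i(\mathcal{R}_i)\right)$ with $R$. By construction (\cref{claim:extension}), the spectral one forms $\{\eta_1, \dots, \eta_m\} = \coprod_{i=1}^N \{r_i^* \eta_{i1}, \dots, r_i^* \eta_{ik_i}\} \subset H^0(\overline{\xsp}, \pi^* \Omega_{\overline{X}}(\log D))$ are obtained precisely by pulling back the forms $\eta_{ij}$ along $r_i$. Therefore $r_i^{-1}(\mathcal{R}_i) = \{z \in \overline{\xsp} \mid \exists\ \eta_{ik} \neq \eta_{i\ell} \text{ with } (r_i^*\eta_{ik} - r_i^*\eta_{i\ell})(z) = 0\} \subseteq R$, and conversely, for any $z \in R$ with $(\eta_a - \eta_b)(z) = 0$ and $\eta_a \neq \eta_b$, if both $\eta_a, \eta_b$ come from the same block $i$ (i.e. $\eta_a = r_i^*\eta_{ik}$, $\eta_b = r_i^*\eta_{i\ell}$) then $z \in r_i^{-1}(\mathcal{R}_i)$; if they come from distinct blocks, one uses that the partition \eqref{eq:partition} is $\widetilde{G}$-stable so that a $\widetilde{G}$-translate still lies in some $r_i^{-1}(\mathcal{R}_i)$, and one shows ramification occurs along such $z$ by the same argument as in \cref{claim:ramified1} (since the discriminant locus of $\{\eta_1, \dots, \eta_m\}$ equals $\bigcup_i \pi^{-1}(\pi_i(\mathcal{R}_i))$ after accounting for the fiber product structure). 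Combining, $\pi$ is étale outside $R$.

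Finally, for the identity $\pi^{-1}(\pi(R)) = R$: the inclusion $R \subseteq \pi^{-1}(\pi(R))$ is trivial. For the reverse, by \cref{claim:extension} every $g \in \widetilde{G} = \mathrm{Aut}(\overline{\xsp}/\overline{X})$ permutes $\{\eta_1, \dots, \eta_m\}$, so the set $R$ — being defined by the vanishing of $\eta_i - \eta_j$ for all pairs with $\eta_i \neq \eta_j$ — is $\widetilde{G}$-invariant: $g(R) = R$ for all $g \in \widetilde{G}$. Since $\overline{\xsp} \to \overline{X}$ is the quotient by $\widetilde{G}$, we have $\pi^{-1}(\pi(R)) = \widetilde{G} \cdot R = R$. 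This completes the proof.

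The step I expect to be the main obstacle is the careful bookkeeping in the second paragraph: matching the discriminant locus of the full collection $\{\eta_1, \dots, \eta_m\}$ on $\overline{\xsp}$ with the union of pullbacks of the individual discriminant loci $\mathcal{R}_i$ from the $\overline{W_i}$, accounting correctly for cross-block coincidences $\eta_a = \eta_b$ with $a, b$ in different blocks and for the fiber-product/normalization structure of $\overline{\xsp}$. The étaleness transfer and the $\widetilde{G}$-invariance argument are routine given \cref{claim:ramified1} and \cref{claim:extension}.
\end{proof}
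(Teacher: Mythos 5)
Your proof is correct and takes essentially the same route as the paper's: étaleness outside $R$ follows from \cref{claim:ramified1} applied to each $\overline{W_i}$ together with the inclusion $\bigcup_{i} r_i^{-1}(\mathcal{R}_i)\subseteq R$, and $\pi^{-1}(\pi(R))=R$ follows from the $\widetilde{G}$-invariance of $R$ given by the permutation action of \cref{claim:extension}. One remark: the ``conversely'' portion of your second paragraph --- arguing that cross-block coincidences $\eta_a=\eta_b$ with $a,b$ in different blocks actually produce ramification --- is not needed, since the claim asserts only that $\pi$ is étale outside $R$ (not that $R$ is the exact branch locus), so the single inclusion $\bigcup_i r_i^{-1}(\mathcal{R}_i)\subseteq R$ suffices; this is fortunate, as that superfluous step is the only place where your argument is left vague.
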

	\begin{proof}[Proof of \cref{claim:ramified}]
		We know that $\overline{W}_i\to \overline{X}$ is \'etale outside $\mathcal{R}_i$ with $\mathcal{R}_i$ defined in \eqref{eq:ramification locus}.  Therefore, $\overline{\xsp}\to \overline{X}$ is \'etale outside  $\cup_{i=1}^{N}r_i^{-1}(\mathcal{R}_i)$.  Note that $\cup_{i=1}^{N}r_i^{-1}(\mathcal{R}_i)\subset R$. Hence $\overline{\xsp}\to \overline{X}$ is \'etale  outside $R$.  Since  any $g\in  \widetilde{G}={\rm Aut}(\overline{\xsp}/\overline{X})$ acts on $\{\eta_1,\ldots,\eta_m\}$ by a permutation  by \cref{claim:extension}, it follows that $\pi^{-1}(\pi(R))=R$. 
	\end{proof} 

	\medspace

\noindent 	{\em Step 4. Partial quasi-Albanese morphism via spectral one forms.}  
Let $\mu:\widetilde{\xsp}\to \overline{\xsp}$   be a	$\widetilde{G}$-equivariant resolution of singularities such that $E:=({\pi}\circ \mu)^{-1}(D)$  is a simple normal crossing divisor.
	Write $Y:=({\pi}\circ \mu)^{-1}(X)$, which is a quasi-projective manifold. One has \(T_{1}(Y) \cong H^0(\widetilde{\xsp}, \Omega_{\widetilde{\xsp}}(\log E))\). 
	Let us denote by  
	$
	\Omega^1(X^{\! \rm sp})\subset T_1(Y)
	$ 
	the $\bC$-subspace of $T_1(Y)$ which vanishes on each fiber of $\mu|_{Y}:Y\to X^{\! \rm sp}$. Since each $\eta\in T_1(Y)$ can be seen as a linear form on  $T_1(Y)^*$,  we may consider the largest semi-abelian subvariety $A$ of the quasi-Albanese variety $\cA_Y$ of $Y$ so that   all $\eta\in \Omega^1(X^{\! \rm sp})$ vanishes on $A$. 
	Define 
	$
	\cA_{X^{\! \rm sp}}:= \cA_Y/A. 
	$ 
	Since $\mu$ is $\widetilde{G}$-equivariant, it follows that $\Omega^1(X^{\! \rm sp})$ is stable under the induced $\widetilde{G}$-action on $T_1(Y)$. Therefore, for the natural $\widetilde{G}$-action on $\cA_Y$ induced by the $\widetilde{G}$-action on $Y$, it gives rise to a  $\widetilde{G}$-action on $\cA_{\xsp}$.

	\begin{claim}\label{claim:factor}
		The composition $Y\to \cA_Y\to \cA_{X^{\! \rm sp}}$ factors through $\alpha_1:X^{\! \rm sp}\to \cA_{X^{\! \rm sp}}$.   Moreover, $\alpha_1$ is $\widetilde{G}$-equivariant. 
	\end{claim}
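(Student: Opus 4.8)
\textbf{Proof proposal for Claim \ref{claim:factor}.} The plan is to use the universal property encoded in Lemma \ref{lem:critpointalb}, applied to the map $\mu|_Y : Y \to X^{\! \rm sp}$ and to the subspace $S := \Omega^1(X^{\! \rm sp}) \subset T_1(Y)$. By construction, $A \subset \cA_Y$ is precisely the largest semi-abelian subvariety on which every $\eta \in \Omega^1(X^{\! \rm sp})$ vanishes, so $\cA_{X^{\! \rm sp}} = \cA_Y / A$ is exactly the quotient $\cA_Y / B$ appearing in that lemma with $B = A$. Thus the composition $r : Y \to \cA_Y \to \cA_{X^{\! \rm sp}}$ is the partial quasi-Albanese morphism induced by $\Omega^1(X^{\! \rm sp})$, and we must check it contracts each fiber of $\mu|_Y$ to a point. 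Since $\mu|_Y$ is proper, for a fiber $F = (\mu|_Y)^{-1}(x)$ we apply Lemma \ref{lem:critpointalb} to the inclusion $f : F \hookrightarrow Y$ (after resolving $F$, as allowed in the lemma): by the very definition of $\Omega^1(X^{\! \rm sp})$, every $\eta \in \Omega^1(X^{\! \rm sp})$ satisfies $f^*\eta = 0$, hence $r(F)$ is a point. Because $\mu|_Y$ has connected fibers (it is birational, or at worst one passes to its Stein factorization, but $X^{\! \rm sp}$ is normal so the geometric fibers over the relevant locus are already connected) and $X^{\! \rm sp}$ is normal, the set-theoretic factorization $r = \alpha_1 \circ (\mu|_Y)$ upgrades to a morphism $\alpha_1 : X^{\! \rm sp} \to \cA_{X^{\! \rm sp}}$ by the rigidity/normality argument (e.g. the composition of $r$ with a projective birational contraction to a normal variety descends). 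One should state this last step carefully, since $\mu$ is only a resolution and $X^{\! \rm sp}$ may be singular; the cleanest route is: $r$ is constant on fibers of $\mu|_Y$, $\mu|_Y$ is proper with connected fibers onto a normal variety, so $r$ factors through $\mu|_Y$ by the standard descent lemma for morphisms to separated schemes.

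For the $\widetilde G$-equivariance, I would argue as follows. The resolution $\mu : \widetilde{\xsp} \to \overline{\xsp}$ is chosen $\widetilde G$-equivariant, so $\widetilde G$ acts on $Y$ compatibly with its action on $X^{\! \rm sp}$ via $\mu|_Y$. This induces an action of $\widetilde G$ on $\cA_Y$ by functoriality of the quasi-Albanese variety (acting on $T_1(Y)^*$, equivalently on $H_1(Y,\bZ)$). The subspace $\Omega^1(X^{\! \rm sp}) \subset T_1(Y)$ is $\widetilde G$-stable: indeed $g \in \widetilde G$ permutes the fibers of $\mu|_Y$ (since it descends to $X^{\! \rm sp}$), so a log one-form vanishing on all fibers pulls back under $g$ to a log one-form with the same property. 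Consequently the largest semi-abelian subvariety $A$ annihilated by $\Omega^1(X^{\! \rm sp})$ is $\widetilde G$-stable — here one uses that $\widetilde G$ acts by automorphisms of $\cA_Y$ preserving the pairing with the $\widetilde G$-stable subspace $\Omega^1(X^{\! \rm sp})$, so it maps the annihilator to the annihilator, and by maximality $gA = A$. Hence $\widetilde G$ acts on the quotient $\cA_{X^{\! \rm sp}} = \cA_Y/A$, the map $r : Y \to \cA_{X^{\! \rm sp}}$ is $\widetilde G$-equivariant by construction, and since $\mu|_Y$ is $\widetilde G$-equivariant and surjective, the induced $\alpha_1$ is $\widetilde G$-equivariant as well (two morphisms $X^{\! \rm sp} \to \cA_{X^{\! \rm sp}}$ agreeing after precomposition with the surjection $\mu|_Y$ coincide).

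The main obstacle I anticipate is not any single deep input but the bookkeeping around base points and the precise meaning of the $\widetilde G$-action on $\cA_Y$: the quasi-Albanese morphism $Y \to \cA_Y$ depends on a base point, and $\widetilde G$ does not in general fix a base point of $Y$, so strictly speaking $g \in \widetilde G$ acts on $\cA_Y$ only up to translation. One handles this by working with the "Albanese torsor" or, more concretely, by noting that translations of $\cA_Y$ descend to translations of $\cA_{X^{\! \rm sp}}$ and do not affect the factorization statement; the equivariance of $\alpha_1$ should then be understood in the natural sense that $\alpha_1(g \cdot s) = g \cdot \alpha_1(s)$ once a compatible base point on $X^{\! \rm sp}$ is fixed and the $\widetilde G$-action on $\cA_{X^{\! \rm sp}}$ is taken to be the affine action (linear part plus translation) induced from $Y$. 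A secondary technical point is the connectedness of fibers of $\mu|_Y$; if $\mu$ is a genuine resolution it is birational with connected fibers and there is nothing to check, but one should remark this explicitly so the descent of $r$ through $\mu|_Y$ to a morphism (rather than merely a rational map) on the normal variety $X^{\! \rm sp}$ is justified.
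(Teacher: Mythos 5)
Your proposal is correct and follows essentially the same route as the paper: connectedness of the fibers of $\mu|_Y$ from normality of $\xsp$ (Zariski's main theorem), vanishing of every $\eta\in\Omega^1(\xsp)$ on those fibers by definition, Lemma~\ref{lem:critpointalb} to contract them, and descent plus the $\widetilde{G}$-equivariance of all prior constructions. The extra care you take with base points and the affine nature of the $\widetilde{G}$-action on $\cA_Y$ is a legitimate refinement of a point the paper leaves implicit, but it does not change the argument.
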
 
	\begin{proof}[Proof of \cref{claim:factor}]
		Since $\xsp$ is normal,  each  fiber $F$ of $Y\to \xsp$ is connected. By definition   $\eta|_{F}=0$ for any $\eta\in \Omega^1(\xsp)$.  It follows from \cref{lem:critpointalb} that $F$ is mapped to one point under the morphism $Y\to \cA_{\xsp}$.  Hence it factors through $\xsp\to \cA_{\xsp}$.   The second one is easy to show since all our previous constructions are \(\widetilde{G}\)-equivariant.
	\end{proof}

	By \cref{claim:extension}, the spectral one forms $\{\eta_1,\ldots,\eta_m\}\subset \Omega^1(X^{\! \rm sp})$.  
	Define $A'\subset \cA_Y$ to be   the largest semi-abelian variety contained in $\cA_Y$ on which each $\eta_i$ vanishes.  Hence $A'\supset A$, which implies that there is a  morphism $\cA_{X^{\! \rm sp}}\to \cA_Y/A'$.    We denote by $\cA:=\cA_Y/A'$ the quotient, which is also a semi-abelian variety. Consider the morphism $q:Y \to \cA$ which is the composition of the quasi-Albanese morphism $\alpha_Y:Y\to \cA_Y$ and the quotient map $\cA_Y\to \cA$.   By the above claim, it follows that $q$ factors through $\alpha: X^{\! \rm sp}\to \cA$. 
	\begin{dfn}[Partial quasi-Albanese morphism]\label{def:partial}
		The above morphism $\alpha: \xsp\to \cA$ is called the partial quasi-Albanese morphism  induced by the spectral one forms $\{\eta_1,\ldots,\eta_m\}$.  
	\end{dfn}
	\begin{claim}\label{claim:trivial}
		For every   closed subvariety $F$ of $\xsp$,  $\alpha(F)$ is a point if and only if $\eta_i|_F$ is trivial for each spectral one form $\eta_i$.
	\end{claim}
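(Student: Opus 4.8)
The plan is to deduce \cref{claim:trivial} directly from \cref{lem:critpointalb}, applied to the smooth model $Y$ of $\xsp$ and to the set of spectral one forms, using the factorization of the partial quasi-Albanese morphism through $Y$. First I would record the relevant structure from \cref{def:partial}: the morphism $q\colon Y\to\cA$ is the composite of the quasi-Albanese morphism $\alpha_Y\colon Y\to\cA_Y$ with the quotient $\cA_Y\to\cA=\cA_Y/A'$, where $A'\subset\cA_Y$ is by definition the largest semi-abelian subvariety on which all the spectral one forms $\eta_1,\dots,\eta_m$ vanish; moreover $q$ factors through $\alpha\colon\xsp\to\cA$ as $q=\alpha\circ(\mu|_Y)$, and $\mu|_Y\colon Y\to\xsp$ is a proper birational (in particular surjective) morphism, being the base change of the proper morphism $\mu\colon\widetilde{\xsp}\to\overline{\xsp}$ along the open immersion $\xsp\hookrightarrow\overline{\xsp}$. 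Since by \cref{claim:extension} the $\eta_i$ belong to $\Omega^1(\xsp)\subset T_1(Y)$, the set $S:=\{\eta_1,\dots,\eta_m\}$ is a set of logarithmic forms on $Y$ whose associated largest vanishing semi-abelian subvariety of $\cA_Y$ is precisely $A'$; hence $q\colon Y\to\cA$ is exactly the quotient map ``$r$'' of \cref{lem:critpointalb} for this choice of $S$.

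Next I would pass from a closed subvariety $F\subset\xsp$ to its preimage in $Y$: let $F_0\subset Y$ be the irreducible component of $(\mu|_Y)^{-1}(F)$ that dominates $F$, with its reduced structure and with inclusion $g\colon F_0\hookrightarrow Y$. As $\mu|_Y$ is proper birational and $F$ is irreducible, $\mu|_Y(F_0)=F$, whence $q(F_0)=\alpha(\mu|_Y(F_0))=\alpha(F)$; in particular $\alpha(F)$ is a single point if and only if $q\circ g\colon F_0\to\cA$ is constant. Applying \cref{lem:critpointalb} to the morphism $g\colon F_0\to Y$ (with the set $S$ and $B=A'$ as above) then yields that $q(g(F_0))$ is a point if and only if $g^{*}\eta_i=0$ for every $i$. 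Since $g^{*}\eta_i$ — the pullback of $\eta_i$ to a desingularization of $F_0$, which is independent of the chosen resolution — is exactly what $\eta_i|_F$ denotes, and this vanishing is independent of the choice of the component $F_0$ (it only depends on the pullback of $\eta_i$ from $\xsp$ to $F$), combining the two equivalences gives that $\alpha(F)$ is a point if and only if $\eta_i|_F=0$ for all $i$, which is the assertion.

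I do not expect any real obstacle here: this is essentially a bookkeeping application of \cref{lem:critpointalb}. The only points needing a modicum of care are the verification that $\mu|_Y$ is proper (so that $\mu|_Y(F_0)=F$ and images of closed subvarieties remain closed), the identification of $q$ with the map ``$r$'' of \cref{lem:critpointalb} for $S=\{\eta_1,\dots,\eta_m\}$ via \cref{def:partial}, and the convention that $\eta_i|_F$ is to be understood as a pullback to a resolution when $\xsp$ or $F$ is singular; none of these goes beyond what has already been set up in Steps 3--4.
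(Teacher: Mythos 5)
Your proposal is correct and takes essentially the same route as the paper: the paper's proof of \cref{claim:trivial} is precisely the one-line application of \cref{lem:critpointalb} with $X$ replaced by $Y$, $S$ by $\{\eta_1,\dotsc,\eta_m\}$ and $B$ by $A'$. Your write-up just makes explicit the bookkeeping (passing to a dominating component of $(\mu|_Y)^{-1}(F)$ and identifying $q$ with the map $r$ of the lemma) that the paper leaves implicit.
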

	\begin{proof}
		It suffices to apply Lemma~\ref{lem:critpointalb} with \(X\) replaced by \(Y\), \(S\) replaced by \(\{\eta_{1}, \dotsc, \eta_{m}\}\) and \(B\) by \(A'\).
	\end{proof}
	Since $\{\eta_1,\ldots,\eta_m\}$ is invariant under $\widetilde{G}$, it follows that the $\widetilde{G}$-action on $\cA_Y$ gives rise to a $\widetilde{G}$-action on $\cA$. Hence $\alpha$ is $\widetilde{G}$-equivariant. As $X=X^{\! \rm sp}/\widetilde{G}$, this gives rise to a morphism $\beta:X\to \cA/\widetilde{G}$ satisfying the following commutative diagram 
	\begin{eqnarray*}
		\begin{tikzcd}
	 &	&&& S_{\nu^*\varrho} \arrow[dr, bend left=10]& \\
&\widetilde{Y}\arrow[dl]\arrow[dr,"\widetilde{\nu}"']\arrow[r,"p_Y"]&	Y\arrow[r, "\mu"]\arrow[rru, bend left=10,  "\varphi"] \arrow[dr,"\nu"']&		X^{\! \rm sp} \arrow[rr, "\alpha"] \arrow[d, "\pi"] & &  \cA\arrow[d]\\
\Delta(G)&&\widetilde{X}\arrow[ll, "u"]\arrow[r,"p_X"'] &			X\arrow[rr, "\beta"] \arrow[dr, "s_\varrho"'] && \cA/\widetilde{G}\\
&&	&		 &S_\varrho\arrow[ur]&
		\end{tikzcd}
	\end{eqnarray*}   
Let $s_\varrho:X\rightarrow S_\varrho$ be the quasi-Stein factorisation of $\beta$, and let $\nu:Y\rightarrow X$ be the composition of $Y\rightarrow \xsp$ and $\xsp\rightarrow X$. Suppose $\widetilde{X}$ is the universal covering of $X$, and let $\widetilde{Y}$ be a connected component of $\widetilde{X}\times_XY$. Then, the induced map $\widetilde{\nu}:\widetilde{Y}\rightarrow \widetilde{X}$ is proper and  $p_Y:\widetilde{Y}\rightarrow Y$ an unramified covering.  Let $\varphi:Y\rightarrow S_{\nu^*\varrho}$ be the quasi-Stein factorization of $q:Y\rightarrow \cA$. 

\medspace

\noindent {\em Step 5. Property of the reduction $s_\rho:X\to S_\rho$.}  
\begin{claim}\label{claim:bounded subgroup}
	Let $T$ be any connected Zariski closed subset of $X$. Then the following properties are equivalent.
	\begin{enumerate}[label*=(\arabic*)]
	\item The image $s_\varrho(T)$ is a point in $S_\varrho$.			\item  The image $\rho({\rm Im}[\pi_1(T)\to \pi_1(X)])$ is a bounded subgroup of $G(K)$.
		\item  For every irreducible component $T_o$ of $T$, the image $\rho({\rm Im}[\pi_1(T_o^{\rm norm})\to \pi_1(X)])$ is a bounded subgroup of $G(K)$.
		\end{enumerate} 	
\end{claim}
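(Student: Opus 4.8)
The plan is to prove \cref{claim:bounded subgroup} by establishing the cycle of implications $(1)\Rightarrow(2)\Rightarrow(3)\Rightarrow(1)$, using the geometry of the spectral cover $\pi:\xsp\to X$, the partial quasi-Albanese morphism $\alpha:\xsp\to\cA$, and the harmonic mapping $u:\widetilde X\to\Delta(G)$ together with its associated spectral one-forms $\{\eta_1,\dots,\eta_m\}$.

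First I would treat $(1)\Rightarrow(2)$. Suppose $s_\varrho(T)$ is a point. Since $s_\varrho$ is the quasi-Stein factorization of $\beta:X\to\cA/\widetilde G$, the image $\beta(T)$ is a point. Lifting to the spectral cover, if $T^{\!\rm sp}$ denotes an irreducible component of $\pi^{-1}(T)$, then $\alpha(T^{\!\rm sp})$ is a point, so by \cref{claim:trivial} every spectral one-form $\eta_i$ restricts to zero on $T^{\!\rm sp}$. Pulling back to $\widetilde Y$ and using \cref{claim:extension}, this means the $\pi^*\omega_k$ vanish along the preimage of $T$; by construction of the $\omega_k$ from $u$ (the multiset $\{u_i^*(d\alpha_j)^{(1,0)}_{\bC}\}$), this forces the pluriharmonic map $u$ to be locally constant in the apartment directions along the lift of $T$ to $\widetilde X$ — more precisely $du$ restricted to $\widetilde T$ lies in the kernel of all roots, hence $u|_{\widetilde T}$ is constant. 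Then $\varrho(\pi_1(T))$ fixes the point $u(\widetilde T)\in\Delta(G)$, and by \cref{lem:AB} it is a bounded subgroup of $G(K)$. (Care is needed here because $u$ is only defined and regular on $X^\circ$, and $T$ may meet $\cS(u)$; one handles this by noting $\cS(u)$ has Hausdorff codimension $\geq 2$ by \cref{thm:GS}, so $T\cap X^\circ$ is connected and dense in $T$, and the equivariance statement propagates.)

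Next, $(2)\Rightarrow(3)$ is the easy step: for any irreducible component $T_o$ of $T$, the map $\pi_1(T_o^{\rm norm})\to\pi_1(T_o)\to\pi_1(X)$ factors through $\pi_1(T)\to\pi_1(X)$ when $T$ is connected (using \cref{lem:fun} for the normalization map and that $T_o\hookrightarrow T$ induces a map on $\pi_1$ up to the ambient group), so $\varrho({\rm Im}[\pi_1(T_o^{\rm norm})\to\pi_1(X)])$ is a subgroup of the bounded group $\varrho({\rm Im}[\pi_1(T)\to\pi_1(X)])$, hence bounded. Finally, for $(3)\Rightarrow(1)$: assuming each $\varrho({\rm Im}[\pi_1(T_o^{\rm norm})\to\pi_1(X)])$ is bounded, by \cref{lem:AB} it fixes a point in $\Delta(G)$, so the equivariant harmonic map $u$ is constant along the lift of (the normalization of) each component $T_o$; running the argument of Step 1 in reverse along $T_o$ shows $\eta_i|_{T_o^{\!\rm sp}}=0$ for all $i$, hence $\alpha(T_o^{\!\rm sp})$ is a point by \cref{claim:trivial}, hence $\beta(T_o)$ is a point. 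Since $T$ is connected and its components $\beta(T_o)$ are each single points, $\beta(T)$ is a single point, so $s_\varrho(T)$ is a point.

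The main obstacle I expect is the careful bookkeeping in the direction $(1)\Rightarrow(2)$ and $(3)\Rightarrow(1)$: translating "$\eta_i$ vanishes on (a lift of) $T$" into "$u$ is constant along the corresponding component of $\widetilde T$", and doing this while $T$ may be singular and may meet the small bad set $\cS(u)\cup\pi(R)$. The resolution is to pass to $\widetilde Y$ (where everything is smooth and the $\eta_i$ are genuine single-valued logarithmic forms by \cref{claim:extension}), use that the lift of $T^{\rm norm}$ misses the codimension-$\geq 2$ loci, invoke the Gromov--Schoen regularity (\cref{thm:GS}) and the pluriharmonicity of $u$ from \cref{thm:BDDMex} to get that $du=0$ there, and then use that $\Delta(G)$ is a ${\rm CAT}(0)$ space so an energy-zero (hence constant-differential) pluriharmonic map on a connected set with the vanishing of all root differentials is genuinely constant. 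A secondary subtlety, also present in the compact case treated by Katzarkov--Zuo--Eyssidieux, is that passing from components of $\pi^{-1}(T)$ to $T$ itself requires that the Galois group $\widetilde G$ permutes the components and the spectral one-forms compatibly, which is exactly \cref{claim:extension} and \cref{claim:ramified}.
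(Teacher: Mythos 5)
Your cycle $(1)\Rightarrow(2)\Rightarrow(3)\Rightarrow(1)$ and the key lemmas you invoke (\cref{claim:trivial}, \cref{lem:AB}, \cref{lem:critpointalb}, \cref{claim:extension}) match the paper's architecture, but both hard implications have genuine gaps. For $(1)\Rightarrow(2)$, your proposed fix for the singular set --- ``$\cS(u)$ has Hausdorff codimension $\geq 2$, so $T\cap X^\circ$ is connected and dense in $T$'' --- fails: $\cS(u)$ has Hausdorff dimension up to $2\dim_{\bC}X-2$, so a positive-dimensional $T$ (e.g.\ a curve) can be \emph{entirely} contained in $\cS(u)$, and then $T\cap X^\circ=\varnothing$. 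The paper's resolution is different: it first shows that $u\circ\widetilde{\nu}$ collapses the lift of a connected component $F$ of a \emph{general} fiber of $\varphi$ (where Shiffman's slicing guarantees $F\not\subset\nu^{-1}(\cS(u))$), and then extends this to \emph{every} fiber component by approximating two points of $\widetilde{F}$ by points $x_n,y_n$ lying in good nearby fiber components $\widetilde{F}_n$ and using the local Lipschitz bound on $u$ plus the triangle inequality. Some such limiting argument is needed; density of regular points in $T$ is simply not available.

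For $(3)\Rightarrow(1)$, the step ``the image fixes a point in $\Delta(G)$, \emph{so} $u$ is constant along the lift of $T_o^{\rm norm}$'' is the crux and is not justified: boundedness of $(\nu\circ g)^*\varrho$ only tells you that the \emph{constant} map is \emph{an} equivariant harmonic map for that representation, not that the restriction of the given $u$ is constant. The missing ingredient is the uniqueness statement for equivariant harmonic maps of logarithmic energy growth (the paper invokes uniqueness of energy densities from \cite[Proposition 2.18]{BDDM}), which forces the pullback of $u$ to the lift of a resolution of $T_o$ to have zero energy density, hence be constant. Even after that, ``running Step 1 in reverse'' to conclude $\eta_i|_{T_o}=0$ is nontrivial precisely when $T_o$ meets $\cS(u)$: the paper needs a further slicing argument (restricting $u$ to disks $\{t\}\times\bD^k$ transverse to $W$, using Shiffman again to find a dense set of good slices, and the continuity of $t\mapsto E^{h_t}$ from \cite[Lemma 2.19]{BDDM}) to propagate the vanishing of the energy to the vanishing of the spectral one-forms on all of $W$. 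Without these two inputs the implication $(3)\Rightarrow(1)$ does not close.
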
 
\begin{proof}
	For the representation $\nu^*\varrho:\pi_1(Y)\to G(K)$,  it is known from \cite{Eys04,DMunique} that that $u\circ\widetilde{\nu}: \widetilde{Y}\to \Delta(G)$ is a  $\nu^*\varrho$-equivariant harmonic mapping of \emph{logarithmic energy growth} (cf. \cite{BDDM} for the definition) which is pluriharmonic.  By the definition of the singular set of a harmonic mapping, we have $\cS(u\circ\widetilde{\nu})\subset \nu^{-1}(\cS(u))$. According to \cref{thm:GS}, the Hausdorff codimension of $\cS(u)$ is at least two. As $\nu:Y\to X$ is a surjective generically finite morphism, the Hausdorff codimension of $\widetilde{\nu}^{-1}(\cS(u))$ is also at least two.  
	Let $F$ be \emph{any} connected component  of an arbitrary  fiber of $\varphi: Y\to S_{\nu^*\varrho}$.  Let $\widetilde{F}$ be   any connected component  of $ p_Y^{-1}(F)$. We will prove that the image $u\circ\widetilde{\nu}(\widetilde{F})$ is a single point in $\Delta(G)$. 
	
	  Pick a general point  $x$ on the smooth locus of $\widetilde{F}$, and we take a relatively compact coordinate system $(U;z_1,\ldots,z_n;\phi)$ centered at $x$ such that 
	  \begin{itemize}
	  	\item $\phi:U\to \bD^n$ is a biholomorphic map;
	  	\item  $\phi(\widetilde{F}\cap U)= \{0\}\times \bD^k$.
	  \end{itemize}  For every $t\in \bD^{n-k}$, we denote by  $U_t:= \phi^{-1}(\{t\}\times \bD^k)$ and  $h_t:=u\circ\widetilde{\nu}|_{U_t}$. Then $h_t:U_t\to \Delta(G)$ are harmonic mappings for every $t\in \bD^{n-k}$.  Since   the Hausdorff dimension   $ \widetilde{\nu}^{-1}(\cS(u))\cap U$ is at most $  2n-2$, by \cite[Collary 4.(i)]{Shi68}   there is a dense set  $\Omega\subset  \bD^{n-k}$ such that  for every $t\in \Omega$ the Hausdorff dimension of $U_t\cap \widetilde{\nu}^{-1}(\cS(u))$ is at most $2k-2$. In particular, $U_t^\circ:=U_t\backslash \widetilde{\nu}^{-1}(\cS(u))$  is an open dense set  of  $U_t$.  For   $t\in \Omega$, $\{p_Y^*\eta_m|_{U_t^\circ},\ldots, p_Y^*\eta_1|_{U_t^\circ},\}$ correspond to complex differentials of $h_t$, and thus there are positive constants $\ep$ and $C$ such that
the energy of $h_t$ 
\begin{align} \label{eq:energybound}
C\cdot 	E^{h_t}\geq  \int_{U_t}\sum_{j=1}^{m}\sqrt{-1}p_Y^*\eta_j|_{U_t^\circ}\wedge p_Y^*\overline{\eta}_j|_{U_t^\circ}\wedge\omega^{k-1} \geq C^{-1}\cdot E^{h_t}
\end{align}
for every $t\in \bD_\ep^{n-k}\cap \Omega$.  
Here $\omega$ is some K\"ahler metric on $\widetilde{Y}$, and $\bD_\ep$ denotes the disk of radius $\ep$.   On the other hand, since $u\circ \widetilde{\nu}$ is Lipschitz on the boundary $\phi^{-1}(\bD^{n-k}\times \partial (\bD^k))$,   it can be shown using \cite[Lemma 2.19]{BDDM} that $t\mapsto E^{h_t}$ is a continuous function. Furthermore, since    $\int_{U_t}\sum_{j=1}^{m}\sqrt{-1}p_Y^*\eta_j|_{U_t^\circ}\wedge p_Y^*\overline{\eta}_j|_{U_t^\circ}\wedge\omega^{k-1}$ is also a continuous function with respect to $t$,   it follows that \eqref{eq:energy} holds for \emph{any}  $t\in \bD_\ep^{n-k}$.   Since $F$ is a fiber of $\varphi: Y\to S_{\nu^*\varrho}$, by \cref{claim:trivial}, it follows that  $\eta_i|_{F}\equiv 0$ for  $i=1,\ldots,m$. Consequently,  we have $p_Y^*\eta_i|_{U_0}$ for each $i$, which implies  $E^{h_0}=0$ by  \eqref{eq:energybound}.  As $x$ is a general point on $\widetilde{F}$, it follows that the energy density of $u\circ\widetilde{\nu}|_{\widetilde{F}}:\widetilde{F}\to \Delta(G)$ is zero almost everywhere on $\widetilde{F}$. Thus, the energy of   $u\circ\widetilde{\nu}|_{\widetilde{F}}$ is zero.   Therefore, $u\circ\widetilde{\nu}(\widetilde{F})$ is a point $P$ in the building  $\Delta(G)$. It is important to note that  $\widetilde{F}$ is a connected component of $p_Y^{-1}(F)$.  This observation  implies that $\nu^*\rho({\rm Im}[\pi_1(F)\to \pi_1(Y)])$ fixes the point  $P$. By \cref{lem:AB}, $\nu^*\rho({\rm Im}[\pi_1(F)\to \pi_1(Y)])$ is a bounded subgroup of $G(K)$.
	
	\medspace
	
	Let $T$ be any connected Zariski closed subset of $X$. We claim that there exists a connected Zariski closed subset $W$ of $\nu^{-1}(T)$ that surjects onto $T$. Moreover, if $T$ is irreducible, then $W$ can be chosen to be irreducible.
	
	Since $\pi:\xsp\to X$ is a Galois covering, any connected component $W_1$ of $\pi^{-1}(T)$ is mapped surjectively onto $T$ via $\pi$. Note that all fibers of $\mu:Y\to\xsp$ are connected because $\xsp$ is normal. Therefore, $W:=\mu^{-1}(W_1)$ is a connected subset of $\nu^{-1}(T)$ that maps surjectively onto $T$ via $\nu$. When $T$ is irreducible, we just replace $W$ by one of its  irreducible component  which surjects onto $T$. 

 Let $\widetilde{W}$ be a connected component    of $p_Y^{-1}(W)$.  By the above choice of $W$, we know that $\widetilde{W}$ is a connected component of $(p_X\circ\widetilde{\nu})^{-1}(T)$. 
 Then $\widetilde{\nu}(\widetilde{W})$ is contained in a  connected component  $\widetilde{T}$ of $p_X^{-1}(T)$.    Let $\{\widetilde{W}_i\}_{i\in I}$ be the set of  connected components  of $p_Y^{-1}(W)$ which are  contained in $\widetilde{\nu}^{-1}(\widetilde{T})$.   Since $\widetilde{\nu}$ is proper and surjective,   we have $\bigcup_{i\in I}\widetilde{\nu}(\widetilde{W}_i)=\widetilde{T}$.      

	\medspace
	
	\noindent {\em Proof of (1) $\Rightarrow$ (2)}. 
		If $s_\varrho(T)$ is a point, then    $W$ is contained in some connected component  $F$ of a fiber of $\varphi$.   
	By the above result, it follows that $u\circ\tilde{\nu}(\widetilde{W}_i)$ is a point for each $i\in I$.  Since $T$ is connected and $\bigcup_{i\in I}\widetilde{\nu}(\widetilde{W}_i)=\widetilde{T}$, it follows that   $u(\widetilde{T})$ is a point $P$ in $\Delta(G)$. As a result,  $\varrho\big({\rm Im}[\pi_1(T)\to \pi_1(X)] \big)$ fixes $P$ and by \cref{lem:AB}   $\varrho\big({\rm Im}[\pi_1(T)\to \pi_1(X)] \big)$ is   bounded.
	
	\medspace

\noindent {\em Proof of (2) $\Rightarrow$ (3)}.  This obviously follows from ${\rm Im}[\pi_1(T_o^{\rm norm})\to \pi_1(X)]\subset {\rm Im}[\pi_1(T)\to \pi_1(X)]$.

\medskip
	
		\noindent {\em Proof of (3) $\Rightarrow$ (1)}.  
We first assume that $T$ is irreducible. Then by the above argument  there exists an irreducible  closed subvariety $W\subset Y$ which maps surjectively onto $T$ via $\nu$.  	If  $\varrho\big({\rm Im}[\pi_1(T^{\rm norm})\to \pi_1(X)] \big)$ is bounded, then $\nu^*\varrho\big({\rm Im}[\pi_1(W^{\rm norm})\to \pi_1(Y)] \big)$ is also bounded.  Let $\widehat{W}\to W$ be a resolution of singularities and $\widetilde{{W}}$ be a connected component  of $\widehat{W}\times_Y\widetilde{Y}$. 
We  denote by $p:\widetilde{{W}}\to \widetilde{Y}$ and $g:\widehat{W}\to Y$  the induced   maps.  
 \begin{equation*}
	\begin{tikzcd}
		\widetilde{W} \arrow[r, "p"]\arrow[d] & \widetilde{Y}\arrow[d,"p_Y"]\\
		\widehat{W}\arrow[r, "g"] & Y
	\end{tikzcd}
\end{equation*} 
Note that $\widetilde{W}\to \widehat{W}$ is a Galois cover.  We can deduce from \cite{BDDM} that $u\circ\widetilde{\nu}\circ p$ is  a  $(\nu\circ g)^*\varrho$-equivariant harmonic mapping of  {logarithmic energy growth}, where   $(\nu\circ g)^*\varrho: \pi_1(\widehat{W})\to G(K)$, whose image is assumed to be bounded.  
	Hence  $(\nu\circ g)^*\varrho(\pi_1(\widetilde{W}))$ fixes a point $P\in \Delta(G)$ by \cref{lem:AB}. The constant map $\widetilde{W}\to P$ is   also a $(\nu\circ g)^*\varrho$-equivariant harmonic mapping. Though harmonic mappings are not unique,  their energy densities are  unique by \cite[Proposition 2.18]{BDDM} if they have  {logarithmic energy growth} at infinity. It follows that  the energy density of $u\circ\widetilde{\nu}\circ p$ is zero and thus $u\circ\widetilde{\nu}\circ p(\widetilde{W})$ is point in $\Delta(G)$. 
	
	To show that $s_{\varrho}(T)$ is a point, it is equivalent to prove that $\varphi(W)$ is a point. By \cref{claim:trivial} it suffices to prove that $\eta_i|_W\equiv 0$  for each $i$.   Pick a general point  $x$ on the smooth locus of $W$, and we take a relatively compact coordinate system $(U;z_1,\ldots,z_n)$ of $Y$ containing $x$ such that  $U\simeq \bD^n$ and $W\cap U\simeq \{0\}\times \bD^k$ under this trivialization. Let  $\Omega$ be a connected component of $p^{-1}(W\cap U)$, which is isomorphic to $\bD^k$.   Denote by $h_0:=u\circ\widetilde{\nu}\circ p|_{\Omega}$.  
By employing the same argument as at the beginning of the proof of the claim, there exists  a  positive constant  $C$ such that  the following inequality holds:
	\begin{align}\label{eq:energy}
		E^{h_0}\geq C\int_{\Omega}\sum_{j=1}^{m}\sqrt{-1}p^*\eta_j \wedge p^*\bar{\eta}_j \wedge\omega^{k-1}. 
	\end{align} 
Here $\omega$ is some K\"ahler metric on $\widetilde{Y}$.  Since $u\circ\tilde{\nu}\circ p:\widetilde{W}\to \Delta(G)$ is constant, 	we have $E^{h_0}=0$.  \eqref{eq:energy} implies that $p^*\eta_j|_{\Omega}\equiv 0$.  Since $\Omega$ is an  analytic open set of $\widetilde{W}$, we can deduce that $\eta_i|_{W}\equiv 0$ for every $i$. Consequently, we can conclude that  $\varphi(W)$ is a point by \cref{claim:trivial}.  This proves that  $s_\varrho(T)$ is a point.  We prove  that $(3)\implies (1)$ when $T$ is irreducible.
	
Assume now $T$ is reducible.  Let $T_1,\ldots,T_k$ be its irreducible components. By the assumption that $\varrho({\rm Im}[\pi_1(T_i^{\rm norm})\to \pi_1(X))])$  is bounded for each $i$, it follows that $s_\varrho(T_i)$ is a point. Since $T$ is connected, it implies that $s_\varrho(T)$ is also a point. We prove  that $(3)\implies (1)$  completely.  
\end{proof}

\medspace

\noindent {\it Step 6. Construction of $s_\varrho$ when $G$ is an algebraic torus.}    
Let $G$ be an algebraic torus over $K$.  Let $a:X\to \cA$ be the quasi-albanese morphism.
Then $\varrho:\pi_1(X)\to G(K)$ factors $\pi_1(X)\to \pi_1(\cA)$.
Let $\tau:\pi_1(\cA)\to G(K)$ be the induced representation. 
Let $\mathcal{B}$ be the set of all semi-abelian subvarieties $B\subset \cA$ such that $B\in \mathcal{B}$ iff $\tau(\mathrm{Im}[\pi_1(B)\to \pi_1(\cA)])\subset G(K)$ is bounded.
\begin{claim}
	There exists a unique maximal element $B_o\in\mathcal{B}$, i.e., for every $B\in \mathcal{B}$, we have $B\subset B_o$.
\end{claim}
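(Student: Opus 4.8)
The key point is that $\mathcal{B}$ is closed under the operation $B_1, B_2 \mapsto B_1 \cdot B_2$ (the smallest semi-abelian subvariety containing both), and that $\mathcal{A}$ being finite-dimensional forces this process to stabilize. First I would recall that $\tau:\pi_1(\mathcal{A})\to G(K)$ takes values in an algebraic torus, so $\tau(\pi_1(\mathcal{A}))$ is an abelian group; moreover a subgroup $H$ of $G(K)=(K^*)^n$ is bounded if and only if each coordinate $|H_i|$ is bounded in $K^*$, and boundedness is inherited by subgroups and stable under products of subgroups, since $G(K)$ carries a topology making it a topological group and a product of two relatively compact subsets is relatively compact. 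This reduces the claim to: if $B_1, B_2 \in \mathcal{B}$, then $B_1\cdot B_2 \in \mathcal{B}$.

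For this, I would use \cref{lem:finiteindex}: if $B_1, B_2$ are semi-abelian subvarieties of $\mathcal{A}$, then the image of $\pi_1(B_1)\times\pi_1(B_2)=\pi_1(B_1\times B_2)\to \pi_1(\mathcal{A})$ under the addition map $B_1\times B_2 \to B_1\cdot B_2 \hookrightarrow \mathcal{A}$ has finite index in $\mathrm{Im}[\pi_1(B_1\cdot B_2)\to \pi_1(\mathcal{A})]$. Hence $\tau(\mathrm{Im}[\pi_1(B_1\cdot B_2)\to\pi_1(\mathcal{A})])$ contains, as a finite-index subgroup, the group generated by $\tau(\mathrm{Im}[\pi_1(B_1)\to\pi_1(\mathcal{A})])$ and $\tau(\mathrm{Im}[\pi_1(B_2)\to\pi_1(\mathcal{A})])$. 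Both generating subgroups are bounded by hypothesis, their product is bounded (abelian target), and a group containing a bounded subgroup of finite index is bounded. Therefore $B_1\cdot B_2\in\mathcal{B}$.

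Finally, to produce $B_o$: among all elements of $\mathcal{B}$ choose one, say $B_o$, of maximal dimension, and among those of maximal dimension one with the maximal number of connected components of $\mathcal{A}/B_o$ — or more simply, observe that if $B_o$ has maximal dimension and $B\in\mathcal{B}$ is arbitrary, then $B_o\cdot B\in\mathcal{B}$ has dimension $\geq\dim B_o$, hence $=\dim B_o$, hence $B_o\cdot B = B_o$ (a semi-abelian subvariety containing $B_o$ with the same dimension equals $B_o$, as $B_o$ has finitely many irreducible components and $B_o\cdot B$ is connected containing the identity component; one checks $B_o\cdot B$ is irreducible of the same dimension so equals $B_o$). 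Thus $B\subset B_o$, giving both maximality and uniqueness. The only mild subtlety — and the step I would be most careful about — is the passage from "finite index" in \cref{lem:finiteindex} to boundedness of the larger group; this is routine since in a topological group a finite union of translates of a relatively compact set is relatively compact, but it should be spelled out. I would also note the degenerate case $\mathcal{B}=\{0\}$ is covered, with $B_o=0$.
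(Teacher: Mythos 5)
Your proof is correct and follows essentially the same route as the paper: the whole point is that $\mathcal{B}$ is closed under the product $B_1\cdot B_2$ because bounded subgroups of the torus $G(K)$ generate bounded subgroups, after which finite-dimensionality of $\cA$ forces a maximal element. If anything you are slightly more careful than the paper, which simply asserts that $\mathrm{Im}[\pi_1(B_1\cdot B_2)\to\pi_1(\cA)]$ is generated by the images from $\pi_1(B_1)$ and $\pi_1(B_2)$, whereas you correctly pass through \cref{lem:finiteindex} and the observation that a finite-index overgroup of a bounded subgroup is bounded.
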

\begin{proof}
If $B_1,B_2\in \mathcal{B}$, then $B_1\cdot B_2\in \mathcal{B}$.
Indeed, $\mathrm{Im}[\pi_1(B_1\cdot B_2)\to\pi_1(\cA)]$ is generated by $\mathrm{Im}[\pi_1(B_1)\to\pi_1(\cA)]$ and $\mathrm{Im}[\pi_1(B_1)\to\pi_1(\cA)]$ whose images under $\varrho$ are both contained in the unique maximal compact subgroup of $G(K)$.  
\end{proof}
Set $A=\cA/B_o$.
Let $\alpha:X\to A$ be the induced map.
Let $s_{\varrho}:X\to S_{\varrho}$ be the quasi-Stein factorisation of $\alpha$.
We shall show that this $s_{\varrho}$ satisfies the desired property. 

For this purpose, let $g:T\to X$ be a morphism from a smooth quasi-projective variety $T$.
Then $a\circ g:T\to \cA$ factors a quasi-albanese morphism $T\to A_T$.
Let $B_T\subset \cA$ be the image of the induced map $A_T\to \cA$.
Then $B_T$ is a  translate of a semi-abelian subvariety.
We have a surjection $\pi_1(T)\to \pi_1(B_T)$ so that $\pi_1(T)\to \pi_1(\cA)$ factors $\pi_1(T)\to \pi_1(B_T)\to \pi_1(\cA)$.
Then
$$
\varrho(\mathrm{Im}[\pi_1(T)\to\pi_1(X)])=\tau(\mathrm{Im}[\pi_1(B_T)\to\pi_1(\cA)]).$$
Thus $\varrho(\mathrm{Im}[\pi_1(T)\to\pi_1(X)])\subset G(K)$ is bounded if and only if $\tau(\mathrm{Im}[\pi_1(B_T)\to\pi_1(\cA)])\subset G(K)$ is bounded, i.e, $B_T\subset B_o\cdot x$ for some $x\in \cA$.
Hence $\varrho(\mathrm{Im}[\pi_1(T)\to\pi_1(X)])\subset G(K)$ is bounded if and only if $\alpha\circ g:T\to A$ is a single point.

We now assume that $T$ is a connected Zariski closed subset of $X$. Let $T_1,\ldots,T_k$ be all its irreducible components.  Let $\mu_i:\widetilde{T}_i\to T_i$ be a desingularization.  If $\varrho({\rm Im}[\pi_1({T}_i^{\rm norm})\to \pi_1(X)])$ is bounded for each  $i$, then   $\varrho({\rm Im}[\pi_1(\widetilde{T}_i)\to \pi_1(X)])$   is also bounded. By the above argument we know that $\alpha\circ\mu_i(\widetilde{T}_i)$ is a point for each $i$.    Since $T$ is connected,   it follows that $\alpha(T)$, and thus $s_\varrho(T)$ is a point.   

Assume now $s_\varrho(T)$ is a point. Then  
 $a(T)$ is contained in $B_o\cdot x$ for some $x\in \cA$. Then 
$$
\varrho(\mathrm{Im}[\pi_1(T)\to\pi_1(X)])\subset \tau(\mathrm{Im}[\pi_1(B_o\cdot x)\to\pi_1(\cA)])$$
which is bounded.  
Hence $s_{\varrho}$ satisfies the desired property of the theorem.

\medspace

\noindent {\it Step 7. Construction of $s_\varrho$ when $G$ is reductive.}   
Now we assume that $G$ is reductive.  Let $Z$ be the central torus of $G$ and let $G'$ be the derived group of $G$. Then  the natural morphism
$$
a:G\to G/G'\times G/Z
$$
is an isogeny, where $T:=G/G'$ is an algebraic torus and $H:=G/Z$ is semisimple.     Let $\varrho_1:\pi_1(X)\to T(K)$ and $\varrho_2:\pi_1(X)\to H(K)$ be the induced  representation obtained by the composition of $a\circ\varrho$ with the projections $T(K)\times H(K)\to T(K)$ and $T(K)\times H(K)\to H(K)$ respectively.  Therefore,   $\varrho:\pi_1(X)\to G(K)$ is bounded if and only if both $\varrho_1$ and $\varrho_2$ are bounded.  At Steps 5 and 6, we have morphisms $s_{\varrho_i}: X \rightarrow S_{\varrho_i}$ for $i = 1, 2$, such that for any connected Zariski closed subset $Z$ of $X$, the image $s_{\varrho_i}(Z)$ is a point if and only if $\varrho_i(\operatorname{Im}[\pi_1(Z) \rightarrow \pi_1(X)])$ is bounded, where $i=1,2$. This is also equivalent to $\varrho_i(\operatorname{Im}[\pi_1(Z_o^{\operatorname{norm}}) \rightarrow \pi_1(X)])$ being bounded for any irreducible component $Z_o$ of $Z$.  Let $s_\varrho:X\to S_{\varrho}$  be the quasi-Stein factorization of the morphism 
\begin{align*}
	X&\to S_{\varrho_1}\times S_{\varrho_2}\\
	x&\mapsto (s_{\varrho_1}(x), s_{\varrho_2}(x)).
\end{align*}
 Then we can verify that $s_\varrho$ is the desired reduction map in the theorem.

 \medspace
 
 \noindent {\it Step 8. We do not assume that $X$ is smooth.}
 Let $\mu:Y\to X$ be a resolution of singularities. Then $\mu^\varrho:\pi_1(Y)\to G(K)$ is also a Zariski dense representation. By Step 7, the desired reduction map $s_{\mu^*\varrho}:Y\to S_{\mu^*\varrho}$ exists.  
 
 Let $F$ be any fiber of $\mu$, which is connected and compact as $X$ is normal. Obviously, $\mu^*\varrho({\rm Im}[\pi_1(F)\to \pi_1(Y)])=\{e\}$. Therefore, $s_{\mu^*\varrho}(F)$ is a point. Hence there exists a morphism $s_\varrho:X\to S_{\mu^*\varrho}$ such that $s_\varrho\circ\mu=s_{\mu^*\varrho}$. We will prove that it satisfies the required property in the theorem. 
 
   Let $T':=\mu^{-1}(T)$. Since $X$ is normal, it follows that each fiber of $\mu$ is connected. Hence the natural morphism $T'\to T$ has connected fibers. By \cite[Lemma 3.44]{DY23}, we know that  $\pi_1(T')\to \pi_1(T)$ is surjective.   
   
 \medspace
 
 	\noindent {\em Proof of (c) $\Rightarrow$ (a)}.  Since $s_{\mu^*\varrho}(T')=s_\varrho(T)$ is a point, it follows that
 	$$
 	 \varrho({\rm Im}[\pi_1(T)\to \pi_1(X)]) = \mu^*\varrho({\rm Im}[\pi_1(T')\to \pi_1(Y)]) 	 
 	$$
 	is bounded. 
 	
 	\medspace
 	
 	 	\noindent {\em Proof of (a) $\Rightarrow$ (b)}. This is obvious.
 	 		
 	 		\medspace
 	 		
 	 		 	\noindent {\em Proof of (b) $\Rightarrow$ (c)}.  We take  an irreducible component, denoted as $T_o'$, of $T'$ that dominates $T_o$. Considering
 	 		 	$$\mu^*\varrho({\rm Im}[\pi_1(T_o')\to \pi_1(Y)])\subset  	 \varrho({\rm Im}[\pi_1(T_o)\to \pi_1(X)]),$$ 
 	 		 we can conclude that $\mu^*\varrho({\rm Im}[\pi_1(T_o')\to \pi_1(Y)])$ is   bounded.   By Step 7, $s_{\mu^*\varrho}(T_o')$ is a point. This leads  to  the conclusion that $s_\varrho(T_o)$ is a  point as well. Given the connectedness of $T$, we  conclude that $s_\varrho(T)$ as a point.

 We complete the proof of the theorem.   
\end{proof}

We finish this section with some remarks  on the above long proof. 
\begin{rem}
	For the purpose of the proof of \cref{main2}, we  only need to study the properties of the morphism $\alpha:\xsp\to \cA$ together with the precise information on the ramification locus $\pi: {\xsp}\to  {X}$ in \cref{claim:ramified}. The reduction $s_\varrho:X\to S_\varrho$ will be an important tool in studying the linear Shafarevich conjecture for quasi-projective varieties (see \cite{Eys04,EKPR12} and the very recent work by Green-Griffiths-Katzarkov \cite{GGK22}).
	
	Note that  the  spectral cover $\xsp$ of $X$ constructed above has the advantage that it is always irreducible, while in the literature like    \cite{Eys04}  spectral covers  might  be reducible.  In \cite{Kli03} Klingler has a completely different way to  construct the spectral cover, which is more visual. However,   the ramification locus cannot be described as  \cref{claim:ramified}.  We stress here that \cref{claim:ramified}  is crucial in the proof of \cref{main6}. 
\end{rem}

\section{Hyperbolicity of algebraic varieties admitting non-Archimedean representations of their fundamental groups}\label{sec:spectral}
This section is devoted to prove \cref{main6}. In  \cref{sub:spectral}  we prove that for the spectral cover of $\xsp\to X$ associated to $\varrho$ in \cref{main6}  defined in \cref{def:spectral}, $\xsp$ is of log general type and  the partial quasi-Albanese morphism induced by its spectral one forms defined in \cref{def:spectral one forms} is generically finite onto its image. In \cref{subsection:spread1} we  prove \cref{main:lgt} by spreading  the positivity of $\xsp$ to $X$. \cref{subsection:log derivative,subsection:ramification} we estimate the ramification counting function for   the covering $\cY\to {\bC}_{>\delta}$ defined in \ref{figure:curve} induced by the spectral covering $\xsp$ and  the holomorphic map $f:{\bC}_{>\delta}\to X$.  In \cref{subsection:spread2} we prove \cref{main:PPH} by applying \cref{thm:spectral cover,thm2nd,prop:Yam10}. 
 \subsection{Positivity  of log canonical bundle of  spectral cover} \label{sub:spectral}
 \begin{thm}\label{thm:spectral cover}
	 Let $X$ be a quasi-projective manifold.  Assume that there is a Zariski dense representation $\varrho:\pi_1(X)\to G(K)$ where $G$ is an almost simple algebraic group defined over a non-archimedean local field $K$.   When $\varrho$ is big and unbounded, then \begin{thmlist}
	 	\item \label{spectral general type}the spectral cover $\xsp$ of $X$ defined in  \cref{def:spectral} is of log general type.  
	 	\item  \label{genericallyfinite}  Let $\alpha:\xsp\to \cA$  be the partial quasi-Albanese map induced by   the spectral one forms $\{\eta_1,\ldots,\eta_m\}\subset  H^0( {\xsp}, \pi^*\Omega_{ {X}})  $  defined in \cref{def:partial}. Then $\dim \xsp=\dim \alpha(\xsp)$. 
	 	\end{thmlist}
  \end{thm}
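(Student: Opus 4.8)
The strategy is to exploit the harmonic mapping $u:\widetilde X\to\Delta(G)$ from \cref{thm:BDDMex}, together with the spectral covering machinery assembled in the proof of \cref{thm:KZreduction} (\cref{def:spectral}, \cref{def:spectral one forms}, \cref{def:partial}) and the geometric-group-theoretic finiteness criterion \cref{lem:BT}. For \cref{spectral general type}, the point is that the log general type of $\xsp$ should follow once we know that the spectral one forms $\{\eta_1,\dots,\eta_m\}\subset H^0(\xsp,\pi^*\Omega_X)$ generate enough positivity: concretely, I would first establish \cref{genericallyfinite}, so that the partial quasi-Albanese morphism $\alpha:\xsp\to\cA$ is generically finite onto its image. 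Then $\xsp$ carries a generically injective map (up to the Stein factorisation, which is étale by Kawamata, as used in \cref{lem:abelian pi}) to a semi-abelian variety; combined with the fact that the difference forms $\eta_i-\eta_j$ cut out the (controlled) ramification of $\pi:\overline{\xsp}\to\overline X$ (\cref{claim:ramified}), and that $X$ itself is of log general type by the projective-case results of Campana--Claudon--Eyssidieux adapted via the building, one can run a Kawamata-type argument on $\alpha(\xsp)\subset\cA$: a subvariety of a semi-abelian variety is of log general type iff it is not fibred by translates of positive-dimensional subtori, and any such fibration would force the $\eta_i$ to be pulled back from a quotient, contradicting bigness of $\varrho$ via \cref{lem:BT}. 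So the bulk of the work, and the logical heart, is \cref{genericallyfinite}.

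For \cref{genericallyfinite}, suppose for contradiction that $\dim\alpha(\xsp)<\dim\xsp$, i.e. the general fibre $F$ of (the Stein factorisation of) $\alpha:\xsp\to\cA$ is positive-dimensional. By \cref{claim:trivial}, $\alpha(F)$ being a point is equivalent to $\eta_i|_F\equiv 0$ for every spectral one form $\eta_i$. Now the $\eta_i$, pulled back, are precisely the complexified differentials $u_k^*(d\alpha_k)^{(1,0)}_{\bC}$ of the harmonic map $u$ on the regular locus $X^\circ$ (this is exactly how they were constructed in Step 1 of the proof of \cref{thm:KZreduction}). The vanishing of all $\eta_i$ along $F$ — more precisely along a connected component $\widetilde F$ of the preimage of $F$ in the universal cover of $Y$, where $Y\to\xsp$ is the resolution — forces, by the energy-estimate argument already used in the proof of \cref{claim:bounded subgroup} (Step 5, the "$(3)\Rightarrow(1)$" direction, inequality \eqref{eq:energy} and continuity of $t\mapsto E^{h_t}$), that $u$ restricted to each such fibre component is constant. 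Consequently $\varrho\big({\rm Im}[\pi_1(F^{\rm norm})\to\pi_1(X)]\big)$ (transported appropriately through the étale/birational covers) fixes a point of $\Delta(G)$, hence is a bounded subgroup of $G(K)$ by \cref{lem:AB}.

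The final step is to derive a contradiction with bigness. The fibres $F$ of $\alpha$ sweep out a Zariski dense family in $\xsp$, hence their images in $X$ sweep out a Zariski dense family of positive-dimensional subvarieties whose fundamental-group images under $\varrho$ are bounded — but here one must be careful: bigness of $\varrho$ says only that for a very general subvariety the image is \emph{infinite}, not unbounded. This is where \cref{lem:BT} enters: since $\varrho$ is Zariski dense and unbounded, any normal subgroup of a finitely generated Zariski-dense unbounded subgroup that is bounded must be finite. I would take $\Gamma=\varrho(\pi_1(X))$ (or rather its image on a suitable finite étale cover where the image is torsion-free, as in \cref{lem:kollar} Step 1), note that ${\rm Im}[\pi_1(F^{\rm norm})\to\pi_1(X)]$ is normal in a finite-index subgroup of $\pi_1(X)$ by \cref{lem:normal} (applied to the fibration structure), conclude from \cref{lem:BT} that its $\varrho$-image is finite, hence trivial after passing to the torsion-free cover, which contradicts bigness for the very general fibre. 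The main obstacle I anticipate is the careful bookkeeping of fundamental groups across the tower $\widetilde Y\to Y\to\xsp\to X$ (and a further étale cover to kill torsion), ensuring that "$\varrho$-image of $\pi_1(F)$ bounded" on $\xsp$ translates faithfully to a statement about a subvariety of $X$ to which bigness and normality (\cref{lem:normal}, \cref{lem:finiteindex}) genuinely apply; the harmonic-map/energy part is essentially a repetition of arguments already carried out in Step 5 of the proof of \cref{thm:KZreduction}.
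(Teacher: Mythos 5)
Your argument for \cref{genericallyfinite} is essentially the paper's: take a connected component $F$ of a general fibre of $\alpha$ (after passing to a resolution $Y\to\xsp$), use \cref{claim:trivial} to get $\eta_i|_F\equiv 0$, the energy argument from the proof of \cref{claim:bounded subgroup} to get boundedness of $\varrho\big({\rm Im}[\pi_1(F)\to\pi_1(Y)]\big)$, then \cref{lem:normal}, \cref{lem:BT} and bigness to force $\dim F=0$. That part is correct and matches the paper.

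The gap is in \cref{spectral general type}. Knowing that $\alpha$ is generically finite onto its image only gives $\bar\kappa(\xsp)\geq 0$ (via \cref{prop:Koddimabb}); to get log general type you must rule out a positive-dimensional very general fibre $F$ of the log-Iitaka fibration of $Y$, and your proposed mechanism for this does not work. Such an $F$ satisfies $\bar\kappa(F)=0$ but there is no reason the spectral forms $\eta_i$ vanish on $F$ — they vanish exactly on fibres of $\alpha$, and $F$ maps generically finitely onto (a translate of) a semi-abelian subvariety, so $\eta_i|_F\neq 0$ in general. Hence you cannot conclude that $\varrho\big({\rm Im}[\pi_1(F^{\rm norm})\to\pi_1(X)]\big)$ is \emph{bounded}, and \cref{lem:BT} is not applicable at this step. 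The paper's argument is different: since $q|_F$ is generically finite onto its image and $\bar\kappa(F)=0$, \cref{lem:abelian pi} shows $\pi_1(F)$ is \emph{abelian}; its $\varrho$-image is a normal subgroup of the Zariski-dense group $\varrho(\pi_1(Y))$, so its Zariski closure is a normal algebraic subgroup of the almost simple group $G$, hence either finite or of finite index — and the finite-index case is excluded by commutativity. Finiteness then contradicts bigness unless $F$ is a point. You need this abelianness argument (or an equivalent one); boundedness is the wrong dichotomy here. Separately, your appeal to "$X$ itself is of log general type by Campana--Claudon--Eyssidieux" is circular: in this paper the log general type of $X$ (\cref{thm:log general type}) is deduced \emph{from} the present theorem about $\xsp$, not available as an input.
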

\begin{proof}
	\noindent
	{\em Step 1. We replace \(\xsp\) by a smooth model.} 
	We will use the notations in \cref{sec:KZthm}. Let \(\mu:Y \to \xsp\) be a resolution of singularities, and let \(\nu : Y \to X\) be the composite map. Denote by \(\widetilde{\nu} : \widetilde{Y} \to \widetilde{X}\) the map induced at the level of universal covers; one checks right away that \(u \circ \widetilde{\nu} : \widetilde{Y} \to \widetilde{X}\) is a pluriharmonic, \(\nu^{\ast} \varrho\)-equivariant map.

	Since the image of \(\nu_{\ast} : \pi_{1}(Y) \to \pi_{1}(X)\) has finite index by \cref{lem:finiteindex}, the Zariski closure $H$ of   \(\nu^{\ast}\varrho(\pi_1(Y))\) contains the identity component $G^o$ of $G$, hence is also almost simple.  We have also:
	\begin{claim}
		The representation \(\nu^{\ast}\varrho\) is big.
	\end{claim}
\begin{proof}
	For any closed   subvariety $Z\subset Y$ containing a  very general point in $Y$, its image $\nu(Z)$ is   closed subvariety  passing to a very general point in $X$.  Since $ Z\to \nu(Z)$ is surjective, ${\rm Im}[\pi_1(Z^{\rm norm})\to \pi_1(\nu(Z)^{\rm norm})]$  has   finite index in  $\pi_1(\nu(Z)^{\rm norm})$ by Lemma~\ref{lem:finiteindex}.  
	 Hence  $\nu^*\varrho({\rm Im}[\pi_1(Z^{\rm norm})\to \pi_1(Y)])$ has finite index in $\varrho({\rm Im}[\pi_1(\nu(Z)^{\rm norm})\to \pi_1(X)])$.  
	 Since \(\nu(Z)\) contains a very general point of $X$, this latter group is infinite, which   implies that $\nu^*\varrho({\rm Im}[\pi_1(Z^{\rm norm})\to \pi_1(Y)])$ is infinite. This proves our claim.
\end{proof}
	 
	 \medskip

	 \noindent
	 {\em Step 2. We show that \(\overline{\kappa}(Y) \geq 0\).} Let \(X^{\circ}\subset X\) be the regular set of $\varrho$-equivariant harmonic mapping $u$ introduced in   \Cref{sec:harmmapp}, and let \(Y^{\circ} := \nu^{-1}(X^{\circ})\).  Let \(F\subset Y\) be a connected component of a general fiber of \(q : Y \to \cA\), where  \(q : Y \to \cA\) is the composite map of $\alpha:\xsp\to \cA$ and $\mu:Y\to \xsp$. \cref{claim:trivial} implies that,  for every fiber $F$ of $Y\to \cA$ one has \(\eta_{i}|_{F} = 0\).
	 
	 Take a connected component  \(F\) of a  general fiber of $Y\to \cA$.   Then  by the proof of  \cref{claim:bounded subgroup}  $\nu^*\varrho({\rm Im}[\pi_1(F)\to \pi_1(Y)])$  is a bounded subgroup of $H(K)$.  However, $\nu^*\varrho({\rm Im}[\pi_1(F)\to \pi_1(Y)])$ is a normal subgroup of $\nu^*\varrho(\pi_1(Y))$ by \cref{lem:normal}.  Since $\nu^*\varrho:\pi_1(Y)\to H(K)$ is Zariski dense and unbounded,   \cref{lem:BT} yields  the finiteness of $\nu^*\varrho({\rm Im}[\pi_1(F)\to \pi_1(Y)])$.  However,   \(\nu^{\ast}\varrho\) is assumed to be big. Then \(F\) must be a point. This implies that  $q:Y\to \cA$, hence  $\alpha:\xsp\to \cA$ is generically finite onto its image.  Let $Z$ to be the Zariski closure of $q(Y)$.  By \cref{prop:Koddimabb}, $\bar{\kappa}(Z)\geq 0$.   Hence $\overline{\kappa}(Y)\geq 0$ for $q:Y\to Z$ is dominant and generically finite. 
	 \medskip

	\noindent
	{\em Step 3. We show that the log--Iitaka fibration of \(Y\) is trivial.}
	We may replace \(Y\) with a birational modification so that the log-Iitaka fibration of \(Y\) is well-defined as a dominant morphism $f:Y\to B$ with connected general fibers. Note that a very general fiber $F$ of $f$ is a connected smooth quasi-projective variety with $\overline{\kappa}(F)=0$. Moreover, since \(q:Y\to \cA\) is generically finite onto its image, so is its restriction $q|_{F}:F\to \cA$. By \cref{lem:abelian pi}, it follows that $\pi_1(F)$ is abelian.  Hence $\nu^*\varrho({\rm Im}[\pi_1(F)\to \pi_1(Y)])$   is an abelian subgroup of $G(K)$.   Note that $\pi_1(F)\triangleleft \pi_1(Y)$ by \cref{lem:normal}. Since $\nu^*\varrho: \pi_1(Y)\to H(K)$ is Zariski dense and $H$ is   almost simple, it follows that the Zariski closure of $\nu^*\varrho({\rm Im}[\pi_1(F)\to \pi_1(Y)])$  is either  finite or a finite index subgroup of $H$. The second case cannot happen since  $\nu^*\varrho({\rm Im}[\pi_1(F)\to \pi_1(Y)])$   is abelian. Therefore, $\nu^*\varrho({\rm Im}[\pi_1(F)\to \pi_1(Y)])$  must be finite. Now, since \(\nu^*\varrho\) is big, this implies that $F$ is a point. We conclude that $Y$, hence $X^\sp$  is of log general type. 
\end{proof}

  \subsection{Spread positivity from spectral covering}\label{subsection:spread1}
  In this subsection, based on \cref{claim:ramified,spectral general type,cor:20221102} together with the celebrated work \cite{CP19} we prove that  for $X$ in \cref{thm:spectral cover} any  closed   subvariety of $X$ passing to a \emph{general} point   is   of log general type.    

\begin{thm}\label{thm:log general type}
 	 	Let $X$ be a quasi-projective manifold.  Let $G$ be an almost simple algebraic group defined over a non-archimedean local field $K$.  Let $\varrho:\pi_1(X)\to G(K)$ be a Zariski dense representation which is big and unbounded. 
Then there exists a proper Zariski closed subset $E\subsetneqq X$ such that any closed subvariety $V\subset X$ not contained in $E$  is of log general type. 
\end{thm}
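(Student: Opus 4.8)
The plan is to combine the positivity of the spectral cover (\cref{spectral general type}) with the generic finiteness onto a semi-abelian variety (\cref{genericallyfinite}), together with \cref{cor:20221102}, to obtain log-general type for all subvarieties not meeting a fixed proper Zariski closed subset. First I would apply \cref{thm:spectral cover}: the spectral cover $\pi:\xsp\to X$ is of log general type, and the partial quasi-Albanese morphism $\alpha:\xsp\to\cA$ induced by the spectral one-forms $\{\eta_1,\dots,\eta_m\}$ satisfies $\dim\xsp=\dim\alpha(\xsp)$. Hence by \cref{cor:20221102} (applied to $\xsp$, with the semi-abelian target $\cA$), there is a proper Zariski closed subset $\Xi_{\xsp}\subsetneqq\xsp$ such that every closed subvariety of $\xsp$ not contained in $\Xi_{\xsp}$ is of log general type. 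Since $\pi$ is finite (hence closed), $\pi(\Xi_{\xsp})$ is a proper Zariski closed subset of $X$; I set $E$ to contain $\pi(\Xi_{\xsp})$ together with the image $\pi(R)$ of the ramification locus $R$ from \cref{claim:ramified} — recall $R=\pi^{-1}(\pi(R))$ — and also the (at most countably many, but in fact finitely many can be arranged) subvarieties where $\alpha$ fails to be quasi-finite.

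\textbf{The descent step.} The heart of the argument is to pass from log-general type on $\xsp$ to log-general type on $X$. Given a closed subvariety $V\subset X$ with $V\not\subset E$, let $V'$ be an irreducible component of $\pi^{-1}(V)$ dominating $V$; then $\pi|_{V'}:V'\to V$ is generically finite, and since $V\not\subset\pi(\Xi_{\xsp})\cup\pi(R)$ we can arrange $V'\not\subset\Xi_{\xsp}$ and that $\pi|_{V'}$ is étale over a dense open subset of $V$. By the choice of $\Xi_{\xsp}$, $V'$ is of log general type. Now the point is that $\pi|_{V'}:V'\to V$ is a generically finite morphism which is étale in codimension one over the complement of a divisor — more precisely, after passing to suitable smooth compactifications $\overline{V'}\to\overline{V}$ with the boundary divisors made simple normal crossing, one has $g^*(K_{\overline{V}}+\partial V)-(K_{\overline{V'}}+\partial V')$ effective plus a ramification term supported on $g^{-1}$ of a divisor. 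The subtlety is that $\pi$ need not be étale over $V$, so I would instead use the logarithmic ramification formula: writing the extra ramification divisor as $\mathrm{Ram}$, one gets $K_{\overline{V'}}(\partial V')\leq g^*\big(K_{\overline{V}}(\partial V)+m\,\Delta\big)$ for some effective divisor $\Delta$ supported on the branch locus, and $m>0$. Combined with the existence of the nontrivial sheaf map $\mathcal{O}(\Delta)\hookrightarrow\mathrm{Sym}^N\Omega_{\overline{V}}(\log\partial V)$ coming from the product of differences $\prod_{g\in G}\prod_{\eta_i\neq\eta_j}g^*(\eta_i-\eta_j)$ descended to $X$ (as in the commented-out proof), I would invoke \cite[Corollary 8.7]{CP19} (or the Campana–Păun positivity criterion) to conclude that bigness of $g^*(K_{\overline{V}}(\partial V)+m\Delta)$ forces bigness of $K_{\overline{V}}(\partial V)$ itself.

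\textbf{Alternative route avoiding Campana--Păun.} A cleaner variant, which I expect to be the one actually carried out, uses only \cref{cor:20221102}: rather than descending positivity divisorially, observe that $V$ admits a generically finite cover $V'$ which maps to $\cA$ with $\dim V' = \dim\alpha(V')$ (because $V'\not\subset$ the non-quasi-finite locus), and $\alpha(V')$ is not a translate of a semi-abelian subvariety whenever $V'\not\subset\Xi_{\xsp}$ — so $V'$ is of log general type. Then log-general type descends under generically finite \emph{dominant} morphisms to $V$ by the subadditivity/easy direction: if $V'\to V$ is generically finite surjective and $V'$ is of log general type, so is $V$ (this is standard: $\bar\kappa(V')\leq\bar\kappa(V)+(\text{ramification contribution})$, or more directly pull back pluri-log-canonical forms). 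This sidesteps $\Delta$ entirely.

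\textbf{Main obstacle.} The delicate point is the bookkeeping of boundary divisors and ramification when descending from $V'$ to $V$: $\pi$ is not étale, so one must carefully track how $\partial V'$ relates to $\pi^*\partial V$ and control the ramification divisor, ensuring the inequality $K_{\overline{V'}}(\partial V')\geq g^*K_{\overline{V}}(\partial V)$ (or its $\Delta$-twisted form) genuinely holds with the correct signs; this is exactly the kind of issue handled by \cite[Lemma 4]{NWY13} and Fujino-type logarithmic ramification formulas. I also need to make sure $E$ can be taken to be a genuine \emph{proper} Zariski closed subset, which requires that the countably-many bad loci from \cref{cor:20221102} are in fact finite here — this follows because on $\xsp$ of log general type with maximal quasi-Albanese dimension the bad locus is the single proper closed set $\Xi_{\xsp}$ coming from \cref{thm2nd}. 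Assembling these pieces gives the theorem.
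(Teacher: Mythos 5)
Your primary route (the ``descent step'') is essentially the paper's proof: one forms, for each $V\not\subset E$, a Galois cover $\overline{S}\to\overline{W}$ of a smooth model of $V$ by base change from the spectral cover, shows $S$ is of log general type via \cref{cor:20221102}, builds the $G'$-invariant product of differences of the restricted spectral one-forms to get a nonzero map $\mathcal{O}_{\overline{W}}(E)\to\Sym^{N}\Omega_{\overline{W}}(\log D_{\overline{W}})$ absorbing the branch divisor, and concludes with \cite[Corollary 8.7]{CP19}. Two points, however, need correction.

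First, your ``alternative route avoiding Campana--P\u{a}un'' --- which you say you expect to be the one actually carried out --- does not work, and the implication you call standard is backwards. If $V'\to V$ is generically finite and \emph{$V$} is of log general type, then $V'$ is (pull back pluri-log-canonical forms). The converse fails: a hyperelliptic curve of genus $2$ maps $2{:}1$ onto $\mathbb{P}^1$, and more relevantly the spectral cover $\xsp\to X$ is typically ramified along a divisor, so $\bar\kappa(\xsp)=\dim X$ gives no direct control on $\bar\kappa(X)$. The whole point of the Campana--P\u{a}un step is to remove the branch divisor $E$ from the big divisor $K_{\overline{W}}+E+D_{\overline{W}}$, and this requires the extra input that $\mathcal{O}_{\overline{W}}(E)$ injects into $\Sym^N\Omega_{\overline{W}}(\log D_{\overline{W}})$; there is no way to ``sidestep $\Delta$ entirely.''

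Second, in the route that does work there is a nontriviality issue you gloss over: the section you descend is a product of \emph{restrictions} $\psi_i-\psi_j$ of the spectral one-forms to the cover $\overline{S}$ of $V$, and even if $V$ avoids $\pi(R)$ (so that $\overline{S}$ is not contained in any zero locus $Z'_{ij}$ inside $\overline{\xsp}$), a difference $\eta_i-\eta_j$ can still restrict to the zero section of $p^*\Omega_{\overline{W}}(\log D_{\overline{W}})$ on $\overline{S}$, killing the whole product. The paper handles this by (a) taking the product only over the set $I$ of pairs whose zero locus actually meets the image of $S$ --- pairs outside $I$ do not contribute to the ramification over $W$, so the product still vanishes on the branch divisor --- and (b) showing that for $(i,j)\in I$ the restriction cannot vanish identically unless $V$ lies in the locus $E_{ij}=\beta^{-1}(\beta(Z_{ij}))$, where $\beta$ is the partial quasi-Albanese map killing $\eta_i-\eta_j$ and $\beta(Z_{ij})$ is a finite set of points. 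This is why the exceptional set $E$ of the theorem must contain $\pi\bigl(\bigcup_{i,j}E_{ij}\bigr)$ and not merely $\pi(R)$; with only $\pi(R)$ excluded, the descended symmetric differential could be identically zero and the argument would collapse.
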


 \begin{proof} 
Let $\pi:\overline{\xsp}\to \overline{X}$ be the spectral cover associated to $\varrho$ defined in \cref{def:spectral}, and $\{\eta_1,\ldots,\eta_m\}\in H^0(\overline{\xsp}, \pi^*\Omega_{ \overline{X}}(\log D))$ the resulting spectral one forms defined in \cref{def:spectral one forms}.  Let $\mu:  \overline{Y}\to \overline{\xsp}$ be a resolution of singularities with $Y:=\mu^{-1}(\xsp)$. Set $\omega_i:=\mu^*\eta_i$.  By \cref{genericallyfinite} the partial quasi-Albanese morphism $\alpha:\xsp\to \cA$ associated to  $\{\eta_1,\ldots,\eta_m\}$ satisfies that $\dim \xsp=\dim \alpha(\xsp)$.   In particular, the     quasi-Albanese morphism  $\alpha_Y:Y\to \cA_Y$ satisfies $\dim Y=\dim \alpha_Y(Y)$.  

We first determine the proper Zariski closed subset $E\subsetneqq X$ in the proposition.  Write $Z_{ij}:=(\omega_i-\omega_j=0)$ for $\omega_i\neq \omega_j$, which is a proper Zariski closed subset  of $Y$.  
	For the quasi-Albanese variety $\cA_Y$ of $Y$, we know that there exists log one forms $\widetilde{\omega}_i\in T_1(\cA_Y)$ so that
	$
	\omega_i=\alpha_Y^*\widetilde{\omega}_i
	$ for every $i$. Consider the maximal semi-abelian subvariety $B$ of $\cA_Y$  on which $\widetilde{\omega}_i-\widetilde{\omega}_j$ vanishes.  Denote by $q:\cA_Y\to \cA_Y/B$ the quotient map.  For the composed morphism $\beta:Y\to \cA_Y/B$,  by \cref{lem:critpointalb}, for any closed    subvariety $F\subset Y$, one has $\beta(F)$ is a point if and only if $(\omega_i-\omega_j)|_{F}=0$.  Then for any irreducible component  $Z$ of $Z_{ij}$,  
	$
	\beta(Z)
	$ is a point $x\in A':=\cA_Y/B$.   Since $Z_{ij}$ has   finitely many irreducible components, it follows that $\beta(Z_{ij})$ is a set of finite points  $F_{ij}$ in $A'$. Denote by $E_{ij}:=\beta^{-1}(F_{ij})$, which is a proper Zariski closed subset of $Y$.   Set  $E_1: =\pi\big( \bigcup_{\omega_i\neq \omega_j}E_{ij}\big)$ in $X$, which is a proper Zariski closed subset of $X$.  
 By \cref{cor:20221102}, 
 there exists a proper Zariski closed subset $\Xi\subsetneqq Y$ such that all entire curves $\mathbb C\to Y$ are contained in $\Xi$.
 By replacing $\Xi$ by a larger proper Zariski closed set of $Y$, we may assume that $Y\to \mathcal{A}_Y$ is quasi-finite outside $\Xi$. 
 
 \begin{claim}\label{claim:loggeneraltype}
 All closed subvarieties $F\subset Y$ with $F\not\subset \Xi$ are of log-general type.
 \end{claim}
\begin{proof}[Proof of \cref{claim:loggeneraltype}]
This is proved in \cref{cor:20221102}.
\end{proof}
 We set $E_2=\pi(\Xi)$, which is a proper Zariski closed subset of $X$.
 We define $E\subsetneqq X$ by $E=E_1\cup E_2$.

 Let $V\subset X$ be a closed subvariety such that $V\not\subset E$.
 Let $W\to V$ be a smooth modification and let $\overline{W}$ be a smooth projective compactification of $W$ so that $D_{\overline{W}}:=\overline{W}-W$ is a simple normal crossing divisor and $(\overline{W},D_{\overline{W}})\to (\overline{X}, D)$ is a log morphism.  
 Let $\overline{S}$ be a normalization of an irreducible component of $\overline{W}\times_{\overline{X}}\overline{\xsp}$. 
 \begin{equation*}
 	\begin{tikzcd}
 		\overline{S}  \arrow[r, "g"] \arrow[d, "p"]&  \overline{\xsp}\arrow[d, "\pi"]\\
 		\overline{W} \arrow[r] & \overline{X}
 	\end{tikzcd}
 \end{equation*}
By the construction,   $S:=p^{-1}(W)$ is not contained in $\Xi$ and thus by \cref{claim:loggeneraltype} $S$ is of log general type. 
 \begin{claim}\label{claim:Galois}
 The finite morphism	$p:\overline{S}\to \overline{W}$ is a Galois morphism with the Galois group $G'\subset \widetilde{G}$ where $\widetilde{G}$ is the Galois group of the  Galois morphism $\overline{\xsp}\to \overline{X}$. The morphism $g:\overline{S}\to \overline{\xsp}$ is $G'$-equivariant. 
 \end{claim}
\begin{proof}[Proof of \cref{claim:Galois}]
	It is obvious that   the base change $\overline{W}\times_{\overline{X}}\overline{\xsp}\to \overline{W}$ is also a Galois covering with the Galois group $\widetilde{G}$. Then the normalization of $\overline{W}\times_{\overline{X}}\overline{\xsp}$ is a disjoint union of isomorphic quasi-projective normal variety, and it admits  an induced  $\widetilde{G}$-action. Define
	 $$
	G':=\{h\in \widetilde{G}\mid h(\overline{S})=\overline{S}\},
	$$
	 and one can see that $G'$ acts  transitively on the fibers of $p$. $p$ is therefore a Galois covering with Galois group $G'$. 
\end{proof}
 Let $\psi_i$ be the pull back of $\eta_i\in H^0(\overline{\xsp}, \pi^*\Omega_{\overline{X}}(\log D))$ by $\overline{S}\to \overline{\xsp}$. 
 Then   we can consider  $\psi_i$ as a section in $H^0(\overline{S}, p^*\Omega_{\overline{W}}(\log D_{\overline{W}}))$. 
 Let $I$ be the set of all $(i,j)$ such that
 \begin{itemize}
 	\item $\eta_i-\eta_j\neq 0$;
 	\item   the image of $S\to \xsp$ intersects with $Z'_{ij}:=\{z\in\overline{\xsp}\mid (\eta_i-\eta_j)(z)=0\}$. 
 \end{itemize}
 
 \begin{claim}
 For $(i,j)\in I$, $\psi_i-\psi_j\not=0$ in $H^0(\overline{S}, p^*\Omega_{\overline{W}}(\log D_{\overline{W}}))$.
 \end{claim}
	\begin{proof}
		Assume by contradiction that $\psi_i-\psi_j=0$ in $H^0(\overline{S}, p^*\Omega_{\overline{W}}(\log D_{\overline{W}}))$.
		Then the image of $S\to \xsp\to \mathcal{A}_Y/B$ is a single point $x$ by \cref{lem:critpointalb}.
		By $V\not\subset E_1$, the image of $S\to \xsp$ is not contained in $E_{ij}$.
		Hence $x$ is not contained in $\beta(Z_{ij})$.
		This contradicts to the assumption $(i,j)\in I$.
	\end{proof}

 We set
 \begin{align*} 
 	 		R':=\{z\in \overline{S} \mid \exists   (i,j)\in I  \mbox{ with } (\psi_i-\psi_j)(z)=0 \}.
 	 	\end{align*}  
By the claim above, $R'$ is a proper Zariski closed subset of $\overline{S}$.  	Denote by  $R_0$ the ramification locus of $p:\overline{S}\to\overline{W}$.  By the purity of branch locus of finite morphisms,  we know that $R_0$ is a (Weil) divisor, and thus $p(R_0)$ is also a divisor.  
Moreover $R_0=p^{-1}\big(p(R_0)\big)$ since $p$ is Galois with Galois group $G'$.  Denote by $E$ the sum of prime components of $p(R_0)$ which intersect with $W$.  One observes that $S-p^{-1}(E)\to W-E$ is finite \'etale.

Recall that  by \cref{claim:ramified} 	 $\pi:\overline{\xsp}\to \overline{X}$  is \'etale over $\overline{\xsp}-R$, where $
	R:=\cup_{\eta_i\neq\eta_j}Z_{ij}'.$  Note that $\mu^{-1}(Z_{ij}')=Z_{ij}$.     Since the base change of an \'etale morphism   is also étale, it follows that 
$p$ is \'etale over $\overline{S}-g^{-1}(R)$.  Hence $R_0\subset g^{-1}(R)$.   Note that for $(i,j)\not\in I$, the image of $S\to \xsp$ does not intersect with $Z'_{ij}$, so such $Z'_{ij}$ does not contribute to $g^{-1}(R)\cap S$. It follows that 
 $
R_0\cap S\subset R'\cap S.
$

Since  $\overline{S}\to \overline{\xsp}$ is $G'$-equivariant, it follows that any $h\in G'$ acts on $\{\psi_i-\psi_j\}_{(i,j)\in I}\subset H^0(\overline{S}, p^*\Omega_{\overline{W}}(\log D_{\overline{W}}))$ as a permutation by \cref{claim:extension} and our choice of $I$. 
 	 	Define a section
 	 	$$
 	 	\sigma:=\prod_{h\in G'}\prod_{(i,j)\in I}h^*(\psi_i-\psi_j)\in H^0(\overline{S}, \Sym^{N}p^*\Omega_{\overline{W}}(\log D_{\overline{W}})), 
 	 	$$
 	 	which is non-zero and vanishes at  $R'$ by our choice of $I$. 
 	 	Then it is invariant under the $G'$-action and thus descends to a section
 	 	$$
 	 	\sigma^{G'}\in H^0(\overline{W}, \Sym^{N}\Omega_{\overline{W}}(\log D_{\overline{W}}))
 	 	$$
 	 	so that $p^*\sigma^{G'}=\sigma$.      Since $R_0\cap S\subset R'\cap S$ and $p^{-1}(p(R_0))=R_0$,     $\sigma^{G'}$ vanishes at   the divisor $E$. This implies that there is a non-trivial morphism
 	 	\begin{align}\label{eq:CP1}
 	 		\mathcal{O}_{\overline{W}}(E)\to \Sym^{N}\Omega_{\overline{W}}(\log D_{\overline{W}}).
 	 	\end{align} 
Recall that by \cref{claim:loggeneraltype} and our construction of $S$, $S$ is of log general type.  Since	$S-p^{-1}(E)\to W-E$ is finite \'etale, by \cref{lem:KodairaDim} below  $W\backslash E$ is also of log general type.  By \cite[Lemma 3]{NWY13}, $K_{\overline{W}}+E+D_{\overline{W}}$ is big.  
	Together with \eqref{eq:CP1} we can apply \cite[Corollary 8.7]{CP19} to conclude that $K_{\overline{W}}+ D_{\overline{W}}$ is big.
		Hence $W$, so $V$ is of log general type. 
 	 \end{proof}

\begin{lem}\label{lem:KodairaDim}
	Let $f':U\to V$ be a finite \'etale morphism between quasi-projective manifolds. If $\bar{\kappa}(V)\geq 0$,  then the logarithmic Kodaira dimension $\bar{\kappa}(U)=\bar{\kappa}(V)$. 
\end{lem}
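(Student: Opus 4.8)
\textbf{Proof proposal for \cref{lem:KodairaDim}.}

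The plan is to compare the logarithmic Kodaira dimensions of $U$ and $V$ by choosing compatible log compactifications and exploiting the ramification formula for an étale morphism, which is the source of the simplification. First I would pick a smooth projective compactification $\overline{V}$ of $V$ such that $B := \overline{V} - V$ is a simple normal crossing divisor. Taking the normalization $\overline{U}$ of $\overline{V}$ in the function field $\mathbb{C}(U)$ produces a finite morphism $\bar{f}' : \overline{U} \to \overline{V}$ extending $f'$; after a further log resolution (which does not change $\bar\kappa$), I may assume $\overline{U}$ is smooth and $D_U := \overline{U} - U = \bar{f}'^{-1}(B)$ together with the exceptional locus is simple normal crossing. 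The key point is that since $f' : U \to V$ is étale, the ramification divisor of $\bar{f}'$ is supported on $\bar{f}'^{-1}(B)$, i.e. is ``hidden'' inside the boundary.

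The heart of the argument is then the logarithmic ramification formula: because $f'$ is étale over $V$, one has
\[
	K_{\overline{U}} + D_U = \bar{f}'^{\,*}\bigl(K_{\overline{V}} + B\bigr) + (\text{effective divisor supported on } D_U),
\]
and in fact the ramification contribution is absorbed exactly because each component of $D_U$ appears with coefficient $1$ on both sides: for a log-smooth model, $\bar f'^* (K_{\overline V}+B) = K_{\overline U}+D_U$ up to a divisor supported on the exceptional locus of the resolution. Concretely, I would argue that $\bar{f}'^{\,*}(m(K_{\overline{V}}+B))$ and $m(K_{\overline{U}}+D_U)$ have the same number of sections up to the resolution correction, giving $\bar\kappa(U) \geq \bar\kappa(V)$ directly from $H^0(\overline{V}, m(K_{\overline V}+B)) \hookrightarrow H^0(\overline{U}, \bar f'^* m(K_{\overline V}+B)) \subseteq H^0(\overline{U}, m(K_{\overline U}+D_U))$ (this inclusion uses only that $f'$ is a finite morphism, hence pullback of sections is injective). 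For the reverse inequality $\bar\kappa(U) \leq \bar\kappa(V)$, I would use the hypothesis $\bar\kappa(V) \geq 0$: since $f'$ is finite étale, the Galois closure trick reduces us to the case where $U \to V$ is Galois with group $\Gamma$; then $H^0(\overline{U}, m(K_{\overline U}+D_U)) = H^0(\overline{U}, \bar f'^* m(K_{\overline V}+B))$ carries a $\Gamma$-action whose invariants are $H^0(\overline{V}, m(K_{\overline V}+B))$, and by averaging a section against $\Gamma$ one sees that $\bar\kappa(V) \geq 0$ forces the whole space to grow at the same rate as its invariant part. Alternatively, and more cleanly, I can invoke the standard fact (Iitaka, Kawamata) that $\bar\kappa$ is multiplicative--invariant under finite étale covers: $\bar\kappa(U) = \bar\kappa(V)$ whenever $U \to V$ is finite étale — this is \cite[Theorem 10.9]{Iitaka} type statement; but to keep the paper self-contained I would spell out the two inequalities as above.

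The main obstacle is bookkeeping the boundary divisors correctly: one must ensure that the chosen compactification $\overline{U}$ is log-smooth over $(\overline{V}, B)$ so that the ramification formula has no unexpected positive or negative contributions outside $D_U$, and that the resolution needed to make $\overline{U}$ smooth introduces only divisors supported over $B$ (hence still in the boundary) — this is where the étale hypothesis is essential, as it prevents ramification over the interior $V$. Once the compactifications are set up compatibly, both inequalities follow from elementary manipulation of spaces of log-pluricanonical sections, with the hypothesis $\bar\kappa(V)\geq 0$ entering only to guarantee that the $\Gamma$-averaging argument (or equivalently the subadditivity in the reverse direction) is non-vacuous.
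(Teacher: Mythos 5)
Your overall route coincides with the paper's: compactify $f'$ to a finite morphism of log pairs, use the \'etale hypothesis to show that $K_{\overline U}+D_U$ and $\bar f'^{\,*}(K_{\overline V}+B)$ differ by an effective divisor which is in fact exceptional, and then compare Iitaka dimensions. Two points, however, are not yet proofs. First, asserting that the discrepancy is ``an effective divisor supported on $D_U$'' is too weak for the inequality $\bar\kappa(U)\le\bar\kappa(V)$: adding an effective divisor supported on the boundary can raise the Iitaka dimension. What is actually needed --- and what the paper's \cref{claim:exceptional} establishes via the local normal form $f(x_1,\dots,x_n)=(x_1^k,x_2,\dots,x_n)$ over the smooth locus of $B$ --- is that the discrepancy lies over a codimension-two subset of $\overline V$ (the singular locus of $B$ together with the centre of the strict resolution), i.e.\ is exceptional. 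You correctly flag this as the main obstacle, but it is the one computation that must be carried out rather than asserted.

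Second, your primary argument for $\bar\kappa(U)\le\bar\kappa(V)$ --- ``averaging a section against $\Gamma$'' --- does not work as stated: $\frac{1}{|\Gamma|}\sum_{\gamma}\gamma^*s$ may vanish, and averaging says nothing about how $\dim H^0(m(K_{\overline U}+D_U))$ compares with the dimension of its $\Gamma$-invariant part (for a degree-$d$ \'etale cover of a curve of genus $\ge 2$ the full space is strictly larger than the invariant part for every $m\ge 2$, yet $\kappa$ agrees; so ``growing at the same rate as the invariant part'' is precisely what has to be proved). The classical device is the norm $\prod_{\gamma\in\Gamma}\gamma^*s$, which yields an invariant section of the $m|\Gamma|$-th power; packaged properly this is $\kappa(g^*L)=\kappa(L)$ for finite surjective $g$, i.e.\ \cite[Lemma 5.13]{Uen75}, which, combined with the exceptionality of $E$ (so that $\mu_*\mathcal{O}(mE)=\mathcal{O}$) and with \cite[Example 2.1.16]{Laz04} (this is where $\bar\kappa(V)\ge 0$ enters), is how the paper closes the argument. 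Your fallback of invoking Iitaka's covering theorem outright is legitimate but is essentially the statement being proved.
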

\begin{proof}
	We first take a smooth projective compactification $Y$ of $V$  so that $D_Y:=Y-V$ is a simple normal crossing divisor.  By \cref{sec:covGal}, there is a normal projective variety $X$ compactifying $U$ so that $f'$ extends to a finite morphism $f:X\to Y$.  Let $\mu:Z\to X$ be a strict desingularization so that $\mu^{-1}(U)\simeq U$ and $D_Z:=Z-\mu^{-1}(U)$ is   a simple normal crossing divisor.  Write $g=f\circ\mu$.  
	\begin{claim}\label{claim:exceptional}
		$E:=K_{Z}+D_Z-g^*(K_Y+D_Y)$ is an effective \emph{exceptional} divisor.  
	\end{claim}
	\begin{proof}[Proof of \cref{claim:exceptional}]
		Since $g:(Z,D_Z)\to (Y,D_Y) $ is a log morphism, $E$ is effective. 	Let $D_Y^{\rm sing}$ be the singularity of $D_Y$  which is a Zariski closed subset of $Y$ of codimension at least two.  Write $Y^\circ:=Y-D_Y^{\rm sing}$, and $X^\circ:=f^{-1}(Y^\circ)$. Note that $X^\circ$ is smooth, and $D_X^\circ:=X^\circ-U$ is a smooth divisor in $X^\circ$.  Moreover, it follows from the proof of \cite[Lemma A.12]{Den22} that at any $x\in D_X^\circ$ we can take a holomorphic coordinate  $(\Omega;x_1,\ldots,x_n)$ around $x$ with $D^\circ_X\cap \Omega=(x_1=0)$ and a holomorphic coordinate  $(\Omega';y_1,\ldots,y_n)$ around $f(x)$ with $D_Y\cap \Omega'=(y_1=0)$  so that 
		\begin{align}\label{eq:ramified}
			f(x_1,\ldots,x_n)=(x_1^k,x_2,\ldots,x_n).
		\end{align}
		This implies that $K_{X^\circ}+D_X^\circ=f^*(K_{Y^\circ}+D_Y^\circ)$. Since $\mu$ is a strict desingularization, it follows that $\mu^{-1}(X^\circ)\simeq X^\circ$.   Therefore,   $\mu (K_{Z}+D_Z-g^*(K_Y+D_Y))$ is contained in $f^{-1}(D_Y^{\rm sing})$  which is of codimension at least two. 
	\end{proof}
Therefore, one has
	$$
	\kappa(K_Z+D_Z)=	\kappa(g^*(K_Y+D_Y)+E)=\kappa(g^*(K_Y+D_Y))=	\kappa(K_Y+D_Y). 
	$$ 
	where the first equality follows from the above claim, the second one is due to  \cite[Example 2.1.16]{Laz04} and the last one follows from \cite[Lemma 5.13]{Uen75}.   This concludes that $\bar{\kappa}(U)=\bar{\kappa}(V)$. 
\end{proof}

\subsection{Lemma on logarithmic derivative for logarithmic one forms} \label{subsection:log derivative}
 Let $D$ be a simple normal crossing divisor on a projective manifold $\overline{Y}$. Let $\cY$ be a Riemann surface with a proper surjective holomorphic map $p_\cY:\cY\to\mathbb C_{>\delta}$.
Let $g:\cY\to \overline{Y}$ be a holomorphic map such that $g(\cY)\not\subset D$. 
Let $\omega \in H^{0}\left(\overline{Y}, \Omega_{\overline{Y}}(\log D)\right)$ be a logarithmic \(1\)-form.  Set $\eta=g^{*} \omega / p_\cY^{*}(d z)$. Then $\eta$ is a meromorphic function on $\cY$.  Then $\eta$ induces a holomorphic map $g_\eta:\cY\to \bP^1$.  We define 
$$
m(r, \eta):=\frac{1}{\operatorname{deg} p_\cY} \int_{y\in p_\cY^{-1}(\{|z|=r\})}\lambda_{\infty}(g_\eta(y))\ \frac{d\mathrm{arg}\ p_\cY(y)}{2\pi},
$$
where $\lambda_{\infty}:\bP^1-\infty \to \mathbb R_{\geq 0}$ is a Weil function for the point $\infty$ of $\bP^1$ (cf. \cite[Prop 2.2.3]{Yam04}).

We claim 
\begin{align}\label{eq:Noguchi}
	 m(r,\eta)=O(\log r)+o(T_g(r))\lVert. 
\end{align}  

We prove this. 
The logarithmic one form $\omega$ defines a morphism $\varphi:T(\overline{Y};\log D)\to \mathbb A^1$, which extends to a rational map $\overline{\varphi}:\overline{T}(\overline{Y};\log D)\dashrightarrow \mathbb P^1$.
By $g(\cY)\not\subset D$, we get the lifting $j_1(g):\cY\to \overline{T}(\overline{Y};\log D)$.
Then we have $\eta=\overline{\varphi}\circ j_1(g)$.
Hence by \cite[Prop 2.3.2 (6)]{Yam04}, we have
$$
m(r, \eta)=O(m_{j_1(g)}(r,\partial T(\overline{Y};\log D)))+O(1).$$
Hence by \eqref{eqn:202311195}, we get \eqref{eq:Noguchi}.

It follows from the First Main theorem (cf. \cref{thm:first}) that
\begin{align}\label{eq:First} 
 T_{g_\eta}(r, \cO_{\bP^1}(1))=N_{g_\eta}(r,\infty)+ m(r,\eta)+O(\log r).
\end{align}

\subsection{Estimate for ramification counting functions}\label{subsection:ramification}
Let $X$ be a quasi-projective manifold and let  $\varrho:\pi_1(X)\to G(K)$  be a Zariski dense representation where $G$ is a simple algebraic group defined over a non-archimedean local field  $K$.  Assume  that $\varrho$ is unbounded and that $\varrho$ is a big representation. By  \cref{claim:ramified}, there is a finite Galois cover $\xsp\to X$  and  algebraic one forms 
$$
\eta_1,\ldots,\eta_m\in H^0(\xsp, \pi^*\Omega_X) 
$$
so that $\xsp\to X$ is unramified  outside  $R:=\{z\in X \mid \exists \eta_i\neq\eta_j \quad \mbox{with}\quad (\eta_i-\eta_j)(z)=0 \}$, which is a Zariski closed   subset of $\xsp$. Moreover, one has $\pi^{-1}(\pi(R))=R$.  Consider a resolution of singularities $\mu:  Y\to \xsp$ and a projective compactification $\overline{Y}$ of $Y$ with $D:=\overline{Y}\backslash Y$ a simple normal crossing divisor. Then $\omega_i:=\mu^*\eta_i\in H^0(Y, \pi^*\Omega_X)\cap H^0(\overline{Y}, \Omega_{\overline{Y}}(\log D))$ by \cref{claim:extension}, where $\pi:Y\to X$ denotes  the composition of $\mu:Y\to \xsp$ and $\xsp\to X$.  

Consider a holomorphic map $f:\bC_{>\delta}\to X$.  The generically finite proper morphism $\pi:Y\to X$ induces a surjective finite holomorphic map   $p_{\cY}:{\cY}\to \bC_{>\delta}$  from a Riemann surface $\cY$ to  $\bC_{>\delta}$ and  a holomorphic map $g:\cY\to Y$  satisfying
\begin{equation}\label{figure:curve}
 	\begin{tikzcd}
	\cY\arrow[r, "g"] \arrow[d, "p_{\cY}"] & Y\arrow[d, "\pi"]\\
	\bC_{>\delta}\arrow[r, "f"] & X
\end{tikzcd}
\end{equation}
Note that $p_\cY$ is unramified outside $(\pi\circ g)^{-1}(R)$.     It follows that $\ram p_\cY\subset \cup_{\omega_i\neq \omega_j}g^{-1}(\omega_i-\omega_j=0)$, where we consider $\omega_i$ as  sections in $H^0(Y, \pi^*\Omega_X)$. 


 \begin{proposition} \label{prop:Yam10}
 	Let $\xsp\to X$ be the spectral cover associated to $\varrho:\pi_1(X)\to G(K)$ defined in \cref{def:spectral}.  Let $\mu:  Y\to \xsp$ a resolution of singularities. Then there is a proper Zariski closed subset $E\subsetneqq X$ such that, for any holomorphic map  $f:\bC_{>\delta}\to X$ which is not contained in $E$,  
	one has  
\begin{align}\label{eq:ramification}
	 	N_{{\rm ram}\, p_{\cY}}(r) =o( T_{g}(r)) + O(\log r) ||,
\end{align} 
 	where $g:\cY\to Y$ is the induced holomorphic map in \eqref{figure:curve}.
 \end{proposition}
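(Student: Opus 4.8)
The strategy is to control the ramification $N_{\operatorname{ram} p_\cY}(r)$ by showing that the ramification locus of $p_\cY$ is contained in the pull-back of the divisors $(\omega_i - \omega_j = 0)$ on $Y$, and that the counting function of each such divisor is, after a suitable reduction, $o(T_g(r)) + O(\log r)$. The key geometric input is \cref{genericallyfinite}: the partial quasi-Albanese morphism $\alpha_Y : Y \to \cA_Y$ induced by the spectral one-forms satisfies $\dim Y = \dim \alpha_Y(Y)$, so the $\omega_i$ are pull-backs $\alpha_Y^*\widetilde\omega_i$ of genuine logarithmic one-forms $\widetilde\omega_i \in T_1(\cA_Y)$ on the quasi-Albanese variety. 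This reduces the question to one about logarithmic one-forms on a semi-abelian variety, where the logarithmic derivative lemma \eqref{eq:Noguchi} applies cleanly.

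First I would fix the exceptional set $E \subsetneqq X$. As in the proof of \cref{thm:log general type}, for each pair $\omega_i \neq \omega_j$ let $B \subset \cA_Y$ be the maximal semi-abelian subvariety on which $\widetilde\omega_i - \widetilde\omega_j$ vanishes, let $q_B : \cA_Y \to \cA_Y/B$ be the quotient, and let $\beta = q_B \circ \alpha_Y : Y \to \cA_Y/B$. By \cref{lem:critpointalb}, $(\omega_i-\omega_j)|_F = 0$ for a subvariety $F$ iff $\beta(F)$ is a point; since $Z_{ij} := (\omega_i - \omega_j = 0) \subsetneqq Y$ has finitely many components, $\beta(Z_{ij})$ is a finite set of points in $\cA_Y/B$, and $E_{ij} := \pi(\beta^{-1}(\beta(Z_{ij})))$ is a proper Zariski closed subset of $X$. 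I would set $E := \bigcup_{\omega_i \neq \omega_j} E_{ij}$, possibly enlarged so that $\pi$ and $\alpha_Y$ are nicely behaved away from it. Now suppose $f : \bC_{>\delta} \to X$ is a holomorphic map with $f(\bC_{>\delta}) \not\subset E$, giving $g : \cY \to Y$ as in \eqref{figure:curve}. Then $g(\cY) \not\subset Z_{ij}$ for every relevant pair, so $\beta \circ g : \cY \to \cA_Y/B$ is non-constant, and the meromorphic function $\psi_{ij} := g^*(\omega_i - \omega_j)/p_\cY^*(dz)$ on $\cY$ is not identically zero.

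The main estimate is then: since $\operatorname{ram} p_\cY \subset \bigcup_{\omega_i \neq \omega_j} g^{-1}(\omega_i - \omega_j = 0)$, and a ramification point of $p_\cY$ over a point where $f$ is not in the branch locus contributes to $g^{-1}(\omega_i-\omega_j=0)$, one has $N_{\operatorname{ram} p_\cY}(r) \le \sum_{\omega_i \neq \omega_j} N_{g}(r, Z_{ij})$ up to an $O(\log r)$ error. Writing $\omega_i - \omega_j = \alpha_Y^*(\widetilde\omega_i - \widetilde\omega_j)$ and viewing $\widetilde\omega_i - \widetilde\omega_j$ as a logarithmic one-form on $(\overline{\cA_Y/B}, \partial)$ (descending through $q_B$ since it vanishes on $B$ by construction), the associated meromorphic function $\psi_{ij}$ is $h^*(\text{log form})/p_\cY^*(dz)$ for the composite $h := \beta \circ g$. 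By \eqref{eq:First} applied to $\psi_{ij}$, $N_{g_{\psi_{ij}}}(r, \infty) \le T_{g_{\psi_{ij}}}(r, \cO_{\bP^1}(1)) = N_{g_{\psi_{ij}}}(r,\infty) + m(r,\psi_{ij}) + O(\log r)$; one checks that $N_g(r, Z_{ij})$ is bounded by $N_{g_{\psi_{ij}}}(r, \infty)$ up to $O(\log r)$ and that $T_{g_{\psi_{ij}}}(r) = O(T_h(r)) + O(\log r) = o(T_g(r)) + O(\log r)$ — the last step using that $h = \beta \circ g$ factors through the quasi-Albanese, together with the standard growth comparison for maps to semi-abelian varieties as in \cref{lem:202210041} and \cref{lem:202210042}. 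Combining with the logarithmic derivative estimate \eqref{eq:Noguchi}, $m(r, \psi_{ij}) = O(\log r) + O(\log^+ T_g(r)) = o(T_g(r)) + O(\log r)$, which forces $N_{g_{\psi_{ij}}}(r,\infty) = o(T_g(r)) + O(\log r)$, and summing over the finitely many pairs gives \eqref{eq:ramification}.

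\textbf{Main obstacle.} The delicate point is the growth comparison $T_{g_{\psi_{ij}}}(r) = o(T_g(r)) + O(\log r)$: a priori the meromorphic function $\psi_{ij}$ could grow as fast as $g$ itself. The resolution is that $\psi_{ij}$ only sees the composite $h = \beta \circ g$ to a \emph{proper quotient} $\cA_Y/B$ of the quasi-Albanese, and on that quotient $\widetilde\omega_i - \widetilde\omega_j$ is (by maximality of $B$) a non-degenerate logarithmic one-form, so the Nevanlinna order of $\psi_{ij}$ is comparable to $T_h(r)$; the point is then to show $T_h(r) = o(T_g(r))$, which is where one must invoke that the spectral one-forms generate the relevant part of $T_1(Y)$ and argue as in the proof of \cref{thm2nd} — the subtlety being exactly the interplay between the semi-abelian "$A$-part" and "$S$-part" growth bounds in \cref{subsec:4.2}, transferred to the present setting. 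I would isolate this as a lemma before assembling the final estimate.
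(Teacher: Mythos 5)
Your construction of the exceptional set $E$ via the quotients $\beta=q_B\circ\alpha_Y$ and the finite point sets $\beta(Z_{ij})$ matches the paper exactly, and your reduction $N_{\operatorname{ram} p_\cY}(r)\le \deg p_\cY\cdot\sum N^{(1)}_g(r,Z_{ij})$ is the right first step. But there is a genuine gap at the heart of the argument: you assert that $f(\bC_{>\delta})\not\subset E$ forces $\psi_{ij}:=g^*(\omega_i-\omega_j)/p_\cY^*(dz)$ to be not identically zero. This does not follow. The hypothesis only guarantees that $\beta\circ g$ is non-constant (or constant with value outside $\beta(Z_{ij})$, a trivial case), and hence that $(\omega_i-\omega_j)$ restricted to the \emph{Zariski closure} of $\psi\circ g(\cY)$ is nonzero. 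A non-constant holomorphic curve in a semi-abelian variety can still be everywhere tangent to the kernel foliation of an invariant one-form whose leaves are non-algebraic (Zariski-dense) translates of a non-closed subgroup, so $g^*(\omega_i-\omega_j)\equiv 0$ is perfectly possible while $g^{-1}(Z_{ij})\neq\varnothing$. The paper's proof splits into exactly these two cases: when $g^*\omega_i\neq g^*\omega_j$ your logarithmic-derivative argument works (and is in fact simpler than you make it, see below); when $g^*\omega_i=g^*\omega_j$ an entirely different mechanism is needed. There, one uses that the local primitive $h$ of $\omega'$ pulled back along $\psi\circ g$ is locally constant equal to $0$ near every point of $g^{-1}(Z_{ij})$, so $g$ meets the subscheme $V_n=\operatorname{Spec}\,\cO_S/(\cI(h\circ\phi|_S)+\mathfrak m_y^n)$ with multiplicity $\ge n$ at each such point; a Riemann--Roch count on $\overline{S}$ produces sections $s_n$ of $M^{k_n}$ vanishing on $V_n$ with $k_n/n\to 0$, giving $N^{(1)}_g(r,Z_{ij})\le \frac{k_n}{n}T_{\psi\circ g}(r,M)+O(\log r)$ and hence the $o(T_g(r))$ bound. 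Nothing in your proposal supplies this idea, and the case cannot be excluded.

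Two secondary points. First, in the non-degenerate case your "main obstacle" is misidentified: you do not need $T_{\beta\circ g}(r)=o(T_g(r))$ (which is false in general). Since $\omega_i-\omega_j\in H^0(Y,\pi^*\Omega_X)$, the function $\eta=\psi_{ij}$ is \emph{holomorphic}, so $N_{g_\eta}(r,\infty)=0$ and the First Main Theorem plus the logarithmic derivative lemma \eqref{eq:Noguchi} give directly $T_{g_\eta}(r,\cO_{\bP^1}(1))\le m(r,\eta)+O(\log r)=O(\log r)+O(\log^+T_g(r))$, and then $N^{(1)}_g(r,Z_{ij})\le N_{g_\eta}(r,0)\le T_{g_\eta}(r)+O(\log r)$; the remaining bookkeeping is \cref{lem:20230415}. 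Second, you bound $N_g(r,Z_{ij})$ by the \emph{pole}-counting function $N_{g_{\psi_{ij}}}(r,\infty)$; it is the zeros of $\psi_{ij}$ that dominate $g^{-1}(Z_{ij})$, not its poles.
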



\begin{proof} 
	We first determine the proper Zariski closed subset $E\subsetneqq X$ in the proposition.  Write $Z_{ij}:=(\omega_i-\omega_j=0)$ for $\omega_i\neq \omega_j$, which is a proper Zariski closed subset of $Y$.  
	For the quasi-Albanese variety $\cA_Y$ of $Y$, we know that there exists log one forms $\widetilde{\omega}_i\in T_1(\cA_Y)$ so that
	$
	\omega_i=\alpha^*\widetilde{\omega}_i
	$ for every $i$, where $\alpha:Y\to \cA_Y$ is the quasi-Albanese map. Consider the maximal semi-abelian subvariety $B$ of $\cA_Y$  on which $\widetilde{\omega}_i-\widetilde{\omega}_j$ vanishes.  Denote by $q:\cA_Y\to \cA_Y/B$ the quotient map.  For the composed morphism $\beta:Y\to \cA_Y/B$,  by \cref{lem:critpointalb}, for any closed    subvariety $F\subset Y$, one has $\beta(F)$ is a point if and only if $(\omega_i-\omega_j)|_{F}=0$.  Then for any irreducible component  $Z$ of the closed subvariety $Z_{ij}$,  
	$
	\beta(Z)
	$ is a point $x\in A':=\cA_Y/B$.   Since $Z$ has any finitely many irreducible components, it follows that $\beta(Z_{ij})$ is a set of finite points  $F_{ij}$ in $A'$. Denote by $E_{ij}:=\beta^{-1}(F_{ij})$, which is a proper Zariski closed subset of $Y$.   Define  $E= \pi\big( \bigcup_{\omega_i\neq \omega_j}E_{ij}\big)$ in $X$, which is a proper Zariski closed subset  of $X$.  
	
We fix an ample line bundle $L$ on a smooth projective compactification $\overline{Y}$ of $Y$. Let $\bC_{>\delta}\to X$  be a holomorphic map  whose image is not contained in $E$. Then for the induced holomorphic map $g:\cY\to Y$, it is not contained in any $Z_{ij}$.  Hence by \cref{claim:ramified},   
	\begin{align}\label{eq:count}
		N_{{\rm ram}\,  p_{\cY}}(r)\leq \deg p_\cY\cdot 	N^{(1)}_g(r, \sum_{\omega_i\neq\omega_j}Z_{ij}).  
	\end{align}
It then suffices to prove that $	N^{(1)}_g(r,  Z_{ij})= o(T_{g}(r, L))+O(\log r)||$ for every  $Z_{ij}$.

	\medspace
	
	\noindent
	{\it Case 1.} If  $g^*\omega_i\neq g^*\omega_j$, write 
	$$
	\eta:=\frac{ g^*(\omega_i-\omega_j)}{p_\cY^*dz}
	$$   
Since $\omega_i-\omega_j\in H^0(Y,\pi^*\Omega_X)$, it follows that $\eta$ is a holomorphic function on $\cY$ and 
	 $g^{-1}(Z_{ij})\subset (\eta=0).$ 
Note that      $\eta$ can be seen as a holomorphic map $g_\eta:{\cY}\to  \bP^1\backslash\{\infty\}$. Hence $N_{g_\eta}(r,\infty)=0$. By \eqref{eq:First} one has 
	\begin{align*} 
		T_{g_\eta}(r, \cO_{\bP^1}(1))=N_{g_\eta}(r,\infty)+ m(r,\eta)+O(\log r)\leq m(r,\eta)+O(\log r).
	\end{align*}
	Since $\omega_i-\omega_j\in H^0(\overline{Y}, \Omega_{\overline{Y}}(\log D))$,  by \eqref{eq:Noguchi}, one concludes 
	$$
	T_{g_\eta}(r, \cO_{\bP^1}(1))\leq  m(r,\eta)+O(\log r)\leq o( T_g(r))+	O(\log r)\ ||. 
	$$
	By \eqref{eq:First} again, one has
	$$
	N^{(1)}_g(r, Z_{ij})\leq 	N_{g_\eta}(r, 0)\leq T_{g_\eta}(r, \cO_{\bP^1}(1))+O(\log r). 
	$$
	In conclusion, 
	$$
	N^{(1)}_g(r, Z_{ij})\leq  o( T_g(r))+	O(\log r)\ ||. 
	$$
	
	\medskip
	
	\noindent
	{\it Case 2.} Assume now $g^*\omega_i=g^*\omega_j$ for some $\omega_i\neq \omega_j$.    Recall that for the composed morphism $\beta:Y\to \cA_Y/B$ defined above,    for any irreducible component  $Z$ of the closed subvariety $Z_{ij}$,  
	$
	\beta(Z)
	$ is a point $x\in A':=\cA_Y/B$.   We take the factorisation of $\beta$ as follows
	\[
	Y\stackrel{\psi}{\to} W\stackrel{\phi}{\to} A'
	\]
	where $\phi:W\to A'$ is a finite morphism and $\psi:Y\to W$ is a dominant morphism whose general fibers are connected.  Hence $\dim W\leq \dim Y$.   Since $\phi$ is finite and $\beta(Z)=\{x\}$, there is a point $y\in W$ so that $\psi(Z)=\{y\}$. 
	Since there is a log one form $\omega'$ on $A'$ so that $\widetilde{\omega}_i-\widetilde{\omega}_j=q^*\omega'$, it follows that 
	\begin{align}\label{eq:nontrivial}
		(\psi\circ g)^*\phi^*\omega'=  (\beta\circ g)^*\omega'=g^*(\omega_i-\omega_j)=0.
	\end{align}
	Let $S$ be the Zariski closure of $\psi\circ g(\cY)$ in $W$. Note that $\dim S\leq \dim W\leq \dim Y$.    Note that $N_g^{(1)}(r,Z)=0$ if $g^{-1}(Z)=\varnothing$, and the proposition follows trivially.  Hence we may assume that $g^{-1}(Z)\neq\varnothing$.   
	\begin{claim}\label{claim:nontrivial}
		$\phi^*\omega'|_{S}\neq 0$.
	\end{claim}
	\begin{proof}[Proof of \cref{claim:nontrivial}]
		Assume by contradiction that   $\phi^*\omega'|_{S}= 0$. Then for the algebraic subvariety $\psi^{-1}(S)$ of $Y$,  one has
		$$
		(\omega_i-\omega_j)|_{\psi^{-1}(S)}=\beta^*\omega'|_{\psi^{-1}(S)}=0.
		$$
		By \cref{lem:critpointalb} $\beta(\psi^{-1}(S))$ is a set of finite points in $A'$. Note that the Zariski closure  $\overline{g(\cY)}^{\rm Zar}$ in  $Y$ is contained in an irreducible component of $\psi^{-1}(S)$.   Hence $\beta\big(\overline{g(\cY)}^{\rm Zar}\big)=\{x\}=\beta(Z)$ by our assumption that $g^{-1}(Z)\neq\varnothing$.  This contradicts with our choice of $E$ at the beginning. 
	\end{proof}
	
Since $\omega'$ is a linear log one form on $A'$, $d\omega'=0$. Hence by the Poincar\'e lemma in some analytic neighborhood $U$ of $x$ in $A'$ there is a holomorphic function $h\in \cO(U)$ so that $dh=\omega'$ on $U$ and $h(x)=0$.   According to \eqref{eq:nontrivial}, $\phi^*\omega'|_S$ is not identically equal to zero. Hence $h\circ \phi|_{S}$ is not identically equal to zero.

	For every $n\in \bZ_{>0}$, consider the zero dimensional subscheme on $S$ defined by $V_n:=\spec \cO_S/(\cI(h\circ\phi|_S)+\mathfrak{m}_y^n)$, where $\mathfrak{m}_y$ is the maximal ideal at $y$ and $\cI(h\circ\phi|_S)$ is the ideal sheaf defined by the holomorphic function $h\circ\phi|_S$.  $V_n$ is supported at $y$.  We take a   projective compactification $\overline{W}$ for $W$, and let $\overline{S}$ the closure of $S$ in $\overline{W}$. We fix an ample line bundle $L'$ on $\overline{W}$ and denote by $M:=L'|_{\overline{S}}$ its restriction on $\overline{S}$, which is also an ample line bundle on $\overline{S}$.  
	Consider the following short exact sequence 
	$$
	0\to M^k\otimes (\mathcal{I}(h\circ \phi|_S)+\mathfrak{m}_y^n)\to   M^k\to M^k\otimes \cO_S/(\cI(h\circ\phi|_S)+\mathfrak{m}_y^n)\to 0.
	$$
	which yields a a short exact sequence
	$$
	0\to H^0(\overline{S}, M^k\otimes (\mathcal{I}(h\circ \phi|_S)+\mathfrak{m}_y^n))\to   H^0(\overline{S}, M^k)\to H^0(\overline{S}, M^k\otimes \cO_S/(\cI(h\circ\phi|_S)+\mathfrak{m}_y^n))
	$$
	Note that 
	$$
	H^0(V_n,  \cO_{V_n}))=H^0(\overline{S}, M^k\otimes \cO_S/(\cI(h\circ\phi|_S)+\mathfrak{m}_y^n))
	$$
	which implies that $h^0(V_n,  \cO_S/(\cI(h\circ\phi|_S)+\mathfrak{m}_y^n))\sim  (n^{\dim S-1})$ when $n\to \infty$. 
	By Riemann-Roch theorem it follows that 
	$
	h^0(\overline{S}, M^k)\sim  O(k^{\dim S}). 
	$ 
	If we take $k_n\sim n^{1-\frac{1}{2\dim S}}$ when $n\to \infty$, it follows that 
	$$
	h^0(\overline{S}, M^{k_n})>h^0(V_n, \cO_{V_n})
	$$
	when $n\gg 1$. Hence there are sections 
	$$
	s_n\in H^0(\overline{S}, M^{k_n}\otimes (\mathcal{I}(h\circ \phi)+\mathfrak{m}_y^n))
	$$
	when $n\gg 1$.  We now consider $\psi\circ g:\cY\to S$ as a holomorphic map with image in $S$ .  Then by \cref{thm:first}
	$$
	N_{\psi\circ g}(r, V_n)\leq N_{\psi\circ g}(r, D_n)\leq T_{\psi\circ g}(r, M^{k_n})+O(\log r)=k_n T_{\psi\circ g}(r, M)+O(\log r)
	$$
	where $D_n$ is the zero divisor of $s_n$, and the second inequality above  follows from   that $\psi\circ g$ is Zariski dense in $S$.  
	For every $t\in \cY$ so that $g(t)\in Z$, it follows that $\beta(g(t))=x$. Since $g^*(\omega_i-\omega_j)=0$, it follows that
	$$
	0=g^*\beta^*\omega' =g^*\beta^*dh.
	$$ 
	Hence $h\circ\phi\circ \psi \circ g$ is constant on the connected component of $(\beta\circ g)^{-1}(U)$ containing $t$ which is thus zero.
	This implies that  $\ord_t(\psi \circ g)^*V_n\geq n$. Therefore, 
	$$
	nN^{(1)}_{g}(r, Z)\leq N_{\psi\circ g}(r, V_n).
	$$
	In conclusion,
	$$
	N^{(1)}_{\psi\circ g}(r, Z)\leq \frac{k_n}{n} T_{\psi\circ g}(r, M)+O(\log r).
	$$
Let us now consider $\psi\circ g:\cY\to W$ as a holomorphic map with image in $W$. By the very definition of the characteristic function and   $L'|_{\overline{S}}=M$, one gets
	$$
	T_{\psi\circ g}(r, M)=T_{\psi\circ g}(r, L').
	$$
	On the other hand, $T_{\psi\circ g}(r, L')\leq cT_{g}(r, L)+O(\log r)$ for some constant $c>0$ since order functions decrease under rational map $\overline{Y}\dashrightarrow \overline{S}$ induced by $\psi$.  Let $n\to \infty$ we get \begin{equation}\label{eqn:202304151}
	N^{(1)}_{\psi\circ g}(r, Z)\leq \ep T_{g}(r, L)+O_{\varepsilon}(\log r) 
	\end{equation}
	for every $\ep>0$.

	Suppose that $\varliminf_{r\to\infty}T_g(r,L)/\log r<+\infty$.
	Then by \cref{lem:20230415}, we have $T_g(r,L)=O(\log r)$.
			Then by \eqref{eqn:202304151}, we have $N^{(1)}_{\psi\circ g}(r, Z)=O(\log r)$, in particular 
	\begin{equation}\label{eqn:202304152}
	N^{(1)}_{\psi\circ g}(r, Z)=o(T_{g}(r,L))+O(\log r).
	\end{equation}	
		Next we assume $\varliminf_{r\to\infty}T_g(r,L)/\log r=+\infty$.
		Then $\log r=o(T_g(r,L))$.		
		Then by \eqref{eqn:202304151}, we have $N^{(1)}_{\psi\circ g}(r, Z)\leq \ep T_{g}(r, L)+	o(T_g(r,L))$ for all $\varepsilon>0$.	
		Hence we get $N^{(1)}_{\psi\circ g}(r, Z)=o(T_g(r,L))$, in particular we get \eqref{eqn:202304152}.		
							
	In summary, by \eqref{eq:count} we prove  that
	 $
	N_{{\rm ram}\,  p_{\cY}}(r)= o(T_{g}(r, L))+O(\log r)||.$
\end{proof}

\begin{rem}\label{rem:small ramification}
In \cite[p.557, Claim]{Yam10}, the third author proved \eqref{eq:ramification} for \emph{Zariski dense entire curves} when $X$ is projective. 
Here we modified the proof in \cite{Yam10} such that the stronger result in \cref{prop:Yam10} holds even if the map $f:\bC_{>\delta}\to X$ is not Zariski dense and $X$ is quasi-projective. This is crucial in the proof of  \cref{main2}. 

In this regard, we should mention that in \cite[Lemma 4.2]{Sun22}, it seems that Sun incorrectly applied \cite[p.557, Claim]{Yam10} to any holomorphic map from any quasi-projective curve $C$ to $X$ whose image is not Zariski dense, provided only that it is not included in the ramification locus of $\xsp\to X$.   
\end{rem}

 \subsection{Spread hyperbolicity from spectral covering}\label{subsection:spread2}
 
\begin{thm}\label{thm:main33}
	Let $X$ be a complex connected quasi-projective manifold and let $G$ be an almost simple algebraic group over some non-archimedean local field $K$. If  $\varrho:\pi_1(X)\to G(K)$ is  a big  representation which is   Zariski dense and unbounded, then $X$ is pseudo Picard hyperbolic. 
	\end{thm}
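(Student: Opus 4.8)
The plan is to reduce the pseudo Picard hyperbolicity of $X$ to that of the spectral cover $\xsp$, which we already control, by carefully transferring a holomorphic map $f:\bD^*\to X$ to the spectral cover and then applying \cref{thm2nd} combined with the ramification estimate of \cref{prop:Yam10}. First I would fix a smooth projective compactification $(\overline X, D)$, a big and Zariski dense unbounded representation $\varrho:\pi_1(X)\to G(K)$, and the associated spectral cover $\pi:\xsp\to X$ from \cref{def:spectral}, together with a resolution $\mu:Y\to\xsp$ and the spectral one forms $\omega_i=\mu^*\eta_i\in H^0(\overline Y,\Omega_{\overline Y}(\log D_Y))$. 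By \cref{thm:spectral cover}, $Y$ (hence $\xsp$) is of log general type, and the partial quasi-Albanese map $\alpha_Y:Y\to\cA_Y$ satisfies $\dim Y=\dim\alpha_Y(Y)$; thus $Y$ has maximal quasi-Albanese dimension and \cref{thm2nd} applies to $Y$, yielding a proper Zariski closed $\Xi_Y\subsetneqq Y$ with the extension property for maps $g$ with $N_{\ram\pi_g}(r)=O(\log r)+o(T_g(r))\|$. Let $E\subsetneqq X$ be the proper Zariski closed subset provided by \cref{prop:Yam10}; I would enlarge it to also contain $\pi(\Xi_Y\cup R)$ and the image of the exceptional locus of $\mu$.

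The core of the argument: given $f:\bD^*\to X$ (equivalently, working on $\bC_{>\delta}$ after the standard change of variable) with $f(\bD^*)\not\subseteq E$, the generically finite proper morphism $\pi:Y\to X$ induces, as in diagram \eqref{figure:curve}, a finite covering $p_\cY:\cY\to\bC_{>\delta}$ of some Riemann surface and a holomorphic lift $g:\cY\to Y$ with $\pi\circ g = f\circ p_\cY$. Since $f(\bD^*)\not\subseteq E$, the curve $g$ avoids the relevant $Z_{ij}$, so by \cref{prop:Yam10} we get $N_{\ram p_\cY}(r)=o(T_g(r))+O(\log r)\|$. Now I would invoke \cref{thm2nd} (with $Y$ in the role of $X$, $A=\cA_Y$, and the covering $\pi_\cY=p_\cY:\cY\to\bC_{>\delta}$): the hypothesis $N_{\ram p_\cY}(r)=O(\log r)+o(T_g(r))\|$ holds, and $g(\cY)\not\subseteq\Xi_Y$ because $f(\bD^*)\not\subseteq\pi(\Xi_Y)\subseteq E$. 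Therefore $g$ has no essential singularity over $\infty$, i.e. extends to $\bar g:\overline\cY\to\overline Y$ where $\overline\cY\to\bC_{>\delta}\cup\{\infty\}$ extends $p_\cY$.

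Finally I would descend this extension back to $X$. Since $\pi:\overline Y\to\overline X$ is proper and $\bar f\circ\bar p_\cY = \bar\pi\circ\bar g$ on $\cY$, the composite $\bar\pi\circ\bar g:\overline\cY\to\overline X$ extends $f\circ p_\cY$ to the boundary; as $p_\cY$ is a finite covering, standard removable-singularity / properness considerations show $f$ itself extends holomorphically to $\bD\to\overline X$ (concretely: each branch of $\overline\cY$ over $\infty$ maps into $\overline X$ and these boundary values agree, being permuted by the Galois group, so they glue to a well-defined limit of $f$ at the puncture). This gives the required holomorphic extension $\bar f:\bD\to\overline X$ with exceptional set contained in $E$, proving $X$ is pseudo Picard hyperbolic. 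The main obstacle I anticipate is the bookkeeping in the last step — matching the covering $\cY\to\bC_{>\delta}$ with its extension over $\infty$, controlling how many times $g$ meets the boundary divisor $D_Y$ versus how $f$ meets $D$, and verifying that the ramification of $p_\cY$ near $\infty$ genuinely stays finite so that $\overline\cY$ is a legitimate punctured-disk-type extension; this is where the precise description of the ramification locus in \cref{claim:ramified} and the estimate \cref{prop:Yam10} must be used in tandem rather than separately.
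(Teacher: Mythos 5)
Your proposal is correct and follows essentially the same route as the paper: construct the spectral cover and its resolution $Y$, use \cref{thm:spectral cover} to see that $Y$ is of log general type with maximal (partial) quasi-Albanese dimension, lift $f$ to $g:\cY\to Y$ as in \eqref{figure:curve}, control $N_{\ram p_\cY}(r)$ via \cref{prop:Yam10}, and apply \cref{thm2nd} to extend $g$ and hence $f$. Your extra bookkeeping (enlarging $E$ to contain $\pi(\Xi_Y)$, and the descent of the extension through the finite covering $\overline{p_\cY}$) is exactly the implicit content of the paper's shorter write-up.
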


\begin{proof}
	By \cref{thm:spectral cover}, we know that there is a finite Galois cover $\xsp\to X$ associated to $\varrho$.  Such normal quasi-projective variety $\xsp$ is of log general type.  Moreover,  the spectral one forms $\{\eta_1,\ldots,\eta_m\}$ in \cref{def:spectral one forms} induced by the $\varrho$-equivariant pluriharmonic mapping $u:\widetilde{X}\to \Delta(G)$   $\alpha$ gives rise to a  quasi-Albanese map $a:\xsp\to \cA$ defined in \cref{def:partial}. By \cref{thm:spectral cover} one has $\dim X=\dim a(\xsp)$.   Let $\mu:Y\to \xsp$ be  a resolution of singularities. We identify the puncture disk $\bD^*$ with $\bC_{>1}$  by taking a transformation $z\mapsto\frac{1}{z}$.    By \cref{prop:Yam10},    there is a proper Zariski closed subset $E\subsetneqq X$ so that, for any holomorphic map $f:\bC_{>1}\to X$ which is not contained in $E$,  one has  
  $N_{{\rm ram}\, p_{\cY}}(r) = o(T_{g}(r, L)) + O(\log r) ||.
$ 
  Here $g:\cY\to Y$ is the induced holomorphic map  defined in the right commutative diagram of  \eqref{figure:curve}, $p_\cY:\cY\to \bC_{>1}$ is the surjective proper finite morphism and $L$ is an ample line bundle on $Y$.   By  \cref{thm2nd}, if $g(\mathcal{Y})\not\subset E\cup \Xi$, where $\Xi$ is the proper Zariski closed subset of $Y$ defined in \cref{thm2nd},  then   there are   an extension $\bar{g}:\overline{\cY}\to  \overline{Y}$ of $g$ and a proper holomorphic surjective map $\overline{p_\cY}:\overline{\cY}\to \mathbb C_{>1}\cup\{\infty\}$ which is an extension of $p_\cY:\cY\to \mathbb C_{>1}$. 
  It follows that $f$ extends to a holomorphic map  $\mathbb C_{>1}\cup\{\infty\}\to \overline{X}$. 
\end{proof}

\section{Rigid representation and $\bC$-VHS} \label{sec:rigid}
In this section we will prove that rigid representations of the fundamental groups of quasi-projective manifolds come from a complex variation of Hodge structures (abbreviated as $\bC$-VHS). The proof relies on Mochizuki's Kobayashi--Hitchin correspondence in the non-compact case and his correspondence between reductive representations of $\pi_1(X)$ into ${\rm GL}_N(\bC)$ and tame pure imaginary harmonic bundles over $X$, as presented in \cite{Moc06,Moc07b}.
\subsection{Regular filtered Higgs bundles}  
In this subsection, we recall the notions of regular filtered Higgs bundles. For more details refer to \cite{Moc06}. Let $X$ be a complex manifold with a reduced simple normal crossing divisor $D=\sum_{i=1}^{\ell}D_i$, and let $U=X-D$ be the complement of $D$. We denote the inclusion map of $U$ into $X$ by $j$.

\begin{dfn}\label{dfn:parab-higgs}
	A \emph{regular filtered Higgs bundle} $(\bm{E}_*,\theta)$ on $(X, D)$ is holomorphic vector bundle $E$ on $X-D$, together with an $\mathbb{R}^\ell$-indexed
	filtration ${}_{ \bm{a}}E$ (so-called {\em parabolic structure}) by coherent subsheaves of $j_*E$ such that
	\begin{enumerate}[leftmargin=0.7cm]
		\item $\bm{a}\in \mathbb{R}^\ell$ and ${}_{\bm{a}}E|_U=E$. 
		\item  For $1\leq i\leq \ell$, ${}_{\bm{a}+\bm{1}_i}E = {}_{\bm{a}}E\otimes \cO_X(D_i)$, where $\bm{1}_i=(0,\ldots, 0, 1, 0, \ldots, 0)$ with $1$ in the $i$-th component.
		\item $_{\bm{a}+\bm{\epsilon}}E = {}_{\bm{a}}E$ for any vector $\bm{\epsilon}=(\epsilon, \ldots, \epsilon)$ with $0<\epsilon\ll 1$.
		\item  The set of {\em weights} \{$\bm{a}$\ |\  $_{\bm{a}}E/_{\bm{a}-\bm{\epsilon}}E\not= 0$  for any vector $\bm{\epsilon}=(\epsilon, \ldots, \epsilon)$ with $0<\epsilon\ll 1$\}  is  discrete in $\mathbb{R}^\ell$.
		\item There is a $\cO_X$-linear map, so-called Higgs field, 
		$$\theta:\diae\to \Omega_X^1(\log D)\otimes \diae$$
		such that
	 $\theta\wedge \theta=0$, 
		and
		$$\theta(_{\bm{a}}E)\subseteq \Omega_X^1(\log D)\otimes {}_{\bm{a}}E.$$ 
	\end{enumerate}
\end{dfn}
 Denote $_{\bm{0}}E$ by $\diae$, where $\bm{0}=(0, \ldots, 0)$.  By the work of Borne-Vistoli the parabolic structure of a parabolic bundle  is  \emph{locally abelian}, \emph{i.e.} it admits a local frame compatible with the filtration (see e.g. \cite{IS07} and \cite{BV12}).

A natural class of regular filtered   Higgs bundles comes from prolongations of tame harmonic bundles. We first   recall some notions in \cite[\S 2.2.1]{Moc07}.  Let $E$ be a holomorphic vector bundle with a $\cC^\infty$ hermitian metric $h$ over $X-D$.
Pick any $x\in D$. Let  $(\Omega; z_1, \ldots, z_n)$  be a coordinate system of $X$ centered at  $x$. For any section $\sigma\in \Gamma(\Omega-D,E|_{\Omega-D})$, let $|\sigma|_h$ denote the norm function of $\sigma$ with respect to the metric $h$. We denote $|\sigma|_h\in \cO(\prod_{i=1}^{\ell}|z_i|^{-b_i})$ if there exists a positive number $C$ such that $|\sigma|_h\leq C\cdot\prod_{i=1}^{\ell}|z_i|^{-b_i}$. For any $\bm{b}\in \bR^\ell$, say $-\mbox{ord}(\sigma)\leq \bm{b}$ means the following:
$$
|\sigma|_h=\cO(\prod_{i=1}^{\ell}|z_i|^{-b_i-\varepsilon})
$$
for any real number  $\varepsilon>0$ and $0<|z_i|\ll1$. For any $\bm{b}$, the sheaf ${}_{\bm{b}} E$ is defined as follows: 
\begin{align}\label{eq:prolongation}
	\Gamma(\Omega, {}_{\bm{b}} E):=\{\sigma\in\Gamma(\Omega-D,E|_{\Omega-D})\mid -\mbox{ord}(\sigma)\leq \bm{b} \}. 
\end{align}
The sheaf ${}_{\bm{b}} E$ is called the prolongment of $E$ by an increasing order $\bm{b}$. In particular, we use the notation ${}^\diamond E$ in the case $\bm{b}=(0,\ldots,0)$.

According to  Simpson \cite[Theorem 2]{Sim90} and Mochizuki \cite[Theorem 8.58]{Moc07}, the above prolongation gives a regular filtered Higgs bundle.
\begin{thm}[Simpson, Mochizuki] \label{thm:SM} Let $(X, D)$ be a complex manifold $X$ with a simple normal crossing divisor $D$. If $(E , \theta, h)$ is a tame harmonic bundle on $X-D$, then the corresponding filtration $_{\bm{b}}E$ defined above defines a regular filtered Higgs bundle $(\bm{E}_*, \theta)$ on $(X,D)$.  	\qed
\end{thm}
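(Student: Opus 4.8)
The statement to prove is Theorem~\ref{thm:SM} (Simpson--Mochizuki): given a tame harmonic bundle $(E,\theta,h)$ on $X-D$, the prolongation filtration $\bm{b}\mapsto {}_{\bm{b}}E$ of \eqref{eq:prolongation} produces a regular filtered Higgs bundle on $(X,D)$. Since the paper explicitly attributes this to \cite[Theorem 2]{Sim90} and \cite[Theorem 8.58]{Moc07} and is merely recalling it as a known input, the ``proof'' here is really a matter of unwinding the definitions and citing the analytic estimates on tame harmonic bundles; I would not attempt to reprove the hard norm estimates from scratch.

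\medskip
\textbf{Plan of proof.} First I would reduce to a local statement near a point $x\in D$, choosing a coordinate polydisc $(\Omega;z_1,\dots,z_n)$ with $D\cap\Omega=\{z_1\cdots z_\ell=0\}$; all the axioms (1)--(5) of Definition~\ref{dfn:parab-higgs} are local on $X$, so it suffices to verify them on each such $\Omega$. Axioms (1) and (2) are immediate from the definition \eqref{eq:prolongation}: on $U$ one imposes no growth condition so ${}_{\bm a}E|_U=E$, and multiplying a local section by $z_i$ shifts the admissible growth order in the $i$-th slot by $1$, giving ${}_{\bm a+\bm 1_i}E={}_{\bm a}E\otimes\mathcal O_X(D_i)$; axiom (3), that the filtration is constant for a small uniform shift $\bm\epsilon$, follows from the ``$\forall\varepsilon>0$'' built into the definition of $-\mathrm{ord}(\sigma)\le\bm b$. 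The substantive analytic inputs are: (i) each ${}_{\bm b}E$ is a \emph{coherent} $\mathcal O_X$-module, in fact locally free, and (ii) the set of weights is discrete in $\mathbb R^\ell$ (axiom (5)). Both of these rest on the asymptotic analysis of tame harmonic bundles: the norm estimate comparing $h$ with a model metric built from the eigenvalues of the residues (the KMS-structure / parabolic filtration in Mochizuki's terminology), which shows that locally ${}_{\bm b}E$ is generated by a frame adapted to the filtration and that weights are controlled by the (finitely many) real parts of the parabolic exponents along each $D_i$. I would invoke \cite[\S8]{Moc07} (resp. \cite[Theorem 2]{Sim90} in the one-variable/curve case, extended to normal crossings) for these facts rather than redo them.

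\medskip
\textbf{The Higgs field axiom.} For axiom (6) I would start from the tameness hypothesis: writing $\theta=\sum_{j=1}^\ell f_j\, d\log z_j+\sum_{k>\ell} f_k\, dz_k$ as in \eqref{eq:local}, tameness says each characteristic polynomial $\det(f_j-t)$ has coefficients extending holomorphically across $D$. Combined with the norm estimate on $h$ (which bounds $|\theta|_h$ in terms of the Poincaré-type metric), one checks that $\theta$ maps the prolongation $\diae$ into $\Omega^1_X(\log D)\otimes\diae$, and more precisely that $\theta({}_{\bm a}E)\subset\Omega^1_X(\log D)\otimes{}_{\bm a}E$ because the residue $\mathrm{Res}_{D_i}\theta$ preserves the parabolic filtration on ${}_{\bm a}E|_{D_i}$ (this is the compatibility of the KMS structure with the Higgs field). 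The relation $\theta\wedge\theta=0$ is inherited from $(E,\theta)$ on $U$ and extends by continuity/coherence since ${}_{\bm a}E$ is torsion-free. Finally I would note, as the theorem asks, that the collection $\{{}_{\bm a}E\}$ with this $\theta$ satisfies Definition~\ref{dfn:parab-higgs}, completing the verification.

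\medskip
\textbf{Main obstacle.} The genuine difficulty — and the reason this is stated as a citation rather than proved — is establishing the local freeness of ${}_{\bm b}E$ and the discreteness of weights in the several-variables (simple normal crossing) setting. In one variable this is Simpson's norm estimate; for normal crossings one needs Mochizuki's much more delicate induction on the dimension and the theory of KMS-structures / prolongations, including the fact that the parabolic structure is locally abelian. I would therefore present the proof as: reduce to the local model, dispatch the formal axioms (1)--(4), and then cite \cite[Theorem 2]{Sim90} and \cite[Theorems 8.58 \& (the relevant parts of \S8)]{Moc07} for the coherence/local-freeness, weight discreteness, and the behaviour of the Higgs field, which together yield the regular filtered Higgs bundle structure.
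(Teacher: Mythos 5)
Your proposal is correct and matches the paper's treatment: the paper gives no proof of this statement at all, simply citing \cite[Theorem 2]{Sim90} and \cite[Theorem 8.58]{Moc07}, and your outline correctly identifies which axioms are formal consequences of the definition of ${}_{\bm b}E$ and which (coherence/local freeness, discreteness of weights, compatibility of $\theta$ with the prolongation) must be deferred to those references.
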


\subsection{Character variety and rigid representation}  
In this subsection, we discuss the character variety and rigid representations. Let $X$ be a quasi-projective manifold, and $G$ be a reductive algebraic group defined over a number field $k$.   
We have an affine scheme $\Hom(\pi_1(X), G)$ defined over $\overline{\bQ}$ that represents the functor
$$
S\mapsto \Hom(\pi_1(X), G(S))
$$
for any ring $S$. 
The \emph{character variety} $M_B(\pi_1(X),G):=\Hom(\pi_1(X), G)\slash\!\!\slash G$ is the GIT quotient of $\Hom(\pi_1(X), G)$ by $G$, where $G$ acts by conjugation. Note that it might be reducible while it is called variety.  Thus, $\Hom(\pi_1(X), G)\to M_B(\pi_1(X),G)$ is surjective.  For any representation $\varrho:\pi_1(X)\to G(\bC)$, we write $[\varrho]$ to denote  its image in $M_B(\pi_1(X),G)$.  We list some properties of  character varieties which will be used in this paper, and we refer the readers to the comprehensive paper \cite{Sik12} for more details.

Let $K$ be any algebraically closed field of   characteristic zero containing $k$.  A representation $\varrho:\pi_1(X)\to G(K)$ is called  \emph{reductive} if the  Zariski closure of $\varrho(\pi_1(X))$ is a   reductive group. 
In particular,  Zariski dense representations in  $\Hom(\pi_1(X), G)(K)$ are reductive for $G$ is reductive.

By \cite[Theorem 30]{Sik12}, the orbit of  any representation $\varrho$ in $ \Hom(\pi_1(X), G)(K)$ is closed if and only if  $\varrho$ is   reductive. Two reductive representations $\varrho,\varrho'$ are conjugate under $G(K)$ if and only if $[\varrho]=[\varrho']$.

A reductive representation   
$\varrho:\pi_1(X)\to G(K)$  is   \emph{rigid} if  the irreducible component  of $M_B(\pi_1(X),G)$ containing $[\varrho]$  is zero dimensional,  otherwise it is called \emph{non-rigid}.  For a  rigid reductive  representation $\varrho$, by  above arguments any continuous deformation $\varrho':\pi_1(X)\to G(K)$ of $\varrho$ which is reductive is conjugate to $\varrho$  under $G(K)$. 

\subsection{Rigid representations underlie $\bC$-VHS}
We will prove that any rigid representation underlies a $\bC$-VHS. We refer the readers to    \cite[\S 3.1.3]{Moc06} for  the definition of  $\mu_L$-stability of regular filtered Higgs bundles with respect to some ample polarization $L$.
\begin{thm}\label{thm:vhs}
	Let $X$ be a quasi-projective manifold.  Let  $\sigma:\pi_1(X)\to G(\bC)$ be a Zariski dense representation where $G$ is a reductive  algebraic group over  $\bC$. If  $\sigma$ is  rigid, then $\sigma$ underlies a $\bC$-VHS.   
\end{thm}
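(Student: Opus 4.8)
The strategy is the classical Corlette--Simpson rigidity argument, adapted to the quasi-projective setting via Mochizuki's results. First I would reduce to the case where $\sigma$ is in addition \emph{simple}. Indeed, since $G$ is reductive and $\sigma$ is Zariski dense, if $\sigma$ is not simple as a ${\rm GL}_N(\bC)$-representation (via some faithful representation $G\hookrightarrow {\rm GL}_N$), it decomposes as a direct sum of stable pieces; rigidity of $\sigma$ forces each summand, and more precisely the associated tame pure imaginary harmonic bundle, to be rigid as well, so it suffices to produce a $\bC$-VHS structure compatibly and reassemble. Concretely, by Mochizuki's theorem (\cref{moc}) applied via the faithful representation, $\sigma$ corresponds to a tame pure imaginary harmonic bundle $(E,\theta,h)$ on $X$, semisimple, and when $\sigma$ is simple the harmonic metric $h$ is unique up to scaling. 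Via \cref{thm:SM}, the prolongation $({\bm E}_*,\theta)$ is a regular filtered Higgs bundle on a good compactification $(\overline X, D)$, and by Mochizuki's Kobayashi--Hitchin correspondence it is $\mu_L$-polystable of vanishing parabolic degree and parabolic Chern classes for any ample polarization $L$.

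Next I would run the $\bC^*$-action argument. The group $\bC^*$ acts on the moduli space of regular filtered Higgs bundles (equivalently, on $M_B$ in Higgs incarnation) by $t\cdot(E,\theta)=(E,t\theta)$, and this action preserves $\mu_L$-stability, vanishing parabolic degree, and the parabolic Chern class conditions. Hence for each $t\in\bC^*$ the rescaled Higgs bundle $(E,t\theta)$ corresponds, again by Mochizuki's correspondence in the reductive/tame pure imaginary setting, to a semisimple representation $\sigma_t:\pi_1(X)\to {\rm GL}_N(\bC)$, with $\sigma_1=\sigma$. This gives a (real-)algebraic family $\{[\sigma_t]\}_{t\in\bC^*}$ of points in the character variety $M_B(\pi_1(X),{\rm GL}_N)$ (continuity/algebraicity of $t\mapsto[\sigma_t]$ follows from properness and continuity in the non-abelian Hodge correspondence, cf. Mochizuki). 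Since $\sigma$ is rigid --- the component of $M_B(\pi_1(X),G)$ through $[\sigma]$ is zero-dimensional, and via $G\hookrightarrow{\rm GL}_N$ the same holds in $M_B(\pi_1(X),{\rm GL}_N)$ along this family, because Zariski density is an open condition and the family stays Zariski dense for $t$ near $1$ --- the orbit $\{[\sigma_t]\}$ must be constant: $[\sigma_t]=[\sigma]$ for all $t$. In particular $(E,t\theta)\cong(E,\theta)$ as filtered Higgs bundles for all $t\in\bC^*$, i.e. $(E,\theta)$ is a \emph{fixed point} of the $\bC^*$-action.

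Finally I would invoke the structure of $\bC^*$-fixed points: a $\mu_L$-stable regular filtered Higgs bundle fixed by $\bC^*$ is a \emph{system of Hodge bundles}, meaning $E=\bigoplus_p E^p$ with $\theta:E^p\to E^{p-1}\otimes\Omega^1_{\overline X}(\log D)$; this is the tame/parabolic analogue of Simpson's theorem, available in Mochizuki's work. By Mochizuki's correspondence such a system of Hodge bundles with its harmonic metric is exactly the Higgs-theoretic avatar of a tame pure imaginary $\bC$-VHS on $X$, whose underlying local system is $\sigma$. Restricting the resulting ${\rm GL}_N$-VHS structure back along $G\hookrightarrow{\rm GL}_N$ and using that $\sigma$ has Zariski-dense image in $G$, the Hodge structure is compatible with the $G$-action, so $\sigma$ underlies a $G$-$\bC$-VHS. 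This completes the proof.

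\textbf{Main obstacle.} The delicate point is the passage from rigidity of $\sigma$ to the $\bC^*$-fixed-point property. One must ensure: (i) that $t\mapsto[\sigma_t]$ is genuinely a morphism of (real) varieties into a \emph{single} irreducible component of the character variety --- this uses that the family of harmonic bundles $(E,t\theta)$ varies nicely and that semisimplicity/tame pure imaginarity is preserved, which is where Mochizuki's non-compact Kobayashi--Hitchin correspondence and its continuity properties are essential; and (ii) that the family stays Zariski dense in $G$ for $t$ in a neighborhood of $1$ so that rigidity in $M_B(\pi_1(X),G)$ (rather than merely in $M_B(\pi_1(X),{\rm GL}_N)$) applies --- Zariski density is open, so $\sigma_t$ is Zariski dense for $t$ near $1$, and zero-dimensionality of the component forces $[\sigma_t]=[\sigma]$ locally, hence everywhere on the connected curve $\bC^*$ by a connectedness/propagation argument. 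The remaining inputs --- the classification of $\bC^*$-fixed stable filtered Higgs bundles as systems of Hodge bundles, and the identification of these with $\bC$-VHS --- are by now standard consequences of Mochizuki's theory and require only careful bookkeeping of parabolic weights.
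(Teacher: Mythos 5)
Your overall strategy is the right one (the Corlette--Simpson rescaling argument transported to the quasi-projective setting via Mochizuki), but there is a genuine gap at the central step: you assert that for \emph{every} $t\in\bC^*$ the rescaled Higgs bundle $(E,t\theta)$ corresponds ``by Mochizuki's correspondence in the tame pure imaginary setting'' to a semisimple representation $\sigma_t$. In the non-compact case this is false as stated. Mochizuki's Kobayashi--Hitchin correspondence does produce a harmonic metric $h_t$ adapted to the parabolic structure of $(\bm{E}_*,t\theta)$, but the semisimplicity of the resulting flat connection (which you need in order to identify $[\sigma_t]$ with a point of the character variety and to invoke rigidity) comes from \cref{moc}, which requires the harmonic bundle to be \emph{pure imaginary}: the eigenvalues of $\Res_{D_i}(t\theta)$ must be pure imaginary. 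This condition is preserved when $t$ is real, but is destroyed by a generic complex rescaling unless the residues are already nilpotent. So your one-step $\bC^*$-orbit argument does not get off the ground; this is precisely the point where the compact and quasi-projective cases diverge.

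The paper repairs this with a two-stage argument that you would need to reproduce. First, rescale only by \emph{real} $t\in\,]0,1]$ (which preserves pure imaginarity), apply rigidity and the uniqueness of the harmonic metric for simple flat bundles to obtain a holomorphic isometry $\varphi$ with $\varphi^*\theta=t\theta$; comparing characteristic polynomials $\det(T-\theta)=\det(T-t\theta)$ forces all coefficients $a_i\in H^0(\overline X,\Omega^i_{\overline X}(\log D))$ to vanish, i.e.\ $\theta$ is nilpotent. Only now, with nilpotent residues, does rescaling by $\lambda\in U(1)$ preserve the tame pure imaginary condition, and a second application of rigidity for a single $\lambda$ that is not a root of unity yields an isometric automorphism $\phi$ with $\phi^*\theta=\lambda\theta$, whose generalized eigenspace decomposition of $\diae_h$ gives the system of log Hodge bundles and hence the $\bC$-VHS. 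A secondary, smaller issue: your appeal to ``Zariski density is an open condition near $t=1$'' should be replaced by Mochizuki's result that the Zariski closure of $\varrho_t$ coincides with that of $\varrho$ for all $t$ in the allowed range, which is what guarantees that $[\sigma_t]$ stays in the component of $M_B(\pi_1(X),G)$ where rigidity is assumed.
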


\begin{proof}
	Fixing a faithful representation \(G \to {\rm GL}_{N}\), \(\sigma\) induces a representation \(\varrho : \pi_{1}(X) \to {\rm GL}_{N}(\mathbb{C})\) whose Zariski closure is \(G\).
	\medskip
	
\noindent	{\em Step 1. We may assume that $\varrho$ is a simple representation.} Since the Zariski closure of $\varrho$ is $G$, it follows that $\varrho$ is semisimple and thus $\varrho=\oplus_{i=0}^{\ell}\varrho_i$ where   $\varrho_i:\pi_1(X)\to GL(V_i)$ is a simple representation with $\oplus_{i=0}^{\ell}V_i=\bC^N$. Let $G_i\subset GL(V_i)$ be the Zariski closure of $\varrho_i$ which is reductive. Then $G:=\prod_{i=0}^{\ell}G_i$, and it follows that	$M_B(X,G)=\prod_{i=1}^{\ell}M_B(X,G_i)$. Since $[\varrho]$ is an isolated point in $M_B(X,G)$, it implies that each $[\varrho_i]$ is an isolated point hence rigid.   We thus can assume that $\varrho:\pi_1(X)\to {\rm GL}_N(\bC)$ is a simple representation to prove the theorem. 
	
\noindent	{\em Step 2. Higgs field and scaling by a real number.}
	Take a projective compactification $\overline{X}$ of $X$ so that $D:=\overline{X}\backslash X$  is a simple normal crossing divisor.  Fix an ample polarization $L$ 
on $\overline{X}$. By \cref{moc}, we can find a tame purely imaginary harmonic bundle \((E, \theta, h)\) with monodromy \(\varrho\) by \cref{moc}. Then, we can let $(\mathbf{E}_*, \theta)$ be the associated regular filtered Higgs bundle  
of $(E,\theta,h)$  on the log pair $(\overline{X},D)$ defined in \cref{thm:SM}.  This filtered Higgs bundle is $\mu_L$-stable  regular filtered  Higgs bundle with trivial characteristic numbers.   
	
	Let $t$ be a real number in $]0,1]$.  Then   $(\mathbf{E}_*, t\theta)$ is also $\mu_L$-stable   with trivial characteristic numbers. By the Kobayashi-Hitchin correspondence \cite[Theorem 9.4]{Moc06}, there is a pluriharmonic metric $h_t$ for $(E,t\theta)$ which is adapted to the parabolic structure of $(\mathbf{E}_*, t\theta)$. In particular, $(E,t\theta,h_t)$ is a tame harmonic bundle. It is moreover pure imaginary since $t$ is real. Hence the associated monodromy representation $\varrho_t:\pi_1(X)\to {\rm GL}_N(\bC)$ of the flat connection $\bD_t:=\nabla_{h_t}+t\theta+(t\theta)_{h_t}^\dagger$ is  semisimple by \cref{moc}. Here $\nabla_t$ is the Chern connection of $(E,h_t)$.   
	By \cite[Lemma 10.10]{Moc06}, the Zariski closure of $\varrho_t$ coincides with that of $\varrho$, hence is also $G$.  
Hence $[\varrho_t]\in M_B(\pi_1(X),G)(\bC)$.  

	
\noindent	{\em Step 3. Existence of isometries between the deformations.} By the proofs of \cite[Theorem 10.1 \&  Lemma 10.11]{Moc06}, we know that the map
	\begin{align*}
		]0,1]&\to M_B(\pi_1(X),G)(\bC)\\
		t&\mapsto [\varrho_t]
	\end{align*}
	is \emph{continuous}, where we endow the complex affine variety $M_B(\pi_1(X),G)(\bC)$ with the analytic topology. Since $ [\varrho] \in M_B(X,G)(\bC)$ is isolated and $\varrho_t$ is reductive, it follows that $\varrho_t$ is conjugate to $\varrho$. This implies that $\varrho_t$ is also simple for each $t\in ]0,1]$.  Let  $V$ be the underlying smooth vector bundle of $E$.  Fix some $t\in ]0,1[$. One can thus construct a  smooth automorphism $\varphi:V\to V$ so that $\bD_t=\varphi^*\bD_1:=\varphi^{-1}\bD_1\varphi$.  Hence  $\varphi^*h$  defined by
	$
	\varphi^*h(u,v):=h(\varphi(u),\varphi(v)) 
	$  is the harmonic metric for the flat bundle $(V, \varphi^*\bD_1)=(V, \bD_t)$.   By the unicity of harmonic metric of  simple flat bundle in  \cref{moc},  there is a constant $c>0$ so that $c\varphi^*h=h_t$.  Let us  replace the automorphism $\varphi$ of $V$ by $\sqrt{c}\varphi$ so that   we will have $\varphi^*h=h_t$.
	\medskip
	
\noindent	{\em Step 4. \(\theta\) is nilpotent.} Since the decomposition of $\bD_t$ with respect to the metric $h_t$ into a sum of unitary connection   and self-adjoint operator of $V$ is unique, it implies that 
	\begin{align*}
		\nabla_{h_t}&=\nabla_{\varphi^*h}=\varphi^*\nabla_h\\
		t\theta+ (t\theta)^\dagger_{h_t}&=\varphi^*(\theta+\theta_{h}^\dagger).
	\end{align*} 
	Since $E=(V, \nabla_h^{0,1})=(V,\nabla_{h_t}^{(0,1)})$, the first equality means that $\varphi$ is a holomorphic isomorphism of $E$. The second one implies that  
	\begin{align}\label{eq:same}
		t\theta=\varphi^*\theta:=\varphi^{-1}\theta \varphi.
	\end{align} 
	Let $T$ be a formal variable. Consider the characteristic polynomial 
	$$
	\det(T-\varphi^{-1}\theta \varphi)=\det(T-\theta)=T^N+a_1T^{N-1}+\cdots+a_0
	$$
	with $a_i\in H^0(\overline{X}, \Omega^i_{\overline{X}}(\log D))$. Note that
	$$
	\det(T-t\theta)=T^N+ta_1T^{N-1}+\cdots+t^Na_0.
	$$
	By \eqref{eq:same} and $t\in (0,1)$, it follows that $a_i\equiv 0$ for each $i=1,\ldots,N$. Hence $\theta$ is \emph{nilpotent}.
	\medskip
	
\noindent	{\em Step 5. Scaling of $\theta$ by an element of \(U(1)\).} By the previous step,  $(E,\theta,h)$ has nilpotent residues in the sense of \cref{def:nilpotency}.  Then for any $\lambda\in  U(1)$, $(E,\lambda\theta,h)$ also has nilpotent residues, hence is tame pure imaginary 
harmonic bundle.  We apply \cite[Lemmas 10.9 \&  10.10]{Moc06} to conclude that for the  monodromy representation $\varrho_\lambda$ of the flat connection  $\bD_\lambda:=\nabla_h+\lambda\theta+\overline{\lambda}\theta^\dagger_h$, its Zariski closure is also $G$, which implies that $\varrho_\lambda$ is reductive.  Hence $[\varrho_\lambda]\in M_B(\pi_1(X),G)(\bC)$.   
As $\varrho_\lambda$ is a continuous deformation of $\varrho$ in $\Hom(\pi_1(X), G)(\bC)$ and $ [\varrho] \in M_B(X,G)(\bC)$ is isolated, it follows that $\varrho_\lambda$ is conjugate to $\varrho$. 
	\medskip
	
\noindent	{\em Step 6. End of proof.} The rest of the proof is exactly the same as \cite[Proposition 4.8]{BDDM}; we provide it for completeness sake.  Now fix $\lambda\in U(1)$ which is not root of unity. By the same argument as above, there is a  a  smooth automorphism $\phi:V\to V$ so that $\bD_\lambda=\phi^*\bD_1:=\phi^{-1}\bD_1\phi$ and $\phi^*h=h$.  Moreover, 
	\begin{align*}
		\nabla_{h}&=\nabla_{\phi^*h}=\phi^*\nabla_h\\
		\lambda\theta+ \overline{\lambda}\theta^\dagger_{h}&=\phi^*(\theta+\theta_{h}^\dagger).
	\end{align*} 
	In other words, $\phi:(E,h)\to (E,h)$ is a holomorphic automorphism which is moreover an isometry.  Moreover, $\phi^*\theta=\lambda\theta$. 
	Consider the prolongation $\diae_h$  over $\overline{X}$ via norm growth defined in \cref{sec:tame}. Since $\phi: (E,h)\to (E,h)$ is   an isometry, it thus extends to a holomorphic isomorphism $\diae_{h}\to \diae_h$, which we still denote by $\phi$. Recall that $(E,\theta,h)$ and $(E,\lambda\theta,h)$ 
is tame,  we thus have the following commutative diagram
	\begin{equation*}
		\begin{tikzcd}
			\diae_{h} \arrow[r, "\lambda\theta"] \arrow[d, "\phi"] & \diae_{h}\otimes \Omega_{\overline{X}}(\log D) \arrow[d, "\phi\otimes {\rm id}"] \\
			\diae_{h} \arrow[r, "\theta"]  & \diae_{h}\otimes \Omega_{\overline{X}}(\log D).
		\end{tikzcd}
	\end{equation*} 
Let $T$ be a formal variable. Consider the polynomial  $\det (\phi-T)=0$ of $T$. Since its coefficients are holomorphic functions defined over  $\overline{X}$, they are all constant.  Let $\eta$ be an eigenvalue of $\det (\phi-T)=0$.  Consider the generalized eigenspace $\diae_{h,\eta}$ defined by $\ker (\phi-\eta)^\ell=0$ for some sufficiently big $\ell$. One can check that $\theta:  \diae_{h,\eta} \to \diae_{h,\lambda^{-1}\eta}\otimes \Omega_{\overline{X}}(\log D)$. Since $\lambda$ is not root of unity,  the eigenvalues of $\phi$ break up into a finite number of  chains of the form $\lambda^{i}\eta,\ldots, \lambda^{-j}\eta$  so that $\lambda^{i+1}\eta$ and $\lambda^{-j-1}\eta$
	are not eigenvalues of $\phi$.  Therefore, there is decomposition $\diae_{h}=\oplus_{i=0}^{m} \diae_i$ so that 
	$$
	\theta: \diae_i\to \diae_{i+1}\otimes \Omega_{\overline{X}}(\log D).
	$$
	In the language of \cite[Definition 2.10]{Den22} $(\diae_{h}=\oplus_{i=0}^{m} E_i,\theta)$ is a \emph{system of log Hodge bundles}. One can apply \cite[Proposition 2.12]{Den22} to show that the harmonic metric  $h$ is moreover a \emph{Hodge metric} in the sense of \cite[Definition 2.11]{Den22}, \emph{i.e.}  $h(u,v)=0$ if $u\in E_i|_{X}$ and $v\in E_j|_{X}$ with $i\neq j$. Hence by \cite[\S 8]{Sim88}, $(E,\theta,h)$ corresponds to a complex variation of Hodge structures in $X$.     This proves the theorem. 
\end{proof}
\begin{rem}
The above theorem has been known to us for the following cases:	when $X$ is projective, it is proved by Simpson \cite{Sim92}; when $\sigma$ has quasi-unipotent monodromies at infinity and $G={\rm SL}_n$, it is proved by Corlette-Simpson \cite{CS08}; when $G={\rm GL}_N$ and the corresponding harmonic bundle of $\sigma$ has nilpotent residues at infinity, it is proved in \cite{BDDM}. In Steps 1-4 of the above proof we utilise Mochizuki's work to prove that $\theta$ is nilpotent. 
\end{rem}

\section{Proof of  \Cref{main2}} \label{sec:proof1}
In this section, we  prove our first main result of this paper.
\subsection{Existence of unbounded representations}
 We first prove a theorem on constructing unbounded representation in almost simple algebraic groups over non-archimedean local field. This refines a previous result in \cite[\S 4.2]{Yam10}.

\begin{proposition}\label{lem:20220819}
	Let $X$ be a quasi-projective manifold and  let $G$ be an almost simple algebraic group defined over $\bC$. 
Assume that there is a Zariski dense, non-rigid representation $\varrho:\pi_1(X)\to G(\bC)$, then there is a Zariski dense, unbounded representation $\varrho':\pi_1(X)\to G(F)$ where $F$ is some non-archimedean local field of zero characteristic. 
	If moreover $\varrho$ is big, then $\varrho'$ is taken to be big.
\end{proposition}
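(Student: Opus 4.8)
\textbf{Proof proposal for Proposition \ref{lem:20220819}.}

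The plan is to exploit non-rigidity to produce a curve in the character variety, and then use the standard dictionary between such curves and actions on Bruhat--Tits buildings (via valuations on the function field of the curve) to extract an unbounded representation over a non-archimedean local field. First I would set up the $\mathbb{Q}$-structure: the scheme $R:=\Hom(\pi_1(X),G)$ is defined over $\overline{\mathbb{Q}}$ (in fact over a number field, since $\pi_1(X)$ is finitely generated and $G$ is defined over $\mathbb{Q}$ after choosing coordinates), and likewise $M_B(\pi_1(X),G)$. Since $\varrho$ is Zariski dense it is reductive, so its orbit is closed and $[\varrho]$ lies in a positive-dimensional irreducible component $M_0$ of $M_B(\pi_1(X),G)$ by the non-rigidity hypothesis. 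I would pick an irreducible affine curve $C\subset M_0$ through $[\varrho]$, defined over a number field $k$, and lift it (after a finite base change, replacing $C$ by the normalization of a component of its preimage in $R$) to an irreducible curve $\widetilde C\subset R$ mapping dominantly to $C$, again defined over a number field. Let $K=k(\widetilde C)$ be the function field; the tautological point gives a representation $\varrho_K:\pi_1(X)\to G(K)$ which is Zariski dense in $G$ (its Zariski closure is a $\pi_1(X)$-invariant subgroup whose specialization at the generic point of $C$ recovers $G$, and $\pi_1(X)$ acts the same way), and $\varrho_K$ is \emph{not} conjugate into $G(\bar k)$ — otherwise the image of $\widetilde C$ in $M_B$ would be a point.

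Next I would choose a place $v$ of $K$ witnessing this non-constancy. Because $[\varrho]\in C$ and $C$ is affine of positive dimension, there is a point of $\widetilde C$ lying over a point at which $C$ meets the boundary, i.e. a discrete valuation $v$ of $K$ (trivial on $k$) such that some matrix entry of some $\varrho_K(\gamma)$, $\gamma\in\pi_1(X)$, has negative $v$-value; equivalently $\varrho_K$ is unbounded for $v$. More precisely: the matrix entries of the $\varrho_K(\gamma)$ for $\gamma$ in a finite generating set generate a subring of $K$ that is not contained in $\bar k$ (else the representation would be defined over $\bar k$), so there is a place $v$ at which one of them has a pole. Let $F$ be the completion $K_v$; it is a non-archimedean local field of characteristic zero (a finite extension of a $p$-adic field or of $\bar k((t))$ — if one insists on local fields in the narrow sense one takes a finite extension of $\mathbb{Q}_p$ by spreading out $\widetilde C$ over $\mathrm{Spec}\,\mathcal{O}_k$ and reducing, but the cleaner route is to allow $F=\bar k((t))$ or to note $K_v$ already works for all the building-theoretic arguments of \cref{sec:reduction}). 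Set $\varrho':=\varrho_K$ composed with $G(K)\hookrightarrow G(F)$. Then $\varrho'$ is Zariski dense (base change preserves Zariski closure) and unbounded by construction. Here I would invoke \cref{lem:AB} only implicitly; the content is just that unboundedness of the image is exactly what the choice of $v$ guarantees.

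For the ``moreover'' statement, I would use the fact that bigness is inherited under the whole construction. Concretely: $\varrho$ big means there are countably many proper closed subvarieties $Z_i\subsetneqq X$ such that $\varrho(\mathrm{Im}[\pi_1(Y^{\mathrm{norm}})\to\pi_1(X)])$ is infinite for every positive-dimensional closed $Y\not\subset\cup Z_i$. For such $Y$, the subgroup $\Gamma_Y:=\varrho(\mathrm{Im}[\pi_1(Y^{\mathrm{norm}})\to\pi_1(X)])$ is infinite and its Zariski closure $H_Y$ is a positive-dimensional subgroup of $G$; the corresponding restriction $\varrho'|_Y:\pi_1(Y^{\mathrm{norm}})\to H_Y(F)$ has the \emph{same} image as the $K$-representation, which is not finite (it maps onto the infinite group $\Gamma_Y$ under specialization), hence $\varrho'(\mathrm{Im}[\pi_1(Y^{\mathrm{norm}})\to\pi_1(X)])$ is infinite. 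Thus $\varrho'$ is big with the same countable exceptional family $\{Z_i\}$. The main obstacle I anticipate is the delicate bookkeeping in the first two paragraphs: one must be careful that the lifted curve $\widetilde C\to C$ can genuinely be taken irreducible and defined over a number field, that Zariski density truly descends to the generic fiber (this uses that the conjugation action of $\pi_1(X)$ and the defining equations of the Zariski closure are all algebraic over $k$, so the Zariski closure of $\varrho_K$ is obtained by base change from that of the specialization), and above all that one can \emph{choose the valuation $v$ compatibly} so that unboundedness holds — this is where non-rigidity is used in an essential, not merely formal, way, and it is essentially the argument of \cite[\S 4.2]{Yam10} which must be adapted to keep track of the bigness property throughout. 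I would present that adaptation as the technical core of the proof.
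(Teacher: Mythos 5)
Your overall architecture (a curve in the character variety through $[\varrho]$, the tautological representation over a field attached to that curve, a valuation detecting non-constancy, then transfer of Zariski density and bigness) is the classical Zuo--Corlette--Simpson strategy, but it breaks down at exactly the step this proposition is designed to repair: the production of the field $F$. The completion $K_v$ of the function field $K=k(\widetilde C)$ at a discrete valuation trivial on the number field $k$ is isomorphic to $k'((t))$ for a finite extension $k'/k$; this field has characteristic zero but is \emph{not} a non-archimedean local field, since its residue field is infinite and it is therefore not locally compact. So the $\varrho'$ you construct does not satisfy the conclusion as stated, and --- more importantly for the rest of the paper --- local compactness is indispensable for the Gromov--Schoen-type harmonic map theory into Bruhat--Tits buildings that the unbounded representation is ultimately fed into. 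Neither of your two escape routes works: allowing $F=\bar k((t))$ simply changes the statement, and spreading out over $\operatorname{Spec}\mathcal O_k$ and reducing mod $\mathfrak p$ lands you in $G(\mathbb F_q((t)))$, which has positive characteristic and, worse, there is no known way to guarantee that the reduced representation remains Zariski dense, big, or even reductive for general $\mathfrak p$; this is precisely the unresolved issue discussed in \cref{rem:20230417} and \cref{rem:previous}.

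The paper's proof takes a genuinely different route to sidestep this. Since the irreducible component $R'$ of $R_{\bar{\mathbb Q}}$ containing $\varrho$ is defined over a finitely generated field, and since $\mathbb Q_p$ has infinite transcendence degree over $\mathbb Q$, one embeds that finitely generated field into a finite extension $L$ of $\mathbb Q_p$, so that the curve $C\subset R'$ and the point $\varrho$ become $L$-points of a genuine local field from the outset. Unboundedness is then extracted not from a valuation on a function field but from a compactness argument inside the character variety: the image in $M(F)$ of the set of bounded representations is compact (\cite[Lemma 4.2]{Yam10}), whereas $\psi\circ p(C(F))\subset\mathbb A^1(F)$ is unbounded for a suitable finite extension $F/L$ and a suitable non-constant $\psi:M'\to\mathbb A^1$, so uncountably many points of $C(F)$ are unbounded; one then avoids the countably many word loci $Z_w$ and the non-density locus $E$ to secure $\ker(\varrho')\subset\ker(\varrho)$ (hence bigness) and Zariski density. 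Your transfer of bigness via kernel containment is correct in spirit at the generic point, but since the argument must ultimately select a closed point $y\in C(F)$ rather than the generic point, the countable-avoidance argument with the subschemes $Z_w$ is what actually makes that step go through.
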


Before prove this, we prepare two lemmas.

\begin{lem}
Let $G$ be an almost simple algebraic group over a field $K$ of characteristic zero. Then its Lie algebra $\mathrm{Lie}(G)$ is simple. 
\end{lem}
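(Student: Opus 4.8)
The statement is the standard fact that an almost simple linear algebraic group over a characteristic-zero field has simple Lie algebra. The plan is to reduce to the algebraically closed case and there invoke the dictionary between the geometry of $G$ and the structure of $\mathfrak{g} := \mathrm{Lie}(G)$. First I would recall the definition: $G$ is almost simple means $G$ is connected, non-abelian (equivalently here positive-dimensional and semisimple), and every proper normal algebraic subgroup of $G$ defined over $\overline{K}$ is finite; equivalently, the adjoint group of $G$ is simple as an abstract algebraic group. Since the ideal structure of $\mathfrak{g}$ is unchanged by base field extension (a $K$-subspace is an ideal over $K$ iff it is one over $\overline K$, as ideals are detected by the bracket which is defined over $K$, and conversely one descends an $\overline K$-ideal by Galois invariance using that $\mathrm{char}\, K = 0$ so everything is reduced), we may and do assume $K = \overline{K}$.

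Over an algebraically closed field of characteristic zero, I would use the following two inputs, both available in the standard structure theory (e.g. Borel's or Humphreys' books on linear algebraic groups): (i) for a connected algebraic group $G$ in characteristic zero, the connected normal algebraic subgroups of $G$ correspond bijectively to the ideals of $\mathfrak{g}$, via $H \mapsto \mathrm{Lie}(H)$; (ii) in characteristic zero, $\mathfrak{g}$ has no nonzero abelian ideal if and only if $G$ has no nonzero connected normal abelian (equivalently, solvable) subgroup, i.e.\ $G$ is semisimple, and in that case $\mathfrak{g}$ is a direct sum of its simple ideals matching the almost-direct-product decomposition of $G$ into almost simple factors. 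Given these, the argument is short: since $G$ is almost simple it is in particular semisimple, so $\mathfrak{g}$ is semisimple (nonzero, with zero radical). If $\mathfrak{a} \subsetneq \mathfrak{g}$ were a nonzero proper ideal, then by semisimplicity of $\mathfrak{g}$ it would contain a simple ideal $\mathfrak{s}$ with $\mathfrak{s} \subsetneq \mathfrak{g}$; by correspondence (i) the connected subgroup $H \leqslant G$ with $\mathrm{Lie}(H) = \mathfrak{s}$ is normal, nontrivial (as $\dim H = \dim \mathfrak{s} > 0$), and proper (as $\mathfrak{s} \neq \mathfrak{g}$), contradicting almost simplicity of $G$. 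Hence $\mathfrak{g}$ has no nonzero proper ideal, and being nonzero and non-abelian (as $G$ is non-abelian, or directly since a semisimple Lie algebra is non-abelian), $\mathfrak{g}$ is simple.

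The only genuinely delicate point — and the step I expect to need the most care — is the correspondence (i) between connected normal subgroups and ideals, which genuinely requires characteristic zero (it fails in positive characteristic, where $\mathrm{Lie}$ can fail to be injective on subgroups and $\mathfrak{g}$ can fail to reflect normality); I would cite this rather than reprove it. A clean alternative, avoiding (i) entirely, is to argue via the adjoint representation: $\mathrm{Ad}: G \to \mathrm{GL}(\mathfrak{g})$ has image the adjoint group $G^{\mathrm{ad}}$, which for $G$ almost simple is simple, and a $G$-submodule of $\mathfrak{g}$ is precisely an ideal (since the $G$-action differentiates to the bracket and $G$ is connected); simplicity of $\mathfrak{g}$ as a Lie algebra then follows from simplicity of $\mathfrak{g}$ as a $G$-module, which in turn follows from $G^{\mathrm{ad}}$ being simple together with the fact that $\mathfrak g$ is a nontrivial irreducible $G^{\mathrm{ad}}$-module in characteristic zero. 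Either route is short; I would present the one via $\mathrm{Ad}$ if a self-contained write-up is wanted, and otherwise simply cite the connected-normal-subgroups/ideals dictionary. In all cases the proof is a couple of lines once the structure-theory input is in hand.
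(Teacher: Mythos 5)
Your proof is correct in substance and shares the paper's overall strategy: $\mathrm{Lie}(G)$ is semisimple because $G$ is almost simple and $\mathrm{char}\,K=0$; a nonzero proper ideal would contain a simple ideal; and a positive-dimensional connected normal subgroup with that Lie algebra contradicts almost simplicity. The two arguments diverge only in how that normal subgroup is produced, and there your citation (i) is stated too strongly: for a general connected $G$ in characteristic zero the map $H\mapsto\mathrm{Lie}(H)$ from connected normal subgroups to ideals is injective but \emph{not} surjective. For instance, in $G=\mathbb{G}_a\times\mathbb{G}_m$ the line spanned by $(1,1)$ is an ideal of the (abelian) Lie algebra, yet it is the Lie algebra of no algebraic subgroup, since every one-dimensional connected subgroup of $G$ is one of the two coordinate factors. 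The correspondence does hold for the ideals you actually use, because an ideal of a semisimple Lie algebra is perfect and hence an algebraic Lie subalgebra, but that justification needs to be supplied. The paper sidesteps the algebraicity question entirely: writing $\mathrm{Lie}(G)=\mathfrak{g}_1\oplus\cdots\oplus\mathfrak{g}_r$ with $r\geq 2$, it sets $H=C_G(\mathfrak{g}_2\oplus\cdots\oplus\mathfrak{g}_r)$, which is manifestly an algebraic subgroup, computes $\mathrm{Lie}(H^0)=\mathfrak{g}_1$ (the centralizer of $\mathfrak{g}_2\oplus\cdots\oplus\mathfrak{g}_r$ in $\mathfrak{g}$ is $\mathfrak{g}_1$ because simple Lie algebras are centerless), and then uses only the unproblematic direction of the dictionary, namely that a connected subgroup whose Lie algebra is an ideal is normal. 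Your alternative route via $\mathrm{Ad}$ is also sound and is close in spirit to what the paper does in \cref{lem:20220827}; your preliminary reduction to $K=\overline{K}$ is harmless but unnecessary, since the centralizer argument works directly over the ground field.
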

\begin{proof} 
		Since $G$ is almost simple, $\mathrm{Lie}(G)$ is semi-simple thanks to the assumption $\mathrm{char}(K)=0$ (cf. \cite[Prop 4.1.]{Mil17}).
		We may decompose as $\mathrm{Lie}(G)=\mathfrak{g}_1\oplus \cdots\oplus \mathfrak{g}_r$, where $\mathfrak{g}_i$ is a simple ideal of $\mathrm{Lie}(G)$.
		We need to show $r=1$.
		So assume contrary that $r>1$.
		Then $\mathfrak{g}_1\subset \mathrm{Lie}(G)$ is neither trivial nor $\mathrm{Lie}(G)$ itself.
Set $H=C_G(\mathfrak{g}_2\oplus \cdots\oplus \mathfrak{g}_r)$, which is a subgroup of $G$ defined by 
		$$
	 C_G(\mathfrak{g}_2\oplus \cdots\oplus \mathfrak{g}_r)=(R \leadsto g \in G(R) \mid g x=x \text { for all } x \in \mathfrak{g}_2\oplus \cdots\oplus \mathfrak{g}_r(R))
		$$
		for all $k$-algebra $R$ 
		(cf. \cite[Proposition 3.40]{Mil17}). It means that  $H$ acts trivially on $\mathfrak{g}_2\oplus \cdots\oplus \mathfrak{g}_r$.
		Then $\mathrm{Lie}(H)=\mathfrak{g}_1$. 
		Let $H^0\subset H$ be the identity component of $H$.
		Then $\mathrm{Lie}(H^0)=\mathrm{Lie}(H)=\mathfrak{g}_1$.
		Hence $\mathrm{Lie}(H^0)\subset \mathrm{Lie}(G)$ is an ideal.
		Hence $H^0\subset G$ is normal (cf. \cite[Thm 3.31]{Mil17}), which is neither trivial nor $G$ itself.
		Since $G$ is almost simple, this is a contradiction.
		Thus $r=1$ and $\mathrm{Lie}(G)$ is simple. 
\end{proof}
\begin{lem}\label{lem:20220827}
Let $G$ be an almost simple algebraic group defined over a field $K$ of characteristic zero.
Let $\mathrm{Ad}:G\to \mathrm{Aut}(\mathrm{Lie}(G))$  be the adjoint representation.
Let $W\subset \mathrm{Lie}(G)$ be a $K$-linear subspace which is $G$-invariant.
Then either $W=\mathrm{Lie}(G)$ or $W=\{ 0\}$. 
\end{lem}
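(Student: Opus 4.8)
The statement is the infinitesimal analogue of almost-simplicity: a $G$-invariant subspace of $\mathrm{Lie}(G)$ under the adjoint action must be an ideal of the Lie algebra, and since $\mathrm{Lie}(G)$ is simple (by the previous lemma), the only ideals are $\{0\}$ and $\mathrm{Lie}(G)$ itself. So the plan is to reduce the $G$-invariance of $W$ to the statement that $W$ is a Lie ideal, then invoke simplicity.

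First I would recall that since $\mathrm{char}(K)=0$, the group $G$ is smooth, and one has the standard correspondence between the adjoint action of $G$ on $\mathrm{Lie}(G)$ and the adjoint action $\mathrm{ad}$ of $\mathrm{Lie}(G)$ on itself: differentiating $\mathrm{Ad}:G\to\mathrm{Aut}(\mathrm{Lie}(G))$ at the identity gives $\mathrm{ad}:\mathrm{Lie}(G)\to\mathrm{End}(\mathrm{Lie}(G))$, $\mathrm{ad}(x)(y)=[x,y]$. The key functorial fact I would use is that a $K$-linear subspace $W\subset\mathrm{Lie}(G)$ which is stable under $\mathrm{Ad}(g)$ for all points $g$ of $G$ (in the scheme-theoretic sense, i.e. $\mathrm{Ad}(g)W_R\subset W_R$ for every $K$-algebra $R$ and every $g\in G(R)$) is automatically stable under the differentiated action, hence satisfies $[\mathrm{Lie}(G),W]\subset W$; that is, $W$ is an ideal of $\mathrm{Lie}(G)$. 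This is exactly the infinitesimal version of the statement (used already in the previous lemma) that a subgroup with Lie-algebra-stable tangent space is normal: concretely, one can either cite the reference \cite[Thm 3.31]{Mil17} / \cite[Proposition 3.40]{Mil17} already invoked in the excerpt, or argue directly by applying $\mathrm{Ad}$ to the $R=K[\varepsilon]/(\varepsilon^2)$-points of the form $1+\varepsilon x$, $x\in\mathrm{Lie}(G)$, which gives $(\mathrm{id}+\varepsilon\,\mathrm{ad}(x))W_R\subset W_R$ and hence $\mathrm{ad}(x)W\subset W$.

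Then I would apply the preceding lemma, which asserts that $\mathrm{Lie}(G)$ is simple when $G$ is an almost simple algebraic group over a field of characteristic zero. A simple Lie algebra by definition is nonabelian and has no ideals other than $\{0\}$ and itself. Since $W$ is an ideal, we conclude $W=\mathrm{Lie}(G)$ or $W=\{0\}$, which is the claim.

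The proof is essentially formal; the only real subtlety is making the passage from "$G$-invariant" to "Lie-ideal" precise at the scheme-theoretic level rather than merely for $K$-points (which would be insufficient if $K$ is not algebraically closed, though in the intended application $K$ may well be taken algebraically closed). I do not anticipate any serious obstacle: the heavy lifting — proving $\mathrm{Lie}(G)$ simple — has already been done in the previous lemma, and everything here is bookkeeping with the $\mathrm{Ad}$/$\mathrm{ad}$ compatibility. If one wants to avoid scheme-theoretic language entirely, one can simply note that the hypotheses of the previous lemma's proof (which constructs $H=C_G(\cdots)$ and differentiates) already package the needed implication, and cite it.
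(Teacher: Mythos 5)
Your proposal is correct and follows essentially the same route as the paper: both reduce the $G$-invariance of $W$ to the statement that $W$ is an ideal of $\mathrm{Lie}(G)$ (the paper via the stabilizer subgroup functor $G_W=G$ and Milne's $\mathrm{Lie}(G_W)=\mathrm{Lie}(G)_W$, which is exactly your dual-numbers computation), and then invoke the simplicity of $\mathrm{Lie}(G)$ from the preceding lemma. Your remark about needing scheme-theoretic invariance rather than invariance only under $K$-points is precisely why the paper phrases the stabilizer as a functor on all $K$-algebras.
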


\begin{proof}
Write $\kg:=\mathrm{Lie}(G)$. 	Consider the adjoint representation $G \rightarrow \mathrm{GL}_{\kg}$. For any $K$-subspace $P$  of $V$,  the functor
	$$
	R \rightsquigarrow\left\{g \in G(R) \mid g P_R=P_R\right\}\  \mbox{for all}\ K\mbox{-algebra } R
	$$
	is a subgroup of $G$, denoted $G_P$. Then by our assumption $G_W=G$.  By \cite[Proposition 3.38]{Mil17}, it follows that $\mathrm{Lie}(G_W)=\mathrm{Lie}(G)_W$,
	where 
	$$
	\mathrm{Lie}(G)_W:=\{x\in \kg\mid  \mathrm{ad}(x)(W)\subset W \}. 
	$$   
In other words, $[x,y]\in W$ for all $x\in\kg$, $y\in W$, i.e., $W\subset  \kg$ is an ideal.
Note that $\kg$ is a simple Lie algebra. 
Hence $W=\kg$ or $W=\{ 0\}$. 
\end{proof}

\medskip

\begin{proof}[Proof of \Cref{lem:20220819}] 
Since $G$ is almost simple,  by the classification of almost simple linear algebraic groups defined over $\bC$,   $G$ is isogenous to exactly one of the following:  $A_n, B_n, C_n, D_n, E_6, E_7, E_8, F_4, G_2$. Therefore, $G$ is defined over some number field $k\subset \bar{\bQ}$. Moreover, it is absolutely almost simple, i.e. for any field extension $L/k$, the base change $G_L$  is also an almost simple algebraic group over $L$.  

Since $\pi _1(X)$ is finitely presented, there exists an affine scheme $R$ defined over $k$ such that
$$R (L)=\Hom (\pi _1(X),G(L))$$
for every field extension $L/k$.
This space is defined as follows: We choose generators $\gamma _1,\ldots ,\gamma _\ell$ for $\pi _1(X)$. Let $\mathcal R$ be the set of relations among the generators $\gamma _i$. Then 
$$R \subset \underbrace{G\times \cdots \times G}_{\ell \ \text{times}}$$
is the closed subscheme defined by the equations $r(m_1,\ldots ,m_\ell)=1$ for $r\in \mathcal R$. A representation $\tau :\pi _1(X)\to G(L)$ corresponds to the point $(m_1,\ldots ,m_\ell)\in R (L)$ with $m_i=\tau (\gamma _i)$. 
Note that $R$ is an affine scheme, since it is a closed subscheme of an affine variety.

\medskip

\begin{claim}\label{claim:ZD}
There exists a Zariski closed subset $E\subset R$ defined over $k$ with the following property:
Let $L$ be a field extension of $k$ and $\tau :\pi _1(X)\to G(L)$ be a representation such that $\tau(\pi_1(X))$ is infinite.
Then $\tau(\pi_1(X))$ is not Zariski dense in $G({L})$ if and only if the corresponding point $[\tau]\in R(L)$ satisfies $[\tau]\in E(L)$. 
\end{claim}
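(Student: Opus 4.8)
\textbf{Plan of proof for Claim~\ref{claim:ZD}.}
The idea is to produce a single Zariski closed subscheme of $R$, defined over $k$, that cuts out precisely the non–Zariski-dense representations (among those with infinite image). The obstruction to Zariski density of $\tau(\pi_1(X))$ is that it is contained in a proper closed subgroup $H\subsetneq G_L$; since $G$ is \emph{absolutely} almost simple, the only proper closed subgroups worth worrying about that can contain an infinite subgroup are either finite or have Lie algebra a proper $\mathrm{Ad}$-invariant subspace --- and by Lemma~\ref{lem:20220827} the latter forces the Lie algebra to be $\{0\}$, i.e. $H$ is finite. So ``infinite and not Zariski dense'' is the same as ``infinite, and contained in a proper closed subgroup of positive dimension but whose connected component is proper'', which we will detect infinitesimally through the adjoint action.

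\textbf{Key steps.} First I would introduce the Zariski closure $\overline{\tau(\pi_1(X))}=:H_\tau\subset G_L$ and observe that $\tau(\pi_1(X))$ is Zariski dense iff $H_\tau=G_L$ iff $\mathrm{Lie}(H_\tau)=\mathrm{Lie}(G_L)$; since $\mathrm{Lie}(H_\tau^0)\subseteq\mathrm{Lie}(G)$ is stable under $\mathrm{Ad}\circ\tau$ and (because $G$ is absolutely simple, hence $\mathrm{Lie}(G)$ absolutely simple) the only $\mathrm{Ad}(\tau(\pi_1(X)))$-invariant $L$-subspaces of $\mathrm{Lie}(G)_L$ are $0$ and everything --- here one needs that when $\tau(\pi_1(X))$ is infinite but not Zariski dense, $H_\tau^0$ is still reductive, or else passes to a Levi; the cleanest route is: $\mathrm{Lie}(H_\tau)$ is an $\mathrm{Ad}$-invariant ideal of $\mathrm{Lie}(G_L)$ (invariance under $\mathrm{Ad}(H_\tau)$ and taking derivative), so it is $0$ or all; if all, density; if $0$, then $H_\tau$ is finite. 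Thus for infinite-image $\tau$: Zariski dense $\iff$ $\mathrm{Lie}(H_\tau)=\mathrm{Lie}(G_L)$.

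\textbf{Making this a closed condition.} Over the universal representation scheme $R$ there is a tautological homomorphism $\pi_1(X)\to G(\mathcal{O}_R)$, hence for each generator $\gamma_i$ an element $g_i\in G(\mathcal{O}_R)$, and thus endomorphisms $\mathrm{Ad}(g_i)$ of the trivial bundle $\mathrm{Lie}(G)\otimes\mathcal{O}_R$. Consider the $\mathcal{O}_R$-submodule $M\subseteq \mathrm{Lie}(G)\otimes\mathcal{O}_R$ generated by all $\bigl(\mathrm{Ad}(g_{i_1}^{\pm})\cdots\mathrm{Ad}(g_{i_N}^{\pm})-\mathrm{Id}\bigr)(v)$ for $v\in\mathrm{Lie}(G)$ and words of bounded length; the locus where this submodule does \emph{not} span all of $\mathrm{Lie}(G)$ at the fibre is a Zariski closed subset $E\subset R$ (vanishing of appropriate maximal minors), defined over $k$. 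One checks that at a point $[\tau]\in R(L)$: $M_{[\tau]}=\mathrm{Lie}(G)_L$ forces $\tau(\pi_1(X))$ to act on $\mathrm{Lie}(G)_L$ with only trivial invariants among sub-representations, hence by the dichotomy above forces density once the image is infinite; conversely if $\tau$ is not dense and has infinite image then $\tau(\pi_1(X))$ is finite — wait, this is exactly the degenerate case, so the invariant-subspace criterion must be phrased on $\mathrm{Lie}(H_\tau)$. I would therefore instead take $M$ to be the $\mathcal O_R$-submodule spanned by $\{(\mathrm{Ad}(w)-\mathrm{Id})v\}$ and set $E$ to be the locus where $\dim_{\kappa([\tau])} M_{[\tau]}<\dim G$; a finite word length suffices by noetherianity of $R$ (an ascending chain of submodules generated by longer and longer words stabilizes), which makes $E$ genuinely closed. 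Then $[\tau]\notin E$ means the $\tau(\pi_1(X))$-module $\mathrm{Lie}(G)_L$ has no proper submodule containing all the $(\mathrm{Ad}(w)-1)v$; combined with Lemma~\ref{lem:20220827} applied to $\mathrm{Lie}(G_L)$ (still absolutely simple since $G$ is), this is equivalent to $\mathrm{Lie}(H_\tau)=\mathrm{Lie}(G_L)$, i.e. to Zariski density. For infinite-image $\tau$ with $[\tau]\in E$, the complementary computation shows $\mathrm{Lie}(H_\tau)\subsetneq\mathrm{Lie}(G_L)$ hence $=0$ hence $H_\tau$ finite, contradicting infinite image unless... so in fact $[\tau]\in E$ and infinite image already cannot both happen unless $H_\tau$ is finite; since we assumed $\tau(\pi_1(X))$ infinite, $[\tau]\in E$ says precisely $H_\tau^0$ is a proper reductive subgroup --- here I must be more careful: replace the single-span condition by requiring $\mathrm{Lie}(H_\tau)$, computed as the span of the image of $\mathrm{ad}$ of all $(\mathrm{Ad}(w)-1)v$, to be proper.

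\textbf{Main obstacle.} The delicate point is uniformity: expressing ``$\mathrm{Lie}(H_\tau)\neq\mathrm{Lie}(G)$'' as the vanishing of finitely many regular functions on $R$, uniformly over all field extensions $L/k$. This is handled by the noetherian stabilization of the chain of submodules generated by words of length $\le N$ in the tautological $g_i^{\pm1}$: there is a uniform $N_0$ (depending only on $X$ and $G$) after which no new elements of $\mathrm{Lie}(G)\otimes\mathcal O_R$ are produced, and then $E$ is the determinantal locus $\{\,[\tau]: \operatorname{rank}(\,\text{matrix of }(\mathrm{Ad}(w)-1)v,\ |w|\le N_0)<\dim G\,\}$, manifestly Zariski closed and defined over $k$. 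The equivalence with non-density then reduces, fibre by fibre, to Lemma~\ref{lem:20220827} together with the standard fact that for an infinite linear group in characteristic zero the Zariski closure has the same Lie algebra as the group generates under $\mathrm{ad}$, so the rank drop is exactly the failure of density. I would finally remark that one may shrink $E$ to its reduced structure; it is the $E$ required.
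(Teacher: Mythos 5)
The central step of your argument is the dichotomy ``if $\tau(\pi_1(X))$ is infinite and not Zariski dense then $H_\tau:=\overline{\tau(\pi_1(X))}$ is finite'', which you derive by declaring $\mathrm{Lie}(H_\tau)$ to be a $G$-invariant subspace (an ``$\mathrm{Ad}$-invariant ideal'') of $\mathrm{Lie}(G_L)$ and invoking \cref{lem:20220827}. This is where the proof breaks. The subspace $\mathrm{Lie}(H_\tau)$ is invariant only under $\mathrm{Ad}$ of the normalizer of $H_\tau$ --- in particular under $\mathrm{Ad}(\tau(\pi_1(X)))$ --- and not under $\mathrm{Ad}(G(L))$ unless $H_\tau$ is normal in $G_L$; so \cref{lem:20220827} does not apply. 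The dichotomy is in fact false: an almost simple group has many infinite proper closed subgroups (tori, Borel and parabolic subgroups, proper semisimple subgroups), and a representation landing in one of these has infinite image, is not Zariski dense, and satisfies $0\subsetneq\mathrm{Lie}(H_\tau)\subsetneq\mathrm{Lie}(G_L)$. You half-notice the tension (``contradicting infinite image unless\dots'') but never resolve it; since the claim is precisely about detecting such representations, an argument that implies they do not exist cannot be repaired by bookkeeping.

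The downstream construction inherits the problem. The span of the vectors $(\mathrm{Ad}(w)-\mathrm{Id})v$ computes a piece of the coinvariants of the adjoint representation restricted to the image, not $\mathrm{Lie}(H_\tau)$: for $\tau$ with infinite image inside a maximal torus of $\mathrm{SL}_2$, that span is the two-dimensional sum of the root spaces while $\mathrm{Lie}(H_\tau)$ is one-dimensional, so your determinantal locus would misclassify this $\tau$ as Zariski dense. The idea you are missing is how the paper turns non-density into a closed condition: an infinite, non-dense image has Zariski closure $H$ of dimension $d$ with $0<d<\dim G$, hence fixes the point $[\mathrm{Lie}(H)]$ of the Grassmannian $\mathrm{Gr}_d(\mathrm{Lie}(G))$ under the adjoint action; conversely a Zariski dense image cannot fix any point of $\mathrm{Gr}_d(\mathrm{Lie}(G))$, since then all of $G(L)$ would fix it, contradicting \cref{lem:20220827} --- the one place where the lemma legitimately applies, to a genuinely $G$-invariant subspace. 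The condition ``the tuple $(\tau(\gamma_1),\dots,\tau(\gamma_\ell))$ fixes some point of $\mathrm{Gr}_d(\mathrm{Lie}(G))$'' is Zariski closed because it is the image, under the proper projection $\mathrm{Gr}_d(\mathrm{Lie}(G))\times G^{\ell}\to G^{\ell}$, of the closed incidence variety $\{(x,h_1,\dots,h_\ell)\mid h_ix=x \text{ for all } i\}$; taking the union over $0<d<\dim G$ and intersecting with $R$ gives $E$. Properness of the Grassmannian replaces your noetherian-stabilization device, which in any case computes the wrong invariant.
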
 

\begin{proof}[Proof of \cref{claim:ZD}]
A stronger result is proved in \cite[Proposition 8.2]{AB}.
Here we give a proof for the sake of completeness.
 Consider  the adjoint action $G\curvearrowright \mathrm{Lie}(G)$ which is defined over $k$. 
This action induces the action $G\curvearrowright \mathrm{Gr}_d(\mathrm{Lie}(G)$ over $k$, where $0< d<\dim G$.
Here $\mathrm{Gr}_d(\mathrm{Lie}(G)$ is the grassmannian variety, which we denote by $X_d$. Recall that $G_{L}$ is  almost simple for the extension $L$ of $k$.   
We remark that the action $G(L)\curvearrowright X_d(L)$ has no fixed point. 
Indeed if there exists a fixed point $[P]\in X_d(L)$, where $P\subsetneqq \mathrm{Lie}(G)_L$ is a $d$-dimensional subspace defined over $L$, we get a $G(L)$ invariant subspace $P\subset \mathrm{Lie}(G)_L=\mathrm{Lie}(G_L)$ which is impossible by Lemma \ref{lem:20220827}. 
On the other hand, if $H\subset G_L$ is an algebraic subgroup of dimension $d$, then $H(L)$ fixes the point $[\mathrm{Lie}(H)]\in X_d(L)$. 

Let $S=\{\gamma_1,\ldots,\gamma_l\}\subset \pi_1(X)$ be a finite subset which generates $\pi_1(X)$.
We define a Zariski closed subset $W_{d}\subset X_d\times G^{S}$ by
$$W_{d}=\{ (x,h_1,\ldots,h_l); h_ix=x, \forall i\in\{1,\ldots,l\}\}.
$$
Then if a representation $\tau :\pi _1(X)\to G(L)$ satisfies $\tau(\pi_1(X))\subset H(L)$ for some algebraic subgroup of dimension $d$, where $0<d<\dim G$, then we have $([\mathrm{Lie(H)}],\tau(\gamma_1),\ldots,\tau(\gamma_l))\in W_{d}(L)$.

Let $p:X_d\times G^{S}\to G^{S}$ be the projection.
Then $p$ is a proper map, for $X_d$ is complete.
Hence $p(W_{d})\subset G^{S}$ is a Zariski closed subset defined over $k$.
We set $E_{d}=p(W_{d})\cap R$, which is a Zariski closed subset of $R$.
Hence if a representation $\tau :\pi _1(X)\to G(L)$ satisfies $\tau(\pi_1(X))\subset H(L)$ for some algebraic subgroup of dimension $d$, then the corresponding point satisfies $[\tau]\in E_{d}(L)$.

Now let $\tau :\pi _1(X)\to G(L)$ be a representation such that $\tau(\pi_1(X))$ is infinite.
Assume first that $\tau(\pi_1(X))$ is not Zariski dense in $G(L)$.
Let $H\subset G$ be the Zariski closure of $\tau(\pi_1(X))\subset G(L)$.
Then $H$ is defined over $L$.
Since $\tau(\pi_1(X))$ is infinite, we have $d=\dim H>0$.
Then by the above consideration, we have $[\tau]\in E_{d}(L)$.  
Next suppose $[\tau]\in E_{d}(L)$ for some $d$ with $0<d<\dim G$.
Then by $E_{d}(L)\subset p(W_{d}(L))$, there exists a $d$-dimensional $L$-subspace $P\subset \mathrm{Lie}(G)_L$  such that $([P],\tau(\gamma_1),\ldots,\tau(\gamma_l))\in W_{d}(L)$.
Hence $\tau(\pi_1(X))$ fixes the point $[P]\in X_d(L)$.
If $\tau(\pi_1(X))\subset G(L)$ is Zariski dense, then $G(L)$ also fixes $[P]\in X_d(L)$.
This is impossible as we see above.
Hence $\tau(\pi_1(X))\subset G(L)$ is not Zariski dense.

We set $E=E_1\cup \cdots\cup E_{\dim G-1}$.
This concludes the proof of the claim.
\end{proof} 

Now we return to the proof of the proposition. 
The group $G$ acts on $R$ by simultaneous conjugation.
Put $M=R // G$, and let $p:R\to M$ be the quotient map which is surjective.
Then $M$ is an affine scheme defined over $k$.
Let $\varrho\in R (\mathbb C)$ be the point which correspond to the Zariski dense representation $\varrho :\pi _1(X)\to G(\mathbb C)$.
\par

Let $W$ be the set of words in $x_1,\ldots,x_\ell$.
Given $w\in W$, we define a closed subscheme $Z_{w}\subset R$ defined by $Z_w=\{ (m_1,\ldots,m_\ell)\in R;\ w(m_1,\ldots,m_\ell)=1\}$.
Then $Z_w$ is defined over $k$. 

Let $M'$ be the irreducible component of $M_{\bar{\bQ}}$ such that $[\varrho]\in M'(\bC)$. 
Since $\varrho $ is non-rigid,   one has $\dim M'>0$.    Let $R'$ be the irreducible component   of $R_{\bar{\bQ}}$ containing $\varrho$. Since  $p:R_{\bar{\bQ}}\to M_{\bar{\bQ}}$  is surjective,   $p|_{R'}: R'\to M'$  is surjective.  Then $R'$ and $M'$ are defined over some finite extension  $K$  of $k$.  Note that $R'$ and $M'$ are geometrically irreducible.  
Let $\eta\in R'$ be the schematic generic point. 
Then $K(\eta)$ is a finitely generated over $K$.
Let $\mathfrak{K}$ be a field extension of $K$, which is finitely generated over $K$, such that an embedding $K(\eta)\hookrightarrow \mathfrak{K}$ exists and that $\varrho$ is defined over $\mathfrak{K}$, i.e., $\varrho:\pi_1(X)\to G(\mathfrak{K})$.

Let $p$ be a prime number and let $\mathbb Q_p$ be the completion.
Then since the transcendental degree of $\mathbb Q_p$ over $\mathbb Q$ is infinite and $\mathfrak{K}$ is finitely generated over $\mathbb Q$, there exists a finite extension $L/\mathbb Q_p$ such that an embedding $\mathfrak{K}\hookrightarrow L$ exists.
Then we have $[\varrho ]\in R (L)$. Recall that $G_L$ is   almost simple as an algebraic group defined over $L$. 
We remark that  $\varrho:\pi_1(X)\to G(L)$ is  Zariski dense.  
Thus by \cref{claim:ZD}, we have $[\varrho]\not\in E(L)$.
In particular, we have $E\subsetneqq R$.

We define $W_o\subset W$ by $w\in W_o$ iff $Z_w(\bar{L})\cap R'(\bar{L})\subsetneqq R'(\bar{L})$. 
We note that $W$, hence $W_o$ is countable. 
We may take $\eta_0\in R'(L)$ such that the corresponding map $\eta_0:\mathrm{Spec}\ L\to R'$ has image $\eta\in R'$.
Then $\eta_0\not\in Z_w(\bar{L})$ for all $w\in W_o$ and $\eta_0\not\in E(\bar{L})$.

\par
Since  $\dim M'>0$, there exists a morphism of $L$-scheme  $\psi :M'\to \mathbb A^1$ such that the image $\psi (M')$ is Zariski dense in $\mathbb A^1$.
Recall that  $p|_{R'}:R'\to M'$ is surjective.  Hence  the image $\psi \circ p(R')$ is Zariski dense in $\mathbb A^1$, and 
there exists an affine curve $C\subset R'$ defined over $L$ such that the restriction $\psi \circ p|_C:C\to \mathbb A^1$ is generically finite and $\eta_0\in C(L)$.
Since $R'$ is geometrically irreducible,,  one has $C\not\subset Z_w$ for all $w\in W_o$ and $C\not\subset E$.
We may take a Zariski open subset $U\subset \mathbb A^1$ such that $\psi \circ p|_C$ is finite over $U$.
Let $x\in U(L)$ be a point, and let $y\in C(\bar{L})$ be a point over $x$.
Then $y$ is defined over some extension of $L$ whose extension degree is bounded by the degree of $\psi \circ p|_C:C\to \mathbb A^1$.
Note that there are only finitely many such field extensions.
Hence there exists a finite extension $F/L$ such that the points over $U(L)$ are all contained in $C(F)$.
Since $U(L)\subset \mathbb A^1(F)$ is unbounded, the image $\psi \circ p(C(F))\subset \mathbb A^1(F)$ is unbounded.
\par
Let $R_0\subset R(F)$ be the subset whose points correspond to $p$-bounded representations.
Let $M_0\subset M(F)$ be the image of $R_0$ under the quotient $p:R\to M$.
Then by \cite[Lemma 4.2]{Yam10}, $M_0$ is compact. 
Hence $\psi (M_0)$ is compact.
In particular it is bounded.
On the other hand, $\psi \circ p (C(F))\subset \mathbb A^1(F)$ is unbounded.
Hence $\psi \circ p (C(F))\backslash \psi (M_0)$ is an uncountable set.
Hence $C(F)\backslash R_0$ is an uncountable set.
Thus we may take $y\in C(F)$ such that $y\not\in R_0$ and $y\not\in Z_w(F)$ for all $w\in W_0$ and $y\not\in E(F)$.
Here we note that $C(F)\cap Z_w$ and $C(F)\cap E$ is a finite set, hence $\cup_{w\in W_o} (C(F)\cap Z_w)\cup (C(F)\cap E)$ is a countable set.
Let $\varrho':\pi _1(X)\to G(F)$ be the representation which corresponds to $y\in C(F)\subset R(F)$, i.e., $[\varrho']=y$.
Then $\varrho'$ is a $p$-unbounded representation such that $\varrho'$ is Zariski dense or has finite image  by \cref{claim:ZD}.

Next we show $\mathrm{ker}(\varrho')\subset \mathrm{ker}(\varrho)$.
Indeed, every element in $\mathrm{ker}(\varrho')$ is written as $w(\gamma_1,\ldots,\gamma_\ell)$ for some $w\in W$.
Then $[\varrho']\in Z_w$.
Hence $w\not\in W_0$.
Hence $R(F)\subset Z_w$.
In particular $[\varrho]\in Z_w$, hence $w(\gamma_1,\ldots,\gamma_\ell)\in \mathrm{ker}(\varrho)$.
This shows $\mathrm{ker}(\varrho')\subset \mathrm{ker}(\varrho)$.
Hence if $\varrho$ is big, then $\varrho'$ is also big.
In particular, $\varrho'$ has infinite image, hence Zariski dense image.
This completes the proof of the proposition.
\end{proof}

\begin{rem}\label{rem:20230417}
The idea of the proof of \cref{lem:20220819} goes back to the works of Zuo \cite{Zuo96}, as well as \cite{CS08, Eys04}.
On the other hand, we should mention that there are several drawbacks, when we apply these works to the hyperbolicity problems.
In the following, we first briefly review Zuo's influential idea to construct unbounded representations from given non-rigid representations.
Indeed the proof of \cref{lem:20220819} is inspired by his idea.
Next we discuss the importance of reducing mod $p$. 
Finally, we highlight the difficult problems that still occurs in proving hyperbolicity problems using reduction mod $p$ methods in \cite{CS08, Eys04}.
	
	 Assume that $\varrho:\pi_1(X)\to G(\bC)$ is non-rigid, where $G$ is an almost simple algebraic group. For simplicity we assume $\varrho:\pi_1(X)\to G(k)$ for some number field $k$. 
In \cite{Zuo96,CS08,Eys04}, it is explained that we can choose an affine curve $T\subset \Hom(\pi_1(X),G)$ containing $\varrho$ defined over   $k$ (after we replace $k$ by some finite extension) such that $\pi(T)$ is non-constant for the GIT quotient $\pi:\Hom(\pi_1(X),G)\to M_{\rm B}(\pi_1(X),G)$. Then $T$ induces a representation $\varrho_T:\pi_1(X)\to G(k[T])$ such that $\varrho$ is the composition of $\varrho_T$ with some $\lambda_\varrho:k[T]\to k$. Let $\overline{T}$  be a projective compactification of $T$. Since $M_B(\pi_1(X),G)$ is affine, there exists  a point $x$ in $\overline{T}$ such that $\pi|_{T}:T\to  M_B(\pi_1(X),G)$ cannot extend over $x$. Let $v:k(T)\to\bZ$ be the valuation defined by $x$, where $k(T)$ is the function field of $T$, and $\widehat{k(T)}_v$ be the completion of $k(T)$ with respect to $v$. Then we can prove that the induced representation $\varrho'_T: \pi_1(X)\to G(\widehat{k(T)}_v)$ is unbounded based on Procesi's theorem and is Zariski dense and big. 

Unfortunately, the field $\widehat{k(T)}_v$ is not locally compact, which prevents the application of Gromov-Schoen's theory.  
To address this issue, we can follow the approach mentioned in \cite{Eys04, CS08} and perform reduction modulo $p$ for $T$. To do so, we choose some model   $\mathcal{T}\to \spec{\cO_{k}[\frac{1}{n}]}$ of $T$. For any   maximal ideal $\mathfrak{p}$ of $\cO_k[\frac{1}{n}]$, we consider the base change $T_{\mathfrak{p}}$ of $T$ over $\mathfrak{p}$.  Then $\mathcal{T}_{\mathfrak{p}}$ is defined over some finite field $\bF_{\mathfrak{p}}$. For a general $\mathfrak{p}$, $\varrho_T$  gives rise to a curve of representation $\varrho_{\mathfrak{p}}: \pi_1(X)\to G(\bF_{\mathfrak{p}}[\mathcal{T}_{\mathfrak{p}}])$ which is non-constant.   We complete the curve $\mathcal{T}_{\mathfrak{p}}$ by adding  points $P_1,\ldots,P_k$   at infinity. Let $\bF_q$ be a finite extension of $\bF_{\mathfrak{p}}$ such that $P_1,\ldots,P_k$ are defined over $\bF_q$.   Each $P_i$ induces a non-archimedean valuation $v_i$ on $\bF_{q}(T_{\mathfrak{p}})$.    Denote by $\widehat{\bF_{q}(T_{\mathfrak{p}})}_{v_i}$   the completion of $\bF_{q}(T_{\mathfrak{p}})$ with respect to the  non-archimedean  valuation $v_i$. Then $\widehat{\bF_{q}(T_{\mathfrak{p}})}_{v_i}\simeq \bF_{q}((t))$ for each $i$, and thus they are all locally compact.   One can prove that   for some $i=1,\ldots,k$, the induced representation $\varrho_{\mathfrak{p},i}:\pi_1(X)\to G(\widehat{\bF_{q}(T_{\mathfrak{p}})}_{v_i})$ by $\varrho_{\mathfrak{p}}$ is  unbounded.

	However, it is presently unknown whether the induced representation $\varrho_{\mathfrak{p},i}: \pi_1(X)\to G(\bF_q((t)))$ is big and has a \emph{semisimple} Zariski closure for general $\mathfrak{p}$. In fact, it is even unclear  whether $\varrho_{\mathfrak{p},i}$ is reductive. 
This issue is a critical point in the proof of hyperbolicity of $X$.
For this regards, we should mention that the proofs in \cite[Sec. 4.1]{Sun22}, \cite[Sec. 6.2.1]{Bru22} 
are incomplete because of these unfixed problems.   
We will come back to this issue again later in \cref{rem:previous}. 
	\end{rem}

\subsection{Constructing linear representations of fundamental groups of Galois conjugate varieties}
Let $X$ be a complex smooth projective variety.  Recall that given any automorphism  $\sigma\in {\rm Aut}(\bC/\bQ)$, we can form the conjugate variety ${X}^\sigma$ defined as the complex variety ${X} \times_\sigma\spec \bC$, that is, by the cartesian diagram
\begin{equation}\label{eq:galois}
	\begin{tikzcd}
		{X}^\sigma\arrow[r,"\sigma^{-1}"]\arrow[d] & {X}\arrow[d]\\
		\spec\bC \arrow[r, "\sigma^*"]&\spec \bC 
	\end{tikzcd}
\end{equation}
It is a smooth  projective variety. If $X$ is defined by homogeneous polynomials $P_1, \ldots, P_r$ in some projective space, then $X^\sigma$ is defined by the conjugates of the $P_i$ by $\sigma$. In this case, the morphism from $X^\sigma$ to $X$ in the cartesian diagram sends the closed point with coordinates $\left(x_0: \ldots: x_n\right)$ to the closed point with homogeneous coordinates $\left(\sigma^{-1}\left(x_0\right): \ldots: \sigma^{-1}\left(x_n\right)\right)$, which allows us to denote it by $\sigma^{-1}$.

The morphism $\sigma^{-1}: {X}^\sigma \rightarrow {X}$ is an isomorphism of abstract schemes, but it is not a morphism of complex varieties. It is important to note that, in general, the fundamental groups of the complex variety $X$ and $X^\sigma$ may be quite different, as demonstrated by the famous examples of Serre \cite{Ser64}. Despite this, their algebraic fundamental groups, which are the profinite completions of the topological fundamental groups, are canonically isomorphic.  

The following proposition plays a crucial role in the proof of \cref{main2}:  
\begin{proposition}\label{thm:conjugate}
	Let $X$ be a   smooth quasi-projective   variety and let $\rho:\pi_1(X)\to\mathrm{GL}_n(\mathbb C)$ be a representation.
	Let $\sigma\in\mathrm{Aut}(\mathbb C/\mathbb Q)$.
	Then there exists a representation $\tau:\pi_1(X^{\sigma})\to\mathrm{GL}_n(\mathbb C)$ such that the Zariki closures satisfy 
	\begin{equation}\label{eqn:202303111}
		\overline{\rho(\pi_1(X))}^{\mathrm{Zar}}=\overline{\tau(\pi_1(X^{\sigma}))}^{\mathrm{Zar}}.
	\end{equation}
	More precisely, $\tau$ satisfies the following property:
	If $Y\to X$ is a morphism from a smooth quasi-projective variety $Y$, we have
	\begin{equation}\label{eqn:202303115}
		\overline{\rho(\mathrm{Im}[\pi_1(Y)\to\pi_1(X)])}^{\mathrm{Zar}}=\overline{\tau(\mathrm{Im}[\pi_1(Y^{\sigma})\to\pi_1(X^{\sigma})])}^{\mathrm{Zar}}.
	\end{equation}
	In particular, if $\rho$ is big (resp. large), then $\tau$ is big (resp. large).
\end{proposition}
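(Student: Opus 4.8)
\textbf{Plan of proof for \cref{thm:conjugate}.}

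The essential point is that $\pi_1$ of a complex algebraic variety depends only on its étale homotopy type, and the profinite completion $\widehat{\pi_1(X)}$ is canonically identified with $\widehat{\pi_1(X^\sigma)}$ since the étale site is algebraic and thus insensitive to the field automorphism $\sigma$. The plan is to exploit this together with the fact that a linear algebraic group $G$ over $\mathbb{C}$, being defined over some finitely generated subfield and in fact (up to taking a suitable model) over $\overline{\mathbb{Q}}$, also admits a conjugate $G^\sigma$, and that $\sigma$ induces an isomorphism of abstract groups $G(\mathbb{C})\to G^\sigma(\mathbb{C})$. First I would let $G$ be the Zariski closure of $\rho(\pi_1(X))$ in $\mathrm{GL}_n$; this is a linear algebraic group defined over $\mathbb{C}$. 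The image $\rho(\pi_1(X))$ is a finitely generated subgroup of $G(\mathbb{C})$, hence defined over a finitely generated subfield $k\subset\mathbb{C}$, and after enlarging $k$ we may assume $G$ and the embedding $G\hookrightarrow\mathrm{GL}_n$ are defined over $k$. Applying $\sigma$ coefficient-wise gives a group $G^\sigma\hookrightarrow\mathrm{GL}_n$ defined over $\sigma(k)\subset\mathbb{C}$ and a commutative-diagram-compatible abstract group isomorphism $\sigma_*:G(\mathbb{C})\to G^\sigma(\mathbb{C})$, which moreover carries Zariski-dense subgroups to Zariski-dense subgroups and, more generally, Zariski closures to Zariski closures (because $\sigma$ is a field automorphism, it maps $k$-constructible sets to $\sigma(k)$-constructible sets bijectively).

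The key step is to produce the representation $\tau$. Here I would argue as follows. The representation $\rho$ does not factor through the profinite completion in general, so I cannot literally transport $\rho$ via $\widehat{\pi_1(X)}\cong\widehat{\pi_1(X^\sigma)}$. Instead, the standard device (going back to work on conjugate varieties and used e.g. in the references on the Shafarevich conjecture) is: realize the datum of $\rho$ geometrically. Since $\rho(\pi_1(X))$ is finitely generated and linear, after passing to a finite-index subgroup we may, by a theorem of the type used in \cref{lem:kollar} / \cref{moc}, or more directly by spreading out, find a smooth model $\mathcal{X}\to\mathrm{Spec}\,R$ of $X$ over a finitely generated $\mathbb{Z}$-algebra $R\subset\mathbb{C}$, together with a local system on $\mathcal{X}$ whose monodromy over the generic geometric fibre is $\rho$; the point is that the local system, being of geometric origin over $R$, base-changes along $\sigma$ to a local system on $\mathcal{X}^\sigma$, and its monodromy is a representation $\tau:\pi_1(X^\sigma)\to\mathrm{GL}_n(\mathbb{C})$. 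By construction the composite $\pi_1(X^\sigma)\to\mathrm{GL}_n(\mathbb{C})$ factors through $G^\sigma(\mathbb{C})$ and, via the canonical isomorphism of profinite completions, $\tau$ composed with $G^\sigma(\mathbb{C})\to G^\sigma(\overline{\mathbb{Q}_\ell}$-points, or rather the action on $\ell$-adic cohomology of covers) agrees with $\sigma_*\circ\rho$ after profinite completion. This already forces \eqref{eqn:202303111}: the Zariski closure of $\tau(\pi_1(X^\sigma))$ contains the closure of the image of the profinite completion, which by the density of $\pi_1$ in $\widehat{\pi_1}$ for the Zariski topology of an algebraic group over a field and the identification $\widehat{\pi_1(X)}\cong\widehat{\pi_1(X^\sigma)}$ equals $G^\sigma=\overline{\rho(\pi_1(X))}^{\mathrm{Zar},\sigma}$; conversely it is contained in $G^\sigma$ by the factorization.

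For the stronger functorial statement, given $g:Y\to X$ I would apply the same spreading-out over a common base $R$ (after enlarging $R$ so that $g$, $Y$ and $X$ all have models), obtaining $g^\sigma:Y^\sigma\to X^\sigma$, and note that $\mathrm{Im}[\pi_1(Y)\to\pi_1(X)]$ and $\mathrm{Im}[\pi_1(Y^\sigma)\to\pi_1(X^\sigma)]$ have the same image in the profinite completion $\widehat{\pi_1(X)}\cong\widehat{\pi_1(X^\sigma)}$, because closures of subgroups in profinite completions are computed functorially and the étale fundamental group of $Y\to X$ is, again, algebraic data stable under $\sigma$. Since for a subgroup $\Gamma$ of $G(\mathbb{C})$ the Zariski closure of $\Gamma$ and of the image of $\Gamma$ in $\widehat{\Gamma}$ (intersected back with $G(\mathbb{C})$ via any embedding into an $\ell$-adic completion) agree — this is the elementary fact that a subgroup and its profinite completion have the same Zariski closure inside a linear algebraic group over $\mathbb{C}$, which one proves by choosing $\ell$-adic integral models of $G$ — I get \eqref{eqn:202303115}. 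Finally, bigness (resp. largeness) of $\tau$ follows from \eqref{eqn:202303115} applied to normalizations $Z^{\mathrm{norm}}\to X$ of subvarieties: the conjugate $\sigma$ gives a bijection between closed subvarieties of $X$ and of $X^\sigma$ preserving the property of passing through a very general point (since the countable union of exceptional subvarieties is itself conjugate-stable, being cut out by the same algebraic conditions), and \eqref{eqn:202303115} shows $\tau(\mathrm{Im}[\pi_1(Z^{\sigma,\mathrm{norm}})\to\pi_1(X^\sigma)])$ is infinite iff $\rho(\mathrm{Im}[\pi_1(Z^{\mathrm{norm}})\to\pi_1(X)])$ is, because a linear group is infinite iff its Zariski closure is positive-dimensional or has infinitely many components, a condition visible on the Zariski closure alone.

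\textbf{Main obstacle.} The delicate point is the very existence and well-definedness of $\tau$ at the level of \emph{topological} (not just profinite) fundamental groups: $\rho$ need not have finite image, so one cannot transport it through $\widehat{\pi_1}$ directly, and one must genuinely spread the representation (or the associated local system / the associated $\mathbb{C}$-VHS when $\rho$ is rigid, via \cref{thm:vhs}, where the argument is cleanest because a polarized $\mathbb{C}$-VHS is motivic-looking and its conjugate is again a $\mathbb{C}$-VHS) out over a finitely generated ring and then base-change along $\sigma$. Making the comparison of Zariski closures precise — i.e. checking that "spread out, conjugate, take monodromy" really does send the Zariski closure of $\rho$ to that of $\sigma_*\rho$ — requires care with the fact that $\sigma$ is wildly discontinuous, so all comparisons must be phrased purely algebraically (étale $\pi_1$, constructible sets, $\ell$-adic completions) and only at the end reconnected to the topological $\pi_1$ via its density in the profinite completion. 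I expect roughly half of the written proof to be devoted to this bookkeeping.
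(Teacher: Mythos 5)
There is a genuine gap at the crucial step: the construction of $\tau$. You correctly identify that the whole difficulty is producing a representation of the \emph{topological} group $\pi_1(X^{\sigma})$ from $\rho$, and you correctly observe that $\rho$ need not factor through the profinite completion; but the device you propose instead — spreading $X$ out over a finitely generated $\mathbb{Z}$-algebra $R$ together with "a local system on $\mathcal{X}$ whose monodromy over the generic geometric fibre is $\rho$", then base-changing along $\sigma$ — does not exist for an arbitrary linear representation. A topological local system on $X(\mathbb{C})$ is not algebraic data visible on the model $\mathcal{X}/R$: an étale local system on $\mathcal{X}$ has profinite monodromy, and a local system "of geometric origin" is a very special thing (there is no reason a given $\rho$ arises this way; indeed Simpson's conjecture that rigid local systems are of geometric origin is open, and non-rigid ones certainly need not be). Relatedly, the Betti character varieties $M_B(\pi_1(X),\mathrm{GL}_n)$ and $M_B(\pi_1(X^{\sigma}),\mathrm{GL}_n)$ are not canonically identified, so there is no general "conjugate local system" to take the monodromy of. Your fallback via $\mathbb{C}$-VHS only covers the rigid case.

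The paper's resolution is the step you explicitly rejected, made to work by a number-theoretic trick: by Cassels' $p$-adic embedding theorem, the finitely generated field $\mathbb{Q}(S)$ generated by the matrix entries of $\rho$ on a finite generating set embeds into $\mathbb{Q}_p$ for some prime $p$ so that all these entries have $p$-adic absolute value $1$. Hence $\rho$ lands in the \emph{profinite} group $\mathrm{GL}_n(\mathbb{Z}_p)$, so it \emph{does} extend continuously to $\widehat{\rho}:\widehat{\pi_1(X)}\to\mathrm{GL}_n(\mathbb{Z}_p)$, and one transports $\widehat{\rho}$ through the canonical isomorphism $\widehat{\pi_1(X)}\simeq\widehat{\pi_1(X^{\sigma})}$ and returns to $\mathbb{C}$ via an abstract field isomorphism $\overline{\mathbb{Q}_p}\simeq\mathbb{C}$ fixing $S$. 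The identity of Zariski closures then follows because Zariski-closed subsets of $\mathrm{GL}_n(\overline{\mathbb{Q}_p})$ are $p$-adically closed and $\pi_1$ is dense in $\widehat{\pi_1}$, so $\overline{\rho(\pi_1(X))}^{\mathrm{Zar}}=\overline{\widehat{\rho}(\widehat{\pi_1(X)})}^{\mathrm{Zar}}=\overline{\tau(\pi_1(X^{\sigma}))}^{\mathrm{Zar}}$; the functorial statement \eqref{eqn:202303115} follows from the same argument applied to the compatible isomorphisms $\widehat{\pi_1(Y)}\simeq\widehat{\pi_1(Y^{\sigma})}$. Note also that with the paper's construction the two Zariski closures are literally \emph{equal} (not merely conjugate by $\sigma$), whereas your sketch would at best produce $G^{\sigma}$; your final deduction of bigness is fine once \eqref{eqn:202303115} is available, but the bridge to get there is missing.
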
 
\begin{proof}
	Let $\gamma_1,\ldots,\gamma_k\in \pi_1(X)$ be a system of generators of $\pi_1(X)$ (as monoid).
	For $l=1,\ldots,k$, we set $\rho(\gamma_l)=(a_{ij}(\gamma_l))_{1\leq i,j,\leq n}\in \mathrm{GL}_n(\mathbb C)$.
	Let $S\subset \mathbb C$ be a finite subset defined by $S=\{a_{ij}(\gamma_l); 1\leq i,j\leq n,1\leq l\leq k\}$.
	Let $\mathbb Q(S)\subset \mathbb C$ be the subfield generated by $S$ over $\mathbb Q$.
	Then $\mathbb Q(S)$ is a finitely generated field over $\mathbb Q$.
	By Cassels' p-adic embedding theorem (cf. \cite{Cas76})
	there exist a prime number $p\in\mathbb N$ and an embedding $\iota:\mathbb Q(S)\hookrightarrow \mathbb Q_p$ such that 
	\begin{equation}\label{eqn:20230311}
		|\iota(a)|_p=1
	\end{equation}
	for all $a\in S$.
	
	We claim that there exists an isomorphism 
	$$\mu:\overline{\mathbb Q_p}\overset{\sim}{\to} \mathbb C$$
	such that $\mu\circ\iota(a)=a$ for all $a\in S$, where $\overline{\mathbb Q_p}$ is an algebraic closure of $\mathbb Q_p$.
	We prove this.
	
	Let $B\subset \mathbb C$ be a transcendence basis of $\mathbb C/\mathbb Q(S)$ 
	Let $B'\subset \overline{\mathbb Q_p}$ be a transcendence basis of $\overline{\mathbb Q_p}/ \iota(\mathbb Q(S))$.
	Then for the cardinality, we have $\#B=\#B'=\#\mathbb R$.
	Hence there exists an extension $\iota_1: \mathbb Q(S)(B)\hookrightarrow \overline{\mathbb Q_p}$ of $\iota$ such that $\iota_1(\mathbb Q(S)(B))=\iota(\mathbb Q(S))(B')$.
	Since $\mathbb C=\overline{\mathbb Q(S)(B)}$ and $\overline{\mathbb Q_p}=\overline{\iota(\mathbb Q(S))(B')}$, $\iota_1$ extends to an isomorphism $\iota_2:\mathbb C\to \overline{\mathbb Q_p}$.
 	Then we set $\mu=\iota_2^{-1}$, which is an isomorphism such that $\mu\circ\iota(a)=a$ for all $a\in S$.
	By this isomorphism $\mu$, we consider
	$$
	\mathrm{GL}_n(\mathbb Z_p)\subset \mathrm{GL}_n(\mathbb Q_p)\subset \mathrm{GL}_n(\overline{\mathbb Q_p})=\mathrm{GL}_n(\mathbb C).
	$$
	
	By \eqref{eqn:20230311}, we have $\rho(\pi_1(X))\subset \mathrm{GL}_n(\mathbb Z_p)$.
	Thus we may consider $\rho$ as $\rho:\pi_1(X)\to \mathrm{GL}_n(\mathbb Z_p)$ so that
	\begin{equation*}
		\rho(\gamma_l)=(\iota(a_{ij}(\gamma_l)))_{1\leq i,j,\leq n}\in \mathrm{GL}_n(\mathbb Z_p).
	\end{equation*}
	Since $\mathrm{GL}_n(\mathbb Z_p)$ is a profinite group, $\rho$ extends to $\widehat{\rho}:\widehat{\pi_1(X)}\to\mathrm{GL}_n(\mathbb Z_p)$, where $\widehat{\pi_1(X)}$ is the profinite completion of $\pi_1(X)$.
	Note that $\widehat{\pi_1(X)}$ is the etale fundamental group of $X$ and the etale fundamental groups of $X$ and $X^{\sigma}$ are naturally isomorphic:
	\begin{equation}\label{eqn:202303112}
		\widehat{\pi_1(X)}\simeq \widehat{\pi_1(X^{\sigma})}.
	\end{equation}
	Hence we define $\tau:\pi_1(X^{\sigma})\to \mathrm{GL}_n(\overline{\mathbb Q_p})$ by the composite of the followings:
	$$
	\pi_1(X^{\sigma})\to \widehat{\pi_1(X^{\sigma})}\simeq \widehat{\pi_1(X)}\overset{\widehat{\rho}}{\to} \mathrm{GL}_n(\mathbb Z_p)\hookrightarrow \mathrm{GL}_n(\overline{\mathbb Q_p}).
	$$
	
	Next we prove \eqref{eqn:202303111}.
	We first prove 
	\begin{equation}\label{eqn:20230313}
		\widehat{\rho}(\widehat{\pi_1(X)})\subset \overline{\rho(\pi_1(X))}^{\mathrm{Zar}}\subset \mathrm{GL}_n(\overline{\mathbb Q_p}).
	\end{equation}
	Note that $\overline{\rho(\pi_1(X))}^{\mathrm{Zar}}\subset \mathrm{GL}_n(\overline{\mathbb Q_p})$ is Zariski closed, hence $p$-adically closed.
	Since $\widehat{\rho}:\widehat{\pi_1(X)}\to\mathrm{GL}_n(\mathbb Z_p)$ is continuous, $\widehat{\rho}^{-1}(\overline{\rho(\pi_1(X))}^{\mathrm{Zar}})\subset \widehat{\pi_1(X)}$ is a closed subset.
	Since the image of $\pi_1(X)\to\widehat{\pi_1(X)}$ is dense and contained in $\widehat{\rho}^{-1}(\overline{\rho(\pi_1(X))}^{\mathrm{Zar}})$.
	Hence $\widehat{\rho}^{-1}(\overline{\rho(\pi_1(X))}^{\mathrm{Zar}})=\widehat{\pi_1(X)}$.
	This shows \eqref{eqn:20230313}.
	Hence $\overline{\widehat{\rho}(\widehat{\pi_1(X)})}^{\mathrm{Zar}}\subset\overline{\rho(\pi_1(X))}^{\mathrm{Zar}}$.
	The converse inclusion is obvious. 
	Hence we have
	\begin{equation}\label{eqn:202303133}
		\overline{\rho(\pi_1(X))}^{\mathrm{Zar}}=\overline{\widehat{\rho}(\widehat{\pi_1(X)})}^{\mathrm{Zar}}\subset \mathrm{GL}_n(\overline{\mathbb Q_p}).
	\end{equation}
	Similarly we have
	$$
	\overline{\tau(\pi_1(X^{\sigma}))}^{\mathrm{Zar}}=\overline{\widehat{\rho}(\widehat{\pi_1(X)})}^{\mathrm{Zar}}\subset \mathrm{GL}_n(\overline{\mathbb Q_p}).$$
	Thus we have \eqref{eqn:202303111} in $\mathrm{GL}_n(\overline{\mathbb Q_p})$.
	Thus \eqref{eqn:202303111} holds in $\mathrm{GL}_n(\mathbb C)$.
	
	Finally we take a morphism $Y\to X$ from a smooth quasi-projective variety $Y$.
	Then we have a natural isomorphism $\widehat{\pi_1(Y)}=\widehat{\pi_1(Y^{\sigma})}$ which commutes with \eqref{eqn:202303112}:
\begin{equation*}
\begin{tikzcd}
	 	\widehat{\pi_1(Y^{\sigma})} \arrow[r,equal] \arrow[d]& \widehat{\pi_1(Y)} \arrow[d]\\ 
	 	\widehat{\pi_1(X^{\sigma})}\arrow[r,equal]  & \widehat{\pi_1(X)}
\end{tikzcd}  
\end{equation*}
	Since the image of $\mathrm{Im}[\pi_1(Y)\to\pi_1(X)]\to \mathrm{Im}[\widehat{\pi_1(Y)} \to   \widehat{\pi_1(X)}]$ is dense, a similar argument for the proof of \eqref{eqn:202303133} yields
	$$
	\overline{\rho(\mathrm{Im}[\pi_1(Y)\to\pi_1(X)])}^{\mathrm{Zar}}=
	\overline{\widehat{\rho}(\mathrm{Im}[\widehat{\pi_1(Y)}\to\widehat{\pi_1(X)}])}^{\mathrm{Zar}}\subset \mathrm{GL}_n(\overline{\mathbb Q_p}).
	$$  
	Similarly we have
	$$
	\overline{\tau(\mathrm{Im}[\pi_1(Y^{\sigma})\to\pi_1(X^{\sigma})])}^{\mathrm{Zar}}=
	\overline{\widehat{\rho}(\mathrm{Im}[\widehat{\pi_1(Y)}\to\widehat{\pi_1(X)}])}^{\mathrm{Zar}}\subset \mathrm{GL}_n(\overline{\mathbb Q_p}).
	$$
	Hence we get \eqref{eqn:202303115} in $\mathrm{GL}_n(\overline{\mathbb Q_p})$, thus in $\mathrm{GL}_n(\mathbb C)$.      
	
	Note that $	\overline{\tau(\mathrm{Im}[\pi_1(Y^{\sigma})\to\pi_1(X^{\sigma})])}^{\mathrm{Zar}}$ is positive dimensional if and only if $\tau(\mathrm{Im}[\pi_1(Y^{\sigma})\to\pi_1(X^{\sigma})])$ is infinite. This concludes the last claim of the theorem.  
\end{proof} 

 \subsection{Proof of \Cref{main2}}
 Now we are able to prove the first main result  of this paper. 
\begin{thm}[=Theorem \ref{main2}]\label{thm:20220819}
Let $X$ be a  complex quasi-projective normal variety and $G$ be a semi-simple linear algebraic group over $\bC$. Suppose that $\varrho: \pi_1(X) \to G(\bC)$ is a Zariski dense and big representation. For any Galois conjugate variety $X^\sigma$ of $X$ under $\sigma \in \mathrm{Aut}(\bC/\bQ)$, 
\begin{thmlist}
	\item \label{item:log general type} there exists a proper Zariski closed subset $Z\subsetneqq X^\sigma$ such that any closed subvariety $V$ of $X^\sigma$ not contained in $Z$ is of log general type. 
	\item \label{item:pseudo Picard}Furthermore, $X^\sigma$ is pseudo Picard hyperbolic.
\end{thmlist}
   \end{thm}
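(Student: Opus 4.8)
The strategy is to reduce \Cref{thm:20220819} to the non-Archimedean situation treated in \Cref{main6} (equivalently \Cref{thm:log general type} and \Cref{thm:main33}), using the rigid/non-rigid dichotomy together with the Galois-conjugation machinery of \Cref{thm:conjugate}. First I would reduce to the case where $X$ is a smooth quasi-projective manifold: by \Cref{lem:fun} the fundamental group of a normal variety agrees with that of any resolution, bigness is preserved under passing to a resolution (the argument is the same as the claim in Step~1 of the proof of \Cref{thm:spectral cover}, using \Cref{lem:finiteindex}), and both conclusions — being of log general type outside a proper Zariski closed subset, and pseudo Picard hyperbolicity — are birational invariants in the appropriate sense (for (ii) one uses \Cref{extension theorem}). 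Next, replacing $X$ by $X^\sigma$ reduces us to proving the two statements for $X$ itself once we know how to transport a big Zariski-dense representation into $G(\bC)$ along $\sigma$; this is exactly \Cref{thm:conjugate}, which produces a big representation $\tau:\pi_1(X^\sigma)\to \mathrm{GL}_N(\bC)$ with the same Zariski closure $G$ (after fixing a faithful embedding $G\hookrightarrow \mathrm{GL}_N$). So it suffices to prove: if $X$ is a smooth quasi-projective manifold admitting a big Zariski-dense representation $\varrho:\pi_1(X)\to G(\bC)$ with $G$ semisimple, then $\Spalg(X)\subsetneqq X$ and $X$ is pseudo Picard hyperbolic.

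Now I would reduce to $G$ \emph{almost simple}. Write the universal cover $G\to G_1\times\cdots\times G_r$ with $G_i$ almost simple; composing $\varrho$ with the projections gives Zariski-dense representations $\varrho_i:\pi_1(X)\to G_i(\bC)$. Bigness of $\varrho$ forces $\varrho_i$ to be big for at least one $i$: indeed, for a very general closed subvariety $Y\subset X$, $\varrho(\mathrm{Im}[\pi_1(Y^{\rm norm})\to\pi_1(X)])$ is infinite, so some $\varrho_i(\mathrm{Im}[\pi_1(Y^{\rm norm})\to\pi_1(X)])$ is infinite — but the countable exceptional sets may a priori depend on $i$; one handles this by taking the union of all the $Z_i^{(j)}$ over the $r$ factors (still countable), so that outside this union \emph{every} $\varrho_i$ evaluated on very general $Y$ is infinite, and in particular the one realizing infiniteness works. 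More carefully: for each very general $Y$ there is an index $i(Y)$ making $\varrho_{i(Y)}$ infinite, and since $Y$ ranges over an uncountable family while there are finitely many indices, one $i$ works for a dense (indeed co-countable) family, hence $\varrho_i$ is big. Fix such an $i$ and a finite non-Archimedean local field. If $\varrho_i$ is \emph{non-rigid}, \Cref{lem:20220819} provides a big, Zariski-dense, unbounded representation $\varrho':\pi_1(X)\to G_i(F)$ with $G_i$ almost simple over the non-Archimedean local field $F$; then \Cref{main:lgt} gives $\Spalg(X)\subsetneqq X$ and \Cref{main:PPH} gives pseudo Picard hyperbolicity, finishing this case.

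It remains to treat the case where \emph{all} the $\varrho_i$ are rigid. Then by \Cref{thm:vhs} each $\varrho_i$ underlies a $\bC$-variation of Hodge structures; consequently so does $\varrho$ itself (direct sum of the underlying VHS on the factors, after lifting to $G$). The period map of this $\bC$-VHS need not be injective; but I would argue that the big hypothesis forces it to be generically immersive, or at least that the locus where it degenerates is a proper Zariski closed subset. Concretely, the period map factors through a Shafarevich-type reduction $X\to \mathrm{Sh}(X)$ whose positive-dimensional fibers have finite monodromy image; bigness then forces these fibers to be points on the complement of a countable union of subvarieties, and a standard argument (using that the family of such fibers is algebraic) upgrades ``countable union'' to ``proper Zariski closed.'' On the complement $X\setminus Z$ the period map is generically injective on every subvariety, so by \Cref{thm:PicardVHS} applied to each such subvariety we get that $X$ is pseudo Picard hyperbolic, and the pluriharmonicity/curvature estimates for the VHS (Griffiths, Zuo) give that each subvariety not contained in $Z$ is of log general type. \textbf{The main obstacle} I anticipate is precisely this last step in the rigid case: controlling the degeneracy locus of the period map of the $\bC$-VHS and promoting the a priori countable union of ``bad'' subvarieties to a genuine proper Zariski closed subset, since the bigness hypothesis is phrased in terms of $\pi_1$ of \emph{normalizations} of subvarieties rather than of a single reduction map. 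One resolves it by constructing the reduction map attached to the VHS (the Hodge-theoretic analogue of $s_\varrho$ from \Cref{main3}), showing its positive-dimensional fibers have virtually trivial monodromy, and invoking bigness to conclude the reduction map is generically finite, so its non-finite locus is the desired $Z$; then both (i) and (ii) follow from the VHS hyperbolicity results applied relative to $Z$.
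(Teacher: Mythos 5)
Your overall architecture (reduce to smooth $X$, transport along $\sigma$ via \cref{thm:conjugate}, split into rigid/non-rigid, use \cref{lem:20220819} plus \cref{thm:log general type,thm:main33} in the non-rigid case) matches the paper, but there are two genuine gaps. The first is your reduction from semisimple $G$ to almost simple $G$: the claim that some projection $\varrho_i:\pi_1(X)\to G_i(\bC)$ must be big is false. Take $X=C_1\times C_2$ with $C_i$ curves of genus $\geq 2$ carrying Zariski dense representations $\varrho_i:\pi_1(C_i)\to G_i(\bC)$, and let $\varrho=\varrho_1\boxtimes\varrho_2$. Then $\varrho$ is big and Zariski dense into $G_1\times G_2$, but each projection is trivial on the fibers $\{pt\}\times C_2$ (resp.\ $C_1\times\{pt\}$), which pass through every point of $X$; so neither projection is big, and your counting argument over "finitely many indices versus uncountably many $Y$" cannot repair this (bigness demands a single countable exceptional family working for \emph{all} $Y$ outside it, not merely that many $Y$ work). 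The paper avoids this by applying \cref{lem:kollar} to each $\varrho_i$ separately: after an \'etale cover and modification one gets fibrations $p_i:X\to Y_i$ and \emph{big} Zariski dense $\tau_i:\pi_1(Y_i)\to G_i(\bC)$ with $p_i^*\tau_i=\varrho_i$; the almost simple case is applied to each $Y_i$, and bigness of the original $\varrho$ is used only to show the product map $X\to Y_1\times\cdots\times Y_k$ is generically finite, after which the two conclusions are reassembled (pseudo Picard hyperbolicity directly, log general type by induction on $k$ using Fujino's additivity of logarithmic Kodaira dimension).

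The second gap is in the rigid case. Passing to a $\bC$-VHS via \cref{thm:vhs} is correct, but your subsequent argument — that the period map's degeneracy locus is a proper Zariski closed set because fibers have "virtually trivial monodromy" — is not established for a general $\bC$-VHS: the monodromy of a fiber of the period map lies in a compact group but need not be discrete, hence need not be finite, and this is exactly where the "countable union versus Zariski closed" problem you flag remains unresolved. The paper resolves it by a further dichotomy on the rigid representation $\tau:\pi_1(X)\to G(L)$ ($L$ a number field): if $\tau$ is unbounded at some finite place, one is back in the non-Archimedean situation of \cref{thm:log general type,thm:main33}; if $\tau$ is bounded at every finite place, it is integral, and Langer--Simpson upgrades it to a complex direct factor of a $\bZ$-VHS, for which fiber monodromy is a finitely generated subgroup of a compact group inside $\mathrm{GL}_N(\cO_L)$, hence finite, so bigness forces the period map to be generically finite and \cref{thm:PicardVHS} together with \cite{BC20,CD21} concludes. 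Without this integrality step your rigid case does not close.
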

\begin{proof} 
Using \cref{lem:fun}, we can replace $X$ by a desingularization, and thus we may assume that $X$ is smooth.  We first prove the theorem for $X$ itself.   We prove in two steps.

\medskip

\noindent{\it Step 1. We assume that $G$ is almost simple.}   Note that $G$  is   isogenous to exactly one of the following:  $A_n, B_n, C_n, D_n, E_6, E_7, E_8, F_4, G_2$, and is defined over some number field $L'$. Moreover, it is absolutely almost simple.  
We prove the theorem in two cases.

\noindent{\em Case 1: $\varrho$ is rigid.}\quad 	 
Since $\varrho$ is  $G$-rigid,   it is conjugate to some Zariski dense representation $\tau:\pi_1(X)\to G(L)$ where $L$ is a finite extension of $L'$. Moreover, for every embedding $v:L\to \bC$, the representation $v\tau:\pi_1(X)\to G(\bC)$ is rigid, \emph{i.e.} $[v\tau]$ is an isolated point in the complex affine variety $M_B(\pi_1(X),G)_{\bC}$.   

\noindent {\em Case 1.1:} Assume that for some non-archimedean place $v$ of $L$, the associated representation $\tau_v:\pi_1(X)\to G(L_v)$ is unbounded, where $L_v$ denotes the completion of $L$ with respect to $v$. Note that $\tau_v$ is still Zariski dense and big, and $G_{L_{v}}$ is an absolutely simple algebraic group over $L_{v}$.   We apply \cref{thm:log general type,thm:main33} to conclude \cref{thm:20220819}. 

\noindent {\em Case 1.2:} Fix a    faithful representation $G\to {\rm GL}_N$. Assume that  for  every non-archimedean place $v$ of $L$, the associated representation $\tau_v:\pi_1(X)\to G(L_v)$ is  bounded.  Then there is a factorisation $\tau:\pi_1(X)\to {\rm GL}_N(\cO_L)$, where $\cO_L$ is the ring of integers.    For every embedding $\nu:L\to \bC$, since   $\nu\tau:\pi_1(X)\to G(\bC)$ is rigid, by virtue of \cref{thm:vhs},  $\nu\tau:\pi_1(X)\to {\rm GL}_N(\bC)$ underlies a $\bC$-VHS.  We apply \cite[Proposition 7.1 \& Lemma 7.2]{LS18} to conclude that $\tau$ is  a complex direct factor of a $\bZ$-variation of Hodge structures $\tau''$. Since $\tau$ is big,  so is $\tau''$. Hence the period mapping $p:X\to \mathcal{D}/\Gamma$ of this  $\bZ$-VHS satisfies $\dim X=\dim p(X)$, where $\Gamma$ is the monodromy group.      Let $Z\subset X$ be a proper Zariski closed subset of $X$ so that $p|_{X-Z}$ is finite. 
Then by \cref{thm:PicardVHS}, we obtain the   pseudo Picard hyperbolicity of $X$.  It follows from \cite{BC20} (see also \cite{CD21}) that any closed   subvariety $V$ of $X$ not contained in $Z$ is  of log general type.

\smallskip

\noindent 
{\em Case 2: $\varrho$ is non-rigid.} \quad By \cref{lem:20220819}, it follows that there is a Zariski dense, big and unbounded repsentation $\pi_1(X)\to G(K)$ where $K$ is some non-archimedean local field of zero characteristic. 
 By \cref{thm:main33,thm:log general type} again,  we conclude the proof.
 
 Thus we have proved the theorem for $X$  when $G$ is almost simple. 

\medskip

\noindent {\it Step 2. 
We treat the general case that $G$ is semi-simple.}
 Note that $G$ has only finitely many minimal normal subgroup varieties $H_1, \ldots, H_k$, and   the multiplication map
$$
H_1 \times \cdots \times H_k \rightarrow G
$$
is an isogeny. Each $H_i$ is almost-simple and it is centralized by the remaining ones. Then the product $H_1\cdots H_{i-1}H_{i+1}\cdots H_k$ is also a normal subgroup of $G$.  Define $G_i:=G/(H_1\cdots H_{i-1}H_{i+1}\cdots H_k)$. It follows that the natural map $G\to G_1\times\cdots \times G_k$ is an isogeny.   This enables us to replace $G$ by $ G_1\times\cdots \times G_k$,   and the representation $ \pi_1(X)\to G_1(\bC)\times\cdots \times G_k(\bC)$ induced by $\varrho$ is also Zariski dense and big. We use the same letter $\varrho$ to denote this representation.   Take the projection $\varrho_i:\pi_1(X)\to G_i(\mathbb C)$.
Then $\varrho_i$ is Zariski dense for all $i$.

We apply \cref{lem:kollar}.
Then after passing to an \'etale cover of $X$, there exists a rational map $p_i:X\dashrightarrow Y_i$ and a big representation $\tau_i:\pi_1(Y_i)\to G_i(\mathbb C)$ such that $p_i^*\tau_i=\varrho_i$. 
Then $\tau_i$ is big and Zariski dense.
So we may apply Step 1 above to get the proper Zariski closed set $Z_i\subsetneqq Y_i$ such that \cref{item:pseudo Picard} holds for $Y_i$ with $Z_i$ the exceptional set. 
Let $q:X\dashrightarrow Y_1\times \cdots\times Y_k$ be the natural map and let $\alpha:X\dashrightarrow S$ be the quasi-Stein factorisation of $q$ (cf. \cref{lem:Stein}).
Then $\alpha$ is birational.
Indeed let $F\subset X$ be a general fiber of $\alpha$.
To show $\dim F=0$, we assume contrary $\dim F>0$.
Since the induced map $F\to Y_i$ is constant, we have $\varrho_i(\mathrm{Im}[\pi_1(F)\to \pi_1(X)])=\{ 1\}$.
Hence $\varrho(\mathrm{Im}[\pi_1(F)\to \pi_1(X)])=\{ 1\}$.
This contradicts to the assumption that $\varrho$ is big.
Hence $\dim F=0$, so $\alpha$ is birational. 

Now we set $Z=\mathrm{Ex}(\alpha)\cup (\cup_ip_i^{-1}(Z_i))$.
Let $f:\bD^*\to X$ be holomorphic such that the image is not contained in $Z$.
Then by Step 1 above, the map $p_i\circ f:\bD^*\to Y_i$ does not have essential singularity at $0\in\bD$, hence the same holds for $\alpha\circ f:\bD^*\to S$.
This proves \cref{item:pseudo Picard} for $X$.

\medspace

Let us prove \cref{item:log general type}.  
We will prove the result by induction on $k$. The case $k=1$ is proved by Step 1.  Assume now the statement is true for $k-1$. For the representation $\varrho':\pi_1(X)\to G_2(\bC)\times\cdots\times G_k(\bC)$ defined by the composition of $\varrho$ and the quotient $G(\bC)\to G_2(\bC)\times\cdots\times G_k(\bC)$, by \cref{lem:kollar},  after we replace $X$ by a finite \'etale cover and a birational proper morphism, there is a dominant morphism  $p:X\to Y$   (resp. $p_1:X\to Y_1$) with connected general fibers and  a big and Zariski dense representation $\tau:\pi_1(Y)\to G_2(\bC)\times\cdots\times G_k(\bC)$ (resp. $\tau_1:\pi_1(Y_1)\to G_1(\bC)$) such that $p^*\tau=\varrho$ (resp. $p_1^*\tau_1=\varrho_1$). By the induction, there is a  proper Zariski closed set  $Z_0\subsetneqq Y$  (resp. $Z_1\subsetneqq Y_1$) such that any closed   subvariety $V\not\subset Z_0$ (resp. $V_1\not\subset  Z_1$) is of log general type.  As we have seen above, the natural morphism
\begin{align*}
	 q: X&\to Y_1\times Y\\
	 x&\mapsto (p_1(x), p(x))
\end{align*} 
satisfies $\dim X=\dim q(X)$.   Let $\alpha:X\dashrightarrow S$ be the quasi-Stein factorisation of $q$. 
Then $\alpha$ is birational.
  Set $Z:=p^{-1}(Z_0)\cup p_1^{-1}(Z_1)\cup {\rm Exc}(\alpha)$. Let $V\subset X$ be  any closed   subvariety  not contained in $Z$.  Then the  closure   $\overline{p(V)}$ 
 is of log general type.  Since $p_1(V)\not\subset Z_1$,      $\overline{p_1(F)}$  is not contained in $Z_1$ for general fibers $F$ of $p|_{V}:V\to \overline{p(V)}$. Hence $\overline{p_1(F)}$ is of log general type. Note that    $\alpha|_F:F\to \overline{\alpha(F)}$ of $p$ is  generically finite. Hence $p_1|_F:F\to \overline{p_1(F)}$  is also generically finite. It follows that $F$ is also of log general type.   
We use Fujino's addition formula for logarithmic Kodaira dimensions \cite[Theorem 1.9]{Fuj17} to show that $V$ is of log general type, which proves \cref{item:log general type} for $X$. Therefore, we have established the theorem for $X$.

Let $\sigma\in {\rm Aut}(\bC/\bQ)$. Using \cref{thm:conjugate}, we can construct a representation $\varrho^\sigma:\pi_1(X^\sigma)\to G(\bC)$ that is also Zariski dense and big, satisfying the conditions of the theorem. Hence, we have proven the theorem for $X^\sigma$.  
\end{proof}

\begin{rem}\label{rem:sharp}
Note that the condition in \cref{thm:20220819} is sharp. For example, consider an abelian variety $X$ of dimension $n$. The representation 
\begin{align*}
	\bZ^{2n}\simeq \pi_1(X)&\to (\bC^*)^{2n}\\
	(a_1,\ldots,a_{2n})&\mapsto (\exp({a_1}),\ldots, \exp(a_{2n}))
	\end{align*}
 is a Zariski dense representation. However, $X$ is not of general type and contains Zariski dense entire curves. This example demonstrates that the semisimplicity of $G$ is necessary for \cref{thm:20220819} to hold.
	
Another example to consider is a curve $C$ of genus at least 2. There exists a Zariski dense representation $\varrho:\pi_1(C)\to G(\bC)$ where $G$ is some semisimple algebraic group over $\bC$. The representation $\varrho:\pi_1(C\times \bP^1)\to G(\bC)$ is Zariski dense but not big. It is clear that $C\times \bP^1$ is neither pseudo Brody hyperbolic nor of general type. Thus, the bigness condition in \cref{thm:20220819} is also essential.
\end{rem}

\begin{rem}\label{rem:previous}
Let us conclude this section with some remarks on the proof of \cref{main2}. When $X$ is projective, proving the bigness of $K_X$ in \cite{CCE15} is a rather challenging and intricate task. It relies on Eyssidieux's celebrated work \cite{Eys04} on the precise structure of the Shafarevich morphism for an absolute constructible subset in $M_B(\pi_1(X),G)$. Another crucial ingredient in their proof is the following deep result proven in \cite[Proposition 5.4.6]{Eys04} (see also the paragraph before \cite[Lemme 6.1]{CCE15}):
\begin{enumerate}[label=($\spadesuit$),leftmargin=0.6cm]
	\item \label{Eys} There exists a finite family of reductive representations into non-Archimedean local fields lying in the absolutely constructible subsets of the character varieties, such that the generic rank of spectral one-forms ${\eta_1,\ldots,\eta_m}$ induced by the harmonic mappings associated with all these representations is equal to the dimension of $\alpha(\xsp)$, where $\alpha:\xsp\to\cA$ is the partial-Albanese morphism defined in \cref{def:partial}.
\end{enumerate}
It should be noted that a similar statement was claimed by Zuo  in \cite[p. 149]{Zuo96} for a \emph{single} Zariski dense and unbounded representation into almost algebraic group defined over a non-archimedean local field: \guillemotleft \emph{...Therefore, the complex differential of $u \pi$ gives rise to non-zero holomorphic 1-forms $\omega_1, \ldots, \omega_l$ on $X^s$. Since $\varrho$ is big, by the above factorization theorem the dimension of   ${\rm Span}\left\langle\omega_1, \ldots, \omega_l\right\rangle$ in $\Omega_{X^s}^1$ at the generic point in $X^s$ is maximal, and is equal to $\operatorname{dim} X=: n$.}\guillemotright. It is noteworthy that this claim remains unproven to date. The proof of \Cref{Eys} in \cite{Eys04} uses a more elaborate version of Simpson's Lefschetz theorem for leaves defined by one-forms (see \cite[Proposition 5.1.7]{Eys04} and \cite{Sim93}). It was used in \cite{CCE15} to prove that $\kappa(\xsp)=\kappa(X)$ when $\kappa(X)=0$.
	
Zuo also claimed in \cite{Zuo96} that when $X$ in \cref{main2} is projective, $K_X$ is big. However, besides the incompleteness of the aforementioned claim, another weak point in his paper is the case when $\varrho$ is non-rigid. In \cite[p. 152]{Zuo96}, Zuo claimed that \guillemotleft \emph{a non-rigid representation $\varrho$ can be associated to a representation $\varrho_T$ over a function field, such that $\left.\varrho_T\right|_{t_0}=\varrho$ and $\varrho_T$ is unbounded with respect to a discrete valuation. Applying Theorem B to those unbounded representations, we prove Theorem 1}\guillemotright. 
A more precise construction of $\varrho_T$ is already discussed in \cref{rem:20230417}.
However, since the completion of the function field with respect to a discrete valuation is not locally compact as $T$ is defined over some infinite field, one cannot apply Gromov-Schoen's theory of harmonic mappings, and thus \cite[Theorem B]{Zuo96} cannot be applied for such unbounded representations. This particular flaw in the argument leads to the collapse of his proof, resulting unfortunately  several incorrect papers, such as \cite{Sun22}. In \cite{Eys04,CS08}, the authors apply the method of reduction mod $p$ to construct unbounded representations over some finite extension of $\mathbb{F}_p((t))$ such that Gromov-Schoen's theory works. But in this case, the new representations might not big, 
and the Zariski closure of their image might not be almost simple, as we already discussed in \cref{rem:20230417}.
Hence one still cannot prove the bigness of $K_X$ along Zuo's strategy.

Our proof of \cref{main:log general type} does not rely on either \cref{Eys} or the precise structure of Shafarevich morphisms of quasi-projective varieties (both of which are open problems in the quasi-projective setting). Instead, we use \cref{lem:20220819} and \cref{thm:vhs} to reduce the proof to either the case of $\bZ$-VHS or the case described in \cref{thm:log general type}. This approach has the advantage of avoiding the reduction mod $p$ method used in previous works like \cite{CS08,Eys04}.

Another novel aspect of our work is the proof of \cref{thm:log general type}, which relies on \cref{thm:main33} regarding the generalized Green-Griffiths-Lang conjecture, as well as the positivity of the spectral cover $\xsp$ established in \cref{thm:spectral cover}. By utilizing the techniques developed by Campana-P\u{a}un \cite{CP19}, we are able to spread the strong positivity of $\xsp$ to $X$. We believe that our approach to proving \cref{thm:log general type}  provides a new perspective on extending the strong positivity of certain covers to the original variety. 
\end{rem}

\section{On the generalized Green-Griffiths-Lang conjecture II}\label{sec:proof2}

In this section we prove \cref{main:GGL,main:special}.
We first prove the following lemma.

\begin{lem}\label{lem:202305101} 
Let $X$ be a quasi-projective normal variety and let $\varrho:\pi_1(X)\to {\rm GL}_N(\bC)$ be a reductive and big representation.  
Then there exist 
\begin{itemize}
\item a semi-abelian variety $A$,
\item a smooth quasi projective variety $Y$ satisfying $\Spp(Y)\subsetneqq Y$ and $\Spalg(Y)\subsetneqq Y$,
\item a birational modification $\widehat{X}'\to \widehat{X}$ of a finite \'etale cover $\widehat{X}\to X$,
\item a morphism $g:\widehat{X}'\to A\times Y$
\end{itemize}
such that $\dim g(\widehat{X}')=\dim \widehat{X}'$.
\end{lem}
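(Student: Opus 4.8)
The idea is to split the reductive representation $\varrho$ into its central torus part and its semisimple part, handle each separately using results already established in the paper, and then recombine. Concretely, let $G$ be the Zariski closure of $\varrho(\pi_1(X))$, which is reductive by hypothesis. Write $Z$ for the connected central torus of $G$ and $G'$ for the derived subgroup, so that $G \to G/G' \times G/Z$ is an isogeny onto a product of a torus $T := G/G'$ and a semisimple group $H := G/Z$. Composing $\varrho$ with this isogeny and the two projections gives a torus representation $\varrho_1 : \pi_1(X) \to T(\bC)$ and a semisimple representation $\varrho_2 : \pi_1(X) \to H(\bC)$, each Zariski dense in its target. Since $\varrho$ is big, at least one of the two factors need not be big; however, by \cref{lem:kollar} applied to each, after passing to a common finite \'etale cover $\widehat{X} \to X$ and a birational modification $\widehat{X}' \to \widehat{X}$ (take a common refinement of the two given in \cref{lem:kollar}) we obtain dominant morphisms with connected general fibers $p_1 : \widehat{X}' \dashrightarrow Y_1$ and $p_2 : \widehat{X}' \dashrightarrow Y_2$ together with big, Zariski dense representations $\tau_1 : \pi_1(Y_1) \to T(\bC)$ and $\tau_2 : \pi_1(Y_2) \to H(\bC)$ pulling back to $\varrho_1$ and $\varrho_2$.

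Next I would identify $A$ and $Y$. For the torus factor: a big representation of $\pi_1(Y_1)$ into a torus factors through the quasi-Albanese morphism $Y_1 \to \cA_{Y_1}$, and bigness forces this morphism to have $\dim \cA_{Y_1}$-dimensional image, so one can take (a quotient of) $\cA_{Y_1}$ as the semi-abelian variety $A$, with $Y_1 \to A$ generically finite onto its image. For the semisimple factor: $\tau_2$ is big and Zariski dense into a semisimple group, so \cref{thm:20220819} (applied with $\sigma = \mathrm{id}$) gives a proper Zariski closed $Z_2 \subsetneqq Y_2$ such that $Y_2$ is pseudo Picard hyperbolic (hence $\Spp(Y_2) \subsetneqq Y_2$ by \cref{def:Picard}) and every closed subvariety not contained in $Z_2$ is of log general type (hence $\Spalg(Y_2) \subsetneqq Y_2$). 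After a further resolution we may assume $Y_2$ is smooth; set $Y := Y_2$. Then the product morphism
\[
g : \widehat{X}' \longrightarrow A \times Y, \qquad x \longmapsto (p_1(x), p_2(x))
\]
is the desired map, where I would replace $\widehat{X}'$ by a suitable birational modification to make $g$ a genuine morphism.

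The remaining point, and the main technical obstacle, is to verify $\dim g(\widehat{X}') = \dim \widehat{X}'$. Suppose not; then a general fiber $F$ of the quasi-Stein factorization of $g$ is positive dimensional. The composite $F \to A$ and $F \to Y$ are both constant, so $p_1(F)$ and $p_2(F)$ are points, whence $\varrho_1(\mathrm{Im}[\pi_1(F) \to \pi_1(\widehat{X}')])$ and $\varrho_2(\mathrm{Im}[\pi_1(F) \to \pi_1(\widehat{X}')])$ are both finite (using $\tau_i^\ast \tau_i = p_i^\ast$-pullbacks and that $\pi_1(F) \to \pi_1(Y_i)$ factors through the constant map). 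Since $\varrho$ is (up to isogeny) determined by the pair $(\varrho_1, \varrho_2)$, this forces $\varrho(\mathrm{Im}[\pi_1(F^{\mathrm{norm}}) \to \pi_1(\widehat{X}')])$ to be finite as well — here one uses that the pullback of a big representation under a finite \'etale cover and birational modification is still big (by \cref{lem:fun} and \cref{lem:finiteindex}, as in Step 1 of the proof of \cref{thm:spectral cover}), so $F$ being a general fiber contradicts bigness. Care is needed with the interplay between $\widehat{X}$, $\widehat{X}'$, and the two possibly-different birational models coming from the two applications of \cref{lem:kollar}: I would first pass to a common smooth model dominating both, check that the representations and factorizations descend compatibly, and only then run the fiber-dimension argument. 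Once $\dim g(\widehat{X}') = \dim \widehat{X}'$ is established, the lemma follows, since $Y = Y_2$ already has $\Spp(Y) \subsetneqq Y$ and $\Spalg(Y) \subsetneqq Y$ by construction.
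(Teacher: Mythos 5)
Your proposal follows essentially the same route as the paper: decompose the reductive group up to isogeny into its torus and semisimple parts, use \cref{lem:kollar} and \cref{thm:20220819} to produce the factor $Y$ with $\Spp(Y)\subsetneqq Y$ and $\Spalg(Y)\subsetneqq Y$, use the quasi-Albanese to produce the semi-abelian factor, and deduce generic finiteness of $g$ from bigness of $\varrho$ via the triviality of both factor representations on the general fiber of the quasi-Stein factorization. The one structural difference is that you also run the torus representation through \cref{lem:kollar} to get an intermediate $Y_1$ and then take $A$ inside $\cA_{Y_1}$; the paper skips this detour and factors the torus representation directly through the quasi-Albanese of (an \'etale cover of) $X$, after first passing to a cover on which $H_1(X,\bZ)$ is torsion-free. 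That last point is the only real repair your argument needs: a representation of $\pi_1(Y_1)$ into a torus factors through $H_1(Y_1,\bZ)$ but not automatically through its free quotient $\pi_1(\cA_{Y_1})$, since $\bC^*$ has torsion, so you must insert a further finite \'etale cover before invoking the quasi-Albanese. A second, cosmetic imprecision: in the fiber argument $p_1(F)$ need not be a point (only its image in $A$ is), but the conclusion you actually use — that $\varrho_1$ is trivial on $\pi_1(F)$ — still follows because $\varrho_1$ factors through $\pi_1(A)$; with these two touch-ups the proof is complete.
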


\begin{proof}
Let $G$ be the Zariski closure of $\varrho$ which is reductive.  
Let $G_0$ be the connected component of $G$ which contains the identity element of $G$. 
Then after replacing $X$ by a finite \'etale cover   corresponding to the finite index subgroup $\rho^{-1}(\varrho(\pi_1(X))\cap G_0(\bC))$ of $\pi_1(X)$, we can assume that the Zariski closure $G$ of $\varrho$ is connected.  
Hence the radical $R(G)$ of $G$ is a torus, and the derived group $\cD G$ is semisimple or trivial.
Write $G_2:=G/\cD G$ and $G_1=G/R(G)$. 
Then   $G_1$ is either semisimple or trivial and $G_2$ is a torus.  
Moreover, the natural  morphism $G\to G_1\times G_2$ is an isogeny.  
Let $\varrho':\pi_1(X)\to G_1(\bC)\times G_2(\bC)$  be the composition of $\varrho$ and $G(\bC)\to G_1(\bC)\times G_2(\bC)$. 
Then it is also big and Zariski dense. 
Denote by  $\varrho_i:\pi_1(X)\to G_i(\bC)$  the  composition of $\varrho:\pi_1(X)\to G_1(\bC)\times G_2(\bC)$ and $ G_1(\bC)\times G_2(\bC)\to G_i(\bC)$, which is Zariski dense. 
After replacing $X$ by a finite \'etale cover, we may assume that $ H_1(X,\bZ)$ is torsion free.
Hence $\varrho_2$ factors the quasi-albanese map $a:X\to A$, i.e., there exists $\varrho_2':H_1(X,\bZ)\to G_2(\bC)$ such that $a^*\varrho_2'=\varrho_2$.

If $G_1$ is not trivial, we apply \cref{lem:kollar}.
Then after replacing $X$ by a finite \'etale cover $\nu:\widehat{X}\to X$ and a birational modification $\mu:\widehat{X}'\to X$, there exists a dominant morphism $p:\widehat{X}'\rightarrow Y$ with connected general fibers and a representation $\tau:\pi_1(Y)\to G_1(\mathbb C)$ such that $p^*\tau=(\nu\circ\mu)^*\varrho_1$ and
	$\tau$ is big and Zariski dense. 
By \cref{main2}, $\Spalg(Y)\subsetneqq Y$ and $\Spp(Y)\subsetneqq Y$.
If $G_1$ is trivial, then we set $Y$ to be a point and   $p:\widehat{X}'\to Y$ is the constant map.  
In this case, we also have $\Spalg(Y)\subsetneqq Y$ and $\Spp(Y)\subsetneqq Y$.

Consider the morphism $g:\widehat{X}'\to A\times Y$ defined by $x\mapsto (a\circ\nu\circ\mu(x),p(x))$.
Let $\beta:\widehat{X}'\to S$ be the quasi-Stein factorisation of $g$ defined in \cref{lem:Stein}.  
 Then $\beta$ is birational.
	Indeed let $F\subset \widehat{X}'$ be a general fiber of $\beta$.
We shall show $\dim F=0$.
	Since the induced map $F\to Y$ is constant, we have $(\nu\circ\mu)^*\varrho_1(\mathrm{Im}[\pi_1(F^{\mathrm{norm}})\to \pi_1(\widehat{X}')])=\{ 1\}$. 
Since the induced map $F\to A$ is constant, we have $(\nu\circ\mu)^*\varrho_2(\mathrm{Im}[\pi_1(F^{\mathrm{norm}})\to \pi_1(\widehat{X}')])=\{ 1\}$. 
Therefore, one has $(\nu\circ\mu)^*\varrho(\mathrm{Im}[\pi_1(F^{\mathrm{norm}})\to \pi_1(\widehat{X}')])=\{ 1\}$.
Since $(\nu\circ\mu)^*\varrho:\pi_1(\widehat{X}')\to G$ is big, we have $\dim F=0$.
Hence $\beta$ is birational.
Thus $\dim g(\widehat{X}')=\dim S=\dim \widehat{X}'$.
\end{proof}
Let us prove \cref{main:GGL}.
\begin{thm}[=\cref{main:GGL}]\label{thm:GGL}
	Let $X$ be a complex  smooth  quasi-projective  variety admitting a big and reductive representation  $\varrho:\pi_1(X)\to {\rm GL}_N(\bC)$. 
Then for any  automorphism $\sigma\in {\rm Aut}(\bC/\bQ)$, the following properties are equivalent:
	\begin{enumerate}[ font=\normalfont, label=(\alph*)] 
		\item $X^\sigma$ is of log general type. 
		\item   $\Spp(X^\sigma)\subsetneqq X^\sigma$.
\item  $\Sph(X^\sigma)\subsetneqq X^\sigma$.
\item  $\Spab(X^\sigma)\subsetneqq X^\sigma$.
\item  $\Spalg(X^\sigma)\subsetneqq X^\sigma$.
	\end{enumerate} 
\end{thm}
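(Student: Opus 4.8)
The strategy is to reduce the five-way equivalence for $X^\sigma$ to the corresponding statements for an auxiliary variety of the form $A\times Y$ via \cref{lem:202305101}, and then to invoke \cref{cor:GGL}. First I would observe that it suffices to prove the theorem for $X$ itself, because \cref{thm:conjugate} provides a big reductive representation $\varrho^\sigma:\pi_1(X^\sigma)\to {\rm GL}_N(\bC)$ (bigness is preserved; reductivity is preserved since the Zariski closures coincide by \eqref{eqn:202303111}), so running the argument with $X$ replaced by $X^\sigma$ and $\varrho$ by $\varrho^\sigma$ yields all five equivalences for $X^\sigma$. Also, all five properties are insensitive to finite \'etale covers and birational modifications: a finite \'etale cover $\widehat X\to X$ satisfies $\Sp_\bullet(\widehat X)=\widehat X$ iff $\Sp_\bullet(X)=X$ (using \cref{lem:KodairaDim}, \cref{lem:finiteindex}, and the fact that entire curves, discs and subvarieties lift/descend along finite \'etale maps), and a birational modification between smooth quasi-projective varieties changes none of the four special subsets being proper. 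So I may freely replace $X$ by the $\widehat X'$ produced in \cref{lem:202305101}.

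Now apply \cref{lem:202305101}: after these harmless modifications there are a semi-abelian variety $A$, a smooth quasi-projective $Y$ with $\Spp(Y)\subsetneqq Y$ and $\Spalg(Y)\subsetneqq Y$, and a morphism $g:X\to A\times Y$ with $\dim g(X)=\dim X$. The chain of easy implications among the five conditions is already recorded in \cref{lem:inclusion} (giving $\Spab\subseteq\Sph\subseteq\Spp$) together with the trivial implication (e) $\Rightarrow$ (a) and the fact that $\Spalg(X)\subsetneqq X$ follows once every subvariety through a general point is of log general type. Concretely I would prove the cycle (a) $\Rightarrow$ (b) $\Rightarrow$ (c) $\Rightarrow$ (d) $\Rightarrow$ (e) $\Rightarrow$ (a). The implications (b) $\Rightarrow$ (c) $\Rightarrow$ (d) are immediate from $\Spab\subseteq\Sph\subseteq\Spp$ (\cref{lem:inclusion}), and (e) $\Rightarrow$ (a) is obvious. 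It remains to establish (a) $\Rightarrow$ (b) and (d) $\Rightarrow$ (e). For (a) $\Rightarrow$ (b): write $b:X\to Y$ for the composition of $g$ with the projection; since $\dim g(X)=\dim X$, $b$ is dominant onto its image, and I replace $Y$ by $\overline{b(X)}$ (after a resolution, still pseudo Picard hyperbolic since that property descends to subvarieties that are not contained in $\Sp_p$). Then $X\to A\times Y$ still has $\dim X=\dim$ of its image and $b$ is dominant, $Y$ is pseudo Picard hyperbolic, and if $X^\sigma$ — i.e. here $X$ — is of log general type, \cref{coritem1} of \cref{cor:GGL} gives that $X$ is pseudo Picard hyperbolic, i.e. $\Spp(X)\subsetneqq X$. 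For (d) $\Rightarrow$ (e): with the same setup, $\Spalg(Y)\subsetneqq Y$, $b$ dominant, and the hypothesis $\Spab(X)\subsetneqq X$, \cref{coritem2} of \cref{cor:GGL} yields $\Spalg(X)\subsetneqq X$.

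The main obstacle I anticipate is the bookkeeping around the reductions: one must check carefully that passing from $X$ to a finite \'etale cover and a birational modification does not affect whether each $\Sp_\bullet$ is a proper subset, and — more delicately — that in \cref{coritem1}/\cref{coritem2} of \cref{cor:GGL} the target $Y$ can be taken to be $\overline{b(X)}$ rather than the abstract $Y$ from \cref{lem:202305101}; this requires knowing that pseudo Picard hyperbolicity and "$\Spalg\subsetneqq\,$" pass to the closure of the image under a dominant morphism, which follows because a closed subvariety $V\subsetneqq Y$ with $V\not\subseteq \Sp_p(Y)$ is itself pseudo Picard hyperbolic (resp.\ its subvarieties through a general point are of log general type). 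Once these compatibilities are in place, the theorem is just the concatenation of \cref{lem:202305101}, \cref{cor:GGL}, \cref{lem:inclusion}, and \cref{thm:conjugate}. I would also remark explicitly that the implication (a) $\Rightarrow$ (c) is the strong Green–Griffiths conjecture for $X^\sigma$, as noted after the statement.
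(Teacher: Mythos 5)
Your proposal is correct and follows essentially the same route as the paper: reduce to $X$ via \cref{thm:conjugate}, apply \cref{lem:202305101} to produce $g:X\to A\times Y$ after a finite \'etale cover and birational modification, and then conclude via \cref{cor:GGL} for (a)$\Rightarrow$(b) and (d)$\Rightarrow$(e) together with \cref{lem:inclusion} for the remaining implications. Your extra worry about replacing $Y$ by $\overline{b(X)}$ is unnecessary, since the construction in \cref{lem:202305101} already yields that the projection $p:X\to Y$ is dominant; on the other hand, your explicit check that the five properties are invariant under finite \'etale covers and birational modifications is a detail the paper leaves implicit, and it is worth recording.
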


\begin{proof}  
	We use \cref{thm:conjugate} to show that it suffices to prove the theorem for $X$ itself. 
	We apply \cref{lem:202305101}.
Then by replacing $X$ with a finite \'etale cover and a birational modification, we obtain a smooth quasi-projective variety $Y$ (might be zero-dimensional), a semiabelian variety $A$, and a morphism $g:X\to A \times Y$ that satisfy the following properties:
	\begin{itemize}
		\item  $\dim X=\dim g(X)$.
		\item Let $p:X\to Y$ be  the composition of $g$ with the projective map $A\times Y\to Y$. 
		Then  $p$   is dominant.
		\item $\Spp(Y)\subsetneqq Y$ and $\Spalg(Y)\subsetneqq Y$.
	\end{itemize} 
Therefore, the conditions in \cref{cor:GGL} are satisfied.
This yields the two implications: $(a)\implies (b)$ and $(d)\implies (e)$. 
By \cref{lem:inclusion}, we have the implications: $(b)\implies (c)$ and $(c)\implies (d)$.
 Since the implication of $(e)\implies (a)$ is direct, we have proved the equivalence of $(a)$, $(b)$, $(c)$, $(d)$ and $(e)$.
\end{proof}

\begin{thm}[=\cref{main:special}]\label{thm:special}
	Let $X$ be a smooth quasi-projective   variety  and $\varrho:\pi_1(X)\to {\rm GL}_N(\bC)$ be a large and reductive representation. Then   for any  automorphism $\sigma\in {\rm Aut}(\bC/\bQ)$,  
	\begin{thmlist}
		\item \label{itemize:same} the four special subsets defined in \cref{def:special2} are the same, i.e., $$\Spalg(X^\sigma)=\Spab(X^\sigma)=\Sph(X^\sigma)=\Spp(X^\sigma).$$  
		\item These special subsets are conjugate under  automorphism   $\sigma$,  i.e., \begin{align}\label{eq:conjugate}
			 \Sp_{\bullet}(X^\sigma)=\Sp_{\bullet}(X)^\sigma,
		\end{align} 
		where $\Sp_{\bullet}$  denotes any of $\Spab$, $\Sph$, $\Spalg$ or $ \Spp$.
	\end{thmlist}
\end{thm}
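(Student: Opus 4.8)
The strategy is to reduce everything to the already-established \cref{thm:GGL} (equivalently \cref{main:GGL}) together with the structural results on special subsets of varieties of maximal quasi-Albanese dimension. First I would fix $\sigma \in \mathrm{Aut}(\bC/\bQ)$ and, using \cref{thm:conjugate}, produce a large and reductive representation $\varrho^\sigma : \pi_1(X^\sigma) \to \mathrm{GL}_N(\bC)$; thus it suffices to prove both assertions for a single $X$ carrying a large reductive $\varrho$, and then deduce \eqref{eq:conjugate} by applying the ``$X$-version'' to both $X$ and $X^\sigma$ and comparing. The point of passing from \emph{big} to \emph{large} is that a large representation stays big after restriction to \emph{any} closed subvariety $V \subset X$ (not just those through a very general point); more precisely, for $V \subset X$ with normalization $V^{\mathrm{norm}}$, the image $\varrho(\mathrm{Im}[\pi_1(V^{\mathrm{norm}}) \to \pi_1(X)])$ is infinite, and by pulling back $\varrho$ (or rather a suitable conjugate/subquotient) along a desingularization $\widetilde V \to V$, one gets a big reductive representation of $\pi_1(\widetilde V)$. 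This is the mechanism that lets \cref{thm:GGL} be applied fiberwise and to arbitrary subvarieties.

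For \cref{itemize:same}, by \cref{lem:inclusion} we already have the chain $\Spab(X) \subseteq \Sph(X) \subseteq \Spp(X)$, so it remains to prove $\Spp(X) \subseteq \Spalg(X)$ and $\Spalg(X) \subseteq \Spab(X)$ (the latter being the harder of the two). For $\Spp(X) \subseteq \Spalg(X)$: by \cref{thm:GGL} applied to $X$ itself, either $X$ is of log general type and then $\Spp(X) \subsetneqq X$ is a proper Zariski closed set; in general one argues that $\Spp(X)$ is a countable union of irreducible closed subvarieties $W$, and for each such $W$ that is \emph{not} contained in $\Spalg(X)$ one may restrict the large representation to (a desingularization of) $W$, invoke \cref{thm:GGL} for $W$ to conclude $W$ is of log general type and $\Spp(W) \subsetneqq W$, hence $\Spp(X)$ cannot accumulate outside $\Spalg(X)$; a Baire-category / countability argument pins down $\overline{\Spp(X)}^{\mathrm{Zar}} \subseteq \Spalg(X)$. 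The inclusion $\Spalg(X) \subseteq \Spab(X)$ is the crux: take an irreducible $V \subset X$ which is \emph{not} of log general type. We must show $V \subseteq \Spab(X)$. Restricting $\varrho$ to $\widetilde V$ gives a big reductive representation, so \cref{thm:GGL} applies to $\widetilde V$; since $V$ is not of log general type, condition (a) fails, hence condition (d) fails, i.e. $\Spab(\widetilde V) = \widetilde V$. Pushing forward along $\widetilde V \to V \hookrightarrow X$ and using that $\Spab$ is compatible with dominant rational maps from semi-abelian varieties, one gets $V \subseteq \Spab(X)$. Taking Zariski closure over all such $V$ yields $\Spalg(X) \subseteq \Spab(X)$, closing the loop.

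For \cref{main:special}(ii), the conjugacy \eqref{eq:conjugate}, the idea is: the implications of \cref{thm:GGL} and the characterization just proved show that each of $\Spab, \Sph, \Spalg, \Spp$ equals $\Spalg$ for $X$ (and for $X^\sigma$), so it is enough to prove $\Spalg(X^\sigma) = \Spalg(X)^\sigma$. Now $\Spalg$ is a purely algebraic invariant: it is the Zariski closure of the union of positive-dimensional closed subvarieties not of log general type, and being of log general type is preserved under the field automorphism $\sigma$ (the logarithmic Kodaira dimension is an algebraic invariant, so $\bar\kappa(V) = \bar\kappa(V^\sigma)$ for every subvariety $V$; and subvarieties of $X$ correspond bijectively, via $\sigma$, to subvarieties of $X^\sigma$). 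Hence the set of positive-dimensional non-log-general-type subvarieties of $X^\sigma$ is exactly the $\sigma$-image of that of $X$, and taking Zariski closures (also an algebraic operation, commuting with $\sigma$) gives $\Spalg(X^\sigma) = \Spalg(X)^\sigma$, whence \eqref{eq:conjugate} for all $\Sp_\bullet$.

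\textbf{Main obstacle.} The delicate point is the restriction argument: to apply \cref{thm:GGL} to a subvariety $V$ (or a general fiber of some fibration), one needs a \emph{big and reductive} representation of $\pi_1(\widetilde V)$, and while largeness of $\varrho$ immediately gives infiniteness of the restricted image hence bigness, reductivity is not automatic after restriction — the Zariski closure of $\varrho(\mathrm{Im}[\pi_1(\widetilde V)\to\pi_1(X)])$ could fail to be reductive a priori. One resolves this by replacing the restricted representation with its semisimplification (which has the same kernel up to finite index issues, hence is still big) and noting that \cref{thm:GGL}'s hypotheses and conclusions only depend on the representation through its ``bigness'' and the reductivity needed in \cref{lem:202305101}; alternatively, apply the reduction of \cref{lem:kollar}/\cref{lem:202305101} directly to $\widetilde V$. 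Getting this bookkeeping exactly right — and making sure the countably-many exceptional loci from all the fiberwise applications assemble into genuine Zariski closed sets, not just countable unions — is where the real care is needed.
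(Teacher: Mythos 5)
Your proposal follows essentially the same route as the paper: reduce to $X$ itself via \cref{thm:conjugate}; prove $\Spalg(X)\subseteq\Spab(X)$ by restricting $\varrho$ to a desingularization of any subvariety that is not of log general type and applying \cref{thm:GGL}; prove $\Spp(X)\subseteq\Spalg(X)$ by restricting to the Zariski closure of the image of each $f:\bD^*\to X$ with essential singularity; close the chain with \cref{lem:inclusion}; and obtain \eqref{eq:conjugate} from the $\sigma$-invariance of log general type. Two remarks on points where you deviate. First, the countability/Baire discussion in the step $\Spp\subseteq\Spalg$ is unnecessary: for each such $f$, the Zariski closure $Z$ of $f(\bD^*)$ satisfies $\Spp(Z)=Z$, hence by \cref{thm:GGL} (applied to the restricted big reductive representation) $Z$ is not of log general type, so $f(\bD^*)\subseteq Z\subseteq\Spalg(X)$; since $\Spalg(X)$ is Zariski closed by definition, the Zariski closure of the union of all $f(\bD^*)$ remains inside it, and no accumulation argument is needed. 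Second, your proposed patch for reductivity of the restricted representation — passing to the semisimplification — is not a safe fix, since semisimplification can destroy bigness (a unipotent representation with infinite image has trivial semisimplification). The paper simply asserts that $\iota^*\varrho$ is reductive; the justification is not group-theoretic but geometric: a reductive representation corresponds to a tame pure imaginary harmonic bundle (\cref{moc}), whose pullback to a smooth model of the subvariety is again tame and pure imaginary, so the restricted representation is itself semisimple and no modification is required.
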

\begin{proof}
\noindent {\em Step 1. }  Let $Y$ be a closed subvariety of $X$ that is not of log general type. Let $\iota:Z\to Y$ be a desingularization. 
Then $\iota^*\varrho$ is big and reductive. 
By \cref{thm:GGL}, we have $\Spab(Z)=Z$.
Hence $Y\subset \Spab(X)$, which implies $\Spalg(X)\subset \Spab(X)$.

\medspace

\noindent {\em Step 2. }   Let $f:\bD^*\to X$ be a holomorphic map with essential singularity at the origin. Let $Z$ be a desingularization of the Zariski closure of $f(\bD^*)$. 
Note that the natural morphism $\iota:Z\to X$ induces a big and reductive representation $\iota^*\varrho$.   
Since $\Spp(Z)=Z$,
\cref{thm:GGL} implies that $Z$ is not of log general type. 
Hence $\iota(Z)\subset \Spalg(X)$, which implies $\Spp(X)\subset \Spalg(X)$.
By  \cref{lem:inclusion} and Step 1, we conclude  that $\Spalg(X)=\Spab(X)=\Sph(X)=\Spp(X)$.

\medspace

\noindent {\em Step 3. }  	 We use \cref{thm:conjugate} to show that there is a large and reductive representation $\varrho^\sigma:\pi_1(X^\sigma)\to {\rm GL}_N(\bC)$. By Step 2, we conclude that $\Spalg(X^\sigma)=\Spab(X^\sigma)=\Sph(X^\sigma)=\Spp(X^\sigma)$.

\medspace

\noindent {\em Step 4. }   A quasi-projective variety $V$ is of log general type if and only if its conjugate $V^\sigma$ is of log general type.   It follows that $\Spalg(X^\sigma)=\Spalg(X)^\sigma.$ By Step 3 we conclude \eqref{eq:conjugate}. We complete the proof of the theorem. 
\end{proof}
 We provide a class of quasi-projective varieties whose fundamental groups have reductive and large representations.  
\begin{proposition}\label{prop:large}
	Let $X$ be  a normal quasi-projective variety.  If $a:X\to A$ is a morphism to a semiabelian variety $A$ that satsifies $\dim a(X)=\dim X$, then there exists a big and reductive representation $\varrho:\pi_1(X)\to \mathrm{GL}_N(\mathbb C)$. Moreover, if $a$ is quasi-finite, then $\varrho$ is furthermore large.  
\end{proposition}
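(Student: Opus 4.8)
The plan is to produce the desired representation by pulling back suitable characters from the semi-abelian variety $A$, exploiting that $\pi_1(A)$ is abelian and that $a_\ast:\pi_1(X)\to\pi_1(A)$ has image of finite index in the relevant sense. First I would recall the structure of $\pi_1$ of a semi-abelian variety: if $1\to(\mathbb C^\ast)^\ell\to A\to A_0\to 1$ is the defining sequence, then $\pi_1(A)\cong\mathbb Z^{2g+\ell}$ where $g=\dim A_0$, and one has a short exact sequence $0\to\mathbb Z^\ell\to\pi_1(A)\to\pi_1(A_0)\to 0$. I would then construct a representation $\tau:\pi_1(A)\to\mathrm{GL}_N(\mathbb C)$ that is "detecting" in the sense that for every positive-dimensional closed subvariety $B\subset A$, $\tau(\mathrm{Im}[\pi_1(B^{\mathrm{norm}})\to\pi_1(A)])$ is infinite. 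A clean way to get such $\tau$ is to combine two pieces: (i) a representation into $(\mathbb C^\ast)^{2g}$ coming from a sufficiently generic collection of characters of $\pi_1(A_0)=H_1(A_0,\mathbb Z)$, and (ii) the characters coming from the toric part $(\mathbb C^\ast)^\ell$. Concretely, choosing a diagonal representation $\pi_1(A)\to(\mathbb C^\ast)^{2g+\ell}$ with multiplicatively independent (or at least "generic enough") eigenvalues produces a representation whose restriction to any subtorus-translate is still infinite, because a subvariety of $A$ of positive dimension cannot be contracted by a generic character torus. I would make this precise using Proposition~\ref{prop:Koddimabb} together with the Bloch--Ochiai-type fact that positive-dimensional subvarieties of $A$ have nontrivial image in the Albanese/character directions; alternatively one invokes the structure of $\pi_1(B^{\mathrm{norm}})\to\pi_1(A)$ for $B$ positive-dimensional, whose image is not torsion.

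Next I would set $\varrho:=a^\ast\tau:\pi_1(X)\to\mathrm{GL}_N(\mathbb C)$. It is reductive because its image lies in a torus $(\mathbb C^\ast)^N$, hence its Zariski closure is a torus, which is reductive. It remains to check bigness. Let $Y\subset X$ be a positive-dimensional closed subvariety through a very general point. Since $\dim a(X)=\dim X$, the morphism $a$ is generically finite onto its image, so for $Y$ avoiding the (proper, hence negligible with respect to "very general") locus where $a$ fails to be finite, $a(Y)$ is a positive-dimensional closed subvariety of $A$ and $Y^{\mathrm{norm}}\to a(Y)^{\mathrm{norm}}$ is dominant and generically finite. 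By Lemma~\ref{lem:finiteindex} (applied to $Y^{\mathrm{norm}}\to a(Y)^{\mathrm{norm}}$), the image of $\pi_1(Y^{\mathrm{norm}})\to\pi_1(a(Y)^{\mathrm{norm}})$ has finite index, hence $\varrho(\mathrm{Im}[\pi_1(Y^{\mathrm{norm}})\to\pi_1(X)])$ has finite index in $\tau(\mathrm{Im}[\pi_1(a(Y)^{\mathrm{norm}})\to\pi_1(A)])$. The latter is infinite by the defining property of $\tau$, so the former is infinite; this is exactly bigness of $\varrho$ with countable exceptional locus taken to be the union of the locus where $a$ is not finite and (if needed) a countable family of translates of proper subgroups.

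For the "moreover" part, when $a$ is quasi-finite I would argue largeness by running the same computation for \emph{all} positive-dimensional closed subvarieties $Y\subset X$, not only the very general ones. Since $a$ is now quasi-finite everywhere, $a(Y)$ is always positive-dimensional for $Y$ positive-dimensional, and $Y^{\mathrm{norm}}\to a(Y)^{\mathrm{norm}}$ is dominant and generically finite, so Lemma~\ref{lem:finiteindex} again gives finite index and hence $\varrho(\mathrm{Im}[\pi_1(Y^{\mathrm{norm}})\to\pi_1(X)])$ infinite. The only genuine obstacle is the construction in the first paragraph: producing a single finite-dimensional representation $\tau$ of the abelian group $\pi_1(A)$ that remains infinite on $\mathrm{Im}[\pi_1(B^{\mathrm{norm}})\to\pi_1(A)]$ for \emph{every} positive-dimensional $B\subset A$. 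I expect this to be the crux, and I would handle it by noting that $\mathrm{Im}[\pi_1(B^{\mathrm{norm}})\to\pi_1(A)]$ generates (up to finite index) $\pi_1(B')$ for $B'$ the smallest translated semi-abelian subvariety containing $B$, which is positive-dimensional by Proposition~\ref{prop:Koddimabb}(a) (a point would force $B$ to be a point); then any character of $\pi_1(A)$ which is nontrivial on each of the finitely many positive-dimensional semi-abelian subvarieties — in fact a single generic character suffices, or a finite direct sum of characters indexed by the (countably many, but up to isogeny finitely many in each dimension) semi-abelian subvarieties — does the job. Packaging this genericity statement carefully, and bookkeeping the countable exceptional set, is the main work; everything else is formal.
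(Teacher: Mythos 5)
Your route is essentially the paper's: pull back a representation of the finitely generated free abelian group $\pi_1(A)$ into a torus, and reduce bigness/largeness to the statement that $\mathrm{Im}[\pi_1(B^{\mathrm{norm}})\to\pi_1(A)]$ is infinite for every positive\-/dimensional closed $B\subset A$, transferring back to $X$ via generic finiteness of $a$ and \cref{lem:finiteindex}. Two calibration points. First, the step you flag as ``the crux'' and ``the main work'' --- building a single $\tau$ detecting all these subgroups --- is in fact immediate: any \emph{faithful} homomorphism $\pi_1(A)\cong\mathbb{Z}^{2g+\ell}\to(\mathbb{C}^*)^N$ (which exists trivially, even with $N=1$, by sending a basis to multiplicatively independent elements) carries every infinite subgroup to an infinite subgroup, so no genericity over the countably many semi-abelian subvarieties is needed; the paper simply takes such a faithful diagonal embedding. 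Second, the genuine content is the infinitude of $\mathrm{Im}[\pi_1(B^{\mathrm{norm}})\to\pi_1(A)]$, which you treat only in outline. The paper argues via the extension $0\to(\mathbb{C}^*)^\ell\to A\to A_0\to 0$: if $B$ has positive-dimensional image in the abelian quotient $A_0$, a finite \'etale cover of (the normalization of) that image with trivial monodromy in $\pi_1(A_0)$ would lift to the universal cover $\mathbb{C}^g$ and be constant by projectivity; otherwise $\overline{B}$ lies in a fiber $(\mathbb{C}^*)^\ell$, dominates some $\mathbb{C}^*$-factor, and $\pi_1((\mathbb{C}^*)^\ell)\to\pi_1(A)$ is injective because $\pi_2(A_0)=0$. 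Your alternative --- that the image has finite index in $\pi_1(B')$ for $B'$ the smallest translated semi-abelian subvariety containing $B$ --- is also valid, but as written it is an assertion rather than a proof: you would need to pass to a resolution of $B^{\mathrm{norm}}$, invoke the quasi-Albanese factorization, and note that a surjection of semi-abelian varieties induces a finite-index inclusion of lattices. With either justification supplied, the rest of your argument (reductivity since the image lies in a torus; bigness outside the locus where $a$ fails to be quasi-finite; largeness when that locus is empty) is correct and matches the paper.
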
  
\begin{proof} 
We start from the following two claims.

\begin{claim}\label{claim:abelian large}
	If $A_0$ is an abelian variety, then $\pi_1(A_0)$ is large.  
\end{claim}
\begin{proof}[Proof of \cref{claim:abelian large}]
	Let $Z$ be any closed positive-dimensional subvariety of $A_0$. Suppose	 that  ${\rm Im}[\pi_1(Z^{\rm norm})\to \pi_1(A_0)]$ is finite. Then there is a finite \'etale cover $Y\to Z^{\rm norm}$ such that ${\rm Im}[\pi_1(Y)\to \pi_1(A_0)]=\{1\}$. Therefore, the natural morphism $Y\to A_0$ lifts to a holomorphic map $Y\to \widetilde{A}_0$, where $\widetilde{A}_0$ is the universal covering of $A_0$ that is  isomorphic to $\bC^N$.    Therefore $Y\to \widetilde{A}_0$ is constant, a contradiction.
\end{proof} 
\begin{claim}\label{claim:large}
Let $Y$ be a closed subvariety of $X$. If $a(Y)$ is not a point, then ${\rm Im}[\pi_1(Y^{\rm norm})\to \pi_1(A)]$ is infinite.   
\end{claim} 
\begin{proof}[Proof of \cref{claim:large}] 
 Note that $A$ admits a short exact sequence
\begin{align}\label{eq:short}
 0\to (\bC^*)^k\to A\stackrel{\pi}{\to} A_0\to 0.
\end{align}
 Let $Z$ be the closure of $\pi\circ a(Y)$.  If $Z$ is positive-dimensional, then ${\rm Im}[\pi_1(Z^{\rm norm})\to \pi_1(A_0)]$  is infinite by \cref{claim:abelian large}. It follows from    \cref{lem:finiteindex} that  ${\rm Im}[\pi_1(Y^{\rm norm})\to \pi_1(A)]$  is also infinite and the claim is proved.  
 
Assume now $\pi\circ a(Y)$ is a point.   Let $W$ be the closure of $a(Y)$. By  \eqref{eq:short}, $W$ is contained in $(\bC^*)^k$.   Since $W$ is assumed to be positive-dimensional, then it must dominate  some factor $\bC^*$ of   $(\bC^*)^k$.  By \cref{lem:finiteindex} again, ${\rm Im}[\pi_1(W^{\rm norm})\to \pi_1((\bC^*)^k)]$ is infinite.  Since	 we have the following short exact sequence:
 $$
 0=\pi_2(A_0)\to \pi_1((\bC^*)^k)\to \pi_1(A),
 $$
it follows that ${\rm Im}[\pi_1(W^{\rm norm})\to \pi_1(A)]$ is infinite.  Applying \cref{lem:finiteindex} once again, we conclude that ${\rm Im}[\pi_1(Y^{\rm norm})\to \pi_1(A)]$ is infinite. The claim is proved. 
\end{proof}
Since $\dim (X)=\dim a(X)$, there exists a proper Zariski closed subset  $\Xi\subsetneqq X$ such that $a|_{X\backslash \Xi}:X\backslash \Xi\to A$ is quasi-finite. Note that $\Xi=\varnothing$ if $a$ is quasi-finite.  Therefore, for any positive-dimensional closed subvariety $Y$ with $Y\not\subset  \Xi$,  we have $\dim(a(Y))>0$. According to Claim \ref{claim:large}, we can conclude that ${\rm Im}[\pi_1(Y^{\rm norm})\to \pi_1(A)]$ is infinite. 

Since $\pi_1(A)$ is abelian, we can embed it faithfully into $(\bC^*)^N\hookrightarrow {\rm GL}_N(\bC)$, where the later is the diagonal embedding.  Thus, the composition $\varrho:\pi_1(X)\to {\rm GL}_N(\bC)$ with $a_*:\pi_1(X)\to \pi_1(A)$   is a big representation. Furthermore, when $a$ is quasi-finite,  $\varrho$ is large. As  $\varrho(\pi_1(X))$ is contained in $(\bC^*)^N$, its Zariski closure is   a torus in ${\rm GL}_N(\bC)$. Hence, $\varrho$ is reductive. This completes the proof of the proposition.
\end{proof} 
  Note that \cref{cor:20221102} is incorporated into   \cref{thm:GGL} through  \cref{prop:large}.

\section{Some properties of $h$-special varieties}\label{sec:20230406}
One can easily see that if $X$ is weakly special, then any finite \'etale cover $X'$ of $X$ is also weakly special.  
It is proven by Campana that this result also holds for special varieties. 
\begin{thm}[Campana]\label{special}
If $X$ is  special quasi-projective manifold, then any finite \'etale cover $X'$ of $X$ is  special. Moreover, $X$ is weakly special. 
\end{thm}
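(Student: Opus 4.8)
The statement has two parts: stability of specialness under finite étale covers, and the implication that special implies weakly special. The second part is essentially formal: if $X$ were not weakly special, then by definition there is a finite étale cover $\widehat X\to X$, a proper birational modification $\widehat X'\to \widehat X$, and a dominant morphism $f:\widehat X'\to Y$ with connected general fibers onto a positive-dimensional normal variety $Y$ of log general type. Composing with a suitable log-Iitaka model, one may upgrade this to an orbifold fibration with log-general-type orbifold base — indeed, a morphism onto a variety of log general type already provides an orbifold base of log general type (the orbifold structure only makes the base "more" of log general type), so $\widehat X$ is not special. But by the first part of the theorem, $\widehat X$ special follows from $X$ special, a contradiction. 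So the bulk of the work is the first assertion.

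For the first assertion, I would argue by contrapositive: assume a finite étale cover $\nu:X'\to X$ is not special, and produce a fibration on (a modification of) $X$ witnessing that $X$ is not special. Let $p:X''\to X'$ be a proper birational modification with a dominant morphism $g:X''\to Z$ with connected general fibers and orbifold base $(Z,\Delta)$ of log general type. The natural idea is to pass to the Galois closure: by the discussion in \cref{sec:covGal}, there is a finite Galois cover $\gamma:\widetilde X\to X$ with group $G$ factoring through $X'$. One pulls back the fibration $g$ along $\widetilde X \dashrightarrow X'$ (after resolving), obtaining a fibration $\widetilde g:\widetilde X'\to \widetilde Z$. The group $G$ acts on $\widetilde X'$, hence (after taking the Stein-type factorization and using its uniqueness, \cref{lem:Stein}) acts compatibly on the base $\widetilde Z$; one then quotients by $G$ to get a fibration $X'''\to \widetilde Z/G$ on a modification of $X$. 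The key point is that log general type of the orbifold base is inherited: $\widetilde Z\to \widetilde Z/G$ is generically finite, and the orbifold base of the quotient fibration dominates $(\widetilde Z,\widetilde\Delta)/G$ in the orbifold sense, and a variety (orbifold) admitting a generically finite cover of log general type is itself of log general type — this uses the behaviour of log/orbifold Kodaira dimension under étale-in-codimension-one covers, in the spirit of \cref{lem:KodairaDim}, together with Campana's orbifold additivity / Fujino's addition formula \cite[Theorem 1.9]{Fuj17} to control the orbifold multiplicities introduced by ramification.

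The main obstacle I anticipate is precisely the orbifold bookkeeping in this quotient step: one must check that the orbifold base of the $G$-quotient fibration is genuinely of log general type, not merely that the underlying variety admits a map from something of log general type. The ramification of $\widetilde Z\to \widetilde Z/G$ contributes multiplicities to the orbifold divisor downstairs, and one needs the multiplicity-weighted canonical bundle to remain big. This is where Campana's formalism of orbifold morphisms and the multiplicativity of orbifold base under composition of fibrations is essential, and it is the technical heart of the argument. A secondary subtlety is ensuring the general fibers of the quotient fibration remain connected and that all the birational modifications can be made $G$-equivariant simultaneously — this is handled by equivariant resolution of singularities. Given the length and delicacy, and since this is a theorem of Campana, I expect the paper to cite \cite{Cam11} (or \cite{Cam04}) for the precise statement rather than reprove it, using it here only as an input; my plan would be to give the short deduction of "weakly special" from "special" in detail and refer to Campana for the cover-stability of specialness.
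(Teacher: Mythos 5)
The paper gives no proof of this statement at all: it is quoted as a theorem of Campana (``It is proven by Campana that this result also holds for special varieties''), so your final plan --- cite \cite{Cam04,Cam11} for the stability of specialness under finite \'etale covers and deduce weak specialness from it --- matches the paper's treatment, and in fact does slightly more, since the paper does not even write out that deduction. Your deduction of ``special $\Rightarrow$ weakly special'' from the cover-stability is correct (a fibration onto a positive-dimensional variety of log general type has orbifold base of log general type, since adding the orbifold divisor only increases the log canonical class).

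One caveat on your exploratory sketch of Campana's argument for the cover-stability: the full Galois group $G$ of the Galois closure $\widetilde X\to X$ does \emph{not} preserve the fibration pulled back from $X'$ --- only the subgroup $\mathrm{Gal}(\widetilde X/X')$ does --- so ``$G$ acts compatibly on the base'' is not true for the single fibration $\widetilde g$. The descent to $X$ requires assembling the finitely many $G$-conjugate fibrations (e.g.\ via the product map to the conjugate bases and its quasi-Stein factorization, which \emph{is} $G$-equivariant) and then invoking orbifold additivity to see that the resulting orbifold base is still of log general type; this is precisely the hard step you flag. Since you ultimately defer to Campana for this part, nothing in your plan is affected, but the sketch as written would not go through.
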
 

For $h$-special varieties, we have the following.

\begin{lem}\label{lem:202304061}
Let $X$ be an $h$-special quasi-projective variety, and let $p:X'\to X$ be a finite \'etale morphism from a quasi-projective variety $X'$.
Then $X'$ is $h$-special.
\end{lem}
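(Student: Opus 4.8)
The statement to prove is \cref{lem:202304061}: if $X$ is $h$-special and $p : X' \to X$ is finite \'etale, then $X'$ is $h$-special. The strategy is to lift chains of entire curves from $X$ to $X'$ and exploit the fact that finite \'etale maps allow path-lifting at the level of universal covers. First I would recall from \cref{defn:20230407} that $h$-special means the relation $R = \{(x,y) \in X \times X : x \sim y\}$ is Zariski dense in $X \times X$, where $x \sim y$ is witnessed by a finite chain of entire curves $f_1, \ldots, f_l : \mathbb{C} \to X$ with overlapping Zariski closures $Z_i$. The key observation is that since $\mathbb{C}$ is simply connected, any holomorphic map $f_i : \mathbb{C} \to X$ lifts (after choosing a base point in the fiber) to a holomorphic map $\tilde f_i : \mathbb{C} \to X'$, because $p$ is a covering map. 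The Zariski closure $Z_i' := \overline{\tilde f_i(\mathbb{C})}$ maps onto $Z_i$ under $p$, and $p|_{Z_i'} : Z_i' \to Z_i$ is again finite (hence surjective when $Z_i'$ dominates, which it does since $\tilde f_i(\mathbb{C})$ surjects onto a Zariski-dense subset of $Z_i$).

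The main technical point — and the step I expect to be the principal obstacle — is matching up the overlaps: given $Z_{i-1} \cap Z_i \neq \emptyset$ in $X$, I need to produce lifts whose closures still meet in $X'$. Here is how I would handle it. Pick a point $w \in Z_{i-1} \cap Z_i$. The issue is that an arbitrary lift of $f_{i-1}$ and an arbitrary lift of $f_i$ need not have closures passing through a common point of $p^{-1}(w)$. To fix this, I would instead argue as follows: let $R' = \{(x', y') \in X' \times X' : x' \sim y'\}$ be the relation for $X'$. For a pair $(x', y') \in X' \times X'$ with $p(x') = x$, $p(y') = y$ and $x \sim y$ in $X$ via a chain through closures $Z_1, \ldots, Z_l$, I would lift the chain step by step: lift $f_1$ to a curve through (a point over) $x$, get $Z_1'$; since $Z_1' \to Z_1$ is surjective finite and $Z_1 \cap Z_2 \ni w_1$, there is a point $w_1' \in Z_1'$ over $w_1$; then lift $f_2$ to a curve whose closure $Z_2'$ passes through $w_1'$ — this is possible because $w_1 \in Z_2 = \overline{f_2(\mathbb{C})}$, so some translate/reparametrization... actually more carefully: $f_2(\mathbb{C})$ is Zariski dense in $Z_2$ but need not contain $w_1$, so I would use that $p^{-1}(Z_2)$ is a finite union of the Zariski closures of the various lifts of $f_2$, and $w_1' \in p^{-1}(w_1) \subset p^{-1}(Z_2)$ lies in at least one such closure $Z_2'$. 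Continue inductively; the chain of lifted closures $Z_1', Z_2', \ldots$ then witnesses $x' \sim z'$ for some $z'$ over $y$. This shows the projection of $R'$ to $X' \times X'$ contains, over each point of $R$, a full fiber-diagonal-compatible set.

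To conclude Zariski density of $R'$, I would argue dimension-theoretically: consider the finite surjective map $q := p \times p : X' \times X' \to X \times X$. Since $R$ is Zariski dense in $X \times X$, its closure is all of $X \times X$, so $q^{-1}(R)$ is Zariski dense in $X' \times X'$ (preimage of a dense set under a finite surjective, hence open-closed-on-components, morphism of irreducible varieties — or apply this componentwise). The argument in the previous paragraph shows that $q^{-1}(R) \subset \overline{R'}$: indeed for $(x', y') \in q^{-1}(R)$ we produced $z'$ with $p(z') = p(y') = y$ and $(x', z') \in R'$, and running the symmetric argument (or continuing the chain with a further trivial lift) one arranges $(x', y') \in \overline{R'}$ — here one uses that $p^{-1}(\{y\})$ is finite so that the finitely many choices of $z'$ over $y$, together with $\sim$ being symmetric and transitive and with constant maps being allowed implicitly via chains, let one connect $z'$ to the specified $y'$ whenever they lie on a common lifted closure; if $X$ is connected one can also simply note $R' \supset q^{-1}(R) \cap (\text{lifted diagonal data})$ has the same dimension as $q^{-1}(R)$. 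Therefore $\overline{R'} \supseteq \overline{q^{-1}(R)} = X' \times X'$, so $X'$ is $h$-special. The delicate bookkeeping of base points and fiber compatibility in the chain-lifting is the real content; everything else is formal. If the direct chain-matching proves awkward, an alternative I would fall back on is: pass to a common finite Galois \'etale cover $X'' \to X$ dominating $X' \to X$, show $X''$ is $h$-special by a cleaner group-theoretic argument using the Galois action on fibers, and then deduce the case of $X'$ from $X''$ — though this requires a separate (easy) lemma that a quotient-type relationship preserves $h$-specialness, which is less immediate than the covering direction.
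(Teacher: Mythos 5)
Your main step --- lifting each $f_i$ along the covering $p$ using simple connectivity of $\mathbb{C}$, observing that $p^{-1}(Z_i)$ is exactly the union of the Zariski closures of the finitely many lifts of $f_i$ (because $p$ is open, so the preimage of the dense set $f_i(\mathbb{C})$ is dense in $p^{-1}(Z_i)$), and matching consecutive closures through a chosen point of $p^{-1}(Z_{i-1}\cap Z_i)$ --- is correct and is essentially the paper's argument. The paper packages the same idea through connected components $W_i$ of $Z_i^{\mathrm{norm}}\times_X X'$ rather than through direct path-lifting, but the content is identical: both establish that for every $(x,y)\in R$ there is a point of $R'$ mapping to it, i.e.\ $R\subseteq q(R')$ where $q=p\times p$.

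The gap is in your concluding step. You try to prove the stronger inclusion $q^{-1}(R)\subseteq\overline{R'}$, but your chain only connects a given $x'\in p^{-1}(x)$ to \emph{some} $z'\in p^{-1}(y)$, not to an arbitrarily prescribed $y'$; the patch you sketch ("connect $z'$ to the specified $y'$ whenever they lie on a common lifted closure") fails precisely because they need not lie on a common lifted closure, and symmetry/transitivity of $\sim$ does not help. This stronger inclusion is both unproven and unnecessary. The correct finish, which is what the paper does, uses only $R\subseteq q(R')$: since $q$ is finite, $q(\overline{R'})$ is closed and contains $R$, hence contains $\overline{R}=X\times X$; therefore $\dim\overline{R'}\geq 2\dim X'$, and since $X'$ is irreducible (the paper's standing convention) this forces $\overline{R'}=X'\times X'$. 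In other words, you should push $\overline{R'}$ forward and compare dimensions, rather than pull $R$ back and try to land inside $\overline{R'}$.
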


\begin{proof}
Let $\sim'$ be the equivalence relation on $X'$ and $R'\subset X'\times X'$ be the set defined by $R'=\{(x',y')\in X'\times X'; x'\sim'y'\}$.
Let $q:X'\times X'\to X\times X$ be the induced map.

We shall show $R\subset q(R')$.
So let $(x,y)\in R$.
Then there exists a sequence $f_1,\ldots,f_l:\mathbb C\to X$ such that 
$$x\in Z_1, Z_1\cap Z_2\not=\emptyset,\ldots,Z_{l-1}\cap Z_l\not=\emptyset, y\in Z_l,$$
where $Z_i\subset X$ is the Zariski closure of $f_i(\mathbb C)\subset X$.
Let $Z_i^{\mathrm{norm}}\to Z_i$ be the normalization.
Then we may take a connected component $W_i$ of $Z_i^{\mathrm{norm}}\times_XX'$ such that 
\begin{equation}\label{eqn:202304061}
\varphi_1(W_1)\cap \varphi_{2}(W_{2})\not=\emptyset,\ldots, \varphi_{l-1}(W_{l-1})\cap \varphi_{l}(W_{l})\not=\emptyset,
\end{equation}
where $\varphi_i:W_i\to X'$ is the natural map.
Note that the induced map $W_i\to Z_i^{\mathrm{norm}}$ is \'etale.
Hence $W_i$ is normal.
Since $W_i$ is connected, $W_i$ is irreducible (cf. \cite[\href{https://stacks.math.columbia.edu/tag/0357}{Tag 0357}]{stacks-project}). 
Since $f_i:\mathbb C\to Z_i$ is Zariski dense, we have a lift $f_i':\mathbb C\to Z_i^{\mathrm{norm}}$.
Since $W_i\to Z_i^{\mathrm{norm}}$ is finite \'etale, we may take a lift $g_i:\mathbb C\to W_i$, which is Zariki dense.
Then $\varphi_i\circ g_i:\mathbb C\to X'$ has Zariski dense image in $\varphi_i(W_i)\subset X'$.

We take $a\in W_1$ and $b\in W_l$ such that $p\circ\varphi_1(a)=x$ and $p\circ\varphi_l(b)=y$.
Then $q((\varphi_1(a),\varphi_l(b)))=(x,y)$.
We have $\varphi_1(a)\in\varphi_1(W_1)$ and $\varphi_l(b)\in\varphi_l(W_l)$.
Hence by \eqref{eqn:202304061}, we have $(\varphi_1(a),\varphi_l(b))\in R'$.
Thus $R\subset q(R')$.

Now let $\overline{R'}\subset X'\times X'$ be the Zariski closure.
To show $\overline{R'}= X'\times X'$, we assume contrary that $\overline{R'}\not= X'\times X'$.
Then $\mathrm{dim}\overline{R'}<2\dim X'$.
This contradicts to $X\times X=\overline{R}\subset q(\overline{R'})$.
Hence $\overline{R'}= X'\times X'$.
\end{proof}

\begin{lem}\label{lem:202304063}
Let $X$ be an $h$-special quasi-projective variety.
Let $S$ be a quasi-projective variety and let $p:X\to S$ be a dominant morphism.
Then $S$ is $h$-special.
\end{lem}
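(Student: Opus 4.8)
\textbf{Proof proposal for \cref{lem:202304063}.}

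The plan is to transport chains of entire curves through the dominant morphism $p:X\to S$ and to use the Zariski density of $R\subset X\times X$ to force the Zariski density of the analogous relation $R_S\subset S\times S$. First I would recall the setup: the equivalence relation $\sim$ on $X$ is generated by declaring $x\sim y$ whenever there is a finite chain of entire curves $f_1,\dotsc,f_l:\mathbb C\to X$ with Zariski closures $Z_1,\dotsc,Z_l$ satisfying $x\in Z_1$, $Z_i\cap Z_{i+1}\neq\emptyset$, and $y\in Z_l$, and by hypothesis $R=\{(x,y):x\sim y\}$ is Zariski dense in $X\times X$. The key elementary observation is that composing with $p$ sends such a chain in $X$ to a chain in $S$: if $f_i:\mathbb C\to X$ has Zariski closure $Z_i$, then $p\circ f_i:\mathbb C\to S$ has Zariski closure equal to $\overline{p(Z_i)}$ (since $p(Z_i)$ is constructible and irreducible, its closure is irreducible and contains the Zariski-dense set $p(f_i(\mathbb C))$). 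Moreover $Z_i\cap Z_{i+1}\neq\emptyset$ implies $\overline{p(Z_i)}\cap\overline{p(Z_{i+1})}\supset p(Z_i\cap Z_{i+1})\neq\emptyset$. Hence $x\sim_X y$ implies $p(x)\sim_S p(y)$, i.e.\ $(p\times p)(R)\subset R_S$, where $R_S\subset S\times S$ is the relation defining $h$-specialness of $S$.

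Next I would conclude by a dimension/density argument exactly as in the proof of \cref{lem:202304061}. Since $p:X\to S$ is dominant, the induced map $p\times p:X\times X\to S\times S$ is dominant, so $(p\times p)(X\times X)$ is Zariski dense in $S\times S$; consequently its subset-closure statement gives $\overline{(p\times p)(R)}^{\mathrm{Zar}}\supseteq\overline{(p\times p)(\overline{R}^{\mathrm{Zar}})}^{\mathrm{Zar}}$. Using $\overline{R}^{\mathrm{Zar}}=X\times X$ and dominance of $p\times p$, we get that $(p\times p)(R)$ is Zariski dense in $S\times S$. Since $(p\times p)(R)\subset R_S$, the set $R_S$ is Zariski dense in $S\times S$, which is precisely the definition of $S$ being $h$-special.

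The only genuinely delicate point is handling the definition of $h$-specialness of $S$ when $S$ is merely quasi-projective and possibly singular: \cref{defn:20230407} as stated requires a smooth quasi-projective variety. If the paper intends \cref{defn:20230407} to apply to normal or general quasi-projective $S$, the argument above goes through verbatim; otherwise one should first replace $S$ by a smooth model $S'\to S$ (or replace $X$ by a desingularization and note that $h$-specialness is insensitive to birational modification, using that entire curves lift through proper birational morphisms and \cref{lem:fun} for the relevant surjectivity of $\pi_1$), and then apply the chain-transport argument to the smooth model. I expect this bookkeeping about which category ($h$-special, smooth vs.\ normal vs.\ quasi-projective) is the real obstacle; the transport of chains itself is routine once one checks the Zariski-closure compatibility $\overline{p(f_i(\mathbb C))}^{\mathrm{Zar}}=\overline{p(Z_i)}^{\mathrm{Zar}}$, which follows from irreducibility of closures of constructible irreducible sets.
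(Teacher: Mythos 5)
Your proposal is correct and follows essentially the same route as the paper: transport each chain of entire curves through $p$ using $\overline{p\circ f_i(\mathbb C)}^{\mathrm{Zar}}=\overline{p(Z_i)}$, deduce $(p\times p)(R)\subset R_S$, and conclude Zariski density of $R_S$ from $\overline{R}=X\times X$ and dominance of $p\times p$. The smoothness caveat you raise is real but harmless — the paper applies \cref{defn:20230407} to singular quasi-projective varieties without comment (see \Cref{ex:20230418}), and the argument does not use smoothness anywhere.
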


\begin{proof}
Let $\sim_S$ be the equivalence relation on $S$ and $R_S\subset S\times S$ be the set defined by $R_S=\{(x',y')\in S\times S; x'\sim_Sy'\}$.
Let $q:X\times X\to S\times S$ be the induced map.
Then $q$ is dominant.

We shall show $q(R)\subset R_S$.
Indeed let $q((x,y))\in q(R)$, where $(x,y)\in R$.
Then there exists a sequence $f_1,\ldots,f_l:\mathbb C\to X$ such that 
$$x\in Z_1, Z_1\cap Z_2\not=\emptyset,\ldots,Z_{l-1}\cap Z_l\not=\emptyset, y\in Z_l,$$
where $Z_i\subset X$ is the Zariski closure of $f_i(\mathbb C)\subset X$.
Then the Zariski closure of $p\circ f_i:\mathbb C\to S$ is $\overline{p(Z_i)}$.
We have
$$
p(x)\in \overline{p(Z_1)}, \overline{p(Z_1)}\cap \overline{p(Z_2)}\not=\emptyset,\ldots,\overline{p(Z_{l-1})}\cap \overline{p(Z_l)}\not=\emptyset, p(y)\in \overline{p(Z_l)}.
$$
Hence $(p(x),p(y))\in R_S$.
Thus $q(R)\subset R_S$.
Hence $q(\overline{R})\subset \overline{R_S}$.
By $\overline{R}=X\times X$, we have $\overline{R_S}=S\times S$, for $q$ is dominant.
\end{proof}

\begin{lem}\label{lem:20230407}
Let $X$ be a smooth, $h$-special quasi-projective variety and let $p:X'\to X$ be a proper birational morphism from a quasi-projective variety $X'$.
Then $X'$ is $h$-special.
\end{lem}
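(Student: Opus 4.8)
\textbf{Proof plan for \cref{lem:20230407}.}
The statement asks that if $X$ is a smooth $h$-special quasi-projective variety and $p:X'\to X$ is a proper birational morphism, then $X'$ is $h$-special. The natural approach is to compare the equivalence relations $\sim$ on $X$ and $\sim'$ on $X'$ via the map $p$, exactly in the spirit of the proofs of \cref{lem:202304061}, \cref{lem:202304063}, and the preceding lemmas. Here, however, the key asymmetry is that $p$ is \emph{not} \'etale; it contracts some exceptional locus. So instead of lifting entire curves from $X$ to $X'$ through a finite cover (as in \cref{lem:202304061}), I would lift them through the birational inverse and control the indeterminacy.

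First I would set up notation: let $R'\subset X'\times X'$ be the set $\{(x',y'):x'\sim' y'\}$, and let $q:X'\times X'\to X\times X$ be $p\times p$, which is proper and birational, hence dominant with image all of $X\times X$. The plan is to show $\overline{R'}=X'\times X'$ by producing, over a dense open subset, preimages of points related under $\sim$. Concretely, let $U\subset X$ be a dense Zariski open set over which $p$ is an isomorphism, with inverse $s:U\to X'$. Given a generic pair $(x,y)\in R$ (generic in the sense that $x,y\in U$ and the connecting entire curves $f_1,\dots,f_l:\mathbb C\to X$ can be chosen so that each Zariski closure $Z_i$ meets $U$ and the intersection points $Z_{i-1}\cap Z_i$ can be taken in $U$), I want to lift each $f_i$ to an entire curve in $X'$. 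Since $Z_i\not\subset X\setminus U$, the composition $f_i$ has Zariski-dense image in $Z_i$, and $p$ restricts to a birational morphism from the strict transform $Z_i'\subset X'$ of $Z_i$ onto $Z_i$; because $f_i(\mathbb C)$ is dense and the indeterminacy locus of $Z_i\dashrightarrow Z_i'$ is a proper closed subset, after possibly reparametrizing we get that $f_i$ avoids (the image of) this locus generically, so $f_i$ lifts to $g_i:\mathbb C\to X'$ with Zariski closure $Z_i'$. The connecting-intersection conditions are preserved because the chosen intersection points lie in $U$, where $p$ is an isomorphism, so $Z_{i-1}'\cap Z_i'\neq\varnothing$. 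This yields $(s(x),s(y))\in R'$, i.e., $\{(s(x),s(y)):(x,y)\in R,\ x,y\in U,\ \text{connecting data in }U\}\subset R'$.

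The remaining step is a dimension/density argument: the set of such ``good'' pairs $(x,y)\in R$ is Zariski dense in $X\times X$ (since $R$ is Zariski dense by hypothesis, and the further genericity conditions only remove a non-dense locus — this needs a short justification using that $U$ is dense and that the connecting entire curves can always be perturbed to meet $U$, which follows because $Z_i$ irreducible and $Z_i\cap U$ dense in $Z_i$). Its image under $(s,s)$ is then Zariski dense in $s(U)\times s(U)$, which is dense in $X'\times X'$ since $s(U)=p^{-1}(U)$ is dense. Hence $\overline{R'}\supset \overline{s(U)\times s(U)}=X'\times X'$, so $X'$ is $h$-special.

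\textbf{Main obstacle.} The delicate point is the lifting step: making precise that an entire curve $f_i:\mathbb C\to X$ with Zariski-dense image in an irreducible subvariety $Z_i$ meeting $U$ can be chosen to lift to $X'$, i.e., to avoid the locus where $p^{-1}$ is undefined on $Z_i$. One must be slightly careful because $p^{-1}$ may fail to be a morphism on all of $Z_i$, not merely on $Z_i\setminus U$; but since $Z_i'\to Z_i$ is proper birational with $Z_i'$ containing a dense open isomorphic to $Z_i\cap U$, and $f_i(\mathbb C)\cap U\neq\varnothing$ with $f_i(\mathbb C)$ dense, the meromorphic map $Z_i\dashrightarrow Z_i'$ composed with $f_i$ extends to a holomorphic map $\mathbb C\to Z_i'$ by properness of $Z_i'\to Z_i$ and the fact that $\mathbb C$ is a smooth curve (any meromorphic map from a smooth curve to a projective variety is a morphism); alternatively one observes directly that $f_i$ avoids the (codimension $\geq 1$) indeterminacy locus after a generic translation. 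This subtlety — and checking the genericity conditions on the connecting data genuinely leave a Zariski-dense subset of $R$ — is where the real work lies; everything else is formal manipulation parallel to the earlier lemmas.
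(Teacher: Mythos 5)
Your approach has a genuine gap at the exceptional locus, and it occurs in two places. First, the chain connecting $x$ to $y$ is \emph{given} by the equivalence relation; you are not free to "perturb" it. A link $Z_i$ may be entirely contained in $p(\mathrm{Ex}(p))$ (an entire curve lying inside the blow-up center, say), in which case the meromorphic map $p^{-1}$ is not defined anywhere on $Z_i$ and your lifting argument does not start. More seriously, even when every $Z_i$ meets the isomorphism locus $U$ and lifts to a strict transform $Z_i'$, the intersection $Z_{i-1}\cap Z_i$ is a fixed algebraic set that may be entirely contained in $X\setminus U$; above such a point the strict transforms $Z_{i-1}'$ and $Z_i'$ can hit disjoint subsets of the exceptional fiber, so $Z_{i-1}'\cap Z_i'=\varnothing$ and the chain breaks in $X'$. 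Your fallback — that the "good" pairs form a Zariski dense subset of $R$ — is not justified: the genericity conditions are conditions on the connecting data, not on $(x,y)$, and there is no reason a pair $(x,y)\in R$ with $x,y\in U$ admits an alternative chain avoiding the bad locus.

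The paper's proof supplies exactly the two ingredients you are missing. It first reduces, via Hironaka, to the case where $p$ is a single blow-up along a smooth center $C$ (factoring a general proper birational $p$ as a tower of smooth blow-ups followed by a dominant morphism to $X'$, and then invoking \cref{lem:202304063}). For a smooth blow-up: (i) \emph{every} entire curve lifts, including those with image in $C$, because $p^{-1}(C)\to C$ is a projectivized bundle $P(E)$ and $f^{*}E$ is trivial on $\mathbb C$, so a nonvanishing section gives a lift; and (ii) when an intersection point $z_i\in Z_i\cap Z_{i+1}$ lies in $C$, the broken chain is repaired by inserting an extra entire curve that is Zariski dense in the fiber $p^{-1}(z_i)\cong\mathbb P^{d}$, whose Zariski closure $W_i=p^{-1}(z_i)$ meets both $Z_i'$ and $Z_{i+1}'$. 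This gives $R\subset q(R')$ for \emph{all} pairs, with no density argument over a restricted set. Note that the reduction to smooth blow-ups is essential for step (ii): for a general proper birational morphism the exceptional fibers need not visibly contain Zariski dense entire curves, which is why one cannot run the reconnection argument directly on $p$.
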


\begin{rem}
We can not drop the smoothness assumption for $X$.
See \Cref{ex:20230418} below.
\end{rem}

\begin{proof}[Proof of \cref{lem:20230407}]
\noindent {\em Step 1. } 
In this step, we assume that $p:X'\to X$ is a blow-up along a smooth subvariety $C\subset X$.
We first prove that every entire curve $f:\mathbb C\to X$ has a lift $f':\mathbb C\to X'$, i.e., $p\circ f'=f$.
The case $f(\mathbb C)\not\subset C$ is well-known, so we assume that $f(\mathbb C)\subset C$.
There exists a vector bundle $E\to C$ so that its projectivization $P(E)\to C$ is isomorphic to $p^{-1}(C)\to C$.
The pull-back $f^*E\to \mathbb C$ is isomorphic to the trivial line bundle over $\mathbb C$.
Thus we may take a non-zero section of $f^*E\to \mathbb C$.
This yields a holomorphic map $f':\mathbb C\to P(E)$.
Hence $f$ has a lift $f':\mathbb C\to X'$.

Now let $\sim'$ be the equivalence relation on $X'$ and $R'\subset X'\times X'$ be the set defined by $R'=\{(x',y')\in X'\times X'; x'\sim'y'\}$.
Let $q:X'\times X'\to X\times X$ be the induced map.

To show $R\subset q(R')$, we take $(x,y)\in R$.
Then there exists a sequence $f_1,\ldots,f_l:\mathbb C\to X$ such that 
$$x\in Z_1, Z_1\cap Z_2\not=\emptyset,\ldots,Z_{l-1}\cap Z_l\not=\emptyset, y\in Z_l,$$
where $Z_i\subset X$ is the Zariski closure of $f_i(\mathbb C)\subset X$.
For each $f_i:\mathbb C\to X$, we take a lift $f_i':\mathbb C\to X'$.
Let $Z_i'\subset X'$ be the Zariski closure of $f_i'(\mathbb C)\subset X'$.
Then the induced map $Z_i'\to Z_i$ is proper surjective.

For each $i=1,2,\ldots,l-1$, we take $z_i\in Z_i\cap Z_{i+1}$.
We define a holomorphic map $\varphi_i:\mathbb C\to X'$ as follows.
If $z_i\not\in C$, then $p^{-1}(z_i)$ consists of a single point $z_i'\in X'$ and thus $z_i'\in Z_i'\cap Z_{i+1}'$.
In particular $Z_i'\cap Z_{i+1}'\not=\emptyset$.
In this case, we define $\varphi_i$ to be the constant map such that $\varphi_i(\mathbb C)=\{z_i'\}$.
If $z_i\in C$, we have $p^{-1}(z_i)=\mathbb P^d$, where $d=\mathrm{codim}(C,X)-1$.
We have $Z_i'\cap p^{-1}(z_i)\not=\emptyset$ and $Z_{i+1}'\cap p^{-1}(z_i)\not=\emptyset$.
In this case, we take $\varphi_i:\mathbb C\to p^{-1}(z_i)$ so that the image is Zariski dense in $p^{-1}(z_i)$.
We define $W_i\subset X'$ to be the Zariski closure of $\varphi_i(\mathbb C)\subset X'$.
Then we have
$$
Z_1'\cap W_1\not=\emptyset, W_1\cap Z_2'\not=\emptyset, Z_2'\cap W_2\not=\emptyset, \ldots, Z_{l-1}'\cap W_{l-1}\not=\emptyset, W_{l-1}\cap Z_l'\not=\emptyset.
$$
We take $a\in Z_1'$ and $b\in Z_l'$ such that $p(a)=x$ and $p(b)=y$.
Then $q((a,b))=(x,y)$ and $(a,b)\in R'$.
Thus $R\subset q(R')$.
This induces $\overline{R'}= X'\times X'$ as in the proof of \cref{lem:202304061}.

\medskip

\noindent {\em Step 2. } 
We consider the general proper birational morphism $p:X'\to X$.  
 Then we can apply a theorem of Hironaka 
 (cf. \cite[Corollary 3.18]{Kol07}) 
to $p^{-1}:X\dashrightarrow X'$ to conclude 
there exists a sequence of blowing-ups
$$
X_k\overset{\psi_k}{\longrightarrow}X_{k-1}\overset{\psi_{k-1}}{\longrightarrow}X_{k-2}\longrightarrow\cdots\longrightarrow X_1\overset{\psi_1}{\longrightarrow}X_0=X$$
such that
\begin{itemize}
\item
each $\psi_i:X_{i}\to X_{i-1}$ is a blow-up along a smooth subvariety of $X_{i-1}$, and
\item
there exists a morphism $\pi:X_k\to X'$ such that $p\circ\pi=\psi_1\circ\psi_2\circ\cdots\circ\psi_k$.
\end{itemize}
Then by the step 1, each $X_i$ is $h$-special.
In particular, $X_k$ is $h$-special.
Thus by \cref{lem:202304063}, $X'$ is $h$-special.
\end{proof}

\begin{lem}\label{lem:202304065}
If a positive dimensional quasi-projective variety $X$ is pseudo Brody hyperbolic, then $X$ is not $h$-special.
\end{lem}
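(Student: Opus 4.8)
\textbf{Proof plan for \cref{lem:202304065}.}

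The statement is that a positive-dimensional quasi-projective variety $X$ that is pseudo Brody hyperbolic cannot be $h$-special. By definition, $X$ pseudo Brody hyperbolic means there exists a proper Zariski closed subset $Z\subsetneqq X$ such that every non-constant holomorphic map $f:\bC\to X$ satisfies $f(\bC)\subset Z$. The plan is to show that under this hypothesis the equivalence relation $R\subset X\times X$ from \cref{defn:20230407} is contained in $(Z\times Z)\cup \Delta$, where $\Delta\subset X\times X$ is the diagonal, and since $\dim((Z\times Z)\cup\Delta)<2\dim X$ when $\dim X>0$, this forces $R$ not to be Zariski dense.

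First I would unwind the definition: if $(x,y)\in R$ with $x\neq y$, there is a chain of holomorphic maps $f_1,\ldots,f_l:\bC\to X$ with Zariski closures $Z_i$ of $f_i(\bC)$ satisfying $x\in Z_1$, $Z_i\cap Z_{i+1}\neq\emptyset$ for $1\le i\le l-1$, and $y\in Z_l$. If some $f_i$ is constant, then $Z_i$ is a single point and this point lies in $Z_{i-1}\cap Z_{i+1}$ (or is $x$, resp. $y$), so the chain can be shortened or the point absorbed; thus after simplification we may assume every $f_i$ appearing is non-constant, or the chain has length one with $f_1$ constant (giving $x=y$). For every non-constant $f_i$, pseudo Brody hyperbolicity gives $f_i(\bC)\subset Z$, hence $Z_i=\overline{f_i(\bC)}^{\mathrm{Zar}}\subset Z$. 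In particular $x\in Z_1\subset Z$ and $y\in Z_l\subset Z$, so $(x,y)\in Z\times Z$.

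Consequently $R\subset (Z\times Z)\cup\Delta$. Since $Z\subsetneqq X$ is a proper Zariski closed subset, $\dim(Z\times Z)\le 2\dim X-2$, and since $\dim X>0$ we have $\dim\Delta=\dim X<2\dim X$; hence $(Z\times Z)\cup\Delta$ is a proper Zariski closed subset of $X\times X$. Therefore $R$ is not Zariski dense in $X\times X$, which means $X$ is not $h$-special. The only subtlety — and the single place needing a careful word — is the reduction to chains with non-constant maps: one should note that inserting or deleting a constant map $f_i$ with $Z_i=\{p\}$ does not change whether $p$ lies in the relevant intersections, so it is harmless to discard all constant links, after which every surviving $Z_i$ is the Zariski closure of a non-constant entire curve and the argument above applies verbatim. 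No genuine obstacle is expected; the proof is a direct consequence of the definitions together with the dimension count.
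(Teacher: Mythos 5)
Your proof is correct and takes essentially the same route as the paper: the paper fixes $x\notin Z$ and shows inductively along the chain that every $Z_i$ collapses to $\{x\}$, yielding $R\subset (X\times Z)\cup(Z\times X)\cup\Delta$, while you prune the constant links to get the (slightly sharper) containment $R\subset (Z\times Z)\cup\Delta$; both conclude with the same dimension count. No gaps.
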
 
 \begin{proof}
  we take a proper Zariski closed subset $E\subsetneqq X$ such that every non-constant holomorphic map $f:\mathbb C\to X$ satisfies $f(\mathbb C)\subset E$.
Let $x\in X$ satisfies $x\not\in E$.
Assume that $y\in X$ satsifies $x\sim y$.
Then there exists a sequence $f_1,\ldots,f_l:\mathbb C\to X$ such that 
$$x\in Z_1, Z_1\cap Z_2\not=\emptyset,\ldots,Z_{l-1}\cap Z_l\not=\emptyset, y\in Z_l,$$
where $Z_i\subset X$ is the Zariski closure of $f_i(\mathbb C)\subset X$.
Then $f_1$ is constant map and $Z_1=\{x\}$.
By $Z_1\cap Z_2\not=\emptyset$, we have $x\in Z_2$.
This yields that $f_2$ is constant and $Z_2=\{x\}$.
Similarly, we have $Z_3=\cdots=Z_l=\{ x\}$.
Hence $y=x$.
Thus we have $R\subset (X\times E)\cup (E\times X)\cup \Delta$, where $\Delta\subset X\times X$ is the diagonal.
Hence $R\subset X\times X$ is not Zariski dense. 
\end{proof}

\begin{cor}[$\supsetneqq$ \cref{main}]\label{cor:202304071}	 	
 	Let $X$ be a  complex   normal quasi-projective  
	variety and let $G$ be a semisimple algebraic group over $\bC$. 
 If  $\varrho:\pi_1(X)\to G(\bC)$ is a Zariski dense representation, then there  exist
 a  finite \'etale cover \(\nu:\widehat{X}\to X\), a birational and proper morphism \(\mu:\widehat{X}'\to \widehat{X}\), a dominant morphism $f:\widehat{X}'\to Y$  with connected general fibers, and a   big   representation \(\tau : \pi_{1}(Y) \to G(\bC)\)  such that
 \begin{itemize}
 	\item   \(f^{\ast} \tau = (\nu\circ\mu)^{\ast}\varrho\). 
 	\item There is a proper Zariski closed subset $Z\subsetneqq Y$ such that any closed   subvariety of $Y$ not contained in $Z$ is  of log general type.  
 	\item  $Y$ is pseudo Picard hyperbolic, and in particular pseudo Brody hyperbolic.  
 \end{itemize}  
Specifically,   $X$ is not weakly special and does not contain Zariski dense entire curves.  Furthermore, if $X$ is assumed to be smooth, then it cannot be $h$-special.
 \end{cor}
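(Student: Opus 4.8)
The plan is to derive \cref{cor:202304071} from \cref{thm:20220819} (i.e. \cref{main2}) by combining it with the factorisation result \cref{lem:kollar} and the elementary properties of $h$-special and weakly special varieties established in \cref{sec:20230406}. First I would reduce to a big representation: applying \cref{lem:kollar} to $\varrho:\pi_1(X)\to G(\bC)$ (legitimate since $G$ is an algebraic group over $\bC$, a field of characteristic zero) produces a finite \'etale cover $\nu:\widehat X\to X$, a proper birational modification $\mu:\widehat X'\to\widehat X$, a dominant morphism $f:\widehat X'\to Y$ with connected general fibers, and a big representation $\tau:\pi_1(Y)\to G(\bC)$ with $f^*\tau=(\nu\circ\mu)^*\varrho$. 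Since the image of $f_*:\pi_1(\widehat X')\to\pi_1(Y)$ has finite index (\cref{lem:finiteindex}) and $\varrho$ is Zariski dense, $\tau$ is still Zariski dense. Now $Y$ is a quasi-projective manifold carrying a big and Zariski dense representation into the semisimple group $G$, so \cref{thm:20220819} applies (with $\sigma=\mathrm{id}$): there is a proper Zariski closed $Z\subsetneqq Y$ such that any closed subvariety not contained in $Z$ is of log general type, and $Y$ is pseudo Picard hyperbolic, hence pseudo Brody hyperbolic by \cref{lem:inclusion}. This establishes the three bulleted assertions.

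Next I would deduce that $X$ is not weakly special. By \cref{def:special}, if $X$ were weakly special, then for the finite \'etale cover $\widehat X\to X$ and the proper birational modification $\widehat X'\to\widehat X$, the dominant morphism $f:\widehat X'\to Y$ with connected general fibers would force $Y$ to be a positive-dimensional quasi-projective normal variety that is \emph{not} of log general type; but $Y$ is of log general type (take $V=Y$ in the second bullet, noting $\dim Y>0$ since $\tau$ being big and Zariski dense into a positive-dimensional group forces $\pi_1(Y)$ infinite, hence $\dim Y>0$). This is a contradiction, so $X$ is not weakly special. For the statement that $X$ contains no Zariski dense entire curve: any entire curve $g:\bC\to X$ lifts along the finite \'etale cover $\nu$ to $\widehat X$ (after possibly reparametrizing $\bC$, which is simply connected, so the lift exists), then lifts along the proper birational $\mu$ to $\widehat X'$ (using that $\mu$ is proper birational and $\bC$ has no non-constant bounded-from-below... — more simply, one may use that $\mu$ is an isomorphism over a dense open set and that a Zariski dense entire curve meets it, or invoke the valuative/properness argument as in Step 1 of the proof of \cref{lem:20230407}); composing with $f$ gives an entire curve in $Y$, which must lie in $Z\subsetneqq Y$ since $Y$ is pseudo Brody hyperbolic. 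Therefore the image of $g$ in $Y$ is not Zariski dense, and since $f\circ(\text{lift of }g)$ is non-dominant while the lift of $g$ is Zariski dense in its image, the image of the lift of $g$ is a proper subvariety of $\widehat X'$; pushing down by the generically finite $\nu\circ\mu$ shows $g(\bC)$ is not Zariski dense in $X$.

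Finally, for the added statement: if $X$ is smooth and were $h$-special, I would reach a contradiction with the non-hyperbolicity conclusions. By \cref{lem:202304061} the finite \'etale cover $\widehat X$ is $h$-special, by \cref{lem:20230407} (applicable since $\widehat X$ is smooth) the proper birational modification $\widehat X'$ is $h$-special, and by \cref{lem:202304063} the dominant image $Y$ is $h$-special. But $Y$ is positive-dimensional and pseudo Brody hyperbolic, contradicting \cref{lem:202304065}. Hence $X$ is not $h$-special. The main obstacle I anticipate is the careful handling of the lifting of entire curves through the proper birational morphism $\mu$ and the generically finite morphism $\nu\circ\mu$ when concluding non-Zariski-density of entire curves in $X$ itself (as opposed to in $Y$); however this is purely a matter of invoking the lifting properties already used elsewhere in the paper (cf. the proof of \cref{lem:20230407} Step 1 for blow-ups, and \cref{lem:fun} for the étale cover), and the $h$-special route via \cref{lem:202304065} in fact gives the cleanest argument, so I would emphasize that.
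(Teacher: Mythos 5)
Your proposal is correct and follows essentially the same route as the paper's proof: apply \cref{lem:kollar}, note $\tau$ remains Zariski dense via \cref{lem:finiteindex}, invoke \cref{main2} for the three bullets, read off non-weak-specialness from the definition, lift a hypothetical Zariski dense entire curve to $\widehat X'$ and push it to $Y$ for a contradiction, and run the chain \cref{lem:202304061}, \cref{lem:20230407}, \cref{lem:202304063}, \cref{lem:202304065} for the $h$-special claim. The only differences are cosmetic (you phrase the entire-curve step contrapositively and are more explicit about the lifting through $\mu$ and about $\dim Y>0$, both of which the paper leaves implicit).
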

 \begin{proof} 
 	By \cref{lem:kollar}, there exists a commutative diagram of quasi-projective varieties
 	\[
 	\begin{tikzcd}
 		\widehat{X}' \arrow[r, "\mu"] \arrow[d, "f"] & \widehat{X} \arrow[r, "\nu"] & X \\
 		Y
 	\end{tikzcd}
 	\]
 	where \(\nu\) is finite \'etale, \(\mu\) is birational and proper, and a big representation \(\tau : \pi_{1}(Y) \to G(\bC)\) such that \(f^{\ast} \tau = (\nu\circ\mu)^{\ast}\varrho\). Moreover, $\widehat{X}'$ and $Y$ are smooth. 
 	Since $\varrho$ is Zariski dense, so is $(\nu \circ \mu)^*\varrho$ for $(\nu \circ \mu)_*(\pi_1(\widehat{X}'))$ is a finite index subgroup in $\pi_1(X)$. Thus, since the image $\tau(\pi_1(Y))$ includes that of $f^*\tau=(\nu\circ\mu)^*\varrho$, it follows that $\tau$ is also Zariski dense. By \cref{main2}, $Y$ is of log general type and pseudo-Picard hyperbolic, implying that $X$ is not weakly special. If there is a Zariski dense entire curve $\gamma:\bC\to X$, it can be lifted to $\gamma':\bC\to \widehat{X}'$, from which $f\circ\gamma':\bC\to Y$ would be a Zariski dense entire curve due to $f$ being dominant. However, this leads to a contradiction, thereby indicating that $X$ does not admit Zariski dense entire curves.
 	
 	Assuming $X$ is smooth, let us suppose that $X$ is $h$-special. 
By \cref{lem:202304061}, $\widehat{X}$ is $h$-special.
Moreover $\widehat{X}$ is smooth.
Hence by \cref{lem:20230407}, $\widehat{X}'$ is $h$-special.
Hence by \cref{lem:202304063}, $Y$ is $h$-special.
This contradicts to \cref{lem:202304065}.
Hence $X$ is not $h$-special.
 \end{proof}

\begin{example}\label{ex:20230418}
There exists a singular, normal projective surface $X$ such that
\begin{itemize}
\item $X$ is not weakly special,
\item $X$ does not contain Zariski dense entire curve,
\item $X$ is $h$-special and $H$-special,
\item
there exists a proper birational modification $X'\to X$ such that $X'$ is neither $h$-special nor $H$-special.
\end{itemize}

The construction is as follows.
Let $C\subset \mathbb P^2$ be a smooth projective curve of genus greater than one.
Then $C$ is of general type and hyperbolic.
Let $p\in \mathbb P^3$ be a point and $\varphi:\mathbb P^3\backslash\{p\}\to\mathbb P^2$ be the projection from the point $p\in\mathbb P^3$.
Namely for each $y\in\mathbb P^2$, $\varphi^{-1}(y)\cup\{p\}\subset \mathbb P^3$ is a projective line  $\mathbb P^1\subset \mathbb P^3$ passing through $p\in\mathbb P^3$.
We denote this line by $\ell_y\in\mathbb P^3$.
Set $X=\varphi^{-1}(C)\cup\{p\}$, which is the cone over $C$.
Then $X$ is projective and normal.

Let $X'=\mathrm{Bl}_pX$ be the blow-up of $X$ at $p\in X$.
Then we have a morphism $\varphi':X'\to C$.
Since $C$ is of general type, this shows that $X$ is not weakly special.
If $X$ contains Zariski dense entire curve $f:\mathbb C\to X$, it lifts as a Zariski dense entire curve $f':\mathbb C \to X'$.
Hence $\varphi'\circ f':\mathbb C\to C$ becomes a non-constant holomorphic map, which is a contradiction.
Hence $X$ does not contain Zariski dense entire curve.

Note that $X=\bigcup_{y\in C}\ell_y$.
Let $x,x'\in X$ be two points.
We take $y,y'\in C$ such that $x\in\ell_{y}$ and $x'\in\ell_{y'}$.
Then we have $d_X(x,p)=d_X(x',p)=0$, where $d_X$ is the Kobayashi pseudo-distance on $X$.
Hence $d_X(x,x')=0$.
This shows that $X$ is $H$-special.
There exist entire curves $f:\mathbb C\to\ell_y$ and $f':\mathbb C\to \ell_{y'}$.
By $\ell_y\cap\ell_{y'}=\{p\}$, we conclude that $X$ is $h$-special.

Now the existence of the morphism $\varphi':X'\to C$ shows that $X'$ is not $H$-special.
Since $C$ is not $h$-special, \cref{lem:202304063} shows that $X'$ is not $h$-special.
\end{example}

\section{Fundamental groups of special varieties}\label{sec:VN}
In this section we study fundamental groups of special varieties. As we will see in \Cref{example}, we construct a special and $h$-special quasi-projective  manifold whose fundamental group is linear nilpotent but not virtually abelian.  Hence  \cref{conj:Campana}  is modified as follows. 
\begin{conjecture}\label{conj:revised2}
	A   special or $h$-special quasi-projective manifold has \emph{virtually nilpotent} fundamental group.  
\end{conjecture}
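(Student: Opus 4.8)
Since \cref{conj:revised2} is stated as a conjecture — a revision of Campana's abelianity conjecture forced by the example of \cref{example} — what follows is a strategy rather than a complete argument, and I will indicate precisely where the genuinely open input lies.

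The linear part of the statement is already in hand: \cref{main5} shows that for $X$ special or $h$-special every finite-dimensional complex representation $\varrho:\pi_1(X)\to\mathrm{GL}_N(\mathbb C)$ has virtually nilpotent image, with the solvable case isolated in \cref{main:geomety group} (a Zariski-dense representation into a connected solvable $\mathbb C$-group lands in a nilpotent group) and the semisimple case ruled out by the non-hyperbolicity input of \cref{main2} through the factorisation lemma \cref{lem:kollar}. The task is therefore purely group-theoretic: to pass from ``every linear quotient of $\pi_1(X)$ is virtually nilpotent'' to ``$\pi_1(X)$ is virtually nilpotent.'' The plan is to reorganise this around the proalgebraic completion. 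Let $G=\pi_1(X)^{\mathrm{alg}}$ be the proalgebraic (equivalently, relative Malcev) completion over $\mathbb C$, with its tautological Zariski-dense homomorphism $\iota:\pi_1(X)\to G(\mathbb C)$, and let $1\to G^{u}\to G\to G^{\mathrm{red}}\to 1$ be the pro-unipotent radical extension. Every semisimple finite-dimensional representation of $\pi_1(X)$ factors through $G^{\mathrm{red}}$; by \cref{main5} such a representation has finite image, so that, up to replacing $X$ by a finite \'etale cover, $G^{\mathrm{red}}$ is a pro-torus and hence $G$ is pro-nilpotent. Consequently $\iota(\pi_1(X))$ — the largest quotient of $\pi_1(X)$ visible through finite-dimensional linear representations — is virtually nilpotent, and the only remaining obstruction to \cref{conj:revised2} is the kernel $\ker\iota$.

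The main obstacle, and the reason the conjecture remains open, is exactly the control of $\ker\iota$: one needs to know that $\pi_1$ of a special (or $h$-special) quasi-projective manifold is \emph{residually linear} — for instance residually finite, or at least residually (torsion-free nilpotent) — and no current technique forces this. Fundamental groups of smooth projective varieties can fail to be residually finite, and although no known counterexample is special, nothing a priori excludes one. A complete proof would thus require either an independent structural theorem producing enough finite, or nilpotent, quotients of $\pi_1(X)$ directly from Campana's orbifold-fibration machinery — a nilpotent analogue of the linear Shafarevich reduction — or a geometric argument bypassing linearity altogether: one would show that, up to finite \'etale cover and birational modification, a special manifold is fibred by semi-abelian varieties over a special base of strictly smaller dimension, with normal subgroup the $\pi_1$ of a general fibre and quotient the $\pi_1$ of the base, reducing \cref{conj:revised2} to itself in lower dimension. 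This last route is a direct extension of \cref{char abelian} (proved in \cref{thm:20230510} under a reductivity hypothesis on a big representation), and I expect the semi-abelian fibration step over the lower-dimensional special base, together with the verification that a nilpotent-by-nilpotent extension arising this way is again virtually nilpotent, to be the crux of the induction.
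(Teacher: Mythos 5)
The statement you are proving is a conjecture in the paper: the authors prove only its linear shadow (\cref{thm:VN}, i.e.\ \cref{main5}), and \Cref{example} is what forces ``abelian'' to be weakened to ``nilpotent''. So there is no proof in the paper to compare against, and your decision to present a strategy while flagging the open core is the right one; your summary of how the linear case is proved (semisimple targets excluded via \cref{main2} and the factorisation \cref{lem:kollar}, solvable targets handled by \cref{thm:202210123}) matches the paper.

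However, the reduction you describe as ``already in hand'' — everything up to ``the only remaining obstruction is $\ker\iota$'' — has genuine gaps. First, \cref{main5} does not give \emph{finite} image for semisimple representations: for reductive $\varrho$ it gives virtually \emph{abelian} image, and this is sharp, since a special abelian variety admits Zariski-dense representations into tori with infinite image (\cref{rem:sharp}). Consequently $G^{\mathrm{red}}$ is at best pro-(virtually abelian), and the finite \'etale cover needed to remove the finite part depends on the representation, so no single cover makes $G^{\mathrm{red}}$ a pro-torus. Second, even granting a pro-torus reductive quotient, an extension of a torus by a (pro-)unipotent group need not be nilpotent (Borel subgroups); ``hence $G$ is pro-nilpotent'' requires triviality of the torus action on the pro-unipotent radical, which is exactly the content of \cref{thm:202210123} and cannot be taken for free at the pro-level. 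Third, and most seriously, even if $G$ were pro-(virtually nilpotent), the deduction ``consequently $\iota(\pi_1(X))$ is virtually nilpotent'' fails as a purely group-theoretic implication: a finitely generated, residually finite group all of whose complex linear quotients are finite — hence virtually nilpotent — need not be virtually nilpotent. The Grigorchuk group is such an example: being a torsion group, all its finitely generated linear quotients are finite by the Burnside--Schur theorem, it is residually finite (so $\iota$ is injective on it), yet it has intermediate growth. So the obstruction is not confined to $\ker\iota$; without uniform bounds on the index and the nilpotency class across all representations, the maximal residually linear quotient is itself uncontrolled. The geometric route you sketch at the end — a semi-abelian fibration over a lower-dimensional special base, extending \cref{thm:char} and exploiting the $\pi_1$-exactness of \cref{pro:202210131} — is closer to what a proof of \cref{conj:revised2} would actually require, but it is left open by the paper as well.
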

In this section we  prove  the linear version of the above conjecture.  
\begin{thm}[=\Cref{main5}]\label{thm:VN}
	Let $X$ be a   special  or $h$-special quasi-projective manifold.   Let  $\varrho:\pi_1(X)\to {\rm GL}_N(\bC)$ be a  linear representation.   Then  $\varrho(\pi_1(X)) $ is {virtually nilpotent}.  If $\varrho$ is reductive,  then  $\varrho(\pi_1(X)) $ is  {virtually abelian}.
\end{thm}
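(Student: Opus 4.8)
\textbf{Proof plan for Theorem \ref{thm:VN}.}
The plan is to reduce the general linear case to the two structural inputs already established: Theorem \ref{main:geomety group} (i.e.\ \cref{thm:202210123}), which says that a Zariski dense representation of $\pi_1(X)$ into a connected solvable algebraic group over $\mathbb C$ has nilpotent image when $X$ is special or $h$-special, and \cref{cor:202304071} together with \cref{lem:202304065} and \cref{lem:202304063}, which forbid $X$ from carrying a big and Zariski dense representation into a positive-dimensional semisimple group (since such $X$ would be neither weakly special nor $h$-special). First I would pass to the Zariski closure $G=\overline{\varrho(\pi_1(X))}^{\mathrm{Zar}}\subset \mathrm{GL}_N(\mathbb C)$; after replacing $X$ by the finite \'etale cover corresponding to $\varrho^{-1}(\varrho(\pi_1(X))\cap G^0(\mathbb C))$ — which is again special by \cref{special} (resp.\ again $h$-special by \cref{lem:202304061}) — we may assume $G$ is connected, so that $\varrho$ is Zariski dense in the connected linear algebraic group $G$.

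Next I would use the Levi-type decomposition: let $R=R(G)$ be the radical and $G\to G/R=:H$ the projection onto the semisimple quotient, giving a Zariski dense representation $\bar\varrho:\pi_1(X)\to H(\mathbb C)$. The key step is to show $H$ is trivial. For this, apply \cref{lem:kollar} to $\bar\varrho$: after a further finite \'etale cover $\nu:\widehat X\to X$ and a proper birational modification $\mu:\widehat X'\to\widehat X$ there is an algebraic fiber space $f:\widehat X'\to Y$ and a \emph{big} Zariski dense representation $\tau:\pi_1(Y)\to H(\mathbb C)$ with $f^*\tau=(\nu\circ\mu)^*\bar\varrho$. If $\dim H>0$ then $Y$ is positive-dimensional, and \cref{main2} shows $Y$ is of log general type (so $Y$ is not weakly special) and pseudo Brody hyperbolic. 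Then $X$ cannot be special: $\widehat X$ is special by \cref{special}, $\widehat X'$ is special since birational modifications of special manifolds are special (\cref{special} applied to the definition), and then $f:\widehat X'\to Y$ dominant with $Y$ of log general type and $\dim Y>0$ contradicts specialness of $\widehat X'$. Likewise $X$ cannot be $h$-special: $\widehat X$ is $h$-special by \cref{lem:202304061}, $\widehat X'$ is $h$-special by \cref{lem:20230407} (as $\widehat X$ is smooth), $Y$ is $h$-special by \cref{lem:202304063}, contradicting \cref{lem:202304065} since $Y$ is pseudo Brody hyperbolic and positive-dimensional. Hence $H=\{1\}$, i.e.\ $G=R$ is connected solvable.

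Now $\varrho:\pi_1(X)\to G(\mathbb C)$ is Zariski dense with $G$ connected solvable, so Theorem \ref{main:geomety group} applies directly: $G$ is nilpotent, hence $\varrho(\pi_1(X))$ is nilpotent. Undoing the finite \'etale cover, $\varrho(\pi_1(X))$ for the original $X$ is virtually nilpotent. I expect the main obstacle to be the careful bookkeeping of the reductions — checking that the finite \'etale covers and birational modifications used at each stage preserve the hypothesis ``special or $h$-special'' in the right direction, and that \cref{lem:kollar} can be invoked for $\bar\varrho$ even though $H$ may be only semisimple and $\bar\varrho$ need not be a priori big (this is exactly why the \cref{lem:kollar} factorization through a big $\tau$ is needed). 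For the reductive refinement I would argue similarly but with $G$ now reductive connected: the same Levi argument kills the semisimple part, leaving $G$ a torus, so $\varrho(\pi_1(X))$ is abelian, and after the \'etale cover the original image is virtually abelian; alternatively one invokes \cref{char abelian} of \cref{thm:20230510} after checking $\varrho$ is big up to a \cref{lem:kollar} factorization, but the torus argument is cleaner. The only genuinely subtle point there is that a Zariski dense reductive $\varrho$ into connected $G$ forces $G$ reductive, which is immediate, and that a solvable reductive connected group is a torus.
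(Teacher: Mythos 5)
Your proposal is correct and follows essentially the same route as the paper: pass to a finite \'etale cover so the Zariski closure $G$ is connected, kill the semisimple quotient $G/R(G)$ via the factorization through a big Zariski dense representation (this is exactly the content of \cref{cor:202304071}, which the paper invokes directly rather than unwinding \cref{lem:kollar} and \cref{main2} inline as you do), and then apply \cref{thm:202210123} to the remaining connected solvable group, with the torus observation handling the reductive case. The only cosmetic difference is that for the special case the paper goes through ``special $\Rightarrow$ weakly special'' and contradicts weak specialness directly, whereas you track specialness through the birational modification; both work.
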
 
It is indeed based on the following  theorem. 
\begin{thm}[=\Cref{main:geomety group}]\label{thm:202210123}
Let $X$ be a special or $h$-special quasi-projective manifold.
Let $G$ be a connected, solvable algebraic group defined over $\mathbb C$.
Assume that there exists a Zariski dense representation $\varphi:\pi_1(X)\to G$.
Then $G$ is nilpotent.
In particular, $\varphi(\pi_1(X))$ is nilpotent.
\end{thm}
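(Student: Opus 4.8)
\textbf{Proof proposal for \cref{thm:202210123}.}

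The plan is to reduce to the two auxiliary facts already available: the structure of varieties of logarithmic Kodaira dimension zero with maximal quasi-Albanese dimension (\cref{lem:abelian pi,lem:ZD}), and the main hyperbolicity results (\cref{cor:202304071}, i.e. \cref{main2}). A connected solvable linear algebraic group $G$ over $\mathbb{C}$ sits in an exact sequence $1 \to R_u(G) \to G \to T \to 1$ with $R_u(G)$ unipotent (hence nilpotent) and $T$ a torus. The goal is to control the conjugation action of $T$ on $R_u(G)$, or rather on its graded pieces coming from the lower central series; $G$ is nilpotent exactly when this action is trivial on all these pieces. First I would run an induction on $\dim G$, or more precisely on the derived/central series of $G$, so that the key case becomes: $G$ a semidirect product $V \rtimes T$ where $V$ is a vector group (a graded piece of $R_u(G)$) on which $T$ acts linearly and Zariski densely through characters, and the task is to show $T$ acts trivially, i.e. $\dim T$-worth of nontrivial characters is impossible. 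Equivalently, if $G$ is not nilpotent there is a surjection $G \twoheadrightarrow \mathbb{G}_m \ltimes \mathbb{G}_a$ (with $\mathbb{G}_m$ acting by a nontrivial character on $\mathbb{G}_a$), and composing with $\varphi$ gives a Zariski dense representation $\pi_1(X) \to (\mathbb{G}_m \ltimes \mathbb{G}_a)(\mathbb{C})$. So it suffices to derive a contradiction from the existence of such a representation when $X$ is special or $h$-special.

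The heart of the argument will be the analysis of that rank-one non-nilpotent quotient $B := \mathbb{G}_m \ltimes \mathbb{G}_a$. The torus part $\mathbb{G}_m$ gives a Zariski dense representation $\pi_1(X) \to \mathbb{G}_m(\mathbb{C}) = \mathbb{C}^*$, which factors through $H_1(X,\mathbb{Z})$ and hence through the quasi-Albanese map $a : X \to \mathcal{A}_X$ (after passing to a finite étale cover so that $H_1(X,\mathbb{Z})$ is torsion-free, which is harmless by \cref{special} and \cref{lem:202304061}). Let $b : X \to B_0$ be the induced morphism to the image semi-abelian variety, so $\dim b(X) > 0$. The unipotent part, together with the twisting, should be used to produce extra geometry over $B_0$: the $\mathbb{G}_a$-cocycle condition forces the "log one-forms'' attached to $\varphi$ along the fibers of $b$ to be constrained, and I would argue — in the spirit of \cref{lem:factor0} and \cref{lem:kollar} — that after a finite étale cover and birational modification, $\varphi$ induces a fibration $f : X' \to Y$ with a big Zariski dense representation $\pi_1(Y) \to B(\mathbb{C})$ together with a positivity statement. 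Then two sub-cases occur. If the general fiber $F$ of $b$ has $\varphi(\mathrm{Im}[\pi_1(F)\to\pi_1(X)])$ infinite, that image is a normal subgroup of the Zariski dense $\varphi(\pi_1(X))$ inside $B$; the only infinite normal subgroups of $B$ containing enough of $\mathbb{G}_a$ or projecting onto $\mathbb{G}_m$ would contradict either the density or (when it is the $\mathbb{G}_a$ itself) the cocycle relation — one extracts from this a positive-dimensional orbifold quotient of log general type, contradicting specialness; for $h$-special one instead lifts the chain of entire curves and uses that $\mathbb{G}_m \ltimes \mathbb{G}_a$, being a quotient of a semisimple-free solvable group, still cannot receive a Zariski dense image from a group generated by loops trivial on such curves, cf. the projective case of Campana \cite{Cam01} and Delzant \cite{Del10}. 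If instead $\varphi(\mathrm{Im}[\pi_1(F)\to\pi_1(X)])$ is finite, then $\varphi$ essentially factors through $b$; replacing $X$ by an étale cover kills the finite part, the representation factors through $\pi_1(B_0)$ which is abelian, so $\varphi(\pi_1(X))$ is abelian, contradicting that $B$ is non-nilpotent and $\varphi$ Zariski dense.

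The main obstacle I anticipate is making precise and rigorous the step that turns the $\mathbb{G}_a$-twisting datum into genuine algebro-geometric positivity (a map to a variety of log general type, or an orbifold base of log general type), in the quasi-projective setting and uniformly for both the "special'' and "$h$-special'' hypotheses. In the compact Kähler case this is exactly the content of the Campana–Delzant theorem and its proof uses either harmonic map techniques (Delzant's work on the Bieri–Neumann–Strebel invariant and Kähler groups) or Campana's orbifold fibration machinery; here one must adapt this to a logarithmic/orbifold framework, handle the non-properness of the quasi-Albanese map, and keep track of monodromy at the boundary divisor as in \cref{lem:factor0} Step 4. Once this positivity is in place, deducing the contradiction with specialness (no dominant morphism to an orbifold of log general type) or with $h$-specialness (\cref{lem:202304063}, \cref{lem:202304065}: a positive-dimensional variety dominating something pseudo-Brody-hyperbolic cannot be $h$-special) is formal. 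Finally, \cref{thm:VN} follows from \cref{thm:202210123}: given a linear $\varrho : \pi_1(X) \to \mathrm{GL}_N(\mathbb{C})$, pass to a finite étale cover so that the Zariski closure $G$ of $\varrho(\pi_1(X))$ is connected; its radical $R(G)$ receives a Zariski dense representation from a finite-index subgroup of $\pi_1(X)$, which corresponds to an étale cover that is again special or $h$-special (\cref{special}, \cref{lem:202304061}), so by \cref{thm:202210123} $R(G)$ is nilpotent; meanwhile the semisimple quotient $G/R(G)$ receives a Zariski dense representation, so by \cref{cor:202304071} it must be trivial (else $X$ would fail to be special / $h$-special), whence $G = R(G)$ is nilpotent and $\varrho(\pi_1(X))$ is virtually nilpotent; if $\varrho$ is reductive then $R(G)$ is a torus, so $\varrho(\pi_1(X))$ is virtually abelian.
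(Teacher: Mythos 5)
Your reduction of non-nilpotency to the existence of a Zariski dense representation $\pi_1(X)\to(\mathbb{G}_m\ltimes\mathbb{G}_a)(\mathbb{C})$ is correct (if $G$ is not nilpotent, the maximal torus acts through a nontrivial character on some graded piece of the unipotent radical, and one quotients accordingly), and your sub-case where $\varphi(\mathrm{Im}[\pi_1(F)\to\pi_1(X)])$ is finite is fine modulo a factorization lemma. But the complementary sub-case — which is the entire content of the theorem — is not proved. You assert that when the image of $\pi_1(F)$ is infinite ``one extracts a positive-dimensional orbifold quotient of log general type,'' and you yourself flag the construction of that quotient as the main obstacle; no mechanism is supplied for producing it from the $\mathbb{G}_a$-twisting datum, and it is far from clear that one exists in the quasi-projective setting (this is in effect Delzant's BNS-invariant theorem, whose extension beyond the compact K\"ahler case is exactly what would need to be proved). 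The proposal therefore stops precisely where the difficulty begins.

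The paper's proof avoids this route entirely. It first establishes that the quasi-Albanese map $a:X\to\mathcal{A}_X$ is $\pi_1$-exact (\cref{pro:202210131}); this is the only place the special/$h$-special hypothesis is used, via the multiplicity divisor $\Delta(p)$ (\cref{lem:3}, which in the $h$-special case rests on the Nevanlinna-theoretic \cref{prop:20230405}). Exactness identifies the image $\Pi$ of $\pi_1(F)$ in $G'/G''$ as a Zariski dense, finitely generated subgroup on which $\pi_1(\mathcal{A}_X)$ acts by conjugation, and this action is a quotient of the geometric monodromy of $R^1f_*\overline{\mathbb{Q}}$ for the fibration $X^{\circ}\to\mathcal{A}_X^{\circ}$. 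Deligne's theorem forces the Zariski closure of that (commutative) monodromy group to have unipotent identity component, so the maximal torus $T\subset G$ acts on $G'/G''$ through a unipotent group, hence trivially (\cref{lem:trivial morphism}), and \cref{claim:202210121} then yields that $G$ is nilpotent. In short: where you propose to manufacture hyperbolicity from the solvable representation and contradict specialness, the paper uses specialness only to obtain $\pi_1$-exactness of the quasi-Albanese map and then lets Hodge theory (unipotency of geometric monodromy) kill the torus action directly. If you want to salvage your approach, you would need to supply a quasi-projective analogue of the Campana--Delzant fibration theorem, which is a substantial open-ended task rather than a routine adaptation.
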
 
The proof consists of the following three inputs: 
\begin{itemize} 
\item
Algebraic property of solvable algebraic groups (\cref{claim:202210121}).
\item
$\pi_1$-exactness of quasi-Albanese map for  special or $h$-special quasi-projective manifold (\cref{pro:202210131}).
\item
Deligne's unipotency theorem for monodromy action.
\end{itemize}
\begin{rem}
	Note that when $X$ is compact K\"ahler,  \cref{thm:202210123} is proved by Campana  \cite{Cam01} and Delzant \cite{Del10}.  Our proof of \cref{thm:202210123} is inspired by \cite[\S 4]{Cam01}.
\end{rem}
The structure of this section is organized as follows. 
In \cref{sec:qA} we prove a structure theorem for the quasi-Albanese morphism of  $h$-special or weakly special quasi-projective manifolds. \cref{s1,s2,s3,s4} are devoted to the proof of \cref{thm:202210123}. In \cref{s5} we prove \cref{thm:VN}. The last section is  on some examples of $h$-special complex manifolds.

\subsection{Structure of the quasi-Albanese morphism}\label{sec:qA}

\begin{lem}\label{prop:factor}
	Let $X$ be an $h$-special or weakly special quasi-projective manifold.  Then  the quasi-Albanese morphism $\alpha:X\to \cA$ is dominant with connected general fibers. 
\end{lem}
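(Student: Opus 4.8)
The plan is to reduce to the already-known structure theory of the quasi-Albanese morphism by analyzing the image of $\alpha$ and by killing positive log Kodaira dimension of the base of the Stein factorization. First I would apply the quasi-Stein factorization (\cref{lem:Stein}) to $\alpha$, writing $\alpha = g\circ\beta$ where $\beta:X\to Y$ is an algebraic fiber space and $g:Y\to\cA$ is finite. By \cref{prop:Koddimabb}, $\bar\kappa(g(Y))\geq 0$, and since $g$ is finite, \cref{lem:KodairaDim} (applied after shrinking to an \'etale locus) or directly the fact that $g$ is finite gives $\bar\kappa(Y)\geq 0$; moreover $\bar\kappa(Y)=\bar\kappa(g(Y))$. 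The key claim is then that $\bar\kappa(Y)=0$: if $\bar\kappa(Y)>0$, then $\beta$ composed with the logarithmic Iitaka fibration of $Y$ (after a birational modification) would produce a dominant morphism from (a modification of) $X$ onto a positive-dimensional variety $Z$ with $\bar\kappa(Z)=\dim Z=\bar\kappa(Y)>0$, i.e. onto a variety of log general type.

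For the \emph{weakly special} case this immediately contradicts the definition of weak specialness (\cref{def:special}), once one checks the Iitaka fibration can be arranged as a genuine morphism after a proper birational modification and a finite \'etale cover is harmless. For the \emph{$h$-special} case, I would instead use Kawamata's structure theorem (\cite[Theorem 26 \& 27]{Kaw81}): after a finite \'etale Galois cover $\widetilde Y\to Y$, $\widetilde Y$ is a fiber bundle over a base $Z$ of log general type with $\bar\kappa(Z)=\dim Z=\bar\kappa(Y)$. Lifting/pushing forward, one obtains a dominant morphism from an \'etale cover (then birational modification) of $X$ onto $Z$; by \cref{lem:202304061,lem:20230407,lem:202304063} this cover is again $h$-special, so $Z$ is $h$-special by \cref{lem:202304063}. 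But a positive-dimensional variety of log general type is pseudo Brody hyperbolic by the Green–Griffiths–Lang type results available here—more directly, \cref{lem:202304065} says a positive-dimensional pseudo Brody hyperbolic variety is not $h$-special, and a variety of log general type of dimension one is a hyperbolic curve hence certainly not $h$-special, and one reduces to this case by further fibering. (Alternatively, and more cleanly: lift a suitable chain of entire curves witnessing $h$-specialness through the \'etale cover and project to $Z$; by the Bloch–Ochiai / log–Bloch–Ochiai theorem their images in $Z$ are contained in proper subvarieties, contradicting Zariski density of the relation $R$.) Either way we reach a contradiction, so $\bar\kappa(Y)=0$.

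Once $\bar\kappa(Y)=0$, \cite[Theorem 26]{Kaw81} forces $Y$ to be a semi-abelian variety, and then the universal property of the quasi-Albanese morphism (recalled in \cref{sec:pre}) implies that $g:Y\to\cA$ must be the identity: indeed $\beta:X\to Y$ is a morphism to a semi-abelian variety, hence factors through $\alpha:X\to\cA$ via a morphism of semi-abelian varieties $\cA\to Y$ which is inverse to $g$ on images. Therefore $\alpha=\beta$ is itself an algebraic fiber space, i.e. dominant with connected general fibers.

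The main obstacle will be the $h$-special case of the vanishing $\bar\kappa(Y)=0$: one must carefully track how $h$-specialness descends through the finite \'etale cover supplied by Kawamata's theorem and through the bundle projection $\widetilde Y\to Z$, using \cref{lem:202304061,lem:20230407,lem:202304063}, and then invoke the appropriate non-hyperbolicity input (\cref{lem:202304065}, or log Bloch–Ochiai) to contradict $Z$ being of log general type. The weakly special case is comparatively routine, being essentially a direct unwinding of \cref{def:special} together with the birational invariance of log Kodaira dimension and the existence of an Iitaka fibration model.
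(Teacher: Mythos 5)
Your overall architecture is the same as the paper's: quasi-Stein factorisation $\alpha=g\circ\beta$ with $\beta:X\to Y$ an algebraic fiber space and $g$ finite, the key claim $\bar\kappa(Y)=0$ proved by contradiction via Kawamata's structure theorem, and then Kawamata's \cite[Theorem 26]{Kaw81} plus the universal property of the quasi-Albanese to force $Y=\cA$. Two steps, however, are not correct as written.

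First, in the weakly special case you contract $\bar\kappa(Y)>0$ by using the logarithmic Iitaka fibration of $Y$ and asserting that its base $Z$ satisfies $\bar\kappa(Z)=\dim Z$. That is false in general: the Iitaka base only has $\dim Z=\bar\kappa(Y)$, not $\bar\kappa(Z)=\dim Z$ (this failure is exactly why orbifold bases exist). The log general type base is produced only after the finite \'etale Galois cover $\widetilde Y\to Y$ supplied by Kawamata's \cite[Theorem 27]{Kaw81}; one must then pull this back to a finite \'etale cover $\widetilde X$ of $X$ (a connected component of $X\times_Y\widetilde Y$) and observe that $\widetilde X\to Z$ is a dominant morphism with connected general fibers onto a positive-dimensional variety of log general type, contradicting \cref{def:special}. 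This is what the paper does; your parenthetical "a finite \'etale cover is harmless" gestures at it but the argument as stated skips the step that actually makes $Z$ of log general type.

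Second, in the $h$-special case your main route to the contradiction --- "a variety of log general type of dimension one is a hyperbolic curve \dots and one reduces to this case by further fibering" --- does not work: there is no fibering of a log general type variety onto a hyperbolic curve in general, and the blanket statement "a positive-dimensional variety of log general type is pseudo Brody hyperbolic" is the open strong Green--Griffiths conjecture. The argument goes through only because the Kawamata base $Z$ carries a \emph{finite} morphism to $\cA/B$, i.e.\ has maximal quasi-Albanese dimension with $\bar\kappa(Z)=\dim Z$, so that \cref{cor:20221102} (resting on the Nevanlinna-theoretic \cref{thm2nd}) applies and gives pseudo Brody hyperbolicity of $Z$; combined with \cref{lem:202304061,lem:202304063,lem:202304065} this yields the contradiction with $h$-specialness of $Z$. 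Your log Bloch--Ochiai alternative also does not apply directly, since $Z$ is only finite over a semi-abelian variety rather than a subvariety of one. So the skeleton is right, but both contradictions must be routed through Kawamata's \'etale cover and, in the $h$-special case, through \cref{cor:20221102} rather than a general hyperbolicity principle.
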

\begin{proof} 

We first assume that $X$ is $h$-special.
	Let $\beta:X\to Y$  and $g:Y\to \cA$ be the quasi-Stein factorisation of $\alpha$ in \cref{lem:Stein}. 
Then $Y$ is $h$-special (cf. \cref{lem:202304063}) and  $\overline{\kappa}(Y)\geq 0$. 
To show $\overline{\kappa}(Y)=0$, we assume contrary that $\overline{\kappa}(Y)>0$.  By a theorem of Kawamata \cite[Theorem 27]{Kaw81},  there are a semi-abelian variety $B\subset \cA$,   finite \'etale Galois covers $\widetilde{Y}\to Y$ and $\widetilde{B}\to B$, and a normal algebraic variety $Z$ such that
	\begin{itemize}
		\item there is a finite morphism from $Z$ to the quotient $\cA/B$;
		\item  $\widetilde{Y}$ is a fiber bundle over $Z$ with fibers $\widetilde{B}$;
		\item $\overline{\kappa}(Z)=\dim Z=\overline{\kappa}(Y)$.
	\end{itemize} 	  
By \cref{lem:202304061}, $\widetilde{Y}$ is $h$-special.
Hence by \cref{lem:202304063}, $Z$ is $h$-special.
Thus by \cref{lem:202304065}, $Z$ is not pseudo-Brody hyperbolic.
On the other hand, by \cref{cor:20221102}, $Z$ is pseudo-Brody hyperbolic, a contradiction. 
Hence $\overline{\kappa}(Y)=0$.

	Assume now $X$ is weakly special.  Consider a connected component $\widetilde{X}$ of $X\times_Y\widetilde{Y}$. Then $\widetilde{X}\to  X$ is finite \'etale.  The composed morphism $\widetilde{X}\to Z$ of $\widetilde{X}\to \widetilde{Y}$ and $\widetilde{Y}\to Z$ is dominant with connected general fibers. We obtain a contradiction since $Z$ is of log general type and $X$ is weakly special. Hence $\overline{\kappa}(Y)=0$. 
	
	By \cite[Theorem 26]{Kaw81}, $Y$ is a semi-abelian variety and according to the universal property of quasi-Albanese morphism, $Y=\cA$. Hence $\alpha$ is dominant with  connected general fibers.  
\end{proof}

\subsection{A nilpotency condition for solvable linear group} \label{s1}
All the ground fields for algebraic groups and linear spaces are $\mathbb C$ in this subsection.

\begin{lem}  \label{lem:trivial morphism}
		Let $T$ be an algebraic torus and let $U$ be a uniponent group.
		Then every morphism $f:T\to U$ of algebraic groups is constant.
	\end{lem}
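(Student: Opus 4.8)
�Here is my plan to prove that every morphism of algebraic groups $f : T \to U$ from an algebraic torus $T$ to a unipotent group $U$ is constant.

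The plan is to argue that the image $f(T)$ is simultaneously a torus and a unipotent group, hence trivial. First I would note that, since $f$ is a morphism of algebraic groups, the image $f(T) \subseteq U$ is a closed subgroup of $U$, and it is a quotient of $T$; a quotient of a torus is again a torus (it is connected, commutative, and consists of semisimple elements, since the image of a semisimple element under a morphism of algebraic groups is semisimple in characteristic zero). On the other hand $f(T)$ is a closed subgroup of the unipotent group $U$, hence is itself unipotent: every one of its elements is unipotent. Thus every element of $f(T)$ is both semisimple and unipotent, which forces it to be the identity. Therefore $f(T) = \{e\}$, i.e. $f$ is constant.

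Alternatively, and perhaps more elementarily, I would reduce to the one-dimensional case and use the structure of morphisms between $\mathbb{G}_m$ and $\mathbb{G}_a$. Since $T \cong \mathbb{G}_m^r$, it suffices to show that any morphism of algebraic groups $\mathbb{G}_m \to U$ is constant; composing with a filtration of $U$ by normal subgroups with successive quotients isomorphic to $\mathbb{G}_a$ (which exists for a unipotent group in characteristic zero), one reduces to showing $\operatorname{Hom}(\mathbb{G}_m, \mathbb{G}_a) = 0$ in the category of algebraic groups. But a morphism $\mathbb{G}_m \to \mathbb{G}_a$ is given by a regular function $\varphi \in \mathbb{C}[t, t^{-1}]$ satisfying $\varphi(st) = \varphi(s) + \varphi(t)$ and $\varphi(1) = 0$; writing $\varphi = \sum_{n \in \mathbb{Z}} a_n t^n$ and comparing coefficients in the additivity relation forces all $a_n = 0$. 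Hence $\varphi = 0$ and the morphism is constant.

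I expect no serious obstacle here; the only point requiring a little care is the claim that the image of a torus under a morphism of algebraic groups consists of semisimple elements (equivalently, is a torus), which is where the characteristic-zero hypothesis — or at least the reductivity of $T$ — enters. This is standard (see e.g.\ \cite{Mil17}), and I would simply cite it. I would present the first argument as the main proof, as it is cleanest and makes the conceptual reason transparent: a group that is both a torus and unipotent is trivial.
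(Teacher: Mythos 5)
Your proposal is correct. Your main argument is genuinely different from the paper's: you observe that $f(T)$ is simultaneously a quotient of a torus (hence consists of semisimple elements) and a closed subgroup of $U$ (hence consists of unipotent elements), so every element of $f(T)$ is both semisimple and unipotent and therefore trivial. The paper instead takes your \emph{alternative} route: it filters $U$ by normal subgroups $\{e\}=U_0\subset\cdots\subset U_n=U$ with successive quotients $\mathbb G_a$, reduces to showing $\mathrm{Hom}(\mathbb G_m,\mathbb G_a)=0$, and kills a putative nonzero $g:\mathbb G_m\to\mathbb G_a$ not by comparing Laurent coefficients as you do, but by noting that every root of unity lies in $g^{-1}(0)$ (since $\mathbb G_a$ is torsion-free), so $g$ vanishes on an infinite set and must be constant. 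Your Jordan-decomposition argument is shorter and more conceptual, at the cost of invoking the standard facts that morphisms of linear algebraic groups preserve Jordan decomposition and that closed subgroups of unipotent groups are unipotent (which in fact hold in any characteristic, so your caveat about characteristic zero is not needed); the paper's argument is more self-contained, using only the filtration of a unipotent group and an elementary computation. Both are complete proofs.
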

\begin{proof}
		Since $U$ is unipotent, we have a sequence $\{e\}=U_0\subset U_1\subset \cdots\subset U_n=U$ of normal subgroups of $U$ such that $U_k/U_{k-1}=\mathbb G_a$, where $\mathbb G_a$ is the additive group.
		To show $f(T)\subset U_0=\{e\}$, suppose contrary $f(T)\not\subset U_0$.
		Then we may take the largest $k$ such that $f(T)\not\subset U_k$.
		Then $k<n$ and $f(T)\subset U_{k+1}$.
		Thus we get a non-trivial morphism $T\to U_{k+1}/U_k=\mathbb G_a$.
		By taking a suitable subgroup $\mathbb G_m\subset T$, we get a non-trivial morphism $g:\mathbb G_m\to \mathbb G_a$. 
		But this is impossible.
		Indeed let $\mu=\cup_n\mu_n$, where $\mu_n=\{ a\in \mathbb C^*;a^n=1\}$.
		Then $|\mu|=\infty$.
		On the other hand, for $a\in \mu_n$, we have $ng(a)=g(a^n)=g(1)=0$, hence $g(a)=0$.
		Thus $\mu\subset g^{-1}(0)$.
		This is impossible since we are assuming that $g$ is non-constant. 
		\end{proof}

\begin{lem}\label{lem:20221012}
Let $T$ be an algebraic torus.
Let $0\to L'\to L\to L''\to 0$ be an exact sequence of vector spaces with equivariant $T$-actions.
If $L'$ and $L''$ have trivial $T$-actions, then $L$ has also a trivial $T$-action.
\end{lem}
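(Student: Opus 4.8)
\textbf{Proof plan for Lemma \ref{lem:20221012}.}

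The statement is a purely representation-theoretic fact: an extension of trivial $T$-representations is trivial. The plan is to reduce to the well-known semisimplicity of representations of an algebraic torus. First I would recall that since $T$ is an algebraic torus, any finite-dimensional linear representation of $T$ decomposes as a direct sum of one-dimensional characters (linear reductivity of tori, or equivalently the diagonalizability of the $T$-action); in particular every short exact sequence of $T$-modules splits equivariantly. Applying this to $0\to L'\to L\to L''\to 0$, there is a $T$-equivariant isomorphism $L\cong L'\oplus L''$ as $T$-modules. Since the $T$-action on each of $L'$ and $L''$ is trivial by hypothesis, the action on the direct sum is trivial, hence so is the action on $L$.

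Alternatively, and perhaps more in the spirit of avoiding invocation of the full structure theory, I would argue at the level of characters. The $T$-action on $L$ is diagonalizable, so $L=\bigoplus_{\chi}L_\chi$ where $\chi$ ranges over characters of $T$ and $L_\chi$ is the corresponding weight space. The submodule $L'$ being trivial means $L'\subset L_0$ (the weight-zero space), and the quotient $L''=L/L'$ being trivial means that $L/L'$ is concentrated in weight zero, i.e. $L_\chi$ maps to zero in $L/L'$ for $\chi\neq 0$, which forces $L_\chi\subset L'\subset L_0$ and hence $L_\chi=0$ for all $\chi\neq 0$. Therefore $L=L_0$, so the $T$-action on $L$ is trivial. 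Either route is short; the only genuine input is the complete reducibility of torus representations in characteristic zero, which is standard.

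There is no real obstacle here — the lemma is elementary once one knows that $T$ is linearly reductive. I would write the one-paragraph argument using the equivariant splitting, since it is the cleanest. If one wanted to be completely self-contained one could even avoid reductivity: choose a $T$-eigenbasis adapted to the filtration $L'\subset L$ (possible because the action is diagonalizable, and $L'$ is a $T$-submodule hence spanned by eigenvectors); each basis vector lies in $L'$ (weight zero) or projects to a nonzero eigenvector of $L''$ (again weight zero), so all weights vanish. I would present the argument as follows.

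\begin{proof}
Since $T$ is an algebraic torus, every finite-dimensional representation of $T$ is semisimple; in particular the short exact sequence $0\to L'\to L\to L''\to 0$ of $T$-modules admits a $T$-equivariant splitting, so there is an isomorphism of $T$-modules $L\cong L'\oplus L''$. By hypothesis $T$ acts trivially on $L'$ and on $L''$, hence $T$ acts trivially on $L'\oplus L''$, and therefore on $L$.
\end{proof}
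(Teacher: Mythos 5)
Your proof is correct, but it takes a different route from the paper's. You invoke the linear reductivity of tori to obtain a $T$-equivariant splitting $L\cong L'\oplus L''$ (or, equivalently, the weight-space decomposition), from which triviality of the action is immediate. The paper instead chooses a \emph{non}-equivariant vector-space splitting $L=L'\oplus L''$, observes that for every $t\in T$ one has $(\varphi(t)-\mathrm{id}_L)L''\subset L'$ and $(\varphi(t)-\mathrm{id}_L)L'=0$, hence $(\varphi(t)-\mathrm{id}_L)^2=0$, so the morphism $\varphi:T\to\mathrm{GL}(L)$ factors through a unipotent subgroup; it then concludes by its \cref{lem:trivial morphism}, which says that any morphism of algebraic groups from a torus to a unipotent group is constant. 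The two arguments encode the same underlying fact — a torus admits no nontrivial unipotent representation — but the paper's version deliberately avoids appealing to the full semisimplicity of torus representations and instead reuses \cref{lem:trivial morphism}, which it has already proved and which it needs again in the surrounding arguments (e.g. in \cref{eqn:202210154}). Your approach is shorter and buys generality from standard structure theory; the paper's is self-contained within its own toolkit. One small point to keep in mind in your write-up: the semisimplicity you invoke applies to \emph{rational} (algebraic) representations of $T$, which is the intended meaning of "equivariant $T$-actions" here, so your argument is complete as stated.
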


\begin{proof}
Let $\varphi:T\to \mathrm{GL}(L)$ be the induced morphism of algebraic groups.
We take a (non-canonical) splitting $L=L'\oplus L''$ of vector spaces.
Let $t\in T$.
For $(v',0)\in L'$, we have $(\varphi(t)-\mathrm{id}_L)v'=0$, for $T$ acts trivially on $L'\subset L$.
For $(0,v'')\in L''$, we have $(\varphi(t)-\mathrm{id}_L)v''\in L'$.
Hence for $(v',v'')\in L$ and $t\in T$, we have 
$$(\varphi(t)-\mathrm{id}_L)^2\cdot (v',v'')=(\varphi(t)-\mathrm{id}_L)\cdot (u,0)=0,$$ where $u=(\varphi(t)-\mathrm{id}_L)v''\in L'$.
Hence $\varphi:T\to \mathrm{GL}(L)$ factors through the unipotent group $U\subset \mathrm{GL}(L)$.
Since the map $T\to U$ is trivial (cf. \cref{lem:trivial morphism}), 
$L$ has a trivial $T$-action.
\end{proof}

Let $G$ be a connected, solvable linear group.
We have an exact sequence 
\begin{equation}\label{eqn:20230321}
1\to U\to G\to T\to 1,
\end{equation}
where $U=R_u(G)$ is the unipotent radical and $T\subset G$ is a maximal torus.
Then $T$ acts on $U/U'$ by the conjugate.
The following lemma is from \cite[Lemma 1.8]{AN99}. 

\begin{lem}\label{eqn:202210154}
If $T$ acts trivially on $U/U'$, then $G$ is nilpotent.
\end{lem}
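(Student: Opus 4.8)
Proof plan for Lemma \ref{eqn:202210154} (i.e. the final statement: if $T$ acts trivially on $U/U'$ then $G$ is nilpotent).

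The plan is to proceed by induction on the nilpotency class (equivalently, the length of the derived/lower central series) of the unipotent radical $U$, reducing at each stage to the already-established base behaviour. First I would record the base case: if $U$ is abelian, then $U' = \{1\}$, so the hypothesis says $T$ acts trivially on $U$ itself, hence $G = U \rtimes T$ with trivial action is just $U \times T$ (or rather the extension \eqref{eqn:20230321} becomes central on the $U$-side), which is visibly nilpotent — indeed $U$ is central in $G$ and $G/U \cong T$ is abelian. For the inductive step, consider the descending central series $U = U^{(0)} \supset U^{(1)} \supset \cdots \supset U^{(c)} = \{1\}$ of $U$ (where $U^{(i+1)} = [U, U^{(i)}]$), each term being normal in $G$ since it is characteristic in $U$ and $U \triangleleft G$. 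The quotient $G/U^{(c-1)}$ has unipotent radical $U/U^{(c-1)}$ of strictly smaller nilpotency class, and one checks that the $T$-action on $(U/U^{(c-1)})/(U/U^{(c-1)})'$ is still trivial (this quotient is a further quotient of $U/U'$), so by induction $G/U^{(c-1)}$ is nilpotent.

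The substantive point is then to control the remaining piece $U^{(c-1)}$, which is a central subgroup of $U$ contained in $U'$, hence a vector group on which $T$ acts by conjugation via some representation. The key step — and what I expect to be the main obstacle — is to show that $T$ acts \emph{trivially} on $U^{(c-1)}$. For this I would use \cref{lem:20221012}: the point is that $U^{(c-1)} \subset U'$, and one needs to produce a filtration of $U^{(c-1)}$ (or of a suitable $T$-stable subspace containing it) whose successive quotients carry trivial $T$-action, so that \cref{lem:20221012} (applied inductively up the filtration) forces the whole $T$-action on $U^{(c-1)}$ to be trivial. Concretely, $U^{(c-1)}$ is spanned (as a $T$-module) by iterated commutators of elements of $U$; working modulo the appropriate terms of the central series and using that each commutator $[x,y]$ with $x,y \in U$ lands, modulo deeper terms, in an expression built from the $T$-trivial module $U/U'$ — combined with the Leibniz-type behaviour of the conjugation action on commutators — shows each graded piece of the central series of $U$ inside $U'$ is $T$-trivial. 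Assembling via \cref{lem:20221012} gives that $T$ acts trivially on $U^{(c-1)}$.

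Once $T$ acts trivially on the central subgroup $U^{(c-1)}$, I would conclude as follows: $U^{(c-1)}$ is then central in $G$ (it is central in $U$ by construction and $T$-fixed by the above, so it is centralized by all of $G$), and $G/U^{(c-1)}$ is nilpotent by the induction hypothesis. A central extension of a nilpotent group by a (central, hence abelian) group is nilpotent, so $G$ is nilpotent. This completes the induction. The final sentence — that $\varphi(\pi_1(X))$ is nilpotent, when combined with \cref{thm:202210123}'s setup — is then immediate since $\varphi(\pi_1(X))$ is a subgroup of the nilpotent group $G(\mathbb{C})$, and subgroups of nilpotent groups are nilpotent. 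I should double-check whether the paper intends a cleaner argument avoiding the explicit commutator bookkeeping — possibly by invoking that $U$, as a unipotent group in characteristic zero, is determined by its nilpotent Lie algebra $\mathfrak{u}$, translating the hypothesis to "$T$ acts trivially on $\mathfrak{u}/[\mathfrak{u},\mathfrak{u}]$" and then using that $[\mathfrak{u},\mathfrak{u}]$ is generated by brackets of $T$-fixed vectors modulo deeper terms, so the $T$-action on all of $\mathfrak{u}$ is unipotent, hence trivial by \cref{lem:trivial morphism}; this Lie-algebra route is likely the shortest and is the version I would ultimately write up.
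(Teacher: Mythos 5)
Your proposal is correct, and in its final form it coincides with the paper's proof: the paper's argument is exactly your closing "Lie-algebra route" --- set $N=\mathrm{Lie}(U)$, observe that the hypothesis forces the image of $\alpha:T\to\mathrm{Aut}(N)$ to lie in the unipotent subgroup $\{\sigma : (\mathrm{id}_N-\sigma)N\subset[N,N]\}$, conclude from \cref{lem:trivial morphism} that $\alpha$ is trivial, hence $G=U\times T$ is nilpotent. Your main route is genuinely different and also works: the induction on the nilpotency class of $U$, with the standard $T$-equivariant surjection $U/U'\otimes U^{(i)}/U^{(i+1)}\to U^{(i+1)}/U^{(i+2)}$ showing each graded piece of the lower central series is $T$-trivial, followed by the observation that $U^{(c-1)}$ is then central in all of $G$ and that a central extension of a nilpotent group is nilpotent. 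The trade-off is that your induction never needs to show $T$ acts trivially on all of $U$ (only on the bottom graded piece at each stage), at the cost of more bookkeeping; the paper's version gets the full triviality of the $T$-action in one stroke and yields the cleaner structural conclusion $G=U\times T$. Note also that \cref{lem:20221012} is itself just the two-step case of "an iterated extension of trivial $T$-modules is trivial", so applying it up the lower central series of $\mathrm{Lie}(U)$ gives a third packaging of the same unipotence-plus-\cref{lem:trivial morphism} argument.
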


\begin{proof}
By $T\subset G$, the conjugate yields a $T$-action on $U$.
We shall show that this $T$-action is trivial.
To show this, we set $N=\mathrm{Lie}(U)$.
Then the $T$-action yields $\alpha:T\to \mathrm{Aut}(N)$.
We set 
$$S=\{ \sigma\in \mathrm{Aut}(N); (\mathrm{id}_N-\sigma)N\subset [N,N]\}.$$
Then the elements of $S$ are unipotent.
Since $T$ acts trivially on $U/U'$, we have $\alpha(T)\subset S$.
Hence $\alpha$ is trivial (cf \cref{lem:trivial morphism}).
Hence $T$ acts trivially on $U$.
Thus the elements of $T$ and $U$ commute.
Thus $G=U\times T$.
Since $U$ is nilpotent, $G$ is nilpotent.
\end{proof}

Since $T$ is commutative, we have 
$G'\subset U$, where $G'=[G,G]$ is the commutator subgroup.
Hence we have
$$
1\to U/G'\to G/G'\to T\to 1. 
$$
Since $G/G'$ is commutative and $U/G'$ is unipotent, we have $G/G'=(U/G')\times T$. 
By
$$
1\to G'/G''\to G/G''\to G/G'\to 1,
$$
$G/G'$ acts on $G'/G''$ by the conjugate.
By $T\subset (U/G')\times T=G/G'$, we get $T$-action on $G'/G''$.

\begin{lem}\label{claim:202210121}
Assume $T$ acts trivially on $G'/G''$.
Then $G$ is nilpotent.
\end{lem}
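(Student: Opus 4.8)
\textbf{Proof strategy for \cref{claim:202210121}.}

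The plan is to reduce the nilpotency of $G$ to the hypothesis of \cref{eqn:202210154}, i.e.\ to show that $T$ acts trivially on $U/U'$, using the hypothesis that $T$ acts trivially on $G'/G''$ together with the assumed triviality of the $T$-action on lower layers. Recall the exact sequence \eqref{eqn:20230321}, and that $G'\subset U$ since $T$ is commutative. First I would set up the filtration of $U$ by the derived series of $G$: we have $G''\subset G'\subset U$, and I would further interpose the commutator subgroups so as to relate $U/U'$ to $U/G'$ and $G'/G''$. The point is that $U/U'$ and $U/G'$ differ only by the image of $G'/(G'\cap U')=G'/G''\cdot(\text{something})$; more precisely, since $G'\subset U$ one has $U'\supset G''$, and the natural surjection $U/G'' \twoheadrightarrow U/G'$ has kernel $G'/G''$, while $U/U'$ is a further quotient of $U/G''$. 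So I would organize the argument around the two exact sequences of $T$-modules (abelian groups with equivariant $T$-action, hence vector spaces after passing to $\mathbf{Lie}$ or after noting these are vector groups)
\[
0 \to G'/G'' \to U/G'' \to U/G' \to 0, \qquad 0 \to (U'/G'') \to U/G'' \to U/U' \to 0.
\]

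The key steps, in order: (1) observe that $U/G'$ is a quotient of $G/G'$, and since $G/G' = (U/G')\times T$ with $T$ acting on itself by conjugation trivially (commutativity), $T$ acts trivially on $U/G'$. (2) By hypothesis $T$ acts trivially on $G'/G''$. (3) Apply \cref{lem:20221012} to the first exact sequence above to conclude that $T$ acts trivially on $U/G''$. (4) Since $U/U'$ is a $T$-equivariant quotient of $U/G''$, the $T$-action on $U/U'$ is trivial as well. (5) Invoke \cref{eqn:202210154} to conclude that $G$ is nilpotent.

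The main obstacle I anticipate is purely bookkeeping: making sure that the inclusions $G''\subset U'$, $G'\subset U$ are correctly used so that the quotients genuinely fit into short exact sequences of $T$-modules with equivariant maps, and that \cref{lem:20221012} (which is stated for vector spaces with equivariant torus action) applies — this requires noting that $U/G''$, $G'/G''$, $U/G'$ are all unipotent abelian, hence vector groups, and that the conjugation action of $T$ is algebraic and equivariant with respect to the group structure. Once these identifications are in place the deduction is a one-line application of \cref{lem:20221012} followed by \cref{eqn:202210154}; there is no hard analysis or geometry involved here, only the algebra of solvable linear algebraic groups.
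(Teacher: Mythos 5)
Your proposal is correct and follows essentially the same route as the paper: establish triviality of the $T$-action on $U/G'$ from the splitting $G/G'=(U/G')\times T$, propagate triviality through a two-step extension using \cref{lem:20221012}, deduce triviality on $U/U'$, and conclude by \cref{eqn:202210154}. The only difference is the choice of intermediate quotient: the paper runs the extension argument on $1\to G'/U'\to U/U'\to U/G'\to 1$ (noting $G''\subset U'\subset G'$, so that $G'/U'$ is a quotient of $G'/G''$ and all three terms are abelian), whereas you use $0\to G'/G''\to U/G''\to U/G'\to 0$ and then pass to the further quotient $U/U'$.

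One small inaccuracy: $U/G''$ need not be abelian, since $[U/G'',U/G'']=U'/G''$ can be nontrivial, so it is not a vector group as you assert. This does not break your argument — \cref{lem:20221012} must be applied at the Lie-algebra level in any case, $\mathrm{Lie}$ is exact on the sequence of unipotent groups, and triviality of the $T$-action on $\mathrm{Lie}(U/G'')$ gives triviality on $U/G''$ via the equivariant exponential in characteristic zero — but the paper's choice of $G'/U'\hookrightarrow U/U'\twoheadrightarrow U/G'$ sidesteps the issue entirely since all three terms there are genuinely abelian.
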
 

\begin{proof}
By $G'\subset U$, we have $G''\subset U'$, where $U'=[U,U]$.
Then we get
$$
1\to U'/G''\to G'/G''\to G'/U'\to 1.
$$
Hence $T$ acts trivially on $G'/U'$.
By $G/G'\simeq T\times (U/G')$, we note that $T$ acts trivially on $U/G'$.
Now we have the following exact sequence
$$
1\to G'/U'\to U/U'\to U/G'\to 1.
$$
This induces
$$
0\to \mathrm{Lie}(G'/U')\to \mathrm{Lie}(U/U')\to \mathrm{Lie}(U/G')\to 0.
$$
We know that both $T\to \mathrm{GL}(\mathrm{Lie}(G'/U'))$ and $T\to \mathrm{GL}(\mathrm{Lie}(U/G'))$ are trivial.
Hence by Lemma \ref{lem:20221012}, $T$ acts trivially on $\mathrm{Lie}(U/U')$.
Hence $T$ acts trivially on $U/U'$.
By \cref{eqn:202210154}, $G$ is nilpotent.
\end{proof}

\subsection{A lemma on Kodaira dimension of fibration}
 In this section, we prove an extension to the quasi-projective setting of \cite[Theorem 1.8]{Cam04}, and derive a useful consequence concerning algebraic fiber spaces of special varieties over semi-abelian varieties (see Corollary~\ref{cor:specialalg}).

We refer to  \Cref{sec:spechspecial} and to \cite{Cam11} for several definitions concerning orbifold bases. For our purposes, we only need to recall the following.
\medskip

Let \(f : X \to Y\) be a dominant morphism with general connected fibers between quasi-projective manifolds. Let $\Delta \subset Y$ be a prime divisor. We denote by $J(\Delta )$ the set of all prime divisors $D$ in $X$ such that $\overline{f(D)}=\Delta$. 
We write 
\begin{equation}\label{eqn:div}
f^*(\Delta )=\sum_{j\in J(\Delta )}m_jD_j+E_{\Delta}
\end{equation}
where $E_{\Delta}$ is an exceptional divisor of $f$, i.e., the codimension of $f(E_{\Delta})$ is greater than one.
We put 
$$m_{\Delta}=
\begin{cases}
\min_{j\in J(\Delta )}m_j & \text{if $J(\Delta)\not=\emptyset$} \\
+\infty & \text{if $J(\Delta)=\emptyset$}
\end{cases}
$$
Let $I(p)$ be the set of all $\Delta$ such that $m_{\Delta}\geq 2$. The {\em orbifold ramification divisor} of \(f\) on \(Y\) is $\Delta (f)=\sum_{\Delta \in I(p)} \left( 1 - \frac{1}{m_{\Delta}} \right)\Delta$. Consider now \(\overline{f} : \overline{X} \to \overline{Y}\), any extension of \(f\) between log-smooth compactifications, and let \(D := \overline{X} - X\) and \(G := \overline{Y} - Y\). Then the orbifold base of \(\bar{f}: (\overline{X}| D)\to \overline{Y}\)   is given by \(\Delta(\bar{f}, D) := G + \overline{\Delta(f)}\) (where \(\overline{\Delta(f)}\) is the \(\mathbb{Q}\)-divisor on \(\overline{Y}\) obtained from \(\Delta(f)\) by taking the closure of every component).

\begin{proposition}\label{prop:KDF}  Let $\{f_i:X_i\to Y_i\}_{i=1,2}$   be  dominant morphisms between smooth quasi-projective varieties with connected general fibers, and let \(\{\overline{f}_{i}:\overline{X}_{i}\to \overline{Y}_{i}\}_{i=1,2}\) be two extensions to log-smooth projective compactifications. We assume that we have two compatible commutative diagrams
	\begin{equation*}
		\begin{tikzcd}
			X_2 \arrow[r, "u_\circ"]\arrow[d, "f_2"] & X_1\arrow[d, "f_1"] \\
			Y_2 \arrow[r, "v_\circ"] & Y_1 
		\end{tikzcd}
		\hspace*{2em}
		\text{and}
		\hspace*{2em}
		\begin{tikzcd}
			\overline{X}_2 \arrow[r, "u"]\arrow[d, "\overline{f}_2"] & \overline{X}_1\arrow[d, "\overline{f}_1"] \\
			\overline{Y}_2 \arrow[r, "v"] & \overline{Y}_1 
		\end{tikzcd}
	\end{equation*}
	where $u_\circ$ and $v_\circ$ are  proper birational morphisms. Denoting by \(D_{i} := \overline{X}_{i} - X_{i}\), we let \((\overline{Y}_{i}| \Delta(\bar{f}_{i}, D_{i}))\) (resp. \((Y| \Delta(f_{i}))\) be the orbifold base of the fibration \(\overline{f}_{i} : (\overline{X}_{i}| D_{i}) \to \overline{Y}_{i}\) (resp. of the fibration \(f_{i} : X_{i} \to Y_{i}\)). Then, one has
	\begin{thmlist}
	\item the orbifold Kodaira dimension satisfy \(\kappa(\overline{Y}_{2}| \Delta(\bar{f}_{2}, D_{2})) \leq \kappa(\overline{Y}_{1}| \Delta(\bar{f}_{1}, D_{1}))\);  
\item If \(\bar{\kappa}(Y_{1}) \geq 0\) and $\bar{\kappa}(\overline{Y}_1-{\rm Supp}\, \Delta(\bar{f}_1,D_1))=\dim Y_1$, then   
 $K_{\overline{Y}_1}+\Delta(\bar{f}_1,D_1)$ and  $ K_{\overline{Y}_2}+\Delta(\bar{f}_2,D_2)$ are both big line bundles.  
 In particular,  $\kappa(Y_1,f_1)=\dim Y_1$, where $\kappa(Y_1,f_1)$ is the Kodaira dimension of   $f_1:X_1\to Y_1$ defined in  \eqref{eq:Kodaira2}.
	\end{thmlist}
\end{proposition}

\begin{proof}[Proof of Proposition~\ref{prop:KDF}]
Let us prove the first statement. According to \cite[Lemme 4.6]{Cam11},  
we have
\begin{align} \label{eq:Cam}
	 v^*\Delta(\bar{f}_{1}, D_{1})=\Delta(\bar{f}_{2}, D_{2})+E,
\end{align} 
where $E$ is some effective $\bQ$-divisor which is $v$-exceptional.  

Since \(\overline{Y}_{1}\) is smooth, hence has terminal singularities, we have therefore
\begin{equation} \label{eq:difforbcanonical}
K_{\overline{Y}_2}+\Delta(\bar{f}_{2}, D_{2})+E=v^*(K_{\overline{Y}_1}+\Delta(\bar{f}_{1}, D_{1}))+F,
\end{equation}
where  $F$ is also an effective  divisor.
Therefore, we have
\begin{align*}
	\kappa(K_{\overline{Y}_1}+\Delta(\bar{f}_{1}, D_{1})) & = \kappa(v^*(K_{\overline{Y}_1}+\Delta(\bar{f}_{1}, D_{1}))) =\kappa(v^*(K_{\overline{Y}_1}+\Delta(\bar{f}_{1}, D_{1}))+F)\\  &=\kappa(K_{\overline{Y}_2}+\Delta(\bar{f}_{2}, D_{2})+E)\geq \kappa(K_{\overline{Y}_2}+\Delta(\bar{f}_{2}, D_{2})).
\end{align*} 
The first claim follows.
\medskip

	We will prove the second claim.  We let \(G_{i} := \overline{Y}_{i} - Y_{i}\). By assumption, this is a simple normal crossing divisor, and we can write
\begin{align}\label{eq:canonical formula}
	 	K_{\overline{Y}_{2}} + G_{2} = v^{\ast}(K_{\overline{Y}_{1}} + G_{1}) +E_b
\end{align} 
	where \(E_b\) is a  \(v\)-exceptional effective divisor. We note that if $E_0$ is a prime \(v\)-exceptional   divisor that is not contained in $G_2$, we have $E_b\geq E_0$. We note that 
	$\Delta(\bar{f}_i,D_i)=G_i+\Delta_i$, where $\Delta_i$ is an effective divisor such that each of its irreducible component is not contained in $G_i$.      
Let $\Delta_1'$ be the strict transform of $\Delta_1$. It follows that  
 $
\Delta_2\geq \Delta_1'.
$  

We denote by $\Delta_1''$ the positive part of $v^*\Delta_1-G_2$. Then    
$|\Delta_1''|=|\Delta_1'|+F_2 
$ 
by \eqref{eq:Cam}, where $F_2$ is  a \(v\)-exceptional   effective divisor such that each of its irreducible component is not contained in $G_2$.   Here $|\Delta_1'|$ denotes the support of $\Delta'_1$.   
 
 Write $Y_1^\circ:=\overline{Y}_1-{\rm Supp}\, \Delta(\bar{f}_1,D_1)$ and $Y_2^\circ:=v^{-1}(Y_1^\circ)$.  By our assumption, $\bar{\kappa}(Y_2^\circ)=\bar{\kappa}(Y_1^\circ)=\dim Y_2$.  Therefore, 
 $
K_{\overline{Y}_1}+|\Delta_1|+G_1
$  and  $
 K_{\overline{Y}_2}+|\Delta_1''|+G_2
 $ are both   big line bundles by \cite[Lemma 3]{NWY13}.
\begin{claim}\label{claim:big}
	The line bundle 
 $
K_{\overline{Y}_2}+ G_2+\Delta_2
$  is big.
\end{claim}
\begin{proof}
	Let $m\in \bZ_{>0}$ sufficiently large such that $m E_b\geq F_2$.  By \eqref{eq:canonical formula}, we have
	$$
	\kappa(	K_{\overline{Y}_{2}} + G_{2}-E_b)\geq 0. 
	$$
	Therefore,
	$$
	\kappa(	m(K_{\overline{Y}_{2}} + G_{2}-E_b)+ K_{\overline{Y}_2}+|\Delta_1''|+G_2 )=\dim Y_2.
	$$
	Note that
	\begin{align*}
		m(K_{\overline{Y}_{2}} + G_{2}-E_b)+ K_{\overline{Y}_2}+|\Delta_1''|+G_2\leq 	(m+1)(K_{\overline{Y}_{2}} + G_2)  +|\Delta_1'|.
	\end{align*}
Hence 
$$
\kappa(		(m+1)(K_{\overline{Y}_{2}} + G_2)  +|\Delta_1'|)=\dim Y_2.
$$
We take $\ep\in \bQ_{>0}$ small enough such that  $\ep(m+1)<1$ and $\ep|\Delta_1'|\leq \Delta_1'$. Since $K_{\overline{Y}_{2}} + G_2$ is $\bQ$-effective, it follows that
$$
\ep((m+1)(K_{\overline{Y}_{2}} + G_2)  +|\Delta_1'|)\leq K_{\overline{Y}_{2}} + G_2+\Delta_1'\leq K_{\overline{Y}_{2}} + G_2+\Delta_2.
$$
Therefore, $K_{\overline{Y}_{2}} + G_2+\Delta_2$ is big. 
\end{proof} 
\Cref{claim:big} implies that  $ K_{\overline{Y}_2}+\Delta(\bar{f}_2,D_2)$ is big. We thus proved the second claim. The last claim follows from the very definition of Kodaira dimension of fibration in \eqref{eq:Kodaira2}. 
\end{proof}
 
 We have the following consequence. 
\begin{cor} \label{cor:specialalg}
	Let \(X\) be a special smooth quasi-projective variety, let \(A\) be a semi-abelian variety, and let \(p : X \to A\) be a dominant morphism with connected general fibers. Let \(\Delta(p)\) be the orbifold base divisor on \(A\) defined at the beginning of this section, and let \(\mathrm{St}(\Delta(p))\) be its stabilizer under the action of \(A\). If \(\mathrm{dim}\; \mathrm{St}(\Delta(p)) = 0\), then \(X\) is not special. 
\end{cor}
\begin{proof}
	Let $\overline{A}$  be an equivariant smooth compactification of $A$ such that $G:=\overline{A}-A$ is a simple normal crossing divisor such that $K_{\overline{A}}+G=\cO_{\overline{A}}$. We take a smooth projective compactification $\overline{X}$ of $X$   such that $D:=\overline{X}-X$ is a simple normal crossing divisor and $p$ extends to a morphism $\bar{p}:\overline{X}\to \overline{A}$.   Denote by $\Delta(\bar{p},D)$ the orbifold divisor of $\bar{p}:(\overline{X}|D)\to \overline{A}$.   Let $\Delta(p)$ be the orbifold divisor of $p:X\to A$ defined at the beginning of this subsection. Then we have
	$$
	\Delta(\bar{p},D) =G+\overline{\Delta(p)}.
	$$
Hence $\overline{A}-|\Delta(\bar{p},D)|=A-|\Delta(p)|$.
	
Since \(\mathrm{dim}\; \mathrm{St}(\Delta(p)) = 0\),	by \cite[Proposition 5.6.21]{NW13}, the variety $A-|\Delta(p)|$, hence  $\overline{A}-|\Delta(\bar{p},D)|$ is of log general type.  Therefore,  conditions in  \Cref{prop:KDF} are fulfilled. This implies that \(\kappa( {A},  {p}) = \dim A\), so \(X\) is not special by \cref{def:special}.
\end{proof}

\subsection{$\pi_1$-exactness of quasi-Albanese morphisms}\label{s2}
Let $X$ and $Y$ be smooth quasi-projective varieties. 
We say that a morphism $p:X\to Y$ is an algebraic fiber space 
if 
\begin{itemize}
\item
$p$ is dominant, and
\item
$p$ has general connected fibers. 
\end{itemize}

Let $p:X\to Y$ be an algebraic fiber space, and let $F$ be a general fiber of $p$ (assumed to be smooth). Then there is a natural sequence of morphisms of groups:
\begin{equation}\label{eqn:gp}
\pi _1(F)\overset{i_*}{\to}\pi _1(X)\overset{p_*}{\to} \pi _1(Y).
\end{equation}
Note that $p_*$ is surjective (because of the second condition of the algebraic fiber spaces) and that the image of $i_*$ is contained in the kernel of $p_*$.
\begin{dfn}
For an algebraic fiber space $p:X\to Y$, we say that $p$ is $\pi _1$-exact if the above sequence \eqref{eqn:gp} is exact.
\end{dfn}

By Proposition \ref{prop:factor}, if $X$ is $h$-special, then the quasi-Albanese map $X\to A(X)$ is an algebraic fiber space.

In this subsection, we prove the following proposition.

\begin{proposition}\label{pro:202210131}
Let $X$ be a $h$-special or special quasi-projective manifold. 
Let $p:X\to A$ be an algebraic fiber space where $A$ is a semi-abelian variety. 
Then $p$ is $\pi _1$-exact. 
In particular, the quasi-Albanese map $a_X:X\to A(X)$ is $\pi _1$-exact.
\end{proposition}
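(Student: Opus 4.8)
\textbf{Proof proposal for Proposition \ref{pro:202210131}.}

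The plan is to reduce the statement to Campana's original result on $\pi_1$-exactness of the Albanese map in the projective/compact Kähler setting, combined with the structure of semi-abelian varieties and the results of this section. First I would fix an algebraic fiber space $p : X \to A$ with $A$ a semi-abelian variety, and let $F$ be a general (smooth, connected) fiber; I want to show that $\mathrm{Im}[\pi_1(F) \to \pi_1(X)] = \ker[\pi_1(X) \to \pi_1(A)]$. One inclusion is automatic, so the content is that the kernel is generated by loops supported on a general fiber. Since $A$ fits into a short exact sequence $1 \to (\mathbb{C}^*)^\ell \to A \to A_0 \to 1$ with $A_0$ an abelian variety, I would first treat the case where $A$ is compact (i.e. an abelian variety), where this is exactly Campana's theorem for special (and, via \cref{lem:202304063} and \cref{lem:20230407}, $h$-special) manifolds mapping to their Albanese — here one uses that specialness is preserved under étale covers (\cref{special}), and that $h$-specialness is preserved under étale covers, birational modifications and dominant images (\cref{lem:202304061,lem:20230407,lem:202304063}). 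The key point that makes $\pi_1$-exactness work in the special/$h$-special case, as opposed to for arbitrary $X$, is that no multiple fiber of any further fibration can occur over a positive-dimensional base of log general type: if the sequence were not exact, one would produce (after an étale cover and birational modification) an orbifold fibration onto a positive-dimensional base of log general type, contradicting specialness; for $h$-special one instead uses that the base would be pseudo-Brody hyperbolic by \cref{cor:20221102}, contradicting that a dominant image of an $h$-special variety is $h$-special (\cref{lem:202304063}) together with \cref{lem:202304065}.

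Next I would bootstrap from the compact case to the general semi-abelian case. Compose $p$ with $A \to A_0$ to get an algebraic fiber space $X \to A_0$ (after taking the quasi-Stein factorisation and noting the base is again a semi-abelian quotient, hence an abelian variety); the compact case gives $\pi_1$-exactness there, so $\ker[\pi_1(X) \to \pi_1(A_0)]$ is generated by a general fiber $X_0$ of $X \to A_0$. The fiber $X_0$ maps to the fiber $(\mathbb{C}^*)^\ell$ of $A \to A_0$ via $p|_{X_0}$, and $X_0$ is again special or $h$-special (being a general fiber — for the $h$-special case one uses \cref{lem:202304063} applied to the restriction, or a Noether-induction-type argument over a dense open subset where the fibration is locally trivial). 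So it remains to establish $\pi_1$-exactness for an algebraic fiber space $X_0 \to (\mathbb{C}^*)^\ell$ onto an affine torus, which I would handle by induction on $\ell$: a single $\mathbb{C}^*$ is handled directly (the kernel of $\pi_1(X_0) \to \mathbb{Z}$ is generated by a general fiber because there are no multiple fibers over the punctured-disk neighborhoods of $0, \infty$ — again blocked by specialness/$h$-specialness of the base of any hypothetical stabilizing fibration), and then one peels off the torus factors one at a time, using at each stage that the relevant general fibers stay special or $h$-special. Diagram-chasing the resulting commutative ladder of short exact sequences of fundamental groups then assembles the full statement for $X \to A$.

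The last sentence of the proposition is then immediate: the quasi-Albanese map $a_X : X \to A(X)$ is an algebraic fiber space by \cref{prop:factor} (since $X$ is $h$-special or special, hence weakly special when special, by \cref{special}), so the general statement applies with $A = A(X)$.

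\textbf{Main obstacle.} The hard part will be rigorously controlling the behavior at the "boundary" directions of the affine torus factors $(\mathbb{C}^*)^\ell$ — i.e. ruling out multiple fibers of auxiliary fibrations over punctured-disk neighborhoods of the cusps — since Campana's compact-Kähler $\pi_1$-exactness is stated for proper fibrations and does not directly see the non-compact locus. Making the induction on $\ell$ clean requires knowing that the general fiber of each intermediate algebraic fiber space is still special or $h$-special; for $h$-special this is not entirely formal (the definition involves a Zariski-dense set of chains of entire curves, and one must check this persists on general fibers), so I expect this fiber-stability lemma, and its interaction with the $\pi_1$-exactness obstruction being an orbifold base of log general type, to be where the real work lies. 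For the special case this stability is in Campana's work; for the $h$-special case one argues via \cref{lem:202304063} and a relative version of the chain-of-entire-curves argument over the locus where the fibration is topologically locally trivial.
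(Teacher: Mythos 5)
Your overall strategy --- identify multiple fibers as the obstruction and kill them using specialness --- is the right idea, but two of your reduction steps fail, and the paper organizes the proof quite differently precisely to avoid these failures. The decomposition of $A$ into its abelian quotient $A_0$ and the torus fibers does not buy you Campana's compact theorem: even when the target is the abelian variety $A_0$, the total space $X$ is quasi-projective and $X\to A_0$ is not proper, so the compact-K\"ahler result does not apply and the multiple-fibers-plus-Van-Kampen argument must be redone in the open setting anyway. Worse, your induction on the torus factors requires the general fiber $X_0$ of $X\to A_0$ (and every subsequent intermediate fiber) to again be special or $h$-special; you correctly flag this as ``the real work,'' but it is a genuine gap, and the paper never proves it because its argument never needs it. The paper instead inducts on $A$ itself: it forms the multiple-fiber divisor $\Delta(p)\subset A$, shows (\cref{lem:3}) that specialness or $h$-specialness forces its stabilizer to be positive-dimensional, quotients $A\to A/\mathrm{St}^o(\Delta(p))$, and transfers exactness back up (\cref{lem:5}). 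The only fact needed about the fibers of the intermediate map is that their own multiple-fiber divisor is empty, which follows from the compatibility $\Delta(p_b)\subset\Delta(p)\cap A_b$ (\cref{lem:20221020}) plus a generic choice of $b$ outside the image of $\Delta(p)$ --- no specialness of the fiber is invoked. Note also that $\mathrm{St}^o(\Delta(p))$ need not be a coordinate subtorus of $(\mathbb{C}^*)^\ell$, nor contained in the torus part at all, so peeling off $\mathbb{C}^*$ factors one at a time does not track the actual obstruction; your induction would simply reproduce the paper's induction inside each of your two cases.

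The second and more essential problem is your treatment of the $h$-special case. When $\Delta(p)\neq\emptyset$ has finite stabilizer, the obstruction is the \emph{orbifold} pair $(A,\Delta(p))$, which is of log general type only as an orbifold; $X$ does not dominate any honest positive-dimensional variety of log general type, so you cannot conclude via \cref{lem:202304063} and \cref{lem:202304065} (dominant images of $h$-special varieties are $h$-special, and pseudo-Brody hyperbolic varieties are not $h$-special), and \cref{cor:20221102} says nothing about orbifolds. What is actually required is an orbifold second main theorem: an entire curve $f:\mathbb{C}\to A$ meeting $\Delta(p)$ with multiplicity $\geq 2$ away from a codimension-two set must be algebraically degenerate. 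This is \cref{prop:20230405}, proved with the Nevanlinna-theoretic machinery of \cref{sec:GGL}, and it is the genuinely hard input behind the $h$-special case of \cref{lem:3}; your proposal has no substitute for it.
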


The rest of this subsection is devoted to the proof of \cref{pro:202210131}, for which we need three lemmas.
\begin{lem}\label{lem:3}
Let $X$ be a $h$-special or special quasi-projective variety. 
Let $A$ be a semi-abelian variety and let $p:X\to A$ be an algebraic fiber space. If $\Delta (p)\not= \emptyset$, then $\dim \mathrm{St}(\Delta (p))> 0$, where $ \mathrm{St}(\Delta (p))=\{ a\in A; \ a+\Delta (p)=\Delta (p)\}$.
\end{lem}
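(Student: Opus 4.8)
\textbf{Proof plan for Lemma~\ref{lem:3}.} The statement asserts that if the divisor $\Delta(p)$ (the ``multiple-fiber locus'' of $p:X\to A$) is nonempty, then it has positive-dimensional stabilizer in $A$. I would argue by contradiction: suppose $\mathrm{St}(\Delta(p))$ is finite, so that $A-\Delta(p)$ is of log general type by \cref{prop:Koddimabb}(b) (indeed $\Delta(p)$ is then a divisor with finite stabilizer, and removing a divisor from a semi-abelian variety whose complement's boundary has a component of codimension one gives $\bar\kappa>0$; more precisely the quotient $A/\mathrm{St}^0(\Delta(p))=A$ and the image of $\Delta(p)$ still has a codimension-one component, so the argument of Kawamata/Noguchi–Winkelmann applies). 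The plan is to exploit the multiplicity-$\geq 2$ structure: since every prime component $D_j$ over a component $\Delta\subset\Delta(p)$ appears in $p^*\Delta$ with multiplicity $m_j\geq 2$, the morphism $p$ exhibits $\Delta(p)$ as a ``multiple fiber divisor,'' which should force an orbifold structure on the base.

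The key step is to pass to an orbifold statement contradicting specialness (resp.\ $h$-specialness). Concretely: using \eqref{eqn:div} and the definition of $m_\Delta\geq 2$, the orbifold base $(A,\Delta_p)$ of $p$ — in the sense of Campana, with orbifold multiplicity along $\Delta$ at least $m_\Delta\geq 2$ — satisfies that $(A,\sum_{\Delta\in I(p)}(1-\tfrac{1}{m_\Delta})\Delta)$ has $\bar\kappa \geq \bar\kappa(A, \Delta(p))$ after a suitable computation, hence is of log general type because $\mathrm{St}(\Delta(p))$ is finite; this would contradict the definition of specialness (\cref{def:special}) directly, since $p:X\to A$ would be a dominant morphism with connected general fibers onto an orbifold of log general type. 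For the $h$-special case, I would instead use the Bloch–Ochiai/log-Bloch–Ochiai input via \cref{cor:20221102}: an orbifold of log general type on a semi-abelian variety is pseudo-Brody hyperbolic (it is itself an open subset of a semi-abelian variety of log general type up to étale cover, and \cref{cor:20221102} applies since $A$ has maximal quasi-Albanese dimension), whereas composing the conjectured infinite family of entire curves witnessing $h$-specialness of $X$ with $p$ (noting that the multiple-fiber condition forces these image curves to avoid $\Delta(p)$, by the ramification argument of Step~3–4 of \cref{prop:factor}) would produce a Zariski-dense set of entire curves missing a proper Zariski closed set, contradicting pseudo-Brody hyperbolicity.

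The main obstacle is the ramification/multiplicity bookkeeping: one must verify that entire curves $f:\mathbb C\to X$ (coming from $h$-specialness), or more precisely their images under $p$, interact correctly with the multiple components $D_j$. The crucial point — analogous to Step~3 of the proof of \cref{prop:factor} — is that an entire curve cannot meet a prime component $D_j$ of $p^*\Delta$ with multiplicity $m_j\geq 2$ ``transversally enough'' to produce a non-orbifold curve on the base; one uses the Second Main Theorem for semi-abelian varieties (as in \cite{NWY07,NW13}) together with the fact that $A-\Delta(p)$ is of log general type to show $p\circ f$ has image in $\Delta(p)$ unless the multiplicities are exactly $1$, which contradicts $\Delta(p)\neq\emptyset$. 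Carrying this out rigorously requires care: one should blow up to make $p$ equidimensional with SNC fibers, track how $\Delta(p)$ changes under birational modification of the base (it is a birational invariant in the orbifold sense), and handle the distinction between ``special'' (use the definition with orbifold base directly, no hyperbolicity needed) and ``$h$-special'' (where one invokes \cref{cor:20221102} and \cref{lem:202304065}). I expect the special case to be essentially immediate from \cref{def:special}, and the genuine work to be concentrated in the $h$-special case, where the interplay between the finitely-many entire curves, the multiple-fiber locus, and pseudo-Brody hyperbolicity of the orbifold base must be made precise.
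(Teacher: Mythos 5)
Your overall strategy matches the paper's: argue by contradiction, dispose of the special case immediately because a finite stabilizer makes $\Delta(p)$ big on an equivariant compactification, so the orbifold base of $p$ is of log general type and $X$ cannot be special; and for the $h$-special case use an orbifold second main theorem on the semi-abelian base to force degeneracy of $p\circ f$ for entire curves $f$ in $X$. However, two steps in your $h$-special argument are stated in a way that would fail. First, the multiple-fiber condition does \emph{not} force the image curves $p\circ f$ to "avoid $\Delta(p)$"; it forces $\mathrm{ord}_y(p\circ f)^{*}\Delta(p)\geq 2$ at every point of $(p\circ f)^{-1}(\Delta(p))$ lying away from the image of the exceptional locus of $p$, and the conclusion of the relevant SMT is then that $p\circ f$ is either constant or has image \emph{contained in} $\Xi\cup\Delta(p)$ for a fixed proper Zariski closed set $\Xi\subsetneqq A$. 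The tool is the paper's \cref{prop:20230405}, not \cref{cor:20221102} (which concerns log general type varieties, not orbifold pairs $(A,\Delta)$ with multiplicities). Note also that the multiplicity bound genuinely fails over $Z=\overline{p(E)}$, where $E$ is the exceptional locus of $p$ (the term $E_{\Delta}$ in \eqref{eqn:div} need not carry multiplicity $\geq 2$); this is exactly why \cref{prop:20230405} is formulated with a codimension-two exceptional subset $Z\subset D$, and your plan must incorporate it — no blow-up to equidimensionalize $p$ is needed.

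Second, your closing contradiction ("a Zariski-dense set of entire curves missing a proper Zariski closed set, contradicting pseudo-Brody hyperbolicity") does not parse: pseudo-Brody hyperbolicity is violated by a curve \emph{not contained in} the exceptional set, and $h$-specialness does not provide a single Zariski-dense entire curve (the paper stresses that the converse of \cref{lem:20230406} fails). The correct mechanism is the chain argument: choose $x\sim y$ with $p(x)\neq p(y)$ and $p(x)\notin\Xi\cup\Delta(p)$, which is possible because $R$ is Zariski dense while $\Xi\cup\Delta(p)$ is a proper closed subset of $A$; since each $p\circ f_i$ in the connecting chain is either constant or lands in the closed set $\Xi\cup\Delta(p)$, induction along the chain shows every $Z_i$ lies in the single fiber $p^{-1}(p(x))$, whence $p(y)=p(x)$, a contradiction. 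With these two corrections your argument coincides with the paper's proof.
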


\begin{proof}
Suppose $\Delta (p)\not= \emptyset$.
To show $\dim \mathrm{St}(\Delta (p))> 0$, we assume contrary $\dim \mathrm{St}(\Delta (p))= 0$.

	The case where \(X\) is special has been dealt with in \cref{cor:specialalg}, which implies readily that $\dim \mathrm{St}(\Delta (p))> 0$ in this situation. 

Next, we consider the case that $X$ is $h$-special.
Let $E\subset X$ be the exceptional divisor of $p$.
Let $Z\subset A$ be the Zariski closure of $p(E)$.
Then $\mathrm{codim}(Z,A)\geq 2$.
We apply \cref{prop:20230405} for the divisor $\Delta(p)\subset A$ and $Z\cap\Delta(p)$ to get a proper Zariski closed set $\Xi\subsetneqq A$.
Let $f:\mathbb C\to X$ be a holomorphic map such that $p\circ f$ is non-constant.
We first show that $p\circ f(\mathbb C)\subset \Xi\cup \Delta(p)$.
Indeed, we have $\mathrm{ord}_y(p\circ f)^{*}(\Delta(p))\geq 2$ for all $y\in (p\circ  f)^{-1}(\Delta(p)\backslash Z)$.
Let $g:\mathbb C\to A$ be defined by $g(z)=p\circ f(e^z)$.
Then $g$ has essential singularity over $\infty$.
Thus by \cref{prop:20230405}, we have $g(\mathbb C)\subset \Xi\cup \Delta(p)$.
Thus $p\circ f(\mathbb C)\subset \Xi\cup \Delta(p)$.

Now since $X$ is $h$-special, we may take two points $x,y\in X$ so that $x\sim y$, $p(x)\not=p(y)$ and $p(x)\not\in \Xi\cup \Delta(p)$.
Then there exists a sequence $f_1,\ldots,f_l:\mathbb C\to X$ such that 
$$x\in F_1, F_1\cap F_2\not=\emptyset,\ldots,F_{l-1}\cap F_l\not=\emptyset, y\in F_l,$$
where $F_i\subset X$ is the Zariski closure of $f_i(\mathbb C)\subset X$.
By $p(x)\not\in \Xi\cup \Delta(p)$, we have $p\circ f_1(\mathbb C)\not\subset  \Xi\cup \Delta(p)$.
Hence $p\circ f_1$ is constant and $F_1\subset  p^{-1}(p(x))$.
By $F_1\cap F_2\not=\emptyset$, we have $p(x)\in p(F_2)$.
Hence $p\circ f_2(\mathbb C)\not\subset  \Xi\cup \Delta(p)$.
Hence $p\circ f_2$ is constant and $F_2\subset  p^{-1}(p(x))$.
Similary, we get $F_i\subset  p^{-1}(p(x))$ for all $i=1,2,\ldots,l$ inductively.
In particular, we have $F_l\subset  p^{-1}(p(x))$.
Thus $p(y)=p(x)$.
This is a contradiction.
Thus we have proved $\dim \mathrm{St}(\Delta (p))> 0$.
\end{proof}

\begin{lem}\label{lem:4}
Let $p:X\to Y$ be an algebraic fiber space. Assume that $\Delta (p)=\emptyset$. Then $p$ is $\pi _1$-exact. 
\end{lem}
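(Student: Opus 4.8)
\textbf{Proof plan for Lemma~\ref{lem:4}.}
The statement asserts that an algebraic fiber space $p:X\to Y$ with $\Delta(p)=\emptyset$ is $\pi_1$-exact, i.e.\ the sequence $\pi_1(F)\to\pi_1(X)\to\pi_1(Y)\to 1$ is exact, where $F$ is a general (smooth) fiber. The nontrivial content is that $\ker(p_*)$ is contained in the image of $i_*:\pi_1(F)\to\pi_1(X)$; surjectivity of $p_*$ is automatic from connectedness of general fibers. The plan is to compare the situation over a Zariski open locus of $Y$ where $p$ is a topologically locally trivial fibration with the full space, and to control the correction terms using the hypothesis $\Delta(p)=\emptyset$.

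First I would choose a Zariski open $Y^\circ\subset Y$ such that $X^\circ:=p^{-1}(Y^\circ)\to Y^\circ$ is a topologically locally trivial fibration with connected fiber $F$; over $Y^\circ$ the homotopy exact sequence of the fibration gives exactness of $\pi_1(F)\to\pi_1(X^\circ)\to\pi_1(Y^\circ)\to 1$. Then I would use \cref{lem:fun} to see that $\pi_1(X^\circ)\to\pi_1(X)$ and $\pi_1(Y^\circ)\to\pi_1(Y)$ are surjective, so it remains to understand the kernels of these surjections in terms of meridian loops around the divisorial components of $Y-Y^\circ$ and $X-X^\circ$. The key point, exactly as in Step~4 of the proof of \cref{lem:factor0}, is this: for a divisorial component $\Delta$ of $Y-Y^\circ$, after possibly enlarging $Y^\circ$ we may assume $p^{-1}(\Delta)$ has a component $D_j$ dominating $\Delta$ (components not meeting $p(X)$ or of the exceptional type are harmless); the meridian loop $\gamma_{D_j}$ around $D_j$ maps to $\eta_\Delta^{m_j}$, where $\eta_\Delta$ is the meridian around $\Delta$. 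Since $\Delta(p)=\emptyset$ forces $m_\Delta=1$ for every such $\Delta$ (there is always a component of multiplicity one over $\Delta$, in fact $m_\Delta\le 1$ means $m_\Delta=1$), we get that $\eta_\Delta$ itself lies in the image of $\pi_1$ of the smooth locus of that component $D_j$, hence in the image of $i_*$ up to the relations already present in $\pi_1(X)$.

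The argument then runs as follows. Take $\alpha\in\ker(p_*:\pi_1(X)\to\pi_1(Y))$. Lift $\alpha$ to $\tilde\alpha\in\pi_1(X^\circ)$ via surjectivity. Its image $p_*^\circ(\tilde\alpha)\in\pi_1(Y^\circ)$ lies in $\ker(\pi_1(Y^\circ)\to\pi_1(Y))$, which by \cref{lem:fun} is normally generated by the meridians $\eta_\Delta$ over the divisorial components $\Delta$ of $Y-Y^\circ$. Using the multiplicity-one statement above, each $\eta_\Delta$ is the $p_*^\circ$-image of a loop in $\pi_1(X^\circ)$ supported near a component $D_j$ dominating $\Delta$; modifying $\tilde\alpha$ by such loops (and by elements of $\pi_1(F)$, since the loops lie in a neighbourhood that retracts onto a fiber up to such corrections), we reduce to the case $p_*^\circ(\tilde\alpha)=1$ in $\pi_1(Y^\circ)$. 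By the homotopy exact sequence of the fibration $X^\circ\to Y^\circ$, $\tilde\alpha\in\operatorname{Im}[\pi_1(F)\to\pi_1(X^\circ)]$, hence $\alpha\in\operatorname{Im}[\pi_1(F)\to\pi_1(X)]$. This establishes $\ker(p_*)\subseteq\operatorname{Im}(i_*)$, and combined with the trivial reverse inclusion and surjectivity of $p_*$, proves $\pi_1$-exactness.

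The main obstacle I anticipate is the bookkeeping in the reduction step: one must be careful that modifying $\tilde\alpha$ by loops around the $D_j$'s does not introduce new obstructions, and that the meridian $\gamma_{D_j}$ can be chosen so that $\gamma_{D_j}$ together with a fiber loop generate the relevant local $\pi_1$ (this uses that near a general point of $D_j$ the map $p$ looks like $(z_1,\dots)\mapsto(z_1^{m_j},\dots)$ in adapted coordinates, and $m_j=1$ for the chosen component). The device of replacing $Y^\circ$ by a larger open set with complement of codimension $\ge 2$ and invoking $\pi_1(Y)\cong\pi_1(Y^{\circ\circ})$ for $Y$ smooth, exactly as at the end of the proof of \cref{lem:factor0}, handles the non-divisorial part cleanly; the genuinely new input compared to that lemma is the use of $\Delta(p)=\emptyset$ to kill the torsion that would otherwise appear when $m_\Delta\ge 2$.
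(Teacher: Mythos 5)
Your proposal is correct and follows essentially the same route as the paper's proof of \cref{lem:4}: restrict to the locus where $p$ is a locally trivial fibration, use the homotopy exact sequence there, identify the kernel of $\pi_1(Y^\circ)\to\pi_1(Y)$ via Van Kampen as normally generated by meridians, and use $\Delta(p)=\emptyset$ to lift each meridian $\eta_\Delta$ to a meridian around a multiplicity-one component $D_\Delta$, which dies in $\pi_1(X)$. The paper packages this as a $3\times3$ diagram chase showing $K\to L$ is surjective, whereas you run the equivalent element-by-element chase; the content is identical.
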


\begin{proof}
This lemma is proved implicitly in  \cite[A.C.10]{Cam98} 
and  \cite[Lemme 1.9.9]{Cam01}. 
Let $Y^*$ be a Zariski open subset of $Y$ on which $p$ is a locally trivial $C^\infty$ fibration. 
We may assume that $Y\backslash Y^*$ is a divisor. Set $X^*=p^{-1}(Y^*)$. For each irreducible component $\Delta $ of $Y\backslash Y^*$, we write $p^*(\Delta )=\sum_{j\in J(\Delta )}m_jD_j+E_{\Delta}$ as in \eqref{eqn:div}.
Then since $m_{\Delta}=1$, there is a divisor $D_j$, $j\in J(\Delta )$, such that $m_j=1$. We denote this $D_j$ by $D_{\Delta }$.
Fix a base point $x$ in $X^*$ and let $\gamma _{\Delta}$ be a small loop going once around the divisor $D_{\Delta}$ in the counterclockwise direction. Then $\delta _{\Delta}=p_*(\gamma _{\Delta})$ is a small loop going once around the divisor $\Delta$ for the corresponding base point $y=p(x)$ in $Y^*$.
Since the restriction $p|_{X^*}:X^*\to Y^*$ is smooth and surjective, 
we have the following exact sequence:
\begin{equation}\label{eqn:1}
\pi _1(F,x)\to \pi _1(X^*,x)\to \pi _1(Y^*,y)\to 1,
\end{equation}
where $F$ is the fiber of $p$ over $y$. Let $\Phi$ be the image of $\pi _1(F,x)\to \pi _1(X^*,x)$.
\par
Now we look the following exact sequence:
\begin{equation*}
\begin{CD}
@. @. 1 @. 1\\
@. @. @VVV @VVV \\
@. @. \Phi @>>> \Psi @>>> \widetilde{C}@>>> 1\\
@. @. @VVV @VVV \\
1 @>>> K @>>> \pi _1(X^*,x) @>>> \pi _1(X,x)@>>> 1\\
@. @VVV @VVV @VVV @.\\
1 @>>> L @>>> \pi _1(Y^*,y) @>>> \pi _1(Y,y)@>>> 1\\
@. @VVV @VVV  @VVV\\
@. C @. 1 @. 1\\
@. @VVV\\
@. 1
\end{CD}
\end{equation*}
Note that $\widetilde{C}$ and $C$ are naturally isomorphic.
To prove our lemma, it is enough to show that $\widetilde{C}$, hence $C$ is trivial.
Note that by Van Kampen's theorem, $L$ is generated by $\delta _{\Delta}$. Thus the map $K\to L$ is surjective, hence $C$ is trivial.
\end{proof}

Let $p:X\to Y$ and $q:Y\to S$ be algebraic fiber spaces.
Given $s\in S$, we consider $p_s:X_s\to Y_s$, where $X_s$ is the fiber of $q\circ p:X\to S$ and $Y_s$ is the fiber of $q:Y\to S$.

\begin{lem}\label{lem:20221020}
For generic $s\in S$, we have $\Delta(p_s)\subset \Delta (p)\cap Y_s$.
\end{lem}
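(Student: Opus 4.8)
The statement is a generic-fiber compatibility result: the "multiple fibers" of the composed fibration $q\circ p$ restricted to a general fiber $X_s$ are already seen among the multiple fibers of $p$ restricted to $Y_s$. The plan is to unwind the definitions of $\Delta(p_s)$ and $\Delta(p)$ in terms of the divisorial decomposition \eqref{eqn:div} and then argue that, over a suitable Zariski dense open $S^\circ\subset S$, taking $p^*$ and restricting to a fiber commute in the relevant way.

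First I would shrink $S$ to a Zariski dense open subset $S^\circ$ over which everything is well-behaved: $q$ is a smooth morphism, $q\circ p$ is an algebraic fiber space with connected fibers, $Y_s$ and $X_s$ are smooth for $s\in S^\circ$, each prime divisor $D_j$ appearing in some $p^*(\Delta)$ (over the finitely many prime divisors $\Delta\subset Y$ in $I(p)$, plus their exceptional parts) restricts to a reduced divisor or to the empty set on $X_s$ with the same multiplicity, and each prime divisor $\Delta\subset Y$ either meets $Y_s$ in a prime divisor of $Y_s$ or misses it. This is a standard generic-flatness / Bertini-type reduction; the only subtlety is that $p$ is merely dominant (not proper), so I would first pass to a partial smooth compactification of $p$ over which it becomes projective and equidimensional (as done repeatedly earlier in the paper, e.g.\ in the proof of \cref{lem:factor0}), perform the reduction there, and then restrict back. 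After this shrinking, $p^*(\Delta)|_{Y_s\cap(\cdot)}$ computes $p_s^*(\Delta\cap Y_s)$ up to exceptional divisors of $p_s$: more precisely, for $s\in S^\circ$ and a prime divisor $\Delta'\subset Y_s$ of the form $\Delta'=\Delta\cap Y_s$ with $\Delta\subset Y$ prime, one has that a prime divisor $D'\subset X_s$ with $\overline{p_s(D')}=\Delta'$ is a component of $D_j\cap X_s$ for some $j\in J(\Delta)$, and its multiplicity in $p_s^*(\Delta')$ equals $m_j$.

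Now let $\Delta'\in I(p_s)$ for $s\in S^\circ$; I must show $\Delta'\subset \Delta(p)\cap Y_s$, i.e.\ $\Delta'$ is a component of $\Delta\cap Y_s$ for some $\Delta\in I(p)$. Let $\Delta=\overline{q|_{Y_s}\text{-image stuff}}$—more carefully, $\Delta'$ is a component of $\overline{\Delta}\cap Y_s$ for a unique prime divisor $\overline{\Delta}\subset Y$ (here using that $\Delta'$ has codimension one in $Y_s$ and $Y_s$ is a general fiber of $q$, so $\Delta'$ is not contracted and not horizontal to the full fiber). By the multiplicity-matching from the previous paragraph, $m_{\Delta'}=\min_{j}m_j$ where the min is over those $j\in J(\overline{\Delta})$ for which $D_j\cap X_s$ has a component dominating $\Delta'$; over $S^\circ$ this set of $j$ is independent of $s$ and equals all of $J(\overline{\Delta})$ (by the Bertini-type genericity, each $D_j$ with $j\in J(\overline\Delta)$ does meet $X_s$ and maps onto $\overline\Delta\cap Y_s\supset\Delta'$). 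Hence $m_{\Delta'}=m_{\overline\Delta}\geq 2$, so $\overline\Delta\in I(p)$ and $\Delta'\subset\overline\Delta\cap Y_s\subset \Delta(p)\cap Y_s$, as desired.

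The main obstacle is the non-properness of $p$ and the bookkeeping of exceptional divisors: one must make sure that when restricting $p$ to $X_s$, (i) no genuine multiple-fiber component $D_j$ is accidentally pushed into the exceptional locus $E_\Delta$ for general $s$, and conversely (ii) no exceptional divisor $E_\Delta$ of $p$ produces, upon restriction, a spurious new horizontal divisor of $p_s$ over $\Delta\cap Y_s$ with multiplicity $\geq 2$ — but the latter cannot contribute to $I(p_s)$ once we know a multiplicity-one component $D_j\cap X_s$ survives, and the former is exactly what the generic-flatness reduction over the compactification guarantees. I would therefore organize the write-up as: (1) compactify and make $p$ equidimensional; (2) choose $S^\circ$ via generic flatness and Bertini so that "restrict to $X_s$" commutes with the divisorial decomposition up to exceptional divisors not affecting $I(p_s)$; (3) run the multiplicity comparison above. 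Note also that the lemma only claims an inclusion "$\subset$" for generic $s$, which is all we need and sidesteps the harder question of whether $\Delta(p_s)=\Delta(p)\cap Y_s$ exactly.
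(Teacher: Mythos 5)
Your overall strategy — shrink $S$ so that restriction to a general fiber commutes with the divisorial decomposition \eqref{eqn:div}, then compare multiplicities over a fixed prime divisor of $Y$ — is the same as the paper's, and the multiplicity comparison itself (reduce to the reducedness of the restriction of the multiplicity-one component $D_{j_0}$, which is how the paper proves the second assertion of its \cref{claim:202210222}) is sound in spirit. But there is a genuine gap at the step where you write that a prime divisor $\Delta'\in I(p_s)$ ``is a component of $\overline{\Delta}\cap Y_s$ for a unique prime divisor $\overline{\Delta}\subset Y$''. This is false for an arbitrary prime divisor of $Y_s$ (e.g.\ a general line in a fiber $\mathbb P^2\times\{s\}$ of $\mathbb P^2\times S\to S$ is not a component of $D\cap Y_s$ for any divisor $D\subset Y$ chosen in advance), and your parenthetical justification (``not contracted and not horizontal to the full fiber'') does not address it. The problem is structural: your genericity hypotheses on $S^\circ$ can only be arranged for finitely many divisors of $Y$ fixed in advance, whereas $\Delta'$ ranges over all prime divisors of $Y_s$; if $\overline\Delta$ is allowed to depend on $s$ and on $\Delta'$, the multiplicity-matching statement $m_{\Delta'}=m_{\overline\Delta}$ is no longer covered by the shrinking of $S$, and the argument does not close.

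The missing ingredient, which the paper supplies explicitly, is an a priori confinement of $\Delta(p_s)$ to the restriction of a \emph{fixed} finite set of divisors of $Y$: let $Y^o\subset Y$ be a dense open over which $p$ is smooth and surjective with $D=Y\setminus Y^o$ a divisor; if $\Delta'\not\subset D_s$ then $\Delta'$ meets $Y^o_s$, where $p_s$ is smooth, so $p_s^*\Delta'$ has a reduced horizontal component and $\Delta'\notin I(p_s)$. Hence every $\Delta'\in I(p_s)$ is a component of $D_i\cap Y_s$ for one of the finitely many irreducible components $D_i$ of $D$, and only for these finitely many $D_i$ does one need the generic reducedness/multiplicity statements. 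Once you insert this step, your argument essentially coincides with the paper's proof.
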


To prove this lemma, we start from the following general discussion.

\begin{claim}\label{claim:202210221}
Let $p:V\to W$ and $q:W\to S$ be dominant morphisms of irreducible quasi projective varieties.
Then for generic $s\in S$, the map $p_s:V_s\to W_s$ is dominant.
\end{claim}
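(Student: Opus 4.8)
The statement to prove is: given dominant morphisms $p:V\to W$ and $q:W\to S$ of irreducible quasi-projective varieties, for generic $s\in S$ the restriction $p_s:V_s\to W_s$ is dominant, where $V_s=(q\circ p)^{-1}(s)$ and $W_s=q^{-1}(s)$.

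The plan is to use the fact that dominance is detected at the level of function fields and that generic fibers behave well under flatness. First I would pass to the generic point of $S$: since $p$ and $q$ are dominant, the generic fiber $V_\eta\to W_\eta$ over the generic point $\eta=\operatorname{Spec}\mathbf{C}(S)$ is again a morphism of schemes over $\mathbf{C}(S)$, and it is dominant because $W_\eta\hookrightarrow W$ and $V_\eta\hookrightarrow V$ are flat (being generic fibers, or localizations) so the image of $p_\eta$ is dense in $W_\eta$. More concretely, $\mathbf{C}(W)$ embeds into $\mathbf{C}(V)$ via $p^\ast$, and this embedding is compatible with the subfield $\mathbf{C}(S)$; hence $V_\eta\to W_\eta$ corresponds to a field extension and is dominant on each irreducible component dominating $W_\eta$. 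Then I would invoke the standard spreading-out / constructibility argument (e.g. \cite[\href{https://stacks.math.columbia.edu/tag/052A}{Tag 052A}]{stacks-project} together with Chevalley's theorem on constructible images): the locus of $s\in S$ over which $p_s:V_s\to W_s$ is dominant is constructible, and it contains the generic point, hence contains a non-empty Zariski open subset of $S$. This gives the claim.

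The one subtlety I would need to address carefully is irreducibility: $V_s$ and $W_s$ need not be irreducible for generic $s$, so "dominant" must be interpreted as: the image $p_s(V_s)$ is Zariski dense in $W_s$. With that reading the argument above goes through verbatim, since density of the image is exactly the property whose locus is constructible and which holds at $\eta$. If instead one wants every component of $V_s$ dominating a component of $W_s$, one first shrinks $S$ so that $q$ and $q\circ p$ have geometrically irreducible generic fibers (possible after a finite base change, or automatic here since we only need the generic $s$), reducing to the irreducible case.

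**Proof plan for \cref{lem:20221020}.** With \cref{claim:202210221} in hand, the plan is to unwind the definition of $\Delta(p)$. Fix a prime divisor $\Delta_0$ of $Y$ not lying in $\Delta(p)$; I must show that for generic $s$, the divisor $\Delta_0\cap Y_s$ (whenever non-empty) does not contribute to $\Delta(p_s)$, i.e. $m_{\Delta_0\cap Y_s}(p_s)=1$. Since $\Delta_0\notin I(p)$, there is a prime divisor $D_0\subset X$ with $\overline{p(D_0)}=\Delta_0$ and multiplicity $m_{D_0}=1$ in $p^\ast\Delta_0$. For generic $s$, by \cref{claim:202210221} applied to $D_0\to\Delta_0\to S$ (restricting $q$ suitably), the intersection $D_0\cap X_s$ dominates $\Delta_0\cap Y_s$, and restriction of Cartier divisors to a generic fiber preserves multiplicities along such components (the coefficient of a component of $p_s^\ast(\Delta_0\cap Y_s)$ equals the coefficient of the corresponding component of $p^\ast\Delta_0$, since $X_s$ and $Y_s$ meet these divisors transversally for generic $s$ by generic flatness/generic smoothness of the family over $S$). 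Hence $m_{\Delta_0\cap Y_s}(p_s)\le m_{D_0}=1$, so $\Delta_0\cap Y_s\notin I(p_s)$. As $Y$ has only finitely many prime divisors outside $\Delta(p)$ that can meet a generic $Y_s$ — more precisely, only the components of a fixed divisor and the horizontal ones, of which finitely many matter — we may choose a single generic $s$ working for all of them simultaneously, giving $\Delta(p_s)\subset\Delta(p)\cap Y_s$.

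The main obstacle I anticipate is the bookkeeping in the second paragraph: one must be careful that a \emph{new} divisorial component of $Y_s$ (arising from a horizontal divisor of $Y$, or from $q$-exceptional behaviour) does not acquire multiplicity $\ge 2$ in $p_s^\ast$. This is handled by noting such a component is the restriction of a prime divisor $\Delta'\subset Y$ with $\overline{q(\Delta')}=S$; then $p^\ast\Delta'$ restricted to generic $X_s$ computes $p_s^\ast(\Delta'\cap Y_s)$ with the same multiplicities, and if $\Delta'\notin I(p)$ the minimal multiplicity is again $1$ on the dominating component. The only remaining worry — components of $Y_s$ that are themselves $q$-exceptional — do not occur for generic $s$ since a generic fiber of $q$ avoids the (lower-dimensional) image of the exceptional locus.
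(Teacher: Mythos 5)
Your argument for \cref{claim:202210221} is correct, but it reaches the key genericity statement by a different device than the paper. Both proofs really rest on the same two observations: by Chevalley's theorem the dense constructible set $p(V)$ contains a dense open $W^o\subseteq W$, and $p_s(V_s)=p(V)\cap W_s$; the entire content of the claim is then that $p(V)\cap W_s$ is dense in $W_s$ (equivalently, meets every irreducible component of $W_s$ densely) for generic $s$. You obtain this by passing to the generic point $\eta\in S$, noting that $p_\eta(V_\eta)$ contains the generic point of the irreducible scheme $W_\eta$, and then invoking the constructibility of the relative density locus $\{s\in S:\overline{E_s}=W_s\}$ for a constructible $E\subseteq W$; that is a genuine (and standard) input, for which the correct reference is EGA~IV, Th.~9.5.3 rather than the Stacks Project tag you cite, which is not about fibrewise density. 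The paper instead argues by hand: it runs the quasi-Stein factorisation $W\xrightarrow{\alpha}\Sigma\xrightarrow{\beta}S$ of \cref{lem:Stein}, shrinks $\Sigma$ so that every fibre $W_\sigma$ of $\alpha$ is irreducible and meets $W^o$, and removes from $S$ the image under the finite map $\beta$ of the bad locus of $\Sigma$; then every irreducible component of $W_s$ is some $W_\sigma$ with $W^o\cap W_\sigma$ dense. Your route is shorter but outsources the decisive step to a heavier black box; the paper's is self-contained and makes transparent that the image is dense in \emph{each} component of $W_s$, which is the form in which the claim is used in \cref{lem:20221020}. Your reading of ``dominant'' as ``$p_s(V_s)$ is Zariski dense in $W_s$'' is exactly the paper's; the stronger variant you mention (every component of $V_s$ dominating a component of $W_s$) is neither asserted nor needed.

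The second half of your proposal concerns \cref{lem:20221020}, which is not the statement under review. I only remark that the paper's proof of that lemma does not proceed via transversality of generic fibres with the divisors, but via reducedness of the generic fibre of the closure of a divisor (using smoothness of $D^o\to S$ over a shrunken base) together with removing the superfluous components of $p^*D$ before restricting; that portion of your sketch would need corresponding tightening.
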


\begin{proof}
We apply \cref{lem:Stein} for $q:W\to S$ to get a factorization $W\overset{\alpha}{\to}\Sigma\overset{\beta}{\to}S$ of $q$, where $\alpha:W\to \Sigma$ has connected general fibers and $\beta:\Sigma\to S$ is finite. 
Since $p(V)$ is a dense contractible set, there exists a non-empty Zariski open set $W^o\subset W$ such that $W^o\subset p(V)$.
Similarly, we may take a non-empty Zariski open set $\Sigma^o\subset \Sigma$ such that $\Sigma^o\subset \alpha(W^o)$.
By replacing $\Sigma^o$ by a smaller non-empty Zariski open set, we may assume that the fiber $W_{\sigma}$ is irreducible for every $\sigma\in \Sigma$.
Then for every $\sigma\in \Sigma$, $W^o\cap W_{\sigma}\subset W_{\sigma}$ is a non-empty, hence dense Zariski open set.
We set $S^o=S-\beta(\Sigma-\Sigma^o)$.
Then for every $s\in S^o$, we have $\beta^{-1}(s)\subset \Sigma^o$.
Hence for every $s\in S^o$, $W^o\cap W_s\subset W_s$ is dense.
By $W^o\subset p(V)$, we have $W^o\cap W_s\subset p_s(V_s)$.
Hence $p_s:V_s\to W_s$ is dominant.
\end{proof}

\begin{proof}[Proof of \cref{lem:20221020}]
We start from the following observation.

\begin{claim}
For generic $s\in S$, $p_s:X_s\to Y_s$ is an algebraic fiber space.
\end{claim}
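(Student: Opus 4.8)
The statement to prove is that for generic $s \in S$, the restriction $p_s : X_s \to Y_s$ is an algebraic fiber space, where $p : X \to Y$ and $q : Y \to S$ are algebraic fiber spaces. Recall that by definition an algebraic fiber space is a dominant morphism of smooth quasi-projective varieties with connected general fibers, so two things must be checked for generic $s$: (i) $p_s$ is dominant (equivalently surjective onto a dense open of $Y_s$), and (ii) a general fiber of $p_s$ is connected.

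\emph{Step 1: Dominance of $p_s$.} This is exactly \cref{claim:202210221} applied to the dominant morphisms $p : X \to Y$ and $q : Y \to S$: there is a non-empty Zariski open set $S^\circ \subset S$ such that for $s \in S^\circ$, the fiber map $p_s : X_s \to Y_s$ is dominant. (One should also shrink $S^\circ$ so that $X_s$, $Y_s$ are smooth — this holds for generic $s$ by generic smoothness — and so that $X_s$, $Y_s$ are irreducible, using that $X$ and $Y$ are irreducible and a standard Bertini-type / generic-fiber-irreducibility argument.)

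\emph{Step 2: Connectedness of general fibers of $p_s$.} Take a Zariski open $Y^* \subset Y$ over which $p$ is a locally trivial $C^\infty$ (or at least topologically locally trivial) fibration with connected fiber $F$, and take $X^* = p^{-1}(Y^*)$. Choosing $s$ generically, $Y^* \cap Y_s$ is a dense open subset of $Y_s$ and $p_s$ restricted over $Y^* \cap Y_s$ is still a locally trivial fibration with the same fiber $F$, which is connected; hence a general fiber of $p_s$ (being a fiber of $p$, which meets $Y^* \cap Y_s$) is connected. Combining Steps 1 and 2, $p_s : X_s \to Y_s$ is an algebraic fiber space for $s$ in a non-empty Zariski open subset of $S$, which proves the claim.

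\emph{The main obstacle.} The routine but slightly delicate point is the simultaneous genericity: we need a \emph{single} non-empty Zariski open $S^\circ$ over which all the required properties hold at once — smoothness and irreducibility of $X_s$ and $Y_s$, dominance of $p_s$, and the fact that the locally trivial fibration structure of $p$ restricts well to $X_s \to Y_s$. Each property holds over a non-empty open subset of $S$ by standard constructibility/generic-flatness arguments, and since $S$ is irreducible the finite intersection is again non-empty open; the only care needed is to phrase the fibration statement so that the chosen $Y^*$ has dense intersection with generic $Y_s$, which follows because $q|_{Y \setminus Y^*} : Y \setminus Y^* \to S$ has image a proper closed (or at least not dense when $Y\setminus Y^*$ does not dominate $S$) — more precisely one replaces $Y^*$ if necessary by removing the components of $Y \setminus Y^*$ that dominate $S$, after which $Y^* \cap Y_s$ is dense in $Y_s$ for generic $s$. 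No genuinely new idea is required beyond \cref{claim:202210221} and generic smoothness.
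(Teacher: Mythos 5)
Your proof is correct and follows essentially the same route as the paper: \cref{claim:202210221} gives dominance of $p_s$ for generic $s$, generic smoothness handles smoothness of $X_s$ and $Y_s$, and connectedness of general fibers follows by choosing $Y^\circ\subset Y$ over which $p$ has connected fibers and observing that $Y^\circ\cap Y_s$ is dense in the irreducible $Y_s$ whenever $s\in q(Y^\circ)$. The only imprecision is your aside about ``removing the components of $Y\setminus Y^*$ that dominate $S$'' --- no modification of $Y^*$ is needed, since a component of $Y\setminus Y^*$ dominating $S$ still meets the irreducible $Y_s$ only in a proper closed subset for generic $s$ --- but this does not affect the validity of the argument.
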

\begin{proof}
By replacing $S$ by a smaller non-empty Zariski open set, we assume that $X_s$ and $Y_s$ are smooth varieties for all $s\in S$.
We apply \cref{claim:202210221}.
Then by replacing $S$ by a smaller non-empty Zariski open set, we may assume that $p_s:X_s\to Y_s$ is dominant for every $s\in S$.
Next we take a non-empty Zariski open set $Y^o\subset Y$ such that $p^{-1}(Y^o)\to Y^o$ has connected fibers.
We take a non-empty Zariski open set $S^o\subset S$ such that $S^o\subset q(Y^o)$ and that $Y_s$ is irreducible for every $s\in S^o$.
Hence for every $s\in S^o$, $Y^o\cap Y^s\subset Y_s$ is a dense Zariski open set.
Note that $p_s:X_s\to Y_s$ has connected fibers over $Y^o\cap Y^s\subset Y_s$.
Hence for every $s\in S^o$, $p_s:X_s\to Y_s$ is an algebraic fiber space.
\end{proof}

Since the problem is local in $S$, by replacing $S$ by a smaller non-empty Zariski open set, we assume that $p_s:X_s\to Y_s$ are algebraic fiber spaces for all $s\in S$.
In particular, both $X_s$ and $Y_s$ are smooth.

\begin{claim}\label{claim:202210222}
Let $D\subset Y$ be an irreducible and reduced divisor such that $q(D)\subset S$ is Zariski dense.
Assume that $D\not\subset \Delta(p)$.
Then for generic $s\in S$,
\begin{itemize}
\item
$D_s\subset Y_s$ is a reduced divisor, and
\item
for every irreducible component $\Delta$ of $D_s$, we have $\Delta\not\subset \Delta(p_s)$.
\end{itemize} 
\end{claim}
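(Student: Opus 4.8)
\textbf{Proof plan for \cref{claim:202210222}.}
The plan is to use generic smoothness and Bertini-type arguments on the fibers of $q\circ p:X\to S$ and $q:Y\to S$. First I would note that since $D\subset Y$ is reduced and irreducible and dominates $S$, there is a non-empty Zariski open set $S_1\subset S$ such that for all $s\in S_1$ the scheme-theoretic fiber $D_s\subset Y_s$ is a reduced divisor: this follows from generic reducedness of the flat family $D\to q(D)$ together with the fact that $Y_s$ is smooth (hence $D_s$, being a Cartier divisor on a smooth variety, has no embedded components, and genericity of $s$ rules out the non-reduced structure that could only appear over a proper closed subset). Shrinking $S_1$ further, I may assume that each $D_s$ has the ``expected'' irreducible components, i.e. that the irreducible components of $D_s$ are exactly the fibers over $s$ of the irreducible components of $D$ dominating $S$; this is the standard statement that irreducible components specialize well in a flat family over a generic point of the base.

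The heart of the argument is the second bullet: controlling the multiplicity $m_{\Delta}(p_s)$ for a component $\Delta$ of $D_s$. Recall from \eqref{eqn:div} that $D\not\subset \Delta(p)$ means $m_D(p)=1$, i.e. there is a prime divisor $D'\subset X$ with $\overline{p(D')}=D$ such that the coefficient of $D'$ in $p^*D$ is $1$; equivalently, $p$ is generically transverse to $D$ along $D'$. I would then argue as follows. Let $D'\subset X$ be such a component with coefficient one. For generic $s$, $D'_s:=D'\cap X_s$ is a reduced divisor on $X_s$ dominating a component $\Delta$ of $D_s$ (using \cref{claim:202210221} applied to $p|_{D'}:D'\to D$ composed with $D\to S$, to ensure $D'_s\to \Delta$ stays dominant for generic $s$), and the coefficient of $D'_s$ in $p_s^*\Delta = (p^*D)|_{X_s}$ equals the coefficient of $D'$ in $p^*D$, which is $1$, because restriction to a generic fiber of $q\circ p$ does not change the multiplicity of a divisor along a component that dominates $S$ --- the key point is that $X_s$ meets $D'$ transversally in codimension one for generic $s$, so pullback of divisors commutes with restriction to $X_s$. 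Hence $m_{\Delta}(p_s)=1$, i.e. $\Delta\not\subset \Delta(p_s)$. Taking $S_2\subset S_1$ to be the non-empty Zariski open set over which all these genericity conditions hold simultaneously for the finitely many relevant components gives the claim.

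Granting \cref{claim:202210222}, \cref{lem:20221020} follows quickly: apply the claim to each irreducible component $D$ of $\overline{Y-Y^*}$ (for a suitable Zariski open $Y^*\subset Y$ over which $p$ is smooth) that dominates $S$ and is \emph{not} contained in $\Delta(p)$, and handle the components of $\Delta(p)$ and the non-dominating components trivially by shrinking $S$; intersecting the finitely many resulting open sets, one obtains that for generic $s$ every component $\Delta$ of $\Delta(p_s)$ satisfies $\Delta\subset \Delta(p)\cap Y_s$, which is exactly the assertion $\Delta(p_s)\subset \Delta(p)\cap Y_s$.

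I expect the main obstacle to be the multiplicity-tracking step in the proof of \cref{claim:202210222}: one must be careful that ``coefficient of a divisor in a pullback'' genuinely commutes with restriction to a generic fiber of $q\circ p$, which requires knowing that the generic fiber $X_s$ is smooth and meets the relevant divisors $D'\subset X$ along components that are themselves fibered over $S$, so that no new multiplicity is created or destroyed by the restriction. This is where generic smoothness (char $0$) and the standard flatness/irreducibility specialization lemmas (as in \cite{Cam98} or the appendix of \cite{Cam01}) do the work, but the bookkeeping over the finitely many components of $Y-Y^*$ and of $\Delta(p)$ must be set up carefully so that a single non-empty Zariski open $S^o\subset S$ works for all of them at once.
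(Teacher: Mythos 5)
Your plan is correct and follows the same route as the paper's proof: both establish the first bullet by generic smoothness of the smooth locus $D^{o}\to S$ (so that $D_s$ is generically reduced, hence reduced because it is a Cartier divisor on the smooth $Y_s$), both isolate a component $W_{j_0}$ of $p^*D$ with coefficient $1$, and both invoke \cref{claim:202210221} so that its fiber over generic $s$ dominates every component $\Delta$ of $D_s$. The only divergence is the final multiplicity step, which you rightly flag as the delicate point. You restrict $p^*D$ to $X_s$ and assert that the coefficient along each component of $(W_{j_0})_s$ is still $1$; to make this airtight you need, for generic $s$, both that $(W_{j_0})_s$ is reduced (the first bullet applied to $W_{j_0}\subset X$ over $S$) and that the remaining summands $\sum_{j\neq j_0}m_jW_j+E$ of $p^*D$ restrict to divisors on $X_s$ containing no component of $(W_{j_0})_s$ (true because $W_j\cap W_{j_0}$ and $E\cap W_{j_0}$ have codimension $\geq 2$ in $X$, hence so do their generic fibers in $X_s$). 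The paper sidesteps this bookkeeping by deleting $\sum_{j\neq j_0}m_jW_j+E$ from $X$ to get an open $Z$ on which $\pi^*D$ is literally the reduced divisor $W=W_{j_0}\cap Z$; the second bullet is then the first bullet applied to $W\subset Z$ together with \cref{claim:202210221}, and since passing to the open subset can only increase the minimum defining $m_\Delta$, the conclusion $\Delta\not\subset\Delta(\pi_s)$ implies $\Delta\not\subset\Delta(p_s)$. Either implementation is fine; the paper's is marginally cleaner in that the generic-reducedness statement is only ever applied to a single prime divisor and no interaction between components has to be controlled.
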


\begin{proof}
We first prove the first assertion.
Let $D^o\subset D$ be the smooth locus of $D$.
Then $D^o\subset D$ is a dense Zariski open set.
We take a non-empty Zariski open set $S^o\subset S$ such that both $Y\to S$ and $D^o\to S$ are smooth and surjective over $S^o$.
By \cref{claim:202210221} applied to the open immersion $D^o\subset D$, we may also assume that $D^o_s\subset D_s$ is dense.
We take $s\in S^o$.
Then we have $\dim Y_s=\dim Y-\dim S$ and $\dim D_s=\dim D-\dim S$.
Hence we have $\dim D_s=\dim Y_s-1$.
Hence $D_s\subset Y_s$ is a divisor.
Note that the fiber $D^o_s=D^o\times_S\mathrm{Spec}(\mathbb C(s))$ is a regular scheme.
Hence $D_s\subset Y_s$ is a reduced divisor.
This shows the first assertion.

Next we prove the second assertion.
Let $p^*D=\sum m_jW_j+E$ be the decomposition as in \eqref{eqn:div}.
By $D\not\subset \Delta(p)$, there exists $W_{j_0}$ such that $m_{j_0}=1$.
We remove $\sum_{j\not=j_0} m_jW_j+E$ from $X$ to get a Zariski open set $Z\subset X$.
Then $Z$ is quasi-projective.
Set $W=Z\cap W_{j_0}$, which is a reduced divisor on $Z$.
Let $\pi:Z\to Y$ be the restriction of $p$ onto $Z$.
Then we have $\pi^*D=W$ and $\overline{\pi(W)}=D$.
We apply the first assertion of \cref{claim:202210222} for $W\subset X$.
Then for generic $s\in S$, $W_s\subset Z_s$ is a reduced divisor.
By \cref{claim:202210221}, $\overline{\pi_s(W_s)}=D_s$ for generic $s\in S$.
We take generic $s\in S$ and an irreducible component $\Delta$ of $D_s$.
Then by $\pi_s^*D_s=W_s$, we have $\Delta\not\subset \Delta(\pi_s)$, hence $\Delta\not\subset \Delta(p_s)$.
\end{proof}

Now let $Y^o\subset Y$ be a non-empty Zariski open set such that $p|_{X^o}:X^o\to Y^o$ is smooth and surjective, where $X^o=p^{-1}(Y^o)$.
We may assume that $D=Y-Y^o$ is a divisor.
Let $D_1,\ldots,D_n$ be the irreducible components of $D$.
By replacing $S$ by a smaller non-empty Zariski open set, we may assume that $q(D_i)$ is dense in $S$ for all $i=1,\ldots,n$.
We assume that $D_i$, $1\leq i\leq l$, are all the components of $D$ such that $D_i\not\subset \Delta(p)$.
By replacing $S$ by a smaller non-empty Zariski open set, we may assume that \cref{claim:202210222} is valid for all $s\in S$ and $D_{1},\ldots,D_l$.
We take $s\in S$.
Let $\Delta\subset Y_s$ be a prime divisor such that $\Delta\subset \Delta(p_s)$.

We first show $\Delta\subset D_s$.
For this purpose, we suppose contrary $\Delta\not\subset D_s$.
Then $\Delta\cap Y^o_s\not=\emptyset$.
Set $X^o_s=p_s^{-1}(Y^o_s)$.
Note that $p_s|_{X^o_s}:X^o_s\to Y^o_s$ is smooth and surjective.
Then the divisor $p_s^*\Delta\cap X^o_s\subset X^o_s$ is reduced and $p_s(\Delta\cap X^o_s)=\Delta\cap Y^o_s$.
Thus $\Delta\not\subset \Delta(p_s)$, a contradiction.
Hence $\Delta\subset D_s$.

Now we may take $D_i$, $i=1,\ldots,n$, such that $\Delta\subset D_i$.
Then by \cref{claim:202210222}, we have $l+1\leq i\leq n$.
Hence by $D_i\subset \Delta(p)$ for $i=l+1,\ldots,n$, we have $\Delta\subset \Delta(p)$.
This shows $\Delta(p_s)\subset \Delta(p)\cap Y_s$.
\end{proof}

\begin{lem}\label{lem:5}
Let $A$ be a semi-abelian variety, and let $p:X\to A$ be an algebraic fiber space. Let $B$ be the quotient semi-abelian variety $A/\mathrm{St}^o(\Delta (p))$.
Let $q:X\to B$ be the composition of $p$ and the quotient $r:A\to B$. 
If $q$ is $\pi _1$-exact, then $p$ is $\pi _1$-exact.
\end{lem}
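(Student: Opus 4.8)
We wish to deduce the $\pi_1$-exactness of $p\colon X\to A$ from that of $q\colon X\to B$, where $B=A/\mathrm{St}^o(\Delta(p))$ and $r\colon A\to B$ is the quotient. Write $C=\mathrm{St}^o(\Delta(p))$, a positive-dimensional semi-abelian subvariety by \cref{lem:3} (if $\Delta(p)=\emptyset$ there is nothing to do: $p$ is already $\pi_1$-exact by \cref{lem:4}). The plan is to run an induction on $\dim A$, the base case $\dim A\le 1$ being trivial (there $\mathrm{St}^o=\{0\}$ forces $\Delta(p)=\emptyset$ again, or $A$ is a point). So assume the statement for all semi-abelian varieties of smaller dimension.

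First I would set up the two relevant fibrations over $B$. Let $F$ be a general fiber of $p$ and $G$ a general fiber of $q$; note $F$ is a general fiber of $p|_G\colon G\to r^{-1}(b)\cong C$ for general $b\in B$, where $r^{-1}(b)$ is a translate of $C$. The key geometric input is that $p|_G\colon G\to (\text{translate of }C)$ is again an algebraic fiber space whose base-change behaves well: by \cref{lem:20221020}, for generic $b\in B$ one has $\Delta(p|_{G})\subset \Delta(p)\cap r^{-1}(b)$; but $\Delta(p)$ is $C$-invariant by the very definition of $C=\mathrm{St}^o(\Delta(p))$, so $\Delta(p)\cap r^{-1}(b)$ is a $C$-invariant divisor in the translate $r^{-1}(b)\cong C$ of a semi-abelian variety — hence either empty or all of $r^{-1}(b)$. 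Since $\Delta(p|_G)$ is a proper subset, it must be empty. Therefore $p|_G\colon G\to r^{-1}(b)$ has $\Delta(p|_G)=\emptyset$, and \cref{lem:4} applies: $p|_G$ is $\pi_1$-exact, i.e.
\[
\pi_1(F)\to \pi_1(G)\to \pi_1(C)\to 1
\]
is exact, where I identify $\pi_1(r^{-1}(b))\cong\pi_1(C)$.

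Next I would splice this with the hypothesis that $q\colon X\to B$ is $\pi_1$-exact, i.e. $\pi_1(G)\to\pi_1(X)\to\pi_1(B)\to 1$ is exact, together with the exact sequence $1\to\pi_1(C)\to\pi_1(A)\to\pi_1(B)\to 1$ coming from $r\colon A\to B$ (this is exact because $r$ is a fibration of semi-abelian varieties with fiber $C$; all groups are the relevant homotopy/homology groups of tori-by-abelian-varieties and the sequence is classical, cf. the proof of \cref{lem:4}). A diagram chase in the commutative ladder
\[
\begin{CD}
\pi_1(F) @>>> \pi_1(G) @>>> \pi_1(C) @>>> 1\\
@| @VVV @VVV @.\\
\pi_1(F) @>>> \pi_1(X) @>>> \pi_1(A) @. \\
@. @VVV @VVV @.\\
@. \pi_1(B) @= \pi_1(B) @.
\end{CD}
\]
then shows: the image of $\pi_1(F)\to\pi_1(X)$ equals $\ker(\pi_1(G)\to\pi_1(C))$ pushed into $\pi_1(X)$, and one checks this coincides with $\ker(\pi_1(X)\to\pi_1(A))$. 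Concretely, given $\gamma\in\pi_1(X)$ mapping to $1$ in $\pi_1(A)$, it maps to $1$ in $\pi_1(B)$, so by $\pi_1$-exactness of $q$ it lifts to some $\tilde\gamma\in\pi_1(G)$; the image of $\tilde\gamma$ in $\pi_1(C)$ maps to $1$ in $\pi_1(A)$, hence is $1$ in $\pi_1(C)$ (injectivity of $\pi_1(C)\to\pi_1(A)$), so by $\pi_1$-exactness of $p|_G$, $\tilde\gamma$ comes from $\pi_1(F)$, whence $\gamma\in\mathrm{Im}(\pi_1(F)\to\pi_1(X))$. The reverse inclusion $\mathrm{Im}(i_*)\subset\ker(p_*)$ is automatic, and surjectivity of $p_*\colon\pi_1(X)\to\pi_1(A)$ follows from surjectivity of $q_*$ and of $\pi_1(A)\to\pi_1(B)$ lifted through $\pi_1(G)\to\pi_1(X)$. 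This gives exactness of $\pi_1(F)\to\pi_1(X)\to\pi_1(A)\to 1$.

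\textbf{Main obstacle.} The delicate point is not the diagram chase but justifying that $p|_G\colon G\to r^{-1}(b)$ is an algebraic fiber space for \emph{generic} $b$ and that \cref{lem:20221020} applies in the present setting — i.e. correctly handling the fact that $r^{-1}(b)$ is only a \emph{translate} of $C$, not $C$ itself, and that "$\Delta(p)$ is $C$-invariant" really gives $\Delta(p)\cap r^{-1}(b)\in\{\emptyset, r^{-1}(b)\}$. This requires noting that a translate of a semi-abelian variety is again one (after choosing an origin) and that a subvariety invariant under the translation action of all of $C$ must be a union of $C$-orbits, hence a divisor of that form is either empty or everything. Once this reduction to $\Delta(p|_G)=\emptyset$ is secured, \cref{lem:4} and the homotopy exact sequences do the rest, and the induction on $\dim A$ is only needed to set up the generic fiber language cleanly; in fact the argument above is direct and the induction can likely be dispensed with.
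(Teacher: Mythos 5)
Your proof is correct and follows essentially the same route as the paper's: both apply \cref{lem:20221020} to get $\Delta(p|_{X_b})\subset\Delta(p)\cap r^{-1}(b)$ for generic $b$, use the $\mathrm{St}^o(\Delta(p))$-invariance of $\Delta(p)$ to conclude this intersection is empty (the paper phrases this as $\overline{r(\Delta(p))}\subsetneqq B$ and picks $b$ outside it), then invoke \cref{lem:4} for $p|_{X_b}:X_b\to r^{-1}(b)$ and splice the resulting exact sequence with the hypothesis on $q$ and with $1\to\pi_1(C)\to\pi_1(A)\to\pi_1(B)\to 1$. One phrasing quibble: the inference ``since $\Delta(p|_G)$ is a proper subset, it must be empty'' is not literally valid, since the empty-or-everything dichotomy applies to the $C$-invariant set $\Delta(p)\cap r^{-1}(b)$ rather than to $\Delta(p|_G)$ itself; what rules out the ``everything'' branch is that the divisor $\Delta(p)$ cannot contain the fiber $r^{-1}(b)$ for generic $b$ — exactly the paper's observation that $r(\Delta(p))$ is a proper closed subset of $B$ — and with that one-line repair your argument (including dispensing with the induction) matches the paper's.
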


\begin{proof}
We apply \cref{lem:20221020}.
Then for generic $b\in B$, we have $\Delta(p_b)\subset \Delta(p)$, where $p_b:X_b\to A_b$ is the induced algebraic fiber space.
Note that $\overline{r(\Delta (p))}\subsetneqq B$ by $r(\Delta (p))=\Delta(p)/\mathrm{St}^o(\Delta (p))$.
Hence we may assume $b\in B\backslash r(\Delta (p))$.
Then we have $A_b\cap \Delta(p)=\emptyset$.
Hence $\Delta(p_b)=\emptyset$.
We take generic $a\in A$ such that the fiber $X_a$ of $p:X\to A$ over $a$ is smooth.
Then $X_a$ is a smooth fiber of the algebraic fiber space $p_b:X_b\to A_b$.
By \cref{lem:4}, we have the exact sequence:
\begin{equation}\label{eqn:s}
\pi _1(X_a,x)\to \pi _1(X_b,x)\to \pi _1(A_b,a)\to 1,
\end{equation}
where $x\in X_a$.
We consider the following sequence:
\begin{equation}\label{eqn:m}
\begin{CD}
\pi _1(X_a,x) @>>> \pi _1(X,x) @>>> \pi _1(A,a) \\
@V{c}VV @V{i}VV @V{e}VV \\
\pi _1(X_b,x) @>>> \pi _1(X,x) @>>> \pi _1(B,b) 
\end{CD}
\end{equation}
where the second line is exact from the assumption and $i$ is an isomorphism.
By \eqref{eqn:s}, we have $\text{coker}(c)=\pi _1(A_b,a)=\text{ker}(e)$.
Thus the kernel of the natural map $\pi _1(X_b,x)\to \text{ker}(e)$ is the image of $c:\pi _1(X_a,x)\to \pi _1(X_b,x)$.
Hence the first line of \eqref{eqn:m} is also exact.
\end{proof}

\begin{proof}[Proof of \cref{pro:202210131}]
Inductively, we define algebraic fiber spaces $p_i:X\to A_i$, where $A_i$ are semi-abelian varieties, as follows: First, set $A_1=A$ and $p_1=p$. Next if $\dim \mathrm{St}(\Delta (p_i))>0$, then set $A_{i+1}=A_i/\mathrm{St}^o(\Delta (p_i))$ and let $p_{i+1}$ be the composition of $p_i$ and the quotient map $A_i\to A_{i+1}$. Then since $\dim A_i>\dim A_{i+1}>0$, this process should stop, i.e., $\dim \mathrm{St}(\Delta (p_i))=0$ for some $i$. Then by Lemma \ref{lem:3}, we have $\Delta (p_i)=\emptyset$. Thus by Lemmas \ref{lem:4} and \ref{lem:5}, we conclude that $p$ is $\pi _1$-exact.
\end{proof}

\subsection{Two Lemmas for the proof of \cref{thm:202210123}}\label{s3}

Before going to give a proof of \cref{thm:202210123}, we prepare three lemmas.

\begin{lem}\label{claim:20221014}
Let $\Pi\subset \mathbb C^n$ be a finitely generated additive subgroup which is Zariski dense.
Set $\Sigma=\{ \sigma\in\mathrm{GL}(\mathbb C^n);\ \sigma\Pi= \Pi\}$.
Let $\lambda:\Sigma\to \mathrm{GL}(\Pi\otimes_{\mathbb Z}\overline{\mathbb Q})$ be the induced map.
Let $\widetilde{\Sigma}\subset \Sigma$ be a subgroup and let $E\subset \mathrm{GL}(\Pi\otimes_{\mathbb Z}\overline{\mathbb Q})$ be the Zariski closure of $\lambda(\widetilde{\Sigma})\subset \mathrm{GL}(\Pi\otimes_{\mathbb Z}\overline{\mathbb Q})$.
Let $Y$ be the Zariski closure of $\widetilde{\Sigma}\subset \mathrm{GL}(\mathbb C^n)$.
Assume that the identity component $E^o\subset E$ is unipotent.
Then the identity component $Y^o\subset Y$ is unipotent.  
\end{lem}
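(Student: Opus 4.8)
The plan is to realize the given representation of $\widetilde\Sigma$ on $\mathbb C^n$ as a quotient (after extension of scalars to $\mathbb C$) of the integral representation $\lambda$ on $\Pi\otimes_{\mathbb Z}\overline{\mathbb Q}$, and then transport unipotence of $E^{\circ}$ along this quotient. First I would note that $\Pi$, being a finitely generated subgroup of the torsion-free group $(\mathbb C^n,+)$, is free of some rank $d$, and I would fix a $\mathbb Z$-basis $e_1,\dots,e_d\in\mathbb C^n$. Since the Zariski closure of a subgroup of $\mathbb G_a^n$ in characteristic $0$ is a linear subspace, the Zariski density of $\Pi$ means exactly that $e_1,\dots,e_d$ span $\mathbb C^n$ over $\mathbb C$; hence the evaluation map $p:\Pi\otimes_{\mathbb Z}\mathbb C\to\mathbb C^n$, $\pi\otimes c\mapsto c\pi$, is surjective, and I set $K=\ker p$. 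Each $\sigma\in\Sigma$ restricts to a $\mathbb Z$-linear automorphism of $\Pi$, inducing $\sigma\otimes\mathrm{id}$ on $\Pi\otimes_{\mathbb Z}\mathbb C$, and a direct check gives $p\circ(\sigma\otimes\mathrm{id})=\sigma\circ p$. Thus $K$ is stable under $\sigma\otimes\mathrm{id}$ for all $\sigma\in\Sigma$, and the induced $\Sigma$-action on $(\Pi\otimes_{\mathbb Z}\mathbb C)/K\cong\mathbb C^n$ is precisely the tautological inclusion $\Sigma\hookrightarrow\mathrm{GL}(\mathbb C^n)$.

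Next I would identify $\Pi\otimes_{\mathbb Z}\mathbb C=(\Pi\otimes_{\mathbb Z}\overline{\mathbb Q})\otimes_{\overline{\mathbb Q}}\mathbb C$, so that $\sigma\otimes\mathrm{id}_{\mathbb C}$ becomes $\lambda(\sigma)\otimes_{\overline{\mathbb Q}}\mathrm{id}_{\mathbb C}$. Because the formation of the Zariski closure of a set of $\overline{\mathbb Q}$-points commutes with the field extension $\overline{\mathbb Q}\subset\mathbb C$, the Zariski closure of $\{\lambda(\sigma)\otimes\mathrm{id}_{\mathbb C}:\sigma\in\widetilde\Sigma\}$ in $\mathrm{GL}(\Pi\otimes_{\mathbb Z}\mathbb C)$ is $E_{\mathbb C}:=E\times_{\overline{\mathbb Q}}\mathbb C$. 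Since unipotence and the formation of the identity component are stable under field extension in characteristic $0$, the hypothesis that $E^{\circ}$ is unipotent gives that $(E_{\mathbb C})^{\circ}=(E^{\circ})_{\mathbb C}$ is unipotent.

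Finally, since $K$ is invariant under each $\lambda(\sigma)\otimes\mathrm{id}_{\mathbb C}$ with $\sigma\in\widetilde\Sigma$, it is invariant under the Zariski closure $E_{\mathbb C}$ (the stabilizer of a subspace is Zariski closed), so $E_{\mathbb C}$ acts on $(\Pi\otimes_{\mathbb Z}\mathbb C)/K\cong\mathbb C^n$; I denote by $\overline E\subset\mathrm{GL}(\mathbb C^n)$ its image. The image of a homomorphism of algebraic groups is a closed subgroup and a quotient of a unipotent group is unipotent, so $\overline E$ is closed and $\overline E^{\circ}$, being the image of $(E_{\mathbb C})^{\circ}$, is unipotent. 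By the first paragraph $\widetilde\Sigma\subseteq\overline E$, hence $Y\subseteq\overline E$ (taking Zariski closures), and therefore $Y^{\circ}\subseteq\overline E^{\circ}$ is a closed subgroup of a unipotent group, hence unipotent, as desired.

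The individual ingredients here are all standard, so the heart of the matter is the bookkeeping of two representations of $\widetilde\Sigma$: the given one on $\mathbb C^n$ and the integral one $\lambda$ on $\Pi\otimes_{\mathbb Z}\overline{\mathbb Q}$. The only point requiring care is that $\mathbb C^n$ is a genuine $\widetilde\Sigma$-module quotient of $\Pi\otimes_{\mathbb Z}\mathbb C$ — which is exactly where the Zariski density of $\Pi$ is used — together with the base-change compatibility of Zariski closures, which is what allows the hypothesis on $E$ (a group over $\overline{\mathbb Q}$) to control $Y$ (a group over $\mathbb C$).
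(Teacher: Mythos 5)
Your proof is correct and follows essentially the same route as the paper: both arguments use the evaluation surjection $\Pi\otimes_{\mathbb Z}\mathbb C\to\mathbb C^n$ (surjective by Zariski density of $\Pi$), observe that $\Sigma$ preserves its kernel $K$, base-change $E$ to $E_{\mathbb C}$, and transport unipotence of the identity component through the induced quotient representation onto $\mathbb C^n$, whose image contains (resp.\ equals, in the paper's phrasing) $Y$. Your write-up is if anything slightly more explicit about the compatibility of Zariski closures with the field extension $\overline{\mathbb Q}\subset\mathbb C$, but there is no substantive difference.
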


\begin{proof}
Since $\Pi\subset \mathbb C^n$ is Zariski dense, $\lambda$ is injective and the natural linear map $q:\Pi\otimes_{\mathbb Z}\mathbb C\to \mathbb C^n$ is surjective.
Set $K=\mathrm{ker}(q)$.
Then we have the following exact sequence of finite dimensional $\mathbb C$-vector spaces:
$$
0\to K\to \Pi\otimes_{\mathbb Z}\mathbb C\to \mathbb C^n\to 0.
$$
Let $H\subset \mathrm{GL}(\Pi\otimes_{\mathbb Z}\mathbb C)$ be the algebraic subgroup such that $g\in H$ iff $gK=K$.
We claim that $\lambda(\Sigma)\subset H$ under the embedding $\lambda:\Sigma\hookrightarrow \mathrm{GL}(\Pi\otimes_{\mathbb Z}\mathbb C)$.
Indeed, we take $\sigma\in \Sigma$ and $v_1\otimes c_1+\cdots+v_r\otimes c_r\in K$.
Then we have $c_1v_1+\cdots +c_rv_r=0$ in $\mathbb C^n$.
We have
\begin{equation*}
\begin{split}
q(\lambda(\sigma)(v_1\otimes c_1+\cdots+v_r\otimes c_r))&=q(\sigma(v_1)\otimes c_1+\cdots+\sigma(v_r)\otimes c_r)\\
&=c_1q(\sigma(v_1))+\cdots+c_rq(\sigma(v_r))
\\
&=c_1\sigma(v_1)+\cdots+c_r\sigma(v_r)
\\
&=\sigma(c_1v_1+\cdots+c_rv_r)=0.
\end{split}
\end{equation*}
Hence $\sigma(v_1\otimes c_1+\cdots+v_r\otimes c_r)\in K$.
Thus $\sigma\in H$, so $\Sigma\subset H$.

Now we have a morphism $p:H\to \mathrm{GL}(\mathbb C^n)$.
Then $p\circ\lambda:\Sigma\to \mathrm{GL}(\mathbb C^n)$ is the original embedding.
This induces a surjection $E_{\mathbb C}\to Y$, where $E_{\mathbb C}=E\times_{\bar{\mathbb Q}}\mathbb C$.
Hence we get a surjection $E^o_{\mathbb C}\to Y^o$.
Since every quotient of unipotent group is again unipotent, 
 $Y^o$ is unipotent. 
\end{proof}

\begin{lem}\label{lem:20221015}
Let $Z\to E$ be a surjective morphism of algebraic groups.
Assume that the radical of $Z$ is unipotent.
Then the radical of $E$ is unipotent.
\end{lem}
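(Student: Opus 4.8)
The statement to prove is \cref{lem:20221015}: if $Z \to E$ is a surjective morphism of algebraic groups and the radical $R(Z)$ is unipotent, then $R(E)$ is unipotent.

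The plan is to exploit the fact that radicals behave well under surjections. First I would recall that for any connected algebraic group $G$, the radical $R(G)$ is the identity component of the intersection of all Borel subgroups, equivalently the largest connected normal solvable subgroup; and that $R(G)$ is unipotent if and only if $G/R_u(G)$ is semisimple, equivalently $R(G) = R_u(G)$ (the unipotent radical). Since the formation of both $R$ and $R_u$ is insensitive to passing to the identity component, I may replace $Z$ and $E$ by their identity components $Z^o$ and $E^o$ and assume both are connected; the surjection $Z \to E$ restricts to a surjection $Z^o \to E^o$.

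The key step is: for a surjective homomorphism $f : Z \to E$ of connected algebraic groups, $f(R(Z)) = R(E)$. Indeed $f(R(Z))$ is a connected normal solvable subgroup of $E$, hence contained in $R(E)$; conversely $f^{-1}(R(E))$ is a normal subgroup of $Z$ whose quotient $Z/f^{-1}(R(E)) \cong E/R(E)$ is semisimple, so $f^{-1}(R(E)) \supseteq R(Z)$ (as $R(Z)$ is the smallest normal subgroup with semisimple quotient, or because the image of $R(Z)$ in the semisimple group $E/R(E)$ is a connected normal solvable subgroup, hence trivial), giving $R(E) = f(f^{-1}(R(E))) \supseteq f(R(Z))$. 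Thus $R(E) = f(R(Z))$. Now by hypothesis $R(Z)$ is unipotent, and the image of a unipotent group under a homomorphism of algebraic groups is unipotent (unipotence is preserved by images since it is detected by the Jordan decomposition, or by any faithful representation). Therefore $R(E) = f(R(Z))$ is unipotent, which is what we wanted.

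I expect the main (minor) obstacle to be making the identification $f(R(Z)) = R(E)$ fully rigorous — in particular verifying that the image of $R(Z)$ in $E/R(E)$ is trivial, which uses that $E/R(E)$ is semisimple and hence has no nontrivial connected normal solvable (indeed no nontrivial connected solvable image-of-normal) subgroups. This is standard structure theory of linear algebraic groups (cf. \cite{Mil17}), and no genuinely hard input is needed; the lemma is essentially a bookkeeping consequence of the functoriality of the radical under surjections, which is exactly the shape in which it will be applied in the proof of \cref{thm:202210123} via \cref{claim:20221014}.
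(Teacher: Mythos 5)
Your overall strategy is the same as the paper's: identify $R(E)$ with the image $f(R(Z))$ of the radical of $Z$ under the surjection $f:Z\to E$, and then use that a quotient of a unipotent group is unipotent. The paper phrases this by writing $E$ as an extension $1\to R(Z)/(R(Z)\cap N)\to E\to (Z/R(Z))/N'\to 1$ (with $N=\ker f$ and $N'$ its image in $Z/R(Z)$), observing that the kernel is connected normal solvable and the quotient semisimple, and concluding that the kernel is exactly $R(E)$.

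There is, however, a direction error in your argument for the inclusion that actually matters. You correctly prove $f(R(Z))\subseteq R(E)$ (the image is a connected normal solvable subgroup). But your ``converse'' step --- $f^{-1}(R(E))\supseteq R(Z)$, hence $R(E)=f(f^{-1}(R(E)))\supseteq f(R(Z))$ --- again yields $f(R(Z))\subseteq R(E)$, not the reverse inclusion. The inclusion $f(R(Z))\subseteq R(E)$ alone gives nothing toward the conclusion: it is $R(E)\subseteq f(R(Z))$ that exhibits $R(E)$ as a quotient of the unipotent group $R(Z)$. To obtain it, consider the quotient $q:E\to E/f(R(Z))$; since $E/f(R(Z))\cong Z/(N\cdot R(Z))$ is a quotient of the semisimple group $Z/R(Z)$, it is semisimple, so $q(R(E))$, being a connected normal solvable subgroup of a semisimple group, is trivial, whence $R(E)\subseteq f(R(Z))$. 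With this correction (which is exactly the content of the paper's exact sequence), your proof goes through and coincides with the paper's.
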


\begin{proof}
We have an exact sequence
$$
1\to R(Z)\to Z\to Z/R(Z)\to 1,
$$
where $R(Z)$ is the radical of $Z$.
Then $Z/R(Z)$ is semi-simple.
Let $N\subset Z$ be the kernel of $Z\to E$.
Then we have
$$
1\to R(Z)/(R(Z)\cap N)\to E\to (Z/R(Z))/N'\to 1
$$
where $N'\subset Z/R(Z)$ is the image of $N$.
Then $(Z/R(Z))/N'$ is semi-simple and $R(Z)/(R(Z)\cap N)$ is solvable.
Hence $R(E)=R(Z)/(R(Z)\cap N)$.
Hence $R(E)$ is a quotient of $R(Z)$.
Note that $R(Z)$ is unipotent.
Hence $R(E)$ is unipotent.
\end{proof}

\subsection{Proof of \cref{thm:202210123}}\label{s4}
\begin{proof}[Proof of \cref{thm:202210123}]
We take a finite {\'e}tale cover $X'\to X$ such that $\pi_1(X')^{ab}$ is torsion free.
Note that $\varphi(\pi_1(X'))\subset G$ is Zariski dense and $X'$ is special or $h$-special.
Hence by replacing $X$ by $X'$, we may assume that $\pi_1(X)^{ab}$ is torsion free.

Let $X\to A$ be the quasi Albanese map.
We have the following sequence 
\begin{equation}\label{eqn:202210131}
\pi_1(F)\overset{\kappa}{\to} \pi_1(X)\to \pi_1(A)\to 1,
\end{equation}
where $F\subset X$ is a generic fiber.
By \cref{pro:202210131}, this sequence is exact.
Since $\pi_1(X)^{ab}$ is torsion free, we have $\pi_1(A)=\pi_1(X)^{ab}$.
Hence $\kappa$ induces $\pi_1(F)\to\pi_1(X)'$, where $\pi_1(X)'= [\pi_1(X),\pi_1(X)]$.
By $\varphi(\pi_1(X)')\subset G'$, we get 
$$\varphi\circ\kappa:\pi_1(F)\to G'.$$
Let $\Pi\subset G'/G''$ be the image of $\pi_1(F)\to G'/G''$.
Since $\pi_1(F)$ is finitely generated, $\Pi$ is a finitely generated, abelian group.

\begin{claim}\label{claim:202210153}
$\Pi\subset G'/G''$ is Zariski dense.
\end{claim}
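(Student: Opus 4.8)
\textbf{Proof plan for \Cref{claim:202210153}.}

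The statement to prove is that the image $\Pi \subset G'/G''$ of $\pi_1(F) \to G'/G''$ is Zariski dense, where $F$ is a general fiber of the quasi-Albanese map $X \to A$, and $G$ is the (connected, solvable) Zariski closure of $\varphi(\pi_1(X))$. The plan is as follows. First I would set $H$ to be the Zariski closure of $\Pi$ inside $G'/G''$; this is a connected algebraic subgroup (after possibly passing to the identity component, which corresponds to a further finite \'etale cover — but since we have already arranged $\pi_1(X)^{ab}$ torsion free and the fibers $F$ are connected, I expect $H$ to already be connected, or at least the argument can be run with $H^{\circ}$). The goal is to show $H = G'/G''$.

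The key input is the $\pi_1$-exactness of the quasi-Albanese map, already established in \Cref{pro:202210131}, which gives the exact sequence \eqref{eqn:202210131}. From this, the image of $\kappa: \pi_1(F) \to \pi_1(X)$ is exactly the kernel of $\pi_1(X) \to \pi_1(A) = \pi_1(X)^{ab}$, i.e. $\operatorname{Im}(\kappa) = \pi_1(X)'$. Applying $\varphi$, I get that $\varphi(\pi_1(F))$ generates (as an abstract group, via its image being exactly $\varphi(\pi_1(X)')$) a subgroup whose Zariski closure is the derived group of $G$, namely $G'$ — here I use the standard fact that for a Zariski dense subgroup $\Gamma$ of a connected algebraic group $G$, the derived subgroup $[\Gamma,\Gamma]$ is Zariski dense in $[G,G] = G'$ (this holds because $G'$ is generated by the images of the commutator maps, and density is preserved under such constructions; cf. the Borel density-type arguments). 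Thus $\varphi(\pi_1(F)) = \varphi(\pi_1(X)')$ has Zariski closure $G'$. Composing with the quotient $G' \to G'/G''$, which is a surjective morphism of algebraic groups, the image $\Pi$ of $\pi_1(F)$ therefore has Zariski closure equal to the image of $G'$, which is all of $G'/G''$. Hence $\Pi$ is Zariski dense in $G'/G''$.

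The main obstacle I anticipate is justifying cleanly that $\varphi(\pi_1(F))$ — not merely $\varphi(\pi_1(X)')$ as an abstract set, but the subgroup it generates — is Zariski dense in $G'$; this requires combining (i) the exactness $\operatorname{Im}(\kappa) = \pi_1(X)'$ from \Cref{pro:202210131} and (ii) the algebraic-group fact that $[\overline{\Gamma},\overline{\Gamma}]^{\circ}$ relation: precisely, if $\Gamma \subset G(\mathbb C)$ is Zariski dense and $G$ is connected, then the abstract derived subgroup $[\Gamma,\Gamma]$ is Zariski dense in $G'$. For a connected \emph{solvable} $G$ this is elementary since $G'$ is unipotent and one can argue on the level of Lie algebras / lower central series, or cite \cite[Corollary 1.2]{Mil17}-type results on commutator subgroups of algebraic groups. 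Once this density statement is in hand, the rest is just functoriality of Zariski closure under the morphism $G \to G' \to G'/G''$, which is routine. So the genuinely substantive step is the density of $[\Gamma,\Gamma]$ in $G'$, and everything else is bookkeeping with the exact sequence \eqref{eqn:202210131} and the fact (already arranged) that $\pi_1(A) = \pi_1(X)^{ab}$.
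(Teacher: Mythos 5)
Your proposal is correct and follows essentially the same route as the paper: use the $\pi_1$-exactness of \Cref{pro:202210131} to identify $\varphi(\pi_1(F))$ with $\Gamma'=[\Gamma,\Gamma]$ where $\Gamma=\varphi(\pi_1(X))$, show $\Gamma'$ is Zariski dense in $G'$, and push forward along the surjection $G'\to G'/G''$. The one step you defer to "standard facts" is handled in the paper by a two-line argument — the Zariski closure $H$ of $\Gamma'$ is normal in $G$, and $G/H$ is commutative because it contains the dense commutative image of $\Gamma/\Gamma'$, whence $G'\subset H$ — which is exactly the elementary justification you anticipated.
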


\begin{proof}
Set $\Gamma=\varphi(\pi_1(X))$.
Note that $\Gamma' \subset G'$, where $\Gamma'=[\Gamma,\Gamma]$ and $G'=[G,G]$.
We first show that $\Gamma'$ is Zariski dense in $G'$.
Let $H\subset G'$ be the Zariski closure of $\Gamma'$.
Then we have $\Gamma/\Gamma'\to G/H$, whose image is Zariski dense.
Since $\Gamma/\Gamma'$ is commutative, $G/H$ is commutative.
Hence $G'\subset H$.
This shows $H=G'$.
Thus $\Gamma'\subset G'$ is Zariski dense.

Now by \cref{pro:202210131}, $\kappa$ induces the surjection $\pi_1(F)\twoheadrightarrow \pi_1(X)'$.
Since $\pi_1(X)\to\Gamma$ is surjective, the induced map $\pi_1(X)'\to\Gamma'$ is surjective.
Hence $\pi_1(X)\to \Gamma$ induces a surjection $\pi_1(F)\twoheadrightarrow \Gamma'$.
Hence the image $\pi_1(F)\to G'$ is Zariski dense, for $\Gamma'\subset G'$ is Zariski dense.
Hence $\Pi\subset G'/G''$ is Zariski dense.
\end{proof}

Let $\Phi\subset \pi_1(X)$ be the image of $\pi_1(F)\to \pi_1(X)$.
By \eqref{eqn:202210131}, we have the following exact sequence:
$$
1\to \Phi\to\pi_1(X)\to \pi_1(A)\to 1.
$$
Note that $\Phi'\subset \pi_1(X)$ is a normal subgroup.
Hence we get
\begin{equation*}
\begin{CD}
1@>>> \Phi^{ab} @>>> \pi _1(X)/\Phi' @>>> \pi _1(A) @>>> 1\\
@. @VVV @VVV @VVV \\
1@>>> G'/G'' @>>> G/G'' @>>> G/G' @>>> 1
\end{CD}
\end{equation*}
By the conjugation, we get 
\begin{equation*}
\pi_1(A)\to \mathrm{Aut}(\Phi^{ab}).
\end{equation*}
This induces
\begin{equation*}
\rho:\pi_1(A)\to \mathrm{Aut}(\Pi).
\end{equation*}
Note that $G'/G''$ is a commutative unipotent group.
Hence $G'/G''\simeq (\mathbb G_a)^n$, where $\mathbb G_a$ is the additive group.
The exponential map $\mathrm{Lie}(G'/G'')\to G'/G''$ is an isomorphism. 
Note that $(\mathbb G_a)^n=\mathbb C^n$ as additive group.
We have 
$$\mathrm{Aut}((\mathbb G_a)^n)=\mathrm{GL}(\mathrm{Lie}(G'/G''))=\mathrm{GL}(\mathbb C^n).
$$
Hence by the conjugate, we have 
$$
\mu:G/G'\to \mathrm{Aut}(G'/G'')=\mathrm{GL}(\mathbb C^n).
$$

Let $1\to U\to G\to T\to 1$ be the sequence as in \eqref{eqn:20230321}.
We have $G/G'=(U/G')\times T$, from which we obtain $\mu|_T:T\to\mathrm{GL}(\mathbb C^n) $.
In the following, we are going to prove that $\mu|_T$ is trivial.
(Then \cref{claim:202210121} will yields that $G$ is nilpotent.)
For this purpose, we shall show that $\mu|_T(T)$ is contained in some unipotent subgroup $Y\subset \mathrm{GL}(\mathbb C^n)$, which we describe below, and then apply \cref{lem:trivial morphism}.

Now we define a subgroup 
$$\Sigma=\{ \sigma\in\mathrm{Aut}(G'/G'');\ \sigma\Pi= \Pi\}\subset \mathrm{GL}(\mathbb C^n).$$
Note that $\rho:\pi_1(A)\to \mathrm{Aut}(\Pi)$ factors through $\mu:G/G'\to \mathrm{Aut}(G'/G'')$.
This induces the following commutative diagram:

\begin{equation}\label{eqn:202210141}
\begin{CD}
\pi _1(A)@>\rho>> \Sigma \\
@VVV @VVV \\
G/G'@>>\mu> \mathrm{GL}(\mathbb C^n)
\end{CD}
\end{equation}

Since $\Pi\subset \bC^n$ is finitely generated, $\Pi$ is a free abelian group of finite rank. 
Since $\Pi\subset \mathbb C^n$ is Zariski dense (cf. \cref{claim:202210153}), the linear subspace spanned by $\Pi$ is $\mathbb C^n$.
Hence we may embed  $\Sigma\subset \mathrm{GL}(\Pi\otimes_{\mathbb Z}\bar{\mathbb Q})$.
Let $E\subset \mathrm{GL}(\Pi\otimes_{\mathbb Z}\bar{\mathbb Q})$ be the Zariski closure of $\rho(\pi_1(A))\subset \mathrm{GL}(\Pi\otimes_{\mathbb Z}\bar{\mathbb Q})$.
Then $E$ is commutative.
Let $E^o\subset E$ be the identity component.

\begin{claim}
$E^o$ is unipotent.
\end{claim}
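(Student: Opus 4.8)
The claim to prove is that $E^o \subset \mathrm{GL}(\Pi \otimes_\bZ \overline{\bQ})$, the identity component of the Zariski closure of $\rho(\pi_1(A))$, is unipotent. Here $\rho : \pi_1(A) \to \mathrm{Aut}(\Pi)$ is the monodromy-type action obtained from conjugation of $\pi_1(A)$ on $\Phi^{ab}$, passed down to $\Pi$.

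The plan is to identify $\rho$ with a monodromy representation of a local system on (an open part of) the semi-abelian variety $A$, and then invoke Deligne's theorem that the monodromy of a local system underlying a variation of Hodge structure (or, more precisely, the relevant quasi-unipotency/semisimplicity statement) has quasi-unipotent local monodromies and that $\pi_1$ of an abelian (or semi-abelian) variety, being abelian, forces the connected monodromy group to be unipotent. Concretely, first I would recall that $\Phi = \mathrm{Im}[\pi_1(F) \to \pi_1(X)]$ is normal in $\pi_1(X)$ with $\pi_1(X)/\Phi \cong \pi_1(A)$ by the $\pi_1$-exactness established in \cref{pro:202210131}, so the conjugation action of $\pi_1(A)$ on $\Phi^{ab}$, hence on $\Pi = \mathrm{Im}[\Phi^{ab} \to G'/G'']$, is well defined; this is exactly the action on $H_1$ of the fibers of the quasi-Albanese fibration $X \to A$ (after passing to a suitable Zariski open $A^\circ \subset A$ over which the fibration is a locally trivial $C^\infty$-bundle and then to the relevant quotient). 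The key point is that $H_1$ of the fibers carries a (geometric) variation of mixed Hodge structure over $A^\circ$, and $\Pi \otimes_\bZ \overline{\bQ}$ is a $\pi_1(A^\circ)$-subrepresentation that extends to a local system over all of $A$ (this extension property is where the $\pi_1$-exactness over $A$ — not just over $A^\circ$ — is used: the meridian loops around the boundary divisors act trivially).

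Next I would argue: since $\Pi \otimes \overline{\bQ}$ underlies a sub-VMHS of the geometric VMHS on $R^1(\text{fiber})$, its weight filtration has graded pieces that are polarizable variations of Hodge structure, each of which extends to a local system over the complete variety $A$ (as the boundary monodromies are trivial). By the theorem of Griffiths–Schmid, a polarizable VHS over a projective variety has finite monodromy image — but more to the point here, a polarizable VHS whose local system extends over a \emph{projective} variety with an \emph{abelian} fundamental group $\pi_1(A)$ has image with finite connected monodromy group, hence trivial connected monodromy; thus $E^o$ acts trivially on each graded piece of the weight filtration, i.e. $E^o$ is unipotent. Alternatively and more robustly, one can cite Deligne's semisimplicity/quasi-unipotency results for monodromy of variations of mixed Hodge structure directly: the monodromy of a VMHS on a quasi-projective variety is quasi-unipotent at infinity and the Zariski closure of the global monodromy has reductive quotient given by the monodromy on the pure graded pieces, which here has finite connected component because the graded pieces extend over $A$ and $\pi_1(A)$ is abelian. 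Either way one concludes $E^o$ unipotent.

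The main obstacle I anticipate is the careful bookkeeping of \emph{which} space carries the variation of Hodge structure and \emph{over which base}. One must pass from $\pi_1(X)$-conjugation on $\Phi^{ab}$ to an honest local system on a Zariski open $A^\circ \subseteq A$, check that $\Pi \otimes \overline\bQ$ is a $\pi_1(A)$-subrepresentation (not merely $\pi_1(A^\circ)$), i.e. that it genuinely extends across the boundary — this is exactly what the diagram in \eqref{eqn:202210141} and the $\pi_1$-exactness of $X \to A$ over the complete $A$ guarantee — and then verify that the resulting object is a sub-VMHS of a geometric VMHS so that Deligne's/Griffiths–Schmid's theorems apply. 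Once these identifications are in place, the unipotency of $E^o$ is a direct consequence of: the connected monodromy group of a polarizable VHS over a complete variety with abelian $\pi_1$ is trivial. After establishing this claim, one combines it with \cref{lem:20221015} (applied to a surjection of the relevant Zariski closures) or directly with \cref{claim:20221014} to deduce that $\mu|_T : T \to \mathrm{GL}(\bC^n)$ lands in a unipotent subgroup, hence is trivial by \cref{lem:trivial morphism}, and then \cref{claim:202210121} finishes the proof that $G$ is nilpotent.
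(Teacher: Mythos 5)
Your overall strategy---identify the action with the monodromy of the fibration on $H_1$ of the fibers, note that this underlies an admissible variation of mixed Hodge structure, and combine Deligne/Andr\'e with the abelianness of $\pi_1(A)$---is the same as the paper's. But your execution has a genuine gap at its center. You want to put a VMHS structure on $\Pi\otimes_{\bZ}\overline{\bQ}$ itself (calling it a ``sub-VMHS'') and argue on its weight-graded pieces. There is no reason this works: $\Pi$ is the image of $\pi_1(F)^{ab}$ in $G'/G''$ under the \emph{linear representation} $\varrho$, so the kernel $K=\ker(\pi_1(F)^{ab}\otimes\overline{\bQ}\to\Pi\otimes\overline{\bQ})$ is merely a monodromy-invariant subspace with no Hodge-theoretic compatibility, and the quotient need not carry any (graded-polarizable) mixed Hodge structure. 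The paper sidesteps this: it applies Andr\'e's theorem (\cite[Corollary 2]{And92}, or \cite[4.2.9b]{Del71}) to the Zariski closure $Z$ of the \emph{full} monodromy $\tau:\pi_1(A^o)\to\mathrm{GL}(\pi_1(F)^{ab}\otimes\overline{\bQ})$ to get that $R(Z)$ is unipotent, and then transfers unipotency of the radical to $E$ through the surjection $Z\to E$ by the purely group-theoretic \cref{lem:20221015}. That lemma is needed \emph{inside} the proof of this claim, not in the later step where you place it. The final ingredient is that $E$ is commutative (because $\bar\lambda$ factors through the abelian group $\pi_1(A)$), so $R(E)=E^o$. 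Note this commutativity is \emph{not} available for $Z$: $\pi_1(A^o)$ is the fundamental group of the complement of a proper closed subset of $A$ and need not be abelian, so you cannot run ``abelian $+$ connected semisimple $=$ trivial'' directly on the graded pieces of the geometric VMHS over $A^o$ either.

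Separately, your appeal to the base being ``the complete variety $A$''/``a projective variety'' and to the local system extending across a boundary is both unavailable and unnecessary. $A$ is a semi-abelian variety, in general non-compact, and the assertion that ``a polarizable VHS over a projective variety has finite monodromy image'' is false (e.g.\ non-isotrivial families of elliptic curves over projective curves). The only Hodge-theoretic input needed is Andr\'e's theorem over the quasi-projective base $A^o$; the factorization of the action through $\pi_1(A)$ (which you correctly tie to the $\pi_1$-exactness of $X\to A$) serves only to make $E$ commutative, not to extend any Hodge structure over a compactification.
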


\begin{proof}
We apply Deligne's theorem.
Let $A^o\subset A$ be a non-empty Zariski open set such that the restriction $f_o:X^o\to A^o$ of $X\to A$ over $A^o$ is a locally trivial $C^\infty$-fibration. 
Then we have the following exact sequence 
$$
\pi_1(F)\to \pi_1(X^o)\to\pi_1(A^o)\to 1.
$$
Let $\Psi\subset \pi_1(X^o)$ be the image of $\pi_1(F)\to \pi_1(X^o)$.
Note that $\Psi'\subset \pi_1(X^o)$ is a normal subgroup.
Then we get the following commutative diagram:

\begin{equation*}
\begin{CD}
1@>>> \Psi^{ab} @>>> \pi _1(X^o)/\Psi' @>>> \pi _1(A^o) @>>> 1\\
@. @VVV @VVV @VVV \\
1@>>>  \Phi^{ab} @>>> \pi _1(X)/\Phi' @>>> \pi _1(A)@>>> 1 \\
@. @VVV @VVV @VVV\\
1@>>> G'/G'' @>>> G/G'' @>>> G/G' @>>> 1
\end{CD}
\end{equation*}
By the conjugation, we get 
\begin{equation*}
\lambda:\pi_1(A^o)\to \mathrm{Aut}(\Psi^{ab}).
\end{equation*} 
This induces
\begin{equation*}
\bar{\lambda}:\pi_1(A^o)\to \mathrm{Aut}(\Pi),
\end{equation*}
which is the composite of $\pi_1(A^o)\to \pi_1(A)$ and $\rho:\pi_1(A)\to  \mathrm{Aut}(\Pi)$.
We note that $\bar{\lambda}$ induces
\begin{equation*}
\bar{\lambda}_{\bar{\mathbb Q}}:\pi_1(A^o)\to\mathrm{GL}(\Pi\otimes_{\mathbb Z}\bar{\mathbb Q}).
\end{equation*}
Then $E\subset \mathrm{GL}(\Pi\otimes_{\mathbb Z}\bar{\mathbb Q})$ is the Zariski closure of the image $\bar{\lambda}_{\bar{\mathbb Q}}(\pi_1(A^o))$.

Now we have the monodromy action
\begin{equation*}
\tau:\pi_1(A^o)\to\mathrm{GL}(\pi_1(F)^{ab}\otimes_{\mathbb Z}\bar{\mathbb Q})
\end{equation*}
induced from the family $X^o\to A^o$.
Let $Z\subset \mathrm{GL}(\pi_1(F)^{ab}\otimes_{\mathbb Z}\bar{\mathbb Q})$ be the Zariski closure of the image $\tau(\pi_1(A^o))$.
Let $R(Z)\subset Z$ be the radical of $Z$. 
Since $X^o\to A^o$ is a locally trivial $C^\infty$ fibration, $\tau$ is dual to the local system $R^1(f_o)_*(\bar{\bQ})$. Note that $R^1(f_o)_*(\bar{\bQ})$, hence $\tau$ underlies an  admissible variation of mixed Hodge structures (cf. \cite{BZ14} for the definition). 
Then by Deligne's theorem (cf. \cite[Corollary 2]{And92} or \cite[4.2.9b]{Del71}), $R(Z)$ is unipotent. 
Let $H\subset \mathrm{GL}(\pi_1(F)^{ab}\otimes_{\mathbb Z}\bar{\mathbb Q})$ be an algebraic subgroup defined by
$$
H=\{\sigma \in \mathrm{GL}(\pi_1(F)^{ab}\otimes_{\mathbb Z}\bar{\mathbb Q}); \sigma K= K\},
$$
where $K=\mathrm{ker}(\pi_1(F)^{ab}\otimes_{\mathbb Z}\bar{\mathbb Q}\to \Pi\otimes_{\mathbb Z}\bar{\mathbb Q})$.
Then $\tau(\pi_1(A^o))\subset H$, hence $Z\subset H$.
Note that $\bar{\lambda}_{\bar{\mathbb Q}}$ factors through $\tau:\pi_1(A^o)\to H$. 
This induces a morphism $Z\to E$, which is
surjective, for $\bar{\lambda}_{\bar{\mathbb Q}}(\pi_1(A^o))$ is Zariski dense.
Hence we get a surjection $Z\to E$.
Hence by \cref{lem:20221015}, $R(E)$ is unipotent.
Since $E$ is commutative, we have $R(E)= E^o$.
Hence $E^o$ is unipotent.
\end{proof}

Let $Y\subset \mathrm{GL}(\mathbb C^n)$ be the Zariski closure of $
\rho(\pi_1(A))\subset \mathrm{GL}(\mathbb C^n)$.
Let $Y^o\subset Y$ be the identity component.
By \cref{claim:202210153}, we may apply \cref{claim:20221014} for 
$\widetilde{\Sigma}=\rho(\pi_1(A))\subset\Sigma$ to conclude that $Y^o$ is unipotent.
Since the image of $\pi_1(A)\to G/G'$ is Zariski dense, the commutativity of \eqref{eqn:202210141} implies $\mu(G/G')\subset Y$.
This is Zariski dense, in particular $Y^o=Y$.
By $G/G'= (U/G')\times T$, $\mu$ induces $\mu|_T:T\to Y$.
Since $Y$ is unipotent, this is trivial by \cref{lem:trivial morphism}. 
Hence the action of $T$ onto $G'/G''$ is trivial.
By Lemma \ref{claim:202210121}, $G$ is nilpotent.
\end{proof}

\subsection{Proof of \cref{thm:VN}}  \label{s5}
\cref{thm:VN} is a consequence of \cref{main,thm:202210123}. 
\begin{proof}[Proof of \cref{thm:VN}]
	Observe that if $X$ is $h$-special (resp.  special), then   any finite \'etale cover  of $X$ is also  $h$-special (resp.   special). Denote by $G$ be the  Zariski closure of $\varrho$. 	
	Then after replacing $X$ by a finite \'etale cover   corresponding to the finite index subgroup $\varrho^{-1}(\varrho(\pi_1(X))\cap G_0(\bC))$ of $\pi_1(X)$, we can assume that the Zariski closure $G$ of $\varrho$ is connected.  Denote by $R(G)$   the radical $R(G)$ of $G$, which is the unique normal solvable subgroup such that $G/R(G)$ is semisimple.  If $R(G)\neq G$, the  induced representation $\varrho':\pi_1(X)\to G/R(G)(\bC)$ is still Zariski dense.   By \cref{cor:202304071}, $X$ is neither $h$-special nor weakly special which contradicts our assumption. This implies that $R(G)=G$.     By \cref{thm:202210123}, $G$ is nilpotent. 
	
	If  $\varrho$ is assumed to be semisimple, then $G$ is reductive. Hence
	$R(G)$ of $G$ is an algebraic torus.  It follows that $\varrho(\pi_1(X))$ is a virtually abelian group. 
\end{proof}

\subsection{Some examples}\label{sec:examples} 
As we mentioned above, the following example disproves \cref{conj:Campana}. 
 We construct a quasi-projective surface, which is both $h$-special and special. Its  fundamental group is linear and nilpotent, but not almost abelian.

\begin{example}\label{example} 
	Fix $\tau\in\mathbb H$ from the upper half plane.
	Then $\mathbb C/<\mathbb Z+\mathbb Z\tau>$ is an elliptic curve.
	We define a nilpotent group $G$ as follows.
	$$
	G=\left\{g(l,m,n)=\begin{pmatrix}
		1 & 0 & m & n \\
		-m & 1 & -\frac{m^2}{2} & l \\
		0 & 0 & 1 & 0 \\
		0 & 0 & 0 & 1
	\end{pmatrix}
	\in\mathrm{GL}_4(\mathbb Z)
	\
	\vert 
	\
	l,m,n\in \mathbb Z
	\right\}
	$$
	Thus as sets $G\simeq \mathbb Z^3$.
	However, $G$ is non-commutative as direct computation shows:
	\begin{equation}\label{eqn:202210061}
		g(l,m,n)\cdot g(l',m',n')=g(-mn'+l+l',m+m',n+n').
	\end{equation}
	We define $C\subset G$ by letting $m=0$ and $n=0$.
	$$
	C=\left\{
	g(l,0,0)=
	\begin{pmatrix}
		1 & 0 & 0 & 0 \\
		0 & 1 & 0 & l \\
		0 & 0 & 1 & 0\\
		0 & 0 & 0 & 1
	\end{pmatrix}
	\in\mathrm{GL}_4(\mathbb Z)
	\
	\vert 
	\
	l\in \mathbb Z
	\right\}
	$$
	Then $C$ is a free abelian group of rank one, thus $C\simeq \mathbb Z$ as groups.
	By \eqref{eqn:202210061}, $C$ is a center of $G$.  
	We have an exact sequence
	$$
	1\to C\to G\to L\to 1,
	$$
	where $L\simeq \mathbb Z^2$ is a free abelian group of rank two.
	This is a central extension.
	The quotient map $G\to L$ is defined by $g(l,m,n)\mapsto (m,n)$.
	
	\begin{claim}
		$G$ is not almost abelian.
	\end{claim}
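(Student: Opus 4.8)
The plan is to show that the commutator subgroup $[G,G]$ has infinite index in $G$ being false — more precisely, to exhibit that $G$ fails the defining property of an almost abelian group, namely that no finite-index subgroup of $G$ is abelian. I would argue by contradiction: suppose $H \leq G$ is a finite-index subgroup that is abelian. The key computational input is the commutator formula, which follows directly from \eqref{eqn:202210061}: for $g = g(l,m,n)$ and $g' = g(l',m',n')$, one computes
\[
	[g,g'] = g\, g'\, g^{-1}\, g'^{-1} = g(m'n - mn', 0, 0) \in C.
\]
Thus $[G,G] \subseteq C$, and in fact $[G,G] = C$ since taking $g = g(0,1,0)$ and $g' = g(0,0,1)$ yields $[g,g'] = g(1,0,0)$, a generator of $C \simeq \mathbb{Z}$.

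The main step is then to check that the restriction of the commutator pairing to any finite-index subgroup $H$ remains nontrivial. Since $H$ has finite index, its image $\bar H$ under the quotient map $\pi : G \to L \simeq \mathbb{Z}^2$ is a finite-index subgroup of $\mathbb{Z}^2$, hence contains two $\mathbb{Z}$-linearly independent elements, say $\bar a = (m_1,n_1)$ and $\bar b = (m_2,n_2)$ with $m_1 n_2 - m_2 n_1 \neq 0$. Lift these to $a, b \in H$. By the commutator formula, $[a,b] = g(m_2 n_1 - m_1 n_2, 0, 0) \neq e$ since $m_1 n_2 - m_2 n_1 \neq 0$. This contradicts the assumption that $H$ is abelian. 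Therefore $G$ is not almost abelian. I would also note, for the later application, that $G$ \emph{is} nilpotent (of class $2$), since $[G,G] = C$ is central, so $[G,[G,G]] = 1$ — this is what makes the eventual surface a counterexample to Campana's original conjecture rather than a violation of the revised one.

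The argument is essentially elementary once \eqref{eqn:202210061} is in hand, so I do not anticipate a serious obstacle here; the only thing to be careful about is the direction of the inequality on indices and the fact that a finite-index subgroup of $\mathbb{Z}^2$ still spans $\mathbb{Q}^2$, which guarantees the nondegeneracy of the pairing survives restriction. The genuinely substantive work of \Cref{example} lies elsewhere — in realizing $G$ as the fundamental group of an explicit quasi-projective surface $X$ (as a $\mathbb{C}^\ast$- or elliptic-curve bundle over an elliptic curve, with the central extension encoded geometrically) and in verifying that this $X$ is both special and $h$-special — but the claim as stated is just the group-theoretic fact that this nilpotent $G$ is not virtually abelian, which the commutator computation settles.
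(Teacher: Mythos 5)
Your proof is correct and follows essentially the same route as the paper's: both arguments reduce to the observation, coming from \eqref{eqn:202210061}, that $g(l,m,n)$ and $g(l',m',n')$ commute exactly when $mn'=m'n$, and that a finite-index subgroup of $G$ must contain a pair of elements violating this condition. (The paper phrases this via the infinite-index centralizers $G_{m,n}$ rather than the commutator pairing; note also the harmless sign slip in your check — the formula gives $[g(0,1,0),g(0,0,1)]=g(-1,0,0)$, which is still a generator of $C$.)
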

	
	\begin{proof}
		For $(\mu,\nu)\in\mathbb Z^2$, we set $G_{\mu,\nu}=\{ g(l,m,n);\ \mu n=\nu m\}$.
		Then by \eqref{eqn:202210061}, $G_{\mu,\nu}$ is a subgroup of $G$.
		By \eqref{eqn:202210061}, $g(l,m,n)$ commutes with $g(l',m',n')$ if and only if $g(l',m',n')\in G_{m,n}$.
		We have $C\subset G_{\mu,\nu}$ and $G_{\mu,\nu}/C=\{ (m,n)\in L; \mu n=\nu m\}$.
		Hence the index of $G_{\mu,\nu}\subset G$ is infinite for $(\mu,\nu)\not=(0,0)$.

		Now assume contrary that $G$ is almost abelian.
		The we may take a finite index subgroup $H\subset G$ which is abelian.
		We may take $g(l,m,n)\in H$ such that $(m,n)\not=(0,0)$.
		Then we have $H\subset G_{m,n}$.
		This is a contradiction, since the index of $G_{m,n}\subset G$ is infinite.
	\end{proof}

	Now we define an action $G\curvearrowright \mathbb C^2$ as follows:
	For $(z,w)\in\mathbb C^2$, we set
	$$
	\begin{pmatrix}
		z  \\
		w\\
		\tau \\
		1 
	\end{pmatrix}
	\mapsto
	\begin{pmatrix}
		1 & 0 & m & n \\
		-m & 1 & -\frac{m^2}{2} & l \\
		0 & 0 & 1 & 0 \\
		0 & 0 & 0 & 1\end{pmatrix}
	\begin{pmatrix}
		z  \\
		w\\
		\tau  \\
		1 
	\end{pmatrix}
	$$
	Hence
	$$
	g(l,m,n)\cdot (z,w)=\left(z+m\tau+n,-mz+w-\frac{m^2}{2}\tau+l\right).
	$$
	This action is properly discontinuous.
	We set $X=\mathbb C^2/G$.
	Hence 
	$\pi_1(X)=G.$ 
	Then $X$ is a smooth complex manifold.
	We have $\mathbb C^2/C\simeq \mathbb C\times \mathbb C^*$.
	The action $L\curvearrowright \mathbb C\times \mathbb C^*$ is written as 
	\begin{equation}\label{eqn:20221005}
		(z,\xi)\mapsto (z+m\tau+n,e^{-2\pi i mz-\pi im^2\tau}\xi),
	\end{equation}
	where $\xi=e^{2\pi iw}$.
	The first projection $\mathbb C\times \mathbb C^*\to \mathbb C$ is equivariant $L\curvearrowright \mathbb C\times \mathbb C^*\to \mathbb C\curvearrowleft L$.
	By this, we have $X\to E$, where $E=\mathbb C/<\mathbb Z+\mathbb Z\tau>$ is an elliptic curve.
	The action \eqref{eqn:20221005} gives the action on $\mathbb C\times \mathbb C$ by the natural inclusion $\mathbb C\times \mathbb C^*\subset \mathbb C\times \mathbb C$.
	We consider this as a trivial line bundle by the first projection $\mathbb C\times \mathbb C\to \mathbb C$. 
	We set $Y=(\mathbb C\times \mathbb C)/L$.
	This gives a holomorphic line bundle $Y\to E$. 
	By Serre's GAGA, $Y$ is algebraic.
	Hence $X=Y-Z$ is quasi-projective, where $Z$ is the zero section of $Y$. 
	\begin{claim}
		The quasi-projective surface $X$ is special and contains a Zariski dense entire curve. In particular, it is $h$-special.
	\end{claim}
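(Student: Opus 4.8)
The plan is to verify the two assertions separately: first that $X$ carries a Zariski-dense entire curve (which by \cref{lem:20230406} immediately gives $h$-speciality), and then that $X$ is special in the sense of Campana.

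\textbf{Construction of a Zariski-dense entire curve.} First I would produce an entire curve $f:\mathbb C\to X$ whose image is not contained in any algebraic curve. The natural candidate is to start from the translates of the curve $\mathbb C\to \mathbb C^2/C\simeq \mathbb C\times\mathbb C^*$ given by the one-parameter subgroups in the fiber direction together with a line in the base direction, and then compose with the quotient $\mathbb C^2/C\to X=\mathbb C^2/G$. Concretely, I would exhibit a holomorphic map $\mathbb C\to \mathbb C\times\mathbb C^*$ of the form $t\mapsto (\varphi(t),\psi(t))$ with $\varphi$ a non-constant affine-linear (or more general entire) map to $\mathbb C$ covering $E$ densely, and $\psi$ chosen so that the composition projects onto a dense subset of both $E$ and the $\mathbb C^*$-fibers; a Weierstrass-product type construction, as in \cref{example:20221105}, should work to control the vanishing and ensure the image is Zariski dense. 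To check Zariski density I would argue that the Zariski closure $W\subset X$ of $f(\mathbb C)$ surjects onto $E$ (since $\varphi$ is dense in $E$), so if $W$ were a proper subvariety it would be (a translate of) a section-like curve; then the $\mathbb C^*$-component $\psi$ would have to land in a single fiber or a torsion multisection, which I would rule out by the choice of $\psi$ having essential growth. Alternatively, one may invoke the Bloch--Ochiai theorem on the associated map to the semiabelian quotient $Y\to E$ plus a direct analysis of the fiber behaviour.

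\textbf{Speciality.} For speciality, I would argue by contradiction exactly as in the end of \cref{example:20221105}: if $X$ were not special, then after a proper birational modification $X'\to X$ there is an algebraic fiber space $g:X'\to B$ whose orbifold base $(B,\Delta)$ is of log general type. Since $\dim X=2$, $B$ is a curve and $(B,\Delta)$ is hyperbolic in the orbifold sense, so it admits no non-constant orbifold entire curve. But the Zariski-dense entire curve $f:\mathbb C\to X$ lifts (after modification, using that $f$ is non-constant and not contained in the modified locus, or by resolving indeterminacy along $f(\mathbb C)$) to $f':\mathbb C\to X'$, and then $g\circ f':\mathbb C\to B$ is a non-constant orbifold entire curve into $(B,\Delta)$ — the multiplicity condition along $\Delta$ being automatic for a map coming from $X'\to B$ with $\Delta$ the orbifold divisor of $g$. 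This contradicts the hyperbolicity of $(B,\Delta)$, so $X$ is special.

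\textbf{Main obstacle.} The delicate point is the construction of the entire curve together with the proof of Zariski density: one must choose the entire functions so that the image simultaneously fills up the elliptic base $E$, the $\mathbb C^*$-direction in the fibers, and avoids being trapped in any horizontal multisection of $Y\to E$, all while the twisted (non-trivial affine) $L$-action in \eqref{eqn:20221005} is compatible with descending the curve to $X$. I expect to handle this by an explicit Weierstrass-product construction mirroring \cref{example:20221105}, combined with the Bloch--Ochiai/logarithmic Bloch--Ochiai theorem to pin down the possible Zariski closures and eliminate all of them except $X$ itself. The speciality half is then comparatively routine given \cref{lem:20230406} and the orbifold-hyperbolicity argument already used in \cref{example:20221105}.
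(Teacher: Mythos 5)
Your speciality argument is exactly the paper's: assume not special, get an algebraic fiber space onto an orbifold curve $(C,\Delta)$ of log general type after a modification, and derive a contradiction by pushing the Zariski dense entire curve down to an orbifold entire curve in the hyperbolic base. That half is fine.

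The first half, however, has a genuine gap, and you have also missed the observation that makes it trivial. As written, your construction of the Zariski dense entire curve is only a plan: you never specify $\psi$, and the density verification is deferred to "a choice of $\psi$ having essential growth" or to Bloch--Ochiai. Neither works as stated. Bloch--Ochiai applied to the composition $\mathbb C\to X\to E$ (or to the quasi-Albanese of $X$, which is just $E$) only tells you that the image in $E$ is a translate of a subtorus, i.e.\ all of $E$; since $\dim E=1<2=\dim X$, this says nothing about whether the closure in $X$ is a curve dominating $E$ or all of $X$, so it cannot "pin down the possible Zariski closures." Ruling out that the closure is a multisection of $Y\to E$ would require an actual argument tied to the twisted cocycle in \eqref{eqn:20221005}, which you do not supply. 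The point you are missing is that the $G$-action on $\mathbb C^2$ is free and properly discontinuous, so $\pi:\mathbb C^2\to X$ is the \emph{universal covering map}. The paper therefore simply picks a countable metrically dense subset $\{(x_n,y_n)\}_{n\in\mathbb Z}$ of $\mathbb C^2$, interpolates it by a pair of entire functions $f_1(n)=x_n$, $f_2(n)=y_n$ (Mittag-Leffler/Weierstrass interpolation), and observes that $\pi\circ(f_1,f_2):\mathbb C\to X$ is metrically dense, hence Zariski dense. No analysis of the fibration $Y\to E$, of multisections, or of the twisted action is needed, and $h$-speciality follows from \cref{lem:20230406}. Your intermediate-quotient route through $\mathbb C\times\mathbb C^*$ could in principle be completed, but as it stands the decisive step is exactly the one you flag as unresolved.
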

	\begin{proof}
We take a dense set $\{(x_n,y_n)\}_{n\in \bZ}\subset  \bC^2$. By  Mittag-Leffler interpolation we can find entire functions $f_1(z)$ and $f_2(z)$   such that $f_1(n)=x_n$ and   $f_2(n)=y_n$. Then $f:\bC\to \bC^2$ defined by $f=(f_1,f_2)$ is a metrically dense entire curve. 
	Note that $\pi:\bC^2\to X$ is the universal covering map. It follows that $\pi\circ f$  is  a metrically dense entire curve in  $X$. Hence $X$ is also $h$-special. 
	
	If $X$ is not special, then  by definition after replacing $X$ by a proper birational modification  there is an algebraic fiber space $g:X\to C$ from $X$ to a  quasi-projective curve such that the orbifold base $(C,\Delta)$ of $g$ defined in \cite{Cam11} is of log general type, hence hyperbolic.  However, the composition $g\circ \pi\circ f$ is a orbifold entire curve of  $(C,\Delta)$. This contradicts with the hyperbolicity of $(C,\Delta)$.  Therefore, $X$ is special. 
	\end{proof}
	
\end{example}

\begin{rem}\label{re:20230428}
Let $Y\to E$ be the line bundle described in \Cref{example}.
Here $Y=(\mathbb C\times \mathbb C)/L$ under the action given by \eqref{eqn:20221005}.
By the $\tau$-quasiperiodic relation, the Jacobi theta function $\vartheta(z,\tau)$ gives an equivariant section of the first projection $\mathbb C\times \mathbb C\to \mathbb C$.
		Hence the degree of the line bundle $Y\to E$ is equal to one.
Thus $Y$ is ample.
\end{rem}

\begin{rem}
The above $X$ in \Cref{example} is homotopy equivalent to a Heisenberg manifold which is a circle bundle over the 2-torus.
It is well known that Heisenberg manifolds have nilpotent fundamental groups.
\end{rem}




We will construct an example of $h$-special complex manifold with linear solvable, but non virtually nilpotent fundamental group. By \cref{thm:202210123}, it is thus non quasi-projective.
\begin{example} 
	We start from algebraic argument.
	Set
	\begin{equation*}
		M=	\begin{bmatrix}
			1 & 2\\
			1 & 3
		\end{bmatrix}
		\in\mathrm{SL}_2(\mathbb Z)
	\end{equation*} 	
	All the needed property for $M$ is contained in the following \cref{claim:202210091}.
	
	\begin{claim}\label{claim:202210091}
		For every non-zero $n\in \mathbb Z-\{0\}$, $M^n$ has no non-zero eigenvector in $\mathbb Z^2$.
	\end{claim}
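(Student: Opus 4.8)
\textbf{Proof strategy for Claim~\ref{claim:202210091}.}
The plan is to show that $M$, hence every power $M^n$ with $n\neq 0$, has no rational eigenvector, which is stronger than having no integer eigenvector. First I would compute the characteristic polynomial of $M$: since $\operatorname{tr}(M)=4$ and $\det(M)=1$, it is $\chi_M(t)=t^2-4t+1$, whose roots are $\lambda_{\pm}=2\pm\sqrt{3}$. These are real quadratic irrationalities, so in particular $M$ has two distinct eigenvalues, neither of which lies in $\mathbb{Q}$. Consequently the eigenspaces of $M$ are lines in $\mathbb{C}^2$ defined over $\mathbb{Q}(\sqrt 3)$ but not over $\mathbb{Q}$: if a nonzero $v\in\mathbb{Q}^2$ satisfied $Mv=\mu v$, then $\mu$ would be a rational eigenvalue of a rational matrix, forcing $\mu\in\mathbb{Q}$, contradicting $\lambda_\pm\notin\mathbb{Q}$.

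Next I would pass from $M$ to $M^n$. The eigenvalues of $M^n$ are $\lambda_\pm^{\,n}=(2\pm\sqrt 3)^n$. For $n\neq 0$ these are again irrational: one way to see this is to note $\lambda_+^n\lambda_-^n=1$ and $\lambda_+^n+\lambda_-^n=:a_n\in\mathbb{Z}$ (an integer, since $a_n$ satisfies the linear recurrence $a_{n+1}=4a_n-a_{n-1}$ with $a_0=2$, $a_1=4$), so $\lambda_\pm^n$ are roots of $t^2-a_nt+1$; if $\lambda_+^n\in\mathbb{Q}$ it would be an algebraic integer that is rational, hence in $\mathbb{Z}$, but $\lambda_+=2+\sqrt3>1$ gives $\lambda_+^n\notin\mathbb{Z}$ for $n\geq 1$ (it is strictly between consecutive integers), and for $n\leq -1$ one uses $\lambda_+^n=\lambda_-^{|n|}\in(0,1)$. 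Since $M^n$ is a rational matrix with no rational eigenvalue, the same argument as in the previous paragraph shows it has no nonzero eigenvector in $\mathbb{Q}^2$, a fortiori none in $\mathbb{Z}^2$.

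The main (and essentially only) obstacle is the irrationality bookkeeping for the eigenvalues of the powers; this is routine once one records the integer linear recurrence satisfied by the traces $a_n=\operatorname{tr}(M^n)$. I would present the recurrence explicitly and note $a_n\geq 2$ with equality only at $n=0$, together with the elementary bound $\lambda_+^n>1>\lambda_-^n>0$ for $n\geq 1$, to conclude $\lambda_+^n$ cannot be an integer, hence not rational. Everything else is the standard fact that a rational matrix with an irrational spectrum has no rational eigenline.
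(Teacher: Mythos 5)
Your proof is correct and follows essentially the same route as the paper: identify the eigenvalues of $M^n$ as $(2\pm\sqrt3)^n$, show they are irrational for $n\neq 0$, and conclude that a nonzero integer (or even rational) eigenvector would force a rational eigenvalue, a contradiction. The only difference is that you spell out the irrationality of $(2\pm\sqrt3)^n$ via the trace recurrence, a step the paper dismisses with "as direct computation shows."
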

	
	\begin{proof}
		Fix $n\in \mathbb Z-\{0\}$.
		Assume contrary that there exists $v\in \mathbb Z^2\backslash \{ 0\}$, such that $M^nv=\alpha^nv$, where $\alpha$ is an eigenvalue of $M$.
		Then $\alpha^n\not\in\mathbb Q$ as direct computation shows. 
		Hence $\alpha^nv\not\in \mathbb Z^2$, while $M^nv\in\mathbb Z^2$.
		This is a contradiction.
	\end{proof}
	
	\begin{claim}\label{claim:202210092}
		Let $N\subset \mathbb Z^2$ be a submodule.
		Let $n\in \mathbb Z-\{0\}$.
		If $N$ is invariant under the action of $M^n$, then either $N=\{0\}$ or $N$ is finite index in $\mathbb Z^2$.
	\end{claim}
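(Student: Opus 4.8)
The plan is to exploit the fact that $\mathbb Z^2$ is a free module over the PID $\mathbb Z$, so every submodule $N\subset\mathbb Z^2$ is free of rank $0$, $1$ or $2$. If $\mathrm{rank}(N)=0$ then $N=\{0\}$, and if $\mathrm{rank}(N)=2$ then $N\otimes_{\mathbb Z}\mathbb Q=\mathbb Q^2$, so any $2\times 2$ integral matrix whose columns generate $N$ has non-zero determinant, which exactly says $N$ has finite index in $\mathbb Z^2$. Thus the whole content of the claim is to show that the case $\mathrm{rank}(N)=1$ cannot occur when $N$ is $M^n$-invariant with $n\neq 0$.

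So suppose $\mathrm{rank}(N)=1$, say $N=\mathbb Z v$ with $v\in\mathbb Z^2\setminus\{0\}$. Since $M^n\in\mathrm{SL}_2(\mathbb Z)$ is invertible over $\mathbb Z$, the invariance $M^nN\subseteq N$ forces $M^nN=N$, and in particular $M^nv\in\mathbb Z v$, i.e.\ $M^nv=cv$ for some $c\in\mathbb Z$ (necessarily $c\neq 0$). Then $v$ is a non-zero eigenvector of $M^n$ lying in $\mathbb Z^2$, contradicting \cref{claim:202210091} because $n\neq 0$. Hence $\mathrm{rank}(N)\neq 1$, and combining with the first paragraph, either $N=\{0\}$ or $N$ is of finite index in $\mathbb Z^2$.

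There is no real obstacle here: the argument is a one-line reduction to \cref{claim:202210091} once the rank trichotomy for submodules of $\mathbb Z^2$ is in place. The only points requiring a word of care are (i) citing the structure theorem for submodules of a finitely generated free module over a PID, and (ii) observing that a rank-$2$ submodule of $\mathbb Z^2$ automatically has finite index — both of which are standard and can be dispatched in a sentence.
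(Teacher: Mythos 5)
Your proof is correct and is essentially the paper's argument, just written out in more detail: the paper also reduces to showing that a nonzero invariant submodule cannot have rank one (via \cref{claim:202210091}, which rules out integral eigenvectors of $M^n$) and then concludes finite index from rank two. The extra care you take with the rank trichotomy and the finite-index observation is fine but adds nothing beyond what the paper's one-line proof already implicitly uses.
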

	
	\begin{proof}
		Suppose $N\not=\{0\}$.
		By \cref{claim:202210091}, we have $\dim_{\mathbb R}N\otimes_{\mathbb Z}\mathbb R=2$.
		Then $N$ is finite index in $\mathbb Z^2$.
	\end{proof}
	
	\begin{claim}\label{claim:202210093}
		Let $G$ be a group which is an extension 
		$$
		1\to \mathbb Z^2\to G\overset{p}{\to}\mathbb Z\to 1.
		$$
		Assume that the corresponding action $\mathbb Z\to {\rm Aut}(\bZ^2)$ is given by $n\mapsto M^n$.
		Then $G$ is not virtually nilpotent.
	\end{claim}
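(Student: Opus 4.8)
The claim to prove is \cref{claim:202210093}: if $G$ fits into a central-type extension $1\to \mathbb Z^2\to G\overset{p}{\to}\mathbb Z\to 1$ with the $\mathbb Z$-action on $\mathbb Z^2$ given by $n\mapsto M^n$, then $G$ is not virtually nilpotent. Since $M$ is a hyperbolic matrix (eigenvalues $\alpha,\alpha^{-1}$ with $\alpha$ irrational, $|\alpha|\neq 1$), $G$ is a polycyclic group of exponential growth, and the heuristic is that a finite-index subgroup of $G$ still ``sees'' a power $M^n$ acting on a finite-index sublattice of $\mathbb Z^2$, hence still has exponential growth and cannot be nilpotent.

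\begin{proof}[Proof of \cref{claim:202210093}]
Suppose for contradiction that $G$ has a finite-index subgroup $H$ that is nilpotent. Set $A=\mathbb Z^2\subset G$ and $H_0=H\cap A$. Since $A$ has finite index in $p^{-1}(p(H))$ and $H$ has finite index in $G$, the subgroup $H_0=H\cap A$ has finite index in $A=\mathbb Z^2$, and $p(H)\subset\mathbb Z$ is a nonzero subgroup, say $p(H)=n\mathbb Z$ with $n\neq 0$; here I use that $p(H)\neq\{0\}$, which holds because $[\,G:H\,]<\infty$ while $\mathbb Z$ is infinite. Pick $g\in H$ with $p(g)=n$. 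Then conjugation by $g$ preserves $A$ and acts on it (after the identification in the statement) by the matrix $M^n$; moreover conjugation by $g$ preserves $H$, hence preserves $H_0=H\cap A$. Therefore $H_0\subset\mathbb Z^2$ is a nonzero submodule invariant under $M^n$ with $n\neq 0$, so by \cref{claim:202210092} it is of finite index in $\mathbb Z^2$.

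Now consider the subgroup $H_1=\langle H_0,g\rangle\subset H$. It fits into an extension $1\to H_0\to H_1\to n_1\mathbb Z\to 1$ for some nonzero multiple $n_1$ of $n$ (namely $p(H_1)=n\mathbb Z$, since $g\in H_1$), and conjugation by $g$ acts on the finite-index lattice $H_0\subset\mathbb Z^2$ by $M^n$. Since $H\subset G$ is nilpotent, so is its subgroup $H_1$; say $H_1$ is nilpotent of class $c$. Then the lower central series gives $[\,g,[\,g,[\ldots,[\,g,v\,]\ldots]\,]\,]=0$ (a bracket of length $c+1$ with $c$ copies of $g$ and one $v\in H_0$). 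Translating into the action on the abelian group $H_0$, this says $(M^n-\mathrm{id})^c$ kills $H_0\otimes_{\mathbb Z}\mathbb Q=\mathbb Q^2$, i.e. $M^n$ is unipotent on $\mathbb Q^2$. But $M^n$ has eigenvalues $\alpha^n,\alpha^{-n}$ with $\alpha^n\notin\mathbb Q$ for $n\neq 0$ (as noted in the proof of \cref{claim:202210091}), in particular $\alpha^n\neq 1$, so $M^n$ is not unipotent. This is a contradiction. Hence $G$ has no nilpotent finite-index subgroup, i.e. $G$ is not virtually nilpotent.
\end{proof}

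The only step that requires a little care is the passage from ``$H$ nilpotent'' to ``$M^n$ unipotent on $\mathbb Q^2$'': one must make sure the conjugating element $g$ can be chosen inside $H$ (so that iterated commutators $[\,g,v\,]$ stay in $H_0$ and the nilpotency of $H$ actually applies), and one must identify the iterated commutator $[\,g,\cdots,[\,g,v\,]\cdots]$ in the semidirect-product structure with $(M^n-\mathrm{id})^{c}v$; this is the routine computation $[\,g,v\,]=gvg^{-1}v^{-1}=(M^n v)\cdot v^{-1}=(M^n-\mathrm{id})v$ in additive notation on $A$, iterated. I expect no serious obstacle; the essential input is the arithmetic of $M$ already packaged in \cref{claim:202210091,claim:202210092}.
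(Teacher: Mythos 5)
Your proof is correct, and it reaches the contradiction by a more direct computation than the paper does. The paper's argument takes the normal subgroup $N_0=G'\cap\mathbb Z^2$ of the putative nilpotent finite-index subgroup $G'$, builds a descending central series $N_0\supset N_1\supset\cdots\supset N_{k+1}=\{0\}$ with $N_{i+1}=[N_i,G']$, and proves by induction that every $N_i$ has finite index in $\mathbb Z^2$: the inductive step uses \cref{claim:202210092} (an $M^l$-invariant submodule is trivial or of finite index) to rule out intermediate ranks, and \cref{claim:202210091} to exclude $N_{i+1}=\{0\}$ (which would force $M^l$ to act trivially on the nonzero lattice $N_i$); this contradicts $N_{k+1}=\{0\}$. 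You instead fix a single $g\in H$ with $p(g)=n\neq 0$, observe the identity $[g,v]=(M^n-\mathrm{id})v$ on $H_0=H\cap\mathbb Z^2$, and conclude from nilpotency of class $c$ that $(M^n-\mathrm{id})^c$ annihilates the finite-index lattice $H_0$, i.e.\ $M^n$ is unipotent on $\mathbb Q^2$ — impossible since its eigenvalues are $\alpha^{\pm n}\neq 1$. This collapses the paper's induction into one commutator computation and, as you implicitly note, makes \cref{claim:202210092} superfluous (your appeal to it is redundant, since $[\,\mathbb Z^2:H\cap\mathbb Z^2\,]\leq[\,G:H\,]<\infty$ already); only the hyperbolicity of $M$ from \cref{claim:202210091} is really needed. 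The one step worth stating explicitly, as you do, is that the left-normed commutator of weight $c+1$ with entries $g,\dotsc,g,v\in H_1$ lies in $\gamma_{c+1}(H_1)=\{1\}$, which is exactly where the nilpotency of the subgroup $H_1\subset H$ is used. Both arguments are sound; yours is shorter and isolates the linear-algebraic obstruction (non-unipotence of $M^n$) more transparently, while the paper's version is the one that generalizes verbatim to the setting of \cref{claim:202210121}, where the same descent along a central series is performed for algebraic groups.
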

	
	\begin{proof}
		Assume contrary that there exists a finite index subgroup $G'\subset G$ which is nilpotent.
		Then there exists a central sequence 
		$$G'\cap \mathbb Z^2=N_0\supset N_1\supset N_2\supset \cdots\supset N_{k+1}=\{0\},$$ 
		i.e., for each $i=0,\ldots, k$, $N_i\subset G'$ is a normal subgroup and $N_i/N_{i+1}\subset G'/N_{i+1}$ is contained in the center of $G'/N_{i+1}$.
		By induction on $i$, we shall prove that $N_i\subset \mathbb Z^2$ is a finite index subgroup.
		For $i=0$, this is true by $N_0=G'\cap \mathbb Z^2$.
		We assume that $N_i\subset \mathbb Z^2$ is finite index.
		There exists $l\in\mathbb Z_{>0}$ such that $p(G')=l\mathbb Z$, where $p:G\to\mathbb Z$.
		Then we have an exact sequence
		$$
		1\to G'\cap \mathbb Z^2\to G'\to l\mathbb Z\to 1.
		$$
		Since $N_{i+1}\subset G'$ is normal, $N_{i+1}\subset \mathbb Z^2$ is invariant under the action $M^l$.
		By \cref{claim:202210092}, either $N_{i+1}=\{0\}$ or $N_{i+1}\subset \mathbb Z^2$ is finite index.
		In the second case, we complete the induction step.
		Hence it is enough to show $N_{i+1}\not=\{0\}$.
		So suppose $N_{i+1}=\{0\}$.
		Note that $N_i/N_{i+1}$ is contained in the center of $G'/N_{i+1}$.
		Hence $N_i$ is contained in the center of $G'$.
		So $M^l$ acts trivially on $N_i$.
		This is a contradiction (cf. \cref{claim:202210091}).
		Hence $N_{i+1}\not=\{0\}$.
		Thus we have proved that $N_i\subset \mathbb Z^2$ is a finite index subgroup.
		This contradicts to $N_{k+1}=\{0\}$.
		Thus $G$ is not virtually nilpotent.
	\end{proof}
	
	Now for $n\in\mathbb Z$, we define integers $a_n$, $b_n$, $c_n$ ,$d_n$ by
	\begin{equation*}
		\begin{bmatrix}
			a_n & b_n\\
			c_n & d_n
		\end{bmatrix}
		=M^n
	\end{equation*} 	
	Define a $\bZ$-action on $\bC^*\times\bC^*\times \bC$ by  
	$$
	(\xi_1,\xi_2,z)\mapsto (\xi_1^{a_n}\xi^{b_n}_2,\xi_1^{c_n}\xi^{d_n}_2,z+n).
	$$
	The quotient by this action is a holomorphic fiber bundle $X$ over $\mathbb C^*$  with fibers $\bC^*\times\bC^*$. For the monodromy representation of this fiber bundle  $\varrho:\bZ\to \mathrm{Aut}(\bZ^2)$,  
	\begin{equation*}
		\varrho(n)=M^n
	\end{equation*} 
	We have
	$$
	1\to \pi_1(\mathbb C^*\times \mathbb C^*)\to \pi_1(X)\to \pi_1(\mathbb C^*)\to 1
	$$
	Hence $\pi_1(X)$ is solvable.
	By \cref{claim:202210093}, $\pi_1(X)$ is not virtually nilpotent.
	
\end{example}

\section{A structure theorem of varieties with $\pi_1$ admitting big $\&$ reductive Representations}      
In this section, we prove \cref{thm:20230510}.
\subsection{A structure theorem}
Before going to prove this, we prepare the following generalization of a structure theorem for quasi-projective varieties of maximal quasi-albanese dimension in \cref{lem:abelian pi0}. 
\begin{thm}\label{thm:202305101} 
Let $X$ be a smooth quasi-projective variety and let $g:X\to \cA\times Y$ be a morphism, where $\cA$ is a semi-abelian variety and $Y$ is a smooth quasi-projective variety such that $\Spalg(Y)\subsetneqq Y$.
Let $p:X\to Y$ be the composition of $g:X\to \cA\times Y$ and the second projection $\cA\times Y\to Y$.
Assume that $p:X\to Y$ is dominant and $\dim g(X)=\dim X$.
Then after replacing $X$ by a finite \'etale cover and a birational modification, there are a semiabelian variety $A$, a quasi-projective manifold $V$  and a birational morphism $a:X\to V$  such that the following commutative diagram holds:
		\begin{equation*} 
			\begin{tikzcd}
				X \arrow[rr,    "a"] \arrow[dr, "j"] & & V \arrow[ld, "h"]\\
				& J(X)&
			\end{tikzcd}
		\end{equation*}
		where $j$ is the logarithmic Iitaka fibration of $X$ and $h:V\to J(X)$ is a  locally trivial fibration with fibers isomorphic to $A$.  
Moreover, for a   general fiber $F$ of $j$, $a|_{F}:F\to A$ is proper in codimension one.  
\end{thm}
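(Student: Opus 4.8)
The plan is to prove \cref{thm:202305101} in four steps; the third is the main point.

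\emph{Step 1 ($\bar\kappa(X)\geq 0$).} First I would observe that $\Spalg(Y)\subsetneqq Y$ forces $Y$ to be of log general type: if $\dim Y>0$ and $Y$ were not of log general type it would be a positive-dimensional subvariety of itself which is not of log general type, so $Y\subseteq\Spalg(Y)$, a contradiction (the case $\dim Y=0$ being trivial). Writing $c:X\to\cA$ for the first component of $g$, a general fibre $F$ of $p$ satisfies $\dim F=\dim\overline{c(F)}$ because $g$, hence $g|_F$, is generically finite onto its image; thus $\bar\kappa(\overline{c(F)})\geq 0$ by \cref{prop:Koddimabb} and $\bar\kappa(F)\geq 0$. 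Fujino's addition formula \cite[Theorem~1.9]{Fuj17} then gives $\bar\kappa(X)\geq\bar\kappa(F)+\bar\kappa(Y)=\bar\kappa(F)+\dim Y\geq 0$. We may therefore replace $X$ by a smooth birational model on which the logarithmic Iitaka fibration $j:X\to J(X)$ is a morphism, with $J(X)$ smooth of log general type.

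\emph{Step 2 (structure of a general fibre of $j$).} Let $F$ be a general fibre of $j$; it is smooth with $\bar\kappa(F)=0$. I claim $p|_F$ is constant. Otherwise, put $T=\overline{p(F)}\subsetneqq Y$ with $\dim T>0$; applying \cref{prop:Koddimabb} and \cite[Theorem~1.9]{Fuj17} to $p|_F:F\to T$ as in Step~1 gives $0=\bar\kappa(F)\geq\bar\kappa(T)$, so $T$ is not of log general type, whence $T\subseteq\Spalg(Y)$ and $F\subseteq p^{-1}(\Spalg(Y))$, a proper Zariski closed subset of $X$ (as $p$ is dominant and $\Spalg(Y)\subsetneqq Y$). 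This is impossible for a general fibre of $j$ when $\dim J(X)>0$; and when $\dim J(X)=0$ one has $\bar\kappa(X)=0$, hence $\dim Y=0$ by the inequality of Step~1, so $p$ is constant anyway. Thus $p|_F$ is constant, so $g|_F$ is essentially $c|_F:F\to\cA$, which is generically finite onto its image. Taking the quasi-Stein factorisation $F\to B_F\to\cA$ of $c|_F$ and combining $\bar\kappa(F)=0$ with \cref{prop:Koddimabb} and Kawamata's theorem \cite[Theorem~26]{Kaw81} (exactly as in Step~1 of the proof of \cref{lem:abelian pi}), $B_F$ is a translate of a semiabelian subvariety of $\cA$, $B_F\to\cA$ is finite \'etale onto its image, and $F\to B_F$ is birational.

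\emph{Step 3 (globalising the fibration --- main step).} Since $\cA$ has only countably many semiabelian subvarieties, there is a fixed semiabelian subvariety $A\subseteq\cA$ such that, for very general $t\in J(X)$, the subvariety $\overline{c(F_t)}$ is a translate of $A$, where $F_t=j^{-1}(t)$. Hence $q\circ c$ is constant on general fibres of $j$, $q:\cA\to\cA/A$ being the quotient, so it factors as $q\circ c=\psi\circ j$ for a dominant rational map $\psi:J(X)\dashrightarrow\cA/A$. The pull-back $V:=\cA\times_{\cA/A}J(X)$ of the $A$-torsor $\cA\to\cA/A$ along $\psi$ is a locally trivial fibration over $J(X)$ with fibre $\cong A$, and $x\mapsto(c(x),j(x))$ defines a dominant rational map $X\dashrightarrow V$ over $J(X)$, generically finite since $\dim V=\dim J(X)+\dim A=\dim X$. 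To upgrade this to a birational \emph{morphism} $a:X\to V$ one passes to a finite \'etale cover of $X$ rendering the maps $c|_{F_t}$ birational onto their images, which is possible by Kawamata's structure theorem \cite[Theorem~27]{Kaw81} (which simultaneously trivialises the family $\{F_t\}$ and identifies $J(X)$, up to birational modification, with the base of an $A$-bundle $V\to J(X)$); after a further common resolution of $X$ and $V$ we obtain a birational morphism $a:X\to V$ with $j=h\circ a$, where $h:V\to J(X)$ is the bundle projection, which is the asserted commutative diagram.

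\emph{Step 4 (properness in codimension one).} For a general fibre $F$ of $j$, the morphism $a$ carries $F$ into the fibre $h^{-1}(j(F))\cong A$, and $a|_F:F\to A$ is a birational morphism from a smooth quasi-projective variety with $\bar\kappa(F)=0$ to a semiabelian variety, hence proper in codimension one by \cref{lem:abelian pi0}. The principal difficulty is Step~3: producing an honest fibre bundle with a fixed fibre $A$ together with a birational \emph{morphism} $a:X\to V$ --- rather than merely a birational map onto a variety whose fibres over $J(X)$ are translates of $A$ --- for which the passage to a finite \'etale cover is indispensable, this being precisely the content of Kawamata's structure theorem for varieties of non-negative logarithmic Kodaira dimension mapping to a semiabelian variety.
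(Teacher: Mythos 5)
Your overall route is the paper's: contract $p$ on the Iitaka fibres using $\Spalg(Y)\subsetneqq Y$ together with Fujino's addition theorem, identify $\overline{c(F_t)}$ for very general $t$ as a translate of one fixed semiabelian subvariety via \cref{prop:Koddimabb} and the countability of semiabelian subvarieties of $\cA$, and realise $V$ as the pull-back to $J(X)$ of the corresponding torsor (the paper's $V=J(X)\times_{W/C}W$ with $W=\overline{c(X)}$ is your $\cA\times_{\cA/A}J(X)$, up to the isogeny discussed below). Steps 1, 2 and 4 are essentially the paper's arguments. One small rearrangement is needed in Step 2: \cite[Theorem 1.9]{Fuj17} is used in this paper only over a base of log general type, so you should argue that either $T=\overline{p(F)}$ is of log general type, in which case Fujino gives $0=\bar{\kappa}(F)\geq\bar{\kappa}(T)=\dim T>0$, absurd, or else $T\subseteq\Spalg(Y)$ directly.

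The genuine gap is the second half of Step 3, which you rightly single out as the crux but do not carry out. As written, ``pass to a finite \'etale cover of $X$ rendering the maps $c|_{F_t}$ birational onto their images'' cannot work: if $c|_{F_t}$ has degree $d>1$ onto $A+a_t$, then precomposing with an \'etale cover of $X$ while keeping the same map to the same $\cA$ multiplies that degree rather than reducing it. The quasi-Stein factorisation is $F_t\to B_t\to A+a_t$ with $F_t\to B_t$ birational and $B_t\to A+a_t$ a nontrivial isogeny, so the \emph{target} must be enlarged. The paper's \cref{lem:finite cover} is exactly the needed device: one chooses an isogeny $\cA'\to\cA$ so that $B_t\times_{\cA}\cA'$ splits with $B_t\hookrightarrow\cA'$, and replaces $X$ by a connected component of $X\times_{\cA}\cA'$; this is indeed a finite \'etale cover of $X$, but it comes with a \emph{lifted} map to $\cA'$, and only for that lifted map are the Iitaka fibres birational onto translates (this is \cref{claim:20221105}). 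Kawamata's \cite[Theorem 27]{Kaw81} does not substitute for this: applied to $W=\overline{c(X)}$ it structures $W$ as a $\widetilde{B}$-bundle over the Iitaka base of $W$, which need not be $J(X)$, and it gives no control matching its isogeny $\widetilde{B}\to B$ with the Stein factorisations $B_t\to A$. Finally, in Step 4 the birationality of $a|_F$ onto a translate of $A$ and the equality $\bar{\kappa}(F)=0$ are only established for \emph{very} general $F$; the paper passes to general $F$ by extending $a$ to a partial compactification $\bar{a}:\overline{X}\to V$ and using semicontinuity of $\codim\bigl(V_t\cap\bar{a}(\overline{X}\setminus X)\bigr)$, a step you should add.
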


\begin{proof}
Consider the logarithmic Iitaka fibration $j: X\dashrightarrow J(X)$. We may replace $X$ by a birational modification such that $j$ is regular.    
Write $X_t:=j^{-1}(t)$.

\begin{claim}\label{claim:20230518}
 $p(X_t)$ is a point for very generic $t\in J(X)$.
 \end{claim}
 
 \begin{proof}
Since $p:X\to Y$ is dominant and $\Spalg(Y)\subsetneqq Y$, we have $\overline{p(X_t)}\not\subset \Spalg(Y)$ for generic $t\in J(X)$.
Hence, $\overline{p(X_t)}$ is of log general type for generic $t\in J(X)$.
Now we take very generic $t\in J(X)$.
To show that $p(X_t)$ is a point, we assume contrary that $\dim p(X_t)>0$.
Then $\bar{\kappa}(\overline{p(X_t)})>0$.
Since $\bar{\kappa}(X_t)=0$, general fibers of $p|_{X_t}:X_t\to \overline{p(X_t)}$ has  non-negative logarithmic Kodaira dimension.  By \cite[Theorem 1.9]{Fuj17} it follows that $\bar{\kappa}(X_t)\geq \bar{\kappa}(\overline{p(X_t)})>0$. We obtain the  contradiction. 
Thus $p(X_t)$ is a point. 
\end{proof}

Let $\alpha:X\to \cA$ be the composition of $g:X\to \cA\times Y$ and the first projection $\cA\times Y\to \cA$.
	Since $\dim X=\dim g(X)$,  one has $\dim X_t=\dim \alpha(X_t)$ for general $t\in J(X)$ by \cref{claim:20230518}.		
	For very generic $t\in J(X)$, note that $\bar{\kappa}(X_t)=0$, hence by \cref{prop:Koddimabb}, the closure of $\alpha(X_t)$ is a translate of a  semi-abelian variety $A_t$  of $\cA$.  
	Note that $\cA$ has only at most countably many  semi-abelian subvarieties, it follows that $A_t$ does not depend on very general $t$ which we denote by $B$.  
	\begin{claim}\label{claim:20221105}
		There are 
		\begin{equation*}
			\begin{tikzcd}
				X' \arrow[r] \arrow[d, "\gamma"] & X\arrow[d,"\alpha"]\\
				\cA' \arrow[r] & \cA
			\end{tikzcd}
		\end{equation*}
		where the two rows are a finite     \'etale cover $X'\to X$ and an isogeny $\cA'\to \cA$  such that for a very general fiber $F$ of the logarithmic Iitaka fibration $j':X'\to J(X')$ of $X'$,  $\gamma|_{F}:F\to \cA'$ is mapped birationally to a translate of a semiabelian subvariety $C$ of $\cA'$, and the induced map $F\to C$ is proper in codimension one.
	\end{claim}
	\begin{proof}
		For a very general fiber $X_t$ of $j$,  we know that $\alpha|_{X_t}:X_t\to \cA$ factors through a birational morphism $X_t\to C$ which is proper in codimension one and an isogeny   $C\to B$.  
 Let $\cA'\to \cA$ be the isogeny in \cref{lem:finite cover}  below such that $C\times_{\cA}\cA'$ is a disjoint union of $C$ and the natural morphism of $C\to \cA'$ is injective.  
 It follows that  for a connected component $X'$ of $X\times_A\cA'$, the fiber of $X'\to J(X)$ at $t$ is a disjoint union of $X_t$.  
 Consider  the quasi-Stein factorisation of $X'\to J(X)$  and we obtain an algebraic fiber space $X'\to I$ and a finite morphism $\beta:I\to J(X)$.   
 Then for each $t'\in I$ with $t=\beta(t')$, the fiber $X'_{t'}$ of $X'\to I$ is isomorphic to $X_t$ hence it has zero logarithmic Kodaira dimension.   
 By the universal property of logarithmic Iitaka fibration,   $X'_{t'}$  is contracted by the logarithmic Iitaka fibration of $X'\to J(X')$. 
 Since $\bar{\kappa}(X')=\bar{\kappa}(X)$ by \cref{lem:same Kd}, it follows that $X'\to I$ is the logarithmic Iitaka fibration of $X'$.   Moreover, by our construction, for the natural morphism $\gamma:X'\to \cA'$,  the induced map $\gamma|_{X'_{t'}}:X'_{t'}\to \cA'$ is mapped birationally to a translate of $C\subset \cA'$, and proper in codimension one.
 \end{proof}

We replace $X$ and $\cA$ by the above \'etale covers.   
Write $W$ to be the closure of $\alpha(X)$. Then $W$ is invariant under the action of $C$ by the translate.  Denote by $T:=W/C$. The natural morphism $X\to T$ contracts general fibers of $j$. Therefore after replacing by a birational model of $X\to J(X)$, one has
	\begin{equation*}
		\begin{tikzcd}
			X \arrow[r] \arrow[d] &  W\arrow[d] \\
			J(X)  \arrow[r] & T
		\end{tikzcd}
	\end{equation*} 
	Consider the natural morphism $a: X\to J(X)\times_T W$. Then it is birational by the above claim.   Moreover,  for a very general fiber $X_t$, the morphism $a|_{X_t}:X_t\to (J(X)\times_T W)_t$ 
	 is proper in codimension one by \cref{claim:20221105}.  
	 	 
Set $V:=J(X)\times_T W$. 	 Let $\overline{X}$ be a partial compactification of $X$ such that $a:X\to V$ extends to  a proper morphism $\bar{a}:\overline{X}\to V$.   Then $\Xi:=\bar{a}(\overline{X}\backslash X)$ is a Zariski closed subset of $V$.   By the above result  for a very general fiber $V_t$ of the fibration $V\to J(X)$  we know that $V_t\cap \Xi$ is of codimension at least two in $V_t$. By the semi-continuity it holds for a \emph{general} fiber $V_t$.   Since $ \overline{X}\backslash \bar{a}^{-1}(\Xi)\subset X$,  we conclude that for a  general fiber $X_t$ of $X\to J(X)$, the morphism $a|_{X_t}:X_t\to V_t$ 
	 is proper in codimension.  The theorem is proved. 
\end{proof}

\begin{lem}\label{lem:finite cover}
	Let $A$ be a semiabelian variety and let $B$ be a semiabelian subvariety of $A$. Let  $C\to B$ be a finite \'etale cover.  Then there is an isogeny $A'\to A$ such that   $C\times_AA'$ is a disjoint union of $C$ and the natural morphism $C\to A'$ is injective. 
\end{lem}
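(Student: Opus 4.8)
\textbf{Proof plan for \cref{lem:finite cover}.}

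The statement is a standard fact about isogenies of semi-abelian varieties; I will reduce it to the structure of finite étale covers of $B$ via the fundamental group. Since $B$ is a semi-abelian variety, its topological fundamental group $\pi_1(B)$ is a free abelian group $\bZ^{2g+\ell}$, and finite étale covers of $B$ correspond to finite-index subgroups of $\pi_1(B)$; moreover every such cover is again a semi-abelian variety, and $C\to B$ is an isogeny of semi-abelian varieties. The first step is therefore to record that $C\to B$ is an isogeny, so that $\Lambda:=\ker(C\to B)$ is a finite subgroup scheme, and $C/\Lambda\cong B$ via the given map.

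The key point is to extend this isogeny to an isogeny of $A$. I would argue as follows. Write $N$ for the order of $\Lambda$ (equivalently, the degree of $C\to B$), and consider the multiplication-by-$N$ isogeny $[N]:A\to A$. Its restriction to $B$ factors the isogeny $C\to B$: since $[N]:B\to B$ annihilates all $N$-torsion, and $\Lambda\subset B[N]$, there is a commutative diagram with $[N]_B:B\to B$ factoring through $C\to B$. Thus $B\times_{[N],A}A$ (i.e.\ $[N]^{-1}(B)=B$, as $[N]$ maps $B$ onto $B$) contains a copy of $C$ embedded via a lift of $C\to B$. To be more careful, the right object to take is the following: let $A':=A$ with the isogeny $[N]:A'\to A$. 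Then $C\to B\hookrightarrow A$ lifts along $[N]:A'\to A$ — concretely, choose a section of $C\to B$ over the $N$-torsion and extend — to give a closed immersion $C\hookrightarrow A'$ whose composite with $[N]$ is $C\to B\hookrightarrow A$. This realizes $C$ as a connected component of $C\times_A A'$; the remaining components are the other translates by $\ker[N]$, and they are all isomorphic to $C$, so $C\times_A A'$ is a disjoint union of copies of $C$ with $C\hookrightarrow A'$ injective as required.

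The step I expect to be the main (though still routine) obstacle is making precise the assertion that $C\to B$, composed with $B\hookrightarrow A$, admits a \emph{closed immersion} lift along $[N]:A\to A$, rather than merely a morphism; this requires checking that the lift is injective on points, which follows because the composite $C\to A'\xrightarrow{[N]}A$ is the injective map $C\to B\hookrightarrow A$ precomposed with the \emph{isomorphism} $C\xrightarrow{\sim} C$ coming from the factorization $[N]_B = (C\to B)\circ(\text{lift})$, so injectivity of $C\to A'$ is automatic. One then checks that $C\times_A A' \to C$ (first projection) is finite étale of degree $\deg[N] = N^{\mathrm{rk}}$ and that the $\ker[N]$-action is free and transitive on the set of components containing the image of $C$, so that $C\times_A A'\cong \coprod C$. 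I would present this computation briefly, citing that $[N]$ is étale of the stated degree on a semi-abelian variety and that translates of a connected component of a fiber product of étale maps are again components.

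\medskip

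(If a cleaner formulation is preferred, one may instead take $A'$ to be the pushout/connected component construction: let $A' := (C \times A)/B$ for the anti-diagonal $B$-action, where $B$ acts on $C$ through $C\to B$ and on $A$ by translation; then $A'\to A$ induced by multiplication is an isogeny with the desired property, and $C\hookrightarrow A'$ via $c\mapsto [(c,0)]$ is a closed immersion with $C\times_A A'$ a disjoint union of copies of $C$. This is the same argument organized group-theoretically, and I would use whichever is shorter to write out.)
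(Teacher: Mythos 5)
Your overall strategy --- produce $A'$ as the covering of $A$ attached to a suitable finite\-/index subgroup of $\pi_1(A)$ --- is the right one and is also the paper's, but the specific subgroup you choose is wrong, and the lifting step on which everything rests fails. Write $\Gamma=\pi_1(A)$, $\Lambda_B=\pi_1(B)=\Gamma\cap V$ (where $V=\mathrm{Lie}(B)$) and $\Lambda'=\pi_1(C)\subset\Lambda_B$, of index $N$. By the covering-space lifting criterion, $C\to B\hookrightarrow A$ lifts along an isogeny $A'\to A$ with $\pi_1(A')=\Gamma'$ if and only if $\Lambda'\subset\Gamma'$, and the lift is injective if and only if $\Gamma'\cap\Lambda_B=\Lambda'$. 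Your first construction takes $\Gamma'=N\Gamma$, for which $\Gamma'\cap\Lambda_B=N\Lambda_B$; but $\Lambda'\subset N\Lambda_B$ is false in general (take $A=B$ an elliptic curve and $C$ the degree-$N$ cover corresponding to $\bZ\oplus N\bZ\subset\bZ^2$), so the lift of $C$ along $[N]$ simply does not exist. The factorization you invoke points the wrong way: killing the $N$-torsion yields a map $\hat p:B\to C$ with $p\circ\hat p=[N]_B$, which exhibits $C$ as an \emph{intermediate quotient} of the cover $[N]_B:B\to B$ (lattice inclusions $N\Lambda_B\subset\Lambda'\subset\Lambda_B$), not as a cover admitting a map into $[N]^{-1}(B)$ over $B$; a degree count also rules out the lift, since it would force $N=\deg p$ to be divisible by $\deg[N]_B=N^{\operatorname{rank}\Lambda_B}$. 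The parenthetical pushout $(C\times A)/B$ is likewise ill-posed: $B$ does not act on $C$ ``through $C\to B$'' --- a point of $B$ has $N$ preimages in $C$, so the would-be translation is defined only up to $\ker(C\to B)$.

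The repair is exactly what the paper does. Since $\Gamma/\Lambda_B\cong\pi_1(A/B)$ is free abelian, $\Lambda_B$ is a direct summand, say $\Gamma=\Lambda_B\oplus M$; set $\Gamma'=\Lambda'\oplus M$ and $A'=\bC^n/\Gamma'$. Then $\Gamma'$ has finite index, $\Gamma'\cap\Lambda_B=\Lambda'$, so the lift $C\to A'$ exists and is injective; moreover $\Gamma'+\Lambda_B=\Gamma$, so the preimage of $B$ in $A'$ is connected and equals $C$, whence $C\times_AA'\cong C\times_BC$, which is a disjoint union of copies of $C$ because the isogeny $C\to B$ is Galois with group $\ker(C\to B)$. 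The essential point your proposal misses is that the sublattice must be chosen to meet $\pi_1(B)$ in exactly $\pi_1(C)$; no multiplication map $[N]$ achieves this once $\pi_1(B)$ has rank at least two.
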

\begin{proof}
	We may write $A=\bC^n/\Gamma$ where $\Gamma$ is a lattice in $\bC^n$ such that there is a natural isomorphic $i: \pi_1(A)\to \Gamma$. Then there is a $\bC$-vector space $V\subset \bC^n$ such that $B=V/V\cap \Gamma$. It follows that $i({\rm Im}[\pi_1(B)\to \pi_1(A)])=\Gamma\cap V$.   Since $C\to B$ is an isogeny, then $i({\rm Im}[\pi_1(C)\to \pi_1(A)])$ is a finite index subgroup of $\Gamma\cap V$.  Let $\Gamma'\subset \Gamma$ be the finite index subgroup such that $\Gamma'\cap V=i({\rm Im}[\pi_1(C)\to \pi_1(A)])$. Therefore, for the semiabelian variety $A':=\bC^n/\Gamma'$,   the morphism $C\to A$  lifts to $C\to A'$, and it is moreover injective. Then the base change $C\times_A A'$ is identified with $C\times_BC$, which is a disjoint union of $C$.  The lemma is proved. 
\end{proof}

Now we prove \cref{thm:20230510} (i), (ii), (iii), which we restate as follows.

\begin{thm}[= \cref{thm:20230510} (i)-(iii)]\label{thm:structure}
	Let $X$ be a quasi-projective normal variety and let $\varrho:\pi_1(X)\to {\rm GL}_N(\bC)$ be a reductive and big representation.   Then 
	\begin{thmlist}
		\item \label{item:LKD} the logarithmic Kodaira dimension satsifies $\bar{\kappa}(X)\geq 0$.
Moreover, if $ \bar{\kappa}(X)= 0$, then $\pi_1(X)$ is virtually abelian.
		\item \label{item:Mori}There is a proper Zariski closed subset $\Xi$ of $X$ such that each non-constant morphism $\bA^1\to X$ has image in $\Xi$.
		\item \label{item:local trivial}After replacing $X$ by a finite \'etale cover and a birational modification, there are a    semiabelian variety $A$, a   quasi-projective manifold $V$  and a birational morphism $a:X\to V$  such that the following commutative diagram holds:
		\begin{equation*} 
			\begin{tikzcd}
				X \arrow[rr,    "a"] \arrow[dr, "j"] & & V \arrow[ld, "h"]\\
				& J(X)&
			\end{tikzcd}
		\end{equation*}
		where $j$ is the logarithmic Iitaka fibration of $X$ and $h:V\to J(X)$ is a  locally trivial fibration with fibers isomorphic to $A$.  Moreover, for a   general fiber $F$ of $j$, $a|_{F}:F\to A$ is proper in codimension one.  
	\end{thmlist}   
\end{thm}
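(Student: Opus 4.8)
The plan is to reduce \cref{thm:structure} to \cref{lem:202305101}, \cref{thm:202305101}, and the hyperbolicity results already established, so that almost all of the content is packaged into those statements. The key input is \cref{lem:202305101}: after passing to a finite \'etale cover and a birational modification, we obtain a semi-abelian variety $A$, a smooth quasi-projective $Y$ with $\Spp(Y)\subsetneqq Y$ and $\Spalg(Y)\subsetneqq Y$, and a morphism $g:X\to A\times Y$ with $\dim g(X)=\dim X$, where moreover the composition $p:X\to Y$ with the projection $A\times Y\to Y$ is dominant. All of \cref{item:LKD}, \cref{item:Mori}, \cref{item:local trivial} will be deduced from this configuration.

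For \cref{item:LKD}: applying \cref{prop:Koddimabb}(a) to a general fiber $F$ of $p$ (which satisfies $\dim F=\dim c(F)$ for $c:X\to A$ the other projection, by $\dim g(X)=\dim X$) gives $\bar\kappa(F)\geq 0$; since $\overline{p(X)}=Y$ is of log general type (being pseudo-Picard hyperbolic it is of log general type by \cref{thm:GGL}, or directly $\Spalg(Y)\subsetneqq Y\implies Y$ of log general type), Fujino's addition formula \cite[Theorem 1.9]{Fuj17} yields $\bar\kappa(X)\geq \bar\kappa(F)+\bar\kappa(Y)\geq 0$. If $\bar\kappa(X)=0$, then $Y$ must be a point and $\bar\kappa(X)=\bar\kappa(F)=0$ where now $g=c:X\to A$ has $\dim X=\dim c(X)$; by \cref{lem:abelian pi}, $\pi_1(X)$ is abelian, hence the original $\pi_1(X)$ is virtually abelian after reversing the \'etale cover.

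For \cref{item:Mori}: a non-constant morphism $\bA^1\to X$ composed with $p$ gives a morphism $\bA^1\to Y$; since $Y$ is pseudo Picard hyperbolic (so $\Spp(Y)\subsetneqq Y$), by \cref{cor:20221102} $\Spalg(Y)\subsetneqq Y$ and in fact images of $\bA^1$ lie in a fixed proper Zariski closed set $Z_0\subsetneqq Y$ — concretely, $\bA^1\subset\bC^*$ has an essential singularity issue so one uses $\Sph$ or directly that $Y$ being of log general type forces $\bA^1\to Y$ to have image in $\Spalg(Y)$. So a non-constant $\bA^1\to X$ either maps into $p^{-1}(Z_0)$, or maps to a point of $Y$, i.e. into a fiber $F$; on such a fiber $c|_F:F\to A$ is quasi-finite outside a proper closed set, and a non-constant $\bA^1\to F$ then yields a non-constant $\bA^1\to A$ unless the image lies in the exceptional set — but $A$ being semi-abelian admits no non-constant $\bA^1$ (every morphism $\bA^1\to A$ is constant since $\bA^1$ has no non-constant invertible functions and $\overline\kappa(\bA^1)=-\infty$ while translates of semi-abelian subvarieties have $\overline\kappa\geq 0$... more simply: $\bA^1\to A$ lifts to the universal cover $\bC^n$ and is constant). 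Hence $\Xi:=p^{-1}(Z_0)\cup(\text{the locus where }c\text{ is not quasi-finite on fibers})$ works; one must check this is a proper Zariski closed set, which follows since $\dim g(X)=\dim X$.

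For \cref{item:local trivial}: this is exactly the conclusion of \cref{thm:202305101} applied to the configuration $g:X\to A\times Y$ from \cref{lem:202305101} (whose hypotheses $\Spalg(Y)\subsetneqq Y$, $p$ dominant, $\dim g(X)=\dim X$ are all met). I expect the main obstacle to be purely bookkeeping: making sure that the finite \'etale covers and birational modifications taken in \cref{lem:202305101}, and then again inside \cref{thm:202305101}, are compatible — in particular that the logarithmic Iitaka fibration behaves well under these modifications (using \cref{lem:same Kd} to see $\bar\kappa$ is preserved by removing high-codimension loci, and \cref{lem:fun} for $\pi_1$). There is also a small subtlety in \cref{item:Mori}: I need the statement about $\bA^1$-images in $Y$ being confined to a proper Zariski closed subset, which is slightly stronger than pseudo-Brody hyperbolicity applied naively, but follows from $Y$ of log general type together with \cref{thm:main33}/\cref{cor:20221102} applied to $Y$ (whose $\pi_1$ carries the big reductive, indeed large, representation coming from the construction), or more elementarily from the fact that in the proof of \cref{lem:202305101} the variety $Y$ itself receives a big and Zariski dense representation into a semisimple group, so \cref{main2}\cref{main:log general type} gives the desired exceptional locus $Z_0\subsetneqq Y$. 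I would write the argument so that $Z_0$ is extracted directly from that application of \cref{main2}.
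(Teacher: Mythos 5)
Your proposal is correct and follows essentially the same route as the paper: it reduces everything to \cref{lem:202305101}, proves (i) via \cref{prop:Koddimabb} and Fujino's addition formula (with \cref{lem:abelian pi} for the $\bar\kappa=0$ case), proves (ii) by combining the absence of $\bA^1$-curves in semi-abelian varieties with $\Spp(Y)\subsetneqq Y$ and the quasi-finiteness of $g$ off a proper closed set, and cites \cref{thm:202305101} for (iii). The only cosmetic difference is that the paper takes the exceptional set for (ii) directly as $E\cup p^{-1}(\Spp(Y))$ with $E$ the non-quasi-finite locus of $g=(\alpha,p)$, rather than your fiber-wise formulation, but the content is the same.
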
    
\begin{proof}
	To prove the theorem we are free to replace $X$ by a birational modification  and by a finite \'etale cover thanks to \cref{lem:fun,lem:KodairaDim}. 
	We apply \cref{lem:202305101}.
Then by replacing $X$ with a finite \'etale cover and a birational modification, we obtain a smooth quasi-projective variety $Y$ (might be zero-dimensional), a semiabelian variety $\cA$, and a morphism $g:X\to \cA \times Y$ that satisfy the following properties:
	\begin{itemize}
		\item  $\dim X=\dim g(X)$.
		\item Let $p:X\to Y$ be  the composition of $g$ with the projective map $\cA\times Y\to Y$. 
		Then  $p$   is dominant.
		\item $\Spalg(Y)\subsetneqq Y$ and $\Spp(Y)\subsetneqq Y$.
	\end{itemize}

	Let us  prove \cref{item:LKD}.
	Let $\alpha:X\to\cA$ be the composite of $g:X\to \cA\times Y$ and the first projection $\cA\times Y\to Y$.
	Let  $Z$ be a general fiber of $p$. 
	Then $\alpha|_Z:Z\to \cA$ satisfies $\dim Z=\dim \alpha(Z)$. 
	It follows that $\bar{\kappa}(Z)\geq 0$. By\cite[Theorem 1.9]{Fuj17} we obtain $\bar{\kappa}(X)\geq \bar{\kappa}(Y)+\bar{\kappa}(Z)$.  
Hence $\bar{\kappa}(X)\geq 0$.

Suppose $\bar{\kappa}(X)= 0$.
Then $\bar{\kappa}(Y)=0$.
By $\Spalg(Y)\subsetneqq Y$, we conclude that $\dim Y=0$.
Hence $\dim\alpha(X)=\dim X$.
By \cref{lem:abelian pi}, $\pi_1(X)$ is abelian.
The first claim is proved.

	\medspace
	
Let us  prove \cref{item:Mori}.  
Let $E\subsetneqq X$ be a proper Zariski closed set such that $g|_{X\backslash E}:X\backslash E\to \cA\times Y$ is quasi-finite.
Set $\Xi=E\cup p^{-1}(\Spp(Y))$.
Then $\Xi\subsetneqq X$.

We shall show that every non-constant algebraic morphism $\bA^1\to X$ has its image in $\Xi$.
Indeed, suppose $f:\bA^1\to X$ satisfies $f(\bA^1)\not\subset \Xi$.
Since $\cA$ does not contain $\bA^1$-curve, the composite $\alpha\circ f:\bA^1\to \cA$ is constant.
By $p\circ f(\bA^1)\not\subset \Spp(Y)$, $p\circ f:\bA^1\to Y$ is constant.
Hence $g\circ f:\bA^1\to \cA\times Y$ is constant.
By $f(\bA^1)\not\subset E$, $f$ is constant.
Thus we have proved that every non-constant algebraic morphism $\bA^1\to X$ has its image in $\Xi$. 

Finally \cref{item:local trivial} follows from \cref{thm:202305101}.
\end{proof}

\begin{rem}
If we assume the logarithmic abundance conjecture: a quasi-projective manifold is $\bA^1$-uniruled if and only if $\bar{\kappa}(X)=-\infty$, then it predicts that   $\bar{\kappa}(X)\geq 0$ if there is a big representation $\varrho:\pi_1(X)\to {\rm GL}_N(\bC)$, which is slightly stronger than the first claim in \cref{thm:structure}. Indeed, since $\varrho$ is big, $X$ is not $\bA^1$-uniruled and thus $\bar{\kappa}(X)\geq 0$ by this conjecture.  
\end{rem}

\subsection{A characterization of varieties birational to semi-abelian variety} 
In \cite{Yam10}, the third author established the following theorem: Let $X$ be a smooth projective variety equipped with a big representation $\varrho:\pi_1(X)\to {\rm GL}_N(\bC)$. If $X$ admits a Zariski dense entire curve, then after replacing $X$ by a  finite \'etale cover, its Albanese morphism $\alpha_X:X\to \cA_X$ is birational.

In \cref{thm:20230510}.(iv) we state a similar result  for smooth quasi-projective varieties $X$, provided that $\varrho$ is a reductive representation.   
\begin{proposition}[=\cref{thm:20230510}.(iv)]\label{thm:char}
	Let $Y$ be an $h$-special or   special quasi-projective manifold.    If $\varrho:\pi_1(Y)\to {\rm GL}_N(\bC)$ is  a  reductive and  big representation, then there is a finite \'etale cover $X$ of $Y$ such that   the quasi-Albanese morphism $\alpha:X\to \cA$ is  birational  and the induced morphism $\alpha_*:\pi_1(X)\to\pi_1(\cA)$ is an isomorphism.
In particular, $\pi_1(Y)$ is virtually abelian.
\end{proposition}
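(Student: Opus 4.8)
The plan is to reduce everything to the structure theorem \cref{thm:structure} together with \cref{thm:202210123} (the nilpotency theorem for solvable groups). Since $Y$ is $h$-special or special and $\varrho$ is big, by \cref{thm:VN} the group $\varrho(\pi_1(Y))$ is virtually abelian; after passing to a finite étale cover $Y' \to Y$ we may assume $\varrho(\pi_1(Y'))$ is abelian, hence $\varrho$ is still big and reductive (bigness and reductivity pass to finite étale covers, the latter because the Zariski closure of $\varrho(\pi_1(Y'))$ is a finite-index subgroup hence still reductive, being a subgroup of an abelian reductive, i.e. torus, group — actually a subgroup of a torus). So without loss of generality $\varrho(\pi_1(Y))$ is an abelian group sitting inside a torus. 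The main point is then to show that under these hypotheses the logarithmic Iitaka fibration of $Y$ is trivial, i.e. $\bar\kappa(Y)=0$, after which \cref{item:LKD} of \cref{thm:structure} gives that $\pi_1(Y)$ is virtually abelian, and \cref{item:local trivial} degenerates to the statement that (after a further finite étale cover and birational modification) $Y$ is birational to a semi-abelian variety $A$ with $a|_F : F \to A$ an isomorphism when $\dim J(Y) = 0$; combining with \cref{lem:abelian pi0} this means the quasi-Albanese morphism is proper in codimension one and birational.

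\textbf{Key steps.} First, I would apply \cref{thm:VN} and reduce to the case where $\varrho(\pi_1(Y))$ is abelian. Second, I would invoke \cref{lem:202305101} (or directly \cref{thm:structure} applied with the big reductive representation $\varrho$): after replacing $Y$ by a finite étale cover and a birational modification, there is a semi-abelian variety $\cA$, a smooth quasi-projective $Y_0$ with $\Spalg(Y_0) \subsetneqq Y_0$, and $g : Y \to \cA \times Y_0$ with $\dim g(Y) = \dim Y$ and $p : Y \to Y_0$ dominant. Third — and this is where the $h$-special/special hypothesis on $Y$ re-enters crucially — I claim $\dim Y_0 = 0$. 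Indeed, $Y_0$ is a dominant image of $Y$, hence by \cref{lem:202304063} (resp. by Campana's theorem \cref{special} for the orbifold base) $Y_0$ is $h$-special (resp. special); if $\dim Y_0 > 0$ then by \cref{cor:202304071} $Y_0$ cannot be $h$-special/weakly special, since $Y_0$ carries a big representation... wait, one needs $Y_0$ to carry a big representation into a semisimple group — but here the relevant representation is abelian. The correct argument: since $\Spalg(Y_0) \subsetneqq Y_0$ and $Y_0$ is $h$-special, \cref{lem:202304065} together with \cref{thm:GGL} applied to $Y_0$ (which carries a big reductive, in fact abelian, representation pulled back through $p$ — this needs checking, or one uses that $Y_0$ supports the descended part of $\varrho$) forces $Y_0$ to be non-pseudo-Brody-hyperbolic, yet $\Spalg(Y_0) \subsetneqq Y_0$ combined with $\Sph \subseteq \Spalg$ would give pseudo-Brody hyperbolicity, contradiction unless $\dim Y_0 = 0$. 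Fourth, with $\dim Y_0 = 0$ we get $\dim \alpha(Y) = \dim Y$ where $\alpha : Y \to \cA$; since $Y$ is $h$-special, by \cref{cor:20221102} (equivalence of being log general type with $\Spalg \subsetneqq Y$) and $\Sph(Y) = Y$ (as $Y$ is $h$-special, by \cref{lem:202304065} $Y$ is not pseudo-Brody hyperbolic, so $\Sph(Y) = Y$), we deduce $Y$ is not of log general type, so $\bar\kappa(Y) < \dim Y$; but also $\bar\kappa(Y) \geq 0$ by \cref{item:LKD}. Running the logarithmic Iitaka fibration and applying \cref{lem:20230509} to a very general fiber (which has $\bar\kappa = 0$ and maximal $\alpha$-dimension) shows $\Spab(\text{fiber}) = \text{fiber}$, hence $\Spab(Y) = Y$; this is consistent. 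To pin down $\bar\kappa(Y) = 0$ exactly, I would argue that if $\dim J(Y) > 0$ then $J(Y)$, being a dominant image of the $h$-special $Y$, is $h$-special and of log general type — impossible by \cref{lem:202304065} and \cref{cor:20221102}. Hence $\bar\kappa(Y) = 0$.

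\textbf{Conclusion and the main obstacle.} Once $\bar\kappa(Y) = 0$ is established, \cref{item:LKD} of \cref{thm:structure} directly gives that $\pi_1(Y)$ is virtually abelian, and \cref{lem:abelian pi} (together with \cref{lem:abelian pi0}) applied to $\alpha : X \to \cA$ — where $X$ is the appropriate finite étale cover with $\dim X = \dim\alpha(X)$ and $\bar\kappa(X) = 0$ — shows $\alpha$ is birational, proper in codimension one, and induces an isomorphism $\pi_1(X) \xrightarrow{\sim} \pi_1(\cA)$ via the commutative diagram in the proof of \cref{lem:abelian pi} (the rows being isomorphisms induced by proper birational morphisms between smooth varieties, and $\pi_1(X^\circ) \to \pi_1(X)$ surjective). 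I expect the main obstacle to be the bookkeeping in step three: carefully tracking which representation descends to $Y_0$ and verifying it is still big, so that the hyperbolicity dichotomy for $Y_0$ can be invoked — alternatively, bypassing this by a direct Kodaira-dimension argument using Fujino's addition formula \cite{Fuj17} as in the proof of \cref{cor:GGL}, showing that a positive-dimensional log-general-type image would contradict $h$-specialness of $Y$ directly via \cref{lem:202304063} and \cref{lem:202304065}. The cleanest route is probably to avoid \cref{thm:structure} in its full strength and instead combine \cref{thm:VN} (giving virtual abelianness of $\pi_1(Y)$ after reduction) with the observation that a virtually abelian $\pi_1$ together with bigness forces $\dim\alpha_X(X) = \dim X$, then apply \cref{lem:abelian pi} after showing $\bar\kappa(X) = 0$ by the $h$-special obstruction.
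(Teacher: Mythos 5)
Your proposal breaks at the step where you assert $\bar\kappa(Y)=0$ and route the conclusion through \cref{lem:abelian pi}. The justification you give — that a positive-dimensional base $J(Y)$ of the logarithmic Iitaka fibration would be a dominant image of an $h$-special variety and simultaneously of log general type — is false: the base of the Iitaka fibration is not of log general type in general (only its orbifold base is expected to be of general type in the orbifold sense). Worse, the intermediate claim itself is refuted inside the paper: \cref{example:20221105} together with \cref{lem:20230507} exhibits a quasi-projective surface that is special and $h$-special, carries a large (hence big) reductive representation, has birational quasi-Albanese morphism, and yet has $\bar\kappa=1$ with Iitaka base an elliptic curve (which is $h$-special and not of log general type, so no contradiction arises). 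So the hypothesis $\bar\kappa(Y)=0$ you need for \cref{lem:abelian pi} simply does not follow from the assumptions, and that lemma cannot be invoked.

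The actual proof avoids Kodaira dimension entirely and is much shorter. The two ingredients you are missing are \cref{prop:factor} and \cref{pro:202210131}. By \cref{prop:factor} (which uses specialness/$h$-specialness) the quasi-Albanese morphism $\alpha:X\to\cA$ is already dominant with connected general fibers. By \cref{thm:VN} one passes to a finite \'etale cover on which $G=\varrho(\pi_1(X))$ is abelian and torsion-free; then $\varrho$ factors through $H_1(X,\bZ)/\mathrm{torsion}\cong H_1(\cA,\bZ)=\pi_1(\cA)$, so $\varrho(\mathrm{Im}[\pi_1(F)\to\pi_1(X)])$ is trivial for every fiber $F$ of $\alpha$. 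Bigness then forces the general fiber to be a point, and since the fibers are connected, $\alpha$ is birational — no structure theorem, no $\Spalg$/$\Spp$ dichotomy, and no Iitaka fibration needed. Finally, the isomorphism $\alpha_*:\pi_1(X)\to\pi_1(\cA)$ comes from the $\pi_1$-exactness of the quasi-Albanese map (\cref{pro:202210131}), not from the commutative diagram in the proof of \cref{lem:abelian pi}. Your steps reducing to the abelian case and deducing $\dim\alpha(X)=\dim X$ from bigness are fine; everything after that should be replaced by the two lemmas above.
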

\begin{proof}
 By \cref{prop:factor} $\alpha$ is dominant with connected general fibers.	Since $\varrho$ is reductive, by \cref{main5},  there is a finite \'etale cover $X$ of $Y$   such that  $G:=\varrho(\pi_1(X))$ is  abelian and torsion free. It follows that $\varrho$ factors through  $H_1(X,\bZ)/{\rm torsion}$. 
	Since $\alpha_*:H_1(X,\bZ)/{\rm torsion}\to H_1(\cA,\bZ)$ is isomorphic,  $\varrho$ further factors through 	$H_1(\cA,\bZ)$.
	\begin{equation*}
		\begin{tikzcd}
			&	\pi_1(X) \arrow[d]\arrow[r, "\rho"] \arrow[ldd, bend right=30]& G\\
			&	H_1(X,\bZ)/{\rm torsion} \arrow[ru]\arrow[d, "\alpha_*"']&\\
			\pi_1(\cA)\arrow[r, "="]	&	H_1(\cA,\bZ)\arrow[ruu, bend right=30,, "\beta"']&
		\end{tikzcd}
	\end{equation*}
	From the above diagram for every fiber $F$ of $\alpha$,   $\varrho({\rm Im}[\pi_1(F)\to \pi_1(X)])$ is trivial.   Since $\varrho$ is big,  the general fiber of $\alpha$ is thus a point. Hence $\alpha$ is birational.  Since $\alpha:X\to \cA$ is $\pi_1$-exact by \cref{pro:202210131}, it follows that  $\alpha_*:\pi_1(X)\to \pi_1(\cA)$ is an isomorphism. 
\end{proof}
\begin{rem}\label{rem:sharp abelian} 
We would like to point out that \cref{thm:char} is a sharp result:
	  \begin{itemize}
	 	\item In contrast to the projective case which was proven in \cite{Yam10}, we require additionally $\varrho$ to be reductive for the result to hold.
	 	\item Unlike the situation described in \cref{lem:abelian pi0},  we cannot expect that the quasi-Albanese morphism $\alpha:X\to \cA$ is proper in codimension one.
	 \end{itemize}
 In the next subsection, we will provide examples to illustrate the above points.
\end{rem}

 \subsection{Remarks on \cref{thm:char}} 
In this subsection, we will provide examples to demonstrate the facts in \cref{rem:sharp abelian}.
\begin{lem}
	Let $X$ be the 	smooth quasi-projective surface constructed defined in \Cref{example}. Then 
	\begin{itemize}
		\item the variety $X$ is a log Calabi-Yau variety, i.e. there is a smooth projective compactification $\overline{X}$ of  $X$ with $D:=\overline{X}\backslash X$ a simple normal crossing divisor such that $K_{\overline{X}}+D$ is trivial. 
		In particular, $\bar{\kappa}(X)=0$.
		\item The  fundamental group  $\pi_1(X)$ is linear and large, i.e. for any closed  subvariety $Y\subset X$, the image ${\rm Im}[\pi_1(Y^{\rm norm})\to \pi_1(X)]$ is infinite.
		\item The quasi-Albanese morphism of $X$ is a fibration over an elliptic curve $B$ with fibers $\bC^*$.
		\item For any finite \'etale cover $\nu:\widehat{X}\to X$, its quasi-Albanese morphism $\alpha_{\widehat{X}}:\widehat{X}\to \cA_{\widehat{X}}$ is a surjective morphism to an elliptic curve $\cA_{\widehat{X}}$.
\end{itemize}
\end{lem}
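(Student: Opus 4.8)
The plan is to use the explicit description of $X$ from \Cref{example}: writing $X=Y-Z$ where $Y\to E$ is the degree-one line bundle of \Cref{re:20230428} (say with underlying line bundle $\mathcal L_0$, $\deg\mathcal L_0=1$) and $Z\subset Y$ its zero section, the map $p:=(Y\to E)|_X\colon X\to E$ is the associated $\mathbb C^*$-bundle. The natural compactification is $\overline X:=\mathbb P(\mathcal L_0\oplus\mathcal O_E)\xrightarrow{\bar\pi}E$, with boundary $D:=Z_0+Z_\infty$, the zero section plus the section at infinity; these are two \emph{disjoint} sections of a $\mathbb P^1$-bundle over an elliptic curve, so $D$ is simple normal crossing, $\overline X$ is smooth projective, and $Z_0^2=\deg\mathcal L_0=1$, $Z_\infty^2=-1$, $Z_0\cdot Z_\infty=0$. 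For part (1): set $N:=K_{\overline X}+D$. On a fibre $f\cong\mathbb P^1$, $N\cdot f=\deg K_f+Z_0\cdot f+Z_\infty\cdot f=-2+1+1=0$, so $N|_f\cong\mathcal O_f$ for every $f$ and hence $N\cong\bar\pi^*N'$ with $N':=\bar\pi_*N\in\operatorname{Pic}(E)$. Restricting to $Z_0\cong E$ and using adjunction $(K_{\overline X}+Z_0)|_{Z_0}=K_{Z_0}\cong\mathcal O_E$ together with $Z_0\cap Z_\infty=\varnothing$ gives $N|_{Z_0}\cong\mathcal O_{Z_0}$, so $N'\cong\mathcal O_E$ and $N\cong\mathcal O_{\overline X}$. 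Thus $K_{\overline X}+D$ is trivial and $\bar\kappa(X)=\kappa(\overline X,\mathcal O_{\overline X})=0$.

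For part (2): linearity is immediate since $\pi_1(X)=G\subset\operatorname{GL}_4(\mathbb Z)$. For largeness, let $V\subset X$ be a positive-dimensional irreducible closed subvariety. If $p(V)$ is a point $e$, then (as $p^{-1}(e)\cong\mathbb C^*$ is one-dimensional) $V=p^{-1}(e)$, and the fibre inclusion identifies $\operatorname{Im}[\pi_1(V^{\mathrm{norm}})\to\pi_1(X)]$ with the central subgroup $C\cong\mathbb Z$ of $G$, which is infinite. Otherwise $p(V)=E$, so by \cref{lem:finiteindex} the image of $\pi_1(V^{\mathrm{norm}})\to\pi_1(E)\cong\mathbb Z^2$ has finite index, hence is infinite; since this map factors through $\pi_1(X)$, the image of $\pi_1(V^{\mathrm{norm}})\to\pi_1(X)$ is infinite as well.

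For part (3): I would compute $T_1(X)=H^0(\overline X,\Omega^1_{\overline X}(\log D))$ from the residue sequence $0\to\Omega^1_{\overline X}\to\Omega^1_{\overline X}(\log D)\xrightarrow{\operatorname{Res}}\mathcal O_{Z_0}\oplus\mathcal O_{Z_\infty}\to 0$. Since $\overline X\to E$ is a $\mathbb P^1$-bundle one has $H^0(\overline X,\Omega^1_{\overline X})=\bar\pi^*H^0(E,\Omega^1_E)\cong\mathbb C$, and the connecting map $\mathbb C^2\to H^1(\overline X,\Omega^1_{\overline X})$ sends $1_{Z_i}\mapsto c_1(\mathcal O_{\overline X}(Z_i))$; as the intersection form on $\langle Z_0,Z_\infty\rangle$ is $\operatorname{diag}(1,-1)$, the two classes are linearly independent, so this map is injective and $\operatorname{Res}=0$ on global sections. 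Hence $T_1(X)=\bar\pi^*H^0(E,\Omega^1_E)$ is one-dimensional, spanned by a form pulled back from $E$; since $p$ has connected fibres, $p_*\colon H_1(X,\mathbb Z)\to H_1(E,\mathbb Z)$ is onto, so the period lattice of $X$ is that of $E$, giving $\mathcal A_X=E$ and $\alpha_X=p$ up to translation — a $\mathbb C^*$-fibration over the elliptic curve $E$.

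For part (4): given a finite étale $\nu\colon\widehat X\to X$, take the quasi-Stein factorisation (\cref{lem:Stein}) of $p\circ\nu$ as $\widehat X\xrightarrow{\alpha}\widehat B\xrightarrow{\beta}E$. Every fibre of $\alpha$ is connected and is a finite étale cover of $\mathbb C^*$, hence $\cong\mathbb C^*$; and $\beta$ is unramified, because a ramified $\beta$ would force $g(\widehat B)\ge 2$ by Riemann–Hurwitz, making $\pi_1(\widehat B)$ a non-nilpotent quotient of the nilpotent group $\pi_1(\widehat X)\le G$ — impossible — while $\widehat B\cong\mathbb P^1$ is excluded since $\pi_1(\widehat B)$ surjects onto the infinite image of $\pi_1(\widehat X)$ in $\pi_1(E)$. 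So $\widehat B$ is an elliptic curve and $\widehat X\to\widehat B$ is a $\mathbb C^*$-bundle. Compactifying $\alpha$ to a $\mathbb P^1$-bundle $\overline{\widehat X}\to\widehat B$ with boundary $\widehat D$ (either two disjoint sections or an étale double section) and rerunning the residue computation, the connecting map is again injective — the only way it could fail is $\widehat D=\widehat Z_0\sqcup\widehat Z_\infty$ with $\widehat Z_0^2=0$, i.e.\ $\widehat X\to\widehat B$ topologically trivial, which would give $\pi_1(\widehat X)\cong\mathbb Z^3$ abelian, contradicting that $G$ is not virtually abelian (\Cref{example}). Hence $\dim T_1(\widehat X)=1$, $\mathcal A_{\widehat X}=\widehat B$, and $\alpha_{\widehat X}$ is, up to translation, $\alpha\colon\widehat X\to\widehat B$, a surjective morphism onto an elliptic curve. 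The main obstacle is exactly this last step: ensuring, uniformly over all finite étale covers, that the quasi-Albanese variety stays compact (no $\mathbb C^*$-factor), which is where the non-virtual-abelianness of $G$ is indispensable; the adjunction bookkeeping on the ruled surface, the Riemann–Hurwitz identification of $\widehat B$, and the computation of $H^0$ of log forms are all routine.
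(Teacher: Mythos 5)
Your proposal is correct; all four claims are established, and the essential inputs are the same as the paper's ($\deg L=1\neq 0$, nilpotency and non-virtual-abelianness of $G$), but the execution differs in two places worth noting. For the first three bullets you are close to the paper: (1) the paper computes $K_{\overline{X}}+D$ directly from $\xi$ and $\overline{\pi}^*$-classes on $\bP(L^*\oplus\cO_B)$, whereas you get triviality from adjunction on fibres and on the zero section — same conclusion, slightly different bookkeeping; (2) is essentially identical; (3) the paper uses the Gysin sequence in integral cohomology to show $\pi^*:H^1(B,\bZ)\to H^1(X,\bZ)$ is an isomorphism (the key being $c_1(L)\neq 0$) and then functoriality of the quasi-Albanese, while you compute $T_1(X)$ from the residue sequence and the nondegeneracy of the intersection form on $\langle Z_0,Z_\infty\rangle$ — equivalent, Hodge-theoretic packaging of the same nonvanishing. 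The real divergence is bullet (4). The paper argues that $\nu^*:H^1(X,\bC)\to H^1(\widehat X,\bC)$ is an isomorphism (which silently uses that finite-index subgroups of $G$ still have $b_1=2$, because the commutator pairing $L\times L\to C$ stays nondegenerate on finite-index sublattices), so $\cA_{\widehat X}$ is either an elliptic curve or $(\bC^*)^2$, and the latter is excluded because $\cA_{\widehat X}$ admits a nonconstant morphism to the elliptic curve $\cA_X=B$. Your route — quasi-Stein factorisation, Riemann--Hurwitz plus nilpotency of $\pi_1$ to force the base $\widehat B$ to be elliptic and the cover unramified, then the residue computation on the compactified $\bC^*$-bundle with non-virtual-abelianness ruling out the topologically trivial (degree-zero) case — is longer and requires you to justify that $\widehat X\to\widehat B$ is a locally trivial $\bC^*$-fibration compactifying to a $\bP^1$-bundle (true here, since the monodromy on $H_1$ of the fibre is trivial, so the bisection case you hedge against does not actually occur), but it has the merit of making explicit exactly where the group theory of $G$ enters; the paper's argument is shorter at the cost of an unstated group-theoretic lemma.
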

\begin{proof}
	By the construction of $X$ in \Cref{example}, there exists a holomorphic line bundle $L$ over an elliptic curve $B$  such that $X=L\backslash D_1$ where $D_1$ is the zero section of $L$.  Denote by $\overline{X}:=\bP(L^*\oplus \cO_B)$ which is a smooth projective surface and write $\xi:=\cO_{\overline{X}}(1)$ for the tautological line bundle. Denote by  $\overline{\pi}:\overline{X}\to B$   the projection map.  Then   $\cO_{\overline{X}}(D_1)=\pi^*L+ \xi$.   Denote by $D_2:=\overline{X}\backslash L$. Then   $\cO_{\overline{X}}(D_2)=\xi$.   Note that $K_{\overline{X}}=-2\xi+\overline{\pi}^*(K_B+\det (L^*\oplus \cO_B))$. 	It follows that $K_{\overline{X}}+D_1+D_2=\cO_{\overline{X}}$. The first claim follows.

    By the Gysin sequence, we have 
    \begin{align}\label{eq:Gysin}
    	0\to H^1(B,\bZ)\stackrel{\pi^*}{\to} H^1(X,\bZ)\to H^0(B,\bZ)\stackrel{\cdot c_1(L)}{\to} H^2(B,\bZ)\to H^2(X,\bZ)\to H^1(B,\bZ)\to\cdots
    \end{align}   
    where $\pi:X\to B$ is the projection map.    By the functoriality of the quasi-Albanese morphism, we have the following diagram
\begin{equation*}
	\begin{tikzcd}
		X \arrow[r,"\pi"]\arrow[d,"\alpha_X"] & B\arrow[d,"\alpha_B","\simeq"']\\
		\cA_X\arrow[r,"\simeq"] & \cA_{B}
	\end{tikzcd}
\end{equation*}
where $\alpha_X$ and $\alpha_B$ are (quasi-)Albanese morphisms of $X$ and $B$ respectively.     By \Cref{re:20230428}, we know that $c_1(L)\neq 0$.
It follows from \eqref{eq:Gysin} that $\pi^*:H^1(B,\bZ)\to H^1(X,\bZ)$ is an isomorphism, and thus $\cA_X\to \cA_B$ is an isomorphism.   Since $B$ is an elliptic curve, $\alpha_B:B\to \cA_B$ is also an isomorphism. It proves that $\pi$ coincides with the quasi-Albanese morphism   $\alpha_X$. 

We will prove that $\pi_1(X)$ is large. Since $\pi_1(X)$ is infinite, it suffices to check all irreducible curves $Y$ of $X$.    If $Y$ is a fiber of $\pi$, then $Y\simeq \bC^*$ and the claim follows from the exact sequence
$$
0=\pi_2(B)\to \pi_1(\bC^*)\to \pi_1(X)\to \pi_1(B).
$$   
If $Y$ is not a fiber of $\pi$, then $\pi|_{Y}:Y\to B$ is a  finite morphism, and thus by \cref{lem:finiteindex}  ${\rm Im}[\pi_1(Y^{\rm norm})\to \pi_1(B)]$ is a finite index subgroup of $\pi_1(B)$ which is thus infinite. It follows that   ${\rm Im}[\pi_1(Y^{\rm norm})\to \pi_1(X)]$ is infinite. 

Let us prove the last assertion. It is important to note that $\nu^*: H^1(X, \mathbb{C}) \to H^1(\widehat{X}, \mathbb{C})$ is   injective with a finite cokernel. From this, we deduce that $\dim_\bC H^1(\cA_{\widehat{X}},\bC)=\dim_\bC H^1(\widehat{X},\bC)=\dim_\bC H^1({X},\bC)=2$.  Therefore, $ \cA_{\widehat{X}}$ is   either an elliptic curve or $(\bC^*)^2$.  However, it is worth noting that $\nu$ induces a non-constant morphism $\cA_{\widehat{X}} \to \cA_X=B$. This implies that $\cA_{\widehat{X}}$ cannot be $(\mathbb{C}^*)^2$. To see this, consider the algebraic morphism from $\mathbb{C}^*$ to an elliptic curve $B$, which can be extended to $\mathbb{P}^1 = \mathbb{C}^* \cup \{0,\infty\}$. This extension must be constant, ruling out the possibility of $\cA_{\widehat{X}}$ being $(\mathbb{C}^*)^2$.
\end{proof}
The above lemma shows that \cref{thm:char} does not hold if $\varrho$ is not reductive. 

\begin{lem} \label{lem:20230507}
	Let $X$ be the quasi-projective surface constructed in \Cref{example:20221105}, which is special and $h$-special.   Then
	\begin{itemize}
	 \item for the quasi-Albanese morphism $\alpha:X\to \cA_X$,   $\alpha_*:\pi_1(X)\to \pi_1(\cA_X)$ is an isomorphism.  
\item $\pi_1(X)$ is linear reductive and  large.
	\item For any finite \'etale cover $\nu:\widehat{X}\to X$, its quasi-Albanese morphism $\alpha_{\widehat{X}}:\widehat{X}\to \cA_{\widehat{X}}$ is birational but not proper in codimension one. 
	\end{itemize} 
\end{lem}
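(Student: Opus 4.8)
The statement to prove is \cref{lem:20230507}, which concerns the quasi-projective surface $X$ from \cref{example:20221105}: recall that $X = \widetilde{A} - D$ where $\widetilde{A} = \mathrm{Bl}_{(0,0)}(C_1\times C_2)$, $E$ is the exceptional divisor, $D$ is the proper transform of $p_1^{-1}(0)$, and $X - E = (C_1-\{0\})\times C_2$. We must show three things: that the quasi-Albanese map $\alpha : X \to \cA_X$ induces an isomorphism on $\pi_1$; that $\pi_1(X)$ is linear, reductive and large; and that for every finite étale cover $\widehat{X} \to X$, the quasi-Albanese morphism $\alpha_{\widehat{X}}$ is birational but not proper in codimension one.

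My plan is as follows. \textbf{First}, I would compute $\pi_1(X)$ directly. Since $X - E = (C_1-\{0\})\times C_2$ is a Zariski open subset of $X$ with complement of codimension one, I use \cref{lem:fun} to see $\pi_1(X-E) \twoheadrightarrow \pi_1(X)$; on the other hand $E \cong \bP^1$ is simply connected and meets $X$ in $E \cap X \cong \bC^*$ (a neighborhood of the boundary), so a van Kampen / Mayer--Vietoris argument shows $\pi_1(X) \cong \pi_1((C_1-\{0\})\times C_2)/N$ where $N$ is the normal closure of the loop around $E\cap X$ — but that loop is the meridian of a $(-1)$-curve, and I would argue it is already trivial in $\pi_1(X)$ since $E$ is a contractible sphere glued in, giving $\pi_1(X) \cong \pi_1(C_1-\{0\}) \times \pi_1(C_2) = F_{2g_1} \times \bZ^{2g_2}$ only if $g_1 \geq 1$. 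Actually the cleaner route: $X$ retracts (up to the blow-up modification) onto $C_1\times C_2$ with a point removed from a fiber removed — here I should instead compute $H_1$ and $\pi_1$ via the line-bundle structure. The key structural fact is that $X$ admits a morphism to $C_1$ (composition $X \to \widetilde{A} \to C_1\times C_2 \to C_1$ restricted appropriately) realizing $X$ as (an open subset of) the total space of a family; I would identify $\pi_1(X)$ with $\pi_1(C_1\times C_2)$ by showing the blow-up and the divisor removal each do not change $\pi_1$ (the blow-up by \cref{lem:fun}, the removal of $D$ because $D$ maps with positive-dimensional fibers and the meridian dies). Then $\cA_X = \cA_{C_1\times C_2} = C_1\times C_2$ since $H^1$ is unchanged, and $\alpha$ is the obvious map, so $\alpha_*$ is an isomorphism. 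Largeness of $\pi_1(X) \cong \pi_1(C_1\times C_2)$ follows from \cref{prop:large} applied to $\alpha$ once I verify $\alpha$ is quasi-finite onto its image on a dense open set, hence reductive (image lies in a torus after embedding $\pi_1$ of an abelian variety diagonally) and large by \cref{claim:large}; alternatively, largeness is inherited directly from the largeness of $\pi_1(A_0)$ for abelian varieties (\cref{claim:abelian large}) plus \cref{lem:finiteindex} applied to curves in $X$ mapping finitely to $C_1\times C_2$.

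\textbf{Second}, for the finite-étale-cover statement, I would use the functoriality of the quasi-Albanese morphism: for $\nu : \widehat{X}\to X$ étale, $\nu^* : H^1(X,\bC) \to H^1(\widehat{X},\bC)$ is injective, and since $\pi_1(X)$ is already (virtually) abelian — indeed abelian here — the Galois cover $\widehat{X}$ has $\pi_1(\widehat{X})$ a finite-index subgroup of $\bZ^{2g_1}\times\bZ^{2g_2}$, hence abelian, so $\cA_{\widehat{X}}$ has the same dimension as $\widehat{X}$ (namely $2$). Then $\alpha_{\widehat{X}}$ is a dominant morphism between surfaces; I claim it is birational. To see birationality I would invoke \cref{pro:202210131} ($\pi_1$-exactness of the quasi-Albanese map for special varieties — $\widehat{X}$ is special by \cref{special} since $X$ is special) together with the fact that $\alpha_{\widehat{X},*}$ is surjective with the same source and target rank; more robustly, I would use \cref{thm:char} directly: $X$ is $h$-special and special, and $\pi_1(X)\hookrightarrow \pi_1(C_1\times C_2)$ is reductive and big (bigness = largeness here), so \cref{thm:char} gives a finite étale cover on which quasi-Albanese is birational — and since any $\widehat{X}$ is dominated by such a cover and the quasi-Albanese construction is compatible with étale covers, birationality of $\alpha_{\widehat{X}}$ follows. \textbf{Third}, to show $\alpha_{\widehat{X}}$ is \emph{not} proper in codimension one: by \cref{lem:abelian pi0}, if it were proper in codimension one then combined with $\bar\kappa(\widehat{X}) = \bar\kappa(X) = 1 \neq 0$ we would... actually \cref{lem:abelian pi0} requires $\bar\kappa = 0$; instead I argue directly. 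If $\alpha_{\widehat{X}} : \widehat{X}\to \cA_{\widehat{X}}$ were proper over the complement of a codimension-two set $Z$, then by \cref{lem:same Kd} we would get $\bar\kappa(\widehat{X}) = \bar\kappa(\cA_{\widehat{X}} - Z) = 0$ by \cref{prop:Koddimabb}(b) (since $Z$ has no codimension-one component), contradicting $\bar\kappa(\widehat{X}) = \bar\kappa(X) = 1$ (which holds by \cref{lem:KodairaDim} since $\nu$ is étale and $\bar\kappa(X) = 1 \geq 0$).

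\textbf{The main obstacle} I anticipate is the precise computation of $\pi_1(X)$ and the verification that $\cA_X$ is genuinely $C_1 \times C_2$ (an abelian surface, not a semi-abelian variety with a torus part) — one must carefully track the effect of removing the divisor $D$, which is a once-punctured elliptic curve $\cong C_2 - \{0\}$ (its proper transform meets $E$ in one point), on $H^1$ and $\pi_1$. The meridian around $D$ could a priori contribute a $\bZ$ to $\pi_1$ and a $\bC^*$ to $\cA_X$; I must show it dies, which should follow because $D$ is not contracted — wait, $D$ maps to the point $0 \in C_1$ under $p_1$, so its meridian maps to the meridian around $0$ in $C_1 - \{0\}$, which is \emph{not} trivial in $\pi_1(C_1-\{0\})$. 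This is exactly the subtle point: the inclusion $X - E \hookrightarrow X$ being surjective on $\pi_1$ (\cref{lem:fun}) tells us only that $\pi_1(X)$ is a quotient, and I must show that putting back $D$ inside $\widetilde A$ (equivalently, the map $\pi_1((C_1-\{0\})\times C_2) \to \pi_1(X)$) kills the $C_1$-meridian — this is because in $\widetilde{A}$ (which is simply-birational-equivalent to $C_1\times C_2$, so $\pi_1(\widetilde A) = \pi_1(C_1\times C_2)$ by \cref{lem:fun}), that meridian is a boundary loop of a disk (a fiber of $p_1$ near $0$ meeting $\widetilde A$), hence trivial; and $X = \widetilde{A} - D$, so by \cref{lem:fun} again $\pi_1(X) \twoheadrightarrow \pi_1(\widetilde A) = \pi_1(C_1 \times C_2)$, while the composite $\pi_1((C_1-\{0\})\times C_2) \to \pi_1(X) \to \pi_1(\widetilde A)$ factors the meridian to zero — I then need to show the first arrow's kernel is \emph{exactly} generated by that meridian, which is a standard fact about removing a smooth divisor (the kernel of $\pi_1$(open) $\to \pi_1$(ambient) is normally generated by meridians). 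Carrying this bookkeeping out cleanly, keeping track of the one point $D \cap E$ and its effect, is where I expect to spend the most care.
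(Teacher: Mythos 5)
Your proposal diverges from the paper's proof in two places where the comparison is instructive, and it has two genuine gaps (one of which the paper's own argument shares).

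\textbf{Where you take a different, valid route.} For the failure of properness in codimension one, the paper argues directly from the geometry: the birational morphism $g:X\to A=C_1\times C_2$ has image $(C_1-\{0\})\times C_2\cup\{(0,0)\}$, which omits the divisor $\{0\}\times C_2$ minus a point, so $\alpha=g$ cannot be proper over the complement of any codimension-two set; for a cover $\widehat{X}$ it realizes $\widehat{X}=X\times_{\cA_X}\widehat{\cA}$ and pulls this failure back. Your argument — if $\alpha_{\widehat{X}}$ were proper over $\cA_{\widehat{X}}\backslash Z$ with $\mathrm{codim}\,Z\geq 2$, then \cref{lem:same Kd} gives $\bar{\kappa}(\widehat{X})=\bar{\kappa}(\alpha_{\widehat{X}}^{-1}(\cA_{\widehat{X}}\backslash Z))=\bar{\kappa}(\cA_{\widehat{X}}\backslash Z)=0$ by \cref{prop:Koddimabb}, contradicting $\bar{\kappa}(\widehat{X})=\bar{\kappa}(X)=1$ from \cref{lem:KodairaDim} — is correct and more robust, since it needs no description of the image. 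It does, however, presuppose that $\alpha_{\widehat{X}}$ is birational, and that is exactly where your argument is loose: knowing from \cref{thm:char} that \emph{some} further étale cover of $\widehat{X}$ has birational quasi-Albanese map does not by itself give birationality of $\alpha_{\widehat{X}}$ (a degree count is missing). The paper closes this by exhibiting $\widehat{X}$ as the fiber product $X\times_{\cA_X}\widehat{\cA}$ over the isogeny $\widehat{\cA}\to\cA_X$ corresponding to $\alpha_*(\pi_1(\widehat{X}))$, so that $\alpha_{\widehat{X}}$ is the base change of the birational $\alpha$; alternatively you could rerun the proof of \cref{thm:char} on $\widehat{X}$ itself, noting that $\pi_1(\widehat{X})\subset\pi_1(\cA_X)\cong\bZ^4$ is already abelian and torsion-free so no auxiliary cover is needed.

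\textbf{The gap in the largeness claim.} Both of your proposed routes to largeness break down on the exceptional curve $Y=E\cap X\cong\bP^1\setminus\{D\cap E\}\cong\bA^1$, which is a positive-dimensional closed subvariety of $X$ contracted by $\alpha$ to $(0,0)$ and has trivial fundamental group. \cref{prop:large} yields largeness only when $a$ is quasi-finite (quasi-finiteness on a dense open set gives bigness, not largeness), and $\alpha$ is not quasi-finite precisely because of this curve; likewise \cref{claim:large} is silent when $a(Y)$ is a point. So ${\rm Im}[\pi_1(Y^{\rm norm})\to\pi_1(X)]$ is trivial for this $Y$, and no argument along these lines can establish largeness in the sense of the paper's definition. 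You should be aware that the paper's own proof has the same blind spot (it asserts that $q_i|_Y$ is dominant for some $i$ because $X\to C_1\times C_2$ is birational, which fails for $Y=E\cap X$); what these arguments actually deliver is bigness, which is all that is used downstream in \cref{rem:sharp abelian}.

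\textbf{The first bullet.} Your direct van Kampen computation of $\pi_1(X)$ is left as a sketch with the key step (that the kernel of $\pi_1((C_1-\{0\})\times C_2)\to\pi_1(X)$ is normally generated by a meridian which dies) unverified. The bookkeeping does work out — both the meridian of $D$ and the meridian of $E$ are the boundary commutator $[a_1,b_1]$ of the once-punctured torus $C_1-\{0\}$, and gluing $E\cap X$ back in kills it, giving $\pi_1(X)\cong\bZ^4$ — but this is exactly the kind of delicate topology the paper avoids by citing \cref{thm:char}, whose proof reduces the isomorphism $\alpha_*$ to the $\pi_1$-exactness statement \cref{pro:202210131} once $\alpha$ is known to be birational. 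Since you also mention that route, I would simply drop the hand computation.
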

\begin{proof}
	We will use the notations in \Cref{example:20221105}.  The first statement follows from \Cref{thm:char}. 
 
 	We aim to show that $\pi_1(X)$ is large. Since $\cA_X$ is positive-dimensional by the construction of $X$, the first statement implies that $\pi_1(X)$ is infinite. Thus, it suffices to check all irreducible curves $Y$ of $X$. Consider the projection maps $q_1:X\to C_1$ and $q_2:X\to C_2$ where $C_1$ and $C_2$ are two elliptic curves constructed in \Cref{example:20221105}. Since $X\to C_1\times C_2$ is birational,  $q_i|_{Y}:Y\to C_i$ is dominant for some $i=1,2$. By Lemma \ref{lem:finiteindex}, for some $i=1,2$, ${\rm Im}[\pi_1(Y^{\rm norm})\to \pi_1(C_i)]$ is a finite index subgroup of $\pi_1(C_i)$ which is thus infinite. It follows that ${\rm Im}[\pi_1(Y^{\rm norm})\to \pi_1(X)]$ is infinite. Since $\pi_1(\cA_X)$ is an abelian group, it follows that $\pi_1(X)$ is linear reductive and large.

 Since $X$ is special and $h$-special, we know from \cref{thm:char} that its quasi-Albanese morphism $\alpha:X\to \cA_X$ is birational. Moreover, by the construction of $X$ in \Cref{example:20221105}, we note that there is a birational morphism $g:X\to A$ where $A=C_1\times C_2$ is an abelian variety. Therefore, $g$ coincides with $\alpha$.      We further observe that $g(X)\subset (C_1-{0})\times C_2\cup {(0,0)}$. As a result, it is not proper in codimension one.
 
 Let us prove the last assertion. Since $\pi_1(\widehat{X})$ is a finite index subgroup of $\pi_1(\cA_X)$ and $\alpha_*:\pi_1(X)\to \pi_1(\cA_X)$ is an isomorphism, we can consider a finite étale cover $ \widehat{\cA} \to \cA_X$ associated with the finite index subgroup  $\alpha_*(\pi_1(\widehat{X}))$   of $\pi_1(\cA_X)$. Then  $\widehat{\cA}$ is also an abelian variety and there exists  a morphism $f:\widehat{X}\to \widehat{\cA}$ satisfying the following commutative diagram 
 \begin{equation*}
 	\begin{tikzcd}
 		\widehat{X} \arrow[r,"\pi"]\arrow[d,"f"] & X\arrow[d,"\alpha"]\\
 		\widehat{\cA}\arrow[r] & \cA_{X}
 	\end{tikzcd}
 \end{equation*}
 As $\alpha$ is birational, we have $\widehat{X}=X\times_{\cA_X}\widehat{\cA}$ which implies that $f$ is birational but not proper is codimension one as $\alpha$ is not proper is codimension one.   Furthermore, since $f_*: \pi_1(\widehat{X}) \to \pi_1(\widehat{\cA})$ is an isomorphism, we conclude that  $f$ coincides with the quasi-Albanese morphism $\alpha_{\widehat{X}}:\widetilde{X}\to \cA_{\widehat{X}}$.   
This completes the proof of the last claim.
\end{proof}
The above lemma shows that in \cref{thm:char} we cannot expect that the quasi-Albanese morphism is   proper in codimension one.


\end{document}